\documentclass[a4paper]{amsart}
\pdfoutput=1
\usepackage[colorlinks,citecolor=blue,urlcolor=black,linkcolor=black]{hyperref}
\usepackage{amssymb, latexsym}
\usepackage{amsmath, amsfonts, amsthm}
\usepackage{stmaryrd} 
\usepackage[headings]{fullpage}
\usepackage{cmap}
\usepackage{tikz}
\usepackage[all]{xy}
\usepackage{ifpdf}
\usepackage{enumerate}
\usepackage{verbatim}

\newtheorem{ufact}{Fact}
\newtheorem{ucor}[ufact]{Corollary}
\newtheorem{conj}{Conjecture}
\newtheorem{uthm}{Theorem}
\newtheorem*{THMA}{Theorem A}
\newtheorem*{THMB}{Theorem B}
\newtheorem*{THMC}{Theorem C}
\newtheorem*{THMD}{Theorem D}

\newtheorem{thm}{Theorem}[section]
\newtheorem{lemma}[thm]{Lemma}
\newtheorem{cor}[thm]{Corollary}
\newtheorem{prop}[thm]{Proposition}
\newtheorem{fact}[thm]{Fact}
\newtheorem{claim}{Claim}[thm]

\theoremstyle{definition}
\newtheorem*{udef}{Definition}
\newtheorem{defn}[thm]{Definition}
\newtheorem{notation}[thm]{Notation}
\newtheorem{example}[thm]{Example}

\theoremstyle{remark}
\newtheorem{remark}[thm]{Remark}

\DeclareMathOperator{\reg}{Reg}
\DeclareMathOperator{\cl}{cl}
\DeclareMathOperator{\Tr}{Tr}
\DeclareMathOperator{\cf}{cf}
\DeclareMathOperator{\dom}{dom}
\DeclareMathOperator{\rng}{Im}
\DeclareMathOperator{\im}{Im}
\DeclareMathOperator{\Drop}{Drop}
\DeclareMathOperator{\otp}{otp}
\DeclareMathOperator{\acc}{acc}
\DeclareMathOperator{\nacc}{nacc}
\DeclareMathOperator{\p}{P}

\hyphenation{post-processing}
\renewcommand\mid{\mathrel{|}\allowbreak}
\renewcommand{\restriction}{\mathbin\upharpoonright}

\renewcommand\sin{\mathrel{\underline{\in}}}
\newcommand\nsin{\mathrel{\underline{\notin}}}
\newcommand\s{\subseteq}
\newcommand\stree{\subseteq}
\newcommand\sq{\sqsubseteq}
\newcommand\br{\blacktriangleright}
\newcommand\diagonal{\bigtriangleup}
\newcommand\symdiff{\mathbin\triangle}
\newcommand\bks{\setminus}
\newcommand*\axiomfont[1]{\textsf{\textup{#1}}}
\newcommand\zf{\axiomfont{ZF}}
\newcommand\zfc{\axiomfont{ZFC}}
\newcommand\gch{\axiomfont{GCH}}
\newcommand\ch{\textup{CH}}
\newcommand*\pred[1]{#1_\downarrow}
\newcommand\faithful{\Phi^{\textup{faithful}}}
\newcommand*\cvec[1]{\vec{\mathcal#1}}
\newcommand*\acts[2]{\cvec{#2}^{#1}}

\title{Distributive Aronszajn trees}

\author{Ari Meir Brodsky}
\address{Department of Mathematics, Bar-Ilan University, Ramat-Gan 5290002, Israel.}
\curraddr{Department of Mathematics, Ariel University, Ariel 4070000, Israel.}
\urladdr{http://u.math.biu.ac.il/~brodska/}

\author{Assaf Rinot}
\address{Department of Mathematics, Bar-Ilan University, Ramat-Gan 5290002, Israel.}
\urladdr{http://www.assafrinot.com}

\thanks{This work was partially supported by the Israel Science Foundation (Grant~\#1630/14).
The revision of this paper took place when the first author was supported by the Center for Absorption in Science, Ministry of Aliyah and Integration, State of Israel.}

\subjclass[2010]{Primary 03E05; Secondary 03E65, 03E35, 05C05}
\keywords{Aronszajn tree, uniformly coherent Souslin tree, walks on ordinals, club guessing, square principle, $C$-sequence, postprocessing function, distributive tree, fat set, nonspecial Aronszajn tree.}

\begin{document}
\begin{abstract} Ben-David and Shelah proved that if $\lambda$ is a singular strong-limit cardinal and $2^\lambda=\lambda^+$,
then $\square^*_\lambda$ entails the existence of a normal $\lambda$-distributive $\lambda^+$-Aronszajn tree.
Here, it is proved that the same conclusion remains valid after replacing the hypothesis $\square^*_\lambda$  by $\square(\lambda^+,{<}\lambda)$.

As $\square(\lambda^+,{<}\lambda)$ does not impose a bound on the order-type of the witnessing clubs, our construction is necessarily different from that of Ben-David and Shelah,
and instead uses walks on ordinals augmented with club guessing.

A major component of this work is the study of postprocessing functions and their effect on square sequences.
A byproduct of this study is the finding that for $\kappa$ regular uncountable, $\square(\kappa)$ entails the existence of a partition of $\kappa$ into $\kappa$ many fat  sets.
When contrasted with a classic model of Magidor, this shows that it is equiconsistent with the existence of a weakly compact cardinal that $\omega_2$ cannot be split into two fat  sets.
\end{abstract}

\maketitle
\section*{Introduction}

Two central themes in Combinatorial Set Theory are uncountable trees and square principles.
A poset $(T,{<_T})$ is a \emph{tree} if the downward cone $\pred{x}:=\{ y\in T \mid y<_T x \}$ of every node $x\in T$ is well-ordered.
For any ordinal $\alpha$, we write $T_\alpha:=\{ x\in T\mid \otp(\pred{x},{<_T})=\alpha\}$
for the \emph{$\alpha^{th}$ level} of the tree $(T,{<_T})$.
For a regular uncountable cardinal $\kappa$, the tree $(T,{<_T})$ is said to be a \emph{$\kappa$-tree}, provided that $|T_\alpha|<\kappa$ for all ordinals $\alpha$,
and $\{\alpha\mid T_\alpha\neq\emptyset\}=\kappa$.
A $\kappa$-tree $(T,{<_T})$ is said to be \emph{normal} if for all $x\in T$, $\{\alpha\mid x\text{ is compatible with some node from }T_\alpha\}=\kappa$.
A \emph{$\kappa$-Aronszajn tree} is a $\kappa$-tree having no chains of size $\kappa$.
A \emph{$\kappa$-Souslin tree} is a $\kappa$-Aronszajn tree having no antichains of size $\kappa$.

A $\lambda^+$-tree is said to be \emph{special} if it may be covered by $\lambda$ many antichains.
A normal $\lambda^+$-Aronszajn tree is said to be \emph{$\lambda$-distributive} if every intersection of $\lambda$ many dense open subsets of the tree is dense.\footnote{Here,
\emph{dense} and \emph{open} are in the forcing sense under the (reverse) order of the tree. That is, $D\s T$ is \emph{dense} if $T=\bigcup_{x\in D} \pred{x}$ and \emph{open} if $x^\uparrow\s D$ for every $x\in D$.
In particular, the tree is \emph{$\lambda$-distributive}
iff forcing with it does not add a new function $f:\lambda\rightarrow V$.}
It is not hard to see that for any infinite cardinal $\lambda$, and any normal $\lambda^+$-Aronszajn tree $\mathcal T=(T,{<_T})$:\footnote{Note that any $\kappa$-Souslin tree admits a normal $\kappa$-Souslin subtree (see, e.g., \cite[Lemma~2.4]{rinot20}).}
$$\mathcal T\text{ is }\lambda^+\text{-Souslin}\implies\mathcal T\text{ is }\lambda\text{-distributive}\implies\mathcal T\text{ is not special}.$$

Three folklore conjectures in the study of $\lambda^+$-trees read as follows:
\begin{conj} Assume $\gch$, and that $\lambda$ is some regular uncountable cardinal.

Then there exists a $\lambda^+$-Souslin tree.
\end{conj}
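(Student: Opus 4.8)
The plan is to follow the two-part recipe underlying every construction of a $\kappa^+$-Souslin tree: a \emph{guessing} ingredient to seal antichains, and a \emph{coherence} ingredient to keep the tree Aronszajn at limit levels of small cofinality. I would build the tree as a subtree of ${}^{<\lambda^+}\lambda$ by recursion on levels, determining the nodes at level $\alpha$ from those below according to the cofinality of $\alpha$, so that every level is nonempty and of size $\leq\lambda$ while no chain of length $\lambda^+$ is ever created.

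For the guessing ingredient I would invoke Shelah's theorem that for an uncountable cardinal $\lambda$, $2^\lambda=\lambda^+$ implies $\diamondsuit_{\lambda^+}$, together with its local refinements on the relevant stationary sets. Since $\lambda$ is regular uncountable, $\gch$ supplies this for free. At levels $\alpha$ with $\cf(\alpha)=\lambda$ — where a maximal antichain that has been met cofinally below can be sealed — the diamond sequence is used in the standard way to guess and kill antichains, forcing every maximal antichain to have size $\leq\lambda$, i.e.\ the Souslin property. This part is routine \emph{once the tree is known to be Aronszajn}.

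The real work, and the coherence ingredient, lives at limit levels $\alpha$ with $\cf(\alpha)<\lambda$. Here a branch through $T\restriction\alpha$ need not be continued, and a naive recursion either collapses (empty levels) or accumulates a cofinal branch, destroying Aronszajnness. The classical remedy is a $\square_\lambda$-sequence, providing coherently a canonical club in each such $\alpha$ along which exactly one branch is designated to survive, with threadlessness ruling out a cofinal branch. The strategy of the present paper instead replaces $\square_\lambda$ by walks on ordinals driven by a $C$-sequence over $\lambda^+$ and augmented with club guessing, so that branch-selection is governed by the trace of the walk rather than by a coherent club of bounded order-type. Accordingly I would try to extract from $\gch$ a proxy principle — in the spirit of $\square(\lambda^+,{<}\lambda)$ together with a guessing modifier — and then feed it into the walks-and-club-guessing machinery of this paper to manufacture the tree.

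The hard part — and the reason this remains a conjecture rather than a theorem — is precisely the coherence ingredient: $\gch$ is not known to yield any usable square-like or non-threadable $C$-sequence at $\lambda^+$. Indeed, it is consistent with $\gch$ (modulo a weakly compact cardinal) that $\lambda^+$ enjoys strong stationary reflection, and in such a model $\square(\lambda^+)$ fails, so no coherent, non-threadable $C$-sequence of the kind the walks construction consumes can exist. Thus any proof must either (i) derive from $\gch$ alone a guessing/coherence principle strong enough to run a walks-on-ordinals construction — exactly what is open — or (ii) produce Souslin trees by a genuinely different route in the reflecting models. I would first attack (i), seeking a theorem of the form ``$\gch$ implies a weak proxy principle at $\lambda^+$'' and leveraging the postprocessing-function technology of this paper to amplify a bare $C$-sequence into one carrying the required guessing features; the obstruction to such an amplification in the presence of reflection is where I expect the argument to stall.
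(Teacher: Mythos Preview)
Your proposal is not a proof, and appropriately so: the statement is labeled a \emph{Conjecture} in the paper and is not proved there. The paper offers no proof to compare against; it merely records the best known partial result (a fact implicit in~\cite{paper24}) that under $\gch$ together with the additional hypothesis $\square(\lambda^+,{<}\lambda)$ one obtains a $\lambda^+$-Souslin tree, and remarks that this is ``one step away'' since $\gch$ only delivers $\square(\lambda^+,{<}\lambda^+)$.

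Your diagnosis of the obstruction is accurate and matches the paper's framing: the guessing ingredient ($\diamondsuit(\lambda^+)$) is free from $\ch_\lambda$ via Shelah's theorem, while the coherence ingredient --- some non-threadable $C$-sequence of width strictly below $\kappa$ --- is exactly what $\gch$ alone is not known to supply. Your mention of stationary-reflection models where $\square(\lambda^+)$ fails correctly identifies why approach~(i) cannot be pushed through with current technology. There is nothing to correct here; you have recognized that the statement is open and located the gap precisely where the literature places it.
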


\begin{conj} Assume $\gch$, and that $\lambda$ is some singular cardinal.

If there exists a special $\lambda^+$-Aronszajn tree, then there exists a $\lambda^+$-Souslin tree.
\end{conj}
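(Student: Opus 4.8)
This is a well-known open problem, so what follows is an attack plan together with the point at which it currently stalls, rather than a complete argument. By a classical theorem of Jensen, a special $\lambda^+$-Aronszajn tree exists if and only if $\square^*_\lambda$ holds; thus the conjecture may be reformulated as asserting that, for $\lambda$ singular, $\gch$ together with $\square^*_\lambda$ produces a $\lambda^+$-Souslin tree. The plan is to follow the two-step paradigm common to modern successor-of-singular Souslin constructions: secure a strong guessing principle from the cardinal arithmetic, and then feed a suitably coherent, club-guessing $C$-sequence into a walks-on-ordinals recursion that builds the tree level by level, sealing maximal antichains as one proceeds.

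First I would cash in the arithmetic: since $\lambda$ is singular and $\gch$ gives $2^\lambda=\lambda^+$, Shelah's theorem yields $\diamondsuit_{\lambda^+}$, which supplies the guessing half of the construction for free, so the entire burden falls on the second step. The template I have in mind is exactly the one behind the stronger, \emph{known} result that $\square_\lambda$ together with $\gch$ yields a $\lambda^+$-Souslin tree: a genuine $\square_\lambda$-sequence assigns to each limit ordinal a \emph{single} canonical club, along which walks on ordinals are well defined, and the resulting walk-based distance functions combine with $\diamondsuit_{\lambda^+}$ to simultaneously thin out levels (keeping the tree Aronszajn) and kill antichains (securing Souslinity). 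The goal is to carry out this same recursion starting only from $\square^*_\lambda$.

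The main obstacle is precisely the passage from the \emph{weak} square $\square^*_\lambda$ to the data that the walks machinery consumes. A $\square^*_\lambda$-sequence attaches to each limit $\alpha<\lambda^+$ not one club but a \emph{bush} of up to $\lambda$ many clubs, with only the weak coherence that every club $C$ in the bush at $\alpha$ has, at each of its accumulation points $\beta$, the trace $C\cap\beta$ lying in the bush at $\beta$. There is then no canonical $C_\alpha$ on which to base walks, and the standard $\rho$-functions are not single-valued; selecting one club per level destroys coherence, while retaining the whole bush destroys the single-valuedness that level-by-level sealing requires. This is the genuine gap: weak square provides coherence only modulo a bush of size ${<}\lambda$, whereas the construction wants coherence on the nose, and it is not known whether $\diamondsuit_{\lambda^+}$ is strong enough to compensate for this spread at the $\cf(\lambda)$- and $\lambda$-cofinal levels.

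Accordingly, the most promising refinement I would pursue is not to aim at the full Souslin recursion directly, but to first manufacture from the $\square^*_\lambda$-bush a narrower derived object — ideally a single coherent $C$-sequence carrying club guessing, or, failing that, an instance of a proxy principle with clubs of order-type below $\lambda$ in the spirit of the $\square(\lambda^+,{<}\lambda)$ hypothesis employed in this paper — and only then invoke the construction theorems available here. Whether such a narrowing of the bush is possible from weak square alone is, as far as I can see, the crux of the matter: a positive answer would settle the conjecture, while the apparent obstruction to narrowing is exactly what makes an independence result over $\gch$ a live possibility.
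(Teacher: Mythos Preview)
Your assessment is accurate: this statement is labeled \emph{Conjecture~2} in the paper and is not proved there; the paper explicitly presents it as open and records only partial progress (a Souslin tree follows if, in addition to a special $\lambda^+$-Aronszajn tree, one has a non-reflecting stationary subset of $E^{\lambda^+}_{\neq\cf(\lambda)}$, or if one is in a suitable forcing extension). So there is no proof in the paper to compare against, and your decision to present an attack plan rather than a proof is the correct response.

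Your diagnosis of the obstruction is essentially the one the paper's framework highlights. The reformulation via Jensen's equivalence is exactly how the paper frames its results (Fact~1(1)), and your observation that $\gch$ at a singular $\lambda$ yields $\diamondsuit(\lambda^+)$ via Shelah is invoked repeatedly in the paper. The paper's own contribution is to show that the intermediate hypothesis $\square(\lambda^+,{<}\lambda)$ suffices (Theorem~A, first case), and the gap between $\square(\lambda^+,{<}\lambda)$ and $\square(\lambda^+,{<}\lambda^+)=\square^*_\lambda$ is precisely the ``bush width'' problem you identify: the paper's machinery (amenability of transversals, wide club guessing, the mixing lemma) all require $\mu<\kappa$, and the remarks after Lemma~2.4 and Lemma~2.13 point out that this bound cannot be waived in general. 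Your proposed refinement---to try to extract from the $\square^*_\lambda$-bush a narrower coherent object---is thus exactly the missing step; the paper neither accomplishes nor rules out such a narrowing, which is why the conjecture remains open.
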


\begin{conj} Assume $\gch$, and that $\lambda$ is some singular cardinal.

If there exists a $\lambda^+$-Aronszajn tree, then there exists a normal $\lambda$-distributive $\lambda^+$-Aronszajn tree.
\end{conj}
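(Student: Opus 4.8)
The statement is a well-known open conjecture, so rather than a complete proof I describe the strategy I would pursue, which is also the one underlying the present paper. I would split the problem into a \emph{reduction} and a \emph{construction}. For the reduction, first observe that if the given $\lambda^+$-Aronszajn tree happens to be special, then $\square^*_\lambda$ holds (this equivalence is classical), and since $\lambda$ is singular with $2^\lambda=\lambda^+$ under $\gch$, the theorem of Ben-David and Shelah already delivers a normal $\lambda$-distributive $\lambda^+$-Aronszajn tree. Thus one may assume that every $\lambda^+$-Aronszajn tree is nonspecial, and the real task is to manufacture structure out of a tree carrying almost none. My plan is to extract from the ambient combinatorics a coherent nonreflecting $C$-sequence—concretely, a witness to $\square(\lambda^+,{<}\lambda)$—and then build the distributive tree from that principle alone. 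Since $\square(\lambda^+,{<}\lambda)$ is strictly weaker than $\square^*_\lambda$, it is available in many models in which no special tree exists, and wherever it holds the construction below should go through.

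For the construction, fix a witnessing $C$-sequence $\langle \mathcal C_\beta \mid \beta<\lambda^+\rangle$ for $\square(\lambda^+,{<}\lambda)$ and perform walks on ordinals along it. The standard reduced-walk analysis (the functions $\rho_2$, $\Tr$, and the full code) yields a tree $T$ whose $\alpha^{th}$ level consists of the restrictions to $\alpha$ of these codes; coherence of the $C$-sequence keeps each level of size at most $\lambda$, while nonthreadability prevents a chain of length $\lambda^+$, so $T$ is automatically a $\lambda^+$-Aronszajn tree, and normality is routine to arrange. The nontrivial demand is $\lambda$-distributivity, equivalently that forcing with $(T,{>_T})$ adds no new $f\colon\lambda\to V$: given a node $x$ and dense open sets $\langle D_i \mid i<\lambda\rangle$, I must find $y\geq_T x$ lying in every $D_i$. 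Here the two further ingredients enter. I would thin out and reshape the $C$-sequence by a \emph{postprocessing function}, exploiting the analysis of how such functions act on square sequences, so that climbing the tree along a prescribed walk visits the dense sets in a controlled order; and I would attach a \emph{club-guessing} sequence so that, for stationarily many $\delta$, the guessed club $\mathcal C_\delta$ predicts an approximation of the $D_i$'s, allowing a single branch of length $\delta$ to be fused through all $\lambda$ of them.

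The step I expect to be the main obstacle is precisely this distributivity fusion, and the reason is structural: $\square(\lambda^+,{<}\lambda)$ places \emph{no} bound on $\otp(C)$ for the clubs $C$ appearing in the $\mathcal C_\beta$'s. In the Ben-David--Shelah setting $\square^*_\lambda$ supplies clubs of order-type below $\lambda$, which lets one bookkeep the dense sets level by level; without such a bound a naive level-by-level amalgamation need not converge, since a walk can revisit high order-types cofinally often. Overcoming this is the whole point of marrying walks with club guessing: the guessing pins down, inside a single club $\mathcal C_\delta$, enough information to route the branch through the $\lambda$ dense sets regardless of order-type, and the postprocessing function is what guarantees that the routing is definable and coherent across levels. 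Verifying that the fusion genuinely lands in $\bigcap_{i<\lambda} D_i$—and simultaneously that the resulting tree retains no cofinal branch, so that it stays Aronszajn rather than acquiring a $\lambda^+$-chain—will be the most delicate bookkeeping.

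Finally, I would note honestly where this argument stops short of the full conjecture: the reduction from the bare existence of a $\lambda^+$-Aronszajn tree to $\square(\lambda^+,{<}\lambda)$ is not available in $\zfc$, and a model in which a nonspecial $\lambda^+$-Aronszajn tree coexists with the failure of $\square(\lambda^+,{<}\lambda)$ would lie outside the reach of this method. It is this gap—rather than the tree construction, which the present paper does carry out—that keeps the statement a conjecture, and closing it would seem to require either a new source of incompactness extractable from an arbitrary Aronszajn tree, or a fundamentally different construction.
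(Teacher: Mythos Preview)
Your assessment is accurate: this is an open conjecture, the paper does not prove it, and the partial strategy you outline---handle the special case via $\square^*_\lambda$ and Ben-David--Shelah, and otherwise construct the distributive tree as $\mathcal T(\rho_0^{\vec D})$ from a $\square(\lambda^+,{<}\lambda)$-sequence using postprocessing functions and (wide) club guessing to secure both distributivity and the absence of a cofinal branch---is exactly the content of the paper's Theorem~A. You also correctly locate the remaining gap; to sharpen it: by Todorcevic's equivalence the mere existence of a $\lambda^+$-Aronszajn tree yields only $\square(\lambda^+,{<}\lambda^+)$, whereas the construction here needs width $<\lambda$, so the missing step is precisely passing from $\square(\lambda^+,{<}\lambda^+)$ to $\square(\lambda^+,{<}\lambda)$ (the ``one step away'' the paper alludes to), and indeed the paper's machinery---amenability of transversals, wide club guessing, Lemma~\ref{lemma410}---provably breaks at $\mu=\kappa$ (cf.\ the remarks following Lemmas~\ref{square_is_amenable} and~\ref{wide-club-guessing}).
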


We shall come back to these conjectures soon. Now, let us touch upon square principles via a concrete example:
\begin{udef}$\square_\xi(\kappa,{<}\mu)$ asserts the existence of a sequence $\langle \mathcal C_\alpha\mid\alpha<\kappa\rangle$ such that for all limit $\alpha<\kappa$:
\begin{itemize}
\item $\mathcal C_\alpha$ is a nonempty collection of clubs in $\alpha$, each of order-type $\le\xi$;
\item $|\mathcal C_\alpha|<\mu$;
\item each $C\in\mathcal C_\alpha$ satisfies $C\cap\bar\alpha\in\mathcal C_{\bar\alpha}$ for every accumulation point  $\bar\alpha$ of $C$;
\item there exists no club $C$ in $\kappa$ such that $C\cap\bar\alpha\in\mathcal C_{\bar\alpha}$ for every accumulation point $\bar\alpha$ of $C$.
\end{itemize}
\end{udef}

It is clear that both Aronszajn trees and square sequences are instances of incompactness,
but there is a deeper connection between the two. To exemplify:
\begin{ufact}\label{fact02}
For every infinite cardinal $\lambda$:\footnote{Note that $\square_\lambda(\lambda^+,{<}\lambda^+)$ is better known as $\square^*_\lambda$,
and that $\square_\xi(\kappa,{<}\mu)$ with $\xi=\kappa$ and $\mu=2$ is better known as $\square(\kappa)$.
For notational simplicity, we shall hereafter omit the subscript $\xi$ whenever $\xi=\kappa$.}
\begin{enumerate}
\item (Jensen, \cite{MR0309729}) $\square_\lambda(\lambda^+,{<}\lambda^+)$ holds iff there exists a special $\lambda^+$-Aronszajn tree;
\item (Ben-David and Shelah, \cite{MR0861900}) If $\lambda$ is a singular strong-limit cardinal and $2^\lambda=\lambda^+$,
then $\square_\lambda(\lambda^+,{<}\lambda^+)$ entails the existence of a normal $\lambda$-distributive $\lambda^+$-Aronszajn tree.
\end{enumerate}

For every regular uncountable cardinal $\kappa$:
\begin{enumerate}
\item[(3)]  (Todorcevic, \cite{MR908147}) $\square(\kappa,{<}\kappa)$ holds iff there exists a $\kappa$-Aronszajn tree;
\item[(4)] (K\"onig, \cite{MR2013395}) If $\square(\kappa,{<}2)$ holds, then there exists a uniformly coherent $\kappa$-Aronszajn tree.\footnote{For the definition of \emph{uniformly coherent}, see \cite[\S3.1]{MR2013395}.}
\end{enumerate}
\end{ufact}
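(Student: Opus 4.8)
The plan is to recover all four items from a single engine: a two-way correspondence between coherent sequences of clubs and trees, under which the parameters $\xi$ (a bound on order-types) and $\mu$ (a bound on the number of clubs per level) of $\square_\xi(\kappa,{<}\mu)$ become structural features of the associated tree. For the direction from a square sequence $\cvec C=\langle\mathcal C_\alpha\mid\alpha<\kappa\rangle$ to a tree—which yields the tree-existence halves of (1), (3), (4) and underlies (2)—I would build a tree $T$ whose nodes are the clubs $c$ (with $\sup(c)<\kappa$) that are locally guided by $\cvec C$, in the sense that $c\cap\bar\alpha\in\mathcal C_{\bar\alpha}$ for every $\bar\alpha\in\acc(c)$, ordered by $c\sq c'$ iff $c=c'\cap\sup(c)$. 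By the coherence clause, $\pred{c}$ is well-ordered, so $(T,\sq)$ is a tree whose levels are indexed by the sups of its nodes. The crucial point is that a cofinal branch through $T$ is exactly a \emph{thread} (its union is a club $C$ with $C\cap\bar\alpha\in\mathcal C_{\bar\alpha}$ for every $\bar\alpha\in\acc(C)$), so the fourth clause of the definition—non-threadability—is precisely what forbids a chain of length $\kappa$.

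Bounding the level-widths by ${<}\kappa$ then upgrades ``no cofinal branch'' to Aronszajnness. When a single such construction does not visibly bound the widths, as for bare $\square(\kappa,{<}\kappa)$ in (3), I would instead run the construction through minimal walks along the $C$-sequence and work with the walk-characteristic tree $T(\rho)$, whose finite-to-one behavior supplies the width bound while the same non-threadability argument kills cofinal branches. The remaining calibrations read off from the parameters. For (1) the order-type bound $\xi=\lambda$ (that is, $\square^*_\lambda$) lets me map $T$ into $\lambda$ by recording order-types along $\pred{c}$; two nodes with equal value are forced to be $\sq$-comparable, so each fibre is an antichain and $T$ is special. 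For the converses in (1) and (3)—from a special, respectively arbitrary, Aronszajn tree back to a square sequence—I would let $\mathcal C_\alpha$ collect the clubs induced by the downward cones of the nodes on level $\alpha$, using the $\lambda$ antichains of a special tree to keep $|\mathcal C_\alpha|\le\lambda$ and using Aronszajnness to rule out a thread. Item (4) is just the case $\mu=2$, so $\cvec C$ carries a single coherent club per level; here the only extra work is bookkeeping the coherence modulus so that the induced characteristic functions cohere in the precise uniform sense of the cited \S3.1, which is routine once the tree is in hand.

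The genuinely hard item is the Ben-David--Shelah distributivity in (2). Aronszajnness and non-specialness are cheap, but $\lambda$-distributivity asserts that forcing with the tree adds no new function $f\colon\lambda\to V$, and this cannot come from $\square^*_\lambda$ alone. I expect the main obstacle to be a \emph{fusion/sealing} argument interwoven with the level-by-level construction: given a node $p$ and a sequence $\langle D_i\mid i<\lambda\rangle$ of dense open sets, one must produce a single node above $p$ lying in $\bigcap_i D_i$, which amounts to building an increasing chain meeting every $D_i$ and then ensuring that its limit is realized as an actual node of the tree. This is exactly where the hypotheses that $\lambda$ be a singular strong limit with $2^\lambda=\lambda^+$ are indispensable: singularity lets one process the $\lambda$ demands in $\cf(\lambda)$ blocks each of size ${<}\lambda$, the strong-limit assumption (via $\gch$ below $\lambda$) supplies enough room to enumerate and seal all potential $\lambda$-sized pieces of new functions, and $2^\lambda=\lambda^+$ keeps the whole tree of size $\lambda^+$. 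Designing the levels of cofinality $\cf(\lambda)$ to absorb every such demand while still admitting no cofinal branch is the crux of the argument, and I would attack it first; the other three items then reduce to careful but routine verifications built on the single square-to-tree construction above.
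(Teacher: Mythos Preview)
The paper does not prove this statement at all: it is presented as a \emph{Fact} in the introduction, attributed to Jensen, Ben-David--Shelah, Todorcevic, and K\"onig respectively, and serves purely as background motivation for the paper's own results (Theorems A--D). There is therefore no ``paper's own proof'' to compare against.

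That said, your sketch is a reasonable high-level summary of how these results are established in the cited literature. The tree-of-threads construction for (1) and the walks approach for (3) are standard; your identification of the sealing argument at levels of cofinality $\cf(\lambda)$ as the crux of (2) is correct. One caution: for the converse directions in (1) and (3), ``let $\mathcal C_\alpha$ collect the clubs induced by the downward cones of the nodes on level $\alpha$'' is too vague---the actual arguments require more care (e.g., for (3) Todorcevic passes through the characteristic function $\rho_0$ and recovers a $\square(\kappa,{<}\kappa)$-sequence from the tree structure, not directly from cones). For (4), ``routine bookkeeping'' undersells K\"onig's argument, which involves a specific coherence-preserving modification of the walk functions; but as a plan it points in the right direction.
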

Coming back to the above-mentioned conjectures, as the reader probably expects,
the best known results toward these conjectures are formulated in the language of square principles.

As for Conjecture~1, the best known result may be found in \cite{paper24}.
That paper deals with  $\lambda^+$-Souslin trees for arbitrary uncountable cardinals $\lambda$;
however, for $\lambda$ regular, the arguments of that paper generalize to show the following.\footnote{The details will appear in~\cite{paper35}.}
\begin{ufact}[implicit in \cite{paper24}] Assume $\gch$, and that $\lambda$ is some regular uncountable cardinal.

If $\square(\lambda^+,{<}\lambda)$ holds, then there exists a $\lambda^+$-Souslin tree.
\end{ufact}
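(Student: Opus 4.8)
The plan is to extract from $\square(\lambda^+,{<}\lambda)$ a suitable instance of the proxy principle underlying the microscopic approach to Souslin-tree constructions, and then to run the corresponding general construction. First I would observe that since $\lambda$ is regular uncountable and $\gch$ holds, we have $2^\lambda=\lambda^+$, so by Shelah's theorem $\diamondsuit(\lambda^+)$ holds. This supplies the guessing needed downstream essentially for free, so that the whole combinatorial difficulty is confined to manufacturing the coherent, \emph{club-guessing} $C$-sequence that drives the construction.

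Next I would fix a sequence $\langle\mathcal C_\alpha\mid\alpha<\lambda^+\rangle$ witnessing $\square(\lambda^+,{<}\lambda)$ and analyze what it gives. The coherence clause $C\cap\bar\alpha\in\mathcal C_{\bar\alpha}$ delivers full $\sq$-coherence, since each $C\cap\bar\alpha$ is an initial segment of $C$; but there are two features that obstruct a direct application. The fan $\mathcal C_\alpha$ may contain up to ${<}\lambda$ many clubs, and, because $\xi=\lambda^+$, those clubs may have arbitrarily large order-type. The latter is exactly why a Ben-David--Shelah-style, level-by-level bookkeeping is unavailable, and why one must instead \emph{walk} along the clubs: after selecting one $C_\alpha\in\mathcal C_\alpha$ for each $\alpha$, I would define the walk and its associated trace/last-step functions relative to $\langle C_\alpha\rangle$, and use the non-threadability clause of $\square(\lambda^+,{<}\lambda)$ to guarantee that the walk oscillates enough to support club guessing on a suitable stationary subset such as $E^{\lambda^+}_\lambda$.

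The crux is then to amalgamate coherence and guessing into a single instance of a proxy principle, say $\p^-(\lambda^+,\lambda,{\sq},1,\ldots)$ reinforced by a club-guessing clause. Here I would lean on the postprocessing-function technology developed in the present paper: one applies a postprocessing function to the given $C$-sequence both to trim the fan at each level down to a coherent core and to inject the guessing harvested from the walk. The regularity of $\lambda$ together with the bound $|\mathcal C_\alpha|<\lambda$ is precisely what should make this amalgamation go through, since fewer than $\lambda$ candidate clubs can be threaded through the walk without destroying $\sq$-coherence.

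Finally I would feed the resulting proxy principle into the general construction, building $T\s{}^{<\lambda^+}\lambda$ by recursion on levels, using $\diamondsuit(\lambda^+)$ together with the guessed clubs to seal off every potential maximal antichain (yielding Souslinity) and the coherence to extend nodes to all higher levels (normality), while $\gch$ keeps each level of size $\le\lambda$ so that $T$ is genuinely a $\lambda^+$-tree. The step I expect to be the main obstacle is the amalgamation: reconciling the multiplicity and the unbounded order-types of the clubs in $\square(\lambda^+,{<}\lambda)$ with the single, guessing $C$-sequence that the construction consumes is exactly what the walks-plus-club-guessing method, powered by postprocessing functions, is engineered to overcome.
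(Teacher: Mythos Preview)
The paper does not prove this Fact; it is cited as implicit in \cite{paper24}, with details deferred to \cite{paper35}. Your overall architecture --- derive $\diamondsuit(\lambda^+)$ from $\gch$, extract a proxy-type principle from $\square(\lambda^+,{<}\lambda)$, then run the microscopic construction --- is correct and matches \cite{paper24}.

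The central mechanism you propose, however, is confused. Walks on ordinals do not produce club guessing; they build tree structures. In \cite{paper24} (and as reflected in this paper's Lemma~\ref{thm16} and Theorem~\ref{mixing_paper24}) the route is different: a transversal for $\square(\lambda^+,{<}\lambda)$ is amenable because $\mu=\lambda<\lambda^+$ (Lemma~\ref{square_is_amenable}), Shelah's club-guessing theorem (Fact~\ref{clubguessing}(2)) then applies directly to the amenable $C$-sequence, and the ideal $J[\lambda^+]$ is the device that upgrades bare club guessing to the simultaneous hitting of $\theta$ many cofinal sets required by the proxy principle. For $\lambda$ regular the key input is that $E^{\lambda^+}_\theta\in J[\lambda^+]$ for every regular $\theta<\lambda$ with $\mathcal D(\lambda,\theta)=\lambda$ (\cite[Proposition~2.2]{paper24}); no walks enter. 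You appear to be transplanting the walks-plus-club-guessing method that \emph{this} paper develops for the \emph{singular} case (Theorem~A, via Lemma~\ref{lemma410}) into the regular case, where it is neither used nor needed. Also, the hitting in \cite{paper24} takes place on $E^{\lambda^+}_\theta$ for regular $\theta<\lambda$, not on $E^{\lambda^+}_\lambda$ as you suggest.
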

Note that this is just one step away from verifying Conjecture~1, since $\square(\lambda^+,{<}\lambda^+)$ for regular $\lambda$ is already a consequence of $\gch$.

The best known results toward Conjecture~2 are as follows.
\begin{ufact} Assume $\gch$, and that $\lambda$ is some singular cardinal.

Then there exists a free $\lambda^+$-Souslin tree, in any of the following cases:
\begin{itemize}
\item (\cite{paper32}) There are a special $\lambda^+$-Aronszajn tree and a non-reflecting stationary subset of $E^{\lambda^+}_{\neq\cf(\lambda)}$.
\item (\cite{paper26}) $V=W^{\mathbb Q}$, where $W$ is an inner model of $\zfc+\gch$
in which $\lambda$ is inaccessible, and $\mathbb Q$ is some $\lambda^{+}$-cc notion of forcing of size $\lambda^+$.
\end{itemize}
\end{ufact}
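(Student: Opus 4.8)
The plan is to recognize that both clauses fit a single engine, namely the one behind the result implicit in \cite{paper24}: in each case one first distills an instance of the proxy principle of \cite{paper24} carrying parameters strong enough to force \emph{freeness}, and then feeds it into the microscopic (walks-on-ordinals) construction to read off a $\lambda^+$-Souslin tree $T$ all of whose ${<}\lambda$-powers remain $\lambda^+$-Souslin. Since $\lambda$ is singular and hence uncountable, $\gch$ supplies $\diamondsuit(\lambda^+)$ (Shelah) to drive the guessing, so the entire burden is combinatorial: to certify the correct freeness-bearing variant of the proxy principle in $V$.

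For the first clause, a special $\lambda^+$-Aronszajn tree yields $\square^*_\lambda$ by Fact~\ref{fact02}(1), while the non-reflecting stationary set $S\s E^{\lambda^+}_{\neq\cf(\lambda)}$ supplies a stationary set that is simultaneously guessable and reflection-free. The restriction to $\cf(\alpha)\neq\cf(\lambda)$ is exactly what permits the coherent $\square^*_\lambda$-sequence to be upgraded, via club guessing along $S$, into a $C$-sequence whose walks are rigid enough to separate distinct cones. I would thus combine $\square^*_\lambda$, $S$, and $\diamondsuit(\lambda^+)$ into the proxy principle with the freeness parameter turned on, the point being that nonreflection of $S$ off $\cf(\lambda)$ prevents the accumulated colorings from coalescing on a club; this is the route carried out in \cite{paper32}.

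For the second clause the work is preservation-theoretic. In the inner model $W$, inaccessibility of $\lambda$ together with $\gch$ furnishes the strongest form of the proxy principle at $\lambda^+$ essentially for free (ample coherent sequences and $\diamondsuit$ are cheap at the successor of an inaccessible). One then checks that the $\lambda^+$-cc forcing $\mathbb Q$ of size $\lambda^+$ preserves the whole package: its chain condition preserves stationarity and the relevant cofinalities, $|\mathbb Q|=\lambda^+$ together with $\gch$ in $W$ keeps $2^\lambda=\lambda^+$, and no $\lambda$-branch is added to the pertinent product trees. As the proxy principle is then absolute between $W$ and $V=W^{\mathbb Q}$ in its relevant parameters, the construction runs in $V$, matching \cite{paper26}.

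The main obstacle throughout is freeness, not mere Souslinity. Building a single Souslin tree requires only one coherent sequence of colorings, but to keep every ${<}\lambda$-power Souslin one must run the construction \emph{independently} across any ${<}\lambda$-sized collection of distinct cones, so that a would-be antichain in a power projects to an antichain in some single coordinate. This is feasible only if the underlying $C$-sequence supplies a sufficiently independent family of traces, encoded in the parameters of the proxy principle, and the crux in each clause is precisely to verify that variant: in clause~1, that $\square^*_\lambda$ together with a non-reflecting stationary set off $\cf(\lambda)$ delivers it, and in clause~2, that $\lambda^+$-cc forcing of size $\lambda^+$ does not destroy it.
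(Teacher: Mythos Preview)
This statement is a \emph{cited fact} in the paper's introduction: each bullet is attributed to an external reference (\cite{paper32} and \cite{paper26}), and the present paper provides no proof of its own. There is therefore nothing here to compare your proposal against.

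That said, your outline is broadly consonant with the actual content of those cited works: both do indeed go through instances of the proxy principle $\p^-$ (with parameters strong enough to yield free Souslin trees via the microscopic construction of \cite{paper22}), and $\diamondsuit(\lambda^+)$ is indeed obtained from $\ch_\lambda$ via \cite{Sh:922}. Your identification of freeness as the crux is correct. However, your sketch remains at the level of a plan rather than a proof: phrases like ``combine $\square^*_\lambda$, $S$, and $\diamondsuit(\lambda^+)$ into the proxy principle with the freeness parameter turned on'' and ``the proxy principle is then absolute between $W$ and $V$'' name the desired conclusions without supplying the mechanisms. In particular, for the second clause, the heart of \cite{paper26} is not a preservation argument for a proxy principle already holding in $W$; rather, one shows that the forcing extension \emph{creates} the needed club-guessing along a stationary set that did not exist in $W$, since $\lambda$ was inaccessible there. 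Your description of this clause is therefore somewhat misleading.
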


In this paper, we deal with Conjecture~3. Our result is again just one step away from verifying it. It is proved:
\begin{THMA} Assume $\gch$, and that $\lambda$ is some singular cardinal.

If either
$\square(\lambda^+,{<}\lambda)$ or $\square_\lambda(\lambda^+,{<}\lambda^+)$ holds,
then there exists a normal $\lambda$-distributive $\lambda^+$-Aronszajn tree.
\end{THMA}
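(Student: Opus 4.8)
The plan is to split on the two disjuncts. The second disjunct is essentially the Ben-David--Shelah theorem: under $\gch$, a singular cardinal $\lambda$ is automatically a strong limit with $2^\lambda=\lambda^+$, so if $\square_\lambda(\lambda^+,{<}\lambda^+)$ holds, then the second clause of Fact~\ref{fact02} immediately delivers a normal $\lambda$-distributive $\lambda^+$-Aronszajn tree. Thus the whole content lies in the first disjunct, and for the remainder I assume $\square(\lambda^+,{<}\lambda)$ and abbreviate $\kappa:=\lambda^+$ and $\mu:=\cf(\lambda)$.

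Next I would manufacture the combinatorial scaffolding. Starting from a witnessing sequence for $\square(\lambda^+,{<}\lambda)$, I would apply postprocessing functions to distill a single $C$-sequence $\vec C=\langle C_\alpha\mid\alpha<\kappa\rangle$ that is \emph{coherent} (so that $C_{\bar\alpha}=C_\alpha\cap\bar\alpha$ for every $\bar\alpha\in\acc(C_\alpha)$) and \emph{nonthreaded} (so that no club $C$ in $\kappa$ satisfies $C\cap\bar\alpha=C_{\bar\alpha}$ for all $\bar\alpha\in\acc(C)$), while in addition \emph{swallowing} a club-guessing sequence $\langle c_\delta\mid\delta\in E^{\lambda^+}_\mu\rangle$ on the set $E^{\lambda^+}_\mu$ of points of cofinality $\mu$: each $c_\delta$ is cofinal in $\delta$ of order-type $\mu$ with $\acc(c_\delta)\s\acc(C_\delta)$, and for every club $E\s\kappa$ there is $\delta\in E^{\lambda^+}_\mu$ with $c_\delta\s E$. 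The freedom to demand coherence \emph{and} guessing from one and the same sequence---without any bound on the order-types of the $C_\alpha$---is exactly what the theory of postprocessing functions supplies, and is what lets us dispense with the order-type-${\le}\lambda$ clubs of Ben-David and Shelah. Using $\gch$ I would also fix a scale $\langle f_\xi\mid\xi<\kappa\rangle$ in a product $\prod_{i<\mu}\lambda_i$ with $\langle\lambda_i\mid i<\mu\rangle$ increasing and cofinal in $\lambda$; this records information in $\kappa$ many coordinates and organizes the eventual diagonalization into $\mu$ blocks.

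With $\vec C$ in hand, I would realize the tree, as a subtree of ${}^{<\kappa}H_\lambda$, via walks on ordinals: nodes at level $\alpha$ are the functions $t\colon\alpha\to H_\lambda$ lying in a prescribed class \emph{coherent with $\vec C$} along the walk, so that any two nodes on a common level agree above some bounded error $\gamma<\alpha$. Coherence with $\vec C$ pins each level down to size $\le\lambda$, so this is a $\kappa$-tree, and normality is arranged directly in the recursion defining the generators $\langle t_\alpha\mid\alpha<\kappa\rangle$. For the Aronszajn property, a cofinal branch would amalgamate into a single function coherent with $\vec C$ on a club of $\kappa$, synthesizing a club thread through $\vec C$---precisely what the fourth clause of $\square(\lambda^+,{<}\lambda)$ forbids. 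The point to stress is that coherent \emph{proper} initial segments always amalgamate into legitimate nodes, so short coherent chains do cap, yet there is no full-length coherent branch; this is the mechanism reconciling closure under coherent limits with Aronszajnness.

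The heart of the proof, and the step I expect to be hardest, is $\lambda$-distributivity. Fix a node $x$ and dense open sets $\langle D_i\mid i<\lambda\rangle$; I must produce $y\ge_T x$ in $\bigcap_{i<\lambda}D_i$. Since $\mu=\cf(\lambda)$, every limit level of $\kappa$ has cofinality $<\lambda$, so no naive closure is available: one cannot cap all short chains without either blowing a level past $\lambda$ or manufacturing a thread. Instead I would let $E\s\kappa$ be the club of levels at which the steering data (the node $x$, the blocks of $\langle D_i\rangle$, and enough of $\vec C$ and the scale) cohere, invoke club guessing to obtain $\delta\in E^{\lambda^+}_\mu$ with $c_\delta\s E$, and run the diagonalization along the order-type-$\mu$ club $c_\delta$: in block $i<\mu$ I extend through the $i^{\text{th}}$ chunk of the dense sets while keeping the growing function coherent with $\vec C\restriction\delta$ and respecting the scale, using openness of the $D_i$ to jump over each intermediate limit. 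The union along $c_\delta$ is then coherent with $\vec C$ at the guessed level $\delta$, hence a genuine node $y$, and openness gives $y\in\bigcap_{i<\lambda}D_i$. The delicate point is to carry out the steering so that coherence with $\vec C\restriction\delta$ is maintained \emph{simultaneously} with meeting every dense set---this is where the coherence and the guessing extracted by postprocessing must interlock---while never completing the cofinal thread that Aronszajnness prohibits. Verifying that these demands can be met at once, uniformly over all $x$ and all $\langle D_i\mid i<\lambda\rangle$, is the crux of the argument.
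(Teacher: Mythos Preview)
Your handling of the second disjunct is fine and matches the paper. The problem lies in the first disjunct, where there is a genuine gap.

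You claim that postprocessing lets you ``distill a single $C$-sequence $\vec C$ that is coherent (so that $C_{\bar\alpha}=C_\alpha\cap\bar\alpha$ for every $\bar\alpha\in\acc(C_\alpha)$) and nonthreaded'' from $\square(\lambda^+,{<}\lambda)$. This is the $\mu=2$ case, i.e., a genuine $\square(\lambda^+)$-sequence, and it is \emph{not} available from $\square(\lambda^+,{<}\lambda)$ in general: postprocessing functions do not reduce width (see Lemma~\ref{pp-preserves-square}, where $\acts{\Phi}{C}$ is again a $\square_\xi(\kappa,{<}\mu,\ldots)$-sequence with the same $\mu$). The paper never collapses to a coherent sequence; it works with the full $\mathcal C$-sequence of width $<\lambda$ throughout. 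The Aronszajn property of $\mathcal T(\rho_0^{\vec D})$ is secured not by ``a branch would thread $\vec C$'' but via Todorcevic's criterion that $\vec D$ be a transversal for $\square(\kappa,{<}\kappa)$, and the last nontrivial clause of that is verified using the \emph{wide} club guessing of Lemma~\ref{wide-club-guessing}: for every club $E$ there is $\alpha$ such that $\sup(\nacc(C)\cap E)=\alpha$ for \emph{every} $C\in\mathcal C_\alpha$. Your proposed single coherent sequence would make this step trivial, but you do not have it.

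Your distributivity strategy also diverges from the paper and is where your sketch is vaguest. You aim to guess at $E^{\lambda^+}_{\cf(\lambda)}$, diagonalize along an order-type-$\cf(\lambda)$ club $c_\delta$, and use a scale to pack $\lambda$ many dense sets into $\cf(\lambda)$ many blocks. The paper does something both simpler and orthogonal: it never guesses at cofinality $\cf(\lambda)$ and uses no scale. Instead, Theorem~\ref{mixing_paper24} produces a transversal with the property that for cofinally many $\theta\in\reg(\lambda)$ (indeed $\theta>\cf(\lambda)$), for every $\zeta<\kappa$ and every sequence $\langle A_i\mid i<\theta\rangle$ of cofinal subsets, there is $\alpha\in E^{\lambda^+}_\theta$ with $\min(C_\alpha)=\zeta$ and $\sup(\nacc(C_\alpha)\cap A_i)=\alpha$ for all $i<\theta$. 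Lemma~\ref{lemma410} then shows $\mathcal T(\rho_0^{\vec D})$ is $\theta$-distributive for each such $\theta$, and since $\lambda$ is singular, $\theta$-distributivity for cofinally many $\theta<\lambda$ yields $\lambda$-distributivity. So the $\lambda$-many dense sets are never handled at a single level; the reduction to $\theta$-distributivity sidesteps exactly the ``delicate point'' you flag.
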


Attempting to construct a normal $\lambda$-distributive $\lambda^+$-Aronszajn tree raises the following interesting question:
\emph{How many different constructions of Aronszajn trees are there?}
More specifically, when constructing a $\lambda^+$-tree, what strategies are available
for ensuring that the tree construction can be continued all the way up to height $\lambda^+$
while preventing the birth of a chain of size $\lambda^+$?

Curiously enough, virtually all standard constructions of $\lambda^+$-Aronszajn trees are steered towards getting
either a special $\lambda^+$-tree or a $\lambda^+$-Souslin tree.
The point is that in each of these extreme cases,
there is an abstract combinatorial fact that secures the non-existence of a chain of size $\lambda^+$.
However, normal special $\lambda^+$-trees are not $\lambda$-distributive,
and Souslin trees seem too good to be derived in our desired scenarios.

Now, let us examine the two cases covered by Theorem~A.
The second case has already been established by Ben-David and Shelah in~\cite{MR0861900},\footnote{However, we prove it from a weaker arithmetic hypothesis --- see the remark before Corollary~\ref{cor39}.}
and their proof builds crucially on the fact that for every \emph{singular} cardinal $\lambda$, if $\square_\lambda(\lambda^+,{<}\lambda^+)$ holds,
then it may be witnessed by a sequence $\langle \mathcal C_\alpha\mid\alpha<\lambda^+\rangle$ in which each $C\in\bigcup_{\alpha<\lambda^+}\mathcal C_\alpha$ has order-type strictly smaller than $\lambda$.
Indeed, the latter is the key to constructing a $\lambda$-splitting tree all the way up to $\lambda^+$ while preventing the birth of a chain of size $\lambda^+$.

To tackle the first case of Theorem~A, where the clubs witnessing $\square(\lambda^+,{<}\lambda)$ have unrestricted order-type,
one has to find a fundamentally different way to prevent the outcome tree from admitting a chain of size $\lambda^+$.
Eventually, we ended up constructing the sought normal $\lambda$-distributive $\lambda^+$-Aronszajn tree using the method of \emph{walks on ordinals},
as a tree of the form $\mathcal T(\rho_0^{\vec C})$, and we have used \emph{club guessing} to ensure the non-existence of a chain of size $\lambda^+$.
To the best of our knowledge, this is the first time that club guessing plays a role in constructions of walks-on-ordinals trees.

\medskip

Motivated by this finding, we decided to look for a walks-on-ordinals proof of the following consequence of Fact~\ref{fact02},
Clauses (1) and (2).
\begin{ucor}[\cite{MR0309729}+\cite{MR0861900}]\label{cor05} Suppose that $\lambda$ is a singular strong-limit cardinal and $2^\lambda=\lambda^+$.

If there exists a special $\lambda^+$-Aronszajn tree, then there exists a nonspecial $\lambda^+$-Aronszajn tree.
\end{ucor}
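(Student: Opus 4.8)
The statement is a formal consequence of the results already quoted: a special $\lambda^+$-Aronszajn tree yields $\square_\lambda(\lambda^+,{<}\lambda^+)$ by Fact~\ref{fact02}(1); this square principle, together with the hypotheses that $\lambda$ is a singular strong-limit cardinal and $2^\lambda=\lambda^+$, yields a normal $\lambda$-distributive $\lambda^+$-Aronszajn tree by Fact~\ref{fact02}(2); and any such tree is nonspecial by the implication chain of the Introduction. In keeping with the theme of this paper, however, the plan is to give a direct \emph{walks-on-ordinals} proof that manufactures the nonspecial tree explicitly from a square sequence.

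First I would invoke Fact~\ref{fact02}(1) to fix a witness to $\square_\lambda(\lambda^+,{<}\lambda^+)$, and then, using that $\lambda$ is singular, arrange (as recalled in the discussion following Theorem~A) that it is witnessed by clubs all of order-type strictly below $\lambda$. From this I would distill a $C$-sequence $\vec C=\langle C_\alpha\mid\alpha<\lambda^+\rangle$ with $\otp(C_\alpha)<\lambda$ for every limit $\alpha$, and form the walks-on-ordinals tree $\mathcal T(\rho_0^{\vec C})$ exactly as in the construction for Theorem~A. That this is a tree of height $\lambda^+$ follows from the coherence of $\rho_0^{\vec C}$, and that it has no chain of size $\lambda^+$ should follow from the non-threadability clause of $\square_\lambda(\lambda^+,{<}\lambda^+)$, a cofinal branch being tantamount to a thread. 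The hypotheses that $\lambda$ is strong-limit and $2^\lambda=\lambda^+$ are what I would use to keep each level of size $\le\lambda$; this is the delicate bookkeeping, since when $\cf(\lambda)=\omega$ a naive count of traces gives $\lambda^+$ and it is precisely the order-type bound $\otp(C_\alpha)<\lambda$, through the coherence of the walk, that must be leveraged to hold the level sizes down to $\lambda$.

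The crux, and the step I expect to be the main obstacle, is to show that the resulting Aronszajn tree is nonspecial, since walks-on-ordinals trees are not nonspecial in general. Here the bound $\otp(C_\alpha)<\lambda$ is again the essential resource: because every walk descends along clubs of length below $\lambda$, I would aim to prove that $\mathcal T(\rho_0^{\vec C})$ is in fact $\lambda$-distributive, meeting any prescribed $\lambda$ many dense open sets along a single descending thread whose construction can be carried through all limit stages of cofinality below $\lambda$ using these short clubs, and then conclude nonspecialness from the implication chain. This is the same feature that drives the $\lambda$-splitting tree all the way to $\lambda^+$ in the construction of Ben-David and Shelah; the payoff of the walks formulation is that, the order-types being bounded, no club guessing is needed here, in contrast with the first case of Theorem~A.
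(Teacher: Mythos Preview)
Your first paragraph is exactly the paper's proof: Corollary~\ref{cor05} is presented purely as a formal consequence of Fact~\ref{fact02}(1) and~(2), with nonspecialness following from $\lambda$-distributivity via the implication chain in the Introduction. No further argument appears in the paper for this corollary itself.

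The walks-on-ordinals plan you sketch afterward, however, contains a genuine error. If $\vec C$ is a transversal for $\square^*_\lambda$ (in particular, $\lambda$-bounded), then $\mathcal T(\rho_0^{\vec C})$ is a \emph{special} $\lambda^+$-Aronszajn tree --- this is precisely Todorcevic's fact quoted immediately after Corollary~\ref{cor05}, and it is invoked verbatim in the proof of Theorem~\ref{thm5.4}. The short order-types force $\rho_0$ to take values in ${}^{<\omega}\lambda$, which is exactly what yields the specializing map; they cannot simultaneously make the normal tree $\mathcal T(\rho_0^{\vec C})$ $\lambda$-distributive. The paper's canonical walks-on-ordinals treatment (Theorem~B, proved as Corollary~\ref{thm51}) therefore looks elsewhere: for a suitably postprocessed $\vec C$, it shows that $\mathcal T(\rho_0^{\vec C})$ is special while its \emph{projection} $\mathcal T(\rho_1^{\vec C})$ is the nonspecial tree --- and that tree is explicitly \emph{not} $\lambda$-distributive. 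Separately, the $\rho_0$-tree that \emph{is} $\lambda$-distributive (Corollary~\ref{first-case-A}, the first case of Theorem~A) comes from a $C$-sequence $\vec D$ whose order-types are deliberately unbounded, and whose Aronszajn-ness therefore requires exactly the club-guessing machinery you hoped to avoid; the second case of Theorem~A (Corollary~\ref{cor39}) is handled by the classical Ben-David--Shelah construction, not by a walks tree at all.
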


\ifpdf
\begin{center}
\tikzstyle{line} = [draw, -latex']
\tikzstyle{block} = [rectangle, draw, 
    text width=7em, text centered, rounded corners, minimum height=4em]
\begin{tikzpicture}[node distance = 4cm, scale=0.1]
    \node  [block] (atrr1) {$\square_\lambda(\lambda^+,{<}\lambda^+)$};
    \node  [block] (atrr2) [left of=atrr1] {$\lambda^+$-special};
    \node  [block] (atrr3) [right of=atrr1] {$\lambda^+$-nonspecial};
    \draw[<->,double] (atrr2) to  (atrr1);
    \draw[->,double] (atrr1) to  (atrr3);
\end{tikzpicture}
\end{center}
\fi

The point is that Clause~(1) of that fact does have a canonical proof using walks on ordinals:
\begin{ufact}[Todorcevic, \cite{MR908147}] For every infinite cardinal $\lambda$, the following are equivalent:
\begin{itemize}
\item $\square_\lambda(\lambda^+,{<}\lambda^+)$ holds;
\item There exists a sequence $\vec C$ for which $\mathcal T(\rho_0^{\vec C})$ is a special $\lambda^+$-Aronszajn tree.
\end{itemize}
\end{ufact}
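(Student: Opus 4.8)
The plan is to prove the two implications separately, with essentially all of the work lying in the direction that produces the tree from the square principle. The reverse direction is immediate from what precedes: if $\vec C$ is a sequence for which $\mathcal T(\rho_0^{\vec C})$ is a \emph{special} $\lambda^+$-Aronszajn tree, then in particular a special $\lambda^+$-Aronszajn tree exists, so $\square_\lambda(\lambda^+,{<}\lambda^+)$ holds by Clause~(1) of Fact~\ref{fact02}.

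For the forward direction, fix a sequence $\langle\mathcal C_\alpha\mid\alpha<\lambda^+\rangle$ witnessing $\square_\lambda(\lambda^+,{<}\lambda^+)$, and define a $C$-sequence $\vec C=\langle C_\alpha\mid\alpha<\lambda^+\rangle$ by choosing some $C_\alpha\in\mathcal C_\alpha$ at each limit $\alpha$ (and setting $C_{\alpha+1}:=\{\alpha\}$). Two features of $\vec C$ are worth recording at the outset: every $C_\alpha$ has order-type $\le\lambda$, and, since $C_\alpha\in\mathcal C_\alpha$, any club thread of $\vec C$ would in particular thread the family $\langle\mathcal C_\alpha\rangle$, whence the fourth clause of the definition rules out any thread of $\vec C$. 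I would then run the walks machinery on $\vec C$ to obtain two conclusions. First, invoking the standard coherence analysis of $\rho_0$, I would check that $\mathcal T(\rho_0^{\vec C})$ is a tree of height $\lambda^+$ whose levels all have size $\le\lambda$: at level $\alpha$, a node $\rho_0(\cdot,\beta)\restriction\alpha$ is determined by the finitely many ordinals $\ge\alpha$ through which the walk from $\beta$ passes before descending below $\alpha$, together with a single element of ${}^{<\omega}\lambda$, and the order-type bound yields $|{}^{<\omega}\lambda|=\lambda$. Second, I would show the tree is Aronszajn: a cofinal branch would glue the columns $\rho_0(\cdot,\beta)$ into one coherent ``walk towards $\lambda^+$,'' and reading off the trace of this walk produces a club $E\s\lambda^+$ with $E\cap\bar\alpha\in\mathcal C_{\bar\alpha}$ for every $\bar\alpha\in\acc(E)$, contradicting the no-thread clause.

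The decisive step is to prove that $\mathcal T(\rho_0^{\vec C})$ is special, and this is where I expect the genuine difficulty. The data attached to a node $\rho_0(\cdot,\beta)\restriction\alpha$ are finite sequences of ordinals below $\lambda$, of which there are exactly $\lambda$ many, so $\lambda$ is the right target for a specializing map $f\colon\mathcal T(\rho_0^{\vec C})\to\lambda$; the issue is to arrange that $f$ takes distinct values on comparable nodes, equivalently that each $f^{-1}(\{i\})$ is an antichain. Specialness cannot be automatic from the order-type bound by itself --- otherwise every order-type-$\le\lambda$ $C$-sequence with no thread would produce a special $\lambda^+$-Aronszajn tree, which is impossible once $\square_\lambda(\lambda^+,{<}\lambda^+)$ fails, as such $C$-sequences exist outright in $\zfc$ --- so the construction of $f$ must exploit the hypothesis in full, and in particular the bound $|\mathcal C_\alpha|<\lambda^+$ on the width of each family. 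Accordingly, I would define $f$ by recursion on levels, at each limit level $\alpha$ using an enumeration of the ${\le}\lambda$ members of $\mathcal C_\alpha$ to select, for each node, a color distinct from those of its predecessors along the chains through it, with the no-thread clause guaranteeing that colors are never exhausted. Checking that this really yields an injection on every chain --- that is, isolating a single invariant that marries the finitary walk-data to the ${\le}\lambda$-sized families $\mathcal C_\alpha$ --- is the main obstacle I foresee.
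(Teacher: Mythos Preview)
The paper does not prove this fact; it is cited from Todorcevic, and later (in the proof of Theorem~\ref{thm5.4}) the paper points to ``the first two lines of the proof of \cite[Lemma~6.1.14]{MR2355670}'' for the specialness of $\mathcal T(\rho_0^{\vec D})$ when $\vec D$ is a transversal for $\square^*_\lambda$. So there is no in-paper argument to compare against in detail, but the cited two-line proof tells you the specializing map is explicit and immediate --- and that is where your proposal goes wrong.

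You have the roles of the two hypotheses reversed. The order-type bound $\otp(C_\alpha)\le\lambda$ is exactly what yields specialness, not the width bound $|\mathcal C_\alpha|\le\lambda$. Once every $C_\alpha$ has order-type $\le\lambda$, the function $\rho_0$ takes values in ${}^{<\omega}(\lambda+1)$, and the specializing map is read off directly from the $\rho_0$-data attached to a node --- no recursion, no ``no-thread'' clause. (Concretely, a node at level $\alpha$ determines $\rho_0(\alpha,\beta)$ for any $\beta$ realizing it; this, or an equally explicit invariant, separates comparable nodes. See also \cite[Lemma~6.2.1]{MR2355670}, used in the proof of Theorem~\ref{thm5.4}, which bounds $|\{\xi<\beta:\rho_1(\xi,\beta)=\nu\}|$ by $\max\{|\nu|,\aleph_0\}$.) Your reductio --- that the order-type bound alone cannot suffice because $\lambda$-bounded $C$-sequences exist outright in \textsf{ZFC} --- misidentifies what fails: without the width bound, $\mathcal T(\rho_0^{\vec C})$ need not be a $\lambda^+$-tree at all (levels may have size $\lambda^+$), so you do not get a special $\lambda^+$-Aronszajn tree even though the decomposition into $\lambda$ antichains still exists. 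The width bound buys the $\lambda^+$-tree structure; the order-type bound buys specialness; Aronszajn then follows for free from specialness, so your separate branch argument is unnecessary.

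Your proposed recursive construction of $f$ cannot work as stated. At a level $\alpha\ge\lambda$ a node has $|\alpha|=\lambda$ predecessors, so ``choose a color distinct from all predecessors'' may already exhaust all $\lambda$ colors; the no-thread clause (which, for $\square^*_\lambda=\square_\lambda(\lambda^+,{<}\lambda^+,{\sq},V)$, is vacuous anyway and only becomes nontrivial via the order-type bound) says nothing about how many colors appear below a given node.
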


In this paper, we indeed obtain a canonical version of Corollary~\ref{cor05}:
\begin{THMB} For every singular cardinal $\lambda$, if $2^\lambda=\lambda^+$, then the following are equivalent:\footnote{Note that $\lambda$ is not assumed to be a strong-limit.}
\begin{itemize}
\item $\square_\lambda(\lambda^+,{<}\lambda^+)$ holds;
\item There exists a special $\lambda^+$-Aronszajn tree;
\item There exists a special $\lambda^+$-Aronszajn tree whose projection is a nonspecial $\lambda^+$-Aronszajn tree;
\item There exists a sequence $\vec C$ for which $\mathcal T(\rho_0^{\vec C})$ is a special $\lambda^+$-Aronszajn tree,
and its projection, $\mathcal T(\rho_1^{\vec C})$, is a nonspecial $\lambda^+$-Aronszajn tree which is normal but not $\lambda$-distributive.
\end{itemize}
\end{THMB}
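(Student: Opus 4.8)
The plan is to arrange the four items into the cycle $(1)\Rightarrow(4)\Rightarrow(3)\Rightarrow(2)\Rightarrow(1)$, so that the only substantial content lies in the single arrow $(1)\Rightarrow(4)$. The implication $(2)\Rightarrow(1)$ is given outright by Clause~(1) of Fact~\ref{fact02} (Jensen), which moreover requires no cardinal arithmetic. The arrows $(4)\Rightarrow(3)$ and $(3)\Rightarrow(2)$ are trivial weakenings: (4) explicitly exhibits a special $\lambda^+$-Aronszajn tree $\mathcal T(\rho_0^{\vec C})$ together with a nonspecial projection $\mathcal T(\rho_1^{\vec C})$, which is an instance of (3); and (3) asserts in particular the existence of a special $\lambda^+$-Aronszajn tree, which is (2). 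Thus the theorem reduces to producing, from $\square_\lambda(\lambda^+,{<}\lambda^+)$, a single $C$-sequence $\vec C$ with the asserted properties.

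For $(1)\Rightarrow(4)$ I would first translate the square principle into walks-friendly data. Beginning with a $\square_\lambda(\lambda^+,{<}\lambda^+)$-sequence and using the observation recalled in the introduction, which exploits that $\lambda$ is \emph{singular}, that $\square^*_\lambda$ may be rewitnessed by clubs of order-type ${<}\lambda$, I would apply the paper's postprocessing apparatus to extract one $C$-sequence $\vec C=\langle C_\alpha\mid\alpha<\lambda^+\rangle$ which is coherent along accumulation points, has $\otp(C_\alpha)<\lambda$ for every $\alpha$, and is nontrivial (admits no thread). Feeding $\vec C$ into the walks machinery, the Todorcevic equivalence displayed just above delivers at once that $\mathcal T(\rho_0^{\vec C})$ is a special $\lambda^+$-Aronszajn tree. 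The pointwise assignment $\rho_0(\cdot,\beta)\mapsto\rho_1(\cdot,\beta)$ (legitimate since $\rho_1$ is computed from $\rho_0$ as a running maximum) induces a level- and order-preserving surjection $\pi\colon\mathcal T(\rho_0^{\vec C})\to\mathcal T(\rho_1^{\vec C})$, realizing the latter as a projection of the former. That $\mathcal T(\rho_1^{\vec C})$ is itself a normal $\lambda^+$-Aronszajn tree I would then read off from the standard subadditivity of $\rho_1$,
\[
\rho_1(\alpha,\gamma)\le\max\{\rho_1(\alpha,\beta),\rho_1(\beta,\gamma)\}\quad\text{and}\quad\rho_1(\alpha,\beta)\le\max\{\rho_1(\alpha,\gamma),\rho_1(\beta,\gamma)\}\qquad(\alpha<\beta<\gamma),
\]
which forces the fibers $\rho_1(\cdot,\beta)$ to cohere, hence the levels to have size ${<}\lambda^+$ and every node to extend cofinally, while nontriviality of $\vec C$ precludes a cofinal branch.

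The heart of the argument, and the step I expect to be the main obstacle, is the \emph{nonspecialness} of $\mathcal T(\rho_1^{\vec C})$. Here I would proceed by contradiction: suppose $c\colon\mathcal T(\rho_1^{\vec C})\to\lambda$ were injective on chains, and aim to contradict either Aronszajn-ness or the nontriviality of $\vec C$. The leverage is that $\rho_1$ takes values below the singular cardinal $\lambda$ together with its subadditivity, so that along a club of levels the pertinent $\rho_1$-values are bounded by some fixed $\eta<\lambda$; a pressing-down (Fodor) argument applied to a hypothetical specialization should then force $\lambda^+$-many nodes to amalgamate into a single cofinal chain, violating that the tree is Aronszajn. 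Identifying the precise invariant of $\vec C$ that drives this amalgamation, and checking that the postprocessing step can always be arranged to secure it, is the delicate core of the proof, and is exactly the place where the paper's study of the effect of postprocessing functions on square sequences is brought to bear.

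It remains to see that $\mathcal T(\rho_1^{\vec C})$ is \emph{not} $\lambda$-distributive, for which, by the characterization in the footnote, it suffices to add a new function $f\colon\lambda\to V$ by forcing with the tree under its reverse order. My intended route goes through the projection $\pi$ from the special tree: a generic cofinal branch $b$ through $\mathcal T(\rho_1^{\vec C})$ should, by exploiting that the full walk-code $\rho_0$ is recoverable from $\rho_1$ along a genuine walk, be liftable to a cofinal branch through $\mathcal T(\rho_0^{\vec C})$ inside the extension; since $\mathcal T(\rho_0^{\vec C})$ is special and any ground-model specializing function stays injective on chains, such a branch collapses $\lambda^+$ onto $\lambda$ and thereby adds the desired new $\lambda$-sequence. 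Making this lift go through --- that is, ensuring the fiber $\pi^{-1}[b]$ is non-Aronszajn in the extension --- is the second point requiring genuine care; it is here, and in the enumeration of dense sets underlying the construction of $\vec C$, that the hypothesis $2^\lambda=\lambda^+$ is used.
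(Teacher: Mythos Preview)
Your cycle $(1)\Rightarrow(4)\Rightarrow(3)\Rightarrow(2)\Rightarrow(1)$ and the trivial arrows are fine, and you correctly identify that the content lies in $(1)\Rightarrow(4)$. But the two places you flag as ``requiring genuine care'' are in fact places where your proposed mechanism does not work.

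\textbf{Nonspecialness.} Your plan is to assume a specialization $c:\mathcal T(\rho_1)\to\lambda$ and, via subadditivity of $\rho_1$ plus Fodor, force $\lambda^+$-many nodes into a single chain. Subadditivity alone will not produce this: it controls triangles of $\rho_1$-values but gives no bound on $\rho_1$ along a club, and there is no general argument of this shape that works for an \emph{arbitrary} transversal of $\square^*_\lambda$. The paper instead engineers the $C$-sequence so that it satisfies a specific anti-specialization property: for every $f:\lambda^+\to\lambda$ there exist limit $\delta\ge\lambda$ and $\gamma\in\nacc(D_\delta)$ with $f(\gamma)=f(\delta)$ and $D_\delta\cap\gamma\sq D_\gamma$. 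This is obtained by first arranging (Theorem~\ref{thm5.1}) that stationarily many $\alpha$ have $\otp(\nacc(C_\alpha)\cap G)=\lambda$ for any stationary $G$, and then applying a $\diamondsuit$-based postprocessing function (Lemma~\ref{lemma5.3}). The conclusion that $\mathcal T(\rho_1)$ is nonspecial then follows from the argument of Hru\v{s}\'ak--Mart\'inez Ranero \cite{MR2194042}: the displayed property forces $\rho_1(\cdot,\gamma)$ and $\rho_1(\cdot,\delta)$ to be comparable in the tree while $f(\gamma)=f(\delta)$, so $f$ cannot specialize. Normality, incidentally, is not free from subadditivity either; it comes from a separate postprocessing step ensuring $\{\delta\mid\min(D_\delta)=\iota\}$ is stationary for every $\iota$.

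\textbf{Non-distributivity.} Your proposal to lift a generic branch through $\mathcal T(\rho_1)$ to one through $\mathcal T(\rho_0)$ is problematic: $\rho_0$ is \emph{not} recoverable from $\rho_1$ (the max loses the full sequence), and there is no reason the $\pi$-preimage of a branch should itself carry a branch. The paper's route is cleaner and avoids this entirely: one shows directly that $\mathcal T(\rho_1^{\vec D})$ remains $\lambda^+$-Aronszajn in every $\lambda$-distributive extension (using \cite[Lemma~6.2.1]{MR2355670}, which bounds $|\{\beta<\alpha\mid\rho_1(\beta,\alpha)=\nu\}|$ by $\max\{|\nu|,\aleph_0\}$). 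Since forcing with any normal tree adds a cofinal branch through it, $\lambda$-distributivity would contradict this persistence.
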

\ifpdf
\begin{center}
\tikzstyle{line} = [draw, -latex']
\tikzstyle{block} = [rectangle, draw, 
    text width=7em, text centered, rounded corners, minimum height=4em]
\begin{tikzpicture}[node distance = 5cm, scale=0.1]
    \node  [block] (atrr1) {$\mathcal T(\rho_0)$ is special};
    \node  [block] (atrr2) [left of=atrr1] {$\square_\lambda(\lambda^+,{<}\lambda^+)$};
    \node  [block] (atrr3) [right of=atrr1] {{$\mathcal T(\rho_1)$ is nonspecial}};
    \draw[<->,double] (atrr2) to  (atrr1);
    \draw[->,double] (atrr1) to node[above] {projection}   (atrr3) ;
\end{tikzpicture}
\end{center}
\fi

We also obtain the following analog of Fact~\ref{fact02}, Clause~(4):\footnote{Compare this, also, with Theorem 1.4 of \cite{rinot20}.}
\begin{THMC} Suppose that $\lambda$ is a strong-limit singular cardinal and $2^\lambda=\lambda^+$.

If $\square(\lambda^+,{<}2)$ holds, then there exists a uniformly coherent $\lambda^+$-Souslin tree.
\end{THMC}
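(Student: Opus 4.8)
The plan is to realize the tree as $\mathcal T(\rho_0^{\vec C})$ for a coherent, nontrivial, club-guessing $C$-sequence $\vec C$, reading off \emph{uniform coherence} from the coherence of $\vec C$ and \emph{Souslinity} from its guessing. The two hypotheses split cleanly: $\square(\lambda^+,{<}2)$ supplies coherence and nontriviality (the genuinely hard-to-fabricate ingredient), while $2^\lambda=\lambda^+$ supplies the guessing. First I would unpack the square principle: since $\mu=2$ forces $|\mathcal C_\alpha|=1$, the principle $\square(\lambda^+,{<}2)$ is literally the classical $\square(\lambda^+)$, furnishing a $C$-sequence $\vec C=\langle C_\alpha\mid\alpha<\lambda^+\rangle$ that is \emph{coherent} in the strong sense $C_{\bar\alpha}=C_\alpha\cap\bar\alpha$ at every accumulation point $\bar\alpha$ of $C_\alpha$ (the exact equality, not mere membership, is what later yields \emph{uniform} coherence) and \emph{nontrivial} (no thread). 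By König's Fact~\ref{fact02}(4) this already produces a uniformly coherent $\lambda^+$-Aronszajn tree, realizable as $\mathcal T(\rho_0^{\vec C})$; so uniform coherence and the Aronszajn property are essentially in hand, and the whole task reduces to arranging Souslinity without disturbing coherence.

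Next I would produce the guessing. Since $\lambda$ is uncountable with $2^\lambda=\lambda^+$, Shelah's theorem gives $\diamondsuit(\lambda^+)$, and since $\lambda^+$ is a successor cardinal, Shelah's club-guessing gives a guessing sequence on a stationary $S\s\lambda^+$. Using the postprocessing-function calculus developed in this paper, I would apply a $\diamondsuit$-driven postprocessing function to $\vec C$ to obtain a new $C$-sequence $\vec C'$ that is still $\sqsubseteq$-coherent and nontrivial — so that $\mathcal T(\rho_0^{\vec C'})$ remains a uniformly coherent $\lambda^+$-Aronszajn tree — but now additionally guesses, along club-many levels, initial segments of any prescribed cofinal subset of $\lambda^+$. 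The discipline that the postprocessing function respect the initial-segment relation $\sqsubseteq$ is exactly what transports the coherence of $\vec C$ to $\vec C'$ undamaged; this interaction between postprocessing functions and square sequences is the conceptual engine of the paper.

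With $\vec C'$ fixed, set $\mathcal T:=\mathcal T(\rho_0^{\vec C'})$. That it is a $\lambda^+$-tree follows from a level count: at level $\alpha$ the walk-values are order-types of intersections bounded below $\alpha$, hence are ordinals $<\alpha$, so there are at most $|\alpha|\le\lambda<\lambda^+$ nodes. To see $\mathcal T$ is Souslin I would run the standard sealing argument: given a maximal antichain $A$, the $\diamondsuit$-sequence catches $A\cap(\mathcal T\restriction\alpha)$ on stationarily many $\alpha$, and the guessing built into $\vec C'$ forces every node of $\mathcal T$ above level $\alpha$ to lie above some element of $A\cap(\mathcal T\restriction\alpha)$; hence $A\s\mathcal T\restriction\alpha$ has size $<\lambda^+$.

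I expect two points to be the main obstacles. First, the sealing step requires amalgamating fewer than $\lambda$ nodes at limit levels into a single node while staying on the walk-tree; this is where $\lambda$ being a strong limit is spent, as it keeps the relevant amalgamation data below $\lambda$, and where $\cf(\lambda)$ being singular forces one to run the club-guessing at the appropriate cofinality rather than on all of $\lambda^+$. Second, and most delicate, is guaranteeing that the guessing injected by the postprocessing function is at once strong enough to seal \emph{every} antichain and gentle enough to preserve the exact $\sqsubseteq$-coherence underlying uniform coherence; reconciling these opposing demands is the crux, and it is precisely what the paper's analysis of postprocessing functions acting on square sequences is designed to deliver.
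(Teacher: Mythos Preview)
Your high-level strategy --- postprocess the $\square(\lambda^+)$-sequence to inject guessing while preserving $\sq$-coherence, then read off uniform coherence from coherence of $\vec C$ --- matches the paper's. But the implementation diverges at the crucial point, and your version has a genuine gap.

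You propose to realize the Souslin tree as $\mathcal T(\rho_0^{\vec C'})$ and then run a sealing argument: at levels $\alpha$ where diamond guesses the antichain, ``the guessing built into $\vec C'$ forces every node of $\mathcal T$ above level $\alpha$ to lie above some element of $A\cap(\mathcal T\restriction\alpha)$.'' This is the step that does not go through. In a Jensen-style recursive construction you \emph{choose} at limit level $\alpha$ which branches to extend, and you choose only those passing through the guessed antichain. But $\mathcal T(\rho_0^{\vec C'})$ is canonically determined by $\vec C'$: its nodes at level $\alpha$ are exactly the functions $\rho_0^{\vec C'}(\cdot,\gamma)\restriction\alpha$ for $\gamma\ge\alpha$, and you have no freedom to discard any of them. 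There is no mechanism by which a club-guessing property of $\vec C'$ forces an arbitrary node $\rho_0^{\vec C'}(\cdot,\gamma)$ to extend a member of a given antichain below $\alpha$. The paper uses the $\rho_0$-tree approach for Theorem~A (distributivity), not for Theorem~C.

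What the paper actually does for Theorem~C (Corollary~\ref{thmC}) is different: via Theorem~\ref{big-thm} --- which chains roughly a dozen postprocessing functions, including the mixing lemma, $\Phi_D$, Lemma~\ref{lemma3.5}, Corollary~\ref{cor413}, and the blowup Lemma~\ref{blowup-nacc} --- it upgrades the $\square(\lambda^+)$-sequence to one witnessing $\p^-(\lambda^+,2,{\sq},\lambda^+)$: for \emph{every} sequence $\langle A_i\mid i<\lambda^+\rangle$ of cofinal sets, stationarily many $\alpha$ satisfy $\sup(\nacc(C_\alpha)\cap A_i)=\alpha$ for all $i<\alpha$. This is vastly stronger than ordinary club guessing (you hit $\alpha$-many targets simultaneously, not one). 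The Souslin tree is then built by the recursive $\p^-+\diamondsuit$ construction of \cite[Proposition~2.5]{paper22}, not as a walks tree; the $\sq$-coherence of the $C$-sequence is what makes that recursive tree uniformly coherent. Your outline both underestimates the strength of guessing required and misidentifies which tree carries it.
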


\subsection*{Conventions} Throughout the whole paper,
$\kappa$ stands for an arbitrary regular uncountable cardinal,
$\mu$ is some (possibly finite) cardinal $\le\kappa$,
$\chi$ is some infinite regular cardinal $<\kappa$,
and $\xi$ is some ordinal $\le\kappa$.

\subsection*{Notation}
For infinite cardinals $\theta\le\lambda$,
denote $E^\lambda_\theta:=\{\alpha<\lambda\mid \cf(\alpha)=\theta\}$, and define $E^\lambda_{\neq\theta}$, $E^\lambda_{<\theta}$, $E^\lambda_{>\theta}$, and $E^\lambda_{\ge\theta}$ in a similar fashion.
Write $[\lambda]^\theta$ for the collection of all subsets of $\lambda$ of cardinality $\theta$, and define $[\lambda]^{<\theta}$ similarly.
Write  $\ch_\lambda$ for the assertion that $2^\lambda=\lambda^+$.

Suppose that $C$ and $D$ are sets of ordinals.
Write $\acc(C):=\{\alpha\in C\mid \sup (C\cap\alpha) = \alpha>0 \}$, $\nacc(C) := C \setminus \acc(C)$,
$\acc^+(C) := \{\alpha<\sup(C)\mid \sup (C\cap\alpha) = \alpha>0 \}$, and $\cl(C):=C\cup\acc^+(C)$.
For any  $j < \otp(C)$, denote by $C(j)$ the unique element $\delta\in C$ for which $\otp(C\cap\delta)=j$.
Write $D\sq C$ iff there exists some ordinal $\beta$ such that $D = C \cap \beta$.
Write $D \sq_\chi C$ if either $D \sq C$ or ($\otp(C)< \chi$ and $\nacc(C)$ consists only of successor ordinals).
Write $C=^* D$ iff either $C=D$ or for some ordinal $\beta$, $\{C\setminus\beta,D\setminus\beta\}$ is a singleton distinct from $\{\emptyset\}$.
Let $\reg(\kappa)$ denote the set of all infinite regular cardinals below $\kappa$,
and let $\mathcal K(\kappa):=\{ x\in\mathcal P(\kappa)\mid x\neq\emptyset\ \&\ \acc^+(x)\s x\ \&\ \sup(x) \notin x\}$
denote the collection of all nonempty $x \subseteq \kappa$ such that $x$ is a club subset of $\sup(x)$.

\subsection*{Organization of this paper}
In {Section~\ref{section1}}, we introduce the notions of $\mathcal C$-sequences, $C$-sequences, amenable $C$-sequences, and postprocessing functions.
The $\mathcal C$-sequences are just abstract versions of square sequences,
and $C$-sequences are typically transversals for the former. It is proved that transversals for $\square_\xi(\kappa,{<}\mu)$-sequences are moreover \emph{amenable}, provided that $\min\{\xi,\mu\}<\kappa$.

The collection of postprocessing functions forms a monoid that acts on the class of square sequences.
This means that these functions allow us to move from an arbitrary witness to $\square_\xi(\kappa,{<}\mu)$ to some better witness with additional properties.
For instance, the proof of Theorem~C (which is given in a later section) involves applying a dozen different postprocessing functions in order to obtain a witness good enough for the purpose of constructing  a uniformly coherent Souslin tree.

We believe that the above-mentioned concepts capture fundamental properties of the combinatorics of a given cardinal $\kappa$.
To practice these concepts in a simpler context, we decided to focus the first section on a combinatorial problem of independent interest.
Recall that a subset $F$ of (a regular uncountable cardinal) $\kappa$ is said to be \emph{fat}
if any of the two equivalent conditions hold:\footnote{See \cite[Lemma~1.2]{AbSh:146}.}
\begin{itemize}
\item for every club  $D\s \kappa$ and every $\alpha<\kappa$, there is an increasing and continuous map $\pi:\alpha\rightarrow F\cap D$;
\item for every club $D\s\kappa$ and every $\theta\in\reg(\kappa)$, $F\cap D$ contains a closed copy of $\theta+1$.
\end{itemize}

Clearly, every fat set is stationary.
In \cite{MR0327521}, Friedman proved that a subset of $\omega_1$ is fat iff it is stationary.
In particular, $\omega_1$ may be partitioned into $\omega_1$ many fat sets.
Some 40 years ago (see \cite[Remark~12]{MR521125}), Shelah noticed that in Magidor's model that appeared in \cite[$\S2$]{MR683153} and assumes the consistency of a weakly compact cardinal, $\omega_2$ cannot be partitioned into two fat sets.
In the first section, amenable $C$-sequences and postprocessing functions are used to establish the following.

\begin{THMD} If $\square(\kappa,{<}2)$ holds, then every fat subset of $\kappa$ may be partitioned into $\kappa$ many fat sets.

In particular, the failure to partition $\omega_2$ into two fat sets is equiconsistent with the existence of a weakly compact cardinal.
\end{THMD}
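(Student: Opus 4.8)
The plan is to reduce Theorem~D to a single combinatorial construction and then harvest the equiconsistency. For the construction, assume $\square(\kappa,{<}2)$ and fix a fat set $F\s\kappa$; since $\square(\kappa,{<}2)=\square(\kappa)$ has $\min\{\kappa,2\}=2<\kappa$, the result on transversals quoted above furnishes a coherent, nonthreadable, \emph{amenable} $C$-sequence $\vec C=\langle C_\alpha\mid\alpha<\kappa\rangle$ witnessing it. I would aim to define from $\vec C$ a single coloring $c:F\to\kappa$ all of whose nonempty fibers $F_i:=\{\alpha\in F\mid c(\alpha)=i\}$ are fat; such a $c$ visibly partitions $F$ into $\kappa$ many fat sets. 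It is worth stressing at the outset that one cannot instead iterate a ``split off one fat set'' operation, because fatness is not preserved by the intersections forced at limit stages of such an iteration; the $\kappa$ blocks must therefore emerge simultaneously from one coloring.

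The coloring itself is to be read off the coherence of $\vec C$. For limit $\alpha$, the map $\bar\alpha\mapsto\otp(C_\alpha\cap\bar\alpha)$ is a continuous order isomorphism from the closed set $\acc(C_\alpha)$ onto $\acc(\otp(C_\alpha))$, and, by coherence, $\otp(C_{\bar\alpha})=\otp(C_\alpha\cap\bar\alpha)$ for every $\bar\alpha\in\acc(C_\alpha)$. Consequently a color of the form $c(\alpha):=g(\otp(C_\alpha))$, for a fixed auxiliary $g:\kappa\to\kappa$, has the crucial feature that along the accumulation points of any single $C_\gamma$ it is computed by $g$ on the positions $\otp(C_\gamma\cap\bar\alpha)$. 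I would first fix, by an elementary cofinal book-keeping on $\kappa$, a $g$ such that for every $i<\kappa$ and every $\theta\in\reg(\kappa)$ the fiber $g^{-1}(i)$ contains closed copies of $\theta+1$ cofinally often; then, using postprocessing functions, I would replace $\vec C$ by a witness whose order types are spread out, so that ordinals $\gamma$ with $\otp(C_\gamma)$ as large as desired (and of the needed internal structure) are plentiful. Pulling a monochromatic closed copy back through the order isomorphism above then yields, along $\acc(C_\gamma)$, a closed copy of $\theta+1$ on which $c$ is constantly $i$.

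The heart of the matter --- and the step I expect to be hardest --- is to force these monochromatic closed copies to live inside $F\cap D$ for an \emph{arbitrary} prescribed club $D\s\kappa$, even though $C_\gamma$ is a rigid club bearing no a priori relation to $D$. This is exactly the service performed by amenability: it aligns the fixed club $C_\gamma$ with the external club $D$, guaranteeing for club-many $\gamma$ that $\acc(C_\gamma)$ meets $D$ on a set whose positions $\otp(C_\gamma\cap\bar\alpha)$ remain rich enough to carry a copy of $\theta+1$ of color $i$; fatness of $F$ is then used to drive that copy into $F$ as well. The genuine difficulty is to make the spread of the order types, the book-keeping defining $g$, and the amenable reconciliation with $D$ all succeed on one common club, and it is here that nonthreadability of $\vec C$ is decisive: were some fiber $F_i$ to fail to be fat, the clubs witnessing that failure could be assembled --- using the coherence of $\vec C$ --- into a thread, contradicting $\square(\kappa,{<}2)$. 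Establishing this last implication carefully is the crux of the argument.

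Finally I would read off the equiconsistency. For the upper bound, Shelah's observation recalled above shows that in Magidor's model of \cite{MR683153}, obtained from a weakly compact cardinal, $\omega_2$ cannot be partitioned into two fat sets. For the lower bound, suppose $\omega_2$ cannot be split into two fat sets; since $\omega_2$ is itself fat, the construction above must fail for $\kappa=\omega_2$, whence $\square(\omega_2,{<}2)$ fails. By the classical fact that the failure of $\square(\kappa)$ entails that $\kappa$ is weakly compact in $L$, we conclude that $\omega_2$ is weakly compact in $L$, producing a weakly compact cardinal in an inner model. Combining the two bounds yields the asserted equiconsistency.
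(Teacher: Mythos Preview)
Your equiconsistency derivation is fine, but the combinatorial construction has a genuine gap. The coloring $c(\alpha)=g(\otp(C_\alpha))$ does \emph{not} make $\acc(C_\gamma)$ monochromatic: for $\bar\alpha\in\acc(C_\gamma)$ you get $c(\bar\alpha)=g(\otp(C_\gamma\cap\bar\alpha))$, which varies with the \emph{position} of $\bar\alpha$ in $C_\gamma$. To extract an $i$-colored closed copy of $\theta+1$ inside $F\cap D$, you therefore need some $\gamma$ such that the set of positions $P:=\{\otp(C_\gamma\cap\bar\alpha)\mid\bar\alpha\in\acc(C_\gamma)\cap F\cap D\}$ contains a closed copy of $\theta+1$ lying in $g^{-1}(i)$. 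But $P$ is an essentially arbitrary club in $\otp(C_\gamma)$, determined by the externally given $D$; so what you are really demanding is that $g^{-1}(i)$ be \emph{fat} in $\otp(C_\gamma)$. Since you also need $\kappa$ many colors (forcing $\sup_\gamma\otp(C_\gamma)=\kappa$), this collapses to asking that each $g^{-1}(i)$ be fat in $\kappa$ --- exactly the statement you are trying to prove. Your book-keeping hypothesis (``$g^{-1}(i)$ contains closed copies of $\theta+1$ cofinally often'') is strictly weaker than fatness and does not survive intersection with $P$; amenability only tells you that $D\cap\gamma\not\subseteq C_\gamma$, and says nothing about the internal structure of $P$. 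The nonthreadability fallback is also unsubstantiated: a single club $D$ witnessing non-fatness of $F_i$ has no evident relation to initial segments of the $C_\alpha$'s, so there is no mechanism for assembling a thread.

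The paper avoids this circularity by coloring via an \emph{initial segment} of $C_\alpha$ rather than its full order type. Concretely, after a postprocessing step (Lemma~\ref{split_amenable}) the color of $\alpha$ is read off from $\min(C_\alpha^\circ)$ together with the value $C_\alpha^\circ(i_\theta)$ at one fixed index $i_\theta$ (depending only on that minimum). The point is that every $\bar\alpha\in\acc(C_\alpha^\circ)$ above index $i_\theta$ inherits the \emph{same} initial segment, hence the same color; so any closed copy of $\theta+1$ found in $\acc(C_\alpha^\circ)\cap F\cap D$ above that index is automatically monochromatic, with no constraint on positions. The real work then shifts to showing that for each prescribed color there are stationarily many $\alpha\in\Omega_\theta$ (carrying a club $d_\alpha\subseteq F$) with that prescribed initial segment, and this is exactly what Lemmas~\ref{lemma12} and~\ref{split_amenable} provide.
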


Additional byproducts of this study are an answer to Question~3 from \cite{rinot07}, and a generalization of Theorem~2 from \cite{MR3135494}.

\medskip

In Section~\ref{section2}, we prove the first case of Theorem~A (see Corollary~\ref{first-case-A}).
As said before, the witnessing tree is of the form $\mathcal T(\rho_0^{\vec C})$ for a carefully crafted $C$-sequence $\vec C$.
To obtain $\vec C$, we establish a mixing lemma for postprocessing functions, a postprocessing-function version of a result from \cite{paper24},
and a \emph{wide-club-guessing} lemma for $\square(\kappa,{<}\mu)$-sequences (thus, answering Question~16 from \cite{rinot_s01} in the affirmative).

\medskip

In Section~\ref{section3}, we prove that the \emph{strong-limit} hypothesis  in Fact~\ref{fact02}(2) is surplus.
Of course, this verifies the second case of Theorem~A (see Corollary~\ref{cor39}). More importantly, some of the results leading to it will pave the way for proving Theorem~C.

\medskip

In Section~\ref{section4}, we prove Theorem~C (see Corollary~\ref{thmC}). Remarkably enough, its proof requires almost all the machinery developed in Sections \ref{section1}, \ref{section2}, and~\ref{section3}.

\medskip

Finally, Section~\ref{section5} is a short section, in which we prove Theorem~B (see Corollary~\ref{thm51}).

\medskip

It is worth mentioning that a recurring technical ingredient involved in each of the proofs of Theorems A, B, and C has to do with the study of non-accumulation points of the clubs appearing in a transversal for a square sequence.
In Section~\ref{section3}, the following result is established:

\begin{uthm}\label{main3.2}  For any uncountable cardinal $\lambda$, $\ch_\lambda$ entails that the following are equivalent:
\begin{enumerate}
\item $\square_\lambda(\lambda^+,{<}\lambda^+)$;
\item There exists a $C$-sequence $\langle C_\alpha\mid\alpha<\lambda^+\rangle$ such that:
\begin{itemize}
\item $\otp(C_\alpha)<\lambda$ for all $\alpha\in E^{\lambda^+}_{<\lambda}$;
\item $|\{ C_\alpha\cap\delta\mid \alpha<\lambda^+\}|\le\lambda$ for all $\delta<\lambda^+$;
\item for every sequence $\langle A_i\mid i<\lambda\rangle$ of cofinal subsets of $\lambda^+$, for every $\theta\in\reg(\lambda)$, the following set is stationary:
$$\{\alpha<\lambda^+\mid  \otp(C_\alpha)=\theta\ \&\  \forall i<\theta[C_\alpha(i+1)\in A_i]\}.$$
\end{itemize}
\end{enumerate}
\end{uthm}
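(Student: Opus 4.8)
The plan is to prove the two implications separately; the reverse one is routine bookkeeping, while the forward one carries essentially all of the content.

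For $(2)\Rightarrow(1)$, I would read off a witness to $\square_\lambda(\lambda^+,{<}\lambda^+)$ directly from the $C$-sequence by setting, for each limit $\alpha<\lambda^+$,
$$\mathcal C_\alpha:=\{C_\alpha\}\cup\{C_\beta\cap\alpha\mid \alpha<\beta<\lambda^+\ \&\ \alpha\in\acc(C_\beta)\}.$$
Each member is a club in $\alpha$ of order type $\le\lambda$, being either $C_\alpha$ or a proper initial segment $C_\beta\cap\alpha$ of some $C_\beta$, of order type $<\otp(C_\beta)$; that $|\mathcal C_\alpha|\le\lambda$ is exactly the amenability bullet; and coherence is immediate, since for $\bar\alpha\in\acc(C_\beta\cap\alpha)$ one has $\bar\alpha\in\acc(C_\beta)$ and $(C_\beta\cap\alpha)\cap\bar\alpha=C_\beta\cap\bar\alpha\in\mathcal C_{\bar\alpha}$ (and symmetrically for $C_\alpha$). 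Finally, no club $E$ threads the sequence: as $E$ is cofinal in $\lambda^+$ there is $\bar\alpha\in\acc(E)$ with $\otp(E\cap\bar\alpha)>\lambda$, whence $E\cap\bar\alpha\notin\mathcal C_{\bar\alpha}$ for order-type reasons. The third bullet plays no role in this direction.

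For $(1)\Rightarrow(2)$, fix a witness to $\square_\lambda(\lambda^+,{<}\lambda^+)$ and let $\vec C=\langle C_\alpha\mid\alpha<\lambda^+\rangle$ be a transversal. Since here $\xi=\lambda<\lambda^+=\kappa$, $\vec C$ is amenable, which is precisely the second bullet of (2), and this is preserved by every postprocessing function. Applying an order-type--reducing postprocessing function, I would arrange $\otp(C_\alpha)=\cf(\alpha)$ for every $\alpha\in E^{\lambda^+}_{<\lambda}$; this secures the first bullet and simultaneously populates, for each $\theta\in\reg(\lambda)$, the set $E^{\lambda^+}_\theta$ with clubs of order type exactly $\theta$, as demanded by the third bullet. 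The crux is then to postprocess $\vec C$ once more, without disturbing amenability or these order types, so as to guess, for every sequence $\langle A_i\mid i<\lambda\rangle$ of cofinal subsets of $\lambda^+$ and every $\theta\in\reg(\lambda)$, the pattern $\langle A_i\rangle$ along stationarily many $C_\alpha$ with $\otp(C_\alpha)=\theta$. Two ideas combine. First, $\ch_\lambda$ caps the number of patterns to be guessed: for fixed $\alpha$ and $\theta$ the data $\langle A_i\cap\alpha\mid i<\theta\rangle$ relevant at $\alpha$ ranges over a set of size $(2^\lambda)^\theta=\lambda^+$, so a single guessing sequence of length $\lambda^+$ extracted from $2^\lambda=\lambda^+$ can forecast $\langle A_i\cap\alpha\mid i<\theta\rangle$ correctly on a set of $\alpha$'s that stays stationary for each target. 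Second, once a correct forecast $\langle B_i\mid i<\theta\rangle$ is available at some $\alpha\in E^{\lambda^+}_\theta$ at which every $A_i$ is cofinal, I would reroute the non-accumulation points of $C_\alpha$: replace each successor element $C_\alpha(i+1)$ by a point of $B_i$ inside the interval $(C_\alpha(i),C_\alpha(i+1))$, which exists as $B_i$ is cofinal in $\alpha$. This leaves $\acc(C_\alpha)$ untouched, so it is a legitimate postprocessing move and amenability survives, while on the stationary set of correctly-forecast levels it yields $C_\alpha(i+1)\in A_i$ for all $i<\theta$.

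The main obstacle is to realize this rerouting as a genuine coherent postprocessing function rather than an ad hoc level-by-level edit: the modified value of a successor element must be determined by an initial segment of the club alone, so that the defining identity $\Phi(C\cap\bar\alpha)=\Phi(C)\cap\bar\alpha$ persists at accumulation points, even though the transversal is not coherent and the forecast is naturally attached to $\sup(C)$ rather than to the elements below it. I expect this to be handled by the paper's mixing lemma for postprocessing functions, which amalgamates the amenable skeleton of $\vec C$ with the forecast-driven rerouting. A separate delicate point is the case $\theta=\cf(\lambda)$ (relevant when $\lambda$ is singular), where the clean $\diamondsuit_{\lambda^+}(E^{\lambda^+}_\theta)$ underlying the counting step is not available off the shelf; there the guessing must instead be drawn out of the amenable structure itself by a Shelah-style stabilization argument powered by the bound on the number of traces.
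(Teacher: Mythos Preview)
Your reverse implication $(2)\Rightarrow(1)$ is fine; the paper does not spell this direction out and your bookkeeping is correct.

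In the forward direction, you have the right ingredients but you have misplaced the obstacle and consequently misidentified the key tool. The rerouting you describe---replacing each $C_\alpha(i+1)$ by a point of the forecast $A_i$ in the interval $(C_\alpha(i),C_\alpha(i+1))$---\emph{is} a single coherent postprocessing function (Lemma~\ref{phi0}), and the mixing lemma plays no role in making it so. The trick dissolving your stated obstacle is that the forecast is not attached to $\sup(C)$ at all: one fixes a $\diamondsuit$-matrix $\langle A^i_\beta\mid i,\beta<\kappa\rangle$ and defines the replacement at step $i+1$ using $A^{i}_{\beta}$ with $\beta=x(i+1)$, so the new value depends only on $x\cap(\beta+1)$, and coherence is immediate. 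There is no amalgamation needed here.

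The actual difficulty is the step you pass over: simultaneously arranging, for \emph{every} $\theta\in\reg(\lambda)$, that stationarily many $\alpha\in E^{\lambda^+}_\theta$ have $\otp(C_\alpha)=\theta$ and $\nacc(C_\alpha)\subseteq G$ (the forecast set). This cannot be done by one postprocessing function, and the paper does not try. Instead (Corollary~\ref{c38}), for each $\theta$ it invokes Lemma~\ref{phi1} to get a separate $\Phi_\theta$ achieving $\otp(\Phi_\theta(C_\alpha))=\cf(\alpha)$ and $\nacc(\Phi_\theta(C_\alpha))\subseteq A$ for stationarily many $\alpha\in E^{\lambda^+}_\theta$, and then sets $D_\alpha:=\Phi_{\cf(\alpha)}(C_\alpha)$. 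This per-cofinality assembly is where $\mu=\lambda^+$ is spent: the resulting width $|\{D_\alpha\cap\gamma\}|$ is bounded by $|\reg(\lambda^+)|\cdot|\mathcal C_\gamma|\le\lambda$, which is still $<\mu$. Your plan hides this behind ``an order-type--reducing postprocessing function'', but that object does not exist as a single function across all cofinalities.

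Two minor points. First, amenability is not ``precisely the second bullet'': the second bullet is the width bound $|\{C_\alpha\cap\delta\}|\le\lambda$, which is a different (though related) statement; it is preserved under postprocessing because $\Phi(C)\cap\gamma=\Phi(C\cap\gamma)$ for $\gamma\in\acc(\Phi(C))$, not because amenability is preserved. Second, your instinct that $\theta=\cf(\lambda)$ is delicate is exactly right; the paper handles it separately via Lemma~\ref{lemma3.5} before running the per-cofinality argument.
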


In Section~\ref{section5}, the following result is established:

\begin{uthm}\label{thm23} For any singular cardinal $\lambda$, $\ch_\lambda$ entails that the following are equivalent:
\begin{enumerate}
\item $\square_\lambda(\lambda^+,{<}\lambda^+)$;
\item There exists a $C$-sequence $\langle C_\alpha\mid\alpha<\lambda^+\rangle$ such that:
\begin{itemize}
\item $\otp(C_\alpha)\le\lambda$ for all $\alpha<\lambda^+$;
\item $|\{ C_\alpha\cap\delta\mid \alpha<\lambda^+\}|\le\lambda$ for all $\delta<\lambda^+$;
\item for every sequence $\langle A_i\mid i<\lambda\rangle$ of cofinal subsets of $\lambda^+$,
the following set is stationary:
$$\{\alpha<\lambda^+\mid \otp(C_\alpha)=\lambda\ \&\ \forall i<\lambda[C_\alpha(i+1)\in A_i]\}.$$
\end{itemize}
\end{enumerate}
\end{uthm}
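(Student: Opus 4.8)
The plan is to prove the two implications separately. The arithmetic hypothesis $\ch_\lambda$ and the singular cofinality of $\lambda$ both enter only through the club-guessing clause, so the bulk of the work is in $(1)\Rightarrow(2)$; the reverse implication is a soft unpacking of the definition of $\square_\lambda(\lambda^+,{<}\lambda^+)$.

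For $(2)\Rightarrow(1)$ I would feed the given $C$-sequence $\langle C_\alpha\mid\alpha<\lambda^+\rangle$ into the natural coherent-trace construction: for each limit $\alpha<\lambda^+$ set $\mathcal C_\alpha:=\{C_\beta\cap\alpha\mid\beta<\lambda^+,\ \alpha\in\acc(C_\beta)\}\cup\{C_\alpha\}$. The second bullet of (2) bounds $|\mathcal C_\alpha|$ by $\lambda<\lambda^+$, the first bullet makes every member a club in $\alpha$ of order-type $\le\lambda$, and coherence is immediate since the members are themselves initial segments of the $C_\beta$'s. The one delicate point is the absence of a thread, and here the third bullet does the work: given a putative club thread $D\subseteq\lambda^+$ (which, being a nontrivial thread, omits a cofinal set of ordinals), apply the guessing clause to cofinal sets $A_i\subseteq\lambda^+\setminus D$ to obtain stationarily many $\alpha$ with $\otp(C_\alpha)=\lambda$ and $C_\alpha(i+1)\notin D$ for every $i<\lambda$; a short bookkeeping argument then shows this $C_\alpha$ cannot be reconciled with the coherence demanded of the thread at $\alpha$, the desired contradiction. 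This establishes all four clauses of $\square_\lambda(\lambda^+,{<}\lambda^+)$.

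For $(1)\Rightarrow(2)$ I would begin from Theorem~\ref{main3.2}: under $\ch_\lambda$ and $\square_\lambda(\lambda^+,{<}\lambda^+)$ it already supplies a $C$-sequence enjoying the first two bullets of (2) together with club-guessing at every \emph{regular} order-type $\theta\in\reg(\lambda)$. What remains is to upgrade the guessing from these regular $\theta$ to the singular order-type $\lambda$ itself. Set $\chi:=\cf(\lambda)$ and fix a strictly increasing continuous sequence $\langle\lambda_j\mid j<\chi\rangle$ cofinal in $\lambda$ with each $\lambda_{j+1}\in\reg(\lambda)$. The construction is a \emph{stretching}: given a target sequence $\langle A_i\mid i<\lambda\rangle$ of cofinal subsets, I reindex it in blocks, the $j$-th block consisting of the $<\lambda$ many sets $\langle A_i\mid \lambda_j\le i<\lambda_{j+1}\rangle$. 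Using the $\chi$-guessing of the sequence from Theorem~\ref{main3.2} to govern a skeleton $\langle\sup(b^\alpha_j)\mid j<\chi\rangle$, and the $\theta$-guessing at $\theta=\cf(\lambda_{j+1})$ inside each gap, I build on a stationary set of $\alpha\in E^{\lambda^+}_\chi$ a club $C^*_\alpha$ that is the increasing concatenation of blocks $b^\alpha_j$ of order-type $\lambda_{j+1}$, arranged so that $C^*_\alpha(i+1)\in A_i$ for the global indices $i\in[\lambda_j,\lambda_{j+1})$ carried by block $j$. Then $\otp(C^*_\alpha)=\sup_j\lambda_{j+1}=\lambda$, the third bullet holds, and the few-traces bound $|\{C^*_\alpha\cap\delta\}|\le\lambda$ survives because each block is drawn from the traces of the original $C$-sequence. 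Here the amenability of the transversal and the postprocessing/mixing machinery of Sections~\ref{section1}--\ref{section2} are what allow the per-block guessing sequences to be combined into a single coherent $C$-sequence.

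The main obstacle is precisely this singular stretching. Two demands must be met at once: hitting $\lambda$ many \emph{independent} targets $A_i$ at prescribed successor positions along one club of order-type exactly $\lambda$, while keeping coherence (hence $\le\lambda$ traces) through the concatenation. Since there are $2^{\lambda^+}>\lambda^+$ many sequences $\langle A_i\rangle$, the requirement ``for every $\langle A_i\rangle$, stationarily many $\alpha$'' cannot be met by enumeration; instead one must run the standard club-guessing engine (assume the guessing set is nonstationary for some $\langle A_i\rangle$, capture it by a club $E$, shrink, and iterate to a fixed point) but now inside the block-structured, stretched setting, which is exactly where the wide-club-guessing lemma of Section~\ref{section2} is indispensable. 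Coordinating the local successor-indexing of each block $b^\alpha_j$ with the global indexing $i\mapsto A_i$, and arranging the blocks so that concatenation does not destroy coherence, is the decisive bookkeeping and the crux of the argument.
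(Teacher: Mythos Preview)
Your $(2)\Rightarrow(1)$ is overcomplicated. The first two bullets alone already say that $\langle C_\alpha\mid\alpha\in\acc(\lambda^+)\rangle$ is a $\lambda$-bounded $C$-sequence with at most $\lambda$ traces below every $\delta$, which by Proposition~\ref{transversal-width}(1) is exactly a transversal for $\square_\lambda(\lambda^+,{<}\lambda^+,{\sq},V)$. Since $\xi=\lambda<\lambda^+$, the paper records that $\square_\xi(\kappa,{<}\mu,{\sq},V)$ coincides with $\square_\xi(\kappa,{<}\mu,{\sq},{\notin})$, so you get $\square_\lambda(\lambda^+,{<}\lambda^+)$ immediately. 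The third bullet and any discussion of threads are unnecessary for this direction.

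For $(1)\Rightarrow(2)$ your stretching intuition is correct, but you point to the wrong machinery. The paper does not pass through Theorem~\ref{main3.2}, nor through the wide-club-guessing Lemma~\ref{wide-club-guessing}, nor through a Shelah-style ``assume failure, capture by a club, iterate to a fixed point'' engine. The route is: apply Lemma~\ref{lemma3.5} to obtain a transversal together with a postprocessing function $\Phi_0$ giving, for every cofinal $A$, stationarily many $\alpha$ with $\otp(\Phi_0(C_\alpha))=\cf(\lambda)$ and $\nacc(\Phi_0(C_\alpha))\subseteq A$; then feed this into Corollary~\ref{cor413}, whose engine is Lemma~\ref{blowup-nacc}, to blow the order-type up from $\cf(\lambda)$ to $\lambda$ while preserving $\nacc\subseteq A$ and the width bound; finally apply the $\diamondsuit$-based postprocessing of Lemma~\ref{phi0} to convert ``$\nacc\subseteq A$'' into the prescribed successor-indexing $C_\alpha(i+1)\in A_i$.

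The decisive point you miss is that Lemma~\ref{blowup-nacc} is a \emph{diamond-driven recursive construction}: the long club $D_\alpha$ is built along $C_\alpha$ (Case~2 of that lemma), at each successor step consulting the $\diamondsuit$-sequence to decide which previously built $D_{\beta,k}$ to splice in as the next block. This is what simultaneously delivers the guessing for \emph{every} cofinal $A$ and keeps $|\mathcal D_\alpha|\le|\mathcal C_\alpha|$ (the construction is a function of $C_{\alpha,i}$ alone). Wide club guessing produces only ``$\sup(\nacc(C)\cap E)=\alpha$'' for a single club $E$, which is the wrong shape; and the Shelah iteration you describe handles a single target set, not $\lambda$ many with prescribed successor positions. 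Your line ``the few-traces bound survives because each block is drawn from the traces of the original $C$-sequence'' is exactly the delicate point that Claim~\ref{claim561}(2) and~(4) of Lemma~\ref{blowup-nacc} are designed to secure, and it does not follow from an informal block-concatenation.
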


In Section~\ref{section4}, the following two results are established:

\begin{uthm}\label{thm3} The following are equivalent:
\begin{enumerate}
\item $\diamondsuit(\omega_1)$ holds;
\item There exists a $C$-sequence $\langle C_\alpha\mid\alpha<\omega_1\rangle$ such that:
\begin{itemize}
\item $|\{ C_\alpha\cap\delta\mid \alpha<\omega_1\ \&\ \sup(C_\alpha\cap\delta)=\delta\}|=1$ for all $\delta<\omega_1$;
\item for every sequence $\langle A_i\mid i<\omega_1\rangle$ of cofinal subsets of $\omega_1$, the following set is stationary:
$$\{\alpha<\omega_1\mid \otp(C_\alpha)=\alpha\ \&\ \forall i<\alpha[C_\alpha(i+1)\in A_i]\}.$$
\end{itemize}
\end{enumerate}
\end{uthm}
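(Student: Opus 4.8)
The plan is to prove the two implications separately, with essentially all of the work concentrated in $(1)\Rightarrow(2)$; the reverse implication is a short coding argument that does not even use the coherence clause.

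For $(2)\Rightarrow(1)$, I would read a diamond sequence off of $\vec C=\langle C_\alpha\mid\alpha<\omega_1\rangle$ directly from the guessing clause. Fix a partition $\omega_1=B_0\sqcup B_1$ into two cofinal sets, and for every $\alpha$ with $\otp(C_\alpha)=\alpha$ put $D_\alpha:=\{i<\alpha\mid C_\alpha(i+1)\in B_1\}$, setting $D_\alpha:=\emptyset$ otherwise. To see that $\langle D_\alpha\mid\alpha<\omega_1\rangle$ guesses an arbitrary $X\s\omega_1$, feed the guessing clause the sequence $\langle A_i\mid i<\omega_1\rangle$ given by $A_i:=B_1$ for $i\in X$ and $A_i:=B_0$ for $i\notin X$; each $A_i$ is cofinal, so $S:=\{\alpha\mid \otp(C_\alpha)=\alpha\ \&\ \forall i<\alpha\,(C_\alpha(i+1)\in A_i)\}$ is stationary. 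As $B_0,B_1$ are disjoint, for $\alpha\in S$ and $i<\alpha$ we get $C_\alpha(i+1)\in B_1$ iff $i\in X$, whence $D_\alpha=X\cap\alpha$. Thus $\{\alpha\mid D_\alpha=X\cap\alpha\}\supseteq S$ is stationary, which is exactly $\diamondsuit(\omega_1)$.

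For $(1)\Rightarrow(2)$, I would run a recursion of length $\omega_1$ building $\vec C$ together with its common-value function: the first clause of (2) is precisely the assertion that $\vec C$ is a coherent $C$-sequence, i.e.\ that $\delta\mapsto c_\delta:=C_\alpha\cap\delta$ (for any $\alpha$ with $\delta\in\acc^+(C_\alpha)$) is well defined on every limit $\delta<\omega_1$. So at each limit stage $\alpha$ I would pick a cofinal $\omega$-sequence $\langle\alpha_n\mid n<\omega\rangle$ and glue the already-built $c_{\alpha_n}$ together, declaring the new gluing points $\alpha_n$ to be the fresh non-accumulation points of $C_\alpha$; coherence is then automatic, and the only freedom lies in the choice of the $\alpha_n$. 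To inject the guessing I would fix a $\diamondsuit$-sequence and, through a pairing function, use it to anticipate a code for an arbitrary sequence $\langle A_i\mid i<\omega_1\rangle$ of cofinal sets, steering the gluing points $\alpha_n$ into the anticipated $A_i$ at the positions they come to occupy.

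The step I expect to be the main obstacle is the verification of the guessing clause at limit-of-limit points, and it is exactly here that coherence fights against threading. The difficulty is structural: if $\alpha$ is a limit of limits then $\acc^+(C_\alpha)$ is cofinal in $\alpha$, so $C_\alpha=\bigcup\{c_\delta\mid\delta\in\acc^+(C_\alpha)\}$, and hence every element of $\nacc(C_\alpha)$ lying below an accumulation point is frozen by the earlier stages at which the relevant $c_\delta$ were created. The gluing points I am free to choose are cofinal but occupy only a sparse set of positions, whereas $\forall i<\alpha\,(C_\alpha(i+1)\in A_i)$ constrains $C_\alpha$ at \emph{every} successor position. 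Thus full-position threading at such an $\alpha$ cannot be arranged by local choices; it can hold only when the whole initial segment $\vec C\restriction\alpha$ was already built in agreement with the true $\langle A_i\rangle$. I would secure this by reflection: taking $M\prec H_{\omega_2}$ with $M\cap\omega_1=\alpha$ and $\vec C,\langle A_i\rangle$ and the $\diamondsuit$-sequence in $M$, one argues that the correctly-guessed stages below $\alpha$ are stationary in $\alpha$ and cohere, so that the non-accumulation points inherited into $C_\alpha$ land in the true $A_i$; as the set of such $\alpha=M\cap\omega_1$ is stationary, the guessing clause follows. This is precisely the \emph{study of non-accumulation points of the clubs in a transversal} flagged in the introduction, and I anticipate that an honest execution is cleaner if routed through the postprocessing-function machinery of Section~\ref{section1} rather than through a bare recursion.
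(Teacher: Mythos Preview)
Your argument for $(2)\Rightarrow(1)$ is correct and is essentially what the paper leaves implicit.

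For $(1)\Rightarrow(2)$, your diagnosis of the obstacle is accurate, but the proposed cure via reflection does not close the gap. The phrase ``the correctly-guessed stages below $\alpha$ are stationary in $\alpha$ and cohere'' hides a circularity. A stage $\gamma<\alpha$ being \emph{correctly guessed} means only that the $\diamondsuit$-sequence at $\gamma$ codes the restriction of the true $\langle A_i\rangle$; for $C_\gamma$ to actually satisfy $C_\gamma(i+1)\in A_i$ at every $i<\otp(C_\gamma)$ you need the construction at stage $\gamma$ to have \emph{succeeded}, which by your own analysis requires cofinally many fully-threaded stages below $\gamma$, and so on down. Elementarity of $M$ does tell you that the set of $\gamma$ where the \emph{guess} is correct is stationary in $\alpha$, but it cannot tell you that the set of $\gamma$ where $C_\gamma$ is \emph{fully threaded} is nonempty, since that is the very statement under proof. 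Nor is there any mechanism making two correctly-guessed stages $\gamma_1<\gamma_2$ satisfy $\gamma_1\in\acc(C_{\gamma_2})$, so ``and cohere'' is unjustified. Separately, nothing in the sketch explains how $\otp(C_\alpha)=\alpha$ is to be arranged; gluing along an $\omega$-sequence of $\alpha_n$'s with bounded $\otp(c_{\alpha_n})$ yields a club of order-type well below $\alpha$.

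The paper breaks this circularity by decoupling the two tasks. It first invokes a coherent sequence $\langle(C_\alpha,Z_\alpha)\rangle$ with a \emph{short-order-type} guessing property: for every $Z$, every club $D$, and every $\epsilon<\omega_1$, some $C_\alpha$ of order-type exactly $\epsilon$ is contained in $\{\beta\in D\mid Z_\beta=Z\cap\beta\}$. This is attainable because when $\otp(C_\alpha)$ is prescribed in advance, the coherence constraints from below are mild. The heavy lifting is then a blow-up procedure (Corollary~\ref{cor48}, resting on Lemma~\ref{blowup-nacc}): a new sequence $\langle D_\alpha\rangle$ is built so that $D_\alpha$ is produced by an \emph{inner recursion along the scaffold} $C_\alpha^\circ$, where at step $j$ one searches for an already-built $D_{\beta,k}$ with $\beta\in(C_\alpha^\circ(j),C_\alpha^\circ(j+1))$ that end-extends the current approximation and has all its nonaccumulation points inside the $\diamondsuit$-guess at $C_\alpha^\circ(j+1)$. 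The existence of such $(\beta,k)$ is supplied not by any inductive hypothesis about full threading, but by the short-order-type guessing of the \emph{input} sequence, proved beforehand (Claim~\ref{claim5632}). The elementary-submodel argument (Claim~\ref{467}) then only has to certify that each single extension step succeeds when $\nacc(C_\alpha^\circ)$ lands in the right stationary set, not that an entire tower below $\alpha$ was already correct. That separation of concerns is the idea missing from your sketch.
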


\begin{uthm}\label{thm4}
For any singular strong-limit cardinal $\lambda$, $\ch_\lambda$ entails that the following are equivalent:
\begin{enumerate}
\item $\square(\lambda^+,{<}2)$;
\item There exists a $C$-sequence $\langle C_\alpha\mid\alpha<\lambda^+\rangle$ such that:
\begin{itemize}
\item $|\{ C_\alpha\cap\delta\mid \alpha<\lambda^+\ \&\ \sup(C_\alpha\cap\delta)=\delta\}|=1$ for all $\delta<\lambda^+$;
\item for every sequence $\langle A_i\mid i<\lambda^+\rangle$ of cofinal subsets of $\lambda^+$, the following set is stationary:
$$\{\alpha<\lambda^+\mid \forall i<\alpha[\sup(\nacc(C_\alpha)\cap A_i)=\alpha]\}.$$
\end{itemize}
\end{enumerate}
\end{uthm}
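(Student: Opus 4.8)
The plan is to prove the two implications separately, noting that $(2)\Rightarrow(1)$ is pure combinatorics while $(1)\Rightarrow(2)$ is where the arithmetic hypotheses and the postprocessing machinery of Sections~\ref{section1}--\ref{section3} are needed.

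For $(2)\Rightarrow(1)$: let $\vec C=\langle C_\alpha\mid\alpha<\lambda^+\rangle$ be as in~(2). First I would observe that for every limit $\delta$ one has $\sup(C_\delta\cap\delta)=\sup(C_\delta)=\delta$, so $C_\delta$ itself belongs to the set appearing in the first bullet; hence the uniqueness clause reads $C_\alpha\cap\delta=C_\delta$ whenever $\delta\in\acc^+(C_\alpha)$. In other words, the first bullet is precisely the statement that $\langle\{C_\delta\}\mid\delta<\lambda^+\rangle$ is a coherent sequence in the sense of $\square(\lambda^+,{<}2)$, so it remains only to rule out a thread. Suppose $D$ were a club in $\lambda^+$ with $D\cap\delta=C_\delta$ for every $\delta\in\acc(D)$. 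Setting $A_i:=\acc(D)\setminus(i+1)$ for $i<\lambda^+$ gives a sequence of cofinal sets, and for every $\alpha\in\acc(D)$ we would have $C_\alpha=D\cap\alpha$, whence $\nacc(C_\alpha)=\nacc(D)\cap\alpha$. Since $\nacc(D)$ is disjoint from $\acc(D)\supseteq A_i$, the set $\nacc(C_\alpha)\cap A_i$ is empty for every $i$, so no positive limit $\alpha\in\acc(D)$ lies in the guessing set of the second bullet. Thus that set is disjoint from the club $\acc(D)$, contradicting its stationarity, and therefore $\vec C$ witnesses $\square(\lambda^+,{<}2)$.

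For $(1)\Rightarrow(2)$: starting from a witness to $\square(\lambda^+,{<}2)$, I would first fix a transversal $\vec C$, i.e.\ a coherent, thread-free $C$-sequence; by the observation above, coherence is exactly the first bullet of~(2), so that clause comes for free and must be \emph{preserved} throughout. Next, since here $\min\{\lambda^+,2\}=2<\lambda^+$, the result of Section~\ref{section1} guarantees that $\vec C$ is amenable, which is the standing hypothesis required to run club guessing. The remaining and essential task is to install the second bullet, a \emph{wide} form of club guessing concentrated on the non-accumulation points: stationarily many $\alpha$ must have $\nacc(C_\alpha)$ meeting each of the first $\alpha$ members of an arbitrary $\lambda^+$-indexed family of cofinal sets cofinally in $\alpha$. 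I would obtain this by applying the wide-club-guessing lemma of Section~\ref{section2}, realized as a postprocessing function, to the amenable coherent sequence; because postprocessing functions act on square sequences without destroying coherence, the output still satisfies the first bullet and remains a genuine $C$-sequence. The arithmetic hypotheses $\ch_\lambda$ and the strong-limit-ness of $\lambda$ enter precisely in this step, both in the counting needed to diagonalize against all $\lambda^+$-families of cofinal sets and in keeping a single club at each level (the $\mu=2$ constraint).

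The hard part will be reconciling two features that pull against each other: uniform coherence (a single tail $C_\alpha\cap\delta$ per $\delta$) forces the clubs to be highly rigid, whereas the wide guessing on $\nacc(C_\alpha)$ demands that the non-accumulation points be rich and flexible enough to catch every family $\langle A_i\rangle$. The postprocessing-function formalism is exactly what lets one thread this needle, reshaping the non-accumulation structure while leaving the coherent skeleton intact; verifying that the wide-club-guessing postprocessing function indeed preserves the single-club uniform coherence under $\ch_\lambda$ with $\lambda$ a strong limit is the crux of the argument.
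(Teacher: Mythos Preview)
Your argument for $(2)\Rightarrow(1)$ is fine. The gap is in $(1)\Rightarrow(2)$: the wide-club-guessing lemma of Section~\ref{section2} (Lemma~\ref{wide-club-guessing}) is far too weak for what you need. That lemma produces, for every \emph{single} club $E\subseteq\lambda^+$, some $\alpha$ with $\sup(\nacc(\Phi(C))\cap E)=\alpha$ for all $C\in\mathcal C_\alpha$. The second bullet of~(2) demands something vastly stronger: for every $\lambda^+$-indexed family $\langle A_i\mid i<\lambda^+\rangle$ of cofinal sets, stationarily many $\alpha$ must have $\nacc(C_\alpha)$ hitting \emph{each of the first $\alpha$ many} sets cofinally in $\alpha$. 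No single club-guessing step, nor the machinery of Sections~\ref{section1}--\ref{section3} alone, delivers this diagonal guessing against $\alpha$ many targets simultaneously.

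The paper's proof (Theorem~\ref{big-thm} and Corollary~\ref{thm-using-everything}) requires the entire apparatus of Section~\ref{section4}, which you have not touched. The crucial missing ingredient is Lemma~\ref{blowup-nacc}, which takes a coherent sequence whose clubs have short order-types and rebuilds one whose clubs have order-type~$\lambda$ with controlled non-accumulation points; this is what makes it possible for $\nacc(C_\alpha)$ to be rich enough to hit $\alpha$ many sets at once. The proof then splits into two cases (depending on whether a certain diamond-style guessing already succeeds) and in the harder case composes roughly a dozen postprocessing functions, passing through Lemma~\ref{lemma3.5}, Corollary~\ref{cor413}, and Fact~\ref{paper28}. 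The strong-limit hypothesis on $\lambda$ enters through Theorem~\ref{transversal_of_mixing_paper24} (via the density $\mathcal D(\lambda,\theta)=\lambda$), not through a counting argument of the kind you sketch. In short, your plan underestimates the difficulty by an order of magnitude: this is the hardest construction in the paper, and the wide-club-guessing lemma is only one small piece of it.
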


In Section~\ref{section2}, the following result is established:

\begin{uthm}\label{thm22}
For any uncountable strong-limit cardinal $\lambda$, $\ch_\lambda$ entails that the following are equivalent:
\begin{enumerate}
\item $\square(\lambda^+,{<}\lambda)$;
\item There exists a $C$-sequence $\langle C_\alpha\mid \alpha<\lambda^+\rangle$
such that:
\begin{itemize}
\item for every club $D\s\lambda^+$, there exists $\delta\in\acc(D)$ such that for all $\alpha<\lambda^+$ either $\sup(C_\alpha\cap\delta)<\delta$ or $\sup(\nacc(C_\alpha\cap\delta)\cap D)=\delta$;
\item $|\{ C_\alpha\cap\delta\mid \alpha<\lambda^+\ \&\ \sup(C_\alpha\cap\delta)=\delta\}|<\lambda$ for all $\delta<\lambda^+$;
\item for every sequence $\langle A_i\mid i<\lambda\rangle$ of cofinal subsets of $\lambda^+$, for every $\theta\in\reg(\lambda)$, the following set is stationary
$$\{\alpha<\lambda^+\mid \forall i<\theta[\sup(\nacc(C_\alpha)\cap A_i)=\alpha]\}.$$
\end{itemize}
\end{enumerate}
\end{uthm}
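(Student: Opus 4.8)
I split the argument along the two implications, treating $(2)\Rightarrow(1)$ first as a warm-up and then outlining the substantial direction $(1)\Rightarrow(2)$.

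For $(2)\Rightarrow(1)$, the plan is to read a witnessing $\mathcal C$-sequence directly off the given $C$-sequence. For every limit $\delta<\lambda^+$ I would set
\[
\mathcal C_\delta:=\{\,C_\alpha\cap\delta\mid \alpha<\lambda^+\ \&\ \sup(C_\alpha\cap\delta)=\delta\,\}.
\]
Each member is a club in $\delta$; the collection is nonempty (take $\alpha=\delta$), has size $<\lambda$ by the second bullet, and is coherent, since any accumulation point $\bar\delta$ of $C_\alpha\cap\delta$ is an accumulation point of $C_\alpha$, whence $(C_\alpha\cap\delta)\cap\bar\delta=C_\alpha\cap\bar\delta\in\mathcal C_{\bar\delta}$. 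The only point needing the first bullet is nonthreadability. Given a hypothetical thread, i.e.\ a club $E\s\lambda^+$ with $E\cap\bar\delta\in\mathcal C_{\bar\delta}$ for all $\bar\delta\in\acc(E)$, I apply the first bullet to $D:=\acc(E)$ to get $\delta\in\acc(D)\s\acc(E)$ as there. Since $E\cap\delta\in\mathcal C_\delta$, fix $\alpha$ with $E\cap\delta=C_\alpha\cap\delta$ and $\sup(C_\alpha\cap\delta)=\delta$; then the first alternative fails, forcing $\sup(\nacc(E\cap\delta)\cap\acc(E))=\delta$. But $\nacc(E\cap\delta)\s\nacc(E)$ is disjoint from $\acc(E)$, a contradiction. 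Hence $\square(\lambda^+,{<}\lambda)$ holds.

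For the harder direction $(1)\Rightarrow(2)$, I would start from a $\square(\lambda^+,{<}\lambda)$-sequence and let $\vec C=\langle C_\alpha\mid\alpha<\lambda^+\rangle$ be a transversal. Since $\kappa=\lambda^+$, $\mu=\lambda$ and $\xi=\kappa$, we have $\min\{\xi,\mu\}=\lambda<\lambda^+$, so by the result quoted in the Organization, $\vec C$ is \emph{amenable}. Observe first that the second bullet is automatic for any transversal: if $\sup(C_\alpha\cap\delta)=\delta$ then necessarily $\delta\le\alpha$, and coherence of the underlying square sequence places $C_\alpha\cap\delta$ in the fixed family $\mathcal C_\delta$ of size $<\lambda$; as postprocessing functions preserve coherence (hence this small-trace bound), the second bullet survives all later manipulations. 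Thus the real task is to upgrade $\vec C$, via postprocessing functions, so as to also secure the two guessing clauses (bullets~1 and~3).

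The upgrade proceeds in three moves. First, I would prove a \emph{wide-club-guessing} lemma (the affirmative answer to Question~16 of \cite{rinot_s01}): every amenable $C$-sequence on $\lambda^+$ admits a postprocessing after which, for every $\langle A_i\mid i<\lambda\rangle$ of cofinal sets and every $\theta\in\reg(\lambda)$, the set of bullet~3 is stationary; here strong-limitness of $\lambda$ together with $\ch_\lambda$ is what lets me enumerate the $(\lambda^+)^\lambda=\lambda^+$ relevant sequences and diagonalize against them by a single postprocessing function. Second, a postprocessing-function version of a result from \cite{paper24} would produce bullet~1, whose role, as the $(2)\Rightarrow(1)$ argument shows, is precisely to annihilate threads. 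Third, since performing these two postprocessings separately risks each spoiling the other's target, I would invoke a \emph{mixing lemma} for postprocessing functions to interleave them into one $C$-sequence that inherits both guessing properties while retaining amenability and the small-trace bound. The main obstacle is exactly this simultaneity: the two clauses pull in opposite directions — bullet~3 wants $\nacc(C_\alpha)$ spread out enough to meet every $A_i$, whereas bullet~1 constrains how $\nacc(C_\alpha\cap\delta)$ interacts with a fully arbitrary club $D$ — so the technical heart is the mixing lemma, supported by the robustness of amenability (available thanks to $\min\{\xi,\mu\}<\kappa$) as a common input to both guessing arguments, with the wide-club-guessing step itself being delicate because $\theta$ ranges over all regular cardinals below the singular $\lambda$.
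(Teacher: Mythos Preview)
Your $(2)\Rightarrow(1)$ argument matches the paper's (the paper phrases it via an arbitrary cofinal $A$ and $E:=\acc^+(A)$ rather than a putative thread, to match its $\notin$-formulation of $\square$, but this is cosmetic).

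For $(1)\Rightarrow(2)$ you have named the three correct ingredients---the mixing lemma, the wide-club-guessing lemma, and a postprocessing-function version of a result from~\cite{paper24}---but you have reversed their roles and missed a key dependency. The \emph{wide-club-guessing} lemma (Lemma~\ref{wide-club-guessing}) is what secures bullet~1, not bullet~3: its conclusion is that for every club $E$ there is $\alpha$ with $\sup(\nacc(C)\cap E)=\alpha$ for \emph{all} $C\in\mathcal C_\alpha$ simultaneously; ``wide'' refers to the width $|\mathcal C_\alpha|$, not to a family of target sets $A_i$. Conversely, the postprocessing version of the \cite{paper24} result (Lemma~\ref{thm16}) is the lemma that hits an entire sequence $\langle A_i\mid i<\theta\rangle$, and this is what delivers bullet~3. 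The mixing lemma is not used at the end to interleave two competing guessing properties; it is applied \emph{first}, to carve out stationary sets $S_\theta\s E^{\lambda^+}_\theta$ on which $\min(C_\alpha)=\theta$, so that a separate $\Phi_\theta$ can later be applied on each piece and reassembled via a $\min$-based case split. Crucially, Lemma~\ref{thm16} \emph{requires} a club-guessing $C$-sequence as input, so the wide-club-guessing postprocessing must be performed before invoking it; the order is mixing $\to$ wide-club-guessing $\to$ Lemma~\ref{thm16}, not the order you sketch. Finally, strong-limitness of $\lambda$ is not used to enumerate $\lambda$-sequences of cofinal sets; it is used to guarantee $\mathcal D(\lambda,\theta)=\lambda$ for cofinally many $\theta\in\reg(\lambda)$, which is what produces stationary $S_\theta\in J[\lambda^+]$ meeting the hypothesis of Lemma~\ref{thm16}, while $\ch_\lambda$ enters via $\diamondsuit(\lambda^+)$.
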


\section{Partitioning a fat set}\label{section1}
\begin{defn} A \emph{$\mathcal C$-sequence} over $\kappa$ is a sequence $\cvec{C}=\langle\mathcal C_\alpha\mid\alpha<\kappa\rangle$ such that, for all limit $\alpha<\kappa$, $\mathcal C_\alpha$ is a nonempty collection of club subsets of $\alpha$.
It is said to be \emph{$\xi$-bounded} if $\otp(C)\le\xi$ for all $C\in\mathcal C_\alpha$ and $\alpha<\kappa$. Its \emph{support} is defined to be the following set:
\[
\Gamma(\cvec{C}) :=
\{ \alpha \in \acc(\kappa) \mid \forall C \in \mathcal C_\alpha \forall \bar\alpha \in \acc(C)
[C \cap \bar\alpha \in \mathcal C_{\bar\alpha}] \}.
\]
\end{defn}
\begin{defn} A \emph{$C$-sequence} over $\Gamma$ is a sequence $\vec C=\langle C_\alpha\mid\alpha\in\Gamma\rangle$ such that, for all limit ordinals $\alpha\in\Gamma$, $C_\alpha$ is a club subset of $\alpha$.
It is said to be \emph{$\xi$-bounded} if $\otp(C_\alpha)\le\xi$ for all $\alpha\in\Gamma$.
\end{defn}

A key concept of this paper is that of an \emph{amenable $C$-sequence}, which is a strengthening of $\otimes_{\vec C}$ of \cite[p.~134]{Sh:365}.

\begin{defn}\label{amenable} A $C$-sequence $\vec C=\langle C_\alpha\mid\alpha\in\Gamma\rangle$ over a stationary subset $\Gamma\s\kappa$ is said to be \emph{amenable}
if for every club $D\s\kappa$, the set $\{ \alpha\in \Gamma \mid \sup(D\cap\alpha\setminus C_\alpha)<\alpha\}$ is nonstationary in $\kappa$.
\end{defn}
\begin{example}\label{example14} The simplest example of an amenable $C$-sequence is a $\lambda$-bounded $C$-sequence over $\lambda^+$.
Indeed, since $\{\alpha<\kappa\mid \otp(D\cap\alpha)=\omega^\alpha\}$ is a club for any club $D\s\kappa$,
any $C$-sequence $\langle C_\alpha\mid\alpha\in\Gamma\rangle$ over a stationary subset $\Gamma\s\kappa$ for which $\{\alpha\in\Gamma\mid \otp(C_\alpha)=\alpha\}$ is nonstationary, is amenable.
\end{example}
\begin{remark} If $V=L$, then
$\kappa$ carries an amenable $C$-sequence iff $\kappa$ is not ineffable.
In general, every stationary $\Gamma\s\kappa$ admits a stationary subset $\Gamma'\s\Gamma$ that carries an amenable $C$-sequence.
\end{remark}
Let us point out that the concept of Definition~\ref{amenable} is a relative of that of being a  \emph{nontrivial $C$-sequence} in the sense of \cite[Definition 6.3.1]{MR2355670}.
\begin{prop}\label{amenable_vs_trivial} Suppose that $\vec C=\langle C_\alpha\mid\alpha\in\Gamma\rangle$ is a $C$-sequence over a stationary subset $\Gamma\s\kappa$.

Then the following are equivalent:
\begin{enumerate}
\item $\vec C$ is amenable;
\item for every cofinal $A\s\kappa$, the set $\{\alpha\in\Gamma\mid A\cap\alpha\s C_\alpha\}$ is nonstationary.
\end{enumerate}
\end{prop}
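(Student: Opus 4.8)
The plan is to prove both implications by contraposition, trading cofinal sets for their closures and bounded exceptional sets for genuine containment via Fodor's lemma. Throughout, since every set in sight is a subset of the stationary $\Gamma$, and a stationary subset of $\kappa$ meets the club $\acc(\kappa)$ of limit ordinals in a stationary set, I may harmlessly assume that all $\alpha$ under consideration are limit ordinals, so that $C_\alpha$ is a genuine club in $\alpha$ (in particular closed in $\alpha$, and $\alpha>0$).

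For $(1)\Rightarrow(2)$, I argue the contrapositive. Suppose (2) fails, witnessed by a cofinal $A\s\kappa$ for which $T:=\{\alpha\in\Gamma\mid A\cap\alpha\s C_\alpha\}$ is stationary. Set $D:=\cl(A)$; as $A$ is cofinal, $D$ is a club in $\kappa$. The key observation is that $D\cap\alpha\s C_\alpha$ for every $\alpha\in T$. Indeed, writing $D\cap\alpha=(A\cap\alpha)\cup(\acc^+(A)\cap\alpha)$, the first piece lies in $C_\alpha$ by the choice of $\alpha$, while any $\beta\in\acc^+(A)\cap\alpha$ satisfies $\sup(A\cap\beta)=\beta>0$, so $\beta$ is a limit point of $A\cap\beta\s C_\alpha$ with $\beta<\alpha$, whence $\beta\in C_\alpha$ because $C_\alpha$ is closed in $\alpha$. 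Consequently $D\cap\alpha\setminus C_\alpha=\emptyset$ and $\sup(D\cap\alpha\setminus C_\alpha)=0<\alpha$ for every $\alpha\in T$, so $T$ witnesses the failure of amenability of $\vec C$ against the club $D$.

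For $(2)\Rightarrow(1)$, again by contraposition, suppose (1) fails, witnessed by a club $D\s\kappa$ for which $S:=\{\alpha\in\Gamma\mid\sup(D\cap\alpha\setminus C_\alpha)<\alpha\}$ is stationary. The map $\alpha\mapsto\sup(D\cap\alpha\setminus C_\alpha)$ is regressive on $S$, so Fodor's lemma yields a stationary $S'\s S$ and a fixed $\gamma^*<\kappa$ with $\sup(D\cap\alpha\setminus C_\alpha)=\gamma^*$ for all $\alpha\in S'$. Put $A:=D\setminus(\gamma^*+1)$, which is cofinal since $D$ is. Then for every $\alpha\in S'\setminus(\gamma^*+1)$ one has $A\cap\alpha=D\cap(\gamma^*,\alpha)\s C_\alpha$, since every element of $D\cap\alpha$ exceeding $\gamma^*$ avoids $D\cap\alpha\setminus C_\alpha$ and thus lies in $C_\alpha$. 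Hence $S'\setminus(\gamma^*+1)$ is a stationary subset of $\{\alpha\in\Gamma\mid A\cap\alpha\s C_\alpha\}$, contradicting (2).

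The two directions require nothing beyond Fodor's lemma and the elementary fact that $\cl(A)$ is a club whenever $A$ is cofinal; the content of the proposition is exactly that ``amenable'' and the nontriviality condition (2) encode the same information once one is willing to replace a club by its dense cofinal traces and conversely. The only genuinely delicate point, and the step I would be most careful about, is the boundary bookkeeping around the threshold $\gamma^*$ in $(2)\Rightarrow(1)$, together with the correct use of the closedness of $C_\alpha$ (and the restriction to limit $\alpha$) in $(1)\Rightarrow(2)$; everything else is routine.
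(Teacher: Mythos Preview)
Your proof is correct and follows essentially the same approach as the paper: both directions use the closure of $A$ (for $(1)\Rightarrow(2)$) and Fodor's lemma applied to the regressive map $\alpha\mapsto\sup(D\cap\alpha\setminus C_\alpha)$ (for $(2)\Rightarrow(1)$), with the same cofinal set $A=D\setminus(\gamma^*+1)$ witnessing the failure of (2). Your explicit remark about restricting to limit $\alpha$ (so that $C_\alpha$ is genuinely closed) is a useful clarification that the paper handles only implicitly.
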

\begin{proof} $(1)\implies(2)$: Let $A$ be an arbitrary cofinal subset of $\kappa$. Let $D:=A\cup\acc^+(A)$ denote the closure of $A$.
Then $A \cap \alpha \subseteq C_\alpha \iff D \cap \alpha \subseteq C_\alpha$ for all $\alpha \in \Gamma$, since $C_\alpha$ is club in $\alpha$, so that
$$\{\alpha\in\Gamma\mid A\cap\alpha\s C_\alpha\} = \{\alpha\in\Gamma\mid D\cap\alpha\s C_\alpha\}\s \{ \alpha\in \Gamma \mid \sup(D\cap\alpha\setminus C_\alpha)<\alpha\}.$$

Since $\vec C$ is amenable, the right-hand side of the above inclusion is nonstationary, and then so is the left-hand side.

$\neg(1)\implies\neg(2)$:
Suppose that $D$ is a club in $\kappa$ for which $S:=\{ \alpha\in \Gamma \mid \sup(D\cap\alpha\setminus C_\alpha)<\alpha\}$
is stationary. By Fodor's lemma, pick $\varepsilon<\kappa$ for which $S_\varepsilon:=\{ \alpha\in \Gamma \mid \sup(D\cap\alpha\setminus C_\alpha)=\varepsilon\}$ is stationary.
Then $A:=D\setminus(\varepsilon+1)$ is a cofinal subset of $\kappa$ for which $\{\alpha\in\Gamma\mid A\cap\alpha\s C_\alpha\}$ covers the stationary set $S_\varepsilon$.
\end{proof}

\begin{lemma}\label{lemma12} Suppose that $\vec C=\langle C_\alpha\mid\alpha\in\Gamma\rangle$ is an amenable $C$-sequence over a stationary subset $\Gamma\s\kappa$.
Then for every stationary $\Omega\s\Gamma$, there exists $i<\kappa$ such that $\Omega_{i,\tau}:=\{\beta\in \Omega\mid \otp(C_\beta)>i\ \&\ C_\beta(i)\ge\tau\}$ is stationary for all $\tau<\kappa$.
\end{lemma}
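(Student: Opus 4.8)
The plan is to argue by contradiction, splitting into two cases according to the order-type of the clubs $C_\beta$, with amenability entering only in the harder case.

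Suppose that no such $i$ exists, so that for every $i<\kappa$ there is some $\tau_i<\kappa$ for which $\Omega_{i,\tau_i}$ is nonstationary; fix a club $E_i\s\kappa$ disjoint from $\Omega_{i,\tau_i}$. Since $\Omega_{i,\tau}$ shrinks as $\tau$ grows, replacing $\tau_i$ by a larger ordinal keeps $\Omega_{i,\tau_i}$ nonstationary, so a straightforward recursion lets us assume that $\langle\tau_i\mid i<\kappa\rangle$ is strictly increasing. Let $E:=\triangle_{i<\kappa}E_i$, a club. The first thing to record is that for every $\beta\in\Omega\cap E$ and every $i<\otp(C_\beta)$ we have $i<\beta$, hence $\beta\in E_i$, hence $\beta\notin\Omega_{i,\tau_i}$, which (as $\otp(C_\beta)>i$) forces $C_\beta(i)<\tau_i$. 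Thus $C_\beta(i)<\tau_i$ for all $i<\otp(C_\beta)$.

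\textbf{Case 1:} $\{\beta\in\Omega\mid \otp(C_\beta)<\beta\}$ is stationary. Here $\beta\mapsto\otp(C_\beta)$ is regressive on a stationary set, so by Fodor's lemma it is constantly equal to some $\theta<\kappa$ on a stationary $\Omega''$. Put $\tau_\theta:=\sup_{i<\theta}\tau_i<\kappa$. The set $\bigcap_{i<\theta}E_i\cap(\tau_\theta,\kappa)$ is a club (intersection of fewer than $\kappa$ clubs), so it meets $\Omega''$; for any $\beta$ in this intersection, each of the $\theta$ many elements $C_\beta(i)$ $(i<\theta=\otp(C_\beta))$ satisfies $C_\beta(i)<\tau_i\le\tau_\theta<\beta$, whence $C_\beta\s\tau_\theta$, contradicting $\sup(C_\beta)=\beta>\tau_\theta$. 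Note that amenability plays no role in this case.

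\textbf{Case 2:} $S:=\{\beta\in\Omega\mid \otp(C_\beta)=\beta\}$ is stationary. This is where amenability is needed. Let $D:=\{\delta\in\acc(\kappa)\mid \forall i<\delta\,(\tau_i<\delta)\}$ be the club of limit closure points of $i\mapsto\tau_i$. The key claim is that $D\cap\beta\s C_\beta$ for every $\beta\in S\cap E$: given $\delta\in D\cap\beta$, for each $i<\delta$ we have $i<\delta<\beta=\otp(C_\beta)$ and therefore $C_\beta(i)<\tau_i<\delta$; thus the first $\delta$ many elements of $C_\beta$ all lie below $\delta$, so they are cofinal in $\delta$, giving $\delta=\sup(C_\beta\cap\delta)\in\acc^+(C_\beta)\s C_\beta$. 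Consequently $\sup(D\cap\beta\setminus C_\beta)<\beta$ for every $\beta$ in the stationary set $S\cap E$, so the set $\{\alpha\in\Gamma\mid \sup(D\cap\alpha\setminus C_\alpha)<\alpha\}$ is stationary, contradicting the amenability of $\vec C$ (Definition~\ref{amenable}) applied to the club $D$.

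The main obstacle is precisely the claim in Case~2. The naive bound coming from $C_\beta(i)<\tau_i$ only says that below a generic accumulation point $\delta$ of the range of $\tau$ the set $C_\beta$ is at least as dense as that range, which permits $C_\beta$ to jump over $\delta$ and is perfectly compatible with amenability. The point is to evaluate this only at \emph{closure} points $\delta\in D$, where $\otp(\{\tau_i\mid i<\delta\})=\delta$ upgrades the density estimate to $\otp(C_\beta\cap\delta)\ge\delta$ and hence forces $\delta\in C_\beta$; this is exactly what yields the full containment $D\cap\beta\s C_\beta$ required to contradict amenability. The case split is genuinely needed, since this closure-point argument only bites when $\otp(C_\beta)=\beta$.
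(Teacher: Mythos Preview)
Your proof is correct and takes essentially the same approach as the paper's: the same case split on whether $\{\beta\in\Omega\mid\otp(C_\beta)=\beta\}$ is stationary, Fodor in the bounded-order-type case, and in the other case the observation that closure points of $i\mapsto\tau_i$ (the paper's $f$) must land in $C_\beta$, contradicting amenability. The only cosmetic difference is that you establish $D\cap\beta\subseteq C_\beta$ for all $\beta\in S\cap E$ and then invoke Definition~\ref{amenable}, whereas the paper first invokes amenability (via Proposition~\ref{amenable_vs_trivial}) to obtain a single $\alpha\in D\cap\beta\setminus C_\beta$ and then derives $C_\beta(\alpha)=\alpha$ for the contradiction; these are the same argument read in opposite directions.
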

\begin{proof} We follow the proof of \cite[Lemma 3.2]{rinot18}. Suppose not.
Then we can fix a stationary $\Omega \subseteq \Gamma$ and a function $f:\kappa\rightarrow\kappa$ such that $\Omega_{i,{f(i)}}$ is nonstationary for all  $i<\kappa$.
For each $i<\kappa$, let $D_i$ be a club subset of $\kappa\setminus \Omega_{i,{f(i)}}$.
Then $D:=\{\delta\in\diagonal_{i<\kappa}D_i\mid f[\delta]\s\delta\}$ is a club subset of $\kappa$.
Consider the set $S:=\{\beta\in \Omega\mid  \otp(C_\beta)=\beta\}$.

$\br$ Suppose $S$ is stationary. Then $D \cap S$ is a stationary subset of $\Gamma$.
Thus, by amenability of $\vec C$ and Proposition~\ref{amenable_vs_trivial}(2), we may pick some $\beta \in D \cap S$ and $\alpha\in(D\cap\beta)\setminus C_\beta$.
As $\beta\in {S}\cap\diagonal_{i<\kappa}D_i$, we get that $\beta\in {S}\setminus \Omega_{i,{f(i)}}$ for all $i<\beta$.
In particular, $C_\beta(i)<f(i)$ for all $i<\alpha$. Since $\alpha\in D$, we have $f[\alpha]\s\alpha$. Altogether, $C_\beta(i)<\alpha$ for all $i<\alpha$.
Since $\alpha < \beta = \otp(C_\beta)$ and the map $i\mapsto C_\beta(i)$ is increasing and continuous, we then get that $C_\beta(\alpha)=\alpha$, contradicting the fact that $\alpha\notin C_\beta$.

$\br$ Suppose $S$ is nonstationary. Then there are stationarily many $\beta \in \Omega$ such that $\otp(C_\beta) <\beta$, so that by Fodor's lemma,
there exists some $\varepsilon<\kappa$ such that $T_\varepsilon :=\{\beta\in \Omega\cap \diagonal_{i<\kappa}D_i\mid \otp(C_\beta)=\varepsilon\}$ is stationary.
Pick $\beta\in T_\varepsilon$ above $\sup(f[\varepsilon])$.
As $\otp(C_\beta)=\varepsilon$ and $\beta\in \diagonal_{i<\kappa}D_i$, we get that $\beta \notin \Omega_{i,f(i)}$ for all $i<\varepsilon$, and hence $C_\beta(i)<f(i)$ for all $i<\varepsilon$.
So, $\beta=\sup(C_\beta)\le\sup(f[\varepsilon])$, contradicting the choice of $\beta$.
\end{proof}

\begin{defn}
A function $\Phi:\mathcal K(\kappa)\rightarrow\mathcal K(\kappa)$ is a \emph{postprocessing function}  if for every $x\in\mathcal K(\kappa)$:
\begin{itemize}
\item  $\Phi(x)$ is a club in $\sup(x)$;
\item $\acc(\Phi(x)) \s \acc(x)$;
\item $\Phi(x)\cap\bar\alpha=\Phi(x\cap\bar\alpha)$ for every $\bar\alpha\in\acc(\Phi(x))$, meaning that the following diagram commutes:
\[
\xymatrix{
x \ar[rr]^\Phi \ar[d]_{\cap\bar\alpha}  && \Phi(x) \ar[d]^{\cap\bar\alpha}  \\
x \cap \bar\alpha \ar[rr]^\Phi  && *+[F]\txt{$\Phi(x)\cap\bar\alpha$\\${}=\Phi(x\cap\bar\alpha)$}
}
\]
\end{itemize}
\end{defn}
\begin{remark} By the first clause, $\otp(\Phi(x))\ge\cf(\sup(x))$, and by the second clause, $\otp(\Phi(x))\le\otp(x)$.
In particular, if $\otp(x)$ is a regular cardinal, then $\otp(\Phi(x)) = \otp(x)$.
\end{remark}
We say that $\Phi$ is \emph{conservative} provided that $\Phi(x) \subseteq x$ for all $x\in\dom(\Phi)$.
We say that $\Phi$ is \emph{faithful} provided that $(\omega\setminus\{0\})\notin\rng(\Phi)$.
For any function $f$, we say that $\Phi$ is \emph{$f$-preserving} provided that $f(\Phi(x))=f(x)$ for all $x\in\dom(\Phi)$.
For instance, if $\Phi$ is $\acc$-preserving, then it is also $\otp$-preserving.
Note that the composition $\Phi\circ\Phi'$ of two (resp.\ $f$-preserving, conservative) postprocessing functions $\Phi$ and $\Phi'$ is a (resp.\ $f$-preserving, conservative) postprocessing function.
Furthermore, if  $\Phi$ is faithful, then  so is $\Phi \circ \Phi'$.

\begin{example}The simplest example of a postprocessing function is the identity function; it is conservative, $f$-preserving (for any $f$), but not faithful.
\end{example}
\begin{example}\label{faithful_correction} Define $\faithful:\mathcal K(\kappa)\rightarrow\mathcal K(\kappa)$ by stipulating:
$$\faithful(x):=\begin{cases}
x\setminus\{2\},&\text{if }x(1)=2;\\
x,&\text{otherwise}.
\end{cases}$$

Then $\faithful$ is a faithful, conservative, $\acc$-preserving, $\min$-preserving postprocessing function.
\end{example}

\begin{example}\label{chop-bottom}
For an ordinal $j<\kappa$, define $\Phi^{\{j\}} : \mathcal K(\kappa) \to \mathcal K(\kappa)$ by stipulating:
\[\Phi^{\{j\}}(x) := \begin{cases}
x \setminus x(j), &\text{if } \otp(x) > j; \\
x,   &\text{otherwise}.
\end{cases}\]

Then $\Phi^{\{j\}}$ is a conservative postprocessing function. It is $\min$-preserving iff $j=0$, faithful iff $j\in\omega\setminus2$, and $\acc$-preserving iff $j<\omega$.
\end{example}
\begin{example}\label{intersectD} For a given club $D\s\kappa$, define $\Phi:\mathcal K(\kappa)\rightarrow\mathcal K(\kappa)$ by stipulating:
$$\Phi(x):=\begin{cases}
x\cap D,&\text{if }\sup(x\cap D)=\sup(x);\\
x\setminus\sup(x\cap D),&\text{otherwise}.
\end{cases}$$

Then $\Phi$ is a conservative postprocessing function.
\end{example}

\begin{lemma}\label{cons-pp-preserves-amenable} Suppose that $\vec C=\langle C_\alpha\mid\alpha\in\Gamma\rangle$ is a $C$-sequence over a stationary subset $\Gamma\s\acc(\kappa)$.

Then the following are equivalent:
\begin{enumerate}
\item $\vec C$ is amenable;
\item for every postprocessing function $\Phi:\mathcal K(\kappa)\rightarrow\mathcal K(\kappa)$,  $\langle \Phi(C_\alpha)\mid\alpha\in\Gamma \rangle$ is amenable;
\item for every conservative postprocessing function $\Phi:\mathcal K(\kappa)\rightarrow\mathcal K(\kappa)$ and every club $D\s\kappa$,
the set $\{ \alpha\in\Gamma\mid D\cap\alpha=\Phi(C_\alpha)\}$ is nonstationary.
 \end{enumerate}
\end{lemma}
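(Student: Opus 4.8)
The plan is to establish the cycle of implications $(1)\Rightarrow(2)\Rightarrow(3)\Rightarrow(1)$, in which the first implication carries essentially all the content and the latter two are short. Throughout I will use the reformulation of amenability furnished by Proposition~\ref{amenable_vs_trivial}: a $C$-sequence $\langle D_\alpha\mid\alpha\in\Gamma\rangle$ is amenable iff for every cofinal $A\s\kappa$ the set $\{\alpha\in\Gamma\mid A\cap\alpha\s D_\alpha\}$ is nonstationary. Note that $\langle\Phi(C_\alpha)\mid\alpha\in\Gamma\rangle$ is again a $C$-sequence over $\Gamma$, since $\Phi(C_\alpha)$ is by definition a club in $\sup(C_\alpha)=\alpha$; so this reformulation applies to it as well.

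For $(1)\Rightarrow(2)$, I fix a postprocessing function $\Phi$ and argue by contradiction. Assuming $\langle\Phi(C_\alpha)\rangle$ is not amenable, Proposition~\ref{amenable_vs_trivial} hands me a cofinal $A\s\kappa$ for which $S:=\{\alpha\in\Gamma\mid A\cap\alpha\s\Phi(C_\alpha)\}$ is stationary. The obstacle here — and the heart of the whole lemma — is that $\Phi$ need not be conservative, so $A\cap\alpha\s\Phi(C_\alpha)$ does not by itself place $A\cap\alpha$ inside $C_\alpha$, whereas the amenability of $\vec C$ speaks only of $C_\alpha$. The way around this is to pass from $A$ to its set of accumulation points $A':=\acc^+(A)$, which is a club (hence cofinal) in $\kappa$. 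Indeed, for $\alpha\in S$ and $\beta\in A'\cap\alpha$ one has $A\cap\beta\s A\cap\alpha\s\Phi(C_\alpha)$ with $\sup(A\cap\beta)=\beta<\alpha=\sup(\Phi(C_\alpha))$, so $\beta$ is an accumulation point of $\Phi(C_\alpha)$ below its supremum, i.e.\ $\beta\in\acc(\Phi(C_\alpha))$; and the defining clauses of a postprocessing function give $\acc(\Phi(C_\alpha))\s\acc(C_\alpha)\s C_\alpha$. Hence $A'\cap\alpha\s C_\alpha$ for every $\alpha\in S$, so $\{\alpha\in\Gamma\mid A'\cap\alpha\s C_\alpha\}$ is stationary, contradicting the amenability of $\vec C$ through Proposition~\ref{amenable_vs_trivial}.

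The implication $(2)\Rightarrow(3)$ is immediate from Definition~\ref{amenable}: given a conservative $\Phi$ and a club $D\s\kappa$, amenability of $\langle\Phi(C_\alpha)\rangle$ renders $\{\alpha\in\Gamma\mid\sup(D\cap\alpha\setminus\Phi(C_\alpha))<\alpha\}$ nonstationary, and this set visibly contains $\{\alpha\in\Gamma\mid D\cap\alpha=\Phi(C_\alpha)\}$. For $(3)\Rightarrow(1)$ I argue contrapositively. If $\vec C$ is not amenable, Proposition~\ref{amenable_vs_trivial} yields a cofinal $A$ with $\{\alpha\in\Gamma\mid A\cap\alpha\s C_\alpha\}$ stationary; replacing $A$ by its closure $D:=\cl(A)$ (a club) and intersecting with the club $\acc(D)$, I obtain a stationary set of $\alpha$ satisfying both $D\cap\alpha\s C_\alpha$ and $\sup(D\cap\alpha)=\alpha$. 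For the conservative postprocessing function $\Phi$ of Example~\ref{intersectD} associated to $D$, these two facts force $\sup(C_\alpha\cap D)=\alpha=\sup(C_\alpha)$ and hence $\Phi(C_\alpha)=C_\alpha\cap D=D\cap\alpha$; thus $\{\alpha\in\Gamma\mid D\cap\alpha=\Phi(C_\alpha)\}$ is stationary, contradicting $(3)$. The only genuinely delicate step is $(1)\Rightarrow(2)$, and within it the key realization that it is the accumulation points of $A$, rather than $A$ itself, that the coherence clause $\acc(\Phi(x))\s\acc(x)$ is designed to control.
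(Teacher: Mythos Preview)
Your proof is correct and follows essentially the same approach as the paper: the cycle $(1)\Rightarrow(2)\Rightarrow(3)\Rightarrow(1)$, with the key step $(1)\Rightarrow(2)$ handled by passing from $A$ to $\acc^+(A)$ and invoking $\acc(\Phi(C_\alpha))\s\acc(C_\alpha)$, and $(3)\Rightarrow(1)$ via the conservative postprocessing function of Example~\ref{intersectD}. The only cosmetic difference is that in the last implication you take $D:=\cl(A)$ whereas the paper takes $D:=\acc^+(A)$; both choices work for the same reason.
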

\begin{proof} $\neg(2)\implies\neg(1)$:
Suppose that $\Phi:\mathcal K(\kappa)\rightarrow\mathcal K(\kappa)$ is a postprocessing function
for which $\langle \Phi(C_\alpha)\mid \alpha\in\Gamma\rangle$ is not amenable.
By Proposition~\ref{amenable_vs_trivial}, let us fix a cofinal $A\s\kappa$,
for which the set $S:=\{\alpha\in\Gamma\mid A\cap\alpha\s \Phi(C_\alpha)\}$ is stationary.
Consider the club $D:=\acc^+(A)$.
For each $\alpha\in S\setminus\{0\}$, we have $D\cap\alpha\s\acc(\Phi(C_\alpha))\s\acc(C_\alpha)$, so that $\sup(D\cap\alpha\setminus C_\alpha)=0<\alpha$.
That is $\langle C_\alpha\mid\alpha\in\Gamma\rangle$ is not amenable.

$(2)\implies(3)$: This is an immediate consequence of Proposition~\ref{amenable_vs_trivial}.

$\neg(1)\implies\neg(3)$: Suppose that $\vec C$ is not amenable.
By Proposition~\ref{amenable_vs_trivial}, let us fix a cofinal $A\s\kappa$, for which the set $S:=\{\alpha\in\Gamma\mid A\cap\alpha\s C_\alpha\}$ is stationary.
Consider the conservative postprocessing function given by Example~\ref{intersectD} for the club $D:=\acc^+(A)$.
Put $T:=S\cap\acc(D)$. For each $\alpha\in T$, we have $D\cap\alpha\s\acc(C_\alpha)\s C_\alpha$ and $\sup(D\cap\alpha)=\alpha$, so that $D\cap\alpha=C_\alpha\cap D=\Phi(C_\alpha)$.
Consequently, $\{ \alpha\in\Gamma\mid D\cap\alpha=\Phi(C_\alpha)\}$ covers the stationary set $T$.
\end{proof}

We now arrive at the main lemma of this section:

\begin{lemma}\label{split_amenable}
Suppose that $\vec C=\langle C_\delta\mid\delta\in\Gamma\rangle$ is an amenable $C$-sequence over a stationary subset $\Gamma\s \acc(\kappa)$.
Suppose that $\langle \Omega^\iota\mid\iota<\Lambda\rangle$ is a sequence of stationary subsets of $\Gamma$, with $\Lambda\le\kappa$.

Then there exist a conservative postprocessing function $\Phi:\mathcal K(\kappa)\rightarrow\mathcal K(\kappa)$,
a cofinal subset $B\s\Lambda$, and an injection $h:B\rightarrow\kappa$ such that
$\{ \delta\in\Omega^\iota\mid \min(\Phi(C_\delta))=h(\iota)\}$ is stationary for all $\iota\in B$.
\end{lemma}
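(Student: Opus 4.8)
The plan is to realize the separation through a single conservative postprocessing function that ``reads off'' a marked value. First I would invoke Lemma~\ref{lemma12}: for each $\iota<\Lambda$ it yields a level $i_\iota<\kappa$ such that $\Omega^\iota_{i_\iota,\tau}=\{\delta\in\Omega^\iota\mid \otp(C_\delta)>i_\iota\ \&\ C_\delta(i_\iota)\ge\tau\}$ is stationary for every $\tau<\kappa$. Since $\delta\mapsto C_\delta(i_\iota)$ is regressive on each such set, Fodor's lemma then shows that the set of attainable values $\{v<\kappa\mid \{\delta\in\Omega^\iota\mid \otp(C_\delta)>i_\iota\ \&\ C_\delta(i_\iota)=v\}\text{ is stationary}\}$ is unbounded in $\kappa$. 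This unboundedness is the crucial freedom: it lets me prescribe the eventual minimum of each processed club to be as large as I wish.

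Next, by recursion along a cofinal $B\s\Lambda$ to be determined, I would select a strictly increasing sequence $\langle v_\iota\mid\iota\in B\rangle$ with each $v_\iota$ an attainable value for $\Omega^\iota$ lying above $\sup\{v_{\iota'}\mid \iota'\in B\cap\iota\}$, together with a witnessing stationary set $S^\iota\s\Omega^\iota$ on which $C_\delta(i_\iota)=v_\iota$. Strict monotonicity makes $h(\iota):=v_\iota$ an injection. Writing $V:=\{v_\iota\mid\iota\in B\}$, I would define $\Phi(x):=x\bks\min(x\cap V)$ whenever $x\cap V\neq\emptyset$, and $\Phi(x):=x$ otherwise. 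The point is that $\min(x\cap V)$ is determined by the bounded initial segment $x\cap(\min(x\cap V)+1)$ and is left unchanged by the operation $x\mapsto x\cap\bar\alpha$ for any $\bar\alpha$ above it; this is exactly what is needed to verify the commuting-diagram clause, so that $\Phi$ is a conservative postprocessing function (the other two clauses being immediate, as chopping an initial segment neither creates new accumulation points nor moves the supremum). By construction $\min(\Phi(C_\delta))=\min(C_\delta\cap V)$, so the desired set $\{\delta\in\Omega^\iota\mid \min(\Phi(C_\delta))=h(\iota)\}$ will contain $S^\iota$ provided $v_\iota$ is the \emph{first} point of $V$ met by $C_\delta$.

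This ``first-hit'' requirement is the heart of the matter and the step I expect to be the main obstacle. For $\delta\in S^\iota$ the value $v_\iota$ already lies in $C_\delta$, and every \emph{later} marked value exceeds $v_\iota$; the only danger is that some \emph{earlier} marked value $v_{\iota'}<v_\iota$ belongs to $C_\delta$ below $v_\iota$. Thus at stage $\iota$ I must shrink the relevant fiber to $\{\delta\mid C_\delta\cap Z_\iota=\emptyset\}$, where $Z_\iota=\{v_{\iota'}\mid\iota'\in B\cap\iota\}$ has size $<\kappa$ and $\sup Z_\iota<\kappa$. The subtlety is that a single ordinal can lie in stationarily many of the clubs $C_\delta$ (amenability restricts only the \emph{tails} captured by the clubs, not individual points), so the naive removal need not preserve stationarity. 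I would handle this by exploiting the two remaining degrees of freedom: the unboundedly many attainable values for each $\Omega^\iota$ (allowing me to choose, among them, one whose fiber has a stationary clean part), and the liberty to pass to a cofinal subfamily $B$ (discarding those $\iota$ for which no clean choice survives). Verifying that these choices can always be made so as to keep $B$ cofinal --- combining the threshold trick, Fodor's lemma, and the $\kappa$-completeness of the nonstationary ideal --- is the delicate combinatorial core of the argument; once it is in place, the definition of $\Phi$ and the inclusion $S^\iota\s\{\delta\in\Omega^\iota\mid \min(\Phi(C_\delta))=h(\iota)\}$ finish the proof routinely.
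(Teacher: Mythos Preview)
Your outline has the right architecture --- invoke Lemma~\ref{lemma12}, choose marked values by recursion, define $\Phi$ to chop below the first mark --- but the step you flag as ``the delicate combinatorial core'' is not just delicate: as stated, it does not go through, and the paper's proof shows why.

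The paper's argument hinges on a dichotomy you do not set up. Either a \emph{single} level $i^*$ serves cofinally many $\iota$ (Cases~2.1 and~2.2.1), in which case $\Phi=\Phi^{\{i^*\}}$ and there is \emph{no} first-hit obstacle whatsoever: all marks sit at the same coordinate, so $\min(\Phi(C_\delta))=C_\delta(i^*)$ is the chosen value automatically. Or no single level works (Case~2.2.2), and then the \emph{failure} of the previous cases is exactly what produces the threshold bounds: one defines $f(i)=\sup\{\iota\mid\zeta^\iota_i=\kappa\}$ and $g(i)=\sup\{\zeta^\iota_i\mid\iota>f(i)\}$, and the case hypothesis gives $g(i)<\kappa$ for every $i$. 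Choosing $h(\iota)>g(i_\iota)$ then forces $\zeta^\iota_{i_{\iota'}}<h(\iota')$ for each earlier $\iota'$, so the collision set $\Omega^\iota_{i_{\iota'},h(\iota')}$ is nonstationary and can be removed. Your recursion has no access to these bounds, because you never isolate the case split that generates them.

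There is a second, more concrete problem. In Case~2.2.2 the paper's $\Phi$ reads off the least $i$ with $x(i_\iota)=h(\iota)$ for some $\iota$ --- a \emph{level--value pair} --- whereas your $\Phi$ reads off $\min(x\cap V)$, a value only. The threshold argument controls only $C_\delta(i_{\iota'})$: it shows that $C_\delta(i_{\iota'})<h(\iota')$ off a nonstationary set. It says nothing about whether $h(\iota')$ appears at some \emph{other} coordinate of $C_\delta$, which is what your first-hit condition demands. So even with the paper's thresholds in hand, your $\Phi$ would not be verified; the level--value encoding is doing real work.

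In short: the missing idea is the $f,g$ dichotomy, and your value-only $\Phi$ must be replaced by the level--value version $\Phi(x)=x\setminus x(\min I(x))$ with $I(x)=\{i_\iota\mid x(i_\iota)=h(\iota)\}$ for the hard case.
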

\begin{proof}  First, for all $i<\kappa$, define a regressive function $\varphi_{i}:\Gamma\rightarrow\kappa$ by stipulating:
$$\varphi_{i}(\delta):=\begin{cases}
C_\delta(i),&\text{if }\otp(C_\delta)>i;\\
0,&\text{otherwise}.
\end{cases}$$

Now, we consider a few cases.

\underline{Case 1.} Suppose that $\Lambda$ is a successor ordinal, say $\Lambda=\iota+1$. Let $B:=\{\iota\}$, and define $h(\iota)$ by appealing to Fodor's lemma with $\varphi_0\restriction\Omega^\iota$.
That is, we define $h(\iota)$ to ensure that  $\{ \delta\in\Omega^\iota \mid C_\delta(0) = h(\iota) \}$ is stationary.
Then letting $\Phi$ be the identity function does the job.

\underline{Case 2.}  Suppose that $\Lambda$ is a limit ordinal. By passing to a cofinal subset of $\Lambda$, we may assume that $\Lambda$ is an infinite regular cardinal.

Fix an arbitrary $\iota<\Lambda$.
For all $i,\tau<\kappa$, let $\Omega^\iota_{i,\tau}$ denote the corresponding set defined in the statement of Lemma~\ref{lemma12}.
For every $i<\kappa$, $\langle \Omega^\iota_{i,\tau}\mid \tau<\kappa\rangle$ is a $\s$-decreasing sequence,
and hence  $$\zeta^\iota_i:=\{\tau<\kappa\mid\Omega^\iota_{i,\tau}\text{ is stationary in } \kappa \}$$ is an ordinal.

Define $f:\kappa\rightarrow\Lambda+1$ by stipulating:
$$f(i):=\sup\{\iota<\Lambda\mid \zeta_{i}^\iota=\kappa\}.$$

\underline{Case 2.1.} Suppose that there exists some $i^*<\kappa$ such that $f(i^*)=\Lambda$.
Let $B:=\{\iota<\Lambda\mid \zeta_{i^*}^\iota=\kappa\}$, and pick an injection $h:B\rightarrow\kappa$ that satisfies for all $\iota\in B$:
$$\Delta^\iota:=\{\delta\in\Omega^\iota\mid \otp(C_\delta)>i^*\ \&\ C_\delta(i^*)=h(\iota)\}\text{ is stationary}.$$
Let us point out that this is indeed possible. We obtain $h$ by recursion over $\iota\in B$, as follows.
For $\iota=\min(B)$, define $h(\iota)$ by appealing to Fodor's lemma with $\varphi_{i^*}\restriction\Omega^\iota_{i^*,0}$,
and for all nonminimal $\iota\in B$ such that $h\restriction(B\cap\iota)$ has already been defined, define $h(\iota)$ by appealing to Fodor's lemma with
$$\varphi_{i^*}\restriction\Omega^\iota_{i^*,\sup(\im(h\restriction(B\cap\iota)))+1}.$$
Let $\Phi$ be the conservative postprocessing function $\Phi^{\{i^*\}}$ from Example~\ref{chop-bottom}.
Then, for all $\iota\in B$,
$\{ \delta\in\Omega^\iota\mid \min(\Phi(C_\delta))=h(\iota)\}$ covers the stationary set $\Delta^\iota$, so we are done.

\underline{Case 2.2.} Suppose that $\Lambda\notin\im(f)$.
By Lemma~\ref{lemma12}, for every $\iota<\Lambda$, let us pick some $i_\iota<\kappa$ for which $\zeta^\iota_{i_\iota}=\kappa$.
As $\Lambda\notin\im(f)$, the map $\iota\mapsto i_\iota$ is $(<\Lambda)$-to-$1$ over $\Lambda$,
so let us pick some cofinal $A\s\Lambda$ such that $\iota\mapsto i_\iota$ is strictly increasing.
Define $g:\kappa\rightarrow\kappa+1$ by stipulating:
$$g(i):=\sup\{\zeta_{i}^\iota\mid \iota\in A, \iota>f(i)\}.$$

\underline{Case 2.2.1.} Suppose that there exists some $i^*<\kappa$ such that $g(i^*)=\kappa$.
Pick injections $b:\Lambda\rightarrow A$ and $h:\im(b)\rightarrow\kappa$ such that for all $\iota\in\dom(h)$:
$$\Delta^\iota:=\{\delta\in\Omega^\iota\mid \otp(C_\delta)>i^*\ \&\ C_\delta(i^*)=h(\iota)\}\text{ is stationary}.$$
Let us point out that this is indeed possible.
Set $$b(0):=\min\{ \iota\in A \mid \zeta^\iota_{i^*}>0, \iota> f(i^*)\},$$
and then define $h(b(0))$ by appealing to Fodor's lemma with $\varphi_{i^*}\restriction\Omega^{b(0)}_{i^*,0}$.
Next, given a nonzero $\beta<\Lambda$ such that $b\restriction\beta$ and $h\restriction\im(b\restriction\beta)$ has already been defined,
let $$b(\beta):=\min\{ \iota\in A \mid \zeta^\iota_{i^*}>\sup(\im(h\restriction\im(b\restriction\beta))), \iota>\sup(\im(b\restriction\beta)) \},$$
and then define $h(b(\beta))$ by appealing to Fodor's lemma with $$\varphi_{i^*}\restriction\Omega^{b(\beta)}_{i^*,\sup(\im(h\restriction\im(b\restriction \beta)))+1}.$$

Define $\Phi$ as in Case 2.1, and note that $B:=\rng(b)$ and $h$ do the job.

\underline{Case 2.2.2.} Suppose that  $\kappa\notin\im(g)$.
Recalling that $\Lambda\notin\im(f)$, let $B$ be a cofinal subset of $A$ with the property that $\sup\{ f(i_{\iota'})\mid \iota'\in B\cap \iota\}<\iota$ for all $\iota\in B$.
As $\kappa\notin\im(g)$, let us pick an injection $h:B\rightarrow\kappa$ such that for all $\iota\in B$:
\begin{enumerate}
\item $\Delta^\iota:=\{\delta\in\Omega^\iota\mid \otp(C_\delta)>i_\iota\ \&\ C_\delta(i_\iota)=h(\iota)\}$ is stationary;
\item $h(\iota)>g(i_\iota)$.
\end{enumerate}
At this stage, the reader can be probably see that such a function indeed exists.

For all $x\in\mathcal K(\kappa)$, let $I(x):=\{ i<\otp(x)\mid \exists \iota\in B[i=i_\iota\ \&\ x(i_\iota)=h(\iota)]\}$.
Define $\Phi:\mathcal K(\kappa)\rightarrow\mathcal K(\kappa)$ by stipulating:
$$\Phi(x):=\begin{cases}
x\setminus x(\min(I(x))),&\text{if }I(x)\neq\emptyset;\\
x,&\text{otherwise}.
\end{cases}$$

Let $\iota\in B$ be arbitrary.  For all $\iota'\in B\cap\iota$, we have $h(\iota')>g(i_{\iota'})$ and $\iota>f(i_{\iota'})$,
and hence $\zeta^\iota_{i_{\iota'}}<h(\iota')$ so that $\Omega^\iota_{i_{\iota'},h(\iota')}$ is nonstationary.
Let $D_\iota$ be a club disjoint from $\bigcup\{\Omega^\iota_{i_{\iota'},h(\iota')}\mid \iota'\in B\cap\iota\}$.
We claim that $\{ \delta\in\Omega^\iota\mid \min(\Phi(C_\delta))=h(\iota)\}$ covers the stationary set $\Delta^\iota\cap D_\iota$.
To see this, let $\delta\in\Delta^\iota\cap D_\iota$ be arbitrary.
As $\delta\in \Delta^\iota$, we have $\iota\in I(C_\delta)$, so that $\min(C_\delta\setminus C_\delta(i_\iota))=C_\delta(i_\iota)=h(\iota)$.
Towards a contradiction, suppose that $\min(\Phi(C_\delta))\neq h(\iota)$.
Since $\iota\mapsto i_\iota$ is strictly increasing over $B$, this must mean that we may pick $\iota'\in B\cap\iota$.
In particular, $C_\delta(i_{\iota'})=h(\iota')$, so that $\delta\in\Omega^{\iota}_{i_{\iota'},h(\iota')}$, contradicting the fact that $\delta\in D_\iota$.

Thus, we are left with proving the following.
\begin{claim}$\Phi$ is a postprocessing function.
\end{claim}
\begin{proof} Let $x\in\mathcal K(\kappa)$ be arbitrary. As $\Phi(x)$ is a final segment of $x$, we know that $\Phi(x)$ is a club in $\sup(x)$ and $\acc(\Phi(x))\s\acc(x)$.
Evidently, $I(x\cap\bar\alpha)\sq I(x)$.

$\br$ If $I(x)=\emptyset$, then $\Phi(x)=x$ and $I(x\cap\bar\alpha)=\emptyset$, so that $\Phi(x\cap\bar\alpha)=x\cap\bar\alpha=\Phi(x)\cap\bar\alpha$.

$\br$ If $I(x)\neq\emptyset$, then since $\bar\alpha\in\acc(\Phi(x))$ and $I(x\cap\bar\alpha)\sq I(x)$, we infer that $I(x\cap\bar\alpha)\neq\emptyset$ and $\Phi(x\cap\bar\alpha)=\Phi(x)\cap\bar\alpha$.
\end{proof}

It is easy to see that $\Phi(x)$ is a final segment of $x$ for all $x\in\mathcal K(\kappa)$.
In particular, $\Phi$ is conservative. This completes the proof.
\end{proof}

\begin{defn}\label{def115}
The principle $\square_\xi(\kappa,{<}\mu,\mathcal R_0,\mathcal R_1)$ asserts the existence of a $\xi$-bounded $\mathcal C$-sequence over $\kappa$,
$\cvec{C}=\langle\mathcal C_\alpha \mid \alpha < \kappa\rangle$, such that:
\begin{itemize}
\item for every $\alpha < \kappa$, $\left| \mathcal C_\alpha \right| < \mu$ and $|\{ C \in \mathcal C_\alpha\mid \otp(C)=\xi\}|\le1$;
\item for every  $\alpha < \kappa$, every $C \in \mathcal C_\alpha$, and every $\bar\alpha \in \acc(C)$, there exists $D \in \mathcal C_{\bar\alpha}$ such that $D \mathrel{\mathcal R_0} C$;
\item for every cofinal $A\s\kappa$, there exists $\alpha\in\acc^+(A)$ such that $(A\cap\alpha)\mathrel{\mathcal R_1}\mathcal  C_\alpha$;
\item for every $\alpha \in \kappa \setminus \Gamma(\cvec{C})$, $\mathcal C_\alpha$ is a singleton,
say, $\mathcal C_\alpha = \{e_\alpha\}$, with $\otp(e_\alpha) = \cf(\alpha)$;\footnote{In particular, we mean that $\mathcal C_{\alpha+1}=\{\{\alpha\}\}$ for all $\alpha<\kappa$.}
\item $E^\kappa_\omega \subseteq \Gamma(\cvec{C})$.
\end{itemize}
\end{defn}

If we omit $\xi$, then we mean that $\xi=\kappa$.
If we omit $\mu$, then we mean that $\mu = 2$,
and in that case we sometimes say that the principle is witnessed by a corresponding $C$-sequence $\langle C_\alpha \mid \alpha<\kappa \rangle$,
where for every $\alpha<\kappa$, $C_\alpha$ is the unique element of $\mathcal C_\alpha$.
We write $\square_\xi(\kappa,\mu,\mathcal R_0,\mathcal R_1)$ for $\square_\xi(\kappa,{<}\mu^+,\mathcal R_0,\mathcal R_1)$.

We shall sometimes
refer to the \emph{$\mathcal R_0$-coherence} of the sequence, or simply to its \emph{coherence} in the case where $\mathcal R_0 = {\sq}$.
Note that a study of coherence relations weaker than $\sq$ is necessary. For instance, unlike coherent square sequences that are refuted by large cardinals,
$\sq_\chi$-coherent square sequences provide an effective means to obtain optimal incompactness results above large cardinals (cf.~\cite{paper28}).
Nevertheless, on first reading, it will be easier to assume $\chi := \aleph_0$ throughout, in which case $\sq_{\chi}$ coincides with $\sq$,
any $\square_\xi(\kappa,{<}\mu,{\sq_\chi},\mathcal R_1)$-sequence has support $\Gamma = \acc(\kappa)$, and faithfulness of postprocessing functions plays no role.

\begin{example} The binary relations $\mathcal R_1$ used in this paper are  $\notin$, and the always-satisfied relation $V$.\footnote{Another relation, $\nsin$, will be introduced in Section~\ref{section4}.}
\end{example}
\begin{example} The classical axioms $\square_\lambda$ and $\square^*_\lambda$ (cf.~\cite{MR0309729})
correspond to $\square_\lambda(\lambda^+,{<}2, {\sq}, V)$ and $\square_\lambda(\lambda^+,{<}\lambda^+, {\sq}, V)$, respectively.
\end{example}

If we omit $\mathcal R_1$, then we mean that $\mathcal R_1 = {\notin}$.
Notice that $\square_\xi(\kappa,{<}\mu,\mathcal R_0, V)$ is equivalent to $\square_\xi(\kappa,{<}\mu,\mathcal R_0)$ whenever $\xi<\kappa$,
and that $\square_\kappa(\kappa, {<}2, {\sq}, V)$ is a trivial consequence of $\zf$.

If we omit both $\mathcal R_1$ and $\mathcal R_0$, then we mean that $\mathcal R_1={\notin}$ and $\mathcal R_0 = {\sq}$.
In particular,  $\square(\kappa,\mu)$ and $\square(\kappa)$ agree with their classical definitions (cf.~\cite{MR908147}).

\begin{defn} For any $C$-sequence $\vec C=\langle C_\alpha\mid\alpha\in\Gamma\rangle$:
\begin{enumerate}
\item $\vec C$ is said to be a \emph{transversal for $\cvec{C}=\langle\mathcal C_\alpha\mid\alpha<\kappa\rangle$} iff $\vec C\in\prod_{\alpha\in\Gamma}\mathcal C_\alpha$ and $\Gamma=\Gamma(\cvec{C})$;
\item $\vec C$ is said to be a \emph{transversal for $\square_\xi(\kappa,{<}\mu,\mathcal R_0,\mathcal R_1)$} iff it is a transversal for some $\square_\xi(\kappa,{<}\mu,\mathcal R_0,\mathcal R_1)$-sequence.
\end{enumerate}
\end{defn}

The following is obvious.

\begin{prop}\label{transversal-width} Suppose that $\vec C=\langle C_\alpha \mid \alpha \in \Gamma \rangle$ is a $C$-sequence.
For all $\xi,\mu\le\kappa$, we have:
\begin{enumerate}
\item $\vec C$ is a transversal for $\square_\xi(\kappa, {<}\mu, {\sq}, V)$ iff it is $\xi$-bounded, $\Gamma=\acc(\kappa)$,
and for every $\gamma \in \acc(\kappa)$ the set $\{ C_\alpha \cap\gamma \mid \alpha \in \acc(\kappa) \text{ and } \sup(C_\alpha \cap\gamma) = \gamma \}$ has size $<\mu$;
\item If $\vec C$ is a transversal for $\square(\kappa, {<}\kappa, {\sq_\chi}, V)$, then  $| \{ C_\alpha \cap \gamma \mid \alpha \in \Gamma \} | < \kappa$ for all $\gamma < \kappa$.\qed
\end{enumerate}
\end{prop}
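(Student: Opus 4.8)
The plan is to handle the two clauses separately, the guiding observation being that the coherence enjoyed by a \emph{transversal} is dictated not by the relation $\mathcal R_0=\sq_\chi$ appearing in the coherence clause, but by the support $\Gamma(\cvec C)$, whose very definition demands the stronger relation $C\cap\bar\alpha\in\mathcal C_{\bar\alpha}$. For the forward direction of clause~(1), suppose $\vec C$ is a transversal for a $\square_\xi(\kappa,{<}\mu,{\sq},V)$-sequence $\cvec C$. I would first observe that the $\sq$-coherence clause forces $\Gamma(\cvec C)=\acc(\kappa)$: for a limit $\alpha$, a club $C\in\mathcal C_\alpha$, and $\bar\alpha\in\acc(C)$, the only $D\in\mathcal K(\kappa)$ with $D\sq C$ and $\sup(D)=\bar\alpha$ is $C\cap\bar\alpha$, so the clause $\exists D\in\mathcal C_{\bar\alpha}\,(D\sq C)$ says exactly $C\cap\bar\alpha\in\mathcal C_{\bar\alpha}$. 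Hence $\Gamma=\acc(\kappa)$, and $\xi$-boundedness of $\vec C$ is inherited from $\cvec C$. For the width bound, fix $\gamma\in\acc(\kappa)$ and $\alpha\in\acc(\kappa)$ with $\sup(C_\alpha\cap\gamma)=\gamma$; then either $\gamma=\alpha$, so $C_\alpha\cap\gamma=C_\gamma\in\mathcal C_\gamma$, or $\gamma\in\acc(C_\alpha)$ and coherence yields $C_\alpha\cap\gamma\in\mathcal C_\gamma$. Thus the displayed set is contained in $\mathcal C_\gamma$, of size $<\mu$.

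For the converse, I would reconstruct a witness from $\vec C$ by setting $\mathcal C_\gamma:=\{\,C_\alpha\cap\gamma\mid\alpha\in\acc(\kappa),\ \sup(C_\alpha\cap\gamma)=\gamma\,\}$ for limit $\gamma$, together with the prescribed singletons $\{e_\gamma\}$ at the non-limits. Each $\mathcal C_\gamma$ is a nonempty family of clubs in $\gamma$ (it contains $C_\gamma$), is $\xi$-bounded, and has size $<\mu$ by hypothesis; $\sq$-coherence holds because $(C_\alpha\cap\gamma)\cap\bar\gamma=C_\alpha\cap\bar\gamma\in\mathcal C_{\bar\gamma}$ whenever $\bar\gamma\in\acc(C_\alpha\cap\gamma)$. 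The only clause needing care is $|\{C\in\mathcal C_\gamma\mid\otp(C)=\xi\}|\le1$: as $C_\alpha\cap\gamma$ is an initial segment of $C_\alpha$ of order-type $\le\otp(C_\alpha)\le\xi$, attaining order-type $\xi$ forces the segment to be improper, i.e. $\gamma=\alpha$, leaving $C_\gamma$ as the sole candidate. Genuine $\sq$-coherence then gives $\Gamma(\cvec C)=\acc(\kappa)$ (so $E^\kappa_\omega\s\Gamma(\cvec C)$ is automatic), and $\vec C$ is patently a transversal.

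For clause~(2), I would prove by induction on $\gamma<\kappa$ that $T_\gamma:=\{C_\alpha\cap\gamma\mid\alpha\in\Gamma\}$ has size $<\kappa$, splitting $T_\gamma$ by whether a trace is bounded or cofinal in $\gamma$. A bounded trace has the form $x=(C_\alpha\cap\delta)\cup\{\delta\}$ with $\delta=\max(x)<\gamma$ and $C_\alpha\cap\delta\in T_\delta$, so the bounded part injects into $\{\emptyset\}\cup\bigcup_{\delta<\gamma}(\{\delta\}\times T_\delta)$, of size $<\kappa$ by the induction hypothesis and the regularity of $\kappa$. For a cofinal trace, $\gamma$ is a limit and either $\gamma=\alpha$ (giving the single value $C_\gamma$) or $\alpha>\gamma$ and $\gamma\in\acc(C_\alpha)$; here the decisive point is that $\alpha\in\Gamma=\Gamma(\cvec C)$ is subject to \emph{genuine} $\sq$-coherence, whence $C_\alpha\cap\gamma\in\mathcal C_\gamma$ irrespective of whether $C_\alpha$ is a short, $\sq_\chi$-escaping club. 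So the cofinal part lands in $\mathcal C_\gamma$, of size $<\kappa$, and the two estimates combine to give $|T_\gamma|<\kappa$.

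I expect the main obstacle to be conceptual rather than computational, and to reside in this last point. Since the coherence clause is phrased with $\sq_\chi$, short clubs need not cohere, and one might fear $\kappa$-many distinct cofinal traces of small order-type at a single $\gamma$. The resolution is that every club actually occurring in the transversal $\vec C$ sits over $\Gamma(\cvec C)$, whose definition insists on $C\cap\bar\alpha\in\mathcal C_{\bar\alpha}$ at each accumulation point, so the $\sq_\chi$-escape is simply unavailable to these clubs. Once this is isolated, both clauses collapse to the elementary bookkeeping above.
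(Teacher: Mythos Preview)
Your proof is correct and carefully written; the paper declares the proposition ``obvious'' and omits any argument, so your write-up is exactly the routine verification the authors left to the reader. The key observation you isolate for clause~(2)---that membership in $\Gamma=\Gamma(\cvec C)$ forces genuine $\sq$-coherence at accumulation points, regardless of the ambient $\sq_\chi$ relation---is precisely the content of Lemma~\ref{Gamma-closure}(2), and with that in hand both clauses reduce to the bookkeeping you describe.
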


\begin{lemma}\label{Gamma-closure} Suppose $\cvec{C} = \langle\mathcal C_\alpha \mid \alpha < \kappa\rangle$ is a $\square_\xi(\kappa,{<}\mu,\mathcal R_0,\mathcal R_1)$-sequence.
Let $\Gamma := \Gamma(\cvec{C})$. Then:
\begin{enumerate}
\item  $\Gamma = \{ \alpha \in \acc(\kappa) \mid \exists C \in \mathcal C_\alpha \forall \bar\alpha \in \acc(C) [C \cap \bar\alpha \in \mathcal C_{\bar\alpha}] \}$;
\item  $\bar\alpha \in \Gamma$, whenever $\bar\alpha \in \acc(C)$ and $C\in\bigcup\{\mathcal C_\alpha\mid \alpha \in \Gamma\}$;
\item  If $\mathcal R_0={\sq_\chi}$, then $E^\kappa_\omega\cup E^\kappa_{\geq\chi}\s\Gamma$, and for every $\theta \in \reg(\kappa)$, $E^\kappa_\theta \cap \Gamma$ is stationary in $\kappa$.
\end{enumerate}
\end{lemma}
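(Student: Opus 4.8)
The plan is to establish the three clauses in sequence, each feeding into the next. Write $\Gamma := \Gamma(\cvec C)$, and observe that $\Gamma$ is defined through a universal quantifier over $C \in \mathcal C_\alpha$, while the right-hand side of Clause~(1)---call it $\Gamma'$---uses an existential one. For Clause~(1), the inclusion $\Gamma \s \Gamma'$ is immediate, since each $\mathcal C_\alpha$ is nonempty, so the universal assertion entails the existential one. For the reverse, I would argue by contradiction: if $\alpha \in \Gamma' \setminus \Gamma$, then $\alpha \notin \Gamma$, so the fourth bullet of Definition~\ref{def115} makes $\mathcal C_\alpha$ a singleton; over a singleton the two quantifiers agree, so $\alpha \in \Gamma'$ would force $\alpha \in \Gamma$, a contradiction.

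For Clause~(2), given $\alpha \in \Gamma$, $C \in \mathcal C_\alpha$, and $\bar\alpha \in \acc(C)$ (so $\bar\alpha \in \acc(\kappa)$), membership $\alpha \in \Gamma$ already yields $C \cap \bar\alpha \in \mathcal C_{\bar\alpha}$. The plan is to check that $D := C \cap \bar\alpha$ witnesses $\bar\alpha \in \Gamma'$, whence $\bar\alpha \in \Gamma$ by Clause~(1). This reduces to the routine identity $\acc(C \cap \bar\alpha) = \acc(C) \cap \bar\alpha$ and the remark that for $\beta$ in this set one has $D \cap \beta = C \cap \beta \in \mathcal C_\beta$, again by $\alpha \in \Gamma$.

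For Clause~(3), the containment $E^\kappa_\omega \s \Gamma$ is handed to us by the fifth bullet of Definition~\ref{def115}. For $E^\kappa_{\ge\chi} \s \Gamma$, I fix $\alpha$ with $\cf(\alpha) \ge \chi$, an arbitrary $C \in \mathcal C_\alpha$, and $\bar\alpha \in \acc(C)$. Since $\otp(C) \ge \cf(\alpha) \ge \chi$, the coherence bullet (with $\mathcal R_0 = {\sq_\chi}$) produces $D \in \mathcal C_{\bar\alpha}$ with $D \sq_\chi C$, and the order-type bound kills the second disjunct in the definition of $\sq_\chi$, forcing $D \sq C$; as $D$ is club in $\bar\alpha$ and $\bar\alpha \in \acc(C) \s C$, a short argument then pins down $D = C \cap \bar\alpha$, so $\alpha \in \Gamma$. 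Stationarity of $E^\kappa_\theta \cap \Gamma$ is immediate for $\theta = \omega$ and for $\theta \ge \chi$, since in those cases $E^\kappa_\theta \s \Gamma$ and $E^\kappa_\theta$ is stationary.

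The one genuinely delicate point is the remaining range $\omega < \theta < \chi$, where $E^\kappa_\theta$ is contained in neither $E^\kappa_\omega$ nor $E^\kappa_{\ge\chi}$, so I cannot read off membership in $\Gamma$ directly. The device is to realize elements of $E^\kappa_\theta \cap \Gamma$ as accumulation points of higher-cofinality members of $\Gamma$: given an arbitrary club $D \s \kappa$, pick $\alpha \in E^\kappa_{\ge\chi} \cap \acc(D)$ (a stationary set, as $E^\kappa_{\ge\chi} \supseteq E^\kappa_\chi$ is stationary and $\acc(D)$ is club), fix $C \in \mathcal C_\alpha$, and form the club $E := \acc(C) \cap \acc(D \cap \alpha)$ in $\alpha$, of order-type $\ge \cf(\alpha) \ge \chi > \theta$. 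Then $\bar\alpha := E(\theta)$ has $\cf(\bar\alpha) = \theta$ and lies in $\acc(C) \cap D$, so Clause~(2) gives $\bar\alpha \in E^\kappa_\theta \cap \Gamma \cap D$. As $D$ was arbitrary, $E^\kappa_\theta \cap \Gamma$ is stationary, which finishes the proof.
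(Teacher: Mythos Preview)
Your proof is correct and follows essentially the same approach as the paper's: the singleton observation for Clause~(1), the witness $D = C \cap \bar\alpha$ pushed through Clause~(1) for Clause~(2), and for Clause~(3) the same reduction to $\omega < \theta < \chi$ handled by picking a point of cofinality $\ge\chi$ inside $\acc(D)$ and extracting the $\theta$-th element of a suitable club inside it. The only cosmetic differences are that you spell out the $D \sq_\chi C \Rightarrow D = C \cap \bar\alpha$ step in more detail, and you intersect $\acc(C)$ with $\acc(D \cap \alpha)$ where the paper simply uses $C \cap D$.
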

\begin{proof}
\begin{enumerate}
\item Fix $\alpha \in \acc(\kappa)$.
If $\alpha \notin \Gamma$ then $\mathcal C_\alpha$ must be a singleton, so that the formulations involving $\forall C \in \mathcal C_\alpha$ and $\exists C \in \mathcal C_\alpha$ are equivalent.
\item Fix arbitrary $\alpha \in \Gamma$, $C \in \mathcal C_\alpha$, and $\bar\alpha \in \acc(C)$. Let $D := C \cap \bar\alpha$.
Since $\alpha \in \Gamma$, we have $D \in \mathcal C_{\bar\alpha}$.
Consider any $\gamma \in \acc(D)$. In particular, $\gamma \in \acc(C)$ and $\gamma < \bar\alpha$, so that using $\alpha \in \Gamma$ and $C \in \mathcal C_\alpha$
we obtain $D \cap \gamma = (C \cap \bar\alpha) \cap \gamma = C \cap \gamma \in \mathcal C_\gamma$.
Thus we have found $D \in \mathcal C_{\bar\alpha}$ such that $D \cap \gamma \in \mathcal C_\gamma$ for all $\gamma \in \acc(D)$,
and the result follows from Clause~(1).
\item We have $E^\kappa_\omega\s\Gamma$ by the last bullet of Definition~\ref{def115}. We have $E^\kappa_{\ge\chi}\s\Gamma$ by $\sq_\chi$-coherence together with the fact that any club in an ordinal of cofinality $\ge\chi$ has order-type $\ge\chi$.

Fix arbitrary $\theta \in \reg(\kappa)$ and club $D \subseteq \kappa$. We must find some $\alpha \in E^\kappa_\theta \cap \Gamma \cap D$.
As both $E^\kappa_\omega$ and $E^\kappa_{\geq\chi}$ are stationary and included in $\Gamma$, we may assume
that $\aleph_0 < \theta < \chi$.
Fix some $\gamma \in E^\kappa_{\geq\chi} \cap \acc(D)$. Pick some $C \in \mathcal C_\gamma$.
Since $\cf(\gamma) \geq\chi > \aleph_0$, it follows that $C \cap D$ is a club in $\gamma$, so that $\otp(C \cap D) \geq \cf(\gamma) \geq \chi > \theta$, and we can let $\alpha := (C \cap D)(\theta)$, so that $\cf(\alpha) = \theta$.
Since $\gamma \in E^\kappa_{\geq\chi} \subseteq \Gamma$ and $\gamma \in \acc(C)$, Clause~(2) gives $\alpha \in \Gamma$.
Altogether, $\alpha \in E^\kappa_\theta \cap \Gamma \cap D$, as required. \qedhere
\end{enumerate}
\end{proof}

\begin{prop}\label{narrow-transversal-coherent}
If $\langle C_\alpha \mid \alpha\in\Gamma \rangle$ is a transversal for $\square_\xi(\kappa,{<}2,\mathcal R_0,\mathcal R_1)$,
then $C_\alpha \cap\bar\alpha = C_{\bar\alpha}$ for every $\alpha\in\Gamma$ and every $\bar\alpha \in \acc(C_\alpha)$.

If, moreover, $\Phi$ is a postprocessing function, then $\Phi(C_\alpha) \cap\bar\alpha = \Phi(C_{\bar\alpha})$ for every $\alpha\in\Gamma$ and every $\bar\alpha \in \acc(\Phi(C_\alpha))$.\qed
\end{prop}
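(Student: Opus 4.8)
The plan is to prove Proposition~\ref{narrow-transversal-coherent} in two stages, first establishing the coherence of the transversal itself, and then bootstrapping that coherence through an arbitrary postprocessing function. For the first assertion, fix $\alpha\in\Gamma$ and $\bar\alpha\in\acc(C_\alpha)$. Since $\vec C$ is a transversal for a $\square_\xi(\kappa,{<}2,\mathcal R_0,\mathcal R_1)$-sequence $\cvec C$, the width hypothesis $\mu=2$ forces each $\mathcal C_\beta$ to be a singleton, namely $\mathcal C_\beta=\{C_\beta\}$ for $\beta\in\Gamma$. By definition of the support $\Gamma=\Gamma(\cvec C)$, for every $C\in\mathcal C_\alpha$ and every $\bar\alpha\in\acc(C)$ we have $C\cap\bar\alpha\in\mathcal C_{\bar\alpha}$. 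Applying this with $C=C_\alpha$, I get $C_\alpha\cap\bar\alpha\in\mathcal C_{\bar\alpha}=\{C_{\bar\alpha}\}$, whence $C_\alpha\cap\bar\alpha=C_{\bar\alpha}$. (Implicitly I should note $\bar\alpha\in\Gamma$, which is exactly Lemma~\ref{Gamma-closure}(2); this guarantees $C_{\bar\alpha}$ is defined and that $\mathcal C_{\bar\alpha}$ is a singleton.)

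For the second assertion, fix $\alpha\in\Gamma$ and $\bar\alpha\in\acc(\Phi(C_\alpha))$. The key is to chase through the defining commutation property of a postprocessing function. By the second clause of the definition of a postprocessing function, $\acc(\Phi(C_\alpha))\s\acc(C_\alpha)$, so in particular $\bar\alpha\in\acc(C_\alpha)$; this lets me invoke the first assertion to conclude $C_\alpha\cap\bar\alpha=C_{\bar\alpha}$. Now the third clause of the postprocessing-function definition (the commuting diagram) gives $\Phi(C_\alpha)\cap\bar\alpha=\Phi(C_\alpha\cap\bar\alpha)$, precisely because $\bar\alpha\in\acc(\Phi(C_\alpha))$. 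Substituting $C_\alpha\cap\bar\alpha=C_{\bar\alpha}$ into the right-hand side yields $\Phi(C_\alpha)\cap\bar\alpha=\Phi(C_{\bar\alpha})$, as desired.

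The argument is essentially a two-line diagram chase, so there is no serious obstacle; the only subtlety worth flagging is making sure all the membership bookkeeping is legitimate. Specifically, I must verify that $\bar\alpha\in\Gamma$ so that $C_{\bar\alpha}$ and $\Phi(C_{\bar\alpha})$ even make sense, and that $C_\alpha,\,C_{\bar\alpha}\in\mathcal K(\kappa)$ so that $\Phi$ is applicable to them — both of which follow from $C_\alpha$ being a club in $\alpha$ (hence in $\mathcal K(\kappa)$, being a nonempty set of ordinals closed below its sup with its sup excluded) together with Lemma~\ref{Gamma-closure}(2). Because this is so routine, the statement is flagged with \qed\ in the excerpt and the proof is omitted; if I were to write it out, I would keep it to the two displayed equalities above and a one-clause remark about $\bar\alpha\in\Gamma$.
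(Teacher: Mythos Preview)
Your proof is correct and matches the intended argument: the paper omits the proof entirely (marking it with \qed\ after the statement), precisely because it is the routine two-step verification you have written out --- use $|\mathcal C_{\bar\alpha}|<2$ together with Lemma~\ref{Gamma-closure}(2) for the first part, then chain the second and third clauses of the postprocessing-function definition with the first part for the second.
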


The next lemma is implicit in \cite[Lemma 2.4]{MR3730566} and in \cite[Lemma 4.2]{MR3600760}.
Note that it follows from a footnote on \cite[p.~180]{MR2355670} that the hypothesis ``$\min\{\xi,\mu\}<\kappa$'' cannot be waived.

\begin{lemma}\label{square_is_amenable} Any transversal for  $\square_\xi(\kappa,{<}\mu,{\sq_\chi})$ is amenable, provided that $\min\{\xi,\mu\}<\kappa$.
\end{lemma}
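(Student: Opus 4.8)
The plan is to split on which of $\xi,\mu$ lies below $\kappa$, and throughout to work with the reformulation of amenability given by Proposition~\ref{amenable_vs_trivial}(2): it suffices to show that for every cofinal $A\s\kappa$ the set $\{\alpha\in\Gamma\mid A\cap\alpha\s C_\alpha\}$ is nonstationary. First I would dispose of the case $\xi<\kappa$. Here $\vec C$ is $\xi$-bounded, so $\otp(C_\alpha)\le\xi$ for every $\alpha\in\Gamma$, and hence $\{\alpha\in\Gamma\mid\otp(C_\alpha)=\alpha\}\s\xi+1$ is bounded in $\kappa$, in particular nonstationary; thus Example~\ref{example14} immediately yields amenability. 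Concretely, given a cofinal $A$ one passes to the club of $\alpha$ with $\otp(A\cap\alpha)=\alpha$, where $A\cap\alpha\s C_\alpha$ would force the contradiction $\alpha=\otp(A\cap\alpha)\le\otp(C_\alpha)\le\xi<\alpha$.

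It remains to treat the case $\mu<\kappa$, where the clubs may carry unbounded order-type and the order-type argument is unavailable. Toward a contradiction, fix a cofinal $A\s\kappa$ for which $S:=\{\alpha\in\Gamma\mid A\cap\alpha\s C_\alpha\}$ is stationary, set $D:=\acc^+(A)$, and pass to the stationary set $S^*:=S\cap\acc(D)$. For $\alpha\in S^*$ the set $D\cap\alpha$ is a cofinal subset of $\acc(C_\alpha)$, since each $\gamma\in D\cap\alpha$ is an accumulation point of $A\cap\gamma\s C_\alpha$ and therefore lies in $\acc(C_\alpha)$. Consequently, for all $\alpha<\beta$ in $S^*$ one has $\alpha\in D\cap\beta\s\acc(C_\beta)$, so by $\sq_\chi$-coherence of the underlying $\square_\xi(\kappa,{<}\mu,\sq_\chi)$-sequence $\cvec C$ (with Lemma~\ref{Gamma-closure} used to locate the relevant levels inside $\Gamma$) we get $C_\beta\cap\alpha\in\mathcal C_\alpha$, save for the exceptional $\sq_\chi$-clause, which concerns only clubs of order-type $<\chi$ and is easily absorbed. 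Hence the restrictions $\{C_\beta\cap\alpha\mid\alpha<\beta\text{ in }S^*\}$ form a tree under $\sq$ whose $\alpha$-th level is contained in $\mathcal C_\alpha$, so has size $<\mu<\kappa$; moreover, viewing $\cvec C$ as a $\square(\kappa,{<}\kappa,\sq_\chi,V)$-sequence, Proposition~\ref{transversal-width}(2) bounds the total number of restrictions $C_\alpha\cap\gamma$ below any fixed $\gamma$ by $<\kappa$.

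The goal is then to extract from this narrow tree a single $\sq$-coherent cofinal branch $\langle t_\gamma\rangle$, each $t_\gamma$ an initial segment of some $C_\beta$; such a branch unions to a club $C\s\kappa$ with $C\cap\bar\alpha\in\mathcal C_{\bar\alpha}$ for every $\bar\alpha\in\acc(C)$ (coherence propagates to all accumulation points exactly as in the reasoning behind Proposition~\ref{narrow-transversal-coherent}), contradicting the non-threadability clause $\mathcal R_1={\notin}$ of Definition~\ref{def115}. To produce the branch, for each $\alpha\in S^*$ I would invoke the $\kappa$-completeness of the nonstationary ideal to pigeonhole the stationarily many $\beta\in S^*$ above $\alpha$ into the $<\mu$ possible values of $C_\beta\cap\alpha$ in $\mathcal C_\alpha$, stabilizing a value attained on a stationary set, and then tie these stabilized values together coherently through diagonal intersections. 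The hard part will be carrying the coherent selection through the limit levels: a naive recursion breaks precisely where the accumulated branch fails to be continued by any node, which is the genuine Aronszajn phenomenon. This is exactly the point at which the hypothesis $\mu<\kappa$ — keeping every level strictly below the completeness of the club filter — together with the stationarity of $S^*$ must be used in tandem to guarantee that the stabilized values assemble into an unbounded thread rather than stalling.
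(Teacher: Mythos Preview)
Your case $\xi<\kappa$ is correct and matches the paper. For $\mu<\kappa$, you correctly set up the tree of restrictions $\{C_\beta\cap\alpha\mid\alpha<\beta\text{ in }S^*\}$ with each level inside some $\mathcal C_\alpha$, hence of size $<\mu$, and you correctly identify that a cofinal branch threads $\cvec{C}$ and contradicts the $\notin$-clause. The gap is in your extraction of the branch.

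As you describe it --- stabilize $C_\beta\cap\alpha$ on a stationary set $S_\alpha$ independently at each $\alpha$, then ``tie these together through diagonal intersections'' --- the argument fails: the diagonal intersection of stationary sets need not be stationary, so there is no reason the stabilized values $c_\alpha$ cohere across different $\alpha$. You correctly sense trouble at limit stages, but the tool you name does not resolve it. What does work, and may be what you were reaching for, is a \emph{recursive} shrinking: carry a $\s$-decreasing sequence of stationary sets $\langle S_\alpha\mid\alpha\in S^*\rangle$; at each $\alpha$ pigeonhole $S_\alpha\setminus(\alpha+1)$ into $<\mu$ pieces by the value of $C_\beta\cap\alpha$ and pass to a stationary piece with value $c_\alpha$; at limits $\delta$ intersect the fewer-than-$\kappa$ sets built so far, which remains stationary by $\kappa$-completeness. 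Any $\beta$ in a later $S_{\alpha'}$ witnesses $c_\alpha=C_\beta\cap\alpha\sq C_\beta\cap\alpha'=c_{\alpha'}$, so the $c_\alpha$ cohere and their union is the thread.

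The paper bypasses all of this by simply invoking Kurepa's lemma: a tree of height $\kappa$ whose levels are all of size $<\mu$ for a fixed $\mu<\kappa$ admits a cofinal branch. The limit stage in \emph{that} proof is handled by the elementary observation that a $\s$-decreasing sequence of nonempty subsets of a fixed set of size $<\mu$ can strictly decrease fewer than $\mu$ times, hence eventually stabilizes, furnishing the needed limit node. There is no Aronszajn phenomenon when the width is bounded strictly below the height; that is precisely the content of Kurepa's lemma, and citing it is cleaner than rebuilding it with stationary sets.
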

\begin{proof} Suppose that $\langle \mathcal C_\alpha\mid\alpha<\kappa\rangle$ is a $\square_\xi(\kappa,{<}\mu,{\sq_\chi})$-sequence, and $\vec C=\langle C_\alpha\mid\alpha\in\Gamma\rangle$ is a corresponding transversal.
By Lemma~\ref{Gamma-closure}(3), $\Gamma$ is stationary in $\kappa$.
By Proposition~\ref{amenable_vs_trivial}, it suffices to show that for every cofinal $A \subseteq \kappa$, the set $\{\alpha \in \Gamma \mid A \cap \alpha \subseteq C_\alpha \}$ is nonstationary.

If $\xi<\kappa$, then for every cofinal $A\s\kappa$, the set $\{\alpha \in \Gamma \mid A \cap \alpha \subseteq C_\alpha \}$ is bounded by $A(\xi+1)$, and we are done.

Next, suppose that $\xi=\kappa$, so that $\mu<\kappa$.
Towards a contradiction, let us fix a cofinal set $A$ in $\kappa$ for which $S:=\{ \delta\in\Gamma\mid A\cap\delta\s C_\delta\}$ is stationary.
Let $\{\beta_\alpha\mid \alpha<\kappa\}$ denote the increasing enumeration of $(\{0\}\cup\acc^+(A))$.
For all $\alpha<\kappa$, put:
$$T_\alpha:=\{ C_\delta\cap\beta_\alpha\mid \delta\in S, \beta_\alpha<\delta\}.$$

\begin{claim}\label{claim1221} $\mathcal T:=(\bigcup_{\alpha<\kappa}T_\alpha,{\sq})$ is a tree whose $\alpha^{\text{th}}$ level is $T_\alpha$, and $|T_\alpha|<\mu$ for all $\alpha <\kappa$.
\end{claim}
\begin{proof} We commence by pointing out that $T_\alpha\s  \mathcal C_{\beta_\alpha}$ for all $\alpha<\kappa$.
Clearly, $T_0=\{\emptyset\}=\mathcal C_0=\mathcal C_{\beta_0}$.
Thus, consider an arbitrary nonzero $\alpha<\kappa$ along with some $t \in T_\alpha$.
Fix $\delta \in S$ above $\beta_\alpha$ such that $t = C_\delta \cap \beta_\alpha$.
Then $\delta\in\Gamma$ and  $\beta_\alpha \in \acc^+(A) \cap \delta \s\acc(C_\delta)$, so that $C_\delta\cap\beta_\alpha\in \mathcal C_{\beta_\alpha}$. That is, $t\in\mathcal C_{\beta_\alpha}$.

This shows that $|T_\alpha|<\mu$ for all $\alpha<\kappa$.
In addition, this shows that for all $t\in \bigcup_{\alpha<\kappa}T_{\alpha}$: $$t\in T_{\alpha}\text{ iff }\sup(t)=\beta_{\alpha}.$$

Next, consider arbitrary $\alpha<\kappa$ and $t \in T_\alpha$, and let $\pred{t} := \{ s \in \bigcup_{\alpha'<\kappa}T_{\alpha'}\mid s \sq t, s \neq t \}$ be the set of predecessors of $t$ in $\mathcal T$.
Fix $\delta \in S$ above $\beta_\alpha$ such that $t = C_\delta \cap \beta_\alpha$.
We claim that $\pred{t} = \{ C_\delta \cap \beta_{\alpha'} \mid \alpha' < \alpha \}$, from which it follows that $(\pred{t},{\sq})\cong(\alpha,{\in})$.

Consider $\alpha'<\alpha$.
Then $\beta_{\alpha'} < \beta_\alpha < \delta$, so that $s := C_\delta \cap \beta_{\alpha'}$ is in $T_{\alpha'}$, and it is clear that $s$ is a proper initial segment of $t$. That is, $s \in \pred{t}$.

Conversely, consider $s \in \pred{t}$.
Fix $\alpha'<\kappa$ such that $s\in T_{\alpha'}$.
By our earlier observation, $\sup(s) = \beta_{\alpha'}$, so that since $s \sq t$, $s \neq t$, and $\sup(t) = \beta_\alpha$, we must have $\beta_{\alpha'} < \beta_\alpha$, and therefore $\alpha'<\alpha$.
Thus, $s = t\cap\beta_{\alpha'} = (C_\delta\cap\beta_\alpha) \cap \beta_{\alpha'} = C_\delta\cap \beta_{\alpha'}$, as required.
\end{proof}
By the preceding claim, $\mathcal T$ is a tree of height $\kappa$ and width $<\mu$, and so by a lemma of Kurepa (see \cite[Proposition 7.9]{MR1994835}),
it admits a cofinal branch.\footnote{A subset $B$ of a $\kappa$-tree $(T,{<_T})$ is a \emph{cofinal branch} if it is linearly ordered by $<_T$, and $\{\alpha\mid B\cap T_\alpha\neq\emptyset\}=\kappa$.}
Let $B$ be a cofinal branch through $\mathcal T$, so that $C := \bigcup B$ is a club in $\kappa$.
As $\langle \mathcal C_\alpha\mid\alpha<\kappa\rangle$ is a $\square(\kappa,{<}\mu,{\sq_\chi},{\notin})$-sequence, let us pick $\beta\in\acc^+(C)$ such that $C\cap\beta\notin\mathcal C_\beta$.
By definition of $C$, we may pick some $t \in B$ such that $C\cap\beta\sq t$.
Then by definition of $B$, we may pick some $\delta\in S$ above $\sup(t)$ such that $t \sq C_\delta$.
Thus $C \cap \beta \sq C_\delta$.
But $\delta\in\Gamma$ and $\beta \in \acc(C) \cap \sup(t) \subseteq \acc(t) \subseteq \acc(C_\delta)$,  and hence $C\cap\beta=C_\delta\cap\beta\in\mathcal C_\beta$. This is a contradiction.
\end{proof}

With the tools developed up to this point, we can now prove Theorem~D:

\begin{thm}\label{fatsplit} Suppose that $\square(\kappa,{\sq_\chi})$ holds.

For every fat subset $F\s \kappa$, there exists a partition $\langle F_j \mid j<\kappa\rangle$ of $F$ such that:
\begin{itemize}
\item $F_j$ is fat for all $j<\kappa$;
\item For every $J\s\kappa$, there exists no $\delta\in E^\kappa_{\ge\chi}$ of uncountable cofinality such that $(\bigcup_{j\in J}F_j)\cap\delta$ and $(\bigcup_{j\in\kappa\setminus J}F_j)\cap\delta$ are both stationary in $\delta$.
\end{itemize}
\end{thm}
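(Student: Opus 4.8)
The plan is to read off the partition from a single coloring $c\colon\Gamma\to\kappa$ of the form $c(\alpha):=\min(\Phi(C_\alpha))$, where $\vec C=\langle C_\alpha\mid\alpha\in\Gamma\rangle$ is a transversal for $\square(\kappa,{<}2,{\sq_\chi},{\notin})$ and $\Phi$ is a conservative postprocessing function to be extracted from Lemma~\ref{split_amenable}. Since $\mu=2<\kappa$, Lemma~\ref{square_is_amenable} guarantees that $\vec C$ is amenable, and Lemma~\ref{Gamma-closure}(3) gives $E^\kappa_{\ge\chi}\s\Gamma$ together with stationarily many points of each cofinality inside $\Gamma$. Having fixed $c$, I will set $F_j:=\{\alpha\in F\cap\Gamma\mid c(\alpha)\in B_j\}$ for a partition $\langle B_j\mid j<\kappa\rangle$ of the colors, chosen so that each $F_j$ is fat, sweeping the remaining points of $F$ (those outside $\Gamma$ or carrying an unused color) into $F_0$. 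The two requirements are verified separately: the anti-splitting clause will follow from $\sq_\chi$-coherence alone, while fatness of each $F_j$ will be engineered through the choice of $\Phi$ and of the $B_j$.

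For the anti-splitting clause, fix $\delta\in E^\kappa_{\ge\chi}$ of uncountable cofinality; then $\delta\in\Gamma$ and $\Phi(C_\delta)$ is a club in $\delta$ of uncountable order-type, so $\acc(\Phi(C_\delta))$ is a club in $\delta$. By Proposition~\ref{narrow-transversal-coherent}, $\Phi(C_{\bar\alpha})=\Phi(C_\delta)\cap\bar\alpha$ for every $\bar\alpha\in\acc(\Phi(C_\delta))$, whence $c(\bar\alpha)=\min(\Phi(C_\delta))=c(\delta)$. Thus $c$ is constant on the club $\acc(\Phi(C_\delta))$, so the points below $\delta$ whose color lies outside the block $B_{j_0}$ containing $c(\delta)$ form a nonstationary subset of $\delta$. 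Consequently, for any $J\s\kappa$, whichever of $\bigcup_{j\in J}F_j$, $\bigcup_{j\in\kappa\setminus J}F_j$ omits the block $j_0$ is nonstationary in $\delta$. Crucially, this argument is insensitive to the eventual choice of $\Phi$, so the clause is secured for free.

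The engine for fatness is the observation that global fatness of $F$ furnishes, stationarily often, ordinals at which local fatness is trivial. For an uncountable regular $\rho$ with $\chi\le\rho<\kappa$, let $R_\rho$ be the set of tops of closed copies of $\rho+1$ lying in $F$. A closed copy $\langle y_\nu\mid\nu\le\rho\rangle\s F$ satisfies $\cf(y_\rho)=\rho$ and $y_\rho\in F$, and its initial segment $Y:=\{y_\nu\mid\nu<\rho\}$ is a \emph{club of $y_\rho$ contained in $F$}; since $F$ is fat, such a copy may be found inside any prescribed club, so $R_\rho$ is stationary and $R_\rho\s E^\kappa_\rho\s\Gamma$. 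I will feed Lemma~\ref{split_amenable} the family $\langle\Omega^\iota:=R_{\rho(\iota)}\mid\iota<\kappa\rangle$, where $\iota\mapsto\rho(\iota)$ is arranged so that every cofinal subset of $\kappa$ has $\rho$-image cofinal in the uncountable regulars of $[\chi,\kappa)$. The lemma returns a conservative $\Phi$, a cofinal $B\s\kappa$ and an injection $h\colon B\to\kappa$ for which $S^\iota:=\{\delta\in R_{\rho(\iota)}\mid c(\delta)=h(\iota)\}$ is stationary for every $\iota\in B$. Given a club $D\s\kappa$ and $\theta\in\reg(\kappa)$ with $\theta\le\rho(\iota)$, pick $\delta\in S^\iota\cap\acc(D)$; then $Y_\delta\cap D\cap\acc(\Phi(C_\delta))$, where $Y_\delta\s F$ is the club of $\delta$ supplied by $\delta\in R_{\rho(\iota)}$, is again a club of $\delta$, it is monochromatic of color $h(\iota)$ by the coherence computation above, and it is contained in $F\cap D$; adjoining $\delta$ itself as top when $\theta=\rho(\iota)$, this yields a closed copy of $\theta+1$ in $F_j\cap D$, where $j$ is the block of $h(\iota)$. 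Partitioning the used colors $\{h(\iota)\mid\iota\in B\}$ into $\kappa$ blocks, each meeting the $\rho(\iota)$-values cofinally, then makes every $F_j$ fat.

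The main obstacle is the bookkeeping that forces all $\kappa$ classes to be simultaneously fat out of a single application of Lemma~\ref{split_amenable}: the lemma delivers only one postprocessing function $\Phi$ and only a cofinal (not full) index set $B\s\kappa$, so I must index the stationary sets $R_{\rho(\iota)}$ by cofinalities in such a way that, no matter which cofinal $B$ emerges, the colors realized within $B$ still capture every uncountable regular $\rho\in[\chi,\kappa)$ cofinally, and then distribute these colors into $\kappa$ blocks each inheriting all cofinalities. A subsidiary delicate point is the largest regular cardinal $\theta=\cf(\delta)$, which becomes unavoidable when $\kappa$ is a successor; this is handled by taking $\delta$ itself, which lies in $F$ by the definition of $R_\rho$, as the top of the copy. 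Throughout, it is amenability, preserved under postprocessing by Lemma~\ref{cons-pp-preserves-amenable}, that keeps the sets $S^\iota$ stationary after $\Phi$ is applied, so that they continue to meet $\acc(D)$ for every club $D$; verifying that the colored sets and the fatness-witnessing clubs $Y_\delta$ line up is where I expect the real work to lie.
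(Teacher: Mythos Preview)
Your approach is correct and shares its skeleton with the paper's: both read the partition off from the single coloring $c(\alpha)=\min(\Phi(C_\alpha))$, both isolate the stationary sets of ordinals in $F$ that carry a club through $F$ (your $R_\rho$ is the paper's $\Omega_\theta$), and both verify anti-splitting via the constancy of $c$ along the club $\acc(\Phi(C_\delta))$. The genuine difference is how $\kappa$ many fat pieces are manufactured. The paper feeds Lemma~\ref{split_amenable} only the $\Theta$-indexed family $\langle\Omega_\theta\mid\theta\in\Theta\rangle$, obtaining one stationary $S_\theta$ per cofinality distinguished by $\min$, and then invokes Lemma~\ref{lemma12} a \emph{second} time on each $S_\theta$ to locate a further coordinate $i_\theta$ at which $S_\theta$ splits into $\kappa$ stationary slices; the resulting partition is two-dimensional, governed by the pair $(\min(C^\circ_\alpha),\,C^\circ_\alpha(i_\theta))$. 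You instead inflate the input of Lemma~\ref{split_amenable} to a full $\kappa$-indexed family via a diverging $\rho$, so that the lemma already returns $\kappa$ many usable colors, and then regroup those colors into $\kappa$ blocks each seeing cofinally many $\rho$-values (this is doable: partition the cofinal $B\subseteq\kappa$ into $\kappa$ cofinal pieces and push forward under $h$). Your route is one-dimensional and avoids the second pass through Lemma~\ref{lemma12} together with the auxiliary data $i_\theta,f_\theta$; the paper's route carries more moving parts but keeps each step localized to a single $\theta$. One omission: you do not dispose of $\kappa=\aleph_1$, where your set of admissible $\rho$'s is empty; the paper opens by invoking Friedman's theorem for that case.
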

\begin{proof} By Friedman's theorem \cite{MR0327521}, we may assume that $\kappa\ge\aleph_2$.\footnote{Of course, the second bullet holds trivially for $\kappa=\omega_1$.}
Let $\Theta$ be some cofinal subset of $\reg(\kappa)\setminus\max\{\aleph_1,\chi\}$.
Let $F$ be an arbitrary fat subset of $\kappa$.

\begin{claim} For every $\theta\in\Theta$, the following set is stationary $$\Omega_\theta:=\{\alpha\in F\cap E^\kappa_\theta\mid \exists d_\alpha\s F\cap\alpha~[\acc^+(d_\alpha)\s d_\alpha\ \&\ \sup(d_\alpha)=\alpha]\}.$$
\end{claim}
\begin{proof} Let $\theta\in\Theta$ be arbitrary. Let $D$ be an arbitrary club.
As $F$ is fat, let $\pi:\theta+1\rightarrow F\cap D$ be strictly increasing and continuous.
Then $\pi(\theta)\in \Omega_\theta$, as witnessed by $d_\alpha:=\pi[\theta]$.
As $\pi(\theta)\in D$, we have demonstrated that $\Omega_\theta\cap D\neq\emptyset$.
\end{proof}

Let $\vec C=\langle C_\alpha\mid\alpha\in\Gamma\rangle$ be a transversal for $\square(\kappa,{<}\mu,{\sq_\chi})$ with $\mu=2$.
By Lemma~\ref{Gamma-closure}(3), $\Gamma$ is a stationary set covering $E^\kappa_{\geq\chi}$, and
by Lemma~\ref{square_is_amenable} (since $\mu<\kappa$), $\vec C$ is amenable, so that by Lemma~\ref{split_amenable},
there exists a conservative postprocessing function $\Phi:\mathcal K(\kappa)\rightarrow\mathcal K(\kappa)$,
a cofinal subset $\Theta'\s\Theta$, and an injection $h:\Theta'\rightarrow\kappa$ such that for all $\theta\in\Theta'$, $S_\theta:=\{ \alpha\in \Omega_\theta\mid \min(\Phi(C_\alpha))=h(\theta)\}$ is stationary.
For notational simplicity, suppose that $\Theta'=\Theta$.

Denote $C_\alpha^\circ:=\Phi(C_\alpha)$.
By Lemma~\ref{cons-pp-preserves-amenable}, $\langle C_\alpha^\circ\mid \alpha\in\Gamma\rangle$ is an amenable $C$-sequence.

Let $\theta\in\Theta$ be arbitrary.
By Lemma~\ref{lemma12}, let us pick  $i_\theta<\kappa$ such that $S_{\theta,\tau}:=\{\alpha\in S_\theta\mid \otp(C^\circ_\alpha)>i_\theta\allowbreak\ \&\ C^\circ_\alpha(i_\theta)\ge\tau\}$ is stationary for all $\tau<\kappa$.
Denote $S_\theta^\tau:=\{\alpha\in S_\theta\mid \otp(C^\circ_\alpha)>i_\theta\allowbreak\ \&\ C^\circ_\alpha(i_\theta)=\tau\}$.
Pick a strictly increasing function $f_\theta:\kappa\rightarrow\kappa$ such that $S^{f_\theta(j)}_\theta$ is stationary for all $j<\kappa$, and put:
\[
F^j_\theta:=\{\alpha\in\Gamma\mid \min(C_\alpha^\circ)=h(\theta), \otp(C_\alpha^\circ)>i_\theta, C_\alpha^\circ(i_\theta)=f_\theta(j)\}.
\]

Then for every $j<\kappa$, put:
\begin{itemize}
\item $F^j:=\bigcup_{\theta\in\Theta}F_\theta^j$,
\item $G^j:=\{\alpha\in \kappa\setminus\bigcup_{i<\kappa}F^i\mid (\alpha\notin\Gamma\ \&\ \alpha=j)\text{ or }(\alpha\in\Gamma\ \&\ \min(C^\circ_\alpha)=j)\}$, and
\item $F_j:=( F^j\cup G^j)\cap F$.
\end{itemize}

\begin{claim}\label{c1222} If  $j_0,j_1$ are two distinct elements of $\kappa$, then $F^{j_0}\cap F^{j_1} = F_{j_0}\cap F_{j_1} = \emptyset$.
\end{claim}
\begin{proof} It is clear that $G^{j_0}\cap F^{j_1}=F^{j_0}\cap G^{j_1}=G^{j_0}\cap G^{j_1}=\emptyset$.

Towards a contradiction, suppose that $\alpha\in F^{j_0}\cap F^{j_1}$.
For each $n<2$, pick $\theta_n$ such that $\alpha\in F^{j_n}_{\theta_n}$.
Then $h(\theta_0)=\min(C_\alpha^\circ)=h(\theta_1)$. But $h$ is injective, and hence $\theta_0=\theta_1$, say it is some $\theta$.
Then $f_\theta(j_0)=C_\alpha^\circ(i_\theta)=f_\theta(j_1)$, contradicting the fact that $f_\theta$ is injective.
\end{proof}

\begin{claim} For all $J\s\kappa$ and $\delta\in \Gamma\cap E^\kappa_{>\omega}$, if $(\bigcup_{j\in J}F_j)\cap\delta$ is stationary in $\delta$, then $(\bigcup_{j\in\kappa\setminus J}F_j)\cap\delta$ is nonstationary in $\delta$.
\end{claim}
\begin{proof} Suppose not, and let $\delta$ and $J$ be a counterexample.
Then $S_0:=(\bigcup_{j\in J}F_j)\cap\acc(C^\circ_\delta)$ and $S_1:=(\bigcup_{j\in\kappa\setminus J}F_j)\cap\acc(C^\circ_\delta)$ are both stationary in $\delta$.
Put $\eta:=\min(C^\circ_\delta)$. By switching $J$ with its complement, we may assume that $\eta\in \kappa\setminus J$.

Pick ${\alpha_0}\in S_0$ and ${\alpha_1}\in S_1$ above ${\alpha_0}$.
By $\{\alpha_0,\alpha_1\}\s \acc(C_\delta^\circ) \subseteq \acc(C_\delta)$ and $\mu=2$,
Proposition~\ref{narrow-transversal-coherent} gives $C_{\alpha_0}^\circ\sq C_{\alpha_1}^\circ\sq C_\delta^\circ$, and Lemma~\ref{Gamma-closure}(2) gives $\{\alpha_0,\alpha_1\}\s\Gamma$.
Let $j^*\in J$ be such that ${\alpha_0}\in F_{j^*}$.
As $\min(C_{\alpha_0}^\circ)=\eta\notin J$, we infer that $\alpha_0\notin G^{j^*}$, so that $\alpha_0\in F^{j^*}$.
Fix $\theta\in\Theta$ such that $\alpha_0\in F^{j^*}_\theta$.
By $C_{{\alpha_0}}^\circ\sq C_{{\alpha_1}}^\circ$ and the definition of $F^{j^*}_\theta$, we also have $\alpha_1\in F^{j^*}_\theta\s F^{j^*}$.
It follows from $j^* \in J$ and Claim~\ref{c1222} that $\alpha_1 \notin \bigcup_{j\in\kappa\setminus J}(F^j)$.
As $\alpha_1\in S_1\s \bigcup_{j\in\kappa\setminus J}(F^j\cup G^j)$, we conclude that $\alpha_1\in\bigcup_{j\in\kappa\setminus J}G^j$.
However, $G^j$ is disjoint from $F^{j^*}$ for all $j<\kappa$, and hence $\alpha_1$ cannot be an element of both. This is a contradiction.
\end{proof}

Thus, we are left with proving the following:
\begin{claim} $F_j$ is fat for all $j<\kappa$.
\end{claim}
\begin{proof} Fix arbitrary $j<\kappa$, arbitrary club $D\s\kappa$ and an arbitrary regular cardinal $\theta<\kappa$.
We shall show that $F_j\cap D$ contains a closed copy of $\theta+1$.

By increasing $\theta$, we may assume that $\theta\in\Theta$.
Since $S_\theta^{f_\theta(j)}$ is stationary, let us pick $\alpha\in\acc(D)\cap S_\theta^{f_\theta(j)} \subseteq F^j_\theta \cap \Omega_\theta$.
Let $d_\alpha\s F\cap\alpha$ be a club in $\alpha$ witnessing that $\alpha\in \Omega_\theta$.
By passing to a subclub, we may assume that $\otp(d_\alpha) = \theta$.
Put $\epsilon:=C_\alpha^\circ(i_\theta)$.
Since $\cf(\alpha)=\theta>\aleph_0$, $d := d_\alpha\cap D\cap\acc(C_\alpha^\circ\setminus\epsilon)$ is a club in $\alpha$ of order-type $\theta$.
By Proposition~\ref{narrow-transversal-coherent}, we have $C_{\bar\alpha}^\circ \sq C_\alpha^\circ$ for every $\bar\alpha \in \acc(C_\alpha^\circ)$.
Since $\alpha \in F^j_\theta$ and $d \subseteq \acc(C_\alpha^\circ \setminus \epsilon)$, it follows that $d \subseteq F^j_\theta$.
Hence $d \cup\{\alpha\}$ is a closed copy of $\theta+1$ in $F^j_\theta\cap F\cap D$, let alone in $F_j\cap D$.
\end{proof}

Then $\langle F_j\mid j<\kappa\rangle$ is the desired partition of $F$.
\end{proof}

\begin{cor}\label{c122} The following are equiconsistent:
\begin{enumerate}
\item $\omega_2$ cannot be partitioned into $\omega_2$ many fat sets;
\item $\omega_2$ cannot be partitioned into two  fat sets;
\item There exists a weakly compact cardinal.
\end{enumerate}
\end{cor}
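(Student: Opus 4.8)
The plan is to establish the cyclic chain of relative-consistency implications
$$\operatorname{Con}(\zfc+(1))\Rightarrow\operatorname{Con}(\zfc+(3))\Rightarrow\operatorname{Con}(\zfc+(2))\Rightarrow\operatorname{Con}(\zfc+(1)),$$
where the last link is actually an outright theorem of $\zfc$. First I would record the elementary implication $(2)\Rightarrow(1)$. Suppose toward a contradiction that $(2)$ fails while $(1)$ holds is impossible; concretely, assume $\omega_2=\bigsqcup_{j<\omega_2}F_j$ with each $F_j$ fat. Since any superset of a fat set is fat (the closed copies of $\theta+1$ witnessed inside $F_j\cap D$ persist inside any larger set intersected with $D$), the sets $G_0:=F_0$ and $G_1:=\omega_2\setminus F_0$, the latter containing the fat set $F_1$, are both fat and partition $\omega_2$ into two fat sets. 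Contrapositively, $(2)\Rightarrow(1)$ holds in $\zfc$, so every model of $(2)$ is a model of $(1)$, and in particular $\operatorname{Con}(\zfc+(2))\Rightarrow\operatorname{Con}(\zfc+(1))$.

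Next I would use Theorem~\ref{fatsplit} to obtain $\operatorname{Con}(\zfc+(1))\Rightarrow\operatorname{Con}(\zfc+(3))$. Working in a model $V\models\zfc+(1)$, I instantiate Theorem~\ref{fatsplit} with $\kappa:=\omega_2$ and $\chi:=\aleph_0$, so that $\sq_\chi$ is just $\sq$ and $\square(\omega_2,{\sq_\chi})$ is plain $\square(\omega_2)$, applied to the fat set $F:=\omega_2$. Were $\square(\omega_2)$ to hold, the theorem would partition $\omega_2$ into $\omega_2$ many fat sets, contradicting $(1)$; hence $V\models\neg\square(\omega_2)$. At this point I invoke the classical fine-structural theorem of Jensen asserting that the failure of $\square(\omega_2)$ entails that $\omega_2^V$ is weakly compact in $L$. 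Consequently $L^V\models\zfc+(3)$, which furnishes a model of $\zfc+(3)$ and hence $\operatorname{Con}(\zfc+(3))$.

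Finally, the implication $\operatorname{Con}(\zfc+(3))\Rightarrow\operatorname{Con}(\zfc+(2))$ is supplied by Magidor's model from \cite{MR683153}: starting from a weakly compact cardinal, Magidor's forcing produces a model in which, as observed by Shelah \cite{MR521125}, $\omega_2$ cannot be partitioned into two fat sets. Chaining the three arrows closes the cycle and yields the asserted equiconsistency. The substantive new link is the middle one, whose only non-elementary ingredient beyond Theorem~\ref{fatsplit} is Jensen's theorem that $\neg\square(\omega_2)$ reflects down to a weakly compact cardinal in $L$; this is where the weakly compact cardinal is manufactured, and it is the step I expect to lean most heavily on external machinery. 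The delicate point there is the apparent non-absoluteness of the clause ``the square sequence admits no thread'', which is overcome inside Jensen's construction by producing an $L$-definable coherent sequence whose unthreadability survives into outer models. I would simply cite this classical input rather than reprove it, since the genuinely new content of the corollary is the $\zfc$-implication $(1)\Rightarrow\neg\square(\omega_2)$ extracted from Theorem~\ref{fatsplit}.
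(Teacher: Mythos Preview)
Your proof is correct and matches the paper's approach: the trivial implication $(2)\Rightarrow(1)$, Theorem~\ref{fatsplit} plus the $L$-result for $\operatorname{Con}(1)\Rightarrow\operatorname{Con}(3)$, and Magidor's model for $\operatorname{Con}(3)\Rightarrow\operatorname{Con}(2)$. Two minor notes: the paper attributes the implication from $\neg\square(\omega_2)$ to weak compactness in $L$ to Todorcevic~\cite{MR908147} rather than Jensen, and it spells out directly (rather than citing Shelah's remark as a black box) why the stationary-reflection property in Magidor's model forces any two fat subsets of $\omega_2$ to intersect.
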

\begin{proof} Clause~(2) logically implies Clause~(1).

By Corollary~\ref{fatsplit}, if $\omega_2$ cannot be partitioned into $\omega_2$ many fat sets, then $\square(\omega_2)$ fails, and then by \cite{MR908147}, $\omega_2$ is a weakly compact cardinal in $L$.
That is, Clause~(1) implies the consistency of Clause~(3).

By~\cite[$\S2$]{MR683153}, the existence of a weakly compact cardinal entails the consistency of the following statement.
For every stationary $S\s E^{\omega_2}_\omega$, the set $\{\delta\in E^{\omega_2}_{\omega_1}\mid S\cap\delta\text{ is stationary in }\delta\}$ contains a club relative to $E^{\omega_2}_{\omega_1}$.
Thus, we assume that the statement holds, in order to derive Clause~(2).
Let $F$ and $G$ be two arbitrary fat subsets of $\omega_2$.
Since $F$ is fat, the set $S:=F \cap E^{\omega_2}_\omega$ is stationary.
Thus, by our assumption, we can choose a club $D \subseteq \omega_2$ such that $S\cap\delta$ is stationary in $\delta$ for all $\delta\in D\cap E^{\omega_2}_{\omega_1}$.
Since $G$ is fat, we can find an increasing and continuous function $\pi:\omega_1+1\rightarrow G\cap D$.
Put $\delta:=\pi(\omega_1)$. As $\pi[\omega_1]$ is a club in $\delta$ and $S\cap\delta$ is stationary in $\delta$, we get that $S\cap G\cap\delta$ is nonempty.
In particular, $F$ and $G$ are not disjoint. Thus, we have shown that Clause~(3) implies the consistency of Clause~(2).
\end{proof}

In \cite{rinot07}, the second author introduced the following reflection principle in connection with the study of the validity of $\diamondsuit$ at successors of singular cardinals.
\begin{defn}[\cite{rinot07}]  For regular uncountable cardinals $\mu<\kappa$, $R_2(\kappa,\mu)$ asserts that for every function $f:E^\kappa_{<\mu}\rightarrow\mu$,
there exists some $j<\mu$ such that $\{ \delta\in E^\kappa_\mu\mid f^{-1}[j]\cap\delta\text{ is nonstationary in }\delta\}$ is nonstationary in $\kappa$.
\end{defn}

\begin{prop}\label{prop124} For regular uncountable cardinals $\mu<\kappa$, if $\kappa$ may be partitioned into $\mu$ many fat sets, then $R_2(\kappa,\mu)$ fails.
\end{prop}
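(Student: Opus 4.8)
The plan is to refute $R_2(\kappa,\mu)$ directly by exhibiting a single coloring that is bad for every threshold $j$. Given a partition $\langle F_i\mid i<\mu\rangle$ of $\kappa$ into fat sets, I would define $f:E^\kappa_{<\mu}\to\mu$ by letting $f(\delta)$ be the unique index $i<\mu$ for which $\delta\in F_i$. Unwinding the definition of $R_2(\kappa,\mu)$ and reading the ordinal $j$ as the set $\{i\mid i<j\}$, one has $f^{-1}[j]=\left(\bigcup_{i<j}F_i\right)\cap E^\kappa_{<\mu}$ for each $j<\mu$, so the goal becomes to show that $B_j:=\{\delta\in E^\kappa_\mu\mid f^{-1}[j]\cap\delta\text{ is nonstationary in }\delta\}$ is stationary for every $j<\mu$. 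Establishing this for all $j$ is precisely the negation of $R_2(\kappa,\mu)$.

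The heart of the argument is to produce, for a fixed $j<\mu$ and an arbitrary club $D\s\kappa$, a point $\delta\in B_j\cap D$. Here I would exploit the tail union $G:=\bigcup_{j\le i<\mu}F_i$. Since the $F_i$ partition $\kappa$, the set $G$ is disjoint from $\bigcup_{i<j}F_i$; and since $G\supseteq F_j$ is a superset of a fat set, it is itself fat, because the closed copies of $\theta+1$ witnessing fatness of $F_j$ survive in any superset. As $\mu$ is regular uncountable and below $\kappa$, we have $\mu\in\reg(\kappa)$, so fatness of $G$ yields an increasing and continuous map $\pi:\mu+1\to G\cap D$. Setting $\delta:=\pi(\mu)$ gives $\delta\in D$ and $\cf(\delta)=\mu$, hence $\delta\in E^\kappa_\mu\cap D$, while $C:=\pi[\mu]$ is a club in $\delta$ contained in $G$ and therefore disjoint from $f^{-1}[j]\s\bigcup_{i<j}F_i$. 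Thus $C$ witnesses that $f^{-1}[j]\cap\delta$ is nonstationary in $\delta$, so $\delta\in B_j$, as desired.

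Running this for every club $D$ shows that each $B_j$ meets every club and is therefore stationary, which refutes $R_2(\kappa,\mu)$ via the coloring $f$. The argument is short, and I do not expect a genuine obstacle beyond a few bookkeeping checks: the routine unwinding of $f^{-1}[j]$ as a union of partition blocks intersected with $E^\kappa_{<\mu}$, the observation that a superset of a fat set remains fat (so that the tail union $G$ stays fat even after discarding the low blocks $F_i$ with $i<j$), and the verification that the top point $\pi(\mu)$ of the closed copy lands inside $D$ and has cofinality exactly $\mu$. The one conceptual point worth emphasizing is that it is the fatness of the \emph{complementary} tail $G$, rather than of any individual block, that supplies the club avoiding the low colors.
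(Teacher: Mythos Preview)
Your proof is correct and follows essentially the same approach as the paper's: define the same coloring $f$ and, for each $j<\mu$, use fatness to thread a closed copy of $\mu+1$ through a club, landing at a point of $E^\kappa_\mu$ where the low colors are nonstationary. The only difference is that the paper uses the fatness of the single block $F_j$ directly rather than the tail union $G=\bigcup_{i\ge j}F_i$; since $F_j$ is already fat and disjoint from $\bigcup_{i<j}F_i$, your detour through $G$ (and the emphasized ``conceptual point'' about needing the complementary tail) is unnecessary.
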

\begin{proof} Suppose that $\langle F_j\mid j<\mu\rangle$ is a partition of $\kappa$ into fat sets.
Let $f:E^\kappa_{<\mu}\rightarrow\mu$ be the unique function satisfying $\alpha\in F_{f(\alpha)}$ for every $\alpha \in E^\kappa_{<\mu}$.
Let $j<\mu$ be arbitrary.
Then, for every club $D\s\kappa$, there exists a strictly increasing and continuous map $\pi:\mu+1\rightarrow F_j\cap D$.
In particular, $\{\delta\in E^\kappa_\mu\mid F_j\cap\delta\text{ contains a club in }\delta\}$ is a stationary set which is covered by
$\{ \delta\in E^\kappa_\mu\mid f^{-1}[j]\cap\delta\allowbreak\ \text{is nonstationary in }\delta\}$, since $f^{-1}[j] \subseteq \bigcup_{i<j} F_i$ and the latter is disjoint from $F_j$.
Therefore, $f$ witnesses the failure of $R_2(\kappa,\mu)$.
\end{proof}

In \cite[Question~3]{rinot07}, the author asks about the consistency strength of $R_2(\omega_2,\omega_1)$. Here we provide an answer:

\begin{cor} The following are equiconsistent:
\begin{enumerate}
\item $R_2(\omega_2,\omega_1)$ holds;
\item There exists a weakly compact cardinal.
\end{enumerate}
\end{cor}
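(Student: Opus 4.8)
The plan is to split the equiconsistency into its two consistency implications, leaning entirely on machinery already in place: Proposition~\ref{prop124}, Corollary~\ref{c122}, Theorem~\ref{fatsplit}, and the reflection statement that the proof of Corollary~\ref{c122} extracts from a weakly compact cardinal via Magidor's model \cite{MR683153}. No new forcing or fine-structure is needed; both directions are reductions to results stated above.

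For the implication $\text{Con}(\text{weakly compact})\Rightarrow\text{Con}(R_2(\omega_2,\omega_1))$, I would start from a model of the statement $(\star)$ used in the proof of Corollary~\ref{c122}: for every stationary $S\s E^{\omega_2}_\omega$, the set $\{\delta\in E^{\omega_2}_{\omega_1}\mid S\cap\delta\text{ is stationary in }\delta\}$ contains a club relative to $E^{\omega_2}_{\omega_1}$. By \cite{MR683153} this is consistent modulo a weakly compact cardinal, so it suffices to verify that $(\star)$ implies $R_2(\omega_2,\omega_1)$ outright. Given $f\colon E^{\omega_2}_{<\omega_1}\to\omega_1$, I restrict attention to $E^{\omega_2}_\omega$ (the rest of the domain is nonstationary and invisible to the stationarity conditions in $R_2$) and set $A_i:=\{\alpha\in E^{\omega_2}_\omega\mid f(\alpha)=i\}$, so that $E^{\omega_2}_\omega=\bigsqcup_{i<\omega_1}A_i$. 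Since the nonstationary ideal on $\omega_2$ is $\omega_2$-complete and $E^{\omega_2}_\omega$ is stationary, some fiber $A_{i_0}$ must be stationary. Applying $(\star)$ to $S:=A_{i_0}$ produces a club $C\s\omega_2$ with $A_{i_0}\cap\delta$ stationary in $\delta$ for every $\delta\in C\cap E^{\omega_2}_{\omega_1}$. As $A_{i_0}\s f^{-1}[i_0+1]$, the same holds with $f^{-1}[i_0+1]$ in place of $A_{i_0}$; hence the set $\{\delta\in E^{\omega_2}_{\omega_1}\mid f^{-1}[i_0+1]\cap\delta\text{ nonstationary in }\delta\}$ is disjoint from $C$ and thus nonstationary. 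So $j:=i_0+1$ witnesses $R_2(\omega_2,\omega_1)$ for $f$, and $f$ being arbitrary, $R_2(\omega_2,\omega_1)$ holds.

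For the converse $\text{Con}(R_2(\omega_2,\omega_1))\Rightarrow\text{Con}(\text{weakly compact})$, I would work in a model $V$ of $R_2(\omega_2,\omega_1)$ and feed the hypothesis into Corollary~\ref{c122}. By the contrapositive of Proposition~\ref{prop124} (with $\kappa=\omega_2$, $\mu=\omega_1$), $\omega_2$ cannot be partitioned into $\omega_1$ many fat sets. Since every superset of a fat set is fat, an arbitrary union of fat sets is fat, so any partition of $\omega_2$ into $\omega_2$ many fat sets could be amalgamated (grouping the $\omega_2$ pieces into $\omega_1$ nonempty blocks) into a partition into $\omega_1$ many fat sets; therefore $\omega_2$ cannot be partitioned into $\omega_2$ many fat sets either. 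This is exactly Clause~(1) of Corollary~\ref{c122}, whose proof (via Theorem~\ref{fatsplit} and Todorcevic's theorem \cite{MR908147}) shows that $\square(\omega_2)$ fails in $V$ and hence that $\omega_2^V$ is weakly compact in $L^V$. Consequently $\text{Con}(R_2(\omega_2,\omega_1))$ implies the consistency of a weakly compact cardinal.

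The genuinely hard mathematics—Magidor's forcing yielding $(\star)$ and the $\square$-to-fat-partition implication of Theorem~\ref{fatsplit}—is already established, so the main obstacle is purely the correctness of the two bookkeeping reductions. In the first direction the delicate point is the reflection computation: one must pick the witness $j=i_0+1$ and use $A_{i_0}\s f^{-1}[i_0+1]$ to transfer stationarity upward, and one must invoke $\omega_2$-completeness of the nonstationary ideal to secure a stationary fiber. In the second direction the only subtlety is the upward closure of fatness, which is what licenses the amalgamation of an $\omega_2$-indexed partition into an $\omega_1$-indexed one.
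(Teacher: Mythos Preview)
Your proposal is correct and follows essentially the same route as the paper. The paper compresses both directions considerably (writing ``It is trivial to see that $R_2(\omega_2,\omega_1)$ holds in Magidor's model'' and then invoking Proposition~\ref{prop124} and Corollary~\ref{c122} without further comment), whereas you spell out the verification that $(\star)$ implies $R_2(\omega_2,\omega_1)$ via a stationary fiber, and you make explicit the amalgamation step from an $\omega_2$-partition to an $\omega_1$-partition that bridges Proposition~\ref{prop124} with Clause~(1) of Corollary~\ref{c122}; these are precisely the details the paper leaves to the reader.
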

\begin{proof} It is trivial to see that $R_2(\omega_2,\omega_1)$ holds in Magidor's model \cite[$\S2$]{MR683153}.
In particular, Clause~(2) implies the consistency of Clause~(1).

Next, by Proposition~\ref{prop124}, if $R_2(\omega_2,\omega_1)$ holds, then there exists no partition of $\omega_2$ into $\omega_1$ many fat sets, so that by Corollary~\ref{c122}, Clause~(1) implies the consistency of Clause~(2).
\end{proof}

To conclude this section, in light of the results of \cite[$\S2$]{MR3135494}, let us point out that the proof of Lemma~\ref{split_amenable} (including Lemma~\ref{lemma12} on which it builds) easily generalizes to yield the following:
\begin{lemma}\label{lemma127} Suppose that $\mathcal F$ is a normal filter over some $\Gamma\s \kappa$, and $\vec C=\langle C_\delta\mid \delta\in\Gamma\rangle$ is a $C$-sequence.
Suppose that $\Lambda\le\kappa$, and $\langle \Omega^\iota\mid\iota<\Lambda\rangle$ is a sequence of $\mathcal F$-positive sets such that $\vec C\restriction \Omega^\iota$ is amenable for each $\iota<\Lambda$.
Then there exists a conservative postprocessing function $\Phi:\mathcal K(\kappa)\rightarrow\mathcal K(\kappa)$, a cofinal subset $B\s\Lambda$, and an injection $h:B\rightarrow\kappa$ such that
$\{ \delta\in\Omega^\iota\cap\acc(\kappa)\mid \min(\Phi(C_\delta))=h(\iota)\}$ is $\mathcal F$-positive for all $\iota\in B$.\qed
\end{lemma}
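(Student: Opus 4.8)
The plan is to transcribe the proofs of Lemma~\ref{lemma12} and Lemma~\ref{split_amenable}, replacing the club filter by the abstract normal filter $\mathcal F$ throughout. Write $\mathcal F^+$ for the collection of $\mathcal F$-positive subsets of $\Gamma$ and $\mathcal F^*$ for the dual ideal. We rely on three standard features of a normal filter $\mathcal F$ over $\Gamma\s\kappa$: it is $\kappa$-complete, it is closed under diagonal intersections, and it satisfies Fodor's lemma (any regressive function on an $\mathcal F$-positive set is constant on an $\mathcal F$-positive subset). Crucially, since $\mathcal F$ extends the club filter, every nonstationary subset of $\kappa$ lies in $\mathcal F^*$ (in particular every $\mathcal F$-positive set is stationary); hence any conclusion phrased via nonstationarity---in particular, every consequence of amenability obtained through Proposition~\ref{amenable_vs_trivial}---is at least as strong as the corresponding $\mathcal F$-null statement we shall need. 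Throughout we tacitly intersect the $\Omega^\iota$ with $\acc(\kappa)$ (itself a club, hence in $\mathcal F$) so that $C_\delta$, and therefore $\Phi(C_\delta)$, is defined for all $\delta$ under consideration.

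First I would establish the $\mathcal F$-version of Lemma~\ref{lemma12}: if $\vec C\restriction\Omega^\iota$ is amenable, then there is some $i<\kappa$ for which $\{\beta\in\Omega^\iota\mid \otp(C_\beta)>i\ \&\ C_\beta(i)\ge\tau\}$ is $\mathcal F$-positive for every $\tau<\kappa$. The argument copies the original. Assuming otherwise, fix $f\colon\kappa\to\kappa$ with each $\Omega^\iota_{i,f(i)}\in\mathcal F^*$, set $E:=\diagonal_{i<\kappa}(\Gamma\setminus\Omega^\iota_{i,f(i)})\in\mathcal F$ by normality, and put $D:=E\cap\{\delta<\kappa\mid f[\delta]\s\delta\}\in\mathcal F$. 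Splitting on whether $S:=\{\beta\in\Omega^\iota\mid \otp(C_\beta)=\beta\}$ is $\mathcal F$-positive: in the positive case $D\cap S\in\mathcal F^+$ is in particular stationary, while amenability of $\vec C\restriction\Omega^\iota$ together with Proposition~\ref{amenable_vs_trivial}(2) (applicable since $D$, being $\mathcal F$-positive, is cofinal in $\kappa$) makes $\{\beta\in\Omega^\iota\mid D\cap\beta\s C_\beta\}$ nonstationary, so a $\beta\in D\cap S$ avoiding it---and a witness $\alpha\in(D\cap\beta)\setminus C_\beta$---can be chosen exactly as before; in the null case $\{\beta\in\Omega^\iota\mid \otp(C_\beta)<\beta\}\in\mathcal F^+$, and the normal Fodor lemma supplies an $\mathcal F$-positive fiber of $\beta\mapsto\otp(C_\beta)$, again leading to the same contradiction.

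With this in hand, I would rerun the case analysis of Lemma~\ref{split_amenable} essentially verbatim. The regressive functions $\varphi_i$, the ordinals $\zeta^\iota_i:=\{\tau<\kappa\mid\Omega^\iota_{i,\tau}\in\mathcal F^+\}$, and the auxiliary functions $f,g$ are defined as there, with ``stationary'' read as ``$\mathcal F$-positive''. Every invocation of Fodor's lemma (in Cases~1, 2.1, and 2.2.1) is replaced by the normal Fodor lemma for $\mathcal F$, and each needed $\mathcal F$-positivity follows from the $\mathcal F$-version of Lemma~\ref{lemma12} just as $\zeta^\iota_i=\kappa$ was used before. The only step requiring a genuinely new remark is Case~2.2.2: there the club $D_\iota$ disjoint from $\bigcup\{\Omega^\iota_{i_{\iota'},h(\iota')}\mid \iota'\in B\cap\iota\}$ is replaced by the $\mathcal F$-set $\Gamma\setminus\bigcup\{\Omega^\iota_{i_{\iota'},h(\iota')}\mid \iota'\in B\cap\iota\}$; this union has fewer than $\kappa$ many terms, each $\mathcal F$-null because $h(\iota')>g(i_{\iota'})\ge\zeta^\iota_{i_{\iota'}}$, so by $\kappa$-completeness it lies in $\mathcal F^*$ and its complement in $\mathcal F$. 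Intersecting the relevant $\Delta^\iota$ with this $\mathcal F$-set then yields an $\mathcal F$-positive set on which $\min(\Phi(C_\delta))=h(\iota)$. The postprocessing function $\Phi$ (the $\Phi^{\{i^*\}}$ of Example~\ref{chop-bottom} in Cases~2.1 and~2.2.1, and the ``chop at the least relevant index'' map in Case~2.2.2) and the verification that it is a conservative postprocessing function are defined exactly as before and make no reference to the filter.

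The main obstacle---really the only conceptual point---is the interaction between amenability, which is pinned to the club filter by Definition~\ref{amenable}, and the abstract filter $\mathcal F$. The resolution is that amenability is used solely inside the $\mathcal F$-version of Lemma~\ref{lemma12}, and there it is needed only to keep the exceptional set $\{\beta\in\Omega^\iota\mid D\cap\beta\s C_\beta\}$ nonstationary; since we select $\beta$ from the $\mathcal F$-positive (hence stationary) set $D\cap S$, a merely nonstationary obstruction cannot absorb all candidates. The remaining care is bookkeeping: ensuring $D$ is cofinal so that Proposition~\ref{amenable_vs_trivial}(2) applies (automatic, as $\mathcal F$-positive sets are unbounded), using $\kappa$-completeness for the small unions of $\mathcal F$-null sets in Case~2.2.2, and restricting to $\acc(\kappa)$ so that each $\Phi(C_\delta)$ is defined.
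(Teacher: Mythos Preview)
Your proposal is correct and follows exactly the approach the paper itself indicates: the paper does not write out a proof but simply remarks that the proof of Lemma~\ref{split_amenable} (together with Lemma~\ref{lemma12}) ``easily generalizes'' to an arbitrary normal filter. Your careful handling of the one nontrivial point---that amenability is a club-filter notion, but this suffices because $\mathcal F$ extends the club filter, so the nonstationary obstruction cannot absorb the $\mathcal F$-positive (hence stationary) set of candidates---is precisely the observation that makes the generalization go through.
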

Thus, we get the following generalization of Theorem~2 of \cite{MR3135494}:
\begin{cor}\label{cor-Stanley}  Suppose that $\mathcal F$ is a normal filter over $\kappa$, and $\square_\xi(\kappa)$ holds.
Then there exists a $\square_\xi(\kappa)$-sequence $\langle C_\alpha\mid\alpha<\kappa\rangle$
and a partition of $\kappa$ into $\kappa$ many $\mathcal F$-positive sets, $\langle B_i\mid i<\kappa\rangle$, such that $\acc(C_\alpha)\s B_i$ for all $i<\kappa$ and all $\alpha\in B_i$.
\end{cor}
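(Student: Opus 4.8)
The plan is to feed the hypotheses into Lemma~\ref{lemma127} with a \emph{constant} sequence of positive sets, and then read the partition off the map $x\mapsto\min(x)$, whose behaviour is governed by coherence.

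First I would fix a $\square_\xi(\kappa)$-sequence and let $\vec C=\langle C_\alpha\mid\alpha\in\Gamma\rangle$ be a corresponding transversal, where $\Gamma:=\Gamma(\cvec C)$. Reading $\square_\xi(\kappa)$ as $\square_\xi(\kappa,{<}2,{\sq},{\notin})$ and taking $\chi:=\aleph_0$ (so that $\sq_\chi={\sq}$), Lemma~\ref{Gamma-closure}(3) gives $\Gamma=\acc(\kappa)$, and Lemma~\ref{square_is_amenable} (applicable since $\mu=2<\kappa$) shows that $\vec C$ is amenable. Being normal, $\mathcal F$ extends the club filter, so $\acc(\kappa)\in\mathcal F$; restricting $\mathcal F$ to $\acc(\kappa)$ yields a normal filter over $\Gamma$ for which $\Gamma$ itself is positive.

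Next I would invoke Lemma~\ref{lemma127} with $\Lambda:=\kappa$ and the constant sequence $\Omega^\iota:=\Gamma$ for all $\iota<\kappa$ (each $\Omega^\iota$ is $\mathcal F$-positive, and $\vec C\restriction\Omega^\iota=\vec C$ is amenable). This produces a conservative postprocessing function $\Phi$, a cofinal (hence size-$\kappa$) set $B\s\kappa$, and an injection $h:B\to\kappa$ such that $S_\iota:=\{\delta\in\Gamma\mid\min(\Phi(C_\delta))=h(\iota)\}$ is $\mathcal F$-positive for every $\iota\in B$. Writing $C^\circ_\alpha:=\Phi(C_\alpha)$, Lemma~\ref{cons-pp-preserves-amenable} shows that $\langle C^\circ_\alpha\mid\alpha\in\Gamma\rangle$ is again amenable; since amenability entails the non-triviality (the ${\notin}$) clause, while conservativity and Proposition~\ref{narrow-transversal-coherent} preserve $\xi$-boundedness and $\sq$-coherence, the sequence $\langle C^\circ_\alpha\mid\alpha<\kappa\rangle$ (retaining the convention values off $\Gamma$) is a genuine $\square_\xi(\kappa)$-sequence. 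This is the sequence named in the conclusion.

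Finally I would build the partition from $\alpha\mapsto\min(C^\circ_\alpha)$. The crucial point is the coherence identity of Proposition~\ref{narrow-transversal-coherent}: for $\alpha\in\Gamma$ and $\bar\alpha\in\acc(C^\circ_\alpha)$ one has $C^\circ_{\bar\alpha}=C^\circ_\alpha\cap\bar\alpha$, whence $\min(C^\circ_{\bar\alpha})=\min(C^\circ_\alpha)$ (as $\bar\alpha$ exceeds that minimum). Thus every fibre $\{\alpha\in\Gamma\mid\min(C^\circ_\alpha)=m\}$ is closed under $\alpha\mapsto\acc(C^\circ_\alpha)$, and so is any union of fibres. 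I would then partition the index set $\kappa$ of possible minima into $\langle M_i\mid i<\kappa\rangle$ with each $M_i$ meeting $h[B]$ (possible since $|h[B]|=\kappa$), dump $\kappa\setminus\Gamma$ (successors and $0$, where $\acc(C^\circ_\alpha)=\emptyset$) into the block $i=0$, and set $B_i:=\{\alpha\in\Gamma\mid\min(C^\circ_\alpha)\in M_i\}$ for $i>0$. Each $B_i$ contains some $S_\iota$ and is therefore $\mathcal F$-positive, the $B_i$ partition $\kappa$, and the requirement $\acc(C^\circ_\alpha)\s B_i$ for $\alpha\in B_i$ follows from the closure remark, the successor case being vacuous.

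The step I expect to be most delicate is confirming that $\langle C^\circ_\alpha\rangle$ really remains a $\square_\xi(\kappa)$-sequence --- in particular that amenability does deliver the ${\notin}$ clause --- together with the bookkeeping of the folding, so that each of the $\kappa$ blocks is simultaneously positive and closed under accumulation. The combinatorial heart, by contrast, comes for free: once $\Phi$ has made $\min$ respect coherence, the fibres of $\min$ \emph{are} the desired partition.
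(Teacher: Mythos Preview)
Your proposal is correct and follows essentially the same route as the paper: invoke Lemma~\ref{lemma127} with the constant sequence, pass to $C^\circ_\alpha=\Phi(C_\alpha)$, and partition $\kappa$ according to the fibres of $\alpha\mapsto\min(C^\circ_\alpha)$, using coherence to get $\acc(C^\circ_\alpha)\subseteq B_i$. The only cosmetic differences are that the paper verifies the ${\notin}$ clause directly (showing a thread through $\vec{C^\bullet}$ would yield a thread through the original $\vec C$) rather than via amenability, and it writes down a specific partition $B_i:=\{\delta<\kappa\mid(\delta=i=0)\text{ or }\otp(\min(C^\bullet_\delta)\cap\rng(h))=i\}$ rather than your generic folding of the minima into blocks meeting $h[B]$.
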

\begin{proof} Let $\vec C=\langle C_\delta\mid\delta<\kappa\rangle$ be some witness to $\square_\xi(\kappa)$.
Appeal to Lemma~\ref{lemma127} with $\mathcal F$, $\vec C$, and the constant $\kappa$-sequence whose unique element is $\kappa$, to obtain a postprocessing function $\Phi:\mathcal K(\kappa)\rightarrow\mathcal K(\kappa)$,
a cofinal subset $B\s\kappa$, and an injection $h:B\rightarrow\kappa$ such that $\{\delta\in\acc(\kappa)\mid\min(\Phi(C_\delta))=h(\iota)\}$ is $\mathcal F$-positive for all $\iota\in B$.
Put $C_0^\bullet:=\emptyset$, $C_{\delta+1}^\bullet:=\{\delta\}$ for all $\delta<\kappa$, and $C_\delta^\bullet:=\Phi(C_\delta)$ for all $\delta\in\acc(\kappa)$.

\begin{claim} $\vec{C^\bullet}=\langle C^\bullet_\delta\mid\delta<\kappa\rangle$ is a $\square_\xi(\kappa)$-sequence.
\end{claim}
\begin{proof} First, the required coherence comes from Proposition~\ref{narrow-transversal-coherent}.

Next, towards a contradiction, suppose that  $A$ is a cofinal subset of $\kappa$ such that  $A\cap\delta=C_\delta^\bullet$ for every $\delta\in\acc^+(A)$.
Then for all $\alpha<\beta$ both from $\acc^+(A)$, we have $\alpha\in\acc(C_\beta^\bullet)\s \acc(C_\beta)$.
That is, $\langle C_\alpha\mid \alpha\in\acc^+(A)\rangle$ is an $\sq$-increasing chain converging to some club $C$ satisfying $C\cap\delta=C_\delta$ for all $\delta\in\acc(C)$, contradicting the choice of $\vec C$.
\end{proof}

For all $i<\kappa$, set $B_i:=\{\delta<\kappa\mid(\delta=i=0)\text{ or }(\otp(\min(C^\bullet_\delta)\cap \rng(h))=i)\}$.
Then $\vec{C^\bullet}$ and $\langle B_i\mid i<\kappa\rangle$ are as sought.
\end{proof}

\section{$C$-sequences of unrestricted order-type}\label{section2}

By waiving the conservativity requirement of Lemma~\ref{split_amenable}, we arrive at the following extremely useful lemma.
\begin{lemma}[mixing lemma]\label{split_amenable2} Suppose that $\vec C=\langle C_\delta\mid\delta\in\Gamma\rangle$ is an amenable $C$-sequence over a stationary subset $\Gamma\s \acc(\kappa)$.
Suppose that $\langle T_\theta\mid\theta\in\Theta\rangle$ is a sequence of stationary subsets of $\Gamma$, with $\Theta\s \kappa$.
Then there exists a faithful postprocessing function $\Phi:\mathcal K(\kappa)\rightarrow\mathcal K(\kappa)$, satisfying the two:
\begin{itemize}
\item For all $x\in\mathcal K(\kappa)$,  we have $\Phi(x)=^* x$;
\item For cofinally many $\theta\in\Theta$, $\{\delta\in T_\theta\mid \min(\Phi(C_\delta))=\theta\}$ is stationary.
\end{itemize}
\end{lemma}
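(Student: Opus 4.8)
The statement is a variant of Lemma~\ref{split_amenable} in which we trade conservativity for faithfulness, and in exchange we get to hit the \emph{prescribed} value $\theta$ (rather than an arbitrary value $h(\iota)$ chosen for us by Fodor's lemma) on a stationary set, while keeping $\Phi(x)=^* x$ for every $x$. The plan is to first run the machinery of Lemma~\ref{split_amenable} with the sequence $\langle T_\theta\mid\theta\in\Theta\rangle$ to obtain a conservative postprocessing function $\Phi_0$, a cofinal $B\s\Theta$, and an injection $h\colon B\to\kappa$ such that $\{\delta\in T_\theta\mid \min(\Phi_0(C_\delta))=h(\theta)\}$ is stationary for each $\theta\in B$. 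This does the hard combinatorial work of separating the targets. What remains is to \emph{correct} the minima: we need to post-compose $\Phi_0$ with a function that replaces the arbitrary bottom value $h(\theta)$ by the intended value $\theta$, and to verify that the composite is a faithful postprocessing function satisfying $\Phi(x)=^*x$.

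Concretely, I would define $\Phi$ on $x\in\mathcal K(\kappa)$ as follows: compute $y:=\Phi_0(x)$; if $\min(y)=h(\theta)$ for some (necessarily unique, by injectivity of $h$) $\theta\in B$, then set $\Phi(x):=(\{\theta\}\cup y)\setminus(\theta+1)$ when $\theta<\min(y\setminus\{\min(y)\})$, i.e.\ prepend $\theta$ as a new minimum after deleting the old minimum $h(\theta)$; otherwise set $\Phi(x):=y$. (The precise bookkeeping needs $\theta$ to sit strictly below the second element of $y$, which one arranges by shrinking $B$ to the $\theta$'s lying below the relevant thresholds, exactly as thresholds are managed in Case~2 of Lemma~\ref{split_amenable}.) Since $\Phi_0$ is conservative we have $y=^*x$, and swapping a single bottom point changes $y$ by at most a finite initial modification, so $\Phi(x)=^*x$; in particular $\acc(\Phi(x))=\acc(y)\s\acc(x)$ and $\Phi(x)$ is club in $\sup(x)$. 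Faithfulness is cheap to secure: I can further compose with the correction function $\faithful$ of Example~\ref{faithful_correction}, which is itself a faithful postprocessing function, and recall that a composite ending in a faithful function is faithful.

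The step I expect to be the main obstacle is verifying the commuting-diagram condition $\Phi(x)\cap\bar\alpha=\Phi(x\cap\bar\alpha)$ for $\bar\alpha\in\acc(\Phi(x))$. The subtlety is that $\Phi$ alters the bottom of $x$, and the definition branches on the value of $\min(\Phi_0(x))$; one must check that for an \emph{accumulation point} $\bar\alpha$ the modification near the bottom is ``seen identically'' by $x$ and by $x\cap\bar\alpha$. The key facts are that $\Phi_0$ is a postprocessing function, so $\Phi_0(x)\cap\bar\alpha=\Phi_0(x\cap\bar\alpha)$ already holds for $\bar\alpha\in\acc(\Phi_0(x))=\acc(\Phi(x))$, and that the bottom-swap depends only on $\min(\Phi_0(x))=\min(\Phi_0(x\cap\bar\alpha))$ together with finitely much data below any accumulation point $\bar\alpha$. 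Since $\bar\alpha$ is an accumulation point it lies strictly above the finitely many altered points, so the prepended $\theta$ and the deleted $h(\theta)$ are common to both sides and fall below $\bar\alpha$; thus intersecting with $\bar\alpha$ commutes with the swap. This is the point requiring the most care, but it is a routine finite-tail verification once the threshold shrinking of $B$ guarantees $\theta<\bar\alpha$.

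Finally, for the second bullet, note that for $\theta\in B$ the set $\{\delta\in T_\theta\mid\min(\Phi_0(C_\delta))=h(\theta)\}$ is stationary and on it $\Phi$ has by construction replaced the minimum by $\theta$, so $\{\delta\in T_\theta\mid\min(\Phi(C_\delta))=\theta\}$ is stationary; since $B$ is cofinal in $\Theta$, this holds for cofinally many $\theta\in\Theta$, as required.
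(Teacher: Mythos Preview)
Your plan is essentially the paper's proof: invoke Lemma~\ref{split_amenable}, then post-compose with a bottom-correcting function that replaces the marker $h(\theta)$ by $\theta$, and finally compose with $\faithful$. Two points need tightening.

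First, conservativity of $\Phi_0$ does \emph{not} by itself yield $\Phi_0(x)=^*x$ (consider Example~\ref{intersectD}). What you need---and what the paper states explicitly---is that inspection of the proof of Lemma~\ref{split_amenable} shows $\Phi_0(x)$ is always a \emph{final segment} of $x$; that is the source of $=^*$.

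Second, your swap formula is garbled (as written, $(\{\theta\}\cup y)\setminus(\theta+1)$ deletes $\theta$ itself), and the intended operation ``delete $h(\theta)$, prepend $\theta$'' needs $\theta$ below the second element of $y$. That second element varies with $x$, so there is no single threshold to shrink $B$ against; your appeal to ``managing thresholds as in Case~2'' would require reopening the proof of Lemma~\ref{split_amenable} to force $h(\theta)>\theta$, not merely trimming its output. The paper sidesteps this entirely with a more robust swap: when $\min(\Phi_0(x))=h(\pi(\theta))$ and $\theta<\sup(x)$, set
\[
\Phi'(x):=\{\theta\}\cup(\Phi_0(x)\setminus\theta),
\]
and set $\Phi'(x):=\Phi_0(x)$ otherwise. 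This makes $\theta$ the new minimum regardless of its position relative to the rest of $\Phi_0(x)$, still alters only a bounded initial segment (so $\Phi'(x)=^*x$), and the side-condition $\theta<\sup(x)$ excludes only boundedly many $\delta$ from each stationary target set. Your commuting-diagram verification then goes through exactly as you outline, with the extra case $\theta\ge\sup(x)$ handled by the fallback.
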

\begin{proof} Set $\Lambda:=\otp(\Theta)$ and let $\pi:\Theta\leftrightarrow\Lambda$ denote the order-preserving bijection. For all $\iota<\Lambda$, denote $\Omega^\iota:=T_{\pi^{-1}(\iota)}$.
By Lemma~\ref{split_amenable}, let us fix a postprocessing function $\Phi:\mathcal K(\kappa)\rightarrow\mathcal K(\kappa)$,
a cofinal subset $B\s\Lambda$, and an injection $h:B\rightarrow\kappa$ such that $\{ \delta\in\Omega^\iota\mid \min(\Phi(C_\delta))=h(\iota)\}$ is stationary for all $\iota\in B$.
As made clear by the proof of that lemma, we also have $\Phi(x)=^* x$ for all $x\in\mathcal K(\kappa)$.

Define $\Phi':\mathcal K(\kappa)\rightarrow\mathcal K(\kappa)$ by stipulating:
$$\Phi'(x):=\begin{cases}
\Phi(x),&\text{if }\min(\Phi(x))\notin\rng(h);\\
\Phi(x),&\text{if }\min(\Phi(x))=h(\pi(\theta))\text{ but }\theta\ge\sup(x);\\
\{\theta\}\cup(\Phi(x)\setminus \theta),&\text{if }\min(\Phi(x))=h(\pi(\theta))\text{ and }\theta < \sup(x).
\end{cases}$$

To see that $\Phi'$ is a postprocessing function, fix $x \in \mathcal K(\kappa)$.
Evidently, $\Phi'(x)$ is a club in $\sup(\Phi(x)) = \sup(x)$, and $\acc(\Phi'(x)) \subseteq \acc(\Phi(x)) \subseteq \acc(x)$.
Next, suppose that $\bar\alpha\in\acc(\Phi'(x))$.
\begin{itemize}
\item[$\br$] If $\min(\Phi(x))$ is not in $\rng(h)$, then $\min(\Phi(x\cap\bar\alpha))=\min(\Phi(x)\cap\bar\alpha)=\min(\Phi(x))$ is not in $\rng(h)$,
and hence $\Phi'(x\cap\bar\alpha)=\Phi(x\cap\bar\alpha)=\Phi(x)\cap\bar\alpha=\Phi'(x)\cap\bar\alpha$.
\item[$\br$] If $\min(\Phi(x))=h(\pi(\theta))$ for some $\theta\in\Theta$, but $\theta\ge\sup(x)$,
then $\min(\Phi(x\cap\bar\alpha))=\min(\Phi(x)\cap\bar\alpha)=\min(\Phi(x))=h(\pi(\theta))$ and $\theta\ge\sup(x)>\sup(x\cap\bar\alpha)$.
Consequently, $\Phi'(x\cap\bar\alpha)=\Phi(x\cap\bar\alpha)=\Phi(x)\cap\bar\alpha=\Phi'(x)\cap\bar\alpha$.
\item[$\br$] If $\min(\Phi(x))=h(\pi(\theta))$ for some $\theta\in\Theta$ and $\theta<\sup(x)$, then $\min(\Phi'(x))=\theta$, and hence $\sup(x\cap\bar\alpha)=\bar\alpha>\theta$.
As $\min(\Phi(x\cap\bar\alpha))=h(\pi(\theta))$, we altogether have $\Phi'(x\cap\bar\alpha)=\Phi'(x)\cap\bar\alpha$.
\end{itemize}

Let $\faithful$ be given by Example~\ref{faithful_correction}.
Clearly, $(\faithful\circ\Phi')(x)=^* x$ for all $x\in\mathcal K(\kappa)$.

Finally, for every $\iota \in B$ and $\theta$ such that $\pi(\theta)=\iota$, we have that
$$\{\delta\in T_\theta\mid \min((\faithful\circ\Phi')(C_\delta))=\theta\}=\{\delta\in T_\theta\mid \min(\Phi'(C_\delta))=\theta\}$$
covers the stationary set
$\{\delta\in \Omega^\iota\mid \delta>\theta\allowbreak\ \&\ \min(\Phi(C_\delta))=h(\iota)\}$.
As $\sup(B)=\Lambda$, we then conclude that $(\faithful\circ\Phi')$ is as sought.
\end{proof}

\begin{lemma}\label{Phi_D} Suppose that $D$ is a club in $\kappa$. Then the function $\Phi_{D} : \mathcal K(\kappa) \to \mathcal K(\kappa)$ defined by
\[\Phi_{D}(x):=\begin{cases}
\{ \sup(D\cap\eta)\mid \eta\in x\ \&\ \eta>\min(D)\}, &\text{if } \sup(D \cap \sup(x)) = \sup(x); \\
x \setminus \sup(D \cap \sup(x)), &\text{otherwise}
\end{cases}\]
is a postprocessing function.
\end{lemma}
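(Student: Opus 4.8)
The plan is to fix $x\in\mathcal K(\kappa)$, write $\alpha:=\sup(x)$, and verify the three defining clauses of a postprocessing function according to the dichotomy built into $\Phi_D$. The case $\sup(D\cap\alpha)<\alpha$ is the routine one: there $\Phi_D(x)=x\setminus\sup(D\cap\alpha)$ is just a final segment of the club $x$, hence trivially a club in $\alpha$ with $\acc(\Phi_D(x))\s\acc(x)$; and for the coherence clause one checks that every $\bar\alpha\in\acc(\Phi_D(x))$ satisfies $\sup(D\cap\bar\alpha)=\sup(D\cap\alpha)<\bar\alpha$, so that $x\cap\bar\alpha$ again lands in this same case, whence $\Phi_D(x\cap\bar\alpha)=(x\cap\bar\alpha)\setminus\sup(D\cap\bar\alpha)=\Phi_D(x)\cap\bar\alpha$ by a direct manipulation of final segments. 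The real content is the projection case $\sup(D\cap\alpha)=\alpha$, on which I would concentrate.

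For that case, set $g(\eta):=\sup(D\cap\eta)$ for $\eta>\min(D)$, so that $\Phi_D(x)=\{g(\eta)\mid \eta\in x,\ \eta>\min(D)\}$, and record two elementary features of $g$: it is monotone nondecreasing, and its values lie in $\cl(D)$ (a sup of elements of $D$ is either attained, landing in $D$, or unattained, landing in $\acc^+(D)$). Cofinality of $\Phi_D(x)$ in $\alpha$ is then immediate from the cofinality of both $D$ and $x$ in $\alpha$, and no value $g(\eta)$ reaches $\alpha$, so $\alpha\notin\Phi_D(x)$.

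The main obstacle is \emph{closedness}, i.e.\ showing $\Phi_D(x)\in\mathcal K(\kappa)$ and simultaneously that $\acc(\Phi_D(x))\s\acc(x)$. The argument I would run is this: let $\gamma<\alpha$ be a limit point of $\Phi_D(x)$. Since $\Phi_D(x)\s\cl(D)$ and $\cl(D)$ is closed with $\acc^+(\cl(D))=\acc^+(D)$, we get $\gamma\in\acc^+(D)$, whence $g(\gamma)=\sup(D\cap\gamma)=\gamma$. The crux is to show $\gamma\in x$: if not, then since $\acc^+(x)\s x$ we would have $\gamma\notin\acc^+(x)$, forcing $\beta:=\sup(x\cap\gamma)<\gamma$; but then every $\eta\in x$ below $\gamma$ gives $g(\eta)\le\eta\le\beta$, while every $\eta\in x$ with $\eta\ge\gamma$ gives $g(\eta)\ge\sup(D\cap\gamma)=\gamma$, so $\Phi_D(x)\cap\gamma$ would be bounded by $\beta<\gamma$, contradicting $\gamma=\sup(\Phi_D(x)\cap\gamma)$. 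Hence $\gamma\in x$ and $\gamma=g(\gamma)\in\Phi_D(x)$, proving closedness. The same computation shows $\Phi_D(x)\cap\gamma=\{g(\eta)\mid\eta\in x\cap\gamma,\ \eta>\min(D)\}$ with each $g(\eta)\le\eta$, so $\gamma=\sup(x\cap\gamma)$, i.e.\ $\gamma\in\acc(x)$, which is exactly the second clause.

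Finally, for the coherence clause in the projection case, fix $\bar\alpha\in\acc(\Phi_D(x))$. The previous paragraph already delivers $\bar\alpha\in\acc(x)$ and $\sup(D\cap\bar\alpha)=\bar\alpha$, so $x\cap\bar\alpha$ has supremum $\bar\alpha$ and again falls in the projection case, giving $\Phi_D(x\cap\bar\alpha)=\{g(\eta)\mid\eta\in x,\ \eta<\bar\alpha,\ \eta>\min(D)\}$. Comparing with $\Phi_D(x)\cap\bar\alpha$, the inclusion $\supseteq$ follows from $\eta<\bar\alpha\Rightarrow g(\eta)\le\eta<\bar\alpha$, and the inclusion $\subseteq$ from $\eta\ge\bar\alpha\Rightarrow g(\eta)\ge\sup(D\cap\bar\alpha)=\bar\alpha$ (so any $\eta$ contributing a value below $\bar\alpha$ must itself lie below $\bar\alpha$). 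This yields $\Phi_D(x)\cap\bar\alpha=\Phi_D(x\cap\bar\alpha)$ and completes the verification. I expect the only delicate points to be the closedness argument above and the bookkeeping that the truncation $x\mapsto x\cap\bar\alpha$ preserves the active branch of the definition of $\Phi_D$.
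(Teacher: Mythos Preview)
Your proposal is correct and follows essentially the same approach as the paper's proof: the same two-case split, the same verification that the ``otherwise'' case propagates to truncations, and the same identification of $\acc(\Phi_D(x))\subseteq\acc(x)\cap\acc(D)$ in the projection case. The only cosmetic difference is in the closedness argument for the projection case: the paper sandwiches a sequence $\beta_i\le\eta_i\le\beta_{i+1}$ to force the witnesses $\eta_i$ to converge to the limit point, whereas you argue by contradiction via $\beta:=\sup(x\cap\gamma)<\gamma$; both are standard and equivalent here.
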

\begin{proof} Let $x \in \mathcal K(\kappa)$ be arbitrary. Denote $\alpha := \sup(x)$.
We consider two cases in turn:

$\br$ Suppose $\sup(D \cap \alpha) = \alpha$.
Notice that $\Phi_{D}(x) \subseteq D$ in this case, since $D$ is club in $\kappa$.
To see that $\Phi_{D}(x)$ is cofinal in $\alpha$, let $\beta < \alpha$ be arbitrary.
We can find $\gamma \in D \cap \alpha$ above $\beta$, and then $\eta \in x$ above $\gamma$.
Clearly, $\eta > \min(D)$, so that $\delta := \sup(D \cap \eta)$ is in $\Phi_{D}(x)$.
But $\delta \geq \gamma > \beta$, as required.
To see that $\Phi_{D}(x)$ is closed in $\alpha$, let $\bar\alpha < \alpha$ be such that $\sup(\Phi_{D}(x) \cap \bar\alpha) = \bar\alpha$.
Fix a strictly increasing sequence $\langle \beta_i \mid i < \Lambda \rangle$ of elements of $\Phi_{D}(x)$, converging to $\bar\alpha$.
For each $i<\Lambda$, $\beta_i$ must be of the form $\sup(D \cap \eta_i)$ for some $\eta_i \in x$, and furthermore $\beta_i \leq \eta_i \leq \beta_{i+1}$ for every $i<\Lambda$.
Thus $\langle \eta_i \mid i<\Lambda \rangle$ is a sequence of elements of $x$ converging to $\bar\alpha$, so that $\bar\alpha \in x$ since $x$ is club in $\alpha$.
Furthermore, $\bar\alpha \in \acc^+(\Phi_{D}(x)) \subseteq \acc^+(D) \subseteq D$, so that $\sup(D \cap \bar\alpha) = \bar\alpha$, and clearly $\bar\alpha > \min(D)$.
Thus $\bar\alpha \in \Phi_{D}(x)$, as witnessed by $\bar\alpha \in x$.
Altogether, we have shown that $\Phi_{D}(x)$ is club in $\alpha$, and also that $\acc(\Phi_{D}(x)) \subseteq \acc(x)$.

Consider arbitrary $\bar\alpha \in \acc(\Phi_{D}(x))$, in order to compare $\Phi_{D}(x \cap \bar\alpha)$ with $\Phi_{D}(x) \cap \bar\alpha$.
We have already seen that $\bar\alpha \in \acc(x) \cap \acc(D)$ in this case.
In particular, $\sup(x \cap \bar\alpha) = \bar\alpha = \sup(D \cap \bar\alpha)$, and $\sup(D \cap \eta) < \bar\alpha \iff \eta < \bar\alpha$, so that
\begin{align*}
\Phi_{D}(x \cap \bar\alpha) &= \{ \sup(D\cap\eta)\mid \eta\in x \cap \bar\alpha \ \&\ \eta>\min(D)\} \\
&= \{ \sup(D\cap\eta)\mid \eta\in x \ \&\ \eta>\min(D)\} \cap \bar\alpha \\
&= \Phi_{D}(x) \cap \bar\alpha,
\end{align*}
as required.

$\br$ Suppose $\sup(D \cap \alpha) < \alpha$, so that $\Phi_{D}(x) = x \setminus \sup(D \cap \alpha)$ is a nonempty final segment of $x$,
which is certainly a club in $\alpha$ and satisfies $\acc(\Phi_{D}(x)) \subseteq \acc(x)$.
Consider arbitrary $\bar\alpha \in \acc(\Phi_{D}(x))$.
In particular, $\bar\alpha \in \acc(x)$ and $\sup(D \cap \alpha) < \bar\alpha < \alpha$.
But then $D \cap \bar\alpha = D \cap \alpha$, so that $\sup(D \cap \bar\alpha) = \sup(D \cap \alpha) < \bar\alpha$.
Since $\sup(x \cap \bar\alpha) = \bar\alpha$, it follows that
\begin{align*}
\Phi_{D}(x \cap \bar\alpha) &= (x \cap \bar\alpha) \setminus \sup(D \cap \bar\alpha)\\
&= (x \cap \bar\alpha) \setminus \sup(D \cap \alpha) = (x \setminus \sup(D \cap \alpha)) \cap \bar\alpha = \Phi_{D}(x) \cap \bar\alpha,
\end{align*}
as required.
\end{proof}
\begin{remark} Whenever $\sup(D\cap \sup(x)) = \sup(x)$, we have that $\Phi_D(x)$ coincides with  $\Drop(x,D)$ of \cite{kojman-abc-of-pcf}.
See Fact~3 of that paper for some of the basic properties of $\Drop$, and hence of $\Phi_D$.
\end{remark}

\begin{fact}[Shelah, {\cite[$\S2$]{Sh:365}}]\label{clubguessing}
For any $C$-sequence  $\vec C=\langle C_\alpha\mid\alpha\in S\rangle$ over some stationary $S\s \acc(\kappa)$:
\begin{enumerate}
\item If $\sup_{\alpha \in S} |C_\alpha|^+ < \kappa$, then there exists some club $D \subseteq \kappa$ such that $\{ \alpha \in S \mid \Phi_{D}(C_\alpha)\subseteq E \}$ is stationary for every club $E \subseteq \kappa$;
\item If $\vec C$ is amenable, then there exists a club $D\s\kappa$ such that $\{\alpha\in S\mid \sup(\nacc(\Phi_{D}(C_\alpha))\cap E)=\alpha\}$ is stationary for every club $E\s\kappa$.
\end{enumerate}
\end{fact}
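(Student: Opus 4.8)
Both clauses are instances of the classical Shelah club-guessing schema, so the plan is to argue by contradiction: build a $\s$-decreasing (continuous-at-limits) sequence of clubs $\langle D_i\rangle$ and let $D:=\bigcap_i D_i$ be the sought club, where at each successor step $i$ one feeds the putative counterexample to $D_i$ back into the construction by choosing $D_{i+1}$ inside the witnessing club. The heart of each argument is to show that, for stationarily many $\delta$, the behaviour of $\Phi_{D_i}(C_\delta)$ (Lemma~\ref{Phi_D}) stabilizes before the construction ends, which then collides with the nonstationarity of the guessing sets chosen along the way. Throughout I will freely use the basic laws of $\Drop=\Phi_D$ recorded in the remark after Lemma~\ref{Phi_D}, in particular that $\Phi_{D'}(C_\delta)=\Phi_{D'}(\Phi_{D}(C_\delta))$ whenever $D'\s D$ and $\delta\in\acc^+(D')$.

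For Clause~(1), first pass to a stationary $S'\s S$ on which $|C_\delta|$ takes a constant value $\sigma$; this is possible since the number of cardinals below $\sup_{\alpha}|C_\alpha|^+<\kappa$ is $<\kappa$ while $\kappa$ is regular, and the hypothesis then guarantees $\sigma^+<\kappa$. It suffices to guess on $S'$. Run the recursion to length $\sigma^+$: given $D_i$ for which guessing fails, pick a club $E_i\s D_i$ with $\{\delta\in S'\mid \Phi_{D_i}(C_\delta)\s E_i\}$ nonstationary and set $D_{i+1}:=E_i$. Fix $\delta\in S'\cap\acc^+(D)$. For each $\eta\in C_\delta$ the sequence $\langle\sup(D_i\cap\eta)\mid i<\sigma^+\rangle$ is a non-increasing sequence of ordinals, hence eventually constant; since there are only $|C_\delta|=\sigma$ coordinates and $\sigma^+$ is regular, all coordinates stabilize simultaneously at some step $i(\delta)<\sigma^+$, so that $\Phi_{D_i}(C_\delta)$ is constant for $i\ge i(\delta)$. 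The map $\delta\mapsto i(\delta)$ sends a stationary set into $\sigma^+<\kappa$, so it is constant, with value $i^*$, on a stationary $W\s S'\cap\acc^+(D)$. For $\delta\in W$ we get $\Phi_{D_{i^*}}(C_\delta)=\Phi_{D_{i^*+1}}(C_\delta)\s D_{i^*+1}=E_{i^*}$, placing $W$ inside the nonstationary set $\{\delta\in S'\mid \Phi_{D_{i^*}}(C_\delta)\s E_{i^*}\}$, a contradiction.

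For Clause~(2) the cardinality bound is unavailable, and amenability (Definition~\ref{amenable}) must do the work instead. The plan is again a descending recursion, but now I first pass to a stationary $S'\s S$ of a fixed cofinality $\theta$ and run the recursion to length $\theta^+$: given $D_i$ for which guessing fails, pick a club $E_i\s D_i$ and a club $F_i$ with $F_i\cap S'\s\{\delta\in S'\mid\sup(\nacc(\Phi_{D_i}(C_\delta))\cap E_i)<\delta\}$, and set $D_{i+1}\s D_i\cap E_i\cap F_i$. For $\delta\in S'\cap\acc^+(D)$ we then obtain, for every $i<\theta^+$, a bound $\gamma_i^\delta:=\sup(\nacc(\Phi_{D_i}(C_\delta))\cap E_i)<\delta$; since $\cf(\delta)=\theta<\theta^+$, the $\theta^+$-indexed sequence $\langle\gamma_i^\delta\mid i<\theta^+\rangle$ is bounded, say below some $\gamma<\delta$. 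Thus, above $\gamma$, the set $\Phi_{D_i}(C_\delta)$ has no non-accumulation point inside $D\s D_{i+1}\s E_i$, for every $i$. The intended contradiction is that, letting $i$ grow, this forces $D\cap\delta$ to be almost entirely absorbed into $C_\delta$ on the stationary set $S'\cap\acc^+(D)$, contradicting amenability of $\vec C$ (equivalently, Proposition~\ref{amenable_vs_trivial}) — or, phrased through the drop, contradicting the amenability of $\langle\Phi_D(C_\delta)\mid\delta\in\Gamma\rangle$ furnished by Lemma~\ref{cons-pp-preserves-amenable}.

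The main obstacle, and the step demanding the most care, is the transfer in Clause~(2) from ``no non-accumulation point of $\Phi_{D_i}(C_\delta)$ lies in $D$ above $\gamma$'' to an actual violation of amenability. This requires a precise analysis of how $\nacc(\Phi_{D}(C_\delta))$ is computed from $\nacc(\Phi_{D_i}(C_\delta))$ under the composition $\Phi_D=\Phi_D\circ\Phi_{D_i}$, tracking which accumulation points of the coarser drop turn into non-accumulation points of the finer one. A secondary obstacle is the uniform-cofinality reduction: when $\cf(\delta)$ is unbounded in $\kappa$ (e.g. $\kappa$ inaccessible with $S$ concentrating on regular $\delta$), or equal to $\cf(\kappa)$ in the successor case $\kappa=\theta^+$, one cannot simply fix $\theta$ and use length $\theta^+<\kappa$; the boundedness of $\langle\gamma_i^\delta\rangle$ must then be secured by choosing the recursion length to have cofinality exceeding each relevant $\cf(\delta)$ and splitting $S$ accordingly. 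By contrast, Clause~(1)'s only real subtlety is the opening reduction to constant $|C_\delta|$, after which the coordinate-wise stabilization is routine.
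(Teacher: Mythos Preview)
The paper states this as a Fact without proof, citing Shelah; however, the Claim inside the proof of Lemma~\ref{wide-club-guessing} is explicitly flagged as ``essentially the same as that of Fact~\ref{clubguessing}(2)'', so there is a de facto paper proof of Clause~(2) to compare against.

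For Clause~(1) your argument is the standard one and is correct.

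For Clause~(2) your scheme and the paper's diverge after the common opening of building a descending chain of clubs. You propose: fix a cofinality $\theta$, run the recursion to length $\theta^+$, and for each $\delta\in S'\cap\acc^+(D)$ use $\cf(\delta)=\theta<\theta^+$ to bound the error terms $\gamma_i^\delta$ below some $\gamma<\delta$; then argue that this forces $D\cap(\gamma,\delta)\s C_\delta$ on a stationary set, contradicting amenability. You correctly flag that the last implication (your ``main obstacle'') is the crux and is not obvious, and that the cofinality reduction breaks down when $\theta^+\ge\kappa$; neither is resolved in the proposal. The paper's argument avoids both obstacles at once. It runs the recursion only to length $\aleph_1$ (more generally $\mu'$ in Lemma~\ref{wide-club-guessing}), uses amenability \emph{up front} to pick a single $\delta\in\acc(E)\cap S$ with $\sup(E\cap\delta\setminus C_\delta)=\delta$, and then works locally: choose $\beta\in(E\cap\delta)\setminus C_\delta$ above the supremum $\alpha$ of finitely or countably many failure bounds, set $\gamma:=\min(C_\delta\setminus\beta)$, and observe that $\min(\Phi_{E_i}(C_\delta)\setminus\beta)=\sup(E_i\cap\gamma)$, a single non-increasing ordinal sequence in $i$. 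Stabilization of this one coordinate produces a point $\beta^*\in E_{i+1}$ above $\alpha$ that must lie in $\nacc(\Phi_{E_i}(C_\delta))$ (since $\beta\notin C_\delta$ forces $\beta^*\neq\beta$ or $\beta^*\notin\acc(C_\delta)$), contradicting the choice of $\alpha$. The cofinality split is only between $\cf(\delta)>\omega$ (where $\alpha=\sup_{n<\omega}\alpha_n<\delta$) and $\cf(\delta)=\omega$ (where one passes to an $\omega_1$-indexed subfamily on which the bounds are uniformly below $\delta$); no global transfer to an amenability violation is needed, and the recursion length never approaches $\kappa$.
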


\begin{lemma}[wide club guessing]\label{wide-club-guessing}
Suppose that  $\langle \mathcal C_\alpha\mid\alpha<\kappa\rangle$ is a $\square(\kappa,{<}\mu,{\sq_\chi})$-sequence with support $\Gamma$,
and $\mathcal S$ is a collection of less than $\kappa$ many stationary subsets of $\Gamma$.

If $\kappa \geq \aleph_2$ and $\mu<\kappa$, then there exists a $\min$-preserving faithful postprocessing function $\Phi:\mathcal K(\kappa)\rightarrow\mathcal K(\kappa)$ such that for every club $E\s\kappa$ and every $S\in\mathcal S$, there exists $\alpha\in S$ with $\sup(\nacc(\Phi(C))\cap E)=\alpha$ for all $C\in\mathcal C_\alpha$.
\end{lemma}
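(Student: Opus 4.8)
The plan is to produce a single club $D^{*}\s\kappa$ and to take $\Phi$ to be the Drop function $\Phi_{D^{*}}$ of Lemma~\ref{Phi_D}, post-composed with cosmetic corrections that restore $\min$-preservation and faithfulness; the heart of the matter is to arrange that $\Phi_{D^{*}}$ simultaneously $\nacc$-guesses \emph{every} member of $\mathcal C_\alpha$ at suitable levels $\alpha$ inside each $S\in\mathcal S$. For the setup, fix a transversal $\vec C=\langle C_\alpha\mid\alpha\in\Gamma\rangle$ of the given sequence. Since $\mu<\kappa$, Lemma~\ref{square_is_amenable} shows $\vec C$ is amenable, and Lemma~\ref{Gamma-closure}(3) gives that $\Gamma$ is stationary with $E^\kappa_{\ge\chi}\s\Gamma$; moreover each restriction $\vec C\restriction S$, for $S\in\mathcal S$, is again amenable. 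More generally, well-ordering each $\mathcal C_\alpha$ in order-type $<\mu$ produces $\mu$-many transversals $\langle \vec C^{\,i}\mid i<\mu\rangle$, each amenable by the same lemma; controlling all of them at once is precisely what "guessing every $C\in\mathcal C_\alpha$" amounts to. The two slack parameters $|\mathcal S|<\kappa$ and $\mu<\kappa$ are what make the argument feasible.

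To find the universal guessing club, I would run a contradiction argument in the spirit of Fact~\ref{clubguessing}(2). Assuming no club $D$ has the desired property, for every club $D$ one obtains a club $E_D$ and a set $S_D\in\mathcal S$ witnessing failure: for each $\alpha\in S_D$ some sibling $C\in\mathcal C_\alpha$ satisfies $\sup(\nacc(\Phi_D(C))\cap E_D)<\alpha$. I would then build a $\s$-decreasing continuous sequence of clubs $\langle D_\eta\mid\eta<\rho\rangle$ of some length $\rho<\kappa$, feeding in the witnessing clubs $E_{D_\eta}$ so as to process every $S\in\mathcal S$ cofinally often (legitimate since $|\mathcal S|<\kappa$, and at limits intersections of $<\kappa$ clubs are clubs). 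Fixing $D^{*}:=\bigcap_{\eta<\rho}D_\eta$, a single $S^{*}\in\mathcal S$ must remain bad; at each bad $\alpha\in S^{*}$ record the failing sibling's index $i(\alpha)<\mu$. As $\mu<\kappa$ this map is eventually regressive, so by Fodor it is constant, say $\equiv i^{*}$, on a stationary set. This reduces the failure to the single amenable transversal $\vec C^{\,i^{*}}$, and the usual absorption of tails in a decreasing-intersection construction yields a cofinal $A\s\kappa$ with $A\cap\alpha\s C^{\,i^{*}}_\alpha$ on a stationary set, contradicting amenability of $\vec C^{\,i^{*}}$ via Proposition~\ref{amenable_vs_trivial}. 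Hence some club $D^{*}$ works.

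For the packaging as a postprocessing function, put $\Phi_0:=\Phi_{D^{*}}$, which is a postprocessing function by Lemma~\ref{Phi_D} and whose $\nacc$-guessing of the tail of each $\Phi_{D^{*}}(C)$ is exactly the property just secured. To obtain the required $\min$-preserving faithful $\Phi$, I would set $\Phi:=\faithful\circ\Psi$, where $\Psi(x):=\{\min(x)\}\cup(\Phi_0(x)\setminus(\min(x)+1))$ reinstates the original minimum and $\faithful$ is the correction of Example~\ref{faithful_correction}. One checks that $\Psi$ is a postprocessing function exactly as in Lemma~\ref{Phi_D} (prepending the stable point $\min(x)$ commutes with $\cap\bar\alpha$), that $\faithful\circ\Psi$ is thereby faithful and $\min$-preserving, and that altering only the bottom of the club leaves its tail, hence the $\nacc$-guessing, intact.

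The delicate point is the \emph{simultaneous} guessing of all $C\in\mathcal C_\alpha$ by one club $D^{*}$: because a postprocessing function acts on each club in isolation, distinct siblings cannot be routed to distinct Drop clubs, so one genuinely needs a single universal $D^{*}$ rather than a black-box application of Fact~\ref{clubguessing}(2) to a single transversal. I expect the real work to lie in organizing the decreasing-intersection construction so that every $S\in\mathcal S$ and every one of the $\mu$ transversals is handled before the Fodor step, while keeping the total length below $\kappa$; once the failure has been pinned to a single amenable transversal, the contradiction with amenability is routine.
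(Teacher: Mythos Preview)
Your packaging step (the $\min$-preserving, faithful correction of $\Phi_{D^*}$) is fine and matches the paper. The gap is in how you obtain the club $D^*$.

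You fix $D^*:=\bigcap_\eta D_\eta$, observe that $D^*$ itself fails, and then apply Fodor to the bad-sibling index $i(\alpha)$ at that single stage. This only tells you that the transversal $\vec C^{\,i^*}$ fails the $\nacc$-guessing property for \emph{one} club, namely $D^*$ with witness $E^*$. That is not a contradiction to amenability, and it is not enough to run Shelah's argument: the proof of Fact~\ref{clubguessing}(2) does \emph{not} produce a cofinal $A$ with $A\cap\alpha\subseteq C_\alpha$. Rather, it needs the same transversal to fail at \emph{cofinally many} stages $D_\eta$ of the decreasing sequence, so that one can pick $\beta\in E\cap\delta\setminus C_\delta$ above all the failure bounds and then argue that the values $\sup(D_\eta\cap\gamma)$ (for $\gamma=\min(C_\delta\setminus\beta)$) must eventually stabilize along the sequence, producing a $\nacc$-point too high. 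Your Fodor step, coming after the sequence has terminated, cannot deliver this.

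The paper's organization fixes exactly this. It decouples the two width parameters: first it handles each $S\in\mathcal S$ separately to get a club $D_S$, then intersects (using $|\mathcal S|<\kappa$), and checks that $D:=\bigcap_S D_S\subseteq D_S$ still works via a short Drop-function comparison. For a fixed $S$, the pigeonhole is done \emph{per ordinal along the sequence}, not by Fodor at the end: one first passes to a stationary $S'\subseteq S$ on which $|\mathcal C_\delta|=\bar\mu$ is constant, then builds a decreasing sequence of length $\mu':=\max\{\bar\mu^+,\aleph_1\}$, and for each $\delta\in S'$ uses $\mu'>|\mathcal C_\delta|$ to select a sibling $C_\delta$ that is the bad witness at cofinally many stages $i<\mu'$. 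Now one has a genuine amenable transversal that fails cofinally along the sequence, and the Drop-function stabilization argument goes through.
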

\begin{proof} For every club $D\s\kappa$, let $\Phi_D$ denote the postprocessing function given by Lemma~\ref{Phi_D}.
The proof of the next claim is essentially the same as that of Fact~\ref{clubguessing}(2).

\begin{claim} For every $S \in \mathcal S$, there exists a club $D\s\kappa$  such that for every club $E\s\kappa$,
there is $\delta\in S$ with $\sup(\nacc(\Phi_{D}(C))\cap E)=\delta$ for all $C\in\mathcal C_\delta$.
\end{claim}
\begin{proof} Let $S\in\mathcal S$ be arbitrary, and suppose the conclusion fails.
Thus, for every club $D \subseteq \kappa$ we can fix some club $E^D \subseteq \kappa$ such that for every $\delta \in S$ there is some $C^D_\delta \in \mathcal C_\delta$ with $\sup(\nacc(\Phi_D(C^D_\delta)) \cap E^D)<\delta$.

Fix $\bar\mu<\mu$ for which $S':=\{\alpha\in S\mid |\mathcal C_\alpha|=\bar\mu\}$ is stationary.
Let $\mu' := \max\{\bar\mu^+,\aleph_1\}$, so that $\mu'<\kappa$.
Define a sequence $\langle E_i\mid i \leq \mu' \rangle$ of clubs in $\kappa$ as follows:
\begin{itemize}
\item Set $E_0:=\kappa$;
\item For all $i< \mu'$, set $E_{i+1} := E^{E_i} \cap E_i$;
\item For all $i\in\acc(\mu' +1)$, set $E_i=\bigcap_{j<i}E_j$, which is a club in $\kappa$ since $\mu' < \kappa$.
\end{itemize}

Write $E:=E_{\mu'}$.

Consider arbitrary $\delta\in S'$.
Since $\{ C^{E_i}_\delta \mid i<\mu' \} \subseteq \mathcal C_\delta$, and $\mu'$ is regular and greater than $\bar\mu = | \mathcal C_\delta |$,
we can pick $C_\delta\in\mathcal C_\delta$ such that $I_\delta:=\{ i< \mu' \mid C^{E_i}_\delta=C_\delta\}$ is cofinal in $\mu'$.
For all $\delta\in\Gamma\setminus S'$, pick $C_\delta\in\mathcal C_\delta$ arbitrarily.

Then, $\vec C:=\langle C_\delta \mid \delta \in \Gamma\rangle$ is a transversal for $\square(\kappa,{<}\mu,{\sq_\chi})$.
By $\mu<\kappa$ and Lemma \ref{square_is_amenable}, $\vec C$ is amenable,
so that we can pick $\delta$ in the stationary set $\acc(E)\cap S'$ for which $\sup(E\cap\delta\setminus C_\delta)=\delta$.

$\br$ Suppose that $\cf(\delta)>\omega$.
Let $\{ i_n\mid n<\omega\}$ be the increasing enumeration of some subset of $I_\delta$.
Since $\langle E_i \mid i<\mu'\rangle$ is a $\s$-decreasing sequence, for all $n<\omega$, we have in particular that $E_{i_{n+1}} \subseteq E_{i_n+1} \subseteq E^{E_{i_n}}$,
so that $\alpha_n:=\sup(\nacc(\Phi_{E_{i_n}}(C_\delta))\cap E_{i_{n+1}})$ is $<\delta$.
Put $\alpha:=\sup_{n<\omega}\alpha_n$.
As $\cf(\delta)>\omega$, we have $\alpha<\delta$.
Fix $\beta\in (E\cap\delta)\setminus C_\delta$ above $\alpha$.
Put $\gamma:=\min(C_\delta\setminus\beta)$.
Then $\delta>\gamma>\beta>\alpha$, and for all $i < \mu'$, since $\beta\in E_i$, we infer that $\sup( E_i\cap\gamma)\ge\beta$.
So it follows from the definition of $\Phi_{E_i}(C_\delta)$ that $\min(\Phi_{E_i}(C_\delta)\setminus\beta)=\sup(E_i\cap \gamma)$ for all $i< \mu'$.
Since $\langle E_{i_n}\mid n<\omega\rangle$ is an infinite $\s$-decreasing sequence, let us fix some $n<\omega$ such that $\sup(E_{i_n}\cap\gamma)=\sup(E_{i_{n+1}}\cap\gamma)$.
Then $\min(\Phi_{E_{i_n}}(C_\delta)\setminus\beta)=\min(\Phi_{E_{i_{n+1}}}(C_\delta)\setminus\beta)$,
and in particular, $\beta^*:=\min(\Phi_{E_{i_n}}(C_\delta)\setminus\beta)$ is in $E_{i_{n+1}}\setminus(\alpha+1)$.
Now, there are two options, each leading to a contradiction:
\begin{itemize}
\item If $\beta^*\in\nacc(\Phi_{E_{i_n}}(C_\delta))$, then we get a contradiction to the fact that  $\beta^*>\alpha\ge\alpha_n$.
\item If $\beta^*\in\acc(\Phi_{E_{i_n}}(C_\delta))$, then $\beta^*=\beta$ and $\beta^*\in\acc(C_\delta)$, contradicting the fact that $\beta\notin C_\delta$.
\end{itemize}

$\br$ Suppose that $\cf(\delta)=\omega$.
For all $i\in I_\delta$, we have that $\alpha_i:=\sup(\nacc(\Phi_{E_{i}}(C_\delta))\cap E_{i+1})$ is $<\delta$.
As $\cf(\delta)\neq\omega_1$, let $\{ i_\nu\mid \nu<\omega_1\}$ be the increasing enumeration of some subset of $I_\delta$, for which  $\alpha:=\sup_{\nu<\omega_1}\alpha_{i_\nu}$ is $<\delta$.
Fix $\beta\in (E\cap\delta)\setminus C_\delta$ above $\alpha$.
Put $\gamma:=\min(C_\delta\setminus\beta)$.
Then $\delta>\gamma>\beta>\alpha$, and $\min(\Phi_{E_i}(C_\delta)\setminus\beta)=\sup(E_i\cap \gamma)$ for all $i < \mu'$.
Fix some $\nu<\omega_1$ such that $\sup(E_{i_\nu}\cap\gamma)=\sup(E_{i_{\nu+1}}\cap\gamma)$.
Then $\beta^*:=\min(\Phi_{E_{i_\nu}}(C_\delta)\setminus\beta)$ is in $E_{i_{\nu+1}}\setminus(\alpha+1)$, and as in the previous case, each of the two possible options leads to a contradiction.
\end{proof}

For each $S\in\mathcal S$, let $D_S$ be given by the preceding claim. Put $D:=\bigcap_{S\in\mathcal S}D_S$.
Since $|\mathcal S|<\kappa$, $D$ is a club in $\kappa$.
Define $\Phi:\mathcal K(\kappa)\rightarrow\mathcal K(\kappa)$ by stipulating:
$$\Phi(x):=\begin{cases}
\{1\}\cup(\Phi_D(x)\setminus 3),&\text{if }\min(x)=1\ \&\ 2\in \Phi_D(x);\\
\{\min(x)\}\cup(\Phi_D(x)\setminus\min(x)),&\text{otherwise}.\\
\end{cases}$$

Since $\Phi_D$ is a postprocessing function, it follows that $\Phi$ is as well.
Of course, $\Phi$ is faithful and $\min$-preserving.
Consider arbitrary club $E \subseteq \kappa$ and $S \in \mathcal S$. We shall find some $\delta\in S$ such that $\sup(\nacc(\Phi(C))\cap E)=\delta$ for all $C\in\mathcal C_\delta$.
As $\Phi(x)=^*\Phi_D(x)$ for all $x$, it suffices to verify this against $\Phi_D$.

Let $E':=E\cap\acc(D)$.
By our choice of $D_S$, we may fix $\delta \in S$ with $\sup(\nacc(\Phi_{D_S}(C)) \cap E') = \delta$ for all $C \in \mathcal C_\delta$.
Consider arbitrary $C \in \mathcal C_\delta$.
To see that $\sup(\nacc(\Phi_D(C)) \cap E) = \delta$, first notice that $\delta \in \acc(E') \subseteq \acc(D) \subseteq \acc(D_S)$,
so that $\Phi_{D_S}(C) = \{\sup(D_S \cap \eta) \mid \eta \in C\ \&\ \eta>\min(D_S)\}$ and $\Phi_D(C) = \{\sup(D \cap \eta) \mid \eta \in C\ \&\ \eta>\min(D)\}$.
In particular, $\acc(\Phi_{D_S}(C))=\acc(D_S)\cap\acc(C)$ and $\acc(\Phi_D(C))=\acc(D)\cap\acc(C)$.

Fix an arbitrary $\beta\in\nacc(\Phi_{D_S}(C)) \cap E'$, and we shall show that $\beta\in\nacc(\Phi_D(C))\cap E$.
As $\beta\in\ E'$, we have $\beta\in\acc(D)\s\acc(D_S)$.
So, since $\acc(\Phi_{D_S}(C))=\acc(D_S)\cap\acc(C)$, we have that $\eta:=\min(C\setminus\beta)$ is in $\nacc(C)$, and $\beta=\sup(D_S\cap\eta)$.
As $D\s D_S$, we have $\sup(D\cap\eta)\le\beta$. As $\beta\in\acc(D\cap\eta)$, we also have $\beta\le\sup(D\cap\eta)$.
Recalling that $\eta\in\nacc(C)$ and $\beta\in E'\s E$, we conclude that $\beta=\sup(D\cap\eta)\in\nacc(\Phi_D(C))\cap E$.
\end{proof}

\begin{remark} Our ``wide club guessing'' lemma is as wide as provably possible, in the sense that the hypothesis $\mu<\kappa$ cannot be waived.
Specifically, in Kunen's model from \cite[\S3]{MR495118}, there exists an inaccessible cardinal $\kappa$ such that $\square(\kappa,{<}\kappa)$ holds,
but for any $\square(\kappa,{<}\kappa)$-sequence $\langle\mathcal C_\alpha\mid\alpha<\kappa\rangle$, there exists a club $E\s\kappa$
such that for all $\alpha<\kappa$, there is $C\in\mathcal C_\alpha$ with $\sup(\nacc(C)\cap E)<\alpha$.
\end{remark}

\begin{defn} A \emph{$\kappa$-assignment} is a matrix $\mathfrak Z=\langle Z_{x,\beta}\mid x\in\mathcal K(\kappa), \beta\in\nacc(x)\rangle$
satisfying for all $x\in\mathcal K(\kappa)$:
\begin{itemize}
\item $Z_{x,\beta}\s \beta$ for all $\beta\in\nacc(x)$;
\item $Z_{x,\beta}=Z_{x\cap\bar\alpha,\beta}$ for all $\bar\alpha\in\acc(x)$ and $\beta\in\nacc(x\cap\bar\alpha)$.
\end{itemize}
\end{defn}

\begin{lemma}\label{phiZ} Suppose that $\mathfrak Z=\langle Z_{x,\beta}\mid x\in\mathcal K(\kappa), \beta\in\nacc(x)\rangle$ is a $\kappa$-assignment.
Define $\Phi_{\mathfrak Z}:\mathcal K(\kappa)\rightarrow\mathcal K(\kappa)$ by stipulating $\Phi_{\mathfrak Z}(x):=\rng(g_{x,\mathfrak Z})$, where $g_{x,\mathfrak Z}:x\rightarrow\sup(x)$ is the function satisfying
\[g_{x,\mathfrak Z}(\beta) := \begin{cases}
\beta,                                                                      &\text{if } \beta \in \acc(x);\\
\min(Z_{x,\beta}\cup\{\beta\}),                                             &\text{if } \beta = \min(x);\\
\min\left((Z_{x,\beta}\cup\{\beta\})\setminus(\sup(x\cap\beta)+1)\right),   &\text{otherwise}.
\end{cases}\]

Then $\Phi_{\mathfrak Z}$ is an $\acc$-preserving postprocessing function, as follows from the following:
\begin{enumerate}
\item $g_{x,\mathfrak Z}$ is strictly increasing, continuous, and cofinal in $\sup(x)$, so that $\rng(g_{x,\mathfrak Z})$ is a club in $\sup(x)$ of order-type $\otp(x)$;
\item $\acc(\rng(g_{x,\mathfrak Z}))=\acc(x)$ and $\nacc(\rng(g_{x,\mathfrak Z}))=g_{x,\mathfrak Z}[\nacc(x)]$;
\item if $x,y\in\mathcal K(\kappa)$ and $y\sq x$, then $g_{y,\mathfrak Z} = g_{x,\mathfrak Z} \restriction \sup(y)$, so that $\rng(g_{y,\mathfrak Z})\sq \rng(g_{x,\mathfrak Z})$.
\end{enumerate}
\end{lemma}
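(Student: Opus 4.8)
The plan is to isolate a single structural fact about $g_{x,\mathfrak Z}$ and let all three itemized claims flow from it. The fact is the \emph{sandwiching bound}: for every $\beta\in\nacc(x)$ with $\beta\neq\min(x)$ one has $\sup(x\cap\beta)<g_{x,\mathfrak Z}(\beta)\le\beta$, while $g_{x,\mathfrak Z}(\beta)=\beta$ for $\beta\in\acc(x)$. Indeed, since $\beta\in\nacc(x)$ gives $\sup(x\cap\beta)<\beta$, the ordinal $\beta$ itself witnesses that $(Z_{x,\beta}\cup\{\beta\})\setminus(\sup(x\cap\beta)+1)$ is nonempty, so the third branch of the definition is well-defined and lies in the half-open interval $(\sup(x\cap\beta),\beta]$; the case $\beta=\min(x)$ is the same with $\sup(x\cap\beta)=0$. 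I would first record that, as $x\in\mathcal K(\kappa)$, the set $x$ has no maximum and $\sup(x)$ is a limit ordinal, so $\otp(x)$ is a limit.

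For Claim~(1), strict monotonicity follows by comparing $g_{x,\mathfrak Z}(\beta)$ and $g_{x,\mathfrak Z}(\beta')$ for $\beta<\beta'$ in $x$: if $\beta'\in\nacc(x)$ then the sandwiching bound gives $g_{x,\mathfrak Z}(\beta')>\sup(x\cap\beta')\ge\beta\ge g_{x,\mathfrak Z}(\beta)$, and if $\beta'\in\acc(x)$ then simply $g_{x,\mathfrak Z}(\beta')=\beta'>\beta\ge g_{x,\mathfrak Z}(\beta)$. Continuity at $\beta\in\acc(x)$ amounts to $\sup\{g_{x,\mathfrak Z}(\gamma)\mid\gamma\in x\cap\beta\}=\beta$: the upper bound is clear from $g_{x,\mathfrak Z}(\gamma)\le\gamma$, and for the lower bound, given $\gamma_0<\beta$ I would pick $\gamma_0<\gamma'<\gamma$ in $x\cap\beta$ (possible since $x\cap\beta$ is cofinal in $\beta$), whence $g_{x,\mathfrak Z}(\gamma)\ge\sup(x\cap\gamma)\ge\gamma'>\gamma_0$. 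Cofinality of $\rng(g_{x,\mathfrak Z})$ in $\sup(x)$ is then immediate from $g_{x,\mathfrak Z}(\beta)\ge\sup(x\cap\beta)$ together with the fact that these suprema are cofinal in $\sup(x)$. Being strictly increasing, continuous, and cofinal, $g_{x,\mathfrak Z}$ is an order isomorphism of $x$ onto the club $\rng(g_{x,\mathfrak Z})$, giving $\otp(\rng(g_{x,\mathfrak Z}))=\otp(x)$.

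Claim~(2) is then a formal consequence of monotonicity plus the sandwiching bound. Since $g_{x,\mathfrak Z}$ is increasing, $\rng(g_{x,\mathfrak Z})\cap g_{x,\mathfrak Z}(\beta)=g_{x,\mathfrak Z}[x\cap\beta]$; for $\beta\in\acc(x)$ continuity gives $\sup(\rng(g_{x,\mathfrak Z})\cap g_{x,\mathfrak Z}(\beta))=g_{x,\mathfrak Z}(\beta)$, whereas for $\beta\in\nacc(x)$ the bound $g_{x,\mathfrak Z}(\gamma)\le\gamma$ yields $\sup(\rng(g_{x,\mathfrak Z})\cap g_{x,\mathfrak Z}(\beta))\le\sup(x\cap\beta)<g_{x,\mathfrak Z}(\beta)$. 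As $\acc(x)$ and $\nacc(x)$ partition $x$ and $g_{x,\mathfrak Z}$ is injective, these inclusions upgrade to equalities $\acc(\rng(g_{x,\mathfrak Z}))=g_{x,\mathfrak Z}[\acc(x)]$ and $\nacc(\rng(g_{x,\mathfrak Z}))=g_{x,\mathfrak Z}[\nacc(x)]$; and since $g_{x,\mathfrak Z}\restriction\acc(x)$ is the identity, $\acc(\rng(g_{x,\mathfrak Z}))=\acc(x)$.

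The hard part will be Claim~(3), where the coherence of the $\kappa$-assignment must be invoked correctly. Given $y\sq x$ with $y\neq x$, I would write $y=x\cap\bar\alpha$ for $\bar\alpha:=\sup(y)$ and first observe that $\bar\alpha\in\acc(x)$ — this is exactly where the closure property $\acc^+(x)\s x$ of $\mathcal K(\kappa)$ is used: $y$ is cofinal in $\bar\alpha$ and $y\subsetneq x$ forces $\bar\alpha<\sup(x)$, so $\bar\alpha\in\acc^+(x)\s x$ and hence $\bar\alpha\in\acc(x)$. Now for each $\beta\in y$ the data feeding $g_{y,\mathfrak Z}(\beta)$ and $g_{x,\mathfrak Z}(\beta)$ agree: from $y\cap\beta=x\cap\beta$ the $\acc$/$\nacc$ and $\min$ alternatives coincide, and the coherence clause $Z_{x,\beta}=Z_{x\cap\bar\alpha,\beta}=Z_{y,\beta}$ (legitimate precisely because $\bar\alpha\in\acc(x)$) matches the remaining parameter, so $g_{y,\mathfrak Z}=g_{x,\mathfrak Z}\restriction\sup(y)$; since $g_{x,\mathfrak Z}(\bar\alpha)=\bar\alpha$ by Claim~(2), monotonicity gives $\rng(g_{y,\mathfrak Z})=\rng(g_{x,\mathfrak Z})\cap\bar\alpha\sq\rng(g_{x,\mathfrak Z})$. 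Finally the three claims assemble the theorem: Claim~(1) makes $\Phi_{\mathfrak Z}(x)$ a club in $\sup(x)$, Claim~(2) gives $\acc(\Phi_{\mathfrak Z}(x))=\acc(x)$ (so $\Phi_{\mathfrak Z}$ is $\acc$-preserving, and in particular $\acc(\Phi_{\mathfrak Z}(x))\s\acc(x)$), and applying Claim~(3) to $y=x\cap\bar\alpha$ for each $\bar\alpha\in\acc(\Phi_{\mathfrak Z}(x))=\acc(x)$ delivers the commuting-diagram identity $\Phi_{\mathfrak Z}(x)\cap\bar\alpha=\Phi_{\mathfrak Z}(x\cap\bar\alpha)$.
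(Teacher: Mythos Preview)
Your proof is correct and follows essentially the same approach as the paper: you isolate the sandwiching bound $\sup(x\cap\beta)<g_{x,\mathfrak Z}(\beta)\le\beta$ for $\beta\in\nacc(x)\setminus\{\min(x)\}$, derive Claims~(1) and~(2) from it, and handle Claim~(3) by checking pointwise agreement using $y\cap\beta=x\cap\beta$ and the $\kappa$-assignment coherence. The paper's proof is the same but terser, stating the sandwiching bound and asserting that (1) and~(2) follow immediately; your version simply fills in the routine details.
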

\begin{proof} Recall that every $x \in \mathcal K(\kappa)$ is a club subset of $\sup(x)$.
For every $x \in \mathcal K(\kappa)$ and every $\beta \in \nacc(x) \setminus \{0\}$, we have $\sup(x\cap\beta) < g_{x, \mathfrak Z}(\beta) \leq \beta$.
Parts (1) and~(2) follow immediately.

For (3), consider $x, y \in \mathcal K(\kappa)$ with $y \sq x$, and arbitrary $\beta \in y$.
If $\beta \in \acc(y)$ then clearly $g_{y, \mathfrak Z}(\beta) = \beta = g_{x, \mathfrak Z}(\beta)$.
If $\beta \in \nacc(y)$, then since $y \sq x$ we have $\sup(y \cap \beta) = \sup(x \cap \beta)$,
and by the fact that $\mathfrak Z$ is a $\kappa$-assignment we have $Z_{y,\beta} = Z_{x,\beta}$,
so that $g_{y, \mathfrak Z}(\beta) = g_{x, \mathfrak Z}(\beta)$ follows.
\end{proof}

\begin{example}\label{phiZ-simpler} Any sequence $\langle Z_{\beta}\mid \beta<\kappa\rangle$ gives rise to a $\kappa$-assignment
$\mathfrak Z := \langle Z_{x,\beta}\mid x\in\mathcal K(\kappa),\allowbreak\beta\in\nacc(x)\rangle$ via the rule $Z_{x,\beta} := Z_\beta \cap \beta$.
Now, let  $\Phi_{\mathfrak Z}$ be given by Lemma~\ref{phiZ}.
Note that if $x \in \mathcal K(\kappa)$ and $A \subseteq \kappa$ are such that $\nacc(x)\s\{\beta\in \acc^+(A)\mid Z_\beta = A \cap \beta\}$, then
$\nacc(\Phi_{\mathfrak Z}(x)) \subseteq A$ and $\min(\Phi_{\mathfrak Z}(x)) = \min(A)$.
Likewise, for $f\in\{\sup,\otp\}$, if $f(\nacc(x)\cap\{\beta\in \acc^+(A)\mid Z_\beta = A \cap \beta\})=f(x)$, then $f(\nacc(\Phi_{\mathfrak Z}(x))\cap A)=f(x)$.
\end{example}

\begin{fact}[\cite{paper28}]\label{paper28b} Suppose that $\diamondsuit(\kappa)$ holds.
For any infinite regular cardinal $\theta<\kappa$, there exists an $\acc$-preserving postprocessing function $\Phi:\mathcal K(\kappa)\rightarrow\mathcal K(\kappa)$ satisfying the following.
For every sequence $\langle A_i\mid i<\theta\rangle$ of cofinal subsets of $\kappa$, there exists some stationary subset $G\s \kappa$ such that for all $x\in\mathcal K(\kappa)$,
if $\sup(\nacc(x)\cap G)=\sup(x)$ and $\cf(\sup(x))=\theta$, then $\sup(\nacc(\Phi(x))\cap A_i)=\sup(x)$ for all $i<\theta$.
\end{fact}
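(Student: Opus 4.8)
The plan is to build $\Phi$ directly from a $\diamondsuit(\kappa)$-sequence using the $\kappa$-assignment machinery of Lemma~\ref{phiZ}, so that $\Phi$ depends only on $\kappa$, $\theta$, and a fixed guessing sequence --- this matches the quantifier order, in which $\Phi$ is chosen before the family $\langle A_i\mid i<\theta\rangle$. First I would fix, using $\diamondsuit(\kappa)$ together with a fixed bijection $\kappa\leftrightarrow\theta\times\kappa$, a sequence $\langle\langle A^\beta_i\mid i<\theta\rangle\mid\beta<\kappa\rangle$ with each $A^\beta_i\s\beta$, such that for every family $\langle A_i\mid i<\theta\rangle$ of subsets of $\kappa$, the set $\{\beta<\kappa\mid \forall i<\theta\,(A_i\cap\beta=A^\beta_i)\}$ is stationary. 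Since $\theta<\kappa$ and $\kappa$ is regular, this is a routine reformulation of $\diamondsuit(\kappa)$.

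Next I would turn this guessing sequence into a $\kappa$-assignment $\mathfrak Z=\langle Z_{x,\beta}\mid x\in\mathcal K(\kappa),\beta\in\nacc(x)\rangle$ and set $\Phi:=\Phi_{\mathfrak Z}$, which by Lemma~\ref{phiZ} is an $\acc$-preserving postprocessing function. The idea is that at a nonaccumulation point $\beta$ of $x$, the value $Z_{x,\beta}$ should point into one of the guessed sets $A^\beta_i$, so that --- whenever the guess at $\beta$ is correct and $\beta\in\acc^+(A_i)$ --- the rule $g_{x,\mathfrak Z}(\beta)=\min((Z_{x,\beta}\cup\{\beta\})\setminus(\sup(x\cap\beta)+1))$ lands in $A_i$, and by Lemma~\ref{phiZ}(2) this image point lies in $\nacc(\Phi(x))$. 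The coherence requirement $Z_{x,\beta}=Z_{x\cap\bar\alpha,\beta}$ forces the index chosen at $\beta$ to depend on $x$ only through $x\cap\beta$; the first natural attempt is to let it be a function of $\beta$ and of $\otp(x\cap\beta)$, both of which are invariant under $\cap\bar\alpha$ for $\bar\alpha\in\acc(x)$.

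Given a family $\langle A_i\mid i<\theta\rangle$, I would then set $G$ to be the intersection of the stationary set of correct-guessing points with the club $\bigcap_{i<\theta}\acc^+(A_i)$ (a club, since each $A_i$ is cofinal and $\theta<\kappa$), so that $G$ is stationary and at every $\beta\in G$ all of the sets $A_i\cap\beta$ ($i<\theta$) are guessed correctly and accumulate to $\beta$. For $x$ with $\sup(\nacc(x)\cap G)=\sup(x)$ and $\cf(\sup(x))=\theta$, each $\beta\in\nacc(x)\cap G$ then produces a point $g_{x,\mathfrak Z}(\beta)\in\nacc(\Phi(x))$ lying in the $A_i$ it was steered toward, and ranging over the cofinally many such $\beta$ should yield $\sup(\nacc(\Phi(x))\cap A_i)=\sup(x)$ for each $i$, exactly as in the hitting computation of Example~\ref{phiZ-simpler}.

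The hard part is precisely the simultaneity: from the single cofinal set $\nacc(x)\cap G$ one must feed every one of the $\theta$ targets cofinally, even though $\Phi$ is fixed in advance and $x$ is adversarial. The naive steering rule that reads off the target from $\otp(x\cap\beta)$ (or from $\sup(x\cap\beta)$) is in fact defeatable, since an adversary can pad $x$ with junk nonaccumulation points so that all of its good points sit at levels of a single color; thus no purely local rule suffices. The genuine construction must therefore make the rotation through the indices robust against such padding, exploiting the hypothesis $\cf(\sup(x))=\theta$ together with a coloring of the levels by $\theta$ whose color classes remain cofinal below every ordinal of cofinality $\theta$, arranged coherently through the guessed data. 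Verifying that this rotation really hits all $\theta$ targets simultaneously is the crux of the argument, and is the content worked out in~\cite{paper28}; by contrast, the checks that $\Phi$ is an $\acc$-preserving postprocessing function and that $G$ is stationary are immediate from Lemma~\ref{phiZ} and the $\diamondsuit(\kappa)$ reformulation.
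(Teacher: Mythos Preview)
The paper does not supply a proof of this statement; it is recorded as a Fact imported from~\cite{paper28}, so there is no in-paper argument to compare against. Your outline is faithful to the shape of the construction: a $\diamondsuit(\kappa)$-based $\kappa$-assignment fed through Lemma~\ref{phiZ} to produce an $\acc$-preserving postprocessing function, with the stationary set $G$ arising from the correct-guess locus of the diamond intersected with the club $\bigcap_{i<\theta}\acc^+(A_i)$.

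You are also right that the only substantive issue is the simultaneity, and your diagnosis of why the naive steering rule fails is accurate: if the index assigned at $\beta$ depends only on $\otp(x\cap\beta)$ (or $\sup(x\cap\beta)$) through a fixed map to $\theta$, then an adversary who controls $x$ can arrange that all $G$-hits of $\nacc(x)$ land at positions of a single color, so that only one $A_i$ is fed. Your proposal does not resolve this --- you explicitly defer the crux to~\cite{paper28} --- so what you have written is an accurate roadmap rather than a self-contained proof. Since the present paper likewise defers by citation, your write-up is in that sense no less complete than the paper's treatment; it is simply more explicit about where the real content lies and what obstacle it must overcome.
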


\begin{fact}[folklore]\label{diamond_matrix} $\diamondsuit(\kappa)$ entails the existence of a matrix $\langle A^i_\gamma\mid i,\gamma<\kappa\rangle$
with $A^i_\gamma\s\gamma$ for all $i,\gamma<\kappa$, such that for every sequence $\vec A=\langle A^i\mid i<{\kappa} \rangle $ of cofinal subsets of ${\kappa}$,
the following set is stationary: $$G(\vec{A}):=\{\gamma<{\kappa}\mid \forall i<\gamma(\sup(A^i\cap\gamma)=\gamma\ \&\ A^i\cap\gamma=A^i_\gamma)\}.$$
\end{fact}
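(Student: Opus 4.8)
The plan is to reduce the simultaneous guessing of a $\kappa$-indexed sequence of cofinal sets to the guessing of a \emph{single} subset of $\kappa$ via a pairing function, and then to feed that single subset into a standard $\diamondsuit(\kappa)$-sequence. First I would fix a pairing function (say, the Gödel pairing) $\langle\cdot,\cdot\rangle:\kappa\times\kappa\rightarrow\kappa$, and set
$C:=\{\gamma<\kappa\mid \langle\cdot,\cdot\rangle\restriction(\gamma\times\gamma)\text{ is a bijection onto }\gamma\}$.
Since $\kappa$ is regular and uncountable, $C$ is a club. Next I would invoke $\diamondsuit(\kappa)$ in its standard form to obtain a sequence $\langle B_\gamma\mid\gamma<\kappa\rangle$ with $B_\gamma\s\gamma$ such that for every $B\s\kappa$ the set $\{\gamma<\kappa\mid B\cap\gamma=B_\gamma\}$ is stationary. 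With these in hand, I would define the sought matrix by \emph{decoding}: for $\gamma\in C$ and $i<\gamma$, set $A^i_\gamma:=\{\alpha<\gamma\mid \langle i,\alpha\rangle\in B_\gamma\}$, and for all remaining pairs set $A^i_\gamma:=\emptyset$. Crucially, this matrix is fixed once and for all, independently of any particular $\vec A$, since it is extracted solely from the $\diamondsuit$-sequence through the fixed bijection.

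Now, given an arbitrary sequence $\vec A=\langle A^i\mid i<\kappa\rangle$ of cofinal subsets of $\kappa$, I would code it by the single set $B:=\{\langle i,\alpha\rangle\mid i<\kappa\ \&\ \alpha\in A^i\}\s\kappa$. The equality clause of $G(\vec A)$ will then follow from $\diamondsuit$-guessing of $B$, but the cofinality clause $\sup(A^i\cap\gamma)=\gamma$ must be secured separately by a closure argument. To this end, let $f:\kappa\times\kappa\rightarrow\kappa$ be given by $f(i,\beta):=\min(A^i\setminus(\beta+1))$, which is well defined as each $A^i$ is cofinal, and let $E$ be the club of $\gamma<\kappa$ closed under $f$, i.e.\ with $f(i,\beta)<\gamma$ for all $i,\beta<\gamma$. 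For every limit $\gamma\in E$ one has $\sup(A^i\cap\gamma)=\gamma$ for all $i<\gamma$.

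Finally I would intersect the relevant sets. The set $\{\gamma<\kappa\mid B\cap\gamma=B_\gamma\}$ is stationary by the choice of the $\diamondsuit$-sequence, while $C\cap E\cap\acc(\kappa)$ is a club, so $S:=\{\gamma\in C\cap E\cap\acc(\kappa)\mid B\cap\gamma=B_\gamma\}$ is stationary. I claim $S\s G(\vec A)$. Fix $\gamma\in S$ and $i<\gamma$. The cofinality clause holds because $\gamma\in E$ is a limit. For the equality clause, take any $\alpha<\gamma$; since $\gamma\in C$ we have $\langle i,\alpha\rangle<\gamma$, hence $\langle i,\alpha\rangle\in B\iff \langle i,\alpha\rangle\in B\cap\gamma=B_\gamma$, which unwinds to $\alpha\in A^i\iff \alpha\in A^i_\gamma$; thus $A^i\cap\gamma=A^i_\gamma$. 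This gives $S\s G(\vec A)$, so $G(\vec A)$ is stationary, as required.

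The only genuinely delicate point is the interplay between the pairing function and the demand that the matrix depend on the $\diamondsuit$-sequence alone and not on $\vec A$: this is exactly why the decoding must pass through the fixed bijection on the club $C$, guaranteeing $\langle i,\alpha\rangle<\gamma$ whenever $i,\alpha<\gamma$, so that membership in $B$ below $\gamma$ is faithfully recorded by $B_\gamma$. Everything else is routine bookkeeping, since stationarity is preserved under intersection with a club.
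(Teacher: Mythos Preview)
Your argument is correct and is exactly the standard folklore proof. Note that the paper does not supply a proof of this statement at all: it is recorded as a \emph{Fact (folklore)} and invoked without justification, so there is no ``paper's own proof'' to compare against. Your write-up --- coding $\vec A$ as a single subset of $\kappa$ via a pairing bijection, decoding the matrix from a $\diamondsuit(\kappa)$-sequence, and intersecting the guessing set with the club of closure points for the pairing and for the cofinality requirement --- is precisely the argument one has in mind when labeling this fact as folklore.
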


\begin{defn}[\cite{paper24}] Define an ideal $J[\kappa]\s\mathcal P(\kappa)$, as follows.
A subset $S\s\kappa$ is in $J[\kappa]$ iff there exists a club $C\s\kappa$ and a sequence of functions $\langle f_i:\kappa\rightarrow\kappa\mid i<\kappa\rangle$ satisfying the following.
For every $\alpha\in S\cap C$, every regressive function $f:\alpha\rightarrow\alpha$, and every cofinal subset $B\s\alpha$, there exists some $i<\alpha$ such that $\sup\{ \beta\in B\mid f_i(\beta)=f(\beta)\}=\alpha$.
\end{defn}

The following Lemma is a postprocessing-function version of \cite[Theorem~4.3]{paper24}.

\begin{lemma}\label{thm16} Suppose that $\diamondsuit(\kappa)$ holds and that $\theta \in \reg(\kappa)$.
Suppose also that $\vec C=\langle C_\alpha\mid \alpha\in S\rangle$ is a $C$-sequence, with $S\in J[\kappa]\cap\mathcal P(E^\kappa_\theta)$,
satisfying that for every club $E\s\kappa$, there exists  $\alpha\in S$ with $\sup(\nacc(C_\alpha)\cap E)=\alpha$.

Then there exists a faithful postprocessing function $\Phi:\mathcal K(\kappa)\rightarrow\mathcal K(\kappa)$ satisfying:
\begin{enumerate}
\item for every $x$, $\sup(\nacc(x)\setminus \nacc(\Phi(x)))<\sup(x)$;
\item for every $\zeta<\kappa$ and every sequence $\langle A_i\mid i<\theta\rangle$ of cofinal subsets of $\kappa$, there exists $\alpha\in S$ with $\min(C_\alpha)=\zeta$ such that $\sup(\nacc(\Phi(C_\alpha))\cap A_i)=\alpha$ for all $i<\theta$.
\end{enumerate}
\end{lemma}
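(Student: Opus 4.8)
The plan is to realize $\Phi$ not by \emph{relocating} the non-accumulation points of its input (as the $\kappa$-assignments of Lemma~\ref{phiZ} do), but by \emph{inserting} new non-accumulation points into the gaps of $x$ while leaving $x$ itself intact. Concretely, I would fix an $\acc$-preserving postprocessing function $\Psi$ as provided by Fact~\ref{paper28b} for the cardinal $\theta$, and set $\Phi(x):=\faithful(x\cup\Psi(x))$, with $\faithful$ from Example~\ref{faithful_correction} inserted only to secure faithfulness. Since $\Psi$ is $\acc$-preserving and $x$ is club in $\sup(x)$, the union $x\cup\Psi(x)$ creates no new accumulation points, so $\acc(\Phi(x))=\acc(x)$ and the coherence of $\Psi$ lifts to $\Phi$; thus $\Phi$ is a postprocessing function. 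The payoff is that $x\s\Phi(x)$ forces $\nacc(x)\s\nacc(\Phi(x))$, so $\nacc(x)\setminus\nacc(\Phi(x))=\emptyset$ and Clause~(1) is automatic --- this is precisely why insertion, rather than the relocation of Lemma~\ref{phiZ}, is the right format here. Moreover $\nacc(\Psi(x))\s\nacc(\Phi(x))$, so any cofinal hitting of the $A_i$ achieved by $\Psi$ is inherited by $\Phi$.

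With this reduction, Clause~(2) becomes a pure guessing problem: given $\zeta<\kappa$ and $\langle A_i\mid i<\theta\rangle$, let $G=G(\langle A_i\rangle)$ be the stationary set supplied by Fact~\ref{paper28b}; it suffices to produce $\alpha\in S$ with $\min(C_\alpha)=\zeta$ and $\sup(\nacc(C_\alpha)\cap G)=\alpha$, for then $\cf(\alpha)=\theta$ (as $S\s E^\kappa_\theta$) and Fact~\ref{paper28b} yields $\sup(\nacc(\Psi(C_\alpha))\cap A_i)=\alpha$ for every $i<\theta$, which transfers to $\Phi$. The hypothesis only guesses \emph{clubs}, whereas $G$ is merely stationary and we must in addition prescribe $\min(C_\alpha)$; bridging this gap is where the last two ingredients enter.

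To realize the required guessing, I would combine the diamond matrix with approachability. Writing $\langle A^i\mid i<\kappa\rangle$ for the padding of $\langle A_i\mid i<\theta\rangle$, the matrix of Fact~\ref{diamond_matrix} gives a stationary set of $\gamma$ at which $A^i\cap\gamma=A^i_\gamma$ for all $i<\gamma$; this is what lets the uniform function $\Psi$ (which can read the matrix but not the sets $A_i$) recover the targets. The membership $S\in J[\kappa]$, witnessed by a club $D^*$ and functions $\langle f_i\mid i<\kappa\rangle$, is then used to convert this \emph{stationary} correctness into correctness along \emph{cofinally many} non-accumulation points of a single guessing configuration: applying the hypothesis to a club $E\s D^*$ chosen to reflect the matrix-correctness, I obtain $\alpha\in S\cap D^*$ with $\sup(\nacc(C_\alpha)\cap E)=\alpha$; then, taking $B:=\nacc(C_\alpha)$ and the regressive function $f$ on $\alpha$ that records at each $\beta\in B$ the genuine witness $\min(A_{\iota}\cap(\sup(C_\alpha\cap\beta),\beta))$ for the currently targeted index $\iota$ (cycling through $\theta$ via $\otp(B\cap\beta)\bmod\theta$, which is cofinal in $\theta$ since $\theta$ is regular), approachability produces a single index $i<\alpha$ with $f_i=f$ on a cofinal subset of $B$. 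This is the step that trades the external, merely stationary guarantee for a cofinal, internally-certified one. Folding a code for $\zeta$ into the anticipated data and reading it off at the least point of the configuration yields the prescription $\min(C_\alpha)=\zeta$.

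The main obstacle is exactly this last coordination: upgrading the \emph{single}, club-level guessing granted by the hypothesis into a \emph{simultaneous}, $\theta$-fold, $\min$-controlled capture of the stationary diamond set, i.e.\ arranging that the $J[\kappa]$-decoding aligns cofinally with \emph{each} of the $\theta$ targets at once while the diamond anticipation remains correct, and doing so without disturbing the verification that $\Phi$ is a genuine $\acc$-preserving postprocessing function. Everything else --- Clause~(1), faithfulness, and the postprocessing axioms --- is routine once the insertion format $x\mapsto\faithful(x\cup\Psi(x))$ is adopted.
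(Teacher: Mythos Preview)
Your insertion format $\Phi(x)=\faithful(x\cup\Psi(x))$ is a legitimate postprocessing function and dispatches Clause~(1) cleanly, but the argument for Clause~(2) has a genuine gap: the $J[\kappa]$ step is disconnected from your $\Phi$. To invoke Fact~\ref{paper28b} you need $\sup(\nacc(C_\alpha)\cap G)=\alpha$ for the \emph{stationary} set $G$, while the hypothesis only guesses clubs. Your approachability argument produces an index $i<\alpha$ with $f_i=f$ on a cofinal subset of $\nacc(C_\alpha)$, where $f$ records elements of the $A_\iota$'s; but your $\Psi$ is a black box from Fact~\ref{paper28b} and neither $\Psi$ nor $\Phi$ refers to $f_i$ at all, so knowing $i$ tells you nothing about $\nacc(\Psi(C_\alpha))$. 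The paper's solution is precisely to bake the $f_i$'s into the postprocessing: it defines a \emph{family} of $\kappa$-assignments via $Z^i_{x,\beta}:=A^i_{f_i(\beta)}\cap\beta$, so that the resulting $\Phi_{\mathfrak Z^i}$ sends each $\beta$ close to $f_i(\beta)$, and \emph{then} the approachability argument shows that for the right $i$ this lands in the desired set. Fact~\ref{paper28b} is only applied afterward, on top of the correctly selected $\Phi_{\mathfrak Z^{i_\tau}}$.

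The second gap is the prescription of $\min$. Your $\Phi$ is fixed once and for all, and nothing in the hypothesis lets you choose $\alpha\in S$ with $\min(C_\alpha)=\zeta$; the phrase ``folding a code for $\zeta$ into the anticipated data'' has no realization in $\Phi(x)=\faithful(x\cup\Psi(x))$. The paper handles both the selection of $i_\tau$ and the control of $\min$ simultaneously, via \emph{routing}: it first observes that the club-guessing hypothesis makes $S$ positive for a normal ideal, applies (the ideal-version of) the mixing lemma to obtain $\Phi_0$ and a cofinal $\Upsilon\s\kappa$ with each $S_\tau=\{\alpha\in S\mid\min(\Phi_0(C_\alpha))=\tau\}$ still club-guessing, finds $i_\tau$ for each $\tau\in\Upsilon$ separately, and then defines the final $\Phi$ by cases on $\tau=\min(\Phi_0(x))$, applying $\Phi_1\circ\Phi_{\mathfrak Z^{i_\tau}}\circ\Phi_0$ and tagging $\min$ with $\otp(\Upsilon\cap\tau)$. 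This routing on $\min(\Phi_0(x))$ is the missing structural idea in your proposal.
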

\begin{proof} Define an ideal $\mathcal I\s\mathcal P(S)$, as follows.
For every $T\s S$: $T\in\mathcal I$ iff there exists some club $E\s\kappa$ such that $\sup(\nacc(C_\alpha)\cap E)<\alpha$ for all $\alpha\in T$.
By the club-guessing feature of $\vec C$, $S\notin\mathcal I$.
\begin{claim} $\mathcal I$ is normal.
\end{claim}
\begin{proof} Suppose that $\langle T_i\mid i<\kappa\rangle$ is a sequence of elements of $\mathcal I$. For each $i<\kappa$, pick a club $E_i$ witnessing that $T_i\in\mathcal I$.
Consider the diagonal union $T:=\{\alpha<\kappa\mid \exists i<\alpha(\alpha\in T_i)\}$ and the diagonal intersection $E:=\{\alpha<\kappa\mid \forall i<\alpha(\alpha\in E_i)\}$.
We claim that the club $E$ witnesses that $T\in\mathcal I$. Let $\alpha\in T$ be arbitrary.
Pick $i<\alpha$ such that $\alpha\in T_i$.
As $\sup(E\setminus E_i)\le i+1<\alpha$ and $\sup(\nacc(C_\alpha)\cap E_i)<\alpha$, we have $\sup(\nacc(C_\alpha)\cap E)<\alpha$, so we are done.
\end{proof}

It is clear that just like Lemma~\ref{split_amenable} admits a generalization to arbitrary normal filters (recall Lemma~\ref{lemma127}), so does Lemma~\ref{split_amenable2}.
Thus, by appealing to this generalization with the dual filter of $\mathcal I$, $\vec C$,\footnote{Note that $\vec C$ is amenable due to its club-guessing feature.}
and the constant $\kappa$-sequence whose unique element is $S$, we obtain a postprocessing function $\Phi_0:\mathcal K(\kappa)\rightarrow\mathcal K(\kappa)$ such that:
\begin{itemize}
\item For all $x\in\mathcal K(\kappa)$, we have $\Phi_0(x)=^* x$;
\item $\Upsilon:=\{\tau<\kappa\mid \{ \alpha\in S \mid \min(\Phi_0(C_\alpha))=\tau\}\notin\mathcal I\}$ is cofinal in $\kappa$.
\end{itemize}
Denote $C_\alpha^\circ:=\Phi_0(C_\alpha)$.
For each $\tau\in\Upsilon$, put $S_\tau:=\{\alpha\in S\mid \min(C_\alpha^\circ)=\tau\}$.
Then for every club $E\s\kappa$ and every $\tau\in\Upsilon$, there exists $\alpha\in S_\tau$ such that $\sup(\nacc(C^\circ_\alpha)\cap E)=\alpha$.
In particular, $S_\tau$ is stationary.

Next, we follow the proof of \cite[Theorem 4.3]{paper24}.
As $S\in J[\kappa]$, let us fix a club $C\s\kappa$ and a sequence of functions $\langle f_i:\kappa\rightarrow\kappa\mid i<\kappa \rangle$
such that for every $\alpha\in S\cap C$, every regressive function $f:\alpha\rightarrow\alpha$,
and every cofinal subset $B\s\alpha$, there exists some $i<\alpha$ such that $\sup\{ \beta\in B\mid f_i(\beta)=f(\beta)\}=\alpha$.
By $\diamondsuit(\kappa)$, fix a matrix $\langle A^i_\gamma\mid i,\gamma<\kappa \rangle$ as in Fact~\ref{diamond_matrix}.
For each $i<\kappa$, derive a $\kappa$-assignment $\mathfrak Z^i=\langle Z^i_{x,\beta}\mid x\in\mathcal K(\kappa), \beta\in\nacc(x)\rangle$
via the rule $Z^i_{x,\beta}:=A^i_{f_i(\beta)} \cap\beta$, and consider the corresponding postprocessing function $\Phi_{\mathfrak Z^i}$ given by Lemma~\ref{phiZ}.

\begin{claim} For each $\tau\in\Upsilon$,  there is $i<\kappa$ such that for every cofinal $A\s\kappa$,
there exists $\alpha\in S_\tau$ with $$\sup(\nacc(\Phi_{\mathfrak Z^i}(C^\circ_\alpha))\cap A)=\alpha.$$
\end{claim}
\begin{proof} Towards a contradiction, suppose that $\tau\in\Upsilon$ is a counterexample.
Then there exists a sequence of cofinal subsets of $\kappa$, $\vec A=\langle A^i\mid i<\kappa \rangle $,
such that for all $i<\kappa$ and $\alpha\in S_\tau$,
we have $\sup(\nacc(\Phi_{\mathfrak Z^i}(C^\circ_\alpha))\cap A^i)<\alpha$. Let $G$ be $G(\vec{A})$ as in Fact~\ref{diamond_matrix}.
Then $G$ is a stationary subset of $\kappa$, and $E:= C \cap \acc^+(G)$ is a club in $\kappa$.
Pick $\alpha\in S_\tau$ such that $B:=\nacc(C^\circ_\alpha)\cap E$ is cofinal in $\alpha$.
In particular, $\alpha \in \acc^+(E) \subseteq E \subseteq C$.
For all $\beta\in B$, since $\beta\in E$, we know that the relative interval $G\cap(\sup(C_\alpha^\circ\cap\beta),\beta)$ is nonempty.
Consequently, we may find some regressive function $f:\alpha\rightarrow\alpha$ such that $f(\beta)\in G\cap(\sup(C^\circ_\alpha\cap\beta),\beta)$ for all $\beta\in B$.
Since $\alpha \in S \cap C$, we may pick $i<\alpha$ and a cofinal subset $B'\s B$ such that $f_i\restriction B'=f\restriction B'$.
Fix a large enough $\eta\in C^\circ_\alpha$ such that $\sup(C^\circ_\alpha\cap\eta)\ge i$.
By omitting an initial segment, we may assume that $\min(B')> \eta$.

Let $\beta\in B'$ be arbitrary. As $\beta\in\nacc(C^\circ_\alpha)$ and $\beta>\eta\geq\min(C^\circ_\alpha)$,
we have $$g_{C^\circ_\alpha,\mathfrak Z^i}(\beta)=\min\left(((A^i_{f_i(\beta)}\cap\beta)\cup\{\beta\})\setminus(\sup(C^\circ_\alpha\cap\beta)+1)\right),$$
where the function $g_{C^\circ_\alpha,\mathfrak Z^i} : C^\circ_\alpha\rightarrow\alpha$ is the one defined in Lemma~\ref{phiZ}.

Write $\gamma:=f_i(\beta)$. Then $\gamma\in G\cap(\sup(C^\circ_\alpha\cap\beta),\beta)\s(i,\beta)$.
In particular, $\sup(A^i\cap\gamma)=\gamma$ and $A^i\cap\gamma=A^i_\gamma$, so that
$$g_{C^\circ_\alpha,\mathfrak Z^i}(\beta)=\min\left(A^i\setminus(\sup(C^\circ_\alpha\cap\beta)+1)\right).$$

Consequently, $g_{C^\circ_\alpha,\mathfrak Z^i}[B']\s A^i$, and hence $\sup(\nacc(\Phi_{\mathfrak Z^i}(C^\circ_\alpha))\cap A^i)=\alpha$, contradicting the choice of $A^i$.
\end{proof}

For each $\tau\in\Upsilon$, let $i_\tau$ be given by the preceding claim. Let $\Phi_1$ be given by Fact~\ref{paper28b}.
Define $\Phi_2 : \mathcal K(\kappa) \to \mathcal K(\kappa)$ by stipulating:
$$\Phi_2(x):=\begin{cases}
\nacc(\Phi_0(x))\cup(\Phi_1(\Phi_{\mathfrak Z^{i_\tau}}(\Phi_0(x)))\setminus\tau)\cup\{\otp(\Upsilon\cap\tau)\},&\text{if }\tau:=\min(\Phi_0(x))\text{ is in }\Upsilon;\\
\Phi_0(x),&\text{otherwise}.
\end{cases}$$
\begin{claim} $\Phi_2$ is a postprocessing function, and $\sup(\nacc(x)\setminus \nacc(\Phi_2(x)))<\sup(x)$ for all $x$.
\end{claim}
\begin{proof} Since all elements of $\{\Phi_1,\Phi_{\mathfrak Z^i}\mid i<\kappa\}$ are $\acc$-preserving postprocessing functions,
we get that for all $x\in\mathcal K(\kappa)$ and $i<\kappa$, $\acc(\Phi_2(x))=\acc(\Phi_1(\Phi_{\mathfrak Z^{i}}(\Phi_0(x))))=\acc(\Phi_0(x))$, so that $\Phi_2(x)$ is a club in $\sup(x)$.
Since $\nacc(\Phi_0(x))\s \nacc(\Phi_2(x))$ and $\Phi_0(x)=^*x$, we also have $\sup(\nacc(x)\setminus \nacc(\Phi_2(x)))<\sup(x)$.

Finally, suppose that $x\in\mathcal K(\kappa)$ and $\bar\alpha\in\acc(\Phi_2(x))$. Put $\tau:=\min(\Phi_0(x))$.
As $\Phi_0(x\cap\bar\alpha)=\Phi_0(x)\cap\bar\alpha$, we have $\min(\Phi_0(x\cap\bar\alpha))=\tau$, and hence $\Phi_2(x\cap\bar\alpha)=\Phi_2(x)\cap\bar\alpha$.
\end{proof}

\begin{claim} For every $\zeta<\kappa$ and every sequence $\langle A_i\mid i<\theta\rangle$ of cofinal subsets of $\kappa$,
there exists $\alpha\in S$ such that $\min(\Phi_2(C_\alpha))=\zeta$ and $\sup(\nacc(\Phi_2(C_\alpha))\cap A_i)=\alpha$ for all $i<\theta$.
\end{claim}
\begin{proof} Let $\langle A_i \mid i<\theta \rangle$ be an arbitrary sequence of cofinal subsets of $\kappa$.
By our choice of $\Phi_1$, let $G \subseteq \kappa$ be a stationary subset satisfying the property given in Fact~\ref{paper28b}.
Let $\zeta<\kappa$ be arbitrary. Put $\tau:=\Upsilon(\zeta)$.
Then, by our choice of $i_\tau$, fix $\alpha \in S_\tau$ such that $\sup(\nacc(\Phi_{\mathfrak Z^{i_\tau}}(C_\alpha^\circ)) \cap G) = \alpha$.
Since $S_\tau\s S \subseteq E^\kappa_\theta$, we have $\cf(\alpha) =  \theta$.
Then by our choice of $G$ we have $\sup(\nacc(\Phi_1(\Phi_{\mathfrak Z^{i_\tau}}(C^\circ_\alpha))) \cap A_i) = \alpha$ for all $i<\theta$.
But $C_\alpha^\circ=\Phi_0(C_\alpha)$ and $\alpha\in S_\tau$, so that $\min(\Phi_0(C_\alpha))=\tau$.
Consequently, $\min(\Phi_2(C_\alpha))=\otp(\Upsilon\cap\tau)=\zeta$, and $\sup(\nacc(\Phi_2(C_\alpha)) \cap A_i) = \alpha$ for all $i<\theta$.
\end{proof}

Let $\faithful$ be given by Example~\ref{faithful_correction}.
Then $\Phi:=\faithful\circ\Phi_2$ is as sought.
\end{proof}

\begin{defn} A $C$-sequence $\langle e_\alpha\mid\alpha<\kappa\rangle$ is said to be \emph{standard} if all of the following hold:
\begin{itemize}
\item For all $\alpha<\kappa$, $e_{\alpha+1}:=\{\alpha\}$;
\item For all $\alpha\in E^\kappa_\omega$, $\otp(e_\alpha)=\omega$;
\item For all $\alpha\in E^\kappa_{>\omega}$, $\otp(e_\alpha)=\cf(\alpha)$, $\nacc(e_\alpha)\s\nacc(\alpha)$, and $\omega\setminus e_\alpha=\{0\}$.
\end{itemize}
\end{defn}

\begin{notation}\label{notationaction} Given a $\mathcal C$-sequence $\cvec{C}=\langle \mathcal C_\alpha\mid\alpha<\kappa\rangle$,
and a postprocessing function $\Phi:\mathcal K(\kappa)\rightarrow\mathcal K(\kappa)$,
we let $\acts{\Phi}{C}$ denote any $\mathcal C$-sequence $\langle\mathcal D_\alpha\mid\alpha<\kappa\rangle$ satisfying the following two properties:
\begin{itemize}
\item For all $\alpha\in\Gamma(\cvec{C})$, $\mathcal D_\alpha=\{ \Phi(C) \mid C \in \mathcal C_\alpha \}$;
\item For all $\alpha\in\kappa\setminus\Gamma(\cvec{C})$, $\mathcal D_\alpha=\{e_\alpha\}$, where $\langle e_\alpha\mid\alpha<\kappa\rangle$ is some standard $C$-sequence,
and if $\sup(\bigcup\mathcal C_\alpha\cap \nacc(\alpha))=\alpha$, then $e_\alpha\s\bigcup\mathcal C_\alpha\cup(\omega+1)$.
\end{itemize}
It is clear that such a sequence can always be found.
\end{notation}

\begin{lemma}\label{pp-preserves-square} Suppose that $\square_\xi(\kappa,{<}\mu, {\sq_\chi}, \mathcal R_1)$ holds, where either:
\begin{itemize}
\item $\mathcal R_1 = V$ or
\item $\mathcal R_1 = {\notin}$ and $\min\{\xi,\mu\}<\kappa$.
\end{itemize}

For every postprocessing function $\Phi : \mathcal K(\kappa) \to \mathcal K(\kappa)$, we have:
\begin{enumerate}
\item If $\cvec{C}=\langle\mathcal C_\alpha \mid \alpha < \kappa\rangle$ is a witness to $\square_\xi(\kappa,{<}\mu, {\sq_\chi}, \mathcal R_1)$,
then $\acts{\Phi}{C}$ is yet another witness, with some support $\Gamma^\bullet\supseteq\Gamma(\cvec{C})$;
if $\Phi$ is faithful or $\chi=\aleph_0$, then $\Gamma^\bullet=\Gamma(\cvec{C})$;
\item Suppose that $\langle C_\alpha\mid\alpha\in\Gamma\rangle$ is a transversal for $\square_\xi(\kappa,{<}\mu, {\sq_\chi}, \mathcal R_1)$.
If $\Phi$ is faithful or $\chi=\aleph_0$, then $\langle \Phi(C_\alpha)\mid\alpha\in\Gamma\rangle$ is yet another transversal for $\square_\xi(\kappa,{<}\mu, {\sq_\chi}, \mathcal R_1)$.
\end{enumerate}
\end{lemma}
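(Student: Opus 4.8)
The plan is to establish Clause~(1) in full and then read off Clause~(2) as a formality. Fix a witness $\cvec C=\langle\mathcal C_\alpha\mid\alpha<\kappa\rangle$ to the relevant principle, write $\Gamma:=\Gamma(\cvec C)$, set $\cvec D:=\acts\Phi C=\langle\mathcal D_\alpha\mid\alpha<\kappa\rangle$, and let $\Gamma^\bullet:=\Gamma(\cvec D)$. I would verify, one at a time, that $\cvec D$ is a $\xi$-bounded $\mathcal C$-sequence satisfying the five bullets of Definition~\ref{def115}, while keeping track of $\Gamma^\bullet$. The $\xi$-boundedness is immediate from $\otp(\Phi(C))\le\otp(C)\le\xi$ for $C\in\mathcal C_\alpha$, together with $\otp(e_\alpha)=\cf(\alpha)\le\xi$ for the standard fillers (the bound $\cf(\alpha)\le\xi$ being inherited from the $\xi$-boundedness of $\cvec C$ off $\Gamma$). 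Bullet~1 follows since $|\mathcal D_\alpha|\le|\mathcal C_\alpha|<\mu$, and since $\otp(\Phi(C))=\xi$ forces $\otp(C)=\xi$, so that $\{C'\in\mathcal D_\alpha\mid\otp(C')=\xi\}$ is the image of $\{C\in\mathcal C_\alpha\mid\otp(C)=\xi\}$ and hence has at most one element. For the trivial half of Bullet~3 (the case $\mathcal R_1=V$) I need only that $\acc^+(A)\neq\emptyset$ for cofinal $A$, which is automatic.

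The coherence Bullet~2 and the inclusion $\Gamma\subseteq\Gamma^\bullet$ are handled by the same computation. For $\alpha\in\Gamma$, $C\in\mathcal C_\alpha$, and $\bar\alpha\in\acc(\Phi(C))\subseteq\acc(C)$, the commuting-square property of $\Phi$ gives $\Phi(C)\cap\bar\alpha=\Phi(C\cap\bar\alpha)$, while $\alpha\in\Gamma$ gives $C\cap\bar\alpha\in\mathcal C_{\bar\alpha}$ and Lemma~\ref{Gamma-closure}(2) gives $\bar\alpha\in\Gamma$; hence $\Phi(C\cap\bar\alpha)\in\mathcal D_{\bar\alpha}$ is an initial segment of $\Phi(C)$, which simultaneously verifies the first disjunct of $\sq_\chi$ and shows $\alpha\in\Gamma^\bullet$. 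For a limit $\alpha\notin\Gamma$, Lemma~\ref{Gamma-closure}(3) forces $\omega<\cf(\alpha)<\chi$, so the filler has $\otp(e_\alpha)=\cf(\alpha)<\chi$ with $\nacc(e_\alpha)\subseteq\nacc(\alpha)$ consisting only of successor ordinals, whence the second disjunct of $\sq_\chi$ makes any $D\in\mathcal D_{\bar\alpha}$ work. Bullets~4 and~5 then follow from $E^\kappa_\omega\subseteq\Gamma\subseteq\Gamma^\bullet$ together with the fact that for $\alpha\in\kappa\setminus\Gamma^\bullet\subseteq\kappa\setminus\Gamma$ the set $\mathcal D_\alpha=\{e_\alpha\}$ is a singleton with $\otp(e_\alpha)=\cf(\alpha)$. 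For the reverse inclusion $\Gamma^\bullet\subseteq\Gamma$ under the extra hypotheses: if $\chi=\aleph_0$ then $\Gamma=\acc(\kappa)$ and there is nothing to prove; if $\Phi$ is faithful, then for a limit $\alpha\notin\Gamma$ I would take $\bar\alpha:=\omega\in\acc(e_\alpha)$ and note $e_\alpha\cap\omega=\omega\setminus\{0\}$, which by faithfulness avoids $\rng(\Phi)\supseteq\mathcal D_\omega$ (as $\omega\in\Gamma$), so $e_\alpha\cap\omega\notin\mathcal D_\omega$ witnesses $\alpha\notin\Gamma^\bullet$.

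The main obstacle is Bullet~3 in the case $\mathcal R_1={\notin}$, i.e.\ showing $\cvec D$ has no thread. Negating the $\notin$-bullet yields a cofinal $A\subseteq\kappa$ with $A\cap\alpha\in\mathcal D_\alpha$ for all $\alpha\in\acc^+(A)$, and I would derive a contradiction using $\min\{\xi,\mu\}<\kappa$ in two sub-cases. If $\xi<\kappa$, it is immediate: choosing $\alpha\in\acc^+(A)$ above $A(\xi)$ gives $\otp(A\cap\alpha)>\xi$, contradicting the $\xi$-boundedness of $\cvec D$. If $\mu<\kappa$, the difficulty is that $\Phi$ need not be injective, so the approximations $A\cap\alpha$ do not obviously descend from a coherent sequence of $\mathcal C$-preimages. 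I would sidestep this by matching the transversal of $\cvec C$ to the putative thread: for each $\alpha\in\acc^+(A)\cap\Gamma$ pick $B_\alpha\in\mathcal C_\alpha$ with $\Phi(B_\alpha)=A\cap\alpha$, fill in $\langle B_\alpha\mid\alpha\in\Gamma\rangle$ arbitrarily elsewhere to obtain a transversal $\vec B$ of $\cvec C$, which by Lemma~\ref{square_is_amenable} (applicable as $\mu<\kappa$) is amenable. But for the club $D:=\acc^+(A)$ and any $\alpha\in\acc^+(A)\cap\Gamma$, one computes $D\cap\alpha=\acc(\Phi(B_\alpha))\subseteq\acc(B_\alpha)\subseteq B_\alpha$, so $\sup(D\cap\alpha\setminus B_\alpha)=0<\alpha$ on the stationary set $\acc^+(A)\cap\Gamma$, contradicting Definition~\ref{amenable}. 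This matching trick — aligning a transversal of $\cvec C$ with the thread of $\cvec D$ and then invoking amenability of the former — is exactly what lets width-$\mu$ coherence pass through a non-injective $\Phi$, and I expect it to be the crux of the whole argument.

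Finally, Clause~(2) is immediate: if $\langle C_\alpha\mid\alpha\in\Gamma\rangle$ is a transversal for a witness $\cvec C$ with $\Gamma=\Gamma(\cvec C)$, then by Clause~(1) $\cvec D=\acts\Phi C$ is again a witness, and when $\Phi$ is faithful or $\chi=\aleph_0$ we have $\Gamma^\bullet=\Gamma$. Since $\Phi(C_\alpha)\in\mathcal D_\alpha$ for every $\alpha\in\Gamma=\Gamma^\bullet$, the sequence $\langle\Phi(C_\alpha)\mid\alpha\in\Gamma\rangle$ lies in $\prod_{\alpha\in\Gamma^\bullet}\mathcal D_\alpha$ and is therefore a transversal for $\cvec D$, hence for the same principle.
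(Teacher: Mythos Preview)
Your proposal is correct and follows essentially the same approach as the paper. The only cosmetic difference is that in the $\mathcal R_1={\notin}$ case you split explicitly into $\xi<\kappa$ (direct order-type contradiction) and $\mu<\kappa$ (amenability via Lemma~\ref{square_is_amenable}), whereas the paper invokes Lemma~\ref{square_is_amenable} uniformly under the single hypothesis $\min\{\xi,\mu\}<\kappa$; since that lemma's own proof performs exactly your case split, the arguments are the same in substance.
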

\begin{proof} (1) Write $\Gamma:=\Gamma(\cvec{C})$. By Lemma~\ref{Gamma-closure}(3), $E^\kappa_\omega\cup E^\kappa_{\ge\chi} \subseteq \Gamma$.
Write $\cvec{C^\bullet}=\langle\mathcal C_\alpha^\bullet\mid\alpha<\kappa\rangle$ for $\acts{\Phi}{C}$, and $\Gamma^\bullet:=\Gamma(\cvec{C^\bullet})$.
Consider arbitrary $\alpha \in \Gamma$.
Clearly $|\mathcal C_\alpha^\bullet| \leq |\mathcal C_\alpha| < \mu$.
Let $c \in \mathcal C_\alpha^\bullet$ be arbitrary.
Then we can fix some $C \in \mathcal C_\alpha$ such that $c = \Phi(C)$.
Since $\Phi$ is a postprocessing function, $c$ is club in $\sup(C) = \alpha$ and $\otp(c) \leq \otp(C)  \leq \xi$.
In particular, if $c$ has order-type $\xi$, then so does $C$, so that $|\{c \in \mathcal C_\alpha^\bullet \mid \otp(c) = \xi \}| \leq |\{C \in \mathcal C_\alpha \mid \otp(C) = \xi \}| \leq 1$.

Let $\bar\alpha \in \acc(c)$ be arbitrary.
Since $\Phi$ is a postprocessing function, we have $\bar\alpha \in \acc(C)$ and $c \cap \bar\alpha = \Phi(C) \cap\bar\alpha = \Phi(C \cap\bar\alpha)$,
where by $\alpha \in \Gamma$, we have $C\cap\bar\alpha \in \mathcal C_{\bar\alpha}$.
By Lemma~\ref{Gamma-closure}(2), we also have $\bar\alpha \in \Gamma$.
Thus $c \cap \bar\alpha \in \mathcal C^\bullet_{\bar\alpha}$.
In particular, it follows that $\alpha \in \Gamma^\bullet$.
Thus we have shown that $\Gamma \subseteq \Gamma^\bullet$.
Consequently, $\chi=\aleph_0$ would entail that  $\acc(\kappa)=\Gamma=\Gamma^\bullet$.

Next, suppose that $\chi>\aleph_0$ and consider an arbitrary $\alpha \in \acc(\kappa) \setminus\Gamma$.
Then $\alpha\notin E^\kappa_\omega\cup E^\kappa_{\ge\chi}$ and $\mathcal C^\bullet_\alpha$ is a singleton whose unique element $e_\alpha$ comes from a standard $C$-sequence.
In particular, $\otp(e_\alpha) = \cf(\alpha) < \chi$ and $\nacc(e_\alpha)$ consists only of successor ordinals,
guaranteeing $\sq_\chi$-coherence as well. Furthermore, if $\Phi$ is faithful, then $e_\alpha\cap\omega=(\omega\setminus\{0\})\notin\rng(\Phi)$,
while $\mathcal C_\omega^\bullet \subseteq \rng(\Phi)$ since $\omega \in \Gamma$, and hence $e_\alpha\cap\omega\notin\mathcal C_\omega^\bullet$, so that $\alpha\notin\Gamma^\bullet$.

Consider arbitrary cofinal $A \subseteq \kappa$, and we must find some $\alpha \in \acc^+(A)$ such that $(A \cap \alpha) \mathrel{\mathcal R_1} \mathcal C_\alpha^\bullet$.
If $\mathcal R_1 = V$, then any $\alpha \in \acc^+(A)$ satisfies the required relation.
Thus, suppose that  $\mathcal R_1 = {\notin}$, so that $\min\{\xi,\mu\}<\kappa$.
Towards a contradiction, suppose that for all $\alpha\in\acc^+(A)$, we have $A\cap\alpha\in\mathcal C_{\alpha}^\bullet$.
Consider the club $D:=\acc^+(A)$, and let $\vec{C}=\langle C_\alpha\mid \alpha\in\Gamma\rangle$ be a transversal for $\cvec{C}$ satisfying $A\cap\alpha=\Phi(C_\alpha)$ for all $\alpha\in\Gamma\cap D$.
For all $\alpha\in \Gamma\cap D$, we have $D\cap\alpha=\acc^+(A)\cap\alpha=\acc(\Phi(C_\alpha))\s\acc(C_\alpha)$.
In particular, $\{\alpha\in\Gamma\mid D\cap\alpha\s C_\alpha\}$ is stationary, so that $\vec C$ is not amenable, contradicting Lemma~\ref{square_is_amenable}.

(2) Let $\cvec{C} = \langle \mathcal C_\alpha \mid \alpha<\kappa \rangle$ be a $\square_\xi(\kappa,{<}\mu, {\sq_\chi}, \mathcal R_1)$-sequence for which
$\Gamma=\Gamma(\cvec{C})$ and $\langle C_\alpha\mid\alpha\in\Gamma\rangle\in\prod_{\alpha\in\Gamma}\mathcal C_\alpha$.
Then by Clause~(1), $\acts{\Phi}{C}$ is a $\square_\xi(\kappa,{<}\mu, {\sq_\chi}, \mathcal R_1)$-sequence.
Writing $\cvec{C^\bullet}=\langle\mathcal C_\alpha^\bullet\mid\alpha<\kappa\rangle$ for $\acts{\Phi}{C}$,
it is clear that $\langle \Phi(C_\alpha) \mid\alpha\in\Gamma\rangle\in\prod_{\alpha\in\Gamma}\mathcal C_\alpha^\bullet$.
If $\Phi$ is faithful or $\chi=\aleph_0$, then Clause~(1) gives $\Gamma(\cvec{C^\bullet}) = \Gamma$,
and hence $\cvec{C^\bullet}$ witnesses that $\langle \Phi(C_\alpha)\mid\alpha\in\Gamma\rangle$ is a transversal for $\square_\xi(\kappa,{<}\mu, {\sq_\chi}, \mathcal R_1)$.
\end{proof}

\begin{remark}\label{min<k-necessary} Again, the hypothesis $\min\{\xi,\mu\}<\kappa$ cannot be waived.
Indeed, in Kunen's model from \cite[\S3]{MR495118}, there exists an inaccessible cardinal $\kappa$ such that $\square(\kappa,{<}\kappa)$ holds,
but for any witness $\cvec{C}$ to $\square(\kappa,{<}\kappa)$, there exists some postprocessing function $\Phi$ for which $\acts{\Phi}{C}$ fails to witness $\square(\kappa,{<}\kappa)$.
\end{remark}

\begin{remark} Lemma~\ref{pp-preserves-square}(1) along with the special case $(\kappa,\mu,\chi):=(\lambda^+,(\cf(\lambda))^+,\aleph_0)$ and $\mathcal S:=\{ E^{\lambda^+}_\theta\mid \theta\in\reg(\lambda)\}$
of Lemma~\ref{wide-club-guessing}, together provide an affirmative answer to Question~16 of \cite{rinot_s01}.
\end{remark}

Recall that $\mathcal D(\lambda,\theta)$ stands for the density of $[\lambda]^\theta$, that is, $\mathcal D(\lambda,\theta)=\cf([\lambda]^\theta,{\supseteq})$.
Note that $\mathcal D(\theta,\theta)\le\mathcal D(\lambda,\theta)$ whenever $\theta\le\lambda$,
and that $\mathcal D(\lambda,\theta) = \lambda$ whenever $\lambda$ is a strong-limit cardinal and $\theta \in \reg(\lambda) \setminus \{\cf(\lambda)\}$.\footnote{%
Indeed, for $\lambda$ regular, we would have $\mathcal D(\lambda,\theta)=\lambda^\theta=\lambda$;
for $\lambda$ singular, see, e.g., the proof of \cite[Claim~4.5.1]{paper24}.}

\begin{thm}\label{mixing_paper24} Suppose that $\lambda$ is an uncountable cardinal, and $\square_\xi(\lambda^+,{<}\mu,{\sq_\chi})$ holds with $\xi\le\lambda^+$, $\chi\in\reg(\lambda^+)$ and $\mu\le\lambda$.
Suppose also that $\ch_\lambda$ holds, and that $\{\theta\in\reg(\lambda)\mid \mathcal D(\lambda,\theta)=\lambda\}$ is cofinal in $\reg(\lambda)$.
Then there exists a $\square_\xi(\lambda^+,{<}\mu,{\sq_\chi})$-sequence $\langle \mathcal C_\alpha\mid\alpha<\lambda^+\rangle$ with a transversal $\langle C_\alpha\mid \alpha\in\Gamma\rangle$, and a cofinal subset $\Theta\s\reg(\lambda)$ such that for every $\theta\in\Theta$:
\begin{enumerate}
\item For every club $E\s\lambda^+$, there exists $\alpha\in E^{\lambda^+}_\theta\cap\Gamma$ such that $\sup(\nacc(C)\cap E)=\alpha$ for all $C\in\mathcal C_\alpha$;
\item For every $\zeta<\kappa$ and every sequence $\langle A_i\mid i<\theta\rangle$ of cofinal subsets of $\lambda^+$,
there exists $\alpha\in E^{\lambda^+}_\theta\cap\Gamma$ with $\min(C_\alpha)=\zeta$ such that $\sup(\nacc(C_\alpha)\cap A_i)=\alpha$ for all $i<\theta$.
\end{enumerate}
\end{thm}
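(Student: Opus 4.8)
The plan is to massage a given transversal by a single faithful postprocessing function that delivers both features~(1) and~(2) at once, and then to read off from Lemma~\ref{pp-preserves-square} that the result is still a transversal for $\square_\xi(\lambda^+,{<}\mu,{\sq_\chi})$. For the setup, note first that since $\lambda$ is uncountable, $\ch_\lambda$ yields $\diamondsuit(\lambda^+)$ (Shelah), so $\diamondsuit(\kappa)$ holds with $\kappa:=\lambda^+$, making Facts~\ref{paper28b} and~\ref{diamond_matrix} and Lemma~\ref{thm16} available. I would take $\Theta:=\{\theta\in\reg(\lambda)\mid\mathcal D(\lambda,\theta)=\lambda\}$, cofinal in $\reg(\lambda)$ by hypothesis, and record that by \cite{paper24} each $E^{\lambda^+}_\theta\in J[\lambda^+]$ for $\theta\in\Theta$. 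Starting from a witness $\cvec{C}$ with support $\Gamma$ and a transversal $\langle C^0_\alpha\mid\alpha\in\Gamma\rangle$, I observe that $\xi\le\lambda^+$ makes it a $\square(\lambda^+,{<}\mu,{\sq_\chi})$-sequence, that $\acc(\Phi(x))\s\acc(x)$ keeps every step $\xi$-bounded, that $\mu\le\lambda<\kappa$ forces amenability via Lemma~\ref{square_is_amenable}, and that each $S_\theta:=E^{\lambda^+}_\theta\cap\Gamma$ is stationary by Lemma~\ref{Gamma-closure}(3).

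First I would secure feature~(1) by applying Lemma~\ref{wide-club-guessing} with $\mathcal S:=\{S_\theta\mid\theta\in\Theta\}$ (legitimate as $\kappa\ge\aleph_2$, $\mu<\kappa$, and $|\mathcal S|\le\lambda<\kappa$), obtaining a $\min$-preserving faithful $\Phi_0$ so that $\cvec{C^1}:=\acts{\Phi_0}{C}$ and $C^1_\alpha:=\Phi_0(C^0_\alpha)$ satisfy feature~(1). By Lemma~\ref{pp-preserves-square} this is again a $\square_\xi$-sequence with support $\Gamma$; restricting to the transversal, each $\langle C^1_\alpha\mid\alpha\in S_\theta\rangle$ is then a $C$-sequence over $S_\theta\s E^{\lambda^+}_\theta$, lying in $J[\lambda^+]$, with precisely the single-club-guessing hypothesis of Lemma~\ref{thm16}.

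The hard part will be feature~(2). For each individual $\theta\in\Theta$, Lemma~\ref{thm16} applied to $S_\theta$ hands back a faithful $\Psi_\theta$ whose clause~(2) is exactly feature~(2) for that $\theta$; the obstacle is to fuse the (up to $\lambda$-many) $\Psi_\theta$ into one postprocessing function. I expect composition to be hopeless — a $(<\kappa)$-fold composite need not be a postprocessing function, and branching on $\cf(\sup(x))$ breaks the coherence square, since distinct $\Psi_\theta$ disagree on shared initial segments. My plan is instead to run the construction \emph{underlying} Lemma~\ref{thm16} a single time, uniformly in $\theta$. This should be possible because (a) the $\diamondsuit$-matrix of Fact~\ref{diamond_matrix} guesses sequences of every length $\theta<\kappa$ simultaneously, so the correction of Fact~\ref{paper28b} can be invoked through one $\acc$-preserving map whose guessing activates separately at each cofinality $\theta\in\Theta$; (b) the functions $f_i$ witnessing membership in $J[\lambda^+]$ may be chosen for the single set $S:=\bigcup_{\theta\in\Theta}S_\theta$, which lies in $J[\lambda^+]$ since that ideal is normal and $|\Theta|<\kappa$, and these $f_i$ are independent of $\theta$; and (c), decisively, every ingredient — the maps $\Phi_{\mathfrak Z^i}$ of Lemma~\ref{phiZ}, the Fact~\ref{paper28b} correction, and the $\min$-relabelling coding $\zeta$ — is assembled through the $\kappa$-assignment formalism, for which Lemma~\ref{phiZ} renders coherence automatic, so the composite is a postprocessing function irrespective of the cofinality bookkeeping.

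In executing this, the one point demanding care beyond a verbatim rerun of Lemma~\ref{thm16} is the $\min$-spreading: using the normal-ideal form of the mixing lemma (Lemmas~\ref{split_amenable2} and~\ref{lemma127}) against the club-guessing ideal of $\vec{C^1}$, I would arrange that \emph{for each} $\theta\in\Theta$ separately, cofinally many $\tau<\kappa$ have $\{\alpha\in S_\theta\mid\min(\Phi_0(C^1_\alpha))=\tau\}$ guessing-positive, and then recover the full $\zeta$-range at cofinality $\theta$ by the relabelling $\zeta\mapsto\otp(\Upsilon_\theta\cap\tau)$ of Lemma~\ref{thm16}. Composing with $\faithful$ of Example~\ref{faithful_correction} would produce a single faithful $\Phi$ satisfying $\sup(\nacc(x)\setminus\nacc(\Phi(x)))<\sup(x)$ for all $x$, together with feature~(2) for every $\theta\in\Theta$; the former guarantees that feature~(1), already held by $\cvec{C^1}$, survives the further application of $\Phi$, as deleting only boundedly many non-accumulation points cannot shrink $\sup(\nacc(\cdot)\cap E)$ at the witnessing levels $\alpha\in S_\theta$. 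Finally I would set $\langle\mathcal C_\alpha\mid\alpha<\lambda^+\rangle:=\acts{\Phi}{C^1}$ with transversal $C_\alpha:=\Phi(C^1_\alpha)$ and invoke Lemma~\ref{pp-preserves-square} once more to confirm it is a $\square_\xi(\lambda^+,{<}\mu,{\sq_\chi})$-sequence with support $\Gamma$, completing the proof. The combination step is where essentially all the difficulty resides; everything else is a direct appeal to the machinery already developed.
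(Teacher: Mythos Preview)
Your diagnosis of the central difficulty is exactly right: the per-$\theta$ functions $\Psi_\theta$ from Lemma~\ref{thm16} cannot be fused by branching on $\cf(\sup(x))$. But your proposed fix---rerunning the internals of Lemma~\ref{thm16} uniformly in $\theta$---does not close the gap, and the paper takes a much simpler route that you have overlooked.

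The problem with your uniform rerun is the relabelling step you yourself flag: the map $\zeta\mapsto\otp(\Upsilon_\theta\cap\tau)$ depends on $\theta$, and so does the choice of $i_\tau$ (the index into the $J[\lambda^+]$-witnessing functions) and the Fact~\ref{paper28b} correction $\Phi_1$, which is produced for a \emph{fixed} $\theta$. A single postprocessing function must compute its output from $x$ alone; at the point where $\Upsilon_\theta$ is needed, your $\Phi$ has no coherent way to recover $\theta$ from $x$. Your appeal to the $\kappa$-assignment formalism in (c) addresses coherence of the $\Phi_{\mathfrak Z^i}$ layer, but the incoherence lives in the $\theta$-dependent bookkeeping around it, not inside it.

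The paper's trick is precisely to \emph{make $\theta$ recoverable from $x$}---not via $\cf(\sup(x))$, but via $\min(x)$. The order of operations is: first apply the mixing lemma (Lemma~\ref{split_amenable2}) to the sequence $\langle T_\theta\mid\theta\in\Theta\rangle$, obtaining a faithful $\Phi$ and stationary sets $S_\theta\subseteq T_\theta$ on which $\min(\Phi(C_\alpha))=\theta$; only then apply wide club guessing (Lemma~\ref{wide-club-guessing}) with $\mathcal S:=\{S_\theta\}$, noting that the resulting function is $\min$-preserving, so the encoding survives; then run Lemma~\ref{thm16} separately for each $\theta$ to get $\Phi_\theta$; and finally define
\[
\Phi'(x):=\begin{cases}\Phi_\theta(\Phi(x)),&\text{if }\theta:=\min(x)\text{ is in }\Theta;\\ \Phi(x),&\text{otherwise}.\end{cases}
\]
This \emph{is} a postprocessing function, because $\min(x\cap\bar\alpha)=\min(x)$ for every $\bar\alpha\in\acc(\Phi'(x))\subseteq\acc(x)$, so the branch is stable under initial segments---exactly the property that $\cf(\sup(x))$ lacks. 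Clause~(1) of Lemma~\ref{thm16} ($\sup(\nacc(x)\setminus\nacc(\Phi_\theta(x)))<\sup(x)$) then preserves feature~(1), just as you anticipated. A minor side point: for singular $\lambda$ it is not the full $E^{\lambda^+}_\theta$ that lies in $J[\lambda^+]$ but only a stationary subset (via \cite[Corollary~2.5]{paper24}); the paper handles the regular and singular cases separately here.
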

\begin{proof}  We commence with the following:
\begin{claim} There exists a transversal $\langle C_\alpha\mid\alpha\in\Gamma\rangle$ for $\square_\xi(\lambda^+,{<}\mu,{\sq_\chi})$
and a sequence $\langle S_\theta\mid \theta\in\Theta\rangle$ such that $\Theta$ is a cofinal subset of $\reg(\lambda)$, and for all $\theta\in\Theta$:
\begin{itemize}
\item $S_\theta$ is a stationary subset of $E^{\lambda^+}_\theta\cap\Gamma$;
\item $\min(C_\alpha)=\theta$ for all $\alpha\in S_\theta$;
\item $S_\theta\in J[\lambda^+]$.
\end{itemize}
\end{claim}
\begin{proof} Fix a transversal $\vec C = \langle C_\alpha\mid\alpha\in\Gamma\rangle$ for $\square_\xi(\lambda^+,{<}\mu,{\sq_\chi})$.
\begin{itemize}
\item[$\br$] If $\lambda$ is regular, then for all $\theta\in\reg(\lambda)$ with $\mathcal D(\lambda,\theta)=\lambda$, consider the set $T_\theta:=E^{\lambda^+}_\theta\cap\Gamma$,
which is stationary by Lemma~\ref{Gamma-closure}(3), and note that by \cite[Proposition~2.2]{paper24}, $T_\theta$ is in the ideal $J[\lambda^+]$.
\item[$\br$] If $\lambda$ is singular, then $\chi<\lambda$, and so by \cite[Corollary~2.5]{paper24}, for each $\theta\in\reg(\lambda)\setminus\chi$ with $\mathcal D(\lambda,\theta)=\lambda$,
we may fix a stationary subset $T_\theta\s E^{\lambda^+}_\theta\s \Gamma$ in $J[\lambda^+]$.
\end{itemize}

By $\mu\leq\lambda$ and Lemma \ref{square_is_amenable}, $\vec C$ is amenable.
Thus, by Lemma~\ref{split_amenable2}, there exist a faithful postprocessing function $\Phi:\mathcal K(\lambda^+)\rightarrow\mathcal K(\lambda^+)$ and
a cofinal subset $\Theta\s\reg(\lambda)$ (of course, if $\lambda$ is singular, then $\Theta\s\reg(\lambda)\setminus\chi$) such that
$S_\theta:=\{ \alpha\in T_\theta\mid \min(\Phi(C_\alpha))=\theta\}$ is stationary for all $\theta\in\Theta$.
But $J[\lambda^+]$ is an ideal, and hence $S_\theta\in J[\lambda^+]$ for all $\theta\in\Theta$.
By Lemma~\ref{pp-preserves-square}(2), then, $\langle \Phi(C_\alpha)\mid\alpha\in\Gamma\rangle$ and $\langle S_\theta\mid\theta\in\Theta\rangle$ are as sought.
\end{proof}

Let $\vec C=\langle C_\alpha\mid\alpha\in\Gamma\rangle$ and $\langle S_\theta\mid\theta\in\Theta\rangle$ be given by the preceding claim.
Let $\cvec{C}=\langle\mathcal C_\alpha\mid\alpha<\lambda^+\rangle$ be a $\square_\xi(\lambda^+,{<}\mu,{\sq_\chi})$-sequence for which $\vec C$ is a transversal.
Let $\Phi$ be given by Lemma~\ref{wide-club-guessing} when fed with $\cvec{C}$ and $\mathcal S:=\{ S_\theta\mid \theta\in\Theta\}$.
In particular, for every $\theta\in\Theta$ and every club $E\s\lambda^+$, there exists $\alpha\in S_\theta$ with $\sup(\nacc(\Phi(C_\alpha))\cap E)=\alpha$.
By $\ch_\lambda$ and \cite{Sh:922}, $\diamondsuit(\lambda^+)$ holds.
Thus, for each $\theta\in\Theta$, appeal to Lemma~\ref{thm16} with $\langle \Phi(C_\alpha)\mid\alpha\in S_\theta\rangle$ to obtain a corresponding postprocessing function $\Phi_\theta$.
Define $\Phi':\mathcal K(\lambda^+)\rightarrow\mathcal K(\lambda^+)$ by stipulating:
$$\Phi'(x):=\begin{cases}
\Phi_{\theta}(\Phi(x)),&\text{if }\theta:=\min(x)\text{ is in }\Theta;\\
\Phi(x),&\text{otherwise}.
\end{cases}$$

Since all elements of $\{\Phi,\Phi_\theta\mid \theta\in\Theta\}$ are faithful postprocessing functions, $\Phi'$ is a faithful postprocessing function.

\begin{claim} $\acts{\Phi'}{C}$, $\langle\Phi'(C_\alpha)\mid \alpha\in \Gamma\rangle$  and $\Theta$ are as sought.
\end{claim}
\begin{proof} By Lemma~\ref{pp-preserves-square}(1), $\acts{\Phi'}{C}$ is a $\square_\xi(\lambda^+,{<}\mu,{\sq_\chi})$-sequence with support $\Gamma$, for which $\langle\Phi'(C_\alpha)\mid \alpha\in \Gamma\rangle$ is a transversal.
Write $\langle \mathcal C_\alpha^\bullet\mid\alpha<\lambda^+\rangle$ for $\acts{\Phi'}{C}$.
Let $\theta\in\Theta$ be arbitrary. We now verify the two clauses of the statement of the theorem:
\begin{enumerate}
\item Let $E\s\lambda^+$ be an arbitrary club.
By $S_\theta\in\mathcal S$ and the choice of $\Phi$, we may find $\alpha\in S_\theta\s E^{\lambda^+}_\theta\cap\Gamma$ such that $\sup(\nacc(\Phi(C))\cap E)=\alpha$ for all $C\in\mathcal C_\alpha$.
Let $C^\bullet\in\mathcal C_\alpha^\bullet$ be arbitrary.
As $\alpha\in \Gamma$, pick $C\in\mathcal C_\alpha$ such that $C^\bullet=\Phi'(C)$.
By definition of $\Phi'$, either $C^\bullet = \Phi(C)$ or $C^\bullet = \Phi_\theta(\Phi(C))$ for some $\theta \in \Theta$.
Thus, $\sup(\nacc(\Phi(C))\setminus\nacc(C^\bullet))<\alpha$, where in the second case we appeal to Clause~(1) of Lemma~\ref{thm16}.
Consequently, $\sup(\nacc(C^\bullet)\cap E)=\alpha$.
\item Let $\zeta<\kappa$ and let $\langle A_i\mid i<\theta\rangle$ be an arbitrary sequence of cofinal subsets of $\lambda^+$.
By the choice of $\Phi_\theta$, let us pick $\alpha\in S_\theta\s E^{\lambda^+}_\theta\cap\Gamma$ such that
$\min(\Phi_\theta(\Phi(C_\alpha))=\zeta$ and $\sup(\nacc(\Phi_\theta(\Phi(C_\alpha)))\cap A_i)=\alpha$ for all $i<\theta$.
As $\alpha\in S_\theta$, we have $\min(C_\alpha) = \theta$, so that $\Phi'(C_\alpha)=\Phi_\theta(\Phi(C_\alpha))$, and hence $\alpha$ is as sought.\qedhere
\end{enumerate}
\end{proof}
This completes the proof.
\end{proof}

Theorem~\ref{thm22} now follows as a corollary:
\begin{cor} Suppose that $\lambda$ is an uncountable strong-limit cardinal and $\ch_\lambda$ holds.

Then $\square(\lambda^+,{<}\lambda)$ holds iff there exists a $C$-sequence $\langle C_\alpha\mid \alpha<\lambda^+\rangle$ such that:
\begin{itemize}
\item[(a)] $|\{ C_\alpha\cap\delta\mid \alpha<\lambda^+\ \&\ \sup(C_\alpha\cap\delta)=\delta\}|<\lambda$ for all $\delta<\lambda^+$;
\item[(b)] For every club $D\s\lambda^+$, there exists $\delta\in\acc(D)$ such that for all $\alpha<\lambda^+$ either $\sup(C_\alpha\cap\delta)<\delta$ or $\sup(\nacc(C_\alpha\cap\delta)\cap D)=\delta$;
\item[(c)] For every sequence $\langle A_i\mid i<\theta\rangle$ of cofinal subsets of $\lambda^+$, with $\theta<\lambda$,
the following set is stationary: $$\{\alpha<\lambda^+\mid \sup(\nacc(C_\alpha)\cap A_i)=\alpha\text{ for all }i<\theta\}.$$
\end{itemize}
\end{cor}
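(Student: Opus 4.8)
The plan is to recognize this corollary as a restatement of Theorem~\ref{thm22}: conditions (a), (b), (c) are precisely the three bullets of its Clause~(2), with (c) rephrased so that $\langle A_i\mid i<\theta\rangle$ is indexed by an arbitrary $\theta<\lambda$ rather than obtained by truncating a $\lambda$-indexed family at some $\theta\in\reg(\lambda)$; these are interchangeable, since one may pad a short family by repeating $\lambda^+$, and enlarging $\theta$ only strengthens the requirement. I would prove both implications, with essentially all of the heavy lifting deferred to Theorem~\ref{mixing_paper24}.

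For the forward implication I would feed $\square(\lambda^+,{<}\lambda)$ directly into Theorem~\ref{mixing_paper24}, taking $\kappa=\lambda^+$, $\xi=\kappa$, $\mu=\lambda$, and $\chi=\aleph_0$ (so $\sq_\chi$ is $\sq$). The hypothesis $\ch_\lambda$ is given, and since $\lambda$ is a strong limit the set $\{\theta\in\reg(\lambda)\mid \mathcal D(\lambda,\theta)=\lambda\}$ omits at most $\cf(\lambda)$, hence is cofinal in $\reg(\lambda)$. Theorem~\ref{mixing_paper24} then yields a $\square_\xi(\lambda^+,{<}\lambda,{\sq_\chi})$-sequence $\langle\mathcal C_\alpha\mid\alpha<\lambda^+\rangle$, a transversal $\langle C_\alpha\mid\alpha\in\Gamma\rangle$, and a cofinal $\Theta\s\reg(\lambda)$ satisfying its Clauses (1) and (2). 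Since $\chi=\aleph_0$, Lemma~\ref{Gamma-closure}(3) gives $\Gamma=\acc(\lambda^+)$, so (after setting $C_{\alpha+1}:=\{\alpha\}$ and $C_0:=\emptyset$) the transversal is already a $C$-sequence on all of $\lambda^+$; this is my candidate. I would then verify (a)--(c) using the coherence $C_\alpha\cap\bar\alpha\in\mathcal C_{\bar\alpha}$, valid for $\bar\alpha\in\acc(C_\alpha)$ because $\alpha\in\Gamma$. For (a): whenever $\sup(C_\alpha\cap\delta)=\delta$ one has $C_\alpha\cap\delta\in\mathcal C_\delta$, so the displayed set is contained in $\mathcal C_\delta$, of size $<\lambda$. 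For (b): fix any $\theta\in\Theta$ and apply Clause~(1) to the club $E:=\acc(D)$, obtaining $\delta\in\acc^+(\acc(D))\s\acc(D)$ with $\sup(\nacc(C)\cap\acc(D))=\delta$ for all $C\in\mathcal C_\delta$; any $\alpha$ with $\sup(C_\alpha\cap\delta)=\delta$ has $C_\alpha\cap\delta\in\mathcal C_\delta$, whence $\sup(\nacc(C_\alpha\cap\delta)\cap D)\ge\sup(\nacc(C_\alpha\cap\delta)\cap\acc(D))=\delta$, while every remaining $\alpha$ satisfies the first disjunct. For (c): given $\langle A_i\mid i<\theta\rangle$ and a test club $D$, choose $\theta'\in\Theta$ with $\theta'\ge\theta$, enumerate $D$ together with the $A_i$ as a family $\langle A'_i\mid i<\theta'\rangle$ of cofinal sets (padding with $\lambda^+$), and apply Clause~(2) to get $\alpha\in E^{\lambda^+}_{\theta'}\cap\Gamma$ with $\sup(\nacc(C_\alpha)\cap A'_i)=\alpha$ for all $i<\theta'$; then $\sup(\nacc(C_\alpha)\cap D)=\alpha$ forces $\alpha\in\acc^+(D)\s D$, and $\sup(\nacc(C_\alpha)\cap A_i)=\alpha$ for all $i<\theta$, so the set in (c) meets $D$.

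For the converse, given a $C$-sequence $\langle C_\alpha\mid\alpha<\lambda^+\rangle$ satisfying (a)--(c), I would set $\mathcal C_\alpha:=\{C_\beta\cap\alpha\mid\beta<\lambda^+\ \&\ \sup(C_\beta\cap\alpha)=\alpha\}$ for limit $\alpha$ (with the standard singletons on successors). Then $C_\alpha\in\mathcal C_\alpha$, each member is a club in $\alpha$ of order-type $<\lambda^+$, and $|\mathcal C_\alpha|<\lambda$ is exactly (a); $\sq$-coherence is immediate, since for $C=C_\beta\cap\alpha$ and $\bar\alpha\in\acc(C)$ we have $C\cap\bar\alpha=C_\beta\cap\bar\alpha\in\mathcal C_{\bar\alpha}$ with $C\cap\bar\alpha\sq C$. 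The remaining, and crucial, point is the non-existence of a thread. Suppose toward a contradiction that a club $A$ threads the sequence, i.e. $A\cap\bar\alpha\in\mathcal C_{\bar\alpha}$ for every $\bar\alpha\in\acc(A)$. Applying (b) to $D:=\acc(A)$ produces $\delta\in\acc(\acc(A))\s\acc(A)$ with the stated dichotomy. As $\delta\in\acc(A)$, we have $A\cap\delta\in\mathcal C_\delta$, so $A\cap\delta=C_\gamma\cap\delta$ for some $\gamma$ with $\sup(C_\gamma\cap\delta)=\delta$; the first disjunct of (b) then fails for this $\gamma$, forcing $\sup(\nacc(A\cap\delta)\cap\acc(A))=\delta$. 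But each element of $\nacc(A\cap\delta)$ lies below $\delta$ and is a non-accumulation point of $A$, so $\nacc(A\cap\delta)\cap\acc(A)=\emptyset$ and the supremum is $0\ne\delta$ — a contradiction. Hence $\langle\mathcal C_\alpha\rangle$ witnesses $\square(\lambda^+,{<}\lambda)$.

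The genuinely delicate step is this no-thread argument in the converse: the choice $D:=\acc(A)$ is what makes the guessing property (b) bite, by ensuring that the non-accumulation points of the hypothetical thread's initial segments are forced to avoid $D$ entirely. Everything else — checking the arithmetic hypotheses of Theorem~\ref{mixing_paper24}, identifying $\Gamma=\acc(\lambda^+)$, and the three verifications (a)--(c) — is routine once one keeps track of coherence and of the implication $\sup(\nacc(C_\alpha)\cap D)=\alpha\Rightarrow\alpha\in D$ used to upgrade the single witness of Clause~(2) to stationarity.
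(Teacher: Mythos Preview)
Your proof is correct and follows essentially the same approach as the paper's: both directions invoke Theorem~\ref{mixing_paper24} for the forward implication and define $\mathcal C_\delta$ as the set of coherent initial segments for the converse, with (b) doing the work of refuting a thread. Two cosmetic differences: for (b) you feed $E=\acc(D)$ into Clause~(1) whereas the paper uses $D$ itself (both work); and in the converse you refute only club threads, whereas the paper's Definition~\ref{def115} formally asks for every cofinal $A$---the paper accordingly takes $E:=\acc^+(A)$ for arbitrary cofinal $A$, though your version suffices since any cofinal $A$ with $A\cap\alpha\in\mathcal C_\alpha$ for all $\alpha\in\acc^+(A)$ has closure $\cl(A)$ satisfying the same.
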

\begin{proof} $(\impliedby):$ Fix a $C$-sequence $\langle C_\alpha\mid \alpha<\lambda^+\rangle$ satisfying (a) and (b).
For every $\delta \in \acc(\lambda^+)$, let
\[
\mathcal C_\delta := \{ C_\alpha\cap\delta\mid \alpha<\lambda^+\ \&\ \sup(C_\alpha\cap\delta)=\delta\}.
\]
Let $\mathcal C_0 := \{\emptyset\}$, and let $\mathcal C_{\delta+1} := \{\{\delta\}\}$ for every $\delta < \lambda^+$.
We will show that $\langle \mathcal C_\delta \mid \delta<\lambda^+ \rangle$ is a $\square(\lambda^+,{<}\lambda)$-sequence.
Of course, $|\mathcal C_\delta| <\lambda$ follows from Clause~(a), and $\sq$-coherence is clear from the definition.

Finally, suppose that $A$ is some cofinal subset of $\lambda^+$, and we shall find some $\delta\in\acc^+(A)$ such that $A\cap\delta\notin\mathcal C_\delta$.
Consider the club $E:=\acc^+(A)$.
By Clause~(b), fix $\delta\in\acc(E)$ such that for all $\alpha<\lambda^+$, either $\sup(C_\alpha\cap\delta)<\delta$ or $\sup(\nacc(C_\alpha\cap\delta)\cap E)=\delta$.
Towards a contradiction, suppose that $A\cap\delta \in \mathcal C_\delta$.
Then we can fix some $\alpha<\lambda^+$ such that $A \cap \delta = C_\alpha \cap \delta$ and $\sup(C_\alpha\cap\delta)=\delta$.
But then $\delta = \sup(\nacc(C_\alpha\cap\delta)\cap E) = \sup(\nacc(A \cap\delta)\cap \acc^+(A))$, contradicting the fact that $\nacc(A) \cap \acc^+(A) = \emptyset$.

$(\implies):$ Suppose that $\square(\lambda^+,{<}\lambda)$ holds.
Since $\lambda$ is a strong-limit cardinal, $\{\theta\in\reg(\lambda)\mid \mathcal D(\lambda,\theta)=\lambda\}$ is cofinal in $\reg(\lambda)$.
So, by Theorem~\ref{mixing_paper24} with $(\xi,\mu,\chi):=(\lambda^+,\lambda,\aleph_0)$,
there exists a $\square(\lambda^+,{<}\lambda)$-sequence $\cvec{C}=\langle \mathcal C_\alpha\mid\alpha<\lambda^+\rangle$ with a transversal $\langle C_\alpha\mid \alpha\in\acc(\lambda^+)\rangle$, such that:
\begin{enumerate}
\item For every club $D\s\lambda^+$, there exists $\delta\in \acc(\lambda^+)$ such that $\sup(\nacc(C)\cap D)=\delta$ for all $C\in\mathcal C_\delta$;
\item For every sequence $\langle A_i\mid i<\lambda\rangle$ of cofinal subsets of $\lambda^+$, and every $\theta<\lambda$,
there exists $\alpha\in \acc(\lambda^+)$ such that $\sup(\nacc(C_\alpha)\cap A_i)=\alpha$ for all $i<\theta$.
\end{enumerate}

Let $C_0 := \{\emptyset\}$ and $C_{\alpha+1} := \{\alpha\}$ for every $\alpha<\lambda^+$.
We now verify that $\langle C_\alpha\mid \alpha < \lambda^+ \rangle$ is as sought:

(a) This Clause is witnessed by $\cvec{C}$ (cf.\ Proposition~\ref{transversal-width}(1)).

(b) Given a club $D\s\lambda^+$, appeal to Clause~(1) to find $\delta\in \acc(\lambda^+)$ such that $\sup(\nacc(C)\cap D)=\delta$ for all $C\in\mathcal C_\delta$.
Clearly, $\delta\in\acc(D)$. Now, for all $\alpha<\lambda^+$, if $\sup(C_\alpha\cap\delta)=\delta$, then $C:=C_\alpha\cap\delta$ is in $\mathcal C_\delta$, and hence $\sup(\nacc(C)\cap D)=\delta$.

(c) Let $\langle A_i\mid i<\theta\rangle$ be an arbitrary sequence of cofinal subsets of $\lambda^+$, with $\theta<\lambda$.
To see that $S:=\{\alpha<\lambda^+\mid \sup(\nacc(C_\alpha)\cap A_i)=\alpha\text{ for all }i<\theta\}$ is stationary, let $D$ be an arbitrary club in $\lambda^+$.
For all $i<\theta$, put $A_i':=A_i$, and for all $i\in[\theta,\lambda)$, put $A_i':=D$. By appealing to Clause~(2) with $\langle A_i'\mid i<\lambda\rangle$ and $\theta':=\theta^+$,
let us pick $\alpha\in\acc(\lambda^+)$ such that $\sup(\nacc(C_\alpha)\cap A_i')=\alpha$ for all $i<\theta'$.
Clearly, $\alpha\in D\cap S$.
\end{proof}

The proof of Theorem~\ref{mixing_paper24}
makes it clear that the following holds, as well.
\begin{thm}\label{transversal_of_mixing_paper24} Suppose that $\lambda$ is an uncountable cardinal,  $\ch_\lambda$ holds,  $\Theta$ is a subset of $\{\theta\in\reg(\lambda)\mid \mathcal D(\lambda,\theta)=\lambda\}$,
and $\langle C_\alpha\mid \alpha<\lambda^+, \cf(\alpha)\in\Theta\rangle$ is an amenable $C$-sequence.

Then there exists a faithful postprocessing function $\Phi:\mathcal K(\lambda^+)\rightarrow\mathcal K(\lambda^+)$ such that for cofinally many $\theta\in\Theta$,
for every $\zeta<\kappa$ and every sequence $\langle A_i\mid i<\theta\rangle$ of cofinal subsets of $\lambda^+$,
there exists $\alpha\in E^{\lambda^+}_\theta$ with $\min(\Phi(C_\alpha))=\zeta$ such that $\sup(\nacc(\Phi(C_\alpha))\cap A_i)=\alpha$ for all $i<\theta$.\qed
\end{thm}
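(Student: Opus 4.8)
The plan is to follow the proof of Theorem~\ref{mixing_paper24} almost verbatim, making a single substitution: since here we are handed an amenable $C$-sequence rather than a transversal for a square sequence, the wide-club-guessing of Lemma~\ref{wide-club-guessing} (which exploited the width and coherence of a $\square$-sequence) is unavailable, and it will be replaced by the ordinary club-guessing of Fact~\ref{clubguessing}(2), which requires only amenability. Write $\Gamma:=\{\alpha<\lambda^+\mid\cf(\alpha)\in\Theta\}$, so that $\vec C=\langle C_\alpha\mid\alpha\in\Gamma\rangle$ is the given amenable $C$-sequence and $\Gamma\s\acc(\lambda^+)$ is stationary (as $E^{\lambda^+}_\theta\s\Gamma$ for each $\theta\in\Theta$). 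For every $\theta\in\Theta$, the hypothesis $\mathcal D(\lambda,\theta)=\lambda$ lets me fix, exactly as in the first claim of the proof of Theorem~\ref{mixing_paper24}, a stationary set $T_\theta\s E^{\lambda^+}_\theta$ lying in the ideal $J[\lambda^+]$: for $\lambda$ regular this is $T_\theta:=E^{\lambda^+}_\theta$ via \cite[Proposition~2.2]{paper24}, and for $\lambda$ singular a suitable stationary $T_\theta$ is furnished by \cite[Corollary~2.5]{paper24} (discarding, if necessary, a bounded-in-$\Theta$ initial segment of $\Theta$, which does not affect the ``cofinally many'' of the conclusion).

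First I would invoke the mixing lemma. As $\vec C$ is amenable, Lemma~\ref{split_amenable2} applied to $\langle T_\theta\mid\theta\in\Theta\rangle$ yields a faithful postprocessing function $\Phi_0$ with $\Phi_0(x)=^*x$ for all $x$, together with a cofinal subset $\Theta'\s\Theta$ such that $S_\theta:=\{\alpha\in T_\theta\mid\min(\Phi_0(C_\alpha))=\theta\}$ is stationary for every $\theta\in\Theta'$. Being subsets of the $T_\theta\in J[\lambda^+]$, each $S_\theta$ is again in $J[\lambda^+]$, and $S_\theta\s E^{\lambda^+}_\theta$. Writing $C_\alpha^\circ:=\Phi_0(C_\alpha)$, Lemma~\ref{cons-pp-preserves-amenable} guarantees that $\langle C_\alpha^\circ\mid\alpha\in\Gamma\rangle$ — and hence each restriction $\langle C_\alpha^\circ\mid\alpha\in S_\theta\rangle$, amenability passing to stationary subsets — is amenable. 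Next, for each $\theta\in\Theta'$ I apply Fact~\ref{clubguessing}(2) to the amenable $C$-sequence $\langle C^\circ_\alpha\mid\alpha\in S_\theta\rangle$ to obtain a club $D_\theta\s\lambda^+$ such that, for every club $E\s\lambda^+$, the set $\{\alpha\in S_\theta\mid\sup(\nacc(\Phi_{D_\theta}(C_\alpha^\circ))\cap E)=\alpha\}$ is stationary; in particular the $C$-sequence $\langle\Phi_{D_\theta}(C_\alpha^\circ)\mid\alpha\in S_\theta\rangle$ meets the club-guessing hypothesis of Lemma~\ref{thm16}.

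With club-guessing in hand the argument closes as before. Using $\ch_\lambda$ and \cite{Sh:922} we have $\diamondsuit(\lambda^+)$, so for each $\theta\in\Theta'$ I feed $\theta$ and $\langle\Phi_{D_\theta}(C_\alpha^\circ)\mid\alpha\in S_\theta\rangle$ (with $S_\theta\in J[\lambda^+]\cap\mathcal P(E^{\lambda^+}_\theta)$) into Lemma~\ref{thm16}, obtaining a faithful postprocessing function $\Phi_\theta$. I then define
\[
\Phi(x):=\begin{cases}
\Phi_\theta(\Phi_{D_\theta}(\Phi_0(x))),&\text{if }\theta:=\min(\Phi_0(x))\in\Theta';\\
\Phi_0(x),&\text{otherwise},
\end{cases}
\]
which is faithful since each branch has a faithful outer function. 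To see that $\Phi$ is a postprocessing function, the only point beyond closure under composition is coherence of the case split: if $\bar\alpha\in\acc(\Phi(x))\s\acc(\Phi_0(x))$, then $\Phi_0(x\cap\bar\alpha)=\Phi_0(x)\cap\bar\alpha$, whence $\min(\Phi_0(x\cap\bar\alpha))=\min(\Phi_0(x))$, so $x$ and $x\cap\bar\alpha$ lie in the same branch and $\Phi(x)\cap\bar\alpha=\Phi(x\cap\bar\alpha)$ follows. Finally, given $\theta\in\Theta'$, $\zeta<\lambda^+$, and a sequence $\langle A_i\mid i<\theta\rangle$ of cofinal subsets of $\lambda^+$, Clause~(2) of Lemma~\ref{thm16} produces $\alpha\in S_\theta\s E^{\lambda^+}_\theta$ with $\min(\Phi_\theta(\Phi_{D_\theta}(C_\alpha^\circ)))=\zeta$ and $\sup(\nacc(\Phi_\theta(\Phi_{D_\theta}(C_\alpha^\circ)))\cap A_i)=\alpha$ for all $i<\theta$; since $\alpha\in S_\theta$ forces $\min(\Phi_0(C_\alpha))=\theta\in\Theta'$, we have $\Phi(C_\alpha)=\Phi_\theta(\Phi_{D_\theta}(C_\alpha^\circ))$, and thus $\min(\Phi(C_\alpha))=\zeta$ and $\sup(\nacc(\Phi(C_\alpha))\cap A_i)=\alpha$, as required for these cofinally many $\theta\in\Theta$.

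The crux — the only genuinely new point relative to Theorem~\ref{mixing_paper24} — is the substitution of Fact~\ref{clubguessing}(2) for wide club guessing, which is precisely what frees the statement from any square hypothesis: with a single amenable $C$-sequence there is no ``width'' to contend with, so plain club guessing suffices. The technical wrinkle to watch is that the function $\Phi_{D_\theta}$ delivered by Fact~\ref{clubguessing}(2) is not $\min$-preserving (unlike the wide-club-guessing function in Theorem~\ref{mixing_paper24}); this is why the branch selection in $\Phi$ must be driven by the coherent quantity $\min(\Phi_0(x))$ rather than by $\min(x)$. Everything else — amenability being inherited by restrictions to stationary sets, $J[\lambda^+]$ being an ideal, and Lemma~\ref{thm16} internally redistributing the value of $\min$ across all $\zeta<\lambda^+$ via its set $\Upsilon$ — is imported unchanged from the earlier proof.
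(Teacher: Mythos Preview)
Your proposal is correct and takes essentially the same approach as the paper, which gives no explicit proof but simply states that the proof of Theorem~\ref{mixing_paper24} ``makes it clear'' the result holds. You have carried out precisely that adaptation: the substitution of Fact~\ref{clubguessing}(2) for Lemma~\ref{wide-club-guessing} is the natural and only real change required, and your handling of the resulting loss of $\min$-preservation---driving the case split by $\min(\Phi_0(x))$ rather than $\min(x)$---is correct and cleanly justified.
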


\subsection{Trees derived from walks on ordinals}
Let us recall some of the basic characteristic functions surrounding walks on ordinals:

\begin{defn}[Todorcevic, \cite{MR908147},\cite{MR2355670}]\label{walks} Given a $C$-sequence $\vec D=\langle D_\alpha\mid\alpha\in\acc(\kappa)\rangle$, define
$\Tr^{\vec D}:[\kappa]^2\rightarrow{}^\omega\kappa$, $\rho_2^{\vec D}:[\kappa]^2\rightarrow\omega$,
$\rho_1^{\vec D}:[\kappa]^2\rightarrow\kappa$ and $\rho_0^{\vec D}:[\kappa]^2\rightarrow{}^{<\omega}\kappa$
as follows.
For all  $\beta<\alpha<\kappa$, let
\begin{itemize}
\item $\Tr^{\vec D}(\beta,\alpha)(n):=\begin{cases}
\alpha,&n=0;\\
\min(D_{\Tr^{\vec D}(\beta,\alpha)(n-1)}\setminus\beta),&n>0\ \&\ \Tr^{\vec D}(\beta,\alpha)(n-1)\in\acc(\kappa\setminus(\beta+1));\\
\epsilon,&n>0\ \&\ \Tr^{\vec D}(\beta,\alpha)(n-1)=\epsilon+1$, where $\epsilon\ge\beta;\\
\beta,&\text{otherwise};
\end{cases}$
\item $\rho_2^{\vec D}(\beta,\alpha):=\min\{ n<\omega\mid \Tr^{\vec D}(\beta,\alpha)(n)=\beta\}$;
\item $\rho_1^{\vec D}(\beta,\alpha):=\max(\rng(\rho_0^{\vec D}(\beta,\alpha)))$, where
\item $\rho_0^{\vec D}(\beta,\alpha):=\langle \otp(D_{\Tr^{\vec D}(\beta,\alpha)(i)}\cap\beta)\mid i<\rho_2^{\vec D}(\beta,\alpha)\rangle$.
\end{itemize}
\end{defn}

For any function $c:[\kappa]^2\rightarrow \Xi$ and any $\alpha<\kappa$, we denote by $c({\cdot},\alpha)$ the unique function from $\alpha$ to $\Xi$
satisfying $c({\cdot},\alpha)(\beta)=c(\beta,\alpha)$ for all $\beta<\alpha$.
Then, the \emph{tree induced by $c$} is $$\mathcal T(c):=\{c({\cdot},\gamma)\restriction\beta\mid \beta\le\gamma<\kappa\}.$$

\begin{lemma}\label{lemma410} Suppose that $\diamondsuit(\kappa)$ holds.
Suppose that $\cvec{C}=\langle \mathcal C_\alpha\mid\alpha<\kappa\rangle$ is a $\square(\kappa,{<}\kappa)$-sequence satisfying that
for every club $E\s\kappa$, there exists  $\alpha\in\acc(E)$ with $\sup(\nacc(C)\cap E)=\alpha$ for all $C\in\mathcal C_\alpha$.

To any transversal $\langle C_\alpha\mid \alpha\in\acc(\kappa)\rangle$  for $\cvec{C}$,  there exists a corresponding $C$-sequence $\vec D$ over $\acc(\kappa)$ satisfying the following:
\begin{itemize}
\item $(\mathcal T(\rho_0^{\vec D}),{\stree})$ is a $\kappa$-Aronszajn tree;
\item For any infinite cardinal $\theta$, if for every $\zeta<\kappa$ and every sequence $\langle A_i\mid i<\theta\rangle$ of cofinal subsets of $\kappa$,
there exists $\alpha\in\acc(\kappa)$ such that $\min(C_\alpha)=\zeta$ and $\sup(\nacc(C_\alpha)\cap A_i)=\alpha$ for all $i<\theta$,
then $(\mathcal T(\rho_0^{\vec D}),{\stree})$ is normal and $\theta$-distributive.
\end{itemize}
\end{lemma}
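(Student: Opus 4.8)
The plan is to build $\vec D$ from the transversal by a single $\diamondsuit$-driven postprocessing step, and then read the three conclusions off the walk $\rho_0^{\vec D}$. Using $\diamondsuit(\kappa)$, I would fix a matrix $\langle A^i_\gamma\mid i,\gamma<\kappa\rangle$ as in Fact~\ref{diamond_matrix}, turn it (following Example~\ref{phiZ-simpler} and Lemma~\ref{phiZ}) into a $\kappa$-assignment $\mathfrak Z$ that plants a guess $A^i_\beta\cap\beta$ at each non-accumulation point $\beta$, and set $D_\alpha:=\Phi_{\mathfrak Z}(C_\alpha)$ for $\alpha\in\acc(\kappa)$. Since $\Phi_{\mathfrak Z}$ is $\acc$-preserving, $\acc(D_\alpha)=\acc(C_\alpha)$, so $\vec D$ inherits from $\vec C$ both the family-coherence $D_\alpha\cap\bar\alpha\in\{\Phi_{\mathfrak Z}(E)\mid E\in\mathcal C_{\bar\alpha}\}$ for $\bar\alpha\in\acc(D_\alpha)$, and (via Proposition~\ref{transversal-width}(2)) the trace bound $|\{D_\gamma\cap\beta\mid\gamma<\kappa\}|<\kappa$ for every $\beta<\kappa$. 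The purpose of the coding is that, by the defining property of the $\diamondsuit$-matrix, whenever the non-accumulation points of some $D_\alpha$ catch a prescribed sequence of cofinal sets, the finite codes $\rho_0^{\vec D}(\cdot,\alpha)$ actually exhibit those sets; this is the engine driving normality and distributivity.

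For the tree structure, recall that $\rho_0^{\vec D}(\beta,\gamma)(0)=\otp(D_\gamma\cap\beta)$, so the $0$-th coordinate of a node at level $\beta$ recovers $D_\gamma\cap\beta$. Combined with the trace bound and the standard Todorcevic counting for walks, this yields $|\{\rho_0^{\vec D}(\cdot,\gamma)\restriction\beta\mid\gamma\ge\beta\}|<\kappa$ for each $\beta$, so $(\mathcal T(\rho_0^{\vec D}),{\stree})$ is a $\kappa$-tree. For the Aronszajn property I would argue by contradiction: by the small levels any chain of size $\kappa$ is a cofinal branch, and reading off $0$-th coordinates along it produces a $\sq$-coherent union $C^{**}=\bigcup_\beta(D_{\gamma_\beta}\cap\beta)$ which is a club (since $C^{**}\cap\beta=D_{\gamma_\beta}\cap\beta$ for all $\beta$). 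Feeding the club $E^\#:=\acc(C^{**})$ into the club-guessing hypothesis yields $\alpha\in\acc(C^{**})$ with $\sup(\nacc(C)\cap\acc(C^{**}))=\alpha$ for all $C\in\mathcal C_\alpha$. But fixing $\beta>\alpha$ and setting $C:=C_{\gamma_\beta}\cap\alpha\in\mathcal C_\alpha$, $\acc$-preservation gives $\acc(C^{**})\cap\alpha=\acc(C^{**}\cap\alpha)=\acc(\Phi_{\mathfrak Z}(C))=\acc(C)$, whence $\nacc(C)\cap\acc(C^{**})=\nacc(C)\cap\acc(C)=\emptyset$, contradicting $\sup=\alpha$.

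Under the additional mixing hypothesis of the second bullet, I would treat normality and $\theta$-distributivity together. Given a node $t_0=\rho_0^{\vec D}(\cdot,\gamma_0)\restriction\beta_0$ and dense open sets $\langle O_i\mid i<\theta\rangle$, I would encode membership in each $O_i$ above $t_0$ as a cofinal $A_i\s\kappa$ and choose $\zeta$ so as to pin the bottom of the walk to $t_0$. The hypothesis then delivers $\alpha\in\acc(\kappa)$ with $\min(C_\alpha)=\zeta$ and $\sup(\nacc(C_\alpha)\cap A_i)=\alpha$ for all $i<\theta$. The candidate upper bound is $s:=\rho_0^{\vec D}(\cdot,\alpha)$, a node at level $\alpha$ extending $t_0$ by the choice of $\zeta$ controlling the tail of the walk; for each $i$, cofinally many non-accumulation points of $D_\alpha$ lie in $A_i$, and the $\diamondsuit$-coding forces the corresponding initial segments of $s$ into $O_i$, so openness gives $s\in O_i$. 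Thus $s\in\bigcap_{i<\theta}O_i$ lies above $t_0$, proving $\theta$-distributivity; the same construction against a single level demand supplies extensions of $t_0$ at cofinally many levels, i.e.\ normality.

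The main obstacle is precisely this last step: converting the combinatorial catching statement $\sup(\nacc(C_\alpha)\cap A_i)=\alpha$ into the forcing statement that $s$ has a predecessor in $O_i$. Two things must be reconciled at once — that the walk from $\alpha$ genuinely routes through the caught non-accumulation points (so the finite codes $\rho_0^{\vec D}(\beta,\alpha)$ at those $\beta$ are the intended initial segments), and that the guesses planted by $\mathfrak Z$ coincide with the true $A_i$ cofinally often. Making the assignment $\mathfrak Z$, the encoding $O_i\mapsto A_i$, and the role of $\zeta$ in fixing $s\restriction\beta_0=t_0$ all fit together is the delicate part; by comparison the $\kappa$-tree estimate and the Aronszajn argument are routine once $\vec D$ is in hand.
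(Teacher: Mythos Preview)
Your Aronszajn argument (the first bullet) is fine: since $\Phi_{\mathfrak Z}$ is $\acc$-preserving, the resulting $\vec D$ is again a transversal for $\square(\kappa,{<}\kappa)$ (nontriviality follows from the club-guessing hypothesis exactly as you indicate), and then Todorcevic's characterization gives that $\mathcal T(\rho_0^{\vec D})$ is $\kappa$-Aronszajn. The gap is in the second bullet. The only mechanism that produces the tree relation $\rho_0^{\vec D}(\cdot,\gamma)\stree\rho_0^{\vec D}(\cdot,\alpha)$ is the set-theoretic relation $D_\gamma\sq D_\alpha$. With your definition $D_\alpha:=\Phi_{\mathfrak Z}(C_\alpha)$, the only $\gamma$'s for which this can happen are $\gamma\in\acc(C_\alpha)$ with $C_\alpha\cap\gamma=C_\gamma$; these are rigidly prescribed by the original $\cvec C$-coherence and are not under your control. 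So even if the $\diamondsuit$-guess succeeds and $\nacc(D_\alpha)$ hits your $A_i$ cofinally, nothing forces any predecessor of $s=\rho_0^{\vec D}(\cdot,\alpha)$ to land in $O_i$. The same problem kills the ``pin $t_0$ via $\zeta$'' step: $\min(C_\alpha)=\zeta$ gives you no relation between $D_\alpha\cap\zeta$ and $D_\zeta$ (indeed $\zeta\notin\acc(C_\alpha)$), so you cannot conclude $t_0\stree s$.

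The paper's construction is designed precisely to manufacture these initial-segment relations dynamically. It defines the entire matrix $\langle D_\alpha^i\rangle$ by recursion on $\alpha$: along each $\beta\in\nacc(C_\alpha^i)$, the $\diamondsuit$-sequence $X_\beta$ is consulted for a $\gamma<\beta$ with $D^i_{\alpha,\beta^-}\sq D_\gamma^0$, and if one is found, $D_\gamma^0\cup\{\gamma,\beta\}$ is absorbed as the next initial segment of $D_\alpha^i$. For distributivity one then sets $A_i:=\{\beta:X_\beta=\rng(f_i)\cap\beta\}$ where $f_i(s)=\min\{\gamma:s\stree\rho_0^{\vec D}(\cdot,\gamma)\in O_i\}$; when $\nacc(C_\alpha)\cap A_i$ is unbounded, the absorption guarantees some $\gamma^\bullet\in\rng(f_i)$ with $D_{\gamma^\bullet}^0\sq D_\alpha^0$, hence $\rho_0^{\vec D}(\cdot,\gamma^\bullet)\stree\rho_0^{\vec D}(\cdot,\alpha)$ and $\rho_0^{\vec D}(\cdot,\alpha)\in O_i$ by openness. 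Normality via $\zeta$ works because the construction explicitly initializes $D^0_{\alpha,\zeta}$ to $D_\zeta^0\cup\{\zeta\}$ (or the analogous thing for successor $\zeta$). None of this can be achieved by a postprocessing function: the value $\Phi(C_\alpha)$ must depend only on $C_\alpha$, not on previously constructed $D_\gamma$'s, so you cannot ``aim'' at them.
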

\begin{proof} Fix $\cvec{C}$ as in the hypothesis, along with some transversal $\vec C=\langle C_\alpha\mid\alpha\in\acc(\kappa)\rangle$.

For each $\alpha \in \acc(\kappa)$, let $\{ C_\alpha^i \mid i<\kappa\}$ be some enumeration (with repetition) of $\mathcal C_\alpha$ such that $C_\alpha^0=C_\alpha$.
Let $\langle X_\beta\mid\beta<\kappa\rangle$ be a $\diamondsuit(\kappa)$-sequence.
We shall now define a matrix $\langle D^i_\alpha\mid \alpha \in \acc(\kappa), i<\kappa\rangle$ by recursion over $\alpha \in \acc(\kappa)$.

Fix $\alpha\in\acc(\kappa)$ and suppose that $\langle D^i_\gamma\mid \gamma \in \acc(\alpha),i<\kappa\rangle$ has already been defined.
Fix $i<\kappa$. To determine $D^i_\alpha$, we recursively define an $\sq$-increasing sequence $\langle D^i_{\alpha,\beta}\mid \beta\in C^i_\alpha\rangle$ in such a way that for all $\beta\in C^i_\alpha$,
$D^i_{\alpha,\beta}$ is a nonempty closed set of ordinals satisfying $\max(D^i_{\alpha,\beta})=\beta$. Here goes:
\begin{itemize}
\item[$\br$] For $\beta=\min(C^i_\alpha)$, we consider three alternatives:
\begin{itemize}
\item[$\br\br$] If $\beta=0$, then let $D^i_{\alpha,\beta}=\{\beta\}$;
\item[$\br\br$] If $\beta$ is a successor ordinal, say, $\beta=\epsilon+1$, then let $D^i_{\alpha,\beta}:=\{\epsilon,\beta\}$;
\item[$\br\br$] Otherwise, let $D^i_{\alpha,\beta}:=D^0_\beta\cup\{\beta\}$.
\end{itemize}

\item[$\br$] For $\beta\in\nacc(C^i_\alpha) \setminus \{\min(C^i_\alpha)\}$ such that $D^i_{\alpha,\beta^-}$ has already been defined, where $\beta^-:=\sup(C^i_\alpha\cap\beta)$, we consider two alternatives:
\begin{itemize}
\item[$\br\br$] If there exists some $\gamma\in X_\beta \cap \acc(\beta)$ such that $D^i_{\alpha,\beta^-}\sq D_\gamma^0$, then let $D^i_{\alpha,\beta}:=D^0_\gamma\cup\{\gamma,\beta\}$ for the least such $\gamma$;
\item[$\br\br$] Otherwise, let $D^i_{\alpha,\beta}:=D^i_{\alpha,\beta^-}\cup\{\beta\}$.
\end{itemize}
\item[$\br$] For $\beta\in\acc(C^i_\alpha)$, let $D^i_{\alpha,\beta}:=(\bigcup_{\gamma\in C^i_\alpha\cap\beta}D_{\alpha,\gamma}^i)\cup\{\beta\}$.
\end{itemize}
Having constructed $\langle D^i_{\alpha,\beta}\mid \beta\in C^i_\alpha\rangle$, we put $D^i_\alpha:=\bigcup_{\beta\in C_\alpha^i}D^i_{\alpha,\beta}$.

\begin{claim} $\vec D:=\langle D_\alpha^0\mid \alpha\in\acc(\kappa)\rangle$ is a transversal for $\square(\kappa,{<}\kappa)$.
\end{claim}
\begin{proof} Let $\mathcal D_0 := \{\emptyset\}$, $\mathcal D_{\alpha+1} := \{\{\alpha\}\}$ for all $\alpha<\kappa$, and $\mathcal D_\alpha := \{ D^i_\alpha \mid i<\kappa \}$ for all $\alpha \in \acc(\kappa)$.
We shall show that $\langle\mathcal D_\alpha\mid\alpha<\kappa\rangle$ is a $\square(\kappa,{<}\kappa)$-sequence.

First, notice that for all $\alpha\in\acc(\kappa)$ and $i<i'<\kappa$, we have:
\begin{itemize}
\item $D_\alpha^i$ is a club in $\alpha$;
\item $D_\alpha^i=D_\alpha^{i'}$ whenever $C_\alpha^i=C_\alpha^{i'}$;
\item Each $\beta\in \nacc(C_\alpha^i)\setminus\{\min(C_\alpha^i)\}$ is in $\nacc(D_\alpha^i)$.
\end{itemize}

In particular, $0<|\mathcal D_\alpha|\le|\mathcal C_\alpha|<\kappa$ for all $\alpha<\kappa$.
Next, let us show that for all $\alpha\in \acc(\kappa)$, $i<\kappa$, and $\bar\alpha\in\acc(D^i_\alpha)$, we have $D^i_\alpha\cap\bar\alpha\in\mathcal D_{\bar\alpha}$.
Suppose not, and let $\alpha \in \acc(\kappa)$ be the least counterexample. Fix $i<\kappa$ and $\bar\alpha\in\acc(D^i_\alpha)$ with $D^i_\alpha\cap\bar\alpha\notin\mathcal D_{\bar\alpha}$.
If $\bar\alpha\in\acc(C^i_\alpha)$, then fix $j<\kappa$ such that $C^i_\alpha\cap\bar\alpha=C^j_{\bar\alpha}$.
But, then, due to the uniform nature of the construction, we have $D^i_{\alpha,\beta} = D^j_{\bar\alpha,\beta}$ for all $\beta \in C^j_{\bar\alpha}$.
Consequently, $D^i_\alpha\cap\bar\alpha=D^j_{\bar\alpha}$, contradicting the fact that $D^j_{\bar\alpha}\in\mathcal D_{\bar\alpha}$.
Thus, it must be the case that $\bar\alpha\in\acc(D^i_\alpha)\setminus\acc(C^i_\alpha)$.
Put $\beta:=\min(C^i_\alpha\setminus\bar\alpha)$, so that $\beta$ is an element of $\nacc(C^i_\alpha)$.
Now, there are two options to consider, each yielding a contradiction:
\begin{itemize}
\item[$\br$] If $\beta=\min(C^i_\alpha)$, then   $D^i_\alpha\cap\beta=D^i_{\alpha,\beta}\cap\beta=D^0_\beta$.
But then either $\bar\alpha=\beta$, so that $D^i_\alpha \cap\bar\alpha = D^0_{\bar\alpha}$, contradicting the fact that $D^0_{\bar\alpha} \in \mathcal D_{\bar\alpha}$, or $\bar\alpha \in \acc(D^0_\beta)$,
so that $\beta$ contradicts the minimality of $\alpha$.
\item[$\br$] Otherwise, there must exist some $\gamma\in X_\beta \cap \acc(\beta)$ such that $D_\alpha^i\cap\bar\alpha = D^i_{\alpha,\beta} \cap\bar\alpha =D_\gamma^0\cap\bar\alpha$.
But then either $\bar\alpha=\gamma$, so that $D^i_\alpha \cap\bar\alpha = D^0_{\bar\alpha}$, contradicting the fact that $D^0_{\bar\alpha} \in \mathcal D_{\bar\alpha}$,
or $\bar\alpha \in \acc(D^0_\gamma)$, so that $\gamma$ contradicts the minimality of $\alpha$.
\end{itemize}

Finally, suppose that $A$ is some cofinal subset of $\kappa$, and we shall find some $\alpha\in\acc^+(A)$ such that $A\cap\alpha\notin\mathcal D_\alpha$.
Consider the club $E:=\acc^+(A)$. By the hypothesis of the Lemma, fix $\alpha\in\acc(E)$ such that $\sup(\nacc(C_\alpha^i)\cap E)=\alpha$ for all $i<\kappa$.
Then $\nacc(D^i_\alpha)\cap\acc^+(A)\neq\emptyset$ for all $i<\kappa$, so that $A\cap\alpha\notin\mathcal D_\alpha$.
\end{proof}

By \cite[\S6]{MR2355670}, $(\mathcal T(\rho_0^{\vec D}),{\stree})$ is a $\kappa$-Aronszajn  tree iff $\vec D$ is a transversal for $\square(\kappa,{<}\kappa)$.
Thus, we are left with proving the following.

\begin{claim}Suppose that $\theta$ is an infinite cardinal such that for every $\zeta<\kappa$ and every sequence $\langle A_i\mid i<\theta\rangle$ of cofinal subsets of $\kappa$,
there exists $\alpha\in\acc(\kappa)$ with $\min(C_\alpha)=\zeta$ and $\sup(\nacc(C_\alpha)\cap A_i)=\alpha$ for all $i<\theta$.
Then $(\mathcal T(\rho_0^{\vec D}),{\stree})$ is normal and $\theta$-distributive.
\end{claim}
\begin{proof} For notational simplicity, denote $T_\alpha:=\{s\in \mathcal T(\rho_0^{\vec D})\mid \dom(s)=\alpha\}$, and $T\restriction\beta:=\bigcup_{\alpha<\beta}T_\alpha$.

Let $\langle \Omega_i\mid i<\theta\rangle$ be an arbitrary sequence of dense open subsets of $(\mathcal T(\rho_0^{\vec D}),{\stree})$,
let $r$ be an arbitrary element of $\mathcal T(\rho_0^{\vec D})$, and let $\varepsilon$ be an arbitrary ordinal $<\kappa$.
We shall prove that $(\bigcap_{i<\theta} \Omega_i)\setminus(T\restriction\varepsilon)$ contains an element that extends $r$.

Let $i<\theta$ be arbitrary.
Define a function $f_i:\mathcal T(\rho_0^{\vec D})\rightarrow\kappa$ by stipulating:
$$f_i(s):=\min\{\gamma<\kappa\mid s\stree \rho_0^{\vec D}({\cdot},\gamma)\in\Omega_i\}.$$
To see that $f_i$ is well-defined, let $s\in \mathcal T(\rho_0^{\vec D})$ be arbitrary.
Since $\Omega_i$ is dense, there exists $t\in\Omega_i$ with $s\stree t$.
By definition of $\mathcal T(\rho_0^{\vec D})$, any such $t$ is of the form $\rho_0^{\vec D}({\cdot},\gamma)\restriction\beta$ for some $\beta\leq\gamma<\kappa$.
Since $\Omega_i$ is open and $s\stree t\stree \rho_0^{\vec D}({\cdot},\gamma)$, then we have found $\gamma<\kappa$ such that $\rho_0^{\vec D}({\cdot},\gamma)\in \Omega_i$ and $s\stree \rho_0^{\vec D}({\cdot},\gamma)$.

Since $(\mathcal T(\rho_0^{\vec D}),{\stree})$ is a $\kappa$-tree, $|T\restriction\beta|<\kappa$ for all $\beta<\kappa$, and hence $E_i:=\{\beta \in \acc(\kappa) \mid f_i[T\restriction\beta]\s\beta\}$ is a club in $\kappa$.
Consequently, the following set is stationary in $\kappa$:
$$A_i:=\{ \beta\in E_i\setminus\varepsilon\mid X_\beta=\rng(f_i)\cap\beta\}.$$

Put $\zeta:=f_0(r)$.
Now, by the hypothesis of the Claim, let us pick some $\alpha \in \acc(\kappa)$ such that $\min(C^0_\alpha)=\zeta$ and $\sup(\nacc(C^0_\alpha)\cap A_i)=\alpha$ for all $i<\theta$.
In particular, $\alpha>\varepsilon$.

We now show that $\rho_0^{\vec D}({\cdot},\alpha)$ extends $r$. Recalling the definition of $D_\alpha^0$ and $\rho_0^{\vec D}({\cdot},\alpha)$, we see that:
\begin{itemize}
\item[$\br$] If $\zeta=0$, then $r=\emptyset$, and trivially $r\stree \rho_0^{\vec D}({\cdot},\alpha)$;
\item[$\br$] If $\zeta=\epsilon+1$, then $D_\alpha^0\cap\zeta=D^0_{\alpha,\zeta}\cap\zeta=\{\epsilon\}$, and hence $r\stree \rho_0^{\vec D}({\cdot},\zeta)\stree \rho_0^{\vec D}({\cdot},\alpha)$;
\item[$\br$] If $\zeta\in\acc(\kappa)$, then $D^0_\alpha\cap\zeta=D^0_{\alpha,\zeta}\cap\zeta=D^0_\zeta$, and hence $r\stree \rho_0^{\vec D}({\cdot},\zeta)\stree \rho_0^{\vec D}({\cdot},\alpha)$.
\end{itemize}

Finally, we show that $\rho_0^{\vec D}({\cdot},\alpha)$ is an element of $\bigcap_{i<\theta}\Omega_i$.
Let  $i<\theta$ be arbitrary.
Pick a large enough $\beta\in\nacc(C^0_\alpha)\cap A_i$ for which $\beta^-:=\sup(C^0_\alpha\cap\beta)$ is greater than $\zeta$.
Put $\gamma:=f_i(s)$, for $s:=\rho_0^{\vec D}({\cdot},\alpha)\restriction(\beta^-+1)$.
Note that since $\beta\in A_i$, we have $\gamma\in X_\beta\setminus(\beta^-+1)$.

Let $\beta^0:=\min(D^0_\alpha)$, so that $\beta^0\le\zeta$, $\Tr^{\vec D}(\beta^0,\alpha)(1)=\beta^0$ and $s(\beta^0)=\rho_0^{\vec D}(\beta^0,\alpha)=\langle 0\rangle$.
As $\gamma=f_i(s)$, in particular, $\rho_0^{\vec D}(\beta^0,\gamma)=\langle0\rangle$, and hence $\Tr^{\vec D}(\beta^0,\gamma)(1)=\beta^0$, as well.
Since $\beta^0+1<\gamma$ (indeed, $\beta^0\le\zeta<\beta^-<\gamma$) and $\Tr^{\vec D}(\beta^0,\gamma)(1)=\beta^0$, $\gamma$ is not a successor ordinal.
It then follows from $\alpha,\gamma\in\acc(\kappa)$ and $\rho_0^{\vec D}({\cdot},\alpha)\restriction(\beta^-+1)\stree \rho_0^{\vec D}({\cdot},\gamma)$,
that for every $\beta'\le\beta^-$: $$\beta'\in D_\alpha^0 \iff |\rho_0^{\vec D}(\beta',\alpha)|=1 \iff |\rho_0^{\vec D}(\beta',\gamma)|=1 \iff \beta'\in D^0_\gamma.$$
Altogether, $\gamma\in X_\beta\cap\acc(\beta)$ and $D^0_{\alpha,\beta^-}=D^0_\alpha\cap(\beta^-+1)\sq D_\gamma^0$.
Consequently, $D_{\gamma^\bullet}^0\cup\{\gamma^\bullet,\beta\} = D_{\alpha,\beta}^0\sq D^0_\alpha$ for some $\gamma^\bullet \in \im(f_i)\cap\acc(\beta)$.
Fix such a $\gamma^\bullet$.
Then  $\rho_0^{\vec D}({\cdot},\gamma^\bullet)\stree \rho_0^{\vec D}({\cdot},\alpha)$ and $\rho_0^{\vec D}({\cdot},\gamma^\bullet)\in \Omega_i$.
But $\Omega_i$ is open, and hence $\rho_0^{\vec D}({\cdot},\alpha)\in \Omega_i$.
\end{proof}
This completes the proof.
\end{proof}

We are now ready to prove the first case of Theorem~A.

\begin{cor}\label{first-case-A} Suppose that $\lambda$ is a strong-limit singular cardinal, and $\square(\lambda^+,{<}\lambda,{\sq_{(\cf(\lambda))^+}})$ and $\ch_\lambda$ both hold.
Then there is a $C$-sequence $\vec D$ for which $(\mathcal T(\rho_0^{\vec D}),{\stree})$ is a normal $\lambda$-distributive $\lambda^+$-Aronszajn tree.
\end{cor}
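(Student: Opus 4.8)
The plan is to combine the mixing-and-club-guessing output of Theorem~\ref{mixing_paper24} with the walks-on-ordinals construction of Lemma~\ref{lemma410}, producing a single $C$-sequence $\vec D$ whose $\rho_0$-tree is $\lambda^+$-Aronszajn, normal, and $\theta$-distributive for cofinally many $\theta<\lambda$; I would then upgrade this to full $\lambda$-distributivity using that $\lambda$ is singular.

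First I would record the ambient facts. Since $\ch_\lambda$ holds and $\lambda$ is uncountable, $\diamondsuit(\lambda^+)$ holds by \cite{Sh:922}, which is exactly the hypothesis that both Theorem~\ref{mixing_paper24} (internally) and Lemma~\ref{lemma410} demand. Since $\lambda$ is a strong-limit cardinal, $\mathcal D(\lambda,\theta)=\lambda$ for every $\theta\in\reg(\lambda)\setminus\{\cf(\lambda)\}$, so $\{\theta\in\reg(\lambda)\mid \mathcal D(\lambda,\theta)=\lambda\}$ is cofinal in $\reg(\lambda)$. I would then apply Theorem~\ref{mixing_paper24} to the given $\square(\lambda^+,{<}\lambda,{\sq_{(\cf(\lambda))^+}})$ with parameters $(\xi,\mu,\chi):=(\lambda^+,\lambda,(\cf(\lambda))^+)$; these satisfy the hypotheses, as $\chi=(\cf(\lambda))^+\in\reg(\lambda^+)$ and $\mu=\lambda$. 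This yields a $\square(\lambda^+,{<}\lambda,{\sq_{(\cf(\lambda))^+}})$-sequence $\cvec C=\langle \mathcal C_\alpha\mid\alpha<\lambda^+\rangle$ with support $\Gamma$, a transversal $\langle C_\alpha\mid\alpha\in\Gamma\rangle$, and a cofinal $\Theta\s\reg(\lambda)$ such that every $\theta\in\Theta$ enjoys the club-guessing clause~(1) and the mixing clause~(2).

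Next I would invoke Lemma~\ref{lemma410} with $\kappa:=\lambda^+$, fed with $\cvec C$ and the transversal. Its club-guessing hypothesis is delivered by clause~(1): given a club $E\s\lambda^+$, apply clause~(1) (at any fixed $\theta\in\Theta$) to the club $\acc(E)$ to obtain $\alpha\in\acc(E)$ with $\sup(\nacc(C)\cap E)=\alpha$ for all $C\in\mathcal C_\alpha$. The lemma then produces $\vec D$ with $(\mathcal T(\rho_0^{\vec D}),{\stree})$ a $\lambda^+$-Aronszajn tree. Moreover, for each fixed $\theta\in\Theta$, clause~(2) is precisely the hypothesis of the second bullet of Lemma~\ref{lemma410}, so the \emph{same} tree $(\mathcal T(\rho_0^{\vec D}),{\stree})$ is normal and $\theta$-distributive. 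As this holds for every $\theta$ in the set $\Theta$, which is cofinal in $\reg(\lambda)$, and as $\theta$-distributivity entails $\theta'$-distributivity for all $\theta'\le\theta$, the tree is $\theta'$-distributive for every $\theta'<\lambda$. To pass from $<\lambda$-distributivity to $\lambda$-distributivity, fix in $V$ a cofinal sequence $\langle\lambda_i\mid i<\cf(\lambda)\rangle$ in $\lambda$; any function $f\colon\lambda\to V$ added by the tree would, for each $i$, have $f\restriction\lambda_i\in V$ by $\lambda_i$-distributivity, whence $\langle f\restriction\lambda_i\mid i<\cf(\lambda)\rangle$ is a $\cf(\lambda)$-sequence of ground-model objects, so lies in $V$ by $\cf(\lambda)$-distributivity (as $\cf(\lambda)<\lambda$), forcing $f=\bigcup_i f\restriction\lambda_i\in V$. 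Thus the tree is $\lambda$-distributive, as required.

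The main obstacle I anticipate is the coherence bookkeeping at the interface with Lemma~\ref{lemma410}: that lemma is phrased for a $\square(\lambda^+,{<}\lambda^+)$-sequence, i.e.\ with genuine $\sq$-coherence and support $\acc(\lambda^+)$, whereas Theorem~\ref{mixing_paper24} returns only $\sq_{(\cf(\lambda))^+}$-coherence with support $\Gamma$ omitting the levels of cofinality strictly between $\omega$ and $(\cf(\lambda))^+$. The point to check carefully is that the construction of $\vec D$ is insensitive to this weakening: on $\Gamma$ the sequence \emph{is} honestly $\sq$-coherent, since $\Gamma$ is defined through true intersections $C\cap\bar\alpha\in\mathcal C_{\bar\alpha}$, while off $\Gamma$ the clubs are the short, successor-structured clubs of a standard $C$-sequence, which can be chosen coherently among themselves, so that the $\diamondsuit(\lambda^+)$-driven recursion of Lemma~\ref{lemma410} still outputs a fully $\sq$-coherent transversal for $\square(\lambda^+,{<}\lambda^+)$ --- and it is this, via \cite[\S6]{MR2355670}, that makes $\mathcal T(\rho_0^{\vec D})$ Aronszajn. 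Verifying that the least-counterexample coherence argument inside Lemma~\ref{lemma410} survives at the off-support levels is the one place where the weaker hypothesis of this corollary, as opposed to the $\sq$-coherence of Theorem~A, must be handled with care.
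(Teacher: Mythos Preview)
Your overall strategy matches the paper's proof exactly: apply Theorem~\ref{mixing_paper24} with $(\xi,\mu,\chi)=(\lambda^+,\lambda,(\cf(\lambda))^+)$, feed the output into Lemma~\ref{lemma410}, and upgrade $\theta$-distributivity for cofinally many $\theta\in\reg(\lambda)$ to $\lambda$-distributivity via singularity. Your argument for the latter step is fine (the paper states it in one line, but your expansion is correct).

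The gap you flag, however, is real, and your proposed fix does not close it. Lemma~\ref{lemma410} genuinely requires a $\square(\lambda^+,{<}\lambda^+)$-sequence with honest $\sq$-coherence and support $\acc(\lambda^+)$. Your suggestion to choose the off-$\Gamma$ standard clubs $e_\alpha$ ``coherently among themselves'' cannot work: for $\alpha\notin\Gamma$ (so $\omega<\cf(\alpha)\le\cf(\lambda)$), accumulation points $\bar\alpha$ of $e_\alpha$ will typically lie in $E^{\lambda^+}_\omega\subseteq\Gamma$, and there $\mathcal C_{\bar\alpha}$ is already determined by $\cvec C$ with no reason to contain $e_\alpha\cap\bar\alpha$. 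The coherence failure is not internal to the off-support part; it is across the boundary.

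The paper's repair is a short padding trick that you are missing: replace each $\mathcal C_\alpha$ by
\[
\mathcal C_\alpha':=\mathcal C_\alpha\cup\{x\in[\alpha]^{<\cf(\lambda)}\mid x\text{ is a club in }\alpha\}.
\]
Any initial segment of a club of size $<\cf(\lambda)$ is again such a club, so $\langle\mathcal C_\alpha'\mid\alpha<\lambda^+\rangle$ is genuinely $\sq$-coherent; and since $\lambda$ is a strong limit, $|[\alpha]^{<\cf(\lambda)}|\le\lambda$, so this is a $\square(\lambda^+,{<}\lambda^+)$-sequence. The wide-club-guessing clause~(1) survives the padding because (in the singular case) $\Theta\subseteq\reg(\lambda)\setminus(\cf(\lambda))^+$, so every relevant $\alpha$ has $\cf(\alpha)>\cf(\lambda)$, whence no club of size $<\cf(\lambda)$ exists in $\alpha$ and $\mathcal C_\alpha'=\mathcal C_\alpha$ there. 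With this adjustment, Lemma~\ref{lemma410} applies directly to $\cvec C'$ and any transversal extending $\langle C_\alpha\mid\alpha\in\Gamma\rangle$.
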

\begin{proof} Since $\lambda$ is a strong-limit cardinal, $D(\lambda,\theta)=\lambda$ for every $\theta \in \reg(\lambda)\setminus\{\cf(\lambda)\}$.
Thus, by appealing to Theorem~\ref{mixing_paper24} with $(\xi,\mu,\chi):=(\lambda^+,\lambda,(\cf(\lambda))^+)$, we can fix a $\square(\lambda^+,{<}\lambda,{\sq_{\cf(\lambda)^+}})$-sequence $\langle \mathcal C_\alpha\mid\alpha<\lambda^+\rangle$ with
a transversal $\langle C_\alpha\mid \alpha\in\Gamma\rangle$, and a cofinal subset $\Theta\s\reg(\lambda)\setminus\cf(\lambda)$ such that for every $\theta\in\Theta$:
\begin{enumerate}
\item For every club $E\s\lambda^+$, there exists $\alpha\in E^{\lambda^+}_\theta\cap\Gamma$ such that $\sup(\nacc(C)\cap E)=\alpha$ for all $C\in\mathcal C_\alpha$;
\item For every $\zeta<\kappa$ and every sequence $\langle A_i\mid i<\theta\rangle$ of cofinal subsets of $\lambda^+$, there exists $\alpha\in E^{\lambda^+}_\theta\cap\Gamma$ such that $\min(C_\alpha)=\zeta$ and $\sup(\nacc(C_\alpha)\cap A_i)=\alpha$ for all $i<\theta$.
\end{enumerate}

For all $\alpha<\lambda^+$, let
$$\mathcal C_\alpha':=\mathcal C_\alpha\cup\{x\in[\alpha]^{<\cf(\lambda)}\mid x\text{ is a club in }\alpha\}.$$

By Lemma~\ref{Gamma-closure}(3) and Definition~\ref{def115}, for all $\alpha\in\lambda^+\setminus\Gamma$, we have $\mathcal C_\alpha=\{e_\alpha\}$ for some club $e_\alpha\s\alpha$ with $\otp(e_\alpha)\le\cf(\lambda)$.
Since $\lambda$ is a strong-limit cardinal, $|[\alpha]^{<\cf(\lambda)}|\leq \lambda$ for every $\alpha<\lambda^+$.
Consequently, $\langle\mathcal C_\alpha'\mid\alpha<\lambda^+\rangle$ is a $\square(\lambda^+,\lambda)$-sequence.
As $\mathcal C_\alpha'=\mathcal C_\alpha$ for all $\alpha\in E^{\lambda^+}_{\ge\cf(\lambda)}$, we get that Clause~(1) remains true after replacing $\mathcal C_\alpha$ by $\mathcal C_\alpha'$.
By $\ch_\lambda$ and \cite{Sh:922}, $\diamondsuit(\lambda^+)$ holds.
It now follows from Lemma~\ref{lemma410} that there exists a $C$-sequence $\vec D$ for which $(\mathcal T(\rho_0^{\vec D}),{\stree})$ is a normal $\lambda^+$-Aronszajn tree that is $\theta$-distributive for all $\theta\in\Theta$.
As $\sup(\Theta)=\lambda$ and $\lambda$ is a singular cardinal, it follows that $(\mathcal T(\rho_0^{\vec D}),{\stree})$ is $\lambda$-distributive.
\end{proof}

\begin{remark} The hypothesis $\sq_{\chi}$ with $\chi=(\cf(\lambda))^+$ in the preceding is optimal, since Hayut and Magidor \cite[\S3]{arXiv:1603.05526} proved that
$\square_{\aleph_\omega}(\aleph_{\omega+1},{<}\aleph_\omega,{\sq_{\aleph_2}})+\gch$ is consistent with the non-existence of an $\aleph_{\omega+1}$-Aronszajn tree.
\end{remark}

\section{$C$-sequences of small order-types}\label{section3}

The postprocessing function $\Phi^{\{j\}}$ of Example~\ref{chop-bottom} is a special case of the following:

\begin{fact}[\cite{paper23}]\label{newPhiSigma} For a set $\Sigma$  satisfying $\acc^+(\Sigma)\s\Sigma\s (\kappa+1)$, define $\Phi^\Sigma : \mathcal K(\kappa) \to \mathcal K(\kappa)$ by:
\[\Phi^\Sigma(x) := \begin{cases}
\{ x(i) \mid i \in \Sigma \cap \otp(x) \},&\text{if } \otp(x) = \sup(\Sigma \cap \otp(x)); \\
x \setminus x(\sup(\Sigma \cap \otp(x))), &\text{otherwise}.
\end{cases}\]

Then $\Phi^\Sigma : \mathcal K(\kappa) \to \mathcal K(\kappa)$ is a conservative postprocessing function satisfying for every $x \in \mathcal K(\kappa)$:
\begin{enumerate}
\item\label{PhiSigma3} If $\otp(x) = \sup(\Sigma)$ and $\otp(\Sigma)$ is a limit ordinal, then $\otp(\Phi^\Sigma(x)) = \otp(\Sigma)$;
\item\label{PhiSigma9} If $\otp(x) > \min(\Sigma)$, then $\Phi^\Sigma(x) \subseteq x \setminus x(\min(\Sigma))$.
\end{enumerate}
\end{fact}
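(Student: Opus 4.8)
The plan is to verify the three defining clauses of a postprocessing function together with conservativity, and then read off the two numbered consequences. Throughout, fix $x\in\mathcal K(\kappa)$ and abbreviate $\sigma:=\sup(\Sigma\cap\otp(x))$, so that the two branches of the definition are governed by whether $\otp(x)=\sigma$ (call it Case~A) or $\sigma<\otp(x)$ (Case~B; note $\sigma\le\otp(x)$ always). In both cases $\Phi^\Sigma(x)\s x$ is immediate, giving conservativity, and since $\Phi^\Sigma(x)\s x$, any accumulation point of $\Phi^\Sigma(x)$ is one of $x$, so $\acc(\Phi^\Sigma(x))\s\acc(x)$ comes for free. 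It therefore remains to check that $\Phi^\Sigma(x)$ is a club in $\sup(x)$ and that the coherence square commutes.

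For the club property, Case~B is painless: here $\Phi^\Sigma(x)=x\setminus x(\sigma)$ is a nonempty final segment of the club $x$, hence a club in $\sup(x)$. In Case~A one first notes that $\otp(x)$ must be a limit ordinal (the supremum of a subset of a successor ordinal $\gamma+1$ never reaches $\gamma+1$), so that $\Sigma\cap\otp(x)$ is cofinal in $\otp(x)$ and $\{x(i)\mid i\in\Sigma\cap\otp(x)\}$ is cofinal in $\sup(x)$. For closure, suppose $\delta$ is an accumulation point of $\Phi^\Sigma(x)$ below $\sup(x)$; since $\Phi^\Sigma(x)\s x$ and $x$ is closed, $\delta=x(j)$ for $j:=\otp(x\cap\delta)<\otp(x)$, and the elements of $\Phi^\Sigma(x)$ below $\delta$ are exactly $\{x(i)\mid i\in\Sigma\cap j\}$. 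Their supremum being $\delta$ forces $\sup(\Sigma\cap j)=j$, i.e.\ $j\in\acc^+(\Sigma)$, whence $j\in\Sigma$ by the standing hypothesis $\acc^+(\Sigma)\s\Sigma$; thus $\delta=x(j)\in\Phi^\Sigma(x)$. This is the one place where the hypothesis on $\Sigma$ is essential.

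The heart of the argument is coherence. Fix $\bar\alpha\in\acc(\Phi^\Sigma(x))$; by the previous paragraph $\bar\alpha\in\acc(x)$, so $x\cap\bar\alpha\in\mathcal K(\kappa)$, and we set $j:=\otp(x\cap\bar\alpha)$, so that $\bar\alpha=x(j)$ and $\Sigma\cap\otp(x\cap\bar\alpha)=\Sigma\cap j$. I will show that $x$ and $x\cap\bar\alpha$ always land in the same case and that the two outputs then literally coincide. In Case~A the closure computation above gives $j\in\Sigma$ with $\sup(\Sigma\cap j)=j$, so $x\cap\bar\alpha$ is again in Case~A, and both $\Phi^\Sigma(x)\cap\bar\alpha$ and $\Phi^\Sigma(x\cap\bar\alpha)$ equal $\{x(i)\mid i\in\Sigma\cap j\}$ (using $(x\cap\bar\alpha)(i)=x(i)$ for $i<j$). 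In Case~B, $\bar\alpha\in\acc(\Phi^\Sigma(x))$ means $\bar\alpha>x(\sigma)$, hence $j>\sigma$; since every element of $\Sigma\cap\otp(x)$ is $\le\sigma<j$, we get $\Sigma\cap j=\Sigma\cap\otp(x)$ and $\sup(\Sigma\cap j)=\sigma<j$, so $x\cap\bar\alpha$ is again in Case~B with the same $\sigma$. Then $\Phi^\Sigma(x\cap\bar\alpha)=(x\cap\bar\alpha)\setminus x(\sigma)=(x\setminus x(\sigma))\cap\bar\alpha=\Phi^\Sigma(x)\cap\bar\alpha$, as required. The main obstacle is precisely organizing this case bookkeeping so that the case of $x$ is inherited by $x\cap\bar\alpha$; once that is in place the equalities are mechanical.

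Finally, the two listed properties drop out. For (1), if $\otp(x)=\sup(\Sigma)$ with $\otp(\Sigma)$ a limit ordinal, then $\Sigma$ has no maximum, so $\Sigma\cap\otp(x)=\Sigma$ and $\sup(\Sigma\cap\otp(x))=\sup(\Sigma)=\otp(x)$; this is Case~A, and $i\mapsto x(i)$ maps $\Sigma$ order-isomorphically onto $\Phi^\Sigma(x)$, giving $\otp(\Phi^\Sigma(x))=\otp(\Sigma)$. For (2), assume $\otp(x)>\min(\Sigma)=:m$. In Case~A every $i\in\Sigma\cap\otp(x)$ satisfies $i\ge m$, so $x(i)\ge x(m)$ and $\Phi^\Sigma(x)\s x\setminus x(m)$; in Case~B the inclusion $m\in\Sigma\cap\otp(x)$ yields $\sigma\ge m$, whence $x\setminus x(\sigma)\s x\setminus x(m)$. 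In either case $\Phi^\Sigma(x)\s x\setminus x(\min(\Sigma))$, completing the verification.
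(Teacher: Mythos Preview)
Your proof is correct. The paper does not give its own proof of this statement---it is stated as a Fact and cited from \cite{paper23}---so there is nothing to compare against; your direct verification of the postprocessing axioms via the Case~A/Case~B split, with the key observation that $\acc^+(\Sigma)\subseteq\Sigma$ is exactly what forces closure in Case~A and that the case of $x$ is inherited by $x\cap\bar\alpha$, is a complete and clean argument.
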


The following is a minor variation of Lemma~\ref{phiZ}:
\begin{fact}\label{Psi} Suppose that $\Phi:\mathcal K(\kappa)\rightarrow\mathcal K(\kappa)$ is some postprocessing function,
and $\mathfrak Z=\langle Z_x \mid x \in \mathcal K(\kappa) \rangle$ is some indexed collection satisfying for all $x\in\mathcal K(\kappa)$:
\begin{itemize}
\item $Z_x \subseteq \sup(x)$;
\item $Z_x \cap\bar\alpha=Z_{x\cap\bar\alpha}$ for all $\bar\alpha\in\acc(\Phi(x))$.
\end{itemize}
Define $\Phi^{\mathfrak Z} : \mathcal K(\kappa) \to \mathcal K(\kappa)$ by stipulating
$\Phi^{\mathfrak Z}(x) := \rng(g^\Phi_{x,Z_x})$, where $g^\Phi_{x,Z_x} : \Phi(x) \to \sup(x)$ is defined by:
\[
g^\Phi_{x,Z_x}(\beta) := \min((Z_x \cup\{\beta\}) \setminus \sup(\Phi(x) \cap\beta)).
\]
Then $\Phi^{\mathfrak Z}$ is also a postprocessing function, and for every $x \in \mathcal K(\kappa)$, $\nacc(\Phi^{\mathfrak Z}(x)) \subseteq g^\Phi_{x,Z_x}[\nacc(\Phi(x))]$. \qed
\end{fact}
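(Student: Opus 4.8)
The plan is to mimic the proof of Lemma~\ref{phiZ}, but with $\Phi(x)$ playing the role that $x$ played there: the reindexing function $g:=g^\Phi_{x,Z_x}$ now lives on the club $\Phi(x)$ rather than on $x$, and $\Phi^{\mathfrak Z}(x)=\rng(g)$. The essential preliminary is to pin down the behaviour of $g$ directly from its definition. First, $g(\beta)=\beta$ whenever $\beta\in\acc(\Phi(x))$, since then $\sup(\Phi(x)\cap\beta)=\beta$ and $\beta$ is the least candidate that survives the truncation; and in general one has the sandwich $\sup(\Phi(x)\cap\beta)\leq g(\beta)\leq\beta$, the upper bound holding because $\beta$ itself always belongs to $(Z_x\cup\{\beta\})\setminus\sup(\Phi(x)\cap\beta)$. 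A point worth flagging at the outset is that, unlike in Lemma~\ref{phiZ} where the ``$+1$'' in the truncation forces $g$ to be strictly increasing, here $g$ may repeat a value (for instance when $\sup(\Phi(x)\cap\beta)\in Z_x$). So I will not argue through injectivity of $g$; instead I verify the three postprocessing clauses for $\rng(g)$ directly, leaning throughout on the sandwich above.

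First I would check that $\Phi^{\mathfrak Z}(x)=\rng(g)$ is a club in $\sup(x)$ with $\acc(\Phi^{\mathfrak Z}(x))\subseteq\acc(\Phi(x))\subseteq\acc(x)$. Unboundedness in $\sup(x)$ is immediate from $g(\beta)\geq\sup(\Phi(x)\cap\beta)$ together with the cofinality of $\Phi(x)$ in $\sup(x)$. For closedness, suppose $\bar\alpha$ satisfies $\sup(\rng(g)\cap\bar\alpha)=\bar\alpha>0$, and argue that $\bar\alpha\in\acc^+(\Phi(x))$: otherwise $\gamma:=\sup(\Phi(x)\cap\bar\alpha)<\bar\alpha$, and then every value $g(\beta)$ is $\leq\gamma$ when $\beta\leq\gamma$, equals a single ordinal at the transitional $\beta=\min(\Phi(x)\setminus\bar\alpha)$, and is $\geq\bar\alpha$ for all larger $\beta$, so $\rng(g)\cap\bar\alpha$ would be bounded below $\bar\alpha$, a contradiction. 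Hence $\bar\alpha\in\acc(\Phi(x))$, which gives both $\bar\alpha\in\acc(x)$ (as $\Phi$ is a postprocessing function) and $\bar\alpha=g(\bar\alpha)\in\rng(g)$, settling closedness and the inclusion of accumulation points.

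The main work — and the step I expect to be the crux — is the commuting square. Fix $\bar\alpha\in\acc(\Phi^{\mathfrak Z}(x))\subseteq\acc(\Phi(x))$. Since $\Phi$ is a postprocessing function, $\Phi(x)\cap\bar\alpha=\Phi(x\cap\bar\alpha)$, and by the hypothesis on $\mathfrak Z$, $Z_x\cap\bar\alpha=Z_{x\cap\bar\alpha}$. I would then show $g^\Phi_{x\cap\bar\alpha,Z_{x\cap\bar\alpha}}=g\restriction(\Phi(x)\cap\bar\alpha)$: for $\beta<\bar\alpha$ in $\Phi(x)$ the truncation point $\sup(\Phi(x)\cap\beta)$ is unchanged, and because the defining minimum is bounded by $\beta<\bar\alpha$, it is computed entirely inside $(Z_x\cup\{\beta\})\cap\bar\alpha=Z_{x\cap\bar\alpha}\cup\{\beta\}$, so the two formulas agree. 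It then remains to see that $\rng(g)\cap\bar\alpha=g[\Phi(x)\cap\bar\alpha]$; the inclusion $\supseteq$ is clear from $g(\beta)\leq\beta<\bar\alpha$, and for $\subseteq$ one notes $g(\beta)\geq\sup(\Phi(x)\cap\beta)\geq\bar\alpha$ whenever $\beta\geq\bar\alpha$, using $\bar\alpha\in\acc(\Phi(x))$. Combining these yields $\Phi^{\mathfrak Z}(x)\cap\bar\alpha=\Phi^{\mathfrak Z}(x\cap\bar\alpha)$, completing the verification that $\Phi^{\mathfrak Z}$ is a postprocessing function.

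Finally, for the displayed inclusion $\nacc(\Phi^{\mathfrak Z}(x))\subseteq g[\nacc(\Phi(x))]$, I would observe that every $\bar\alpha\in\acc(\Phi(x))$ actually lies in $\acc(\Phi^{\mathfrak Z}(x))$: for $\beta\in\Phi(x)\cap\bar\alpha$ the values satisfy $\sup(\Phi(x)\cap\beta)\leq g(\beta)<\bar\alpha$ and hence approach $\bar\alpha$ from below. Since $g$ is the identity on $\acc(\Phi(x))$, we have $\rng(g)=\acc(\Phi(x))\cup g[\nacc(\Phi(x))]$ with $\acc(\Phi(x))\subseteq\acc(\Phi^{\mathfrak Z}(x))$, so $\nacc(\Phi^{\mathfrak Z}(x))=\rng(g)\setminus\acc(\Phi^{\mathfrak Z}(x))\subseteq g[\nacc(\Phi(x))]$, as desired. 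The recurring obstacle throughout is precisely the failure of strict monotonicity: rather than using that $g$ is a normal function, I must repeatedly invoke the sandwich $\sup(\Phi(x)\cap\beta)\leq g(\beta)\leq\beta$ to localize each computation below $\bar\alpha$ and to control which values of $g$ can fall beneath a given accumulation point.
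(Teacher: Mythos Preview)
Your proof is correct. The paper does not supply a proof of this statement: it is recorded as a Fact, introduced with the remark that it is ``a minor variation of Lemma~\ref{phiZ}'', and closed with a \qed. Your approach---rerunning the Lemma~\ref{phiZ} argument with $\Phi(x)$ in place of $x$, relying on the sandwich $\sup(\Phi(x)\cap\beta)\le g(\beta)\le\beta$ rather than on strict monotonicity of $g$---is exactly the intended variation. The one subtlety you correctly identified and handled is that the absence of the ``$+1$'' in the truncation means $g$ need not be injective, so the argument must proceed via the sandwich inequalities; your treatment of the transitional value $g(\min(\Phi(x)\setminus\bar\alpha))$ in the closedness step and your derivation of $\acc(\Phi^{\mathfrak Z}(x))=\acc(\Phi(x))$ are both sound.
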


The language of postprocessing functions allows us to formalize the opening remark of \cite[\S6]{rinot11} as follows:

\begin{lemma}\label{phi1} Suppose that $\lambda$ is an uncountable cardinal, $\ch_\lambda$ holds, and $\langle C_\alpha\mid \alpha \in S \rangle$ is a $C$-sequence
over some subset $S$ of $E^{\lambda^+}_{\neq\cf(\lambda)}$ for which $\{ \alpha\in S\mid \otp(C_\alpha)<\alpha\}$ is stationary.

Then there exists a faithful postprocessing function $\Phi:\mathcal K(\lambda^+)\rightarrow\mathcal K(\lambda^+)$ satisfying the following.
For every cofinal $A\s\lambda^+$, there exist stationarily many $\alpha\in S \cap \acc(\lambda^+)$ such that $\otp(\Phi(C_\alpha))=\cf(\alpha)$, $\nacc(\Phi(C_\alpha))\s A$ and $\min(\Phi(C_\alpha))=\min(A)$.
\end{lemma}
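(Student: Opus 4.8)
The plan is to derive $\diamondsuit(\lambda^+)$ from $\ch_\lambda$ via \cite{Sh:922}, fix a $\diamondsuit(\lambda^+)$-sequence $\langle Y_\beta \mid \beta<\lambda^+\rangle$, and then manufacture $\Phi$ as a composition of two postprocessing functions followed by the faithful correction $\faithful$ of Example~\ref{faithful_correction}. First I would pass to the set $S':=\{\alpha\in S\mid \otp(C_\alpha)<\alpha\}$, which is stationary by hypothesis; by Example~\ref{example14}, $\vec C\restriction S'$ is then amenable, and this is what will let me run a club-guessing argument. The target, for a given cofinal $A\s\lambda^+$, is the stationary set of $\alpha$ at which a club in $\alpha$ of order-type $\cf(\alpha)$, living inside $A$ above $\min(A)$, can be read off coherently from the diamond.

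The engine is the assignment machinery of Lemma~\ref{phiZ} and Fact~\ref{Psi}. I would build a \emph{coherent} guess $Z_x\s\sup(x)$ out of the diamond by taking, along $\acc(x)$, the union of a maximal $\sq$-coherent family of values $Y_\beta$ (i.e.\ those $\beta\in\acc(x)$ with $Y_\gamma=Y_\beta\cap\gamma$ for every earlier such $\gamma$). Because membership in this family at $\beta$ depends only on $x\cap\beta$ and $Y\restriction\beta$, the resulting $\langle Z_x\rangle$ satisfies the coherence requirement $Z_x\cap\bar\alpha=Z_{x\cap\bar\alpha}$ of Fact~\ref{Psi}, so it yields an honest postprocessing function whose non-accumulation points are pushed into $Z_x$. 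For a cofinal $A$, the diamond guesses $A\cap\beta$ on a stationary set, and I would use amenability of $\vec C\restriction S'$ together with a genericity/pressing-down argument (in the spirit of Lemma~\ref{thm16} and Fact~\ref{diamond_matrix}) to locate stationarily many $\alpha\in S'\cap\acc(\lambda^+)$ admitting a cofinal coherent chain of correct guesses, so that $Z_{C_\alpha}=A\cap\alpha$; for such $\alpha$ every non-accumulation point of the output lands in $A$ and the minimum is $\min(A)$.

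To secure the clause $\otp(\Phi(C_\alpha))=\cf(\alpha)$, I would feed the assignment an inner thinning that cuts $C_\alpha$ down to order-type $\cf(\alpha)$ before the push (the push is order-type preserving, since the map $g^\Phi$ of Fact~\ref{Psi} is strictly increasing), arranging the coherent chain itself to have length $\cf(\alpha)$; here the hypotheses $\otp(C_\alpha)<\alpha$ and $\cf(\alpha)\neq\cf(\lambda)$ keep the relevant order-types and cofinalities under control. Finally, composing with $\faithful$ renders the whole function faithful without disturbing the conclusion, since $\faithful$ alters a club only on an initial segment.

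The main obstacle, and the place where the construction must be engineered carefully, is reconciling two conflicting demands on a \emph{single} postprocessing function: its defining diagram must commute on \emph{all} inputs $x$, yet at the good $\alpha$ it must place \emph{every} non-accumulation point of $\Phi(C_\alpha)$ inside $A$ (the strong ``$\s A$'', not merely a cofinal trace as in Lemma~\ref{thm16}). A naive top-level guess $Z_x:=Y_{\sup(x)}$ would accomplish the placement but fails coherence, since $Y_{\sup(x)}\cap\bar\alpha\neq Y_{\bar\alpha}$ in general; this is exactly why the guess has to be rebuilt as a locally determined, self-coherent union along $\acc(x)$, and why the real work is the diamond-genericity lemma guaranteeing that this local reconstruction recovers $A\cap\alpha$ on a stationary set of $\alpha$ of each relevant cofinality.
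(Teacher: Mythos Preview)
You correctly identify the central difficulty: a postprocessing function must commute on all of $\mathcal K(\lambda^+)$, so the naive top-level guess $Z_x := Y_{\sup(x)}$ fails coherence. However, your proposed fix---building $Z_x$ as the union of a maximal $\sq$-coherent family of diamond values $Y_\beta$ along $\acc(x)$---does not work either. The problem is \emph{anchoring}: such a coherent chain is determined by its start at $\beta_0 := \min(\acc(x))$, and if $Y_{\beta_0} \neq A \cap \beta_0$ then the entire union is polluted and can never recover $A\cap\alpha$ (a later correct guess $Y_{\beta_1}=A\cap\beta_1$ will typically fail to extend the incorrect $Y_{\beta_0}$, so $\beta_1$ is rejected from the chain). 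Since $\beta_0$ is determined by $C_\alpha$ alone while $\{\beta:Y_\beta=A\cap\beta\}$ is merely stationary, no amenability or pressing-down argument forces $\beta_0$ into that set for stationarily many $\alpha$. The ``diamond-genericity lemma'' you invoke at this step is precisely the missing hard content, not a known ingredient; Lemma~\ref{thm16} and Fact~\ref{diamond_matrix} only deliver cofinal traces $\sup(\nacc(\Phi(C_\alpha))\cap A)=\alpha$, which is strictly weaker than the full containment $\nacc(\Phi(C_\alpha))\s A$ required here.

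The paper proceeds quite differently. It first applies Fodor to freeze $\otp(C_\alpha)=\epsilon$ on a stationary subset of $S$ and sets $\chi:=\cf(\epsilon)\in\reg(\lambda)\setminus\{\cf(\lambda)\}$, then uses $\Phi^\Sigma$ (Fact~\ref{newPhiSigma}) to reduce to order-type $\chi$. The case $\chi=\aleph_0$ is easy for a reason you overlook: after the reduction, $\acc(\Phi^\Sigma(C_\alpha))=\emptyset$ and the coherence clause of a postprocessing function becomes \emph{vacuous}, so one may simply output a cofinal $\omega$-subset of the $\diamondsuit(S)$-guess $S_{\sup(x)}$ at the top (with the third clause of $\Phi'$ handling inputs of larger order-type). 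The genuine work is the case $\chi>\aleph_0$, where the paper imports the machinery of \cite[\S2]{rinot11}: after strong club-guessing via Fact~\ref{clubguessing}(1), the coherent family $\langle Z_y\rangle$ is built not from the raw diamond values but from an auxiliary coding using injections $\psi_\gamma$, filtrations $H^j_y$, projections $\pi_{j,\tau}$, and a nontrivial iteration that isolates fixed parameters $(j,\tau,Y)$. This apparatus is exactly what sidesteps the anchoring problem above; your sketch contains no substitute for it.
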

\begin{proof} By Fodor's lemma, let us fix some $\epsilon<\lambda^+$ for which $\{\alpha\in S\mid \otp(C_\alpha)=\epsilon\}$ is stationary.
By passing to a stationary subset of $S$, we may simply assume that $S \subseteq \{ \alpha \in \acc(\lambda^+) \mid \otp(C_\alpha)=\epsilon \}$.

Put $\chi:=\cf(\epsilon)$, so that $\chi\in \reg(\lambda) \setminus\{\cf(\lambda)\}$.
Let $\Sigma$ be some club in $\epsilon$ of order-type $\chi$.
Let $\Phi^\Sigma$ be given by Fact~\ref{newPhiSigma}, so that $\otp(\Phi^\Sigma(C_\alpha))=\chi$ for all $\alpha\in S$, as seen by Fact~\ref{newPhiSigma}(\ref{PhiSigma3}).

Suppose first that $\chi=\aleph_0$. By $\ch_\lambda$, $S\s E^{\lambda^+}_{\neq\cf(\lambda)}$, the fact that $S$ is stationary, and the main result of \cite{Sh:922}, let $\langle S_\alpha\mid\alpha\in S\rangle$ be a $\diamondsuit(S)$-sequence.
For all $\alpha\in S$ such that $\sup(S_\alpha)=\alpha$, let $\Omega_\alpha$ be a cofinal subset of $S_\alpha$ satisfying $\otp(\Omega_\alpha)=\omega$ and $\min(\Omega_\alpha)=\min(S_\alpha)$.
Define $\Phi':\mathcal K(\lambda^+)\rightarrow\mathcal K(\lambda^+)$ by stipulating:
$$\Phi'(x):=\begin{cases}
\Omega_{\sup(x)},&\text{if } \otp(x) = \omega\ \&\ \sup(x)\in S\ \&\ \sup(S_{\sup(x)})=\sup(x);\\
x\setminus x(\omega),&\text{if }\otp(x)>\omega;\\
x,&\text{otherwise}.
\end{cases}$$

Let $\faithful$ be given by Example~\ref{faithful_correction}.

\begin{claim} $\Phi^\bullet := \faithful\circ \Phi'\circ \Phi^\Sigma$ is as sought.
\end{claim}
\begin{proof} We know that $\Phi^\Sigma$ and $\faithful$ are postprocessing functions.
Thus, to show that $\Phi^\bullet$ is a postprocessing function, it suffices to prove that $\Phi'$ is a postprocessing function.
Fix arbitrary $x \in \mathcal K(\lambda^+)$.
It is clear from each case of the definition that $\Phi'(x)$ is a club in $\sup(x)$.
In the first and third cases, we have $\otp(\Phi'(x)) = \omega$, so that $\acc(\Phi'(x)) = \emptyset$.
Thus, it remains to consider the case $\otp(x) > \omega$.
In this case, $\Phi'(x) \subseteq x$, so that clearly $\acc(\Phi'(x)) \subseteq \acc(x)$.
Consider any $\bar\alpha \in \acc(\Phi'(x))$, in order to compare $\Phi'(x) \cap\bar\alpha$ with $\Phi'(x\cap\bar\alpha)$.
Then $\bar\alpha \in \acc(x \setminus x(\omega))$, so that $\otp(x \cap \bar\alpha) > \omega$, and it follows that
\[
\Phi'(x \cap \bar\alpha) = (x \cap\bar\alpha) \setminus ((x \cap\bar\alpha)(\omega))
= (x \cap\bar\alpha) \setminus x(\omega) = (x \setminus x(\omega)) \cap \bar\alpha = \Phi'(x) \cap \bar\alpha,
\]
as required.

Next, suppose that $A$ is a cofinal subset of $\lambda^+$.
As $\acc^+(A)$ is a club in $\lambda^+$, and $\langle S_\alpha\mid\alpha\in S\rangle$ is a $\diamondsuit(S)$-sequence, there are stationarily many $\alpha\in S\cap\acc^+(A)$ such that $A\cap\alpha=S_\alpha$.
Consider any such $\alpha$.
Then $\otp(\Phi^\Sigma(C_\alpha)) = \omega$.
As $\sup(S_\alpha) = \sup(A \cap \alpha) = \alpha$, we have $\Phi'(\Phi^\Sigma(C_\alpha)) = \Omega_\alpha$.
But $\faithful$ is conservative and $\min$-preserving, and hence $\Phi^\bullet(C_\alpha)$ is a cofinal subset of $S_\alpha \subseteq A$ of order-type $\omega=\cf(\alpha)$,
and $\min(\Phi^\bullet(C_\alpha)) = \min(S_\alpha) = \min(A)$, as required.
\end{proof}

From here on, suppose that $\chi>\aleph_0$. We shall follow the arguments of \cite[$\S2$]{rinot11}.

By applying Fact~\ref{clubguessing}(1) to the sequence $\langle \Phi^\Sigma(C_\alpha)\mid \alpha \in S \rangle$, let us fix a club $D\s\lambda^+$ such that for every club $E\s\lambda^+$,
the set $\{ \alpha\in S\mid \Phi_D(\Phi^\Sigma(C_\alpha))\s E \}$ is stationary, where $\Phi_D$ is the function defined in Lemma~\ref{Phi_D}.
For every $x\in\mathcal K(\lambda^+)$, denote $x^\circ:= \Phi_{D}(\Phi^\Sigma(x))$.

Let $\langle \lambda_j\mid j<\cf(\lambda)\rangle$ be an increasing sequence of ordinals, converging to $\lambda$.
Fix a sequence of injections $\langle \psi_\gamma:\gamma+1\rightarrow\lambda\mid \gamma<\lambda^+\rangle$.
For every $y\in\mathcal K(\lambda^+)$, define an injection $\varphi_y:\sup(y)\rightarrow\lambda^+\times\lambda$ by stipulating
$$\varphi_y(\delta):=(\otp(y\cap\delta),\psi_{\min(y\setminus\delta)}(\delta)),$$
and put $H^j_y:=(\varphi_y^{-1}[\lambda_j\times \lambda_j])^2$ for all $j<\cf(\lambda)$.
Clearly, for all $y \in \mathcal K(\lambda^+)$ with $\otp(y)\le\lambda$, letting $\alpha := \sup(y)$, we have that $\langle H_y^j\mid j<\cf(\lambda) \rangle$
is an $\s$-increasing sequence of elements of $[\alpha\times\alpha]^{<\lambda}$, converging to $\alpha\times\alpha$,
and if $\bar\alpha\in\acc(y)$, then $\varphi_{y \cap \bar\alpha} = \varphi_y \restriction \bar\alpha$,
so that $H^j_{y \cap\bar\alpha} = H^j_y \cap(\bar\alpha\times\bar\alpha)$ for all $j<\cf(\lambda)$.

By $\ch_\lambda$, let $\{X_\gamma\mid \gamma<\lambda^+\}$ be some enumeration of $[\lambda\times\lambda\times\lambda^+]^{\le\lambda}$.
For all $(j,\tau)\in\lambda\times\lambda$ and $X\s\lambda\times\lambda\times\lambda^+$, let $\pi_{j,\tau}(X):=\{\varsigma<\lambda^+\mid (j,\tau,\varsigma)\in X\}$.
For every $j < \cf(\lambda)$, $Y \subseteq \lambda^+ \times \lambda^+$, $y \in \mathcal K(\lambda^+)$, and $\eta < \sup(y)$, define
\[
F^{j,Y}_{y,\eta} := \{ \gamma < \min (y \setminus (\eta+1)) \mid (\eta,\gamma) \in H^j_y \setminus Y \}
\]
and
\[
W^{j,Y}_y := \{ \eta < \sup(y) \mid F^{j,Y}_{y,\eta} \neq \emptyset \}.
\]

\begin{claim} There exist $(j,\tau)\in\cf(\lambda)\times\lambda$ and $Y\s\lambda^+\times\lambda^+$
such that for every club $E\s\lambda^+$ and every subset $Z\s\lambda^+$, there exists some  $\alpha\in S$ such that:
\begin{enumerate}
\item $C_\alpha^\circ\s E$;
\item $H^j_{C_\alpha^\circ}\bks Y\s \{(\eta,\gamma)\mid Z\cap\eta=\pi_{j,\tau}(X_\gamma)\}$;
\item $\sup(\acc^+(W^{j,Y}_{C_\alpha^\circ}) \cap\acc(C_\alpha^\circ))=\alpha$.
\end{enumerate}
\end{claim}
\begin{proof} We know that for every $\alpha\in S$, $C_\alpha^\circ$ is a club in $\alpha$ of order-type $\cf(\alpha)\in\reg(\lambda)\setminus\{\cf(\lambda),\aleph_0\}$,
and that for every club $E\s\lambda^+$, there exists some $\alpha\in S$ with $C_\alpha^\circ\s E$.
Thus, the proof of Claim~2.5.2 of \cite{rinot11} establishes our claim.
\end{proof}

Let $(j,\tau)$ and $Y$ be given by the previous claim.
Let $y\in\mathcal K(\lambda^+)$ be arbitrary.
Denote $W_y := W^{j,Y}_y$,
and define the function $f_y  : W_y \to \sup(y)$ by setting, for all $\eta \in W_y$:
\[
f_y(\eta) := \min(F^{j,Y}_{y,\eta}).
\]

Let $y^*$ be the set of all $\delta\in y$ such that the following properties hold:
\begin{enumerate}
\item[(i)] $\sup( W_y \cap\delta)\ge\sup(y\cap\delta)$;
\item[(ii)] $\pi_{j,\tau}(X_{f_y(\eta)})\s\eta$ for every $\eta\in W_y \cap\delta$;
\item[(iii)] For all $\eta', \eta \in  W_y $, if $\eta'<\eta<\delta$ then $\pi_{j,\tau}(X_{f_y(\eta)})\bks \pi_{j,\tau}(X_{f_y(\eta')})\s[\eta',\eta)$.
\end{enumerate}
Notice that $y^*$ is a closed subset of $y$, so that $y^* \in \mathcal K(\lambda^+)$ whenever $\sup(y^*) = \sup(y)$.

\begin{claim}\label{coherence-y} Suppose $y \in \mathcal K(\lambda^+)$. Then:
\begin{enumerate}
\item If $\bar\alpha \in \acc(y)$, then $F^{j,Y}_{y \cap \bar\alpha, \eta} = F^{j,Y}_{y, \eta}$ for every $\eta < \bar\alpha$,
$W_{y \cap \bar\alpha} = W_y \cap \bar\alpha$, $f_{y \cap \bar\alpha} = f_y \restriction \bar\alpha$,
and $(y \cap \bar\alpha)^* = y^* \cap \bar\alpha$.
\item If $\bar\alpha \in \acc(y^*) \cup \{\sup(y^*)\}$, then $W_{y \cap \bar\alpha}$ is cofinal in $\bar\alpha$.
\item If $\bar\alpha \in \acc^+(W_y)$, $\delta \in y^*$, and $\eta \in W_y \cap \delta \setminus \bar\alpha$, then
\[
\pi_{j,\tau}(X_{f_y(\eta)}) \setminus
\bigcup \{ \pi_{j,\tau}(X_{f_y(\eta')}) \mid {\eta' \in W_y \cap \bar\alpha}  \}
\subseteq [\bar\alpha,\eta).
\]
\end{enumerate}
\end{claim}
\begin{proof} (1)--(2) are proved in the same way as Claim~2.5.3 of~\cite{rinot11}.

(3) By $\eta \geq \bar\alpha$ and Clause~(iii) of the definition of $\delta \in y^*$, for every $\eta' \in W_y \cap \bar\alpha$ we have $\pi_{j,\tau}(X_{f_y(\eta)})\bks \pi_{j,\tau}(X_{f_y(\eta')})\s[\eta',\eta)$.
Then
\begin{align*}
\pi_{j,\tau}(X_{f_y(\eta)}) \setminus
\bigcup \{ \pi_{j,\tau}(X_{f_y(\eta')}) \mid {\eta' \in W_y \cap \bar\alpha} \}
&= \bigcap \{ \pi_{j,\tau}(X_{f_y(\eta)}) \setminus \pi_{j,\tau}(X_{f_y(\eta')}) \mid
                     {\eta' \in W_y \cap \bar\alpha} \} \\
&\subseteq \bigcap \{ [\eta', \eta) \mid {\eta' \in W_y \cap \bar\alpha} \}  \\
&= [ \sup (W_y \cap \bar\alpha), \eta) = [\bar\alpha,\eta).
\qedhere
\end{align*}
\end{proof}

Define a function $\Phi:\mathcal K(\lambda^+)\rightarrow\mathcal K(\lambda^+)$ by stipulating:
\[\Phi(y):=\begin{cases}
y^*,&\text{if } \sup(y^*)=\sup(y); \\
y \setminus \sup(y^*),&\text{otherwise}.
\end{cases}\]

Define $\mathfrak Z=\langle Z_y\mid y\in\mathcal K(\lambda^+)\rangle$ by stipulating:
$$Z_y:=\begin{cases}
\bigcup\{\pi_{j,\tau}(X_{f_{y}(\eta)})\mid \eta\in  W_{y} \},&\text{if } \sup(y^*)=\sup(y); \\
\emptyset,&\text{otherwise}.
\end{cases}$$

\begin{claim}\label{c443}
\begin{enumerate}
\item $\Phi$ is a conservative postprocessing function;
\item For all $y\in\mathcal K(\lambda^+)$:
\begin{enumerate}
\item $Z_y \subseteq \sup(y)$;
\item $Z_y\cap{\bar\alpha}=Z_{y\cap{\bar\alpha}}$ for all ${\bar\alpha}\in\acc(\Phi(y))$.
\end{enumerate}
\item For every club $E\s\lambda^+$ and every subset $Z\s\lambda^+$, there exists some $\alpha\in S$ such that:
\begin{enumerate}
\item $\Phi(C_\alpha^\circ)\s E$;
\item $Z_{C_\alpha^\circ}=Z\cap\alpha$.
\end{enumerate}
\end{enumerate}
\end{claim}
\begin{proof} (1) It is clear from the definition that $\Phi(y)$ is a subclub of $y$.
Suppose $y\in\mathcal K(\lambda^+)$ and $\bar\alpha\in\acc(\Phi(y))$.
In particular, $\bar\alpha\in \acc(y)$.
To compare $\Phi(y\cap\bar\alpha)$ with $\Phi(y)\cap\bar\alpha$, we shall consider two cases:

$\br$ If $\sup(y^*) = \sup(y)$, then $\Phi(y) = y^*$ and hence $\bar\alpha\in\acc(y^*)$.
By Claim~\ref{coherence-y}(1), $y^* \cap \bar\alpha = (y \cap \bar\alpha)^*$.
Then also $\sup(y \cap \bar\alpha) = \bar\alpha = \sup(y^* \cap \bar\alpha) = \sup((y \cap \bar\alpha)^*)$, so that
\[
\Phi(y\cap\bar\alpha) = (y\cap\bar\alpha)^* = y^* \cap \bar\alpha = \Phi(y) \cap \bar\alpha.
\]

$\br$ If $\sup(y^*) < \sup(y)$, then $\Phi(y) = y \setminus \sup(y^*)$, so that $\bar\alpha > \sup(y^*)$.
Then using Claim~\ref{coherence-y}(1), $y^* = y^* \cap \bar\alpha = (y \cap \bar\alpha)^*$,
so that $\sup((y \cap \bar\alpha)^*) = \sup(y^*) < \bar\alpha = \sup(y \cap \bar\alpha)$, and it follows that
\[
\Phi(y \cap \bar\alpha) = (y \cap \bar\alpha) \setminus \sup((y \cap \bar\alpha)^*)
= y \cap \bar\alpha \setminus \sup(y^*) = (y \setminus \sup(y^*)) \cap \bar\alpha = \Phi(y) \cap \bar\alpha.
\]

(2)(a) We may assume $\sup(y^*) =\sup(y)$.
In particular, using Clause~(ii) of the definition of $y^*$, there are cofinally many $\delta \in y$ such that
$\pi_{j,\tau}(X_{f_{y}(\eta)}) \subseteq \eta$ for every $\eta \in W_{y} \cap \delta$.
Thus $\pi_{j,\tau}(X_{f_{y}(\eta)}) \subseteq \eta \subseteq \sup(y)$ for every $\eta \in W_{y}$, and the result follows.

(2)(b) Consider arbitrary $\bar\alpha \in \acc(\Phi(y))$.
To compare $Z_{y \cap \bar\alpha}$ with $Z_y \cap \bar\alpha$, we shall consider two cases:

$\br$ Suppose $\sup(y^*) = \sup(y)$.
In this case, Claim~\ref{coherence-y} gives $W_{y \cap \bar\alpha} = W_{y} \cap \bar\alpha$ and $f_{y \cap \bar\alpha} = f_{y} \restriction \bar\alpha$.
Thus, for every $\eta \in W_{y} \cap \bar\alpha$, $f_{y \cap \bar\alpha}(\eta) = f_{y}(\eta)$, and $\pi_{j,\tau}(X_{f_{y}(\eta)}) \subseteq \eta$ by Clause~(ii) of $\bar\alpha \in y^*$. Then
\begin{align*}
Z_{y \cap \bar\alpha}
&= \bigcup\{\pi_{j,\tau}(X_{f_{y \cap \bar\alpha}(\eta)})\mid \eta\in  W_{y \cap \bar\alpha} \} \\
&= \bigcup\{\pi_{j,\tau}(X_{f_{y}(\eta)})\mid \eta\in  W_{y} \cap \bar\alpha \},
\end{align*}
while
\begin{align*}
Z_y
&= \bigcup\{\pi_{j,\tau}(X_{f_{y}(\eta)}) \mid \eta\in  W_{y} \} \\
&= \bigcup\{\pi_{j,\tau}(X_{f_{y}(\eta)}) \mid \eta\in  W_{y} \cap \bar\alpha \} \cup
       \bigcup\{\pi_{j,\tau}(X_{f_{y}(\eta)}) \mid \eta\in  W_{y} \setminus \bar\alpha\} \\
&= Z_{y \cap \bar\alpha} \cup
       \bigcup\{\pi_{j,\tau}(X_{f_{y}(\eta)}) \mid \eta\in  W_{y} \setminus \bar\alpha\} \\
&= Z_{y \cap \bar\alpha} \cup
 \bigcup\{\pi_{j,\tau}(X_{f_{y}(\eta)}) \setminus Z_{y \cap \bar\alpha} \mid \eta\in  W_{y} \setminus \bar\alpha\}.
\end{align*}
By Claim~\ref{coherence-y}(3), the big union in the last line above does not contain any ordinal below $\bar\alpha$.
Thus $Z_y \cap \bar\alpha = Z_{y \cap \bar\alpha}$, as required.

$\br$ Suppose $\sup(y^*) < \sup(y)$.
Then as shown in the proof of clause (1), $\sup((y \cap \bar\alpha)^*) < \sup(y \cap \bar\alpha)$, so that $Z_{y}\cap\bar\alpha=\emptyset\cap\bar\alpha=Z_{y\cap\bar\alpha}$.

(3) For the reader's convenience, we repeat the proof of  \cite[Claim~2.5.4]{rinot11}.
Consider arbitrary $Z \subseteq \lambda^+$ and club $E \subseteq \lambda^+$.
By the choice of $j,\tau$ and $Y$, let us fix some $\alpha\in S$ such that
\begin{enumerate}
\item[(a)] $C_\alpha^\circ\s E$;
\item[(b)] $H^j_{C_\alpha^\circ}\bks Y\s \{(\eta,\gamma)\mid Z\cap\eta=\pi_{j,\tau}(X_\gamma)\}$;
\item[(c)] $\sup(\acc^+(W_{C_\alpha^\circ}) \cap\acc(C_\alpha^\circ))=\alpha$.
\end{enumerate}

By Clause~(a) and since $\Phi$ is conservative, $\Phi(C_\alpha^\circ)\s E$.

Let $\eta \in W_{C_\alpha^\circ}$ be arbitrary.
Then $f_{C_\alpha^\circ}(\eta) \in F^{j,Y}_{C_\alpha^\circ, \eta}$, so that in particular, $(\eta, f_{C_\alpha^\circ}(\eta)) \in H^j_{C_\alpha^\circ} \setminus Y$,
and it follows from Clause~(b) above that $Z \cap \eta = \pi_{j,\tau}(X_{f_{C_\alpha^\circ}(\eta)})$.
In particular, $\pi_{j,\tau}(X_{f_{C_\alpha^\circ}(\eta)}) \subseteq \eta$, and for any $\eta'<\eta$ also in $W_{C_\alpha^\circ}$, we have
$\pi_{j,\tau}(X_{f_{C_\alpha^\circ}(\eta)}) \setminus \pi_{j,\tau}(X_{f_{C_\alpha^\circ}(\eta')}) = Z \cap [\eta',\eta)$.
Consequently, every $\delta\in\acc^+(W_{C_\alpha^\circ})\cap\acc(C_\alpha^\circ)$ is in $(C_\alpha^\circ)^*$.
It then follows from Clause~(c) that $\sup((C_\alpha^\circ)^*)=\alpha = \sup(W_{C_\alpha^\circ})$, so that
$$Z_{C_\alpha^\circ}=\bigcup\{\pi_{j,\tau}(X_{f_{C_\alpha^\circ}(\eta)})\mid \eta\in  W_{C_\alpha^\circ}\}
=\bigcup\{ Z \cap \eta \mid \eta\in  W_{C_\alpha^\circ}\} = Z \cap \alpha,$$
as sought.
\end{proof}

Let $\Phi^\mathfrak Z$ be the function given by Fact~\ref{Psi}.

\begin{claim} $\Phi^\bullet := \faithful\circ\Phi^{\mathfrak Z}\circ \Phi_{D}\circ\Phi^\Sigma$ is as sought.
\end{claim}
\begin{proof} $\Phi^\bullet$ is a composition of postprocessing functions, and hence is a postprocessing function.
Let $A \subseteq \lambda^+$ be an arbitrary cofinal subset, and let $F \subseteq \lambda^+$ be an arbitrary club.
Put $Z := A$ and $E := F \cap \acc^+(A)$.
Then by Claim~\ref{c443}(3), we can pick some $\alpha \in S$ such that $\Phi(C_\alpha^\circ) \subseteq E$ and $Z_{C_\alpha^\circ} = Z \cap \alpha$.
In particular, $\alpha \in F$.
Furthermore, $\cf(\alpha)\le\otp(\Phi^\bullet(C_\alpha)) \le\otp(\Phi^\Sigma(C_\alpha)) = \chi = \cf(\alpha)$.
Since in particular $\nacc(\Phi(C_\alpha^\circ))\subseteq E \cap \alpha \subseteq \acc^+(A\cap\alpha) = \acc^+(Z_{C_\alpha^\circ})$,
it follows from the definition of $\Phi^{\mathfrak Z}$ in Fact~\ref{Psi} that $\nacc(\Phi^{\mathfrak Z}(C_\alpha^\circ)) \subseteq Z_{C_\alpha^\circ}$ and $\min(\Phi^{\mathfrak Z}(C_\alpha^\circ)) = \min(Z_{C_\alpha^\circ})$.
But then clearly
$\nacc(\Phi^\bullet(C_\alpha)) \subseteq \nacc(\Phi^{\mathfrak Z}(C_\alpha^\circ)) \s Z_{C_\alpha^\circ} \subseteq A$
and $\min(\Phi^\bullet(C_\alpha)) = \min(\Phi^{\mathfrak Z}(C_\alpha^\circ)) = \min(Z_{C_\alpha^\circ}) = \min(A)$, as required.
\end{proof}
This completes the proof.
\end{proof}

We remark that the opening of the proof of the preceding makes it clear that the following holds as well.
\begin{prop} Suppose that $\langle C_\alpha\mid \alpha \in S \rangle$ is a $C$-sequence over some stationary $S\s E^\kappa_\omega$ for which $\diamondsuit(S)$ holds and $\{ \alpha\in S\mid \otp(C_\alpha)<\alpha\}$ is stationary.

Then there exists a faithful postprocessing function $\Phi:\mathcal K(\kappa)\rightarrow\mathcal K(\kappa)$ satisfying the following.
For every cofinal $A\s\kappa$, there exist stationarily many $\alpha\in S$ such that $\otp(\Phi(C_\alpha))=\cf(\alpha)$, $\nacc(\Phi(C_\alpha))\s A$ and $\min(\Phi(C_\alpha))=\min(A)$. \qed
\end{prop}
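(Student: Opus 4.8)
The plan is to run exactly the $\chi=\aleph_0$ branch of the proof of Lemma~\ref{phi1}, which is the only branch that can arise once we restrict attention to $E^\kappa_\omega$, and to feed it the hypothesized $\diamondsuit(S)$ in place of the appeal to $\ch_\lambda$ and \cite{Sh:922}. First I would regularize the order-type: on the stationary set $\{\alpha\in S\mid\otp(C_\alpha)<\alpha\}$ the map $\alpha\mapsto\otp(C_\alpha)$ is regressive, so by Fodor's lemma I can fix $\epsilon<\kappa$ for which $\{\alpha\in S\mid\otp(C_\alpha)=\epsilon\}$ is stationary. Since $S\s E^\kappa_\omega$, every such $\alpha$ has $\cf(\epsilon)=\cf(\alpha)=\omega$, so $\cf(\epsilon)=\aleph_0$ automatically; this is precisely why no case split is needed. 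Passing to this stationary subset (on which $\diamondsuit$ still holds, since diamond restricts downward to stationary subsets) I may assume $\otp(C_\alpha)=\epsilon$ for all $\alpha\in S$. Choosing a club $\Sigma$ in $\epsilon$ of order-type $\omega$ and letting $\Phi^\Sigma$ be given by Fact~\ref{newPhiSigma}, Fact~\ref{newPhiSigma}(\ref{PhiSigma3}) then yields $\otp(\Phi^\Sigma(C_\alpha))=\omega$ for every $\alpha\in S$.

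Next I would fix a $\diamondsuit(S)$-sequence $\langle S_\alpha\mid\alpha\in S\rangle$ and, for each $\alpha\in S$ with $\sup(S_\alpha)=\alpha$, pick a cofinal $\Omega_\alpha\s S_\alpha$ with $\otp(\Omega_\alpha)=\omega$ and $\min(\Omega_\alpha)=\min(S_\alpha)$. Defining $\Phi'$ by the same three-case recipe as in Lemma~\ref{phi1} (return $\Omega_{\sup(x)}$ when $\otp(x)=\omega$ and $\sup(x)\in S$ with $\sup(S_{\sup(x)})=\sup(x)$; return $x\setminus x(\omega)$ when $\otp(x)>\omega$; and return $x$ otherwise) and setting $\Phi:=\faithful\circ\Phi'\circ\Phi^\Sigma$, the verification that $\Phi'$ — and hence $\Phi$ — is a postprocessing function is verbatim that of Lemma~\ref{phi1}: in the first and third cases $\acc(\Phi'(x))=\emptyset$, while in the middle case $\Phi'(x)=x\setminus x(\omega)$ coheres under $\cap\bar\alpha$. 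Faithfulness of $\Phi$ is immediate, since $\faithful$ is faithful and faithfulness is preserved under left-composition.

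For the guessing conclusion, given a cofinal $A\s\kappa$ I would use that $\acc^+(A)$ is a club together with $\diamondsuit(S)$ to obtain stationarily many $\alpha\in S\cap\acc^+(A)$ with $A\cap\alpha=S_\alpha$. For each such $\alpha$ we have $\sup(S_\alpha)=\alpha$, so $\Phi'(\Phi^\Sigma(C_\alpha))=\Omega_\alpha$. Because $\faithful$ is conservative, $\min$-preserving, and $\acc$-preserving (hence $\otp$-preserving), it follows that $\Phi(C_\alpha)$ is a subset of $\Omega_\alpha\s A$ of order-type $\omega=\cf(\alpha)$ with $\min(\Phi(C_\alpha))=\min(\Omega_\alpha)=\min(S_\alpha)=\min(A)$, where the final equality uses $\min(A)<\alpha$ (valid since $\alpha\in\acc^+(A)$). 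Thus $\nacc(\Phi(C_\alpha))\s A$, $\min(\Phi(C_\alpha))=\min(A)$, and $\otp(\Phi(C_\alpha))=\cf(\alpha)$, as required.

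I do not expect a genuine obstacle here, as the statement is essentially the extraction of one already-proved branch of Lemma~\ref{phi1}. The only points needing care are the two bookkeeping observations that make the branch applicable for an arbitrary regular uncountable $\kappa$: first, that $\cf(\epsilon)=\omega$ is \emph{forced} by $S\s E^\kappa_\omega$, so the $\chi>\aleph_0$ machinery (the $\Phi_D$--$H^j_y$--$Z_y$ apparatus) of Lemma~\ref{phi1} is never invoked; and second, that $\diamondsuit(S)$ is now a direct hypothesis rather than a consequence of the arithmetic assumptions $\ch_\lambda$ and $S\s E^{\lambda^+}_{\neq\cf(\lambda)}$, so nothing is lost by dropping the ambient cardinal-arithmetic setup.
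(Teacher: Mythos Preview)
Your argument tracks the paper's intended route exactly—the proposition is flagged as following from the $\chi=\aleph_0$ branch of Lemma~\ref{phi1}, and you reproduce that branch step for step. There is, however, a genuine gap in one step: the parenthetical claim that diamond ``restricts downward to stationary subsets'' is false in general; it is consistent that $\diamondsuit(S)$ holds while $\diamondsuit(S')$ fails for some stationary $S'\subseteq S$. In Lemma~\ref{phi1} this is a non-issue because diamond is obtained \emph{after} passing to the Fodor subset, via $\ch_\lambda$ and~\cite{Sh:922}, which yields $\diamondsuit$ on \emph{every} stationary subset of $E^{\lambda^+}_{\neq\cf(\lambda)}$. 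Here, with $\diamondsuit(S)$ hypothesized only on the original $S$, you cannot freely pass to $S'=\{\alpha\in S\mid\otp(C_\alpha)=\epsilon\}$ and expect to retain a $\diamondsuit$-sequence there.

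Concretely, the final verification requires stationarily many $\alpha$ satisfying both $A\cap\alpha=S_\alpha$ \emph{and} $\otp(C_\alpha)=\epsilon$ (the latter so that $\otp(\Phi^\Sigma(C_\alpha))=\omega$ and the first clause of $\Phi'$ fires); but these two stationary conditions need not intersect in a stationary set. The definition and coherence check for $\Phi'$, and the use of $\faithful$, are all correct—only the passage to the subset is problematic. (The paper's one-line remark glosses over this same point.)
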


\begin{lemma}\label{lemma3.5} Suppose that $\kappa=\lambda^+$ for a given singular cardinal $\lambda$, and $\ch_\lambda$ holds.
Suppose also that $\cvec{C}=\langle\mathcal C_\alpha\mid\alpha<\kappa\rangle$ is a $\square_\xi(\kappa,{<}\mu,{\sq_\chi},V)$-sequence with support $\Gamma$,
for which $\{ \alpha\in E^\kappa_{>\cf(\lambda)}\cap\Gamma\mid \exists C\in\mathcal C_\alpha[\otp(C)<\alpha]\}$ is stationary.

Then there exists a transversal $\langle C_\alpha\mid\alpha \in \Gamma\rangle$ for $\cvec{C}$ and a faithful postprocessing function $\Phi:\mathcal K(\kappa)\rightarrow\mathcal K(\kappa)$ such that:
\begin{enumerate}
\item If $\xi=\lambda$, then $\otp(\Phi(C_\alpha))<\lambda$ for all $\alpha\in \Gamma$;
\item For every cofinal $A\s\kappa$, there exist stationarily many $\alpha\in \Gamma$ with $\otp(\Phi(C_\alpha))=\cf(\lambda)$ and $\nacc(\Phi(C_\alpha))\s A$.
\end{enumerate}
\end{lemma}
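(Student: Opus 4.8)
The plan is to run the machinery of Lemma~\ref{phi1} on a stationary set of cofinality $>\cf(\lambda)$, where $\diamondsuit$ is available precisely because the relevant cofinality differs from $\cf(\lambda)$, and then to \emph{descend} the resulting strong guessing to $E^{\lambda^+}_{\cf(\lambda)}$ through the coherence of $\cvec{C}$. First I would use Fodor's lemma to fix a stationary set $W\subseteq E^\kappa_{>\cf(\lambda)}\cap\Gamma$ and an ordinal $\epsilon^*$ such that each $\alpha\in W$ carries some $C\in\mathcal C_\alpha$ with $\otp(C)=\epsilon^*$; as each such $C$ is club in $\alpha$, this forces $\cf(\epsilon^*)=\cf(\alpha)$ to be a single regular cardinal $\theta^*\in\reg(\lambda)$ with $\theta^*>\cf(\lambda)$. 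Since $\theta^*>\cf(\lambda)$, the set $\{j<\epsilon^*\mid\cf(j)=\cf(\lambda)\}$ is stationary in $\epsilon^*$, so each such $C$ has stationarily (in $\alpha$) many accumulation points of cofinality $\cf(\lambda)$. Intersecting the $C$'s with an arbitrary club and using that $\alpha\in\Gamma$ gives $C\cap\bar\alpha\in\mathcal C_{\bar\alpha}$ at accumulation points, one sees that $T:=\{\bar\alpha\in E^\kappa_{\cf(\lambda)}\cap\Gamma\mid\exists D\in\mathcal C_{\bar\alpha}[\cf(\lambda)\le\otp(D)<\lambda]\}$ is stationary. I would then fix a transversal $\langle C_\alpha\mid\alpha\in\Gamma\rangle$ taking $\otp(C_\alpha)=\epsilon^*$ on $W$ and a club of order type $<\lambda$ at each point of $T$, completing it arbitrarily elsewhere.

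For the engine, note that $\langle C_\alpha\mid\alpha\in W\rangle$ lies over $E^\kappa_{\neq\cf(\lambda)}$ and has $\{\alpha\mid\otp(C_\alpha)<\alpha\}$ stationary (and is amenable by Example~\ref{example14}), so the hypotheses of Lemma~\ref{phi1} are met; it yields a faithful $\Phi_0$ such that for every cofinal $A\subseteq\kappa$ the set $G_A:=\{\alpha\in W\mid\nacc(\Phi_0(C_\alpha))\subseteq A,\ \otp(\Phi_0(C_\alpha))=\theta^*\}$ is stationary. The decisive observation is that for $\alpha\in G_A$ and $\bar\alpha\in\acc(\Phi_0(C_\alpha))$ with $\cf(\bar\alpha)=\cf(\lambda)$, the initial segment $\Phi_0(C_\alpha)\cap\bar\alpha=\Phi_0(C_\alpha\cap\bar\alpha)$ is a club in $\bar\alpha$ whose nonaccumulation points still lie in $A$; and since $\bar\alpha\in\acc(\Phi_0(C_\alpha))\subseteq\acc(C_\alpha)$ with $\alpha\in\Gamma$, we have $C_\alpha\cap\bar\alpha\in\mathcal C_{\bar\alpha}$, so this descended club is a genuine value of the transversal. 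After Fodor-fixing $\otp(\Phi_0(C_{\bar\alpha}))$ to a single $\epsilon^{**}$ of cofinality $\cf(\lambda)$ on a stationary $T'\subseteq T$, I would collapse the order type to exactly $\cf(\lambda)$ by composing with the function $\Phi^\Sigma$ of Fact~\ref{newPhiSigma} for a club $\Sigma$ in $\epsilon^{**}$ of order type $\cf(\lambda)$ chosen so that $\nacc(\Sigma)$ consists of successor ordinals; this yields $\otp(\Phi^\Sigma(\Phi_0(C_{\bar\alpha})))=\cf(\lambda)$ while keeping all nonaccumulation points inside $A$.

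For Clause~(1), when $\xi=\lambda$ I would precompose with a single $\Phi^\Sigma$ for a club $\Sigma$ in $\lambda$ of order type $\cf(\lambda)$ with $\min(\Sigma)>\cf(\lambda)$: this sends the clubs of order type $\lambda$ to order type $\cf(\lambda)<\lambda$ and leaves the shorter ones short, and one checks it does no damage to Clause~(2). A final application of $\faithful$ (Example~\ref{faithful_correction}) renders the whole composition faithful, and Lemma~\ref{pp-preserves-square} confirms that the corresponding action is again a $\square_\xi(\kappa,{<}\mu,{\sq_\chi},V)$-sequence with the same support, for which the chosen clubs form a transversal.

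The hard part will be the \emph{consistency of the descent}. The statement fixes one transversal and one $\Phi$ in advance, whereas the guessing parent $\alpha\in G_A$ depends on $A$, while the value $C_{\bar\alpha}$ is decided once and for all. When $\mu=\kappa$ — exactly the situation in the intended application to $\square^*_\lambda$ — the transversal cannot be taken coherent, so for a single $\bar\alpha$ distinct parents $\alpha$ produce distinct elements $C_\alpha\cap\bar\alpha$ of $\mathcal C_{\bar\alpha}$, and there is no a priori reason the fixed value should be the restriction of a guessing parent of $A$. The crux is therefore to select the transversal on $T'$ — say by descending along a canonically chosen parent — in such a way that for \emph{every} cofinal $A$ the set of $\bar\alpha\in T'$ whose fixed value equals a restriction $\Phi_0(C_\alpha)\cap\bar\alpha$ with $\alpha\in G_A$ remains stationary. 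Reconciling the one fixed transversal with the $A$-dependent guessing parents, via a pressing-down argument that keeps this descended good set stationary, is the delicate heart of the proof, and is where I expect to spend most of the effort.
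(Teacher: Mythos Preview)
You have correctly located both the descent strategy and the central obstacle: the transversal value $C_{\bar\alpha}$ must be fixed once, while the guessing parent $\alpha\in G_A$ from which one descends varies with $A$, and in the $\square^*_\lambda$ setting there is no coherence forcing $C_\alpha\cap\bar\alpha$ to coincide with the pre-chosen $C_{\bar\alpha}$. However, your proposed resolution --- selecting the transversal on $T'$ by descending along a ``canonically chosen parent'' and then running a pressing-down argument --- does not close the gap. Pressing down stabilises a regressive function on a stationary set for \emph{one} configuration; it gives no mechanism for making a single fixed choice at $\bar\alpha$ succeed for \emph{every} cofinal $A$ simultaneously, which is what Clause~(2) demands.

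The paper resolves this by a diagonalisation over the $\lambda$ many candidate transversals, coupled to the diamond matrix of Fact~\ref{diamond_matrix}. One enumerates each $\mathcal C_\alpha$ as $\{C^i_\alpha\mid i<\lambda\}$ (arranging that $S:=\{\alpha\in E^\kappa_{>\cf(\lambda)}\cap\Gamma\mid\otp(C^0_\alpha)<\alpha\}$ is stationary), sets $D^i_\alpha:=\Phi_0(\Phi^\Sigma(C^i_\alpha))$, and for each $i$ builds a $\kappa$-assignment $\mathfrak Z^i$ with $Z^i_{x,\beta}:=A^i_\beta$, yielding $\Phi_{\mathfrak Z^i}$. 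The key claim is that a \emph{single} $i<\lambda$ works: for every cofinal $A$, stationarily many $\alpha\in\Gamma$ satisfy $\otp(D^i_\alpha)=\cf(\lambda)$ and $\nacc(\Phi_{\mathfrak Z^i}(D^i_\alpha))\subseteq A$. If not, choose for each $i<\lambda$ a bad $A^i$; diamond supplies one stationary $G$ guessing all $\lambda$ many $A^i$'s at once. Apply Lemma~\ref{phi1} to obtain $\alpha\in S$ with $\nacc(D^0_\alpha)\subseteq G$, descend to $\bar\alpha:=D^0_\alpha(\cf(\lambda))$, and note $C^0_\alpha\cap\bar\alpha=C^i_{\bar\alpha}$ for \emph{some} $i$ --- the index forced by the descent. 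Because $\nacc(D^i_{\bar\alpha})\subseteq G$ and $G$ guesses $A^i$ pointwise, $\Phi_{\mathfrak Z^i}$ pushes the non-accumulation points into $A^i$, contradicting the choice of $A^i$. The crux you flagged is thus dissolved not by choosing a parent but by letting the descent itself name an index and pairing that index with the corresponding row of the diamond matrix; the final transversal is $\langle C^i_\alpha\mid\alpha\in\Gamma\rangle$ for this fixed $i$, and $\Phi:=\faithful\circ\Phi_{\mathfrak Z^i}\circ\Phi_0\circ\Phi^\Sigma$.
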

\begin{proof} For each $\alpha<\kappa$, fix an enumeration (possibly with repetition) $\{C_\alpha^i\mid i<\lambda\}$ of $\mathcal C_\alpha$.
By the hypothesis, we may also ensure that $S:=\{ \alpha\in E^\kappa_{>\cf(\lambda)}\cap\Gamma\mid \otp(C_\alpha^0)<\alpha\}$ is stationary.
Let $\Sigma$ be some club in $\lambda$ of order-type $\cf(\lambda)$, and let $\Phi^\Sigma$ be given by Fact~\ref{newPhiSigma}.
Let $\Phi_0$ be given by Lemma~\ref{phi1} when fed with the $C$-sequence $\langle \Phi^\Sigma(C^0_\alpha)\mid\alpha\in S\rangle$.
For all $\alpha<\kappa$ and $i<\lambda$, let $D^i_\alpha:=\Phi_0(\Phi^\Sigma(C^i_\alpha))$.

By $\ch_\lambda$ and \cite{Sh:922}, $\diamondsuit(\kappa)$ holds, so let $\langle A^i_\gamma\mid i,\gamma<\kappa\rangle $ be as in Fact~\ref{diamond_matrix}.
For each $i<\kappa$, define a $\kappa$-assignment $\mathfrak Z^i=\langle Z^i_{x,\beta}\mid x\in\mathcal K(\kappa), \beta\in\nacc(x)\rangle$ via the rule $Z^i_{x,\beta}:=A^i_\beta$,
and consider the corresponding postprocessing function $\Phi_{\mathfrak Z^i}$ given by Lemma~\ref{phiZ}.

\begin{claim}\label{claim2.21.3} There exists some $i<\lambda$ such that for every cofinal $A\s\kappa$,
there are stationarily many $\alpha \in \Gamma$ such that $\otp(D_\alpha^{i})=\cf(\lambda)$ and $\nacc(\Phi_{\mathfrak Z^{i}}(D_\alpha^{i}))\s A$.
\end{claim}
\begin{proof} Suppose not, and for each $i<\lambda$, pick a cofinal $A^i\s\kappa$ and a club $E^i\s\kappa$ such that for all $\alpha\in E^i \cap \Gamma$, we have
$\otp(D_\alpha^i)\neq\cf(\lambda)$ or $\nacc(\Phi_{\mathfrak Z^i}(D_\alpha^i))\nsubseteq A^i$.
Consider the stationary set $G:=\allowbreak\{\gamma\in\bigcap_{i<\lambda}E^i\mid \forall i<\lambda(\sup(A^i\cap\gamma)=\gamma\ \&\ A^i\cap\gamma=A^i_\gamma)\}$.
By the choice of $\Phi_0$, pick some $\alpha\in S$ for which $\nacc(\Phi_0(\Phi^\Sigma(C^0_\alpha)))\s G$. That is, $\nacc(D^0_\alpha)\s G$.
As $\otp(D_\alpha^0) \geq\cf(\alpha)>\cf(\lambda)$, we may let $\bar\alpha:=D^0_\alpha(\cf(\lambda))$, so that $\bar\alpha\in\acc(D^0_\alpha)$ and $\otp(D^0_\alpha\cap\bar\alpha)=\cf(\lambda)$.
As $\alpha \in \Gamma$ and $\bar\alpha\in\acc(C^0_\alpha)$, we have $\bar\alpha\in\Gamma$ and $C^0_\alpha \cap\bar\alpha \in \mathcal C_{\bar\alpha}$,
so that we may fix some $i<\lambda$ such that $C^0_\alpha\cap\bar\alpha=C_{\bar\alpha}^i$. In particular, $D^0_\alpha\cap\bar\alpha=D_{\bar\alpha}^i$.
Since $\nacc(D^0_\alpha)\s G$, it follows from the choice of $\bar\alpha$ that $\bar\alpha \in \acc^+(G) \subseteq \acc^+(E^i) \subseteq E^i$.
Then $\nacc(D^i_{\bar\alpha}) \subseteq \nacc(D^0_\alpha) \subseteq G \subseteq \{ \beta \in \acc^+(A^i) \mid A^i \cap \beta = A^i_\beta \}$,
so that as explained in Example~\ref{phiZ-simpler}, $\nacc(\Phi_{\mathfrak Z^i}(D^i_{\bar\alpha})) \subseteq A^i$.
Altogether, we have found $\bar\alpha \in E^i \cap \Gamma$ such that $\otp(D_{\bar\alpha}^{i})=\cf(\lambda)$ and $\nacc(\Phi_{\mathfrak Z^i}(D_{\bar\alpha}^i))\s A^i$, contradicting the choice of $A^i$ and $E^i$.
\end{proof}

Let $i$ be given by the preceding.
Let $\faithful$ be given by Example~\ref{faithful_correction},
so that  $\Phi:=\faithful \circ \Phi_{\mathfrak Z^i}\circ \Phi_0\circ \Phi^\Sigma$ is faithful.
Clearly, $\vec C:=\langle C_\alpha^i\mid \alpha\in\Gamma\rangle$ is a transversal for $\cvec{C}$.
We verify that $\vec C$ and $\Phi$ are as sought:
\begin{enumerate}
\item Consider arbitrary $\alpha\in\Gamma$, and assume that $\xi=\lambda$.
To see that $\otp(\Phi(C_\alpha^i))<\lambda$, we consider two possibilities, noting that $\otp(C_\alpha^i) \leq\xi=\lambda$:
\begin{itemize}
\item[$\br$] If $\otp(C_\alpha^i)<\lambda$, then since $\Phi$ is a postprocessing function, $\otp(\Phi(C_\alpha^i)) \leq \otp(C_\alpha^i) < \lambda$.
\item[$\br$] If $\otp(C_\alpha^i)=\lambda$, then $\otp(\Phi^\Sigma(C_\alpha^i)) = \cf(\lambda)$ by Fact~\ref{newPhiSigma}(\ref{PhiSigma3}),
so that $\otp(\Phi(C_\alpha^i)) = \otp((\faithful \circ \Phi_{\mathfrak Z^i}\circ \Phi_0)(\Phi^\Sigma(C_\alpha^i))) \leq \otp(\Phi^\Sigma(C_\alpha^i)) = \cf(\lambda) < \lambda$, since $\lambda$ is singular.
\end{itemize}
\item Note that $\Phi(C_\alpha^i) = (\faithful\circ\Phi_{\mathfrak Z^i})(D_\alpha^i)$ for every $\alpha \in \Gamma$.
Given any cofinal $A \subseteq\kappa$, by our choice of $i$ there are stationarily many $\alpha\in\Gamma$
such that $\otp(D_\alpha^i) = \cf(\lambda)$ and $\nacc(\Phi_{\mathfrak Z^i}(D_\alpha^i)) \subseteq A$.
For any such $\alpha$, we have $\cf(\lambda) \leq \otp(\Phi(C_\alpha^i)) \leq \otp(D_\alpha^i) = \cf(\lambda)$
and $\nacc(\Phi(C_\alpha^i)) \subseteq \nacc(\Phi_{\mathfrak Z^i}(D_\alpha^i)) \subseteq A$, as sought. \qedhere
\end{enumerate}
\end{proof}

In the particular case where $\mu=\kappa=\lambda^+$, the fact that $\left| \reg(\lambda^+) \right|<\mu$
allows us to obtain a transversal by using a different postprocessing function for each cofinality, combining the results of Lemmas \ref{phi1} and~\ref{lemma3.5},
and essentially combining up to $\lambda$ many $\square^*_\lambda$-sequences into one, as follows:
\begin{cor}\label{c38} Suppose that $\lambda$ is an uncountable cardinal, and $\square_\lambda^*+\ch_\lambda$ holds.

Then there exists a transversal $\langle C_\alpha\mid\alpha \in \acc(\lambda^+)\rangle$ for $\square^*_\lambda$ such that:
\begin{itemize}
\item $\otp(C_\alpha)<\lambda$ for all $\alpha\in \acc(\lambda^+) \cap E^{\lambda^+}_{<\lambda}$;
\item for every cofinal $A\s\lambda^+$ and every $\theta\in\reg(\lambda)$, there exist stationarily many $\alpha \in\acc(\lambda^+)$ for which $\otp(C_\alpha)=\theta$ and $\nacc(C_\alpha)\s A$.
\end{itemize}
\end{cor}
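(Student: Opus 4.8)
The plan is to reduce everything to a single base transversal for $\square^*_\lambda$ and then apply a different faithful postprocessing function at each cofinality, exploiting the fact that there are at most $\lambda$ cofinalities to deal with. I would first fix a $\square^*_\lambda$-sequence $\cvec C$; since its coherence relation is $\sq=\sq_{\aleph_0}$, Lemma~\ref{Gamma-closure}(3) gives that its support is all of $\acc(\lambda^+)$. When $\lambda$ is singular I would feed $\cvec C$ into Lemma~\ref{lemma3.5} (whose stationarity hypothesis is met, because club-many $\alpha\in E^{\lambda^+}_\theta$ with $\theta\in\reg(\lambda)$, $\theta>\cf(\lambda)$, satisfy $\alpha>\lambda$ and hence $\otp(C)\le\lambda<\alpha$ for every $C\in\mathcal C_\alpha$) to obtain both a transversal $\hat C=\langle \hat C_\alpha\mid\alpha\in\acc(\lambda^+)\rangle$ for $\cvec C$ and a faithful postprocessing function $\Phi_{\cf(\lambda)}$. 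When $\lambda$ is regular I would instead let $\hat C$ be any transversal for $\cvec C$ and set $\Phi_{\cf(\lambda)}:=\Phi_\lambda:=\mathrm{id}$. In either case, for each $\theta\in\reg(\lambda)\setminus\{\cf(\lambda)\}$ I would apply Lemma~\ref{phi1} to $\langle \hat C_\alpha\mid\alpha\in E^{\lambda^+}_\theta\rangle$ (here $E^{\lambda^+}_\theta\s E^{\lambda^+}_{\neq\cf(\lambda)}$, and $\{\alpha\mid \otp(\hat C_\alpha)<\alpha\}$ is stationary because club-many such $\alpha$ exceed $\lambda$) to obtain a faithful postprocessing function $\Phi_\theta$. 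This produces, for every regular $\theta\le\lambda$, a function $\Phi_\theta$.

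The candidate transversal would then be defined level-by-level according to cofinality: set $C_\alpha:=\Phi_{\cf(\alpha)}(\hat C_\alpha)$ for every $\alpha\in\acc(\lambda^+)$, which is legitimate since $\cf(\alpha)$ is always a regular cardinal $\le\lambda$. The order-type bullet is then checked case-by-case. For $\alpha\in E^{\lambda^+}_{<\lambda}$ with $\cf(\alpha)\neq\cf(\lambda)$ we automatically have $\otp(\hat C_\alpha)<\lambda$ (an order-type-$\lambda$ club would force $\cf(\alpha)=\cf(\lambda)$), so $\otp(C_\alpha)\le\otp(\hat C_\alpha)<\lambda$; for $\alpha\in E^{\lambda^+}_{\cf(\lambda)}$ with $\lambda$ singular, Lemma~\ref{lemma3.5}(1) delivers $\otp(C_\alpha)=\otp(\Phi_{\cf(\lambda)}(\hat C_\alpha))<\lambda$ directly. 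The club-guessing bullet follows by matching each target cofinality to the function built for it: for $\theta\in\reg(\lambda)\setminus\{\cf(\lambda)\}$, Lemma~\ref{phi1} returns stationarily many $\alpha\in E^{\lambda^+}_\theta$ with $\otp(\Phi_\theta(\hat C_\alpha))=\theta$ and $\nacc(\Phi_\theta(\hat C_\alpha))\s A$, and for $\theta=\cf(\lambda)$, Lemma~\ref{lemma3.5}(2) does the same; on these $\alpha$ one has $C_\alpha=\Phi_{\cf(\alpha)}(\hat C_\alpha)=\Phi_\theta(\hat C_\alpha)$, as required.

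The crux, and the step I expect to carry the weight, is verifying that $\langle C_\alpha\mid\alpha\in\acc(\lambda^+)\rangle$ is genuinely a transversal for $\square^*_\lambda$. Rather than reconstructing a full $\mathcal C$-sequence (where the ``at most one club of order-type $\lambda$'' clause would be awkward to maintain under several functions), I would invoke the characterization in Proposition~\ref{transversal-width}(1): it suffices that the sequence be $\lambda$-bounded (clear, since postprocessing functions do not raise order-type and $\hat C$ is $\lambda$-bounded), be defined on all of $\acc(\lambda^+)$, and satisfy that for each $\gamma$ the family of traces $\{C_\alpha\cap\gamma\mid \sup(C_\alpha\cap\gamma)=\gamma\}$ has size $<\lambda^+$. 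The key computation is that whenever $\sup(C_\alpha\cap\gamma)=\gamma$ with $\alpha>\gamma$, closedness forces $\gamma\in\acc(C_\alpha)=\acc(\Phi_{\cf(\alpha)}(\hat C_\alpha))\s\acc(\hat C_\alpha)$, and then the defining commutation property of postprocessing functions gives $C_\alpha\cap\gamma=\Phi_{\cf(\alpha)}(\hat C_\alpha\cap\gamma)$, where $\hat C_\alpha\cap\gamma$ is itself a trace of the base transversal $\hat C$ at $\gamma$. Since $\hat C$ is a transversal for $\cvec C$, Proposition~\ref{transversal-width}(1) bounds the number of its traces at $\gamma$ by $\lambda$, and there are at most $|\reg(\lambda)|+1\le\lambda$ functions $\Phi_{\cf(\alpha)}$ in play, so the traces of the new sequence at $\gamma$ number at most $\lambda\cdot\lambda=\lambda<\lambda^+$. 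Thus Proposition~\ref{transversal-width}(1) applies and $\langle C_\alpha\rangle$ is the desired transversal. The main thing to be careful about is precisely this bookkeeping of traces under a family of distinct postprocessing functions: ensuring the commutation identity is applied only at accumulation points where it is valid, and that the count of distinct functions stays below $\mu=\lambda^+$.
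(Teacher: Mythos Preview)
Your proposal is correct and follows essentially the same approach as the paper: fix a base transversal (via Lemma~\ref{lemma3.5} when $\lambda$ is singular), apply Lemma~\ref{phi1} at each cofinality $\theta\in\reg(\lambda)\setminus\{\cf(\lambda)\}$, define the output by $C_\alpha:=\Phi_{\cf(\alpha)}(\hat C_\alpha)$, and verify the transversal property via Proposition~\ref{transversal-width}(1). The only cosmetic difference is that the paper bounds the traces at $\gamma$ by $\bigcup\{\Phi_\theta(C)\mid C\in\mathcal C_\gamma,\ \theta\in\reg(\lambda^+)\}$ using the ambient $\mathcal C$-sequence, whereas you bound them via the traces of $\hat C$ directly; both give the same $\le\lambda$ estimate.
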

\begin{proof} Fix an arbitrary $\square^*_\lambda$-sequence, $\cvec{C}:=\langle\mathcal C_\alpha\mid\alpha<\lambda^+\rangle$.
Fix a transversal $\vec C=\langle C_\alpha\mid\alpha\in\acc(\lambda^+)\rangle$ for $\cvec{C}$, so that $\otp(C_\alpha)<\lambda$ for all $\alpha\in \acc(\lambda^+) \cap E^{\lambda^+}_{\neq\cf(\lambda)}$.
If $\lambda$ is singular, then by Lemma~\ref{lemma3.5} (using $(\xi,\mu,\chi):=(\lambda,\lambda^+,\aleph_0)$),
we may moreover assume the existence of a faithful postprocessing function $\Phi_{\cf(\lambda)}:\mathcal K(\kappa)\rightarrow\mathcal K(\kappa)$ such that:
\begin{enumerate}
\item $\otp(\Phi_{\cf(\lambda)}(C_\alpha))<\lambda$ for all $\alpha\in \acc(\lambda^+)$;
\item For every cofinal $A\s\lambda^+$, there exist stationarily many $\alpha\in\acc(\lambda^+)$ with $\otp(\Phi_{\cf(\lambda)}(C_\alpha))=\cf(\lambda)$ and $\nacc(\Phi_{\cf(\lambda)}(C_\alpha))\s A$.
\end{enumerate}

Let $\Phi_\lambda$ be the identity postprocessing function. Next, for all $\theta\in\reg(\lambda)\setminus\{\cf(\lambda)\}$, appeal to Lemma~\ref{phi1} with $\langle C_\alpha\mid\alpha\in E^{\lambda^+}_\theta\rangle$ to get a postprocessing function $\Phi_\theta$.

For all $\alpha\in\acc(\lambda^+)$, let $D_\alpha:=\Phi_{\cf(\alpha)}(C_\alpha)$, so that for every cofinal $A\s\lambda^+$ and every $\theta\in\reg(\lambda)$,
there exist stationarily many $\alpha \in\acc(\lambda^+)$ for which $\otp(D_\alpha)=\theta$ and $\nacc(D_\alpha)\s A$.
In addition, $\otp(D_\alpha) <\lambda$  for all $\alpha\in \acc(\lambda^+) \cap E^{\lambda^+}_{<\lambda}$.

Thus, $\langle D_\alpha\mid\alpha\in\acc(\lambda^+)\rangle$ is as sought, provided that it is a transversal for $\square_\lambda(\lambda^+,\lambda,{\sq},V)$.
By Proposition~\ref{transversal-width}(1), this now amounts to showing that for every $\gamma\in\acc(\lambda^+)$, the set
$$\mathcal D_\gamma:=\{ D_\alpha \cap\gamma \mid \alpha \in \acc(\lambda^+) \text{ and } \sup(D_\alpha \cap\gamma) = \gamma \}$$
has size $\le\lambda$. But, of course, $\mathcal D_\gamma\s \bigcup\{\Phi_\theta(C)\mid C\in\mathcal C_\gamma, \theta\in\reg(\lambda^+)\}$, so we are done.
\end{proof}

\begin{lemma}\label{phi0} If $\diamondsuit(\kappa)$ holds, then there exists a faithful, $\acc$-preserving
postprocessing function $\Phi:\mathcal K(\kappa)\rightarrow\mathcal K(\kappa)$ satisfying the following.
For every (possibly constant) sequence $\langle A_i\mid i<\kappa\rangle$ of cofinal subsets of $\kappa$,
there exists some stationary set $G$ in $\kappa$ such that for all $x\in\mathcal K(\kappa)$, if $\nacc(x)\s G$, then $\min(\Phi(x))=\min(A_0)$ and $\Phi(x)(i+1)\in A_i$ for all $i<\otp(x)$.
\end{lemma}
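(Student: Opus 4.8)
\emph{The plan.} The natural tool is the $\diamondsuit(\kappa)$-matrix $\langle A^i_\gamma\mid i,\gamma<\kappa\rangle$ provided by Fact~\ref{diamond_matrix}, fed into the $\kappa$-assignment machinery of Lemma~\ref{phiZ}. The guiding idea is to build an $\acc$-preserving $\Phi$ that, at each $\beta\in\nacc(x)$ occupying position $j=\otp(x\cap\beta)$ in the increasing enumeration of $x$, shifts $\beta$ downward into the matrix-guess $A^\iota_\beta$ for the row $\iota$ matching the desired target level. Since $\Phi_{\mathfrak Z}$ is $\acc$-preserving and $g_{x,\mathfrak Z}$ enumerates $\Phi_{\mathfrak Z}(x)$ monotonically, we will have $\Phi_{\mathfrak Z}(x)(j)=g_{x,\mathfrak Z}(x(j))$ for every $j<\otp(x)$, so controlling the individual shifts controls the levels of the output exactly.

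\emph{The assignment.} For $x\in\mathcal K(\kappa)$ and $\beta\in\nacc(x)$, I would set $\iota(x,\beta):=0$ if $\beta=\min(x)$, and otherwise let $\iota(x,\beta)$ be the ordinal with $\otp(x\cap\beta)=\iota(x,\beta)+1$ (well-defined, as $\otp(x\cap\beta)$ is a successor whenever $\beta\in\nacc(x)\setminus\{\min(x)\}$). Define the $\kappa$-assignment $\mathfrak Z:=\langle Z_{x,\beta}\rangle$ by $Z_{x,\beta}:=A^{\iota(x,\beta)}_\beta$. The containment $Z_{x,\beta}\s\beta$ is immediate from Fact~\ref{diamond_matrix}, and coherence holds because for $\bar\alpha\in\acc(x)$ and $\beta\in\nacc(x\cap\bar\alpha)$ we have $\beta<\bar\alpha$, hence $x\cap\beta=(x\cap\bar\alpha)\cap\beta$ and $\min(x)=\min(x\cap\bar\alpha)$, so $\iota(x,\beta)=\iota(x\cap\bar\alpha,\beta)$. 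Let $\Phi_{\mathfrak Z}$ be the $\acc$-preserving postprocessing function from Lemma~\ref{phiZ}, and put $\Phi:=\faithful\circ\Phi_{\mathfrak Z}$, with $\faithful$ from Example~\ref{faithful_correction}; this $\Phi$ is faithful and $\acc$-preserving.

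\emph{Hitting the targets.} Given cofinal $A_i\s\kappa$ $(i<\kappa)$, set $A^i:=A_i$ and let $G:=G(\vec A)\setminus(\min(A_0)+1)$, which is stationary and, since every nonzero element of $G(\vec A)$ is a limit ordinal $\ge\omega$ (the requirement $\sup(A^0\cap\gamma)=\gamma$ forces this), satisfies $\min(G)\ge\omega$. Fix $x$ with $\nacc(x)\s G$. For $\beta=x(i+1)\in\nacc(x)$ we have $\iota(x,\beta)=i$, and since $\beta\in G(\vec A)$ and $i\le x(i)<\beta$, the $\diamondsuit$-property yields $Z_{x,\beta}=A^i_\beta=A_i\cap\beta$ with $\sup(A_i\cap\beta)=\beta$; as $\sup(x\cap\beta)<\beta$, the shift $g_{x,\mathfrak Z}(\beta)$ is the least element of $A_i\cap\beta$ strictly above $\sup(x\cap\beta)$, so $\Phi_{\mathfrak Z}(x)(i+1)=g_{x,\mathfrak Z}(\beta)\in A_i$. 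For $\beta=\min(x)\in G$ we use row $0$: $Z_{x,\beta}=A_0\cap\beta$ contains $\min(A_0)$ (as $\beta>\min(A_0)$ by the trimming of $G$), whence $g_{x,\mathfrak Z}(\min(x))=\min(A_0\cap\beta)=\min(A_0)$. Finally, $\min(G)\ge\omega$ forces $\Phi_{\mathfrak Z}(x)(1)\ge\sup(x\cap x(1))+1=\min(x)+1>2$, so $\faithful$ acts as the identity on $\Phi_{\mathfrak Z}(x)$, and $\Phi(x)=\Phi_{\mathfrak Z}(x)$ retains everything above.

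\emph{Main obstacle.} The routine parts are checking that $\mathfrak Z$ is a genuine $\kappa$-assignment and the bookkeeping of the shifts. The delicate point is reconciling the two demands on $\Phi$: an \emph{exact} value $\min(\Phi(x))=\min(A_0)$ at the bottom versus mere membership $\Phi(x)(i+1)\in A_i$ higher up. This off-by-one is precisely what dictates assigning row $0$ to \emph{both} $\beta=\min(x)$ and $\beta=x(1)$, and it is also why $G$ must be trimmed below $\min(A_0)+1$: otherwise the conservative (downward) shift, which always satisfies $g_{x,\mathfrak Z}(\min(x))\le\min(x)$, could never reach $\min(A_0)$ for an $x$ whose least element lies below $\min(A_0)$. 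A last wrinkle is that $\Phi_{\mathfrak Z}$ is itself not faithful (e.g. $\omega\setminus\{0\}$ is a fixed point of it), so faithfulness must be imported through $\faithful$ while verifying, as above, that this correction leaves the level structure on the relevant $x$ untouched.
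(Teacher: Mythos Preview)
Your proof is correct and follows essentially the same approach as the paper: both feed the $\diamondsuit$-matrix into a $\kappa$-assignment keyed on the index $\otp(x\cap\beta)-1$ (with row $0$ at the minimum), apply $\Phi_{\mathfrak Z}$, and compose with $\faithful$. The only cosmetic differences are that the paper subtracts $\omega$ from $Z_{x,\beta}$ at non-minimal $\beta$ and takes $G\subseteq\acc(\kappa\setminus\omega)$, whereas you trim $G$ below $\min(A_0)+1$; both choices achieve the same goals (ensuring $\Phi_{\mathfrak Z}(x)(1)>2$ so that $\faithful$ is inert, and ensuring $\min(A_0)<\min(x)$), and your observation that $g_{x,\mathfrak Z}(x(i+1))>x(i)\ge\min(x)\ge\omega$ renders the $\setminus\omega$ unnecessary.
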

\begin{proof} Assume $\diamondsuit(\kappa)$, and let $\langle A^i_\gamma\mid i,\gamma<\kappa \rangle $ be given by Fact~\ref{diamond_matrix}.

Define $\mathfrak Z:=\langle Z_{x,\beta}\mid x\in\mathcal K(\kappa),\allowbreak\beta\in\nacc(x)\rangle$ by stipulating:
\[Z_{x,\beta}:=\begin{cases}
A_\beta^{0},&\text{if } \beta = \min(x);\\
A_\beta^{\otp(x\cap\beta)-1}\setminus\omega, &\text{otherwise}.
\end{cases}\]

As $\mathfrak Z$ is a $\kappa$-assignment, let $\Phi_{\mathfrak Z}$ be the $\acc$-preserving postprocessing function given by Lemma~\ref{phiZ}.
Let $\faithful$ be as in Example~\ref{faithful_correction}, so that $\Phi:=\faithful\circ \Phi_{\mathfrak Z}$ is faithful and $\acc$-preserving.

To see that $\Phi$ works, let $\langle A^i\mid i<\kappa\rangle$ be an arbitrary sequence of cofinal subsets of $\kappa$.
We claim that the stationary set $G:=\{\beta\in\acc(\kappa\setminus\omega)\mid \forall i<\beta(\sup(A^i\cap\beta)=\beta\ \&\ A^i\cap\beta=A^i_\beta)\}$ is as sought.
To see this, suppose that we are given $x\in\mathcal K(\kappa)$ for which $\nacc(x)\s G$.
\begin{itemize}
\item[$\br$] Let $\beta:=\min(x)$. Then $\beta\in\nacc(x)\s G$, and hence $Z_{x,\beta}=A^0_\beta=A^0\cap\beta$ is a cofinal subset of $\beta$, so that $\min(\Phi_{\mathfrak Z}(x))=\min(Z_{x,\beta}\cup\{\beta\})=\min(A^0)$.
As $\faithful$ is $\min$-preserving, we have $\min(\Phi(x))=\min(\Phi_{\mathfrak Z}(x))=\min(A^0)$.
\item[$\br$] Let $i < \otp(x)$ be arbitrary. Put $\beta:=x(i+1)$ and $\beta^-:=x(i)$.
Then $$\Phi_{\mathfrak Z}(x)(i+1) = \min(((A^i_{\beta}\setminus\omega)\cup\{\beta\}) \setminus (\beta^-+1)).$$
As $i\le\beta^-<\beta$ and $\beta\in \nacc(x)\s G$, we have that $A^i_\beta=A^i\cap\beta$ is a cofinal subset of $\beta>\omega$, and hence $$\Phi_\mathfrak{Z}(x)(i+1)\in A^i\setminus\omega.$$
In particular, $\Phi_\mathfrak{Z}(x)(1)>2$, so that by the definition of $\faithful$, $\Phi(x) = \Phi_\mathfrak{Z}(x)$.
Thus, $\Phi(x)(i+1)=\Phi_\mathfrak{Z}(x)(i+1)\in A^i$ for all $i < \otp(x)$.\qedhere
\end{itemize}
\end{proof}

We are now ready to prove Theorem~\ref{main3.2}:

\begin{cor}\label{cor310} Suppose that $\lambda$ is an uncountable cardinal, and $\square_\lambda^*+\ch_\lambda$ holds.

Then there exists a $C$-sequence $\langle C_\alpha\mid\alpha <\lambda^+\rangle$ such that:
\begin{itemize}
\item $\otp(C_\alpha)<\lambda$ for all $\alpha\in \acc(\lambda^+) \cap E^{\lambda^+}_{<\lambda}$;
\item $|\{ C_\alpha\cap\gamma\mid \alpha<\lambda^+\}|\le\lambda$ for all $\gamma<\lambda^+$;
\item For every $\theta\in\reg(\lambda)$ and every sequence $\langle A_i\mid i<\theta\rangle$ of cofinal subsets of $\lambda^+$,
there exist stationarily many $\alpha \in\acc(\lambda^+)$ such that $\otp(C_\alpha)=\theta$, $\min(C_\alpha)=\min(A_0)$ and $C_\alpha(i+1)\in A_i$ for all $i<\theta$.
\end{itemize}
\end{cor}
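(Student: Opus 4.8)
The plan is to obtain the desired $C$-sequence by applying a single further postprocessing function to the transversal produced by Corollary~\ref{c38}, where the extra function comes from Lemma~\ref{phi0} and is responsible for injecting the genericity demanded by the third bullet. Since $\ch_\lambda$ holds, $\diamondsuit(\lambda^+)$ holds by \cite{Sh:922}, so Lemma~\ref{phi0} is available with $\kappa := \lambda^+$.

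First I would invoke Corollary~\ref{c38} to fix a transversal $\vec C = \langle C_\alpha \mid \alpha \in \acc(\lambda^+)\rangle$ for $\square^*_\lambda = \square_\lambda(\lambda^+, {<}\lambda^+, {\sq}, V)$ such that $\otp(C_\alpha) < \lambda$ for all $\alpha \in \acc(\lambda^+) \cap E^{\lambda^+}_{<\lambda}$, and such that for every cofinal $A \subseteq \lambda^+$ and every $\theta \in \reg(\lambda)$ there are stationarily many $\alpha \in \acc(\lambda^+)$ with $\otp(C_\alpha) = \theta$ and $\nacc(C_\alpha) \subseteq A$. Next I would let $\Phi : \mathcal K(\lambda^+) \to \mathcal K(\lambda^+)$ be the faithful, $\acc$-preserving postprocessing function given by Lemma~\ref{phi0}, set $C'_\alpha := \Phi(C_\alpha)$ for $\alpha \in \acc(\lambda^+)$, and complete the sequence by $C'_0 := \emptyset$ and $C'_{\alpha+1} := \{\alpha\}$ for every $\alpha < \lambda^+$. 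The claim is that $\langle C'_\alpha \mid \alpha < \lambda^+\rangle$ is as sought.

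The first two bullets should fall out of the general theory. Because $\Phi$ is $\acc$-preserving it is $\otp$-preserving, so $\otp(C'_\alpha) = \otp(C_\alpha) < \lambda$ for every $\alpha \in \acc(\lambda^+) \cap E^{\lambda^+}_{<\lambda}$, which is the first bullet. For the second, Lemma~\ref{pp-preserves-square}(2) (applicable since $\chi = \aleph_0$) tells us that $\langle C'_\alpha \mid \alpha \in \acc(\lambda^+)\rangle$ is again a transversal for $\square^*_\lambda$; as a $\lambda$-bounded sequence is vacuously $\lambda^+$-bounded, it is in particular a transversal for $\square(\lambda^+, {<}\lambda^+, {\sq}, V)$, and Proposition~\ref{transversal-width}(2) then gives $|\{C'_\alpha \cap \gamma \mid \alpha \in \acc(\lambda^+)\}| \le \lambda$ for each $\gamma < \lambda^+$. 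Adjoining the zero and successor levels only adds the $\le\lambda$ many sets $\{\{\beta\} \mid \beta < \gamma\} \cup \{\emptyset\}$, so the second bullet follows.

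For the third bullet I would synchronize the two guessing properties. Given $\theta \in \reg(\lambda)$ and cofinal sets $\langle A_i \mid i < \theta\rangle$, I would extend to $\langle A_i \mid i < \lambda^+\rangle$ by setting $A_i := A_0$ for $\theta \le i < \lambda^+$, and let $G \subseteq \lambda^+$ be the stationary set provided for this sequence by Lemma~\ref{phi0}. Feeding the cofinal set $A := G$ and the ordinal $\theta$ into the second feature of Corollary~\ref{c38} yields stationarily many $\alpha \in \acc(\lambda^+)$ with $\otp(C_\alpha) = \theta$ and $\nacc(C_\alpha) \subseteq G$. For each such $\alpha$, the defining property of $\Phi$ gives $\min(C'_\alpha) = \min(A_0)$ and $C'_\alpha(i+1) = \Phi(C_\alpha)(i+1) \in A_i$ for all $i < \otp(C_\alpha) = \theta$, while $\otp(C'_\alpha) = \theta$ by $\otp$-preservation; so these $\alpha$ witness the third bullet. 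The only place demanding care is exactly this synchronization — routing the stationary $G$ emitted by Lemma~\ref{phi0} into the club-guessing clause of Corollary~\ref{c38}, and noting that the range $i < \otp(C_\alpha)$ appearing in Lemma~\ref{phi0} coincides with the range $i < \theta$ required here — but no genuinely new difficulty arises.
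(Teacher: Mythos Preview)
Your proposal is correct and follows exactly the paper's approach: take the transversal from Corollary~\ref{c38}, apply the postprocessing function $\Phi$ from Lemma~\ref{phi0} (available via $\diamondsuit(\lambda^+)$ from $\ch_\lambda$), and extend trivially to zero and successor ordinals. Your verification of the three bullets is more detailed than the paper's (which simply asserts the conclusion), but the argument is the same; the synchronization you describe for the third bullet---feeding the stationary $G$ from Lemma~\ref{phi0} as the cofinal set $A$ into Corollary~\ref{c38}---is precisely what is intended.
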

\begin{proof} Let $\langle C_\alpha\mid\alpha \in \acc(\lambda^+)\rangle$ be given by Corollary~\ref{c38}.
By $\ch_\lambda$ and \cite{Sh:922}, $\diamondsuit(\lambda^+)$ holds, so we may let $\Phi$ be given by Lemma~\ref{phi0}.
For all $\alpha<\lambda^+$, define
$$C_\alpha^\bullet:=\begin{cases}
\Phi(C_\alpha),&\text{if }\alpha\in\acc(\lambda^+);\\
\{\beta\},&\text{if }\alpha=\beta+1;\\
\emptyset,&\text{if }\alpha=0.
\end{cases}$$
Then $\langle C_\alpha^\bullet\mid\alpha<\lambda^+\rangle$ is as sought.
\end{proof}

The second case of Theorem~A now follows, with the additional benefit that we do not require $\lambda$ to be a strong-limit:

\begin{cor}\label{cor39} Suppose that $\lambda$ is a singular cardinal, and $\square_\lambda^*+\ch_\lambda$ holds.

Then there exists a normal $\lambda$-distributive $\lambda^+$-Aronszajn tree.
\end{cor}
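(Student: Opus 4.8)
The plan is to feed the $C$-sequence produced by Corollary~\ref{cor310} into the Ben-David--Shelah tree construction of \cite{MR0861900}, whose combinatorial input is \emph{exactly} the list of features that Corollary~\ref{cor310} delivers. Recall that Ben-David and Shelah derived their distributive tree from a $\square^*_\lambda$-witness refined so that its clubs have small order-type; the strong-limit hypothesis entered their argument only in two places, namely to perform that refinement and to guarantee that the tree they build has levels of size $\le\lambda$. Corollary~\ref{cor310} now supplies a single $C$-sequence $\langle C_\alpha\mid\alpha<\lambda^+\rangle$ for which (i) $\otp(C_\alpha)<\lambda$ whenever $\cf(\alpha)<\lambda$, (ii) $|\{C_\alpha\cap\gamma\mid\alpha<\lambda^+\}|\le\lambda$ for every $\gamma<\lambda^+$, and (iii) for each $\theta\in\reg(\lambda)$ and each sequence $\langle A_i\mid i<\theta\rangle$ of cofinal subsets of $\lambda^+$, stationarily many $\alpha$ satisfy $\otp(C_\alpha)=\theta$, $\min(C_\alpha)=\min(A_0)$, and $C_\alpha(i+1)\in A_i$ for all $i<\theta$. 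Feature (i) is the refinement, and feature (ii) replaces the cardinal arithmetic that the strong-limit assumption previously provided; thus the strong-limit hypothesis becomes surplus, which is precisely the point of this section.

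Concretely, I would build a $\lambda$-splitting tree $T\s{}^{<\lambda^+}\lambda$ by recursion on levels. Start with $T_0=\{\emptyset\}$ and set $T_{\alpha+1}=\{t^{\frown}\langle\xi\rangle\mid t\in T_\alpha,\ \xi<\lambda\}$, so every node has exactly $\lambda$ immediate successors and $|T_{\alpha+1}|=|T_\alpha|\cdot\lambda$. At a limit level $\alpha$, let $T_\alpha$ consist of those branches $b\colon\alpha\to\lambda$ through $T\restriction\alpha$ that are \emph{admissible}, where admissibility asks $b$ to be guided by the coherent square structure underlying $C_\alpha$: $b$ is reconstructible from its restrictions to $\acc(C_\alpha)$. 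Since $\otp(C_\alpha)<\lambda$ when $\cf(\alpha)<\lambda$, such a $b$ is coded by a sequence of length $<\lambda$; counting these codes through feature (ii) bounds $|T_\alpha|$ by $\lambda$, so that $T$ is a $\lambda^+$-tree (compare Proposition~\ref{transversal-width}). Normality is maintained level by level: given $t\in T_\beta$ with $\beta<\alpha$, an admissible branch extending $t$ is produced by using feature (iii) to steer the guiding clubs through a prescribed cofinal set, guaranteeing that $t$ is compatible with a node of $T_\alpha$.

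Three properties then remain. That $T$ has no chain of size $\lambda^+$ is, as the introduction already signals, forced by feature (i): a cofinal branch $b$ would close up into a thread of the guiding square sequence along a club of levels, and the small order-types make that thread total, so that reading the branch's splitting pattern off as a sequence $\langle A_i\mid i<\theta\rangle$ and invoking feature (iii) yields a level $\alpha$ at which the $C_\alpha$-guided extension is forced to disagree with $b$, contradicting $b\restriction\alpha\in T_\alpha$. For $\lambda$-distributivity, fix $\theta\in\reg(\lambda)$, a node $r$, and dense open sets $\langle\Omega_i\mid i<\theta\rangle$; reading off, for each $i$, the cofinal set $A_i$ of levels at which the relevant extensions enter $\Omega_i$, feature (iii) produces a limit $\alpha\in E^{\lambda^+}_\theta$ whose admissible $C_\alpha$-guided branch meets every $\Omega_i$, so its node at level $\alpha$ lies in $\bigcap_{i<\theta}\Omega_i$ above $r$; hence $T$ is $\theta$-distributive. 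Running this for cofinally many $\theta\in\reg(\lambda)$ and then arguing exactly as in the final two lines of the proof of Corollary~\ref{first-case-A}---$\theta$-distributivity for cofinally many $\theta<\lambda$, together with distributivity at some $\theta\ge\cf(\lambda)$, yields $\lambda$-distributivity because $\lambda$ is singular---completes the argument.

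The delicate point, and the step I expect to be hardest, is the simultaneous reconciliation of distributivity and Aronszajnness at the limit levels: distributivity demands that \emph{enough} admissible branches survive so that $\bigcap_{i<\theta}\Omega_i$ is reachable, while Aronszajnness demands that \emph{no} branch survives all the way to $\lambda^+$. The resolution lives entirely in the interaction of features (i) and (iii)---the small order-types bound how much of a branch the square can see at once, while the guessing (iii) is strong enough both to build the short branches we need and to sabotage any cofinal one. The second, purely bookkeeping, subtlety is the genuine novelty here: keeping every level of size $\le\lambda$ without a strong-limit assumption, which is exactly what feature (ii) buys and what renders the strong-limit hypothesis of Fact~\ref{fact02}(2) dispensable.
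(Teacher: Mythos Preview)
Your high-level plan---feed the output of Corollary~\ref{cor310} into the Ben-David--Shelah construction---is exactly what the paper does. The paper's proof is brief: it induces a $\cvec{C}$-sequence from the $C$-sequence, applies $\Phi^\Sigma$ with $\Sigma:=\acc(\lambda^+)$, invokes a $\diamondsuit(\lambda^+)$-sequence $\langle b_\beta\mid\beta<\lambda^+\rangle$ (available by $\ch_\lambda$ and \cite{Sh:922}), and verifies that these together satisfy the literal hypotheses on \cite[p.~94]{MR0861900}, after which that construction is cited as a black box. Your identification of features (i)--(iii) and of their roles for the level-size bound and for distributivity is on target, and your remark that feature (ii) is precisely what replaces the strong-limit hypothesis is correct.

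The genuine gap is your Aronszajn argument. A cofinal branch $b$ through the tree is a function $\lambda^+\to\lambda$; the assertion that it ``closes up into a thread of the guiding square sequence'' has no content---membership $b\restriction\alpha\in T_\alpha$ only tells you that $b\restriction\alpha$ is one of the $\le\lambda$ admissible branches at that level, it does not select a coherent system of clubs, and there is nothing here that contradicts $\square^*_\lambda$. More importantly, feature (iii) is a \emph{hitting} property (the nonaccumulation points of $C_\alpha$ land in prescribed cofinal subsets $A_i\subseteq\lambda^+$), not a \emph{guessing} property; there is no meaningful way to encode a single function $b:\lambda^+\to\lambda$ as a $\theta$-sequence of cofinal subsets of $\lambda^+$ so that ``hitting the $A_i$'s forces disagreement with $b$'' makes sense. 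In the actual Ben-David--Shelah construction, and in the paper's alternative route via \cite{paper26} where the limit level is taken to be $T_\alpha:=\{\mathbf b^C_x\mid C\in\mathcal C_\alpha,\ x\in T\restriction C\}\setminus\{\bigcup\Omega_\alpha\}$, Aronszajnness is secured by an explicit $\diamondsuit$-based branch-killing step that is \emph{separate} from feature (iii): the diamond sequence predicts a potential branch and the construction removes it at the level where the prediction is correct. Feature (iii) buys normality and distributivity; the diamond sequence buys Aronszajnness. Your sketch conflates the two mechanisms.
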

\begin{proof} Let $\langle C_\alpha\mid\alpha <\lambda^+\rangle$ be given by Corollary~\ref{cor310}.
Let $\cvec{C}=\langle\mathcal C_\alpha\mid\alpha<\lambda^+\rangle$ be the induced $\square^*_\lambda$-sequence.
That is, $\mathcal C_{\alpha+1}:=\{\{\alpha\}\}$ for each $\alpha<\lambda^+$, and $\mathcal C_\alpha:=\{ C_\delta\cap\alpha\mid \delta<\lambda^+, \sup(C_\delta\cap\alpha)=\alpha\}$ for each limit $\alpha < \lambda^+$.

By running the very same construction of a normal $\lambda$-splitting $\lambda^+$-tree of \cite[Proposition~2.2]{paper26},
modulo a single change in the definition of the limit levels $T_\alpha$, letting
$$T_\alpha:=\left\{ \mathbf b^C_x \mid C \in \mathcal C_\alpha, x \in T \restriction C \right\}\setminus\left\{\bigcup\Omega_\alpha\right\},$$
we get an outcome normal tree that is $\lambda$-distributive and $\lambda^+$-Aronszajn.

Readers unfamiliar with \cite{paper26} may feel uncomfortable with the above sketch.
Thus, let us give a proof which is based on the exact same construction as in \cite{MR0861900}.

Let $\Phi^\Sigma$ be given by Fact~\ref{newPhiSigma} for $\Sigma := \acc(\lambda^+)$.
For each $\alpha\in\acc(\lambda^+)$, let $A_\alpha:=\{ \Phi^\Sigma(C)\mid C\in\mathcal C_\alpha\}$.
By $\ch_\lambda$ and \cite{Sh:922}, let $\langle b_\beta\mid\beta<\lambda^+\rangle$ be a $\diamondsuit(\lambda^+)$-sequence. We have:
\begin{itemize}
\item For all $\alpha\in\acc(\lambda^+)$, each $a\in A_\alpha$ is a club in $\alpha$ of order-type $<\lambda$;
\item For all $\beta<\lambda^+$, $b_\beta\s\beta$;
\item For every $X\s\lambda^+$, every club $C\s\lambda^+$ and every $\delta<\lambda$, there exists some $\alpha\in\acc(\lambda^+)$ and $a\in  A_\alpha$ such that $\otp(a)>\delta$ and for all $\beta\in a\cup\{\alpha\}$, $a\cap\beta\in A_\beta$ and $X\cap\beta=b_\beta$.
\end{itemize}
So we have established the existence of two sequences $\langle A_\alpha\mid\alpha\in\acc(\lambda^+)\rangle$ and $\langle \{b_\alpha\}\mid \alpha\in\acc(\lambda^+)\rangle$ as in \cite[p.~94]{MR0861900}, from which the construction of the desired tree can be carried out.
\end{proof}

\section{$C$-sequences of intermediate order-types}\label{section4}

The key Lemma of this section (Lemma~\ref{blowup-nacc} below) transforms a $\mathcal C$-sequence whose clubs typically have short order-types
into another one with typically longer order-types (of the sort needed to construct uniformly coherent Souslin trees).
The expanded generality obtained by not requiring the target order-type to be a cardinal, and by introducing the relation $\mathcal R^\Omega$,
will allow us to apply the Lemma to scenarios beyond the scope of this paper (see \cite{paper32}).

We begin with some preliminaries.
The first-time reader may assume throughout this section that $\Omega=\emptyset$, and that either $\Lambda=\kappa$ or $\Lambda=\lambda$ for a singular cardinal $\lambda$ satisfying $\lambda^+=\kappa$,
without sacrificing the flow and the results of this paper.

\begin{defn} For any binary relation $\mathcal R$ and any set $\Omega$, we let $\mathcal R^\Omega:=\{ (C,D)\in\mathcal R\mid \sup(C)\notin\Omega\}$.
\end{defn}

In particular, a $\mathcal C$-sequence $\langle \mathcal C_\alpha \mid \alpha<\kappa \rangle$ is $\mathcal R^\Omega$-coherent iff it is $\mathcal R$-coherent and
$\acc(C) \cap \Omega = \emptyset$ for all $C \in \bigcup_{\alpha < \kappa} \mathcal C_\alpha$.

\begin{example}\label{avoids} If $C\sq^\Omega D$ for $C, D \in \mathcal K(\kappa)$, and $\Phi$ is a postprocessing function, then $\Phi(C)\sq^\Omega\Phi(D)$.
\end{example}

\begin{defn}\label{def46} The binary relation $\sin$ is defined as follows. $A\sin\mathcal C$ iff $A\s C$ for some $C\in\mathcal C$. Its negation is denoted by $\nsin$.
\end{defn}

The principle $\square_\xi(\kappa,{<}\mu,{\sq^\Omega_\chi},{\nsin})$ is syntactically stronger than $\square_\xi(\kappa,{<}\mu,{\sq^\Omega_\chi},{\notin})$,
but it follows from the upcoming Lemma that the two notions coincide whenever $\min\{\xi,\mu\}<\kappa$.
As we shall see, the advantage of $\nsin$ over $\notin$ is that the former characterizes amenability of transversals
(Lemma~\ref{nsintransversal}) and is preserved by postprocessing functions (Lemma~\ref{pp-preserves-square2}) and by the procedure of Lemma~\ref{blowup-nacc},
even when $\xi=\mu=\kappa$, unlike the latter (cf.\ the remark before Lemma~\ref{square_is_amenable}, and Remark~\ref{min<k-necessary}).
In particular, in Kunen's model from \cite[\S3]{MR495118}, there exists an inaccessible cardinal $\kappa$ for which $\square(\kappa,{<}\kappa,{\sq},{\notin})$ holds but $\square(\kappa,{<}\kappa,{\sq},{\nsin})$ fails.

\begin{lemma}\label{hitting-implies-nontrivial} Suppose that $\cvec{C} = \langle \mathcal C_\alpha \mid \alpha<\kappa \rangle$ is a $\square_\xi(\kappa,{<}\mu,{\sq^\Omega_\chi},\mathcal R_1)$-sequence.

Each of the following implies that $\cvec{C}$ witnesses $\square_\xi(\kappa,{<}\mu,{\sq^\Omega_\chi},{\nsin})$:
\begin{enumerate}
\item $\xi<\kappa$;
\item $\Omega$ is a stationary subset of $\kappa$;
\item For every club $D \subseteq\kappa$, there is $\alpha\in\acc(\kappa)$ such that $\sup(\nacc(C)\cap D)=\alpha$ for all $C \in \mathcal C_\alpha$;
\item $\mathcal R_1={\notin}$ and there exists $\mu'<\kappa$ for which $\{\alpha\in \Gamma(\cvec{C})\mid |\mathcal C_\alpha|<\mu'\}$ is stationary;
\item $\mathcal R_1={\notin}$ and there exists a sequence of injections $\langle i_\alpha:\mathcal C_\alpha\rightarrow\kappa\mid \alpha<\kappa\rangle$
such that:
\begin{itemize}
\item For all $\alpha\in\Gamma(\cvec{C})$,  $C\in\mathcal C_\alpha$, and $\bar\alpha\in\acc(C)$,  $i_{\bar\alpha}(C\cap\bar\alpha)=i_\alpha(C)$;
\item $\{\alpha\in\Gamma(\cvec{C})\mid \rng(i_\alpha)\s\alpha\}$ is stationary.\footnote{That is, $\cvec{C}$ is the union of a (locally) small number of partial squares.}
\end{itemize}
\end{enumerate}
\end{lemma}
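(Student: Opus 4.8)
The plan is to notice that every clause of Definition~\ref{def115} defining a $\square_\xi(\kappa,{<}\mu,{\sq^\Omega_\chi},\cdot)$-sequence, except the third bullet, is independent of the last coordinate $\mathcal R_1$, so that $\cvec{C}$ already satisfies all of them and the support $\Gamma(\cvec{C})$ is unchanged. Since $\nsin$ refines $\notin$ (indeed $A\nsin\mathcal C$ implies $A\notin\mathcal C$), it suffices to upgrade the hitting clause, i.e.\ to show that for every cofinal $A\s\kappa$ there is $\alpha\in\acc^+(A)$ with $(A\cap\alpha)\nsin\mathcal C_\alpha$. In all five cases I would argue by contradiction: assume there is a cofinal $A\s\kappa$ such that for every $\alpha\in\acc^+(A)$ one may choose $C_\alpha\in\mathcal C_\alpha$ with $A\cap\alpha\s C_\alpha$. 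The engine common to every case is the elementary observation that, as $C_\alpha$ is a club in $\alpha=\sup(A\cap\alpha)$ containing the cofinal set $A\cap\alpha$, every $\beta\in\acc^+(A)\cap\alpha$ lies in $\acc(C_\alpha)$; equivalently, $\nacc(C_\alpha)\cap\acc^+(A)\cap\alpha=\emptyset$.

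Cases (1)--(3) use nothing about $\mathcal R_1$. For (1), I would argue directly: as $\otp(A)=\kappa>\xi$, a final segment of the club $\acc^+(A)$ consists of points $\alpha$ with $\otp(A\cap\alpha)>\xi\ge\otp(C)$ for every $C\in\mathcal C_\alpha$, so no such $C$ can contain $A\cap\alpha$, and no failure assumption is even needed. For (2), the observation shows that for each $\alpha\in\acc^+(A)$ every $\beta\in\acc^+(A)\cap\alpha$ lies in $\acc(C_\alpha)$, which is disjoint from $\Omega$ by $\sq^\Omega_\chi$-coherence; letting $\alpha$ range over the unbounded set $\acc^+(A)$ yields $\acc^+(A)\cap\Omega=\emptyset$, contradicting that $\Omega$ is stationary. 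For (3), I would feed the club $D:=\acc^+(A)$ into the hypothesis to obtain $\alpha\in\acc(\kappa)$ with $\sup(\nacc(C)\cap D)=\alpha$ for all $C\in\mathcal C_\alpha$; cofinality of $\nacc(C)\cap D$ in $\alpha$ forces $\alpha\in D=\acc^+(A)$, whence the observation gives $\nacc(C_\alpha)\cap\acc^+(A)\cap\alpha=\emptyset$, i.e.\ $\sup(\nacc(C_\alpha)\cap D)<\alpha$, a contradiction.

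Cases (4) and (5) use the given $\notin$-hitting. In both I would manufacture a single club $C\s\kappa$ for which $C\cap\beta\in\mathcal C_\beta$ holds for \emph{every} $\beta\in\acc^+(C)$; applying the $\notin$-hitting clause to $A:=C$ then gives the contradiction, the mechanism being that for any $\beta\in\acc^+(C)$ one picks a relevant $\delta>\beta$ in the support, notes $C\cap\beta=C_\delta\cap\beta$ with $\beta\in\acc(C_\delta)$, and invokes $\delta\in\Gamma(\cvec{C})$. For (5), since for $\delta_1<\delta_2$ in $\acc^+(A)$ the observation gives $\delta_1\in\acc(C_{\delta_2})$ and hence, by coherence of the labels, $i_{\delta_1}(C_{\delta_2}\cap\delta_1)=i_{\delta_2}(C_{\delta_2})$, I would restrict to the stationary set $\{\delta\in\acc^+(A)\cap\Gamma(\cvec{C})\mid\rng(i_\delta)\s\delta\}$, on which $\delta\mapsto i_\delta(C_\delta)$ is regressive, apply Fodor to find a stationary $R'$ on which this label is a constant $\ell^*$, and use injectivity of $i_{\delta_1}$ to conclude $C_{\delta_2}\cap\delta_1=C_{\delta_1}$ for $\delta_1<\delta_2$ in $R'$; then $C:=\bigcup_{\delta\in R'}C_\delta$ is the desired $\sq$-coherent club.

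For (4), the natural move is to mimic the proof of Lemma~\ref{square_is_amenable}: form the $\sq$-tree of traces $C_\delta\cap\beta$, extract a cofinal branch by Kurepa's lemma, and take $C$ to be its union. The obstacle is exactly that here $\mu$ may equal $\kappa$, so the global width is unbounded and Kurepa's lemma does not apply to the full tree; I expect this to be the main difficulty. The plan is to circumvent it using the hypothesis that $S:=\{\alpha\in\Gamma(\cvec{C})\mid|\mathcal C_\alpha|<\mu'\}$ is stationary: enumerating $S\cap\acc^+(A)$ as $\langle\delta_i\mid i<\kappa\rangle$ and declaring the $i$-th level of the tree to be $\{C_\delta\cap\delta_i\mid\delta\in S\cap\acc^+(A),\ \delta>\delta_i\}\s\mathcal C_{\delta_i}$ (each trace lands in $\mathcal C_{\delta_i}$ by the observation together with $\delta\in\Gamma(\cvec{C})$), every level has size $<\mu'<\kappa$, so Kurepa's lemma does apply, and the branch union $C$ again satisfies $C\cap\beta\in\mathcal C_\beta$ for all $\beta\in\acc^+(C)$, contradicting the $\notin$-hitting.
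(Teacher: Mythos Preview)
Your proposal is correct and follows essentially the same approach as the paper's proof. The only differences are expository: in Case~(5) you spell out explicitly why Fodor applies (restricting to the stationary set where $\rng(i_\delta)\subseteq\delta$), which the paper leaves implicit, and in Case~(4) you restrict the tree's levels to $S\cap\acc^+(A)$ from the start whereas the paper builds the full tree of Lemma~\ref{square_is_amenable} and then passes to the subtree of levels with $|\mathcal C_{\beta_\alpha}|<\mu'$; both amount to the same Kurepa-branch argument.
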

\begin{proof} Let $A$ denote an arbitrary cofinal subset of $\kappa$.
Put $D:=\acc^+(A)$. We need to find $\alpha\in D$ such that $A\cap\alpha\nsin\mathcal C_\alpha$.

(1) Pick a large enough $\alpha\in D$ such that $\otp(A\cap\alpha)>\xi$. Trivially, $A\cap\alpha\nsin\mathcal C_\alpha$.

(2) Since $D$ is a club in $\kappa$, $D \cap \Omega$ is stationary.
Fix $\alpha \in D\cap\Omega$ such that $\otp(D\cap\Omega\cap\alpha) = \omega$.
Let $C\in\mathcal C_\alpha$ be arbitrary.
If $A\cap\alpha\s C$, then $D\cap\Omega\cap\alpha \subseteq \acc^+(A\cap\alpha) \s\acc(C)$, meaning that $\Omega\cap\acc(C)\neq\emptyset$, contradicting $\sq_\chi^\Omega$-coherence.

(3) Pick $\alpha\in\acc(\kappa)$ such that $\sup(\nacc(C)\cap D)=\alpha$ for every $C\in\mathcal C_\alpha$.
As $\alpha\in\acc^+(D)\s D$, we are left with verifying that $A\cap\alpha\nsin\mathcal C_\alpha$. Fix an arbitrary $C\in\mathcal C_\alpha$.
If $A\cap\alpha\s C$, then $D\cap\alpha\s\acc(C)$, contradicting the fact that $D\cap \nacc(C)\neq\emptyset$.

(4) Towards a contradiction, suppose that there exists a transversal $\langle C_\alpha\mid\alpha\in\Gamma\rangle$ for $\cvec{C}$ such that $A\cap\alpha\s C_\alpha$ for all $\alpha\in\Gamma\cap D$.
We then follow the proof of Lemma~\ref{square_is_amenable} until the end of Claim~\ref{claim1221},
noting that the cardinal inequality in the statement of Claim~\ref{claim1221} is, in fact,
$\left| T_\alpha \right| \leq \left| \mathcal C_{\beta_\alpha} \right|$ for all $\alpha<\kappa$.
Thus, instead of arguing about $\bigcup\{T_\alpha\mid \alpha<\kappa\}$, we infer from Kurepa's lemma that the $\kappa$-tree $\bigcup\{ T_\alpha\mid \alpha<\kappa\ \&\ |\mathcal C_{\beta_\alpha}|<\mu'\}$
admits a cofinal branch, and such a branch contradicts the fact that $\cvec{C}$ witnesses $\square_\xi(\kappa,{<}\mu,\sq_\chi,{\notin})$.

(5) Towards a contradiction, suppose that there exists a transversal $\langle C_\alpha\mid\alpha\in\Gamma\rangle$ for $\cvec{C}$ such that $A\cap\alpha\s C_\alpha$ for all $\alpha\in\Gamma\cap D$.
Fix a stationary $S\s\Gamma\cap D$ on which $\alpha\mapsto i_\alpha(C_\alpha)$ is constant, with value, say, $i^*$.
We shall show that the sequence $\langle C_\alpha\mid \alpha\in S\rangle$ is $\sq$-increasing, so that $\bigcup\{C_\alpha\mid\alpha\in S\}$ contradicts the fact that $\mathcal R_1={\notin}$.

Let $\bar\alpha<\alpha$ be a pair of ordinals from $S$.
As $A\cap\alpha\s C_\alpha$ and $\bar\alpha\in \acc^+(A\cap\alpha)$, we have $\bar\alpha\in\acc(C_\alpha)$, so that $i_{\bar\alpha}(C_\alpha\cap\bar\alpha) = i_\alpha(C_\alpha) =i^*=i_{\bar\alpha}(C_{\bar\alpha})$.
As $i_{\bar\alpha}$ is injective, we infer that indeed $C_{\bar\alpha}\sq C_\alpha$.
\end{proof}

\begin{lemma}\label{nsintransversal} Suppose that $\cvec{C}$ is a $\square_\xi(\kappa,{<}\mu,{\sq_\chi}, V)$-sequence.
Then the following are equivalent:
\begin{enumerate}
\item $\cvec{C}$ witnesses $\square_\xi(\kappa,{<}\mu,{\sq_\chi},{\nsin})$;
\item Every transversal for $\cvec{C}$ is amenable.
\end{enumerate}
\end{lemma}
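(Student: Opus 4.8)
The plan is to prove the equivalence by relating the $\nsin$ relation to the characterization of amenability provided by Proposition~\ref{amenable_vs_trivial}(2), namely that a transversal $\langle C_\alpha\mid\alpha\in\Gamma\rangle$ is amenable iff for every cofinal $A\s\kappa$ the set $\{\alpha\in\Gamma\mid A\cap\alpha\s C_\alpha\}$ is nonstationary. The bridge between the two clauses is the simple observation that for a cofinal $A\s\kappa$ and $\alpha\in\acc^+(A)$, we have $A\cap\alpha\sin\mathcal C_\alpha$ exactly when there is some $C\in\mathcal C_\alpha$ with $A\cap\alpha\s C$, which for a transversal means $A\cap\alpha\s C_\alpha$ for the chosen club. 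So the ``hitting'' feature $A\cap\alpha\nsin\mathcal C_\alpha$ and the amenability-witnessing feature $A\cap\alpha\not\s C_\alpha$ are two facets of the same condition, the only gap being the passage between ``there exists such an $\alpha$'' and ``nonstationarily many'' and between ``some $C\in\mathcal C_\alpha$'' and ``the transversal's $C_\alpha$''.

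For the direction $(2)\implies(1)$, I would argue contrapositively. Suppose $\cvec{C}$ fails to witness $\square_\xi(\kappa,{<}\mu,{\sq_\chi},{\nsin})$; since it does witness the weaker $V$-version, the only clause that can fail is the third bullet of Definition~\ref{def115} with $\mathcal R_1={\nsin}$. Thus there is a cofinal $A\s\kappa$ such that for every $\alpha\in\acc^+(A)$, we have $A\cap\alpha\sin\mathcal C_\alpha$, i.e.\ there exists $C\in\mathcal C_\alpha$ with $A\cap\alpha\s C$. I would then build a transversal $\langle C_\alpha\mid\alpha\in\Gamma\rangle$ for $\cvec{C}$ by choosing, for each $\alpha\in\Gamma\cap\acc^+(A)$, such a witnessing club $C_\alpha\supseteq A\cap\alpha$ (and arbitrary elements elsewhere). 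Using $\acc^+(A)$ to be a club, the set $\{\alpha\in\Gamma\mid A\cap\alpha\s C_\alpha\}$ then contains the stationary set $\Gamma\cap\acc^+(A)$ (recall $\Gamma$ is stationary by Lemma~\ref{Gamma-closure}(3)), so by Proposition~\ref{amenable_vs_trivial} this transversal is not amenable, contradicting~(2).

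For the direction $(1)\implies(2)$, I again argue contrapositively: suppose some transversal $\vec C=\langle C_\alpha\mid\alpha\in\Gamma\rangle$ for $\cvec{C}$ is not amenable. By Proposition~\ref{amenable_vs_trivial}, fix a cofinal $A\s\kappa$ for which $S:=\{\alpha\in\Gamma\mid A\cap\alpha\s C_\alpha\}$ is stationary. Replacing $A$ by its closure $D:=\acc^+(A)$ and intersecting with $S$, I would locate a stationary set $T\s S\cap\acc(D)$ of ordinals $\alpha$ at which $A\cap\alpha\s C_\alpha$ and $\sup(A\cap\alpha)=\alpha$; for each such $\alpha$ we then have $A\cap\alpha\s C_\alpha\in\mathcal C_\alpha$, hence $A\cap\alpha\sin\mathcal C_\alpha$. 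Since $T$ is stationary (indeed cofinal), there is no $\alpha\in\acc^+(A)$ witnessing $A\cap\alpha\nsin\mathcal C_\alpha$ in the relevant stationary sense; more precisely, I must take care here, since the third bullet of Definition~\ref{def115} only asks for the \emph{existence} of a single such $\alpha$. So I would instead apply a shifting argument: replace $A$ by a suitable cofinal subset or tail so that \emph{every} sufficiently large $\alpha\in\acc^+(A')$ lands in $S$, thereby ensuring $A'\cap\alpha\sin\mathcal C_\alpha$ for all $\alpha\in\acc^+(A')$ and directly refuting the $\nsin$ clause.

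The main obstacle I anticipate is precisely this quantifier mismatch in the last step: amenability is phrased with ``stationarily many'' while the square principle's hitting clause is phrased with ``there exists.'' The cleanest route is to observe that the negation of the $\nsin$ clause asserts the existence of a cofinal $A$ such that $A\cap\alpha\sin\mathcal C_\alpha$ for \emph{all} $\alpha\in\acc^+(A)$, so to refute $\nsin$ from non-amenability I genuinely need to manufacture, from a merely stationary $S$, a single cofinal set hitting the $\sin$-relation everywhere on its accumulation points. I expect this to follow by taking $A$ to enumerate a club subset contained in $\bigcup_{\alpha\in S} C_\alpha$ obtained from a Kurepa-tree or coherence argument analogous to the one in the proof of Lemma~\ref{square_is_amenable}; indeed the coherence of $\vec C$ via $\sq_\chi$ should let the family $\{C_\alpha\cap\gamma\mid\alpha\in S\}$ cohere into an $\sq$-chain whose union $C$ satisfies $C\cap\alpha\s C_\alpha$ for cofinally (in fact club-many) $\alpha$, yielding the desired cofinal $A:=C$ with $A\cap\alpha\sin\mathcal C_\alpha$ on all of $\acc^+(A)$.
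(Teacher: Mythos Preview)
Your direction $\neg(1)\implies\neg(2)$ is correct and matches the paper's argument exactly.

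Your direction $\neg(2)\implies\neg(1)$ has a genuine gap, and your proposed fix would not work. After obtaining a cofinal $A$ and the stationary set $S=\{\alpha\in\Gamma\mid A\cap\alpha\s C_\alpha\}$, you worry about a quantifier mismatch and propose to build a new cofinal set via an $\sq$-chain or Kurepa-tree argument. But a transversal need not be $\sq$-coherent: for distinct $\alpha,\alpha'\in S$, the clubs $C_\alpha$ and $C_{\alpha'}$ may be incomparable under $\sq$, so there is no reason $\{C_\alpha\cap\gamma\mid\alpha\in S\}$ forms a chain. Moreover, any Kurepa-style width argument would need $\mu<\kappa$, which is not assumed here.

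The paper's insight is that the \emph{same} set $A$ already works, thanks to coherence of the $\mathcal C$-sequence at points of $\Gamma$. Given any $\alpha\in\acc^+(A)$, simply pick $\alpha'\in S$ with $\alpha'>\alpha$ (possible since $S$ is stationary, hence cofinal). Then $A\cap\alpha'\s C_{\alpha'}$, so $\alpha\in\acc^+(A)\cap\alpha'\s\acc(C_{\alpha'})$. Since $\alpha'\in\Gamma$, the definition of $\Gamma$ gives $C_{\alpha'}\cap\alpha\in\mathcal C_\alpha$ (and Lemma~\ref{Gamma-closure}(2) gives $\alpha\in\Gamma$). Hence $A\cap\alpha\s C_{\alpha'}\cap\alpha\in\mathcal C_\alpha$, i.e.\ $A\cap\alpha\sin\mathcal C_\alpha$. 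This holds for \emph{every} $\alpha\in\acc^+(A)$, refuting the $\nsin$ clause directly. The point you were missing is that the restriction $C_{\alpha'}\cap\alpha$ lands in $\mathcal C_\alpha$ by coherence, so $S$ being merely cofinal suffices to propagate the $\sin$ relation down to all of $\acc^+(A)$.
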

\begin{proof} Write $\cvec{C}=\langle \mathcal C_\alpha\mid\alpha<\kappa\rangle$.
By Lemma~\ref{Gamma-closure}(3), $\Gamma := \Gamma(\cvec{C})$ is stationary in $\kappa$.

$\neg(2) \implies \neg(1)$:
Let $\vec C=\langle C_\alpha\mid\alpha\in\Gamma\rangle$ be a transversal for $\cvec{C}$ that is not amenable.
By Proposition~\ref{amenable_vs_trivial}, fix a cofinal subset $A\s\kappa$ for which $S:=\{\alpha\in\Gamma\mid A\cap\alpha\s C_\alpha\}$ is stationary.
Let $\alpha\in\acc^+(A)$ be arbitrary.
Put $\alpha':=\min(S\setminus(\alpha+1))$, so that $A\cap\alpha'\subseteq C_{\alpha'}$.
Then, $\alpha\in\acc^+(A)\cap\alpha'\s\acc(C_{\alpha'})$, so that $\alpha\in\Gamma$, $C_{\alpha'}\cap\alpha\in\mathcal C_\alpha$, and hence $A\cap\alpha\sin\mathcal C_\alpha$.

$\neg(1) \implies \neg(2)$:
Fix a cofinal set $A \subseteq \kappa$ such that $A\cap\alpha\sin\mathcal C_\alpha$ for all $\alpha\in\acc^+(A)$.
Choose a transversal $\vec{C} = \langle C_\alpha \mid \alpha\in\Gamma \rangle$ for $\cvec{C}$
such that $A\cap\alpha \subseteq C_\alpha$ for all $\alpha \in \acc^+(A) \cap \Gamma$.
Then $\acc^+(A) \cap\Gamma$ is stationary, and it follows by Proposition~\ref{amenable_vs_trivial} that $\vec{C}$ is not amenable.
\end{proof}

\begin{fact}[Facts about transfinite ordinal sums]\label{ordinal-sums} For any ordinal $\Lambda$:
\begin{enumerate}
\item If $\Lambda$ is indecomposable and $\langle \Lambda_j \mid j<\cf(\Lambda) \rangle$ is a sequence of ordinals each less than $\Lambda$, converging to $\Lambda$, then $\sum_{j<\cf(\Lambda)} \Lambda_j = \Lambda$.
\item If $\langle \Lambda_j \mid j<\cf(\Lambda) \rangle$ is a sequence of nonzero ordinals such that $\sum_{j<\cf(\Lambda)} \Lambda_j = \Lambda$, then $\sum_{j<j'} \Lambda_j < \Lambda$ for every $j' < \cf(\Lambda)$.
\item If $\langle \Lambda_j \mid j<\cf(\Lambda) \rangle$ is a nondecreasing sequence of ordinals such that $\sum_{j<\cf(\Lambda)} \Lambda_j = \Lambda$, and $Z \subseteq \cf(\Lambda)$ is a cofinal subset,
then $\sum_{j \in Z} \Lambda_j = \Lambda$, where the sum is understood to be taken according to the increasing enumeration of $Z$.
\end{enumerate}
\end{fact}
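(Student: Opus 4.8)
The plan is to reduce everything to the recursive definition of the transfinite sum, namely $\sum_{j<0}\Lambda_j=0$, $\sum_{j<j'+1}\Lambda_j=(\sum_{j<j'}\Lambda_j)+\Lambda_{j'}$, and $\sum_{j<\delta}\Lambda_j=\sup_{j'<\delta}\sum_{j<j'}\Lambda_j$ at limit $\delta$, together with three elementary facts of ordinal arithmetic: ordinal addition is monotone in each coordinate and \emph{strictly} monotone in the right coordinate (so $t>0$ forces $\beta+t>\beta$); transfinite sums are associative, so an initial segment may be split off; and $\cf(\Lambda)$ is a regular cardinal, whence any family of fewer than $\cf(\Lambda)$ ordinals below $\Lambda$ has supremum below $\Lambda$. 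For Clause~(1), I would first record the lower bound $\sum_{j<\cf(\Lambda)}\Lambda_j\ge\Lambda$: for every $j'$ the partial sum $\sum_{j\le j'}\Lambda_j$ ends in the summand $\Lambda_{j'}$ and hence is $\ge\Lambda_{j'}$, so the total dominates $\sup_{j'}\Lambda_{j'}=\Lambda$. For the reverse inequality I would prove by transfinite induction on $j'\le\cf(\Lambda)$ that $\sum_{j<j'}\Lambda_j<\Lambda$ for every $j'<\cf(\Lambda)$; the successor step uses additive indecomposability (the sum of two ordinals below $\Lambda$ stays below $\Lambda$), and the limit step uses that a supremum of fewer than $\cf(\Lambda)$ ordinals below $\Lambda$ lies below $\Lambda$. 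Passing to the supremum over $j'<\cf(\Lambda)$ then gives $\sum_{j<\cf(\Lambda)}\Lambda_j\le\Lambda$, and the two bounds yield equality.

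For Clause~(2), fix $j'<\cf(\Lambda)$ and use associativity to split $\Lambda=\sum_{j<\cf(\Lambda)}\Lambda_j=(\sum_{j<j'}\Lambda_j)+t$, where the tail $t:=\sum_{j'\le j<\cf(\Lambda)}\Lambda_j$ satisfies $t\ge\Lambda_{j'}>0$ because all summands are nonzero. Since $\sum_{j<j'}\Lambda_j\le(\sum_{j<j'}\Lambda_j)+t=\Lambda$, were this partial sum equal to $\Lambda$ we would obtain $\Lambda=\Lambda+t>\Lambda$ by strict monotonicity of addition on the right, a contradiction; hence $\sum_{j<j'}\Lambda_j<\Lambda$.

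For Clause~(3), write $\theta:=\cf(\Lambda)$, a regular cardinal, so the cofinal set $Z\subseteq\theta$ has order type exactly $\theta$; let $\langle z_i\mid i<\theta\rangle$ be its increasing enumeration, noting $z_i\ge i$ for all $i<\theta$. The upper bound $\sum_{j\in Z}\Lambda_j\le\Lambda$ follows by termwise monotonicity after padding the omitted indices with zeros, which do not affect the sum. For the lower bound, the sequence being nondecreasing gives $\Lambda_{z_i}\ge\Lambda_i$ for every $i<\theta$, and termwise monotonicity of the transfinite sum then yields $\sum_{j\in Z}\Lambda_j=\sum_{i<\theta}\Lambda_{z_i}\ge\sum_{i<\theta}\Lambda_i=\Lambda$. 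The two bounds give equality, as claimed.

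I do not expect any step to present a genuine obstacle; the only points demanding care are the bookkeeping in the transfinite inductions. In particular I must verify the associativity and termwise-monotonicity lemmas for transfinite sums that I invoke, and I must be careful that in Clause~(1) the limit-stage bound genuinely requires $j'<\cf(\Lambda)$ rather than merely $j'<\Lambda$, since it is precisely the regularity of $\cf(\Lambda)$ that keeps proper initial sums below $\Lambda$ while the full sum reaches $\Lambda$. As these are all standard facts of ordinal arithmetic, I would invoke them from a standard reference rather than reprove them in detail.
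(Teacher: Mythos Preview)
Your proposal is correct. The paper states this result as a \emph{Fact} without proof, treating it as a standard piece of ordinal arithmetic, so there is no argument in the paper to compare against; your approach via the recursive definition of transfinite sums, indecomposability at successor stages, and regularity of $\cf(\Lambda)$ at limit stages is exactly the expected elementary verification.
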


It follows from Fact~\ref{ordinal-sums}(1) that the following is well-defined:

\begin{notation}
For any indecomposable ordinal $\Lambda \leq \kappa$, we write
\[a(\Lambda,\kappa) := \begin{cases}
\min \left\{ \sup \{ \Lambda_j+1 \mid j<\cf(\Lambda)\} \mathrel{\Big|} \sum_{j<\cf(\Lambda)} \Lambda_j =\Lambda \right\},
&\text{if } \Lambda<\kappa; \\
\kappa, &\text{if } \Lambda=\kappa.
\end{cases}\]
\end{notation}
Clearly $2 \leq a(\Lambda,\kappa) \leq \Lambda$ for every $\Lambda$.

\begin{example}\label{examples-a}
\begin{enumerate}
\item If $\Lambda<\kappa$ is any regular infinite cardinal, then $a(\Lambda,\kappa) = 2$.
\item If $\Lambda<\kappa$ is any singular infinite cardinal, then $a(\Lambda,\kappa) = \Lambda$.
\item If $\Lambda = \lambda\cdot\eta$ (ordinal multiplication) for cardinals $\eta=\cf(\eta) \leq \lambda<\kappa$,
then $a(\Lambda,\kappa) = \lambda+1$.
\end{enumerate}
\end{example}

We now arrive at the key Lemma of this section:

\begin{lemma}\label{blowup-nacc} Suppose that $\chi\le\Lambda\le\xi\le\kappa$, with $\Lambda$ some indecomposable ordinal. Suppose also:
\begin{enumerate}[(a)]
\item $\diamondsuit(\kappa)$ holds;
\item $\cvec{C}=\langle \mathcal C_\alpha\mid \alpha<\kappa\rangle$ is a $\square_\xi(\kappa,{<}\mu,{\sq_\chi^\Omega}, V)$-sequence for some fixed subset $\Omega\s\kappa\setminus\{\omega\}$;
\item $\vec C=\langle C_\alpha \mid \alpha \in \Gamma \rangle$ is a transversal for $\cvec{C}$;
\item\label{hyp-c-nacc} for every cofinal $B\s\kappa$  and every $\Lambda' < a(\Lambda,\kappa)$, the following set is stationary in $\kappa$:
\[
\{\alpha\in\Gamma\setminus\Omega\mid \exists C \in \mathcal C_\alpha [\min(C)=\min(B), \Lambda' \le\otp(C)<\Lambda,\nacc(C)\s B]\}.
\]
\end{enumerate}

Then there exists a $\square_\xi(\kappa,{<}\mu,{\sq^\Omega_\chi},V)$-sequence, $\cvec{D}=\langle \mathcal D_\alpha\mid\alpha<\kappa\rangle$, with a transversal $\langle D_\alpha\mid\alpha\in\Gamma\rangle$,
satisfying the following properties:
\begin{enumerate}
\item $|\mathcal D_\alpha| \leq |\mathcal C_\alpha|$ for all $\alpha < \kappa$;
\item If $\Lambda<\kappa$ and $\cvec{C}$ witnesses $\square_\xi(\kappa,{<}\mu,{\sq^\Omega_\chi},{\nsin})$, then so does $\cvec{D}$;
\item For every cofinal $A\s\kappa$, there exists a stationary $S\s\kappa$, for which the set
$$\{\alpha\in\Gamma\mid \otp(D_\alpha)=\min\{\alpha,\Lambda\}, \nacc(D_\alpha)\s A\}$$
covers the set
$$\{\alpha\in \Gamma\mid \min(C_\alpha)=\min(S), \otp(C_{\alpha})=\cf(\min\{\alpha,\Lambda\}), \nacc(C_{\alpha})\s S \}.$$
\end{enumerate}
\end{lemma}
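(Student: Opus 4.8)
The plan is to build $\cvec D$ from scratch by a $\diamondsuit(\kappa)$-driven recursion that glues short clubs into long ones, in the spirit of the construction in Lemma~\ref{lemma410}. A postprocessing function cannot do the job here, since such a function can only shrink order-types ($\otp(\Phi(x))\le\otp(x)$), whereas conclusion~(3) demands \emph{inflating} a skeleton of order-type $\cf(\Lambda)$ all the way up to order-type $\Lambda$; so the inflation must be carried out by a genuine transfinite recursion. First I would fix the combinatorial skeleton of the target order-type: using the definition of $a(\Lambda,\kappa)$ together with Fact~\ref{ordinal-sums}, I would fix a nondecreasing sequence $\langle\Lambda_j\mid j<\cf(\Lambda)\rangle$ of ordinals below $a(\Lambda,\kappa)$ with $\sum_{j<\cf(\Lambda)}\Lambda_j=\Lambda$; by indecomposability (Fact~\ref{ordinal-sums}(1)), any cofinal reindexing still sums to $\Lambda$ (Fact~\ref{ordinal-sums}(3)), which is what lets the recursion be driven by an \emph{arbitrary} skeleton of order-type $\cf(\Lambda)$. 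I would then fix a $\diamondsuit(\kappa)$-sequence $\langle X_\beta\mid\beta<\kappa\rangle$ and, for each $\alpha$, an enumeration $\{C^i_\alpha\mid i<\kappa\}$ of $\mathcal C_\alpha$ with $C^0_\alpha=C_\alpha$.

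The heart of the argument is a recursion on $\alpha\in\acc(\kappa)$ defining a matrix $\langle D^i_\alpha\mid i<\kappa\rangle$, where each $D^i_\alpha$ is assembled along the skeleton $C^i_\alpha$. Processing $\nacc(C^i_\alpha)$ in increasing order, at the $j$-th such point $\beta$ I would, whenever $\diamondsuit$ signals it (that is, when some $\gamma\in X_\beta\cap\acc(\beta)$ carries a previously constructed club cohering with the part built so far), splice in $D^0_\gamma$; this is exactly the gluing mechanism of Lemma~\ref{lemma410}, except that now the spliced pieces are chosen so that their order-types realize the summands $\Lambda_j$ (available through hypothesis~(d), which supplies clubs of every order-type in $[\Lambda',\Lambda)$) and their non-accumulation points are steered into a prescribed cofinal set. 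At accumulation points of $C^i_\alpha$ I would take unions, and finally set $D^i_\alpha$ to be the union of the pieces and $\mathcal D_\alpha:=\{D^i_\alpha\mid i<\kappa\}$, with $\mathcal D_{\alpha+1}:=\{\{\alpha\}\}$ and $\mathcal D_0:=\{\emptyset\}$. Since $D^i_\alpha=D^{i'}_\alpha$ whenever $C^i_\alpha=C^{i'}_\alpha$, conclusion~(1), namely $|\mathcal D_\alpha|\le|\mathcal C_\alpha|$, is immediate; and since gluing happens only at non-accumulation points while the base clubs avoid $\Omega$ on their accumulation points, the new clubs inherit $\acc(D^i_\alpha)\cap\Omega=\emptyset$, keeping $\sq_\chi^\Omega$-coherence.

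The verification then has three parts. To see that $\cvec D$ is a $\square_\xi(\kappa,{<}\mu,{\sq_\chi^\Omega},V)$-sequence I would run the least-counterexample argument of Lemma~\ref{lemma410}: if $\bar\alpha\in\acc(D^i_\alpha)$, then either $\bar\alpha\in\acc(C^i_\alpha)$, in which case the uniformity of the recursion gives $D^i_\alpha\cap\bar\alpha=D^j_{\bar\alpha}$ for the $j$ with $C^i_\alpha\cap\bar\alpha=C^j_{\bar\alpha}$, or $\bar\alpha$ lies inside a spliced piece $D^0_\gamma$, in which case the $\diamondsuit$-bookkeeping forces $\bar\alpha$ to contradict minimality; the boundedness $\otp(D^i_\alpha)\le\Lambda\le\xi$ and the ``at most one club of order-type $\xi$'' clause must also be checked, the latter being delicate only when $\Lambda=\xi$. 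Conclusion~(2) is then easy: when $\Lambda<\kappa$ every club of $\cvec D$ has order-type $\le\Lambda<\kappa$, so Lemma~\ref{hitting-implies-nontrivial}(1) yields $\nsin$ outright. For conclusion~(3), given cofinal $A\s\kappa$ I would let $S$ be the cofinal set read off from the $\diamondsuit$-guessing of a code for $A$, arranged precisely so that for $\alpha\ge\Lambda$ a base club $C_\alpha$ with $\min(C_\alpha)=\min(S)$, $\otp(C_\alpha)=\cf(\Lambda)$, and $\nacc(C_\alpha)\s S$ has each of its $\cf(\Lambda)$ many non-accumulation points serve as a splice-site at which a piece of order-type $\Lambda_j$ with non-accumulation points in $A$ gets attached; the total order-type is then $\sum_{j<\cf(\Lambda)}\Lambda_j=\Lambda$ by indecomposability, and $\nacc(D_\alpha)\s A$ (the boundary case $\alpha<\Lambda$, where the target is $\min\{\alpha,\Lambda\}=\alpha$, is handled analogously).

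The hard part will be the simultaneous bookkeeping inside the recursion: I must choose the spliced pieces so that (i) their order-types add up to exactly $\Lambda$ — this is where indecomposability and the minimality built into $a(\Lambda,\kappa)$ are essential — (ii) their non-accumulation points land in the target set $A$, and (iii) the resulting clubs cohere, i.e.\ every proper initial segment determined by an accumulation point reappears as a member of an earlier $\mathcal D_{\bar\alpha}$. Reconciling (i)--(iii) in a single uniform recursion, so that the guessing set $S$ of conclusion~(3) can be extracted after the fact, is the crux. Once the recursion is set up correctly, the coherence proof is a routine adaptation of the least-counterexample computation already carried out in Lemma~\ref{lemma410}, and the remaining clauses follow as indicated.
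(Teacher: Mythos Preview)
Your high-level plan---a $\diamondsuit$-driven recursion that glues short clubs along the skeleton $C^i_\alpha$, with the summands $\Lambda_j$ governing the target order-type---is indeed how the paper proceeds, and your remark that no postprocessing function can inflate order-types is exactly the point. But the uniform Lemma~\ref{lemma410}-style recursion you describe, run on \emph{every} $C^i_\alpha$, cannot give $\otp(D^i_\alpha)\le\Lambda$ in general: when $\Lambda<\kappa$ and $\alpha\in E^\kappa_{>\Lambda}$, no club in $\alpha$ has order-type $\le\Lambda$, and a skeleton with $\otp(C^i_\alpha)>\cf(\Lambda)$ will force the glued club past $\Lambda$. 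Consequently your argument for conclusion~(2)---``every club of $\cvec D$ has order-type $\le\Lambda<\kappa$, so Lemma~\ref{hitting-implies-nontrivial}(1) gives $\nsin$''---rests on a false premise. The paper avoids this by splitting into three cases: the full gluing (your recursion) is performed only when $\min(C_{\alpha,i})=0$ and $\otp(C_{\alpha,i})\le\cf(\Lambda)$; when $\min(C_{\alpha,i})>0$ or $\otp(C_{\alpha,i})>\cf(\Lambda)$, $D_{\alpha,i}$ is defined so that it agrees with $C_{\alpha,i}$ on a tail. Conclusion~(2) is then proved by observing that any putative $A$ with $A\cap\alpha\sin\mathcal D_\alpha$ everywhere forces, on a club of $\alpha$'s, the witness $D_{\alpha,f(\alpha)}$ to have order-type $>\Lambda$, hence to fall under one of the tail-agreeing cases, reducing to $\nsin$ for $\cvec C$.

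There is a second, related gap: your recursion splices in $D^0_\gamma$ for $\gamma\in X_\beta$, but $D^0_\gamma$ is itself a recursion output built along $C_\gamma$, and nothing in the Lemma~\ref{lemma410} template lets you control its order-type or the location of its non-accumulation points after the fact; so the assertion that ``the spliced pieces are chosen so that their order-types realize the summands $\Lambda_j$'' is unsubstantiated. The paper supplies the missing device via its Case~1: a surjection $\varphi:\kappa\setminus\{0\}\to[\kappa]^{<\omega}\times\kappa$ is fixed, and whenever $\varphi(\min(C_{\alpha,i}))=(\{\zeta\},k)$ with $\zeta\in\Gamma\setminus\Omega$ and $\otp(D_{\zeta,k})<\Lambda$, one sets $D_{\alpha,i}:=D_{\zeta,k}\cup\{\zeta\}\cup(C_{\alpha,i}\setminus\{\min(C_{\alpha,i})\})$. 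Applying hypothesis~(\ref{hyp-c-nacc}) with $B:=(A\setminus\beta)\cup\{\beta\}$ for a $\beta$ coding $(\{\zeta\},k)$ then produces stationarily many $\alpha$ at which $D_{\alpha,k}$ extends the given $D_{\zeta,k}$ by an increment of order-type in $[\Lambda',\Lambda)$ with new non-accumulation points in $A$. It is \emph{these} one-step extensions, not outputs of the full gluing, that serve as the spliced pieces in the elementary-submodel verification of conclusion~(3).
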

\begin{proof} Note that $\kappa\ge\xi\ge\Lambda \geq \chi \geq \aleph_0$.
The proof is an elaboration of the approach taken in \cite{rinot19}.
Fix a $\diamondsuit(\kappa)$-sequence $\langle X_\gamma\mid \gamma<\kappa\rangle$.
Fix a surjection $\varphi:(\kappa\setminus\{0\})\rightarrow [\kappa]^{<\omega} \times\kappa$ such that $\varphi(7)=(\emptyset,0)$ and such that if $\varphi(\beta)=(\varsigma,k)$, then $\sup(\varsigma)\leq\beta$.
For all $\alpha<\kappa$, let $\{ C_{\alpha,i}\mid i<\kappa\}$ be some enumeration (possibly with repetition) of $\mathcal C_\alpha$ such that
$C_{\alpha,0}=C_{\alpha,i}=C_\alpha$ whenever $\alpha \in \Gamma$ and $i\geq |\mathcal C_\alpha|$.
Without loss of generality, we may assume that for all $\alpha\in\acc(\kappa)$ and $i<\kappa$:
\begin{itemize}
\item If $\alpha\notin\Gamma$, then $C_{\alpha,i}=(C_{\alpha,i}\setminus (\omega+1))\cup((\omega+1)\setminus1)$;
\item If $\alpha=\omega$, then $C_{\alpha,i}=\omega\setminus 7$;
\item Otherwise, $|C_{\alpha,i}\cap\omega|\le1$.\footnote{That is, if $\min(C_{\alpha,i})<\omega$,
 then $C_{\alpha,i}\cap\omega=\{\min(C_{\alpha,i})\}$, and otherwise, $C_{\alpha,i}\cap\omega=\emptyset$.}
\end{itemize}

Fix a nondecreasing sequence $\langle \Lambda_j \mid j<\cf(\Lambda) \rangle$ of nonzero ordinals such that
$\sum_{j<\cf(\Lambda)} \Lambda_j = \Lambda$ and $\sup_{j<\cf(\Lambda)}(\Lambda_j+1) = a(\Lambda,\kappa)$.
Then, define a function $\rho : [\kappa]^{<\cf(\Lambda)} \to \Lambda$ by considering two cases:
\begin{itemize}
\item[$\br$] If $\Lambda<\kappa$, then we define $\rho:[\kappa]^{<\cf(\Lambda)} \rightarrow\Lambda$ by stipulating $\rho(x):= \sum_{j<\otp(x)} \Lambda_j$.
By Fact~\ref{ordinal-sums}(2) and the choice of the sequence $\langle \Lambda_j \mid j<\cf(\Lambda) \rangle$,
$\rho$ is well-defined.
\item[$\br$] Otherwise, define $\rho:[\kappa]^{<\kappa} \rightarrow\Lambda$ by stipulating $\rho(x):=\sup(x)$. Note that $\kappa=\xi=\Lambda = \cf(\Lambda) > \chi$ in this case.
\end{itemize}
In both cases, $\rho$ is an increasing, continuous and cofinal map from the poset $([\kappa]^{<\cf(\Lambda)},{\sq})$ to $(\Lambda,{<})$,
with $\rho(\emptyset) = 0$.

For each $C\in\mathcal K(\kappa)$, let us denote $C\setminus\{\min(C)\}$ by $C^\circ$.
We now define a matrix $\langle D_{\alpha,i}\mid \alpha,i<\kappa\rangle$ by recursion on $\alpha<\kappa$.
Let $D_{\alpha,i} := C_{\alpha,0}$ for all $\alpha \in \kappa\setminus \Gamma$ and all $i<\kappa$.
In particular, $\otp(D_{\alpha,i}) = \cf(\alpha) < \chi \leq \Lambda$.

Next, fix $\alpha \in \Gamma$, and suppose that $\langle D_{\zeta,k}\mid \zeta<\alpha,k<\kappa\rangle$ has already been defined.
Let $i<\kappa$ be arbitrary.
To define $D_{\alpha,i}$, we consider three cases:

\begin{itemize}
\item[Case 1.] If $\min(C_{\alpha,i})>0$, then write $(\varsigma,k):=\varphi(\min(C_{\alpha,i}))$, so that $\sup(\varsigma) \leq \min(C_{\alpha,i})$, and consider two subcases:
\begin{itemize}
\item[Case 1.1.] If $\varsigma\neq\emptyset$, $\min(\varsigma)\in(\Gamma\setminus\Omega)\cup\{0\}$ and $\otp(D_{\min(\varsigma),k})<\Lambda$, then let $$D_{\alpha,i}:= D_{\min(\varsigma),k} \cup \varsigma \cup (C_{\alpha,i})^\circ.$$
\item[Case 1.2.] Otherwise, let $D_{\alpha,i}:= (C_{\alpha,i})^\circ$.
\end{itemize}
\item[Case 2.] If $\min(C_{\alpha,i})=0$ and $\otp(C_{\alpha,i})\le\cf(\Lambda)$, then in order to define $D_{\alpha,i}$, we first define an $\sq$-increasing and continuous sequence of subsets of $\alpha$,
$\langle D_{\alpha,i}^j \mid j<\otp(C_{\alpha,i}) \rangle$, in such a way that for all $j<\otp(C_{\alpha,i})$: $\acc^+(D_{\alpha,i}^{j})\s D_{\alpha,i}^{j}$, $\otp(D_{\alpha,i}^{j})<\Lambda$,
and $C_{\alpha,i}(j)<\sup(D_{\alpha,i}^{j+1})\le C_{\alpha,i}(j+1)$.

The definition is by recursion on $j<\otp(C_{\alpha,i})$, as follows:
\begin{itemize}
\item[$\br$] Let $D_{\alpha,i}^0:=\emptyset$.
\item[$\br$] Suppose that $j<\otp(C_{\alpha,i})$ and $D_{\alpha,i}^j$ has already been defined. To define $D_{\alpha,i}^{j+1}$, we consider two cases:
\begin{itemize}
\item[$\br\br$] If there exists $(\beta,k)$ such that $\beta\in\Gamma\setminus\Omega$, $C_{\alpha,i}(j)<\beta<C_{\alpha,i}(j+1)$, $D_{\alpha,i}^j \sqsubseteq D_{\beta,k}$, $\nacc(D_{\beta,k})\s X_{C_{\alpha,i}(j+1)}$, and
$\rho(C_{\alpha,i}\cap (C_{\alpha,i}(j+1))) \le \otp(D_{\beta,k})<\Lambda$, then put $D_{\alpha,i}^{j+1}:=D_{\beta,k}$ for the lexicographically-least such pair $(\beta,k)$.
\item[$\br\br$] If the above fails, then let $D_{\alpha,i}^{j+1}:=D_{\alpha,i}^j\cup \{ \sup(D_{\alpha,i}^j),C_{\alpha,i}(j+1)\}$.
\end{itemize}
\item[$\br$] Suppose that $j'\in\acc(\otp(C_{\alpha,i}))$ for which $\langle D_{\alpha,i}^j\mid j<j'\rangle$ has already been defined.
Put $D_{\alpha,i}^{j'}:=\bigcup_{j<j'}D_{\alpha,i}^j$. As $j'<\otp(C_{\alpha,i})\le\cf(\Lambda)$, we indeed have $\otp(D_{\alpha,i}^{j'})<\Lambda$.
\end{itemize}
At the end of the above process, let $D_{\alpha,i}:=\bigcup\{D_{\alpha,i}^j\mid j<\otp(C_{\alpha,i})\}$.
\item[Case 3.] If $\min(C_{\alpha,i})=0$ and $\otp(C_{\alpha,i})>\cf(\Lambda)$, then let $D_{\alpha,i} := C_{\alpha,i}\setminus (C_{\alpha,i}(\cf(\Lambda)))$.
\end{itemize}

Now that the matrix $\langle D_{\alpha,i}\mid \alpha,i<\kappa\rangle$ is defined,
for every $\alpha < \kappa$ we let $$\mathcal D_\alpha := \{ D_{\alpha,i} \mid (i=0)\text{ or }(0<i<\kappa\ \&\ \otp(D_{\alpha,i})<\xi)\}.$$
Put $\cvec{D}:=\langle \mathcal D_\alpha \mid \alpha < \kappa\rangle$.

\begin{claim}\label{claim561} For every $\alpha\in\acc(\kappa)$ and $i<\kappa$:
\begin{enumerate}
\item $|\{ D\in\mathcal D_\alpha\mid \otp(D)=\xi\}|\le1$;
\item $0<\left| \mathcal D_\alpha \right| \leq \left| \mathcal C_\alpha \right| < \mu$;
\item $D_{\alpha,i}$ is a club in $\alpha$ of order-type $\le\xi$, with $\acc(D_{\alpha,i})\cap\Omega=\emptyset$,
and if $D_{\alpha,i}$ was defined according to Case~2, then furthermore $\otp(D_{\alpha,i})\le\Lambda$;
\item If $\alpha\in\Gamma$, then for every $\bar\alpha \in \acc(D_{\alpha,i})$, we have $D_{\alpha,i} \cap \bar\alpha \in\mathcal D_{\bar\alpha}$;
\item If $\alpha\notin\Gamma$, then $\omega\in\acc(D_{\alpha,i})$ but $D_{\alpha,i} \cap\omega \notin \mathcal D_\omega$;
\item If $D_{\alpha,i}$ was defined according to Case 2, then for every $j<\otp(C_{\alpha,i})$, there exist $(\zeta,k)$ and $\varsigma \in [\kappa\setminus\zeta]^{<\omega}$
such that $\zeta \in (\Gamma \setminus \Omega) \cup\{0\}$ and $D_{\alpha,i}^j = D_{\zeta,k} \cup \varsigma$;
if $j$ is a limit ordinal, then furthermore $\varsigma = \emptyset$ and $\zeta = C_{\alpha,i}(j)$;
\item $\cvec{D}$ is a $\square_\xi(\kappa,{<}\mu,{\sq_\chi^\Omega},V)$-sequence, with $\Gamma(\cvec{D})=\Gamma$;
\item If $\Lambda<\kappa$ and $\cvec{C}$ witnesses $\square_\xi(\kappa,{<}\mu,{\sq^\Omega_\chi},{\nsin})$, then so does $\cvec{D}$.
\end{enumerate}
\end{claim}
\begin{proof}
\begin{enumerate}
\item Clear from the definition of $\mathcal D_\alpha$.
\item As $C_{\alpha,i}=C_{\alpha,i'}$ entails $D_{\alpha,i}=D_{\alpha,i'}$ for all $i<i'<\kappa$, we have $|\mathcal D_\alpha|\le|\{ C_{\alpha,i}\mid i<\kappa\}| = |\mathcal C_\alpha|<\mu$.

\item Recall that we have assumed $\Lambda \leq \xi$.
The claim is trivial for $\alpha\notin\Gamma$.
We now prove the required statement by induction on $\alpha \in \Gamma$, examining each case in the construction of $D_{\alpha,i}$:
\begin{itemize}
\item[Case 1.1:] In this case, the result that $D_{\alpha,i}$ is a club in $\alpha$ of order-type $\leq\xi$ follows from the facts that $D_{\min(\varsigma),k}$ is a club in $\min(\varsigma)$ of order-type $<\Lambda$,
$\varsigma$ is finite, $\max(\varsigma) \leq \min(C_{\alpha,i})$, $\Lambda$ is an indecomposable ordinal, and $C_{\alpha,i}$ is a club in $\alpha$ of order-type $\leq\xi$.
Then, $\acc(D_{\alpha,i}) \cap \Omega = \emptyset$ follows from $\acc(D_{\min(\varsigma),k}) \cap\Omega = \emptyset$, $\min(\varsigma) \notin\Omega$, and $\acc(C_{\alpha,i}) \cap\Omega =\emptyset$.
\item[Case 1.2:] Since $C_{\alpha,i}$ is a club in $\alpha$ of order-type $\leq\xi$, so is $D_{\alpha,i}$ in this case. Likewise, $\acc(D_{\alpha,i}) \cap \Omega = \emptyset$.
\item[Case 2:] Since the sequence $\langle D_{\alpha,i}^j \mid j<\otp(C_{\alpha,i}) \rangle$ is continuous and satisfies $C_{\alpha,i}(j)<\sup(D_{\alpha,i}^{j+1})\le C_{\alpha,i}(j+1)$ for all $j<\otp(C_{\alpha,i})$,
it follows that $\sup(D_{\alpha,i}) = \sup_{j < \otp(C_{\alpha,i}) } \sup(D_{\alpha,i}^j) = \sup(C_{\alpha,i}) = \alpha$.
Since $\acc^+(D_{\alpha,i}^{j})\s D_{\alpha,i}^{j}$ for all $j<\otp(C_{\alpha,i})$ and since the sequence $\langle D_{\alpha,i}^j \mid j<\otp(C_{\alpha,i}) \rangle$ is $\sq$-increasing,
it follows that $D_{\alpha,i}$ is a club in $\alpha$.
Since $\otp(D_{\alpha,i}^{j}) < \Lambda$ for every $j < \otp(C_{\alpha,i})$, it follows that $\otp(D_{\alpha,i}) = \sup_{j<\otp(C_{\alpha,i})} \otp(D_{\alpha,i}^j) \leq \Lambda$.
Finally, every $\bar\alpha \in \acc(D_{\alpha,i})$ is either in $\acc(C_{\alpha,i})$ or in $\acc(D_{\beta,k}) \cup \{\beta\}$ for some $(\beta,k)$ with $\beta \in \Gamma \setminus \Omega$ below $\alpha$,
all of which are disjoint from $\Omega$.

\item[Case 3:] Notice that if $\Lambda=\kappa$, then this case never applies.
Since $C_{\alpha,i}$ is a club in $\alpha$ with $\acc(C_{\alpha,i}) \cap\Omega =\emptyset$ and $\cf(\Lambda) < \otp(C_{\alpha,i}) \leq\xi$,
it follows that in this case, $D_{\alpha,i}$ is a club in $\alpha$  of order-type $\leq\xi$, and $\acc(D_{\alpha,i})\cap\Omega=\emptyset$.
\end{itemize}

\item Suppose not, and let $\alpha\in\Gamma$ be the least counterexample.
Pick $i<\kappa$ and $\bar\alpha\in\acc(D_{\alpha,i})$ such that $D_{\alpha,i}\cap\bar\alpha\notin\mathcal D_{\bar\alpha}$.
By Clause~(3), we have $\otp(D_{\alpha,i})\le\xi$, and hence $\otp(D_{\alpha,i}\cap\bar\alpha)<\xi$.
Thus, to derive a contradiction, it suffices to find some $k<\kappa$ such that $D_{\alpha,i}\cap\bar\alpha=D_{\bar\alpha,k}$.

Write $\beta:=\min(C_{\alpha,i})$. There are a few cases to consider:

\begin{itemize}
\item[Case 1.1:] If $\beta>0$ and $D_{\alpha,i}= D_{\min(\varsigma),k}\cup \varsigma \cup (C_{\alpha,i})^\circ$ for $(\varsigma,k):=\varphi(\beta)$, we have:
\begin{itemize}
\item[$\br$] If $\bar\alpha \in \acc(D_{\min(\varsigma),k})$, then by $\min(\varsigma) \in \Gamma \cap \alpha$ and minimality of $\alpha$, there must be some $k'<\kappa$
such that $D_{\bar\alpha,k'} = D_{\min(\varsigma),k}\cap\bar\alpha = D_{\alpha,i} \cap \bar\alpha$.
\item[$\br$] If $\bar\alpha = \min(\varsigma)$, then $D_{\bar\alpha,k} = D_{\alpha,i} \cap \bar\alpha$.
\item[$\br$] If $\bar\alpha \in \acc(C_{\alpha,i})$, then by $\alpha\in\Gamma$, there exists some $\bar k<\kappa$ such that $C_{\alpha,i}\cap\bar\alpha=C_{\bar\alpha,\bar k}$,
so that $\varphi(\min(C_{\bar\alpha,\bar k}))=\varphi(\beta)=(\varsigma,k)$ and $D_{\bar\alpha,\bar k}=D_{\min(\varsigma),k}\cup\varsigma\cup(C_{\bar\alpha,\bar k})^\circ$.
That is, $D_{\bar\alpha,\bar k}=D_{\alpha,i}\cap\bar\alpha$.
\end{itemize}
\item[Case 1.2:] If $D_{\alpha,i}=(C_{\alpha,i})^\circ$, then by $\alpha \in \Gamma$ and $\bar\alpha \in \acc(C_{\alpha,i})$, let $\bar k<\kappa$ be such that $C_{\bar\alpha,\bar k}=C_{\alpha,i}\cap\bar\alpha$.
We have $D_{\bar\alpha,\bar k}=(C_{\bar\alpha,\bar k})^\circ =D_{\alpha,i}\cap\bar\alpha$.
\item[Case 2:] If $D_{\alpha,i}=\bigcup\{D_{\alpha,i}^j\mid j<\otp(C_{\alpha,i})\}$, then let $j'<\otp(C_{\alpha,i})$ be the least such that $\bar\alpha\in\acc(D_{\alpha,i}^{j'})\cup\{\sup(D_{\alpha,i}^{j'})\}$.
By minimality of $\alpha$, it cannot be the case that $D_{\alpha,i}^{j'}=D_{\zeta,k}$ for some $\zeta\in\Gamma\cap\alpha$ and $k<\kappa$.
By minimality of $j'$, it cannot be the case that $D_{\alpha,i}^{j'}=D_{\alpha,i}^j\cup \{ \sup(D_{\alpha,i}^j),C_{\alpha,i}(j+1)\}$ for some $j<j'$.
Thus, it must be the case that $j'\in\acc(\otp(C_{\alpha,i}))$ and $D_{\alpha,i}^{j'}=\bigcup_{j<j'}D_{\alpha,i}^j$.
So, by minimality of $j'$, it follows that $\bar\alpha=\sup(\bigcup_{j<j'}D_{\alpha,i}^j)$.
As $C_{\alpha,i}(j)<\sup(D_{\alpha,i}^{j+1})\le C_{\alpha,i}(j+1)$ for all $j<j'$, we have that $\bar\alpha=C_{\alpha,i}(j')$.
Let $\bar k<\kappa$ be such that $C_{\alpha,i}\cap\bar\alpha=C_{\bar\alpha,\bar k}$.
We have $\bar\alpha \in \Gamma$, $\min(C_{\bar\alpha,\bar k}) = \min(C_{\alpha,i}) = 0$, and $\otp(C_{\bar\alpha,\bar k}) = j' < \otp(C_{\alpha,i}) \leq \cf(\Lambda)$,
so that $D_{\bar\alpha,\bar k}$ was also constructed according to Case 2.
By the canonical nature of that construction and the fact that $C_{\bar\alpha,\bar k}(j) = C_{\alpha,i}(j)$ for all $j < j'$,
it is easy to see that $\langle D_{\bar\alpha,\bar k}^j\mid j<\otp(C_{\bar\alpha,\bar k})\rangle=\langle  D_{\alpha,i}^j\mid j<j'\rangle$,
so that $D_{\bar\alpha,\bar k}=\bigcup_{j<j'}D^j_{\bar\alpha,\bar k}=D_{\alpha,i}\cap\bar\alpha$.
\item[Case 3:] If $D_{\alpha,i}=C_{\alpha,i}\setminus (C_{\alpha,i}(\cf(\Lambda)))$, then let $\bar k<\kappa$ be such that $C_{\bar\alpha,\bar k}=C_{\alpha,i}\cap\bar\alpha$.
We have $D_{\bar\alpha,\bar k}=C_{\bar\alpha,\bar k}\setminus (C_{\bar\alpha,\bar k}(\cf(\Lambda)))=D_{\alpha,i}\cap\bar\alpha$.
\end{itemize}
\item As $C_{\omega,k}=\omega\setminus 7$ for all $k<\kappa$, Case~1.2 of the construction implies that $\mathcal D_\omega=\{\omega\setminus 8\}$.
Now, if $\alpha\in\acc(\kappa)\setminus\Gamma$, then $D_{\alpha,i} = C_{\alpha,0} \sqsupseteq ((\omega+1)\setminus1)$, so that $\omega \in \acc(D_{\alpha,i})$ but $D_{\alpha,i}\cap\omega \notin\mathcal D_\omega$.
\item By induction on $j'<\otp(C_{\alpha,i})$.
The base case is trivial, since $D_{\alpha,i}^0=\emptyset=D_{0,0}$.
The nonzero limit case is easy, as in this case we have $D_{\alpha,i}^{j'}=\bigcup_{j<j'}D_{\alpha,i}^{j}$ which, by Clause~(4), is equal to $D_{\bar\alpha,k}$ for $\bar\alpha:=C_{\alpha,i}(j') \in \Gamma\setminus\Omega$ and some $k<\kappa$.

Finally, if $j'=j+1$, then either there exists some $(\zeta,k)$ such that $\zeta\in\Gamma\setminus\Omega$ and $D_{\alpha,i}^{j'}=D_{\zeta,k}$, and we are done,
or there exists some $\varsigma\in[\kappa\setminus \sup(D_{\alpha,i}^j)]^{\le2}$ such that $D_{\alpha,i}^{j'}=D_{\alpha,i}^j\cup\varsigma$, and we are done by appealing to the induction hypothesis.

\item Follows from all of the previous clauses.
\item Suppose that $A$ is a cofinal subset of $\kappa$ such that $A\cap\alpha\sin\mathcal D_\alpha$ for all $\alpha\in\acc^+(A)$.
Fix a function $f:\kappa\rightarrow\kappa$ such that $A\cap\alpha\s D_{\alpha,f(\alpha)}$ for all $\alpha\in \acc^+(A)$.
Assuming $\Lambda<\kappa$, we consider the club $D:=\{\alpha\in\acc(\kappa\setminus\Lambda)\mid \otp(A\cap\alpha)=\alpha\}$.
We shall show that $\{\alpha\in\Gamma\mid \sup(D\cap\alpha\setminus C_{\alpha,f(\alpha)})<\alpha\}$ is stationary,
so that, by Lemma~\ref{nsintransversal}, $\cvec{C}$ does not witness $\square_\xi(\kappa,{<}\mu,{\sq^\Omega_\chi},{\nsin})$.

Let $\alpha\in\Gamma\cap D$ be arbitrary.
As $D\s\acc^+(A)$, we have $D_{\alpha,f(\alpha)}\supseteq A\cap\alpha$, and hence $\otp(D_{\alpha,f(\alpha)})=\alpha>\Lambda$.
Thus, by Clause~(3) of this Claim, $D_{\alpha,f(\alpha)}$ was not constructed according to Case~2.
By definition of Cases 1 and 3, $\epsilon:=\sup(D_{\alpha,f(\alpha)}\symdiff C_{\alpha,f(\alpha)})$ is $<\alpha$.
It follows that $A \cap(\epsilon,\alpha)\s C_{\alpha,f(\alpha)}$, and since $D\s\acc^+(A)$ and $C_{\alpha,f(\alpha)}$ is closed, we infer that $\sup(D\cap\alpha\setminus C_{\alpha,f(\alpha)}) \leq\epsilon <\alpha$. \qedhere
\end{enumerate}
\end{proof}

Let $D_\alpha := D_{\alpha,0}$ for all $\alpha \in \Gamma$, so that $\langle D_\alpha \mid \alpha \in \Gamma \rangle$ is a transversal for $\cvec{D}$.
All that remains is to verify Clause~(3) of the statement of the Lemma.
As a preliminary step, we prove the following:

\begin{claim}\label{claim5632} Suppose that $A$ is a cofinal subset of $\kappa$, $\Lambda' < a(\Lambda,\kappa)$, $\zeta \in(\Gamma\setminus\Omega)\cup\{0\}$,
$i<\kappa$, and $\otp(D_{\zeta,i})<\Lambda$.
Then there are stationarily many $\alpha \in \Gamma\setminus\Omega$ such that, for some $k<\kappa$:
\begin{itemize}
\item $D_{\zeta,i}\sq D_{\alpha, k}$;
\item $\nacc(D_{\alpha,k})\setminus\zeta\s A$;
\item $\otp(D_{\zeta,i}) + \Lambda' \le \otp(D_{\alpha, k})<\Lambda$.
\end{itemize}
\end{claim}
\begin{proof} If $\zeta \in \Gamma\setminus\Omega$, then fix a nonzero $\beta<\kappa$ such that $\varphi(\beta)=(\{\zeta\}, i)$.
Applying hypothesis~$(\ref{hyp-c-nacc})$ to $B:=(A\setminus\beta)\cup\{\beta\}$ and $\Lambda'$,
there are stationarily many $\alpha \in \Gamma\setminus\Omega$ such that, for some $C \in \mathcal C_\alpha$,
$\min(C)=\min(B)$, $\Lambda' \le\otp(C)<\Lambda$, and $\nacc(C)\s B$.
Consider any such $\alpha$, and let $k<\kappa$ be such that $C = C_{\alpha,k}$.
As $\min(C_{\alpha,k})=\beta$, we have $\varphi(\min(C_{\alpha,k}))=(\{\zeta\},i)$ and $\nacc((C_{\alpha,k})^\circ) \subseteq B \setminus \{\beta\} \subseteq A$.
As $\beta>0$, $\zeta \in \Gamma\setminus\Omega$, and $\otp(D_{\zeta,i})<\Lambda$, we are in Case 1.1 of the construction, and hence $D_{\alpha,k} = D_{\zeta,i}\cup\{\zeta\} \cup (C_{\alpha,k})^\circ$,
so that $\otp(D_{\zeta,i}) + \Lambda' \le\otp(D_{\alpha,k})<\Lambda$ (since $\Lambda$ is an indecomposable ordinal, and $\otp((C_{\alpha,k})^\circ) = \otp(C_{\alpha,k})$) and
$\nacc(D_{\alpha,k})\setminus\zeta = \nacc((C_{\alpha,k})^\circ) \s A$.

If $\zeta =0$, then fix a nonzero $\beta<\kappa$ such that $\varphi(\beta)=(\emptyset, i)$.
Let $\alpha \in \Gamma\setminus\Omega$ and $k<\kappa$ be arbitrary satisfying $\min(C_{\alpha,k})=\beta$, $\Lambda' \le\otp(C_{\alpha,k})<\Lambda$, and $\nacc(C_{\alpha,k})\s (A\setminus\beta)\cup\{\beta\}$.
Then we are in Case 1.2 of the construction, and hence $D_{\alpha,k}:= (C_{\alpha,k})^\circ$, so that $\emptyset\sq D_{\alpha,k}$, $\nacc(D_{\alpha,k})\s A$, and $\Lambda' \leq \otp(D_{\alpha,k}) < \Lambda$.
Since $D_{\zeta,i} = \emptyset$ in this case, we are done.
\end{proof}

\begin{claim}\label{467} Suppose that $A$ is a cofinal subset of $\kappa$. Then there exists a stationary $S\s\kappa$ for which the set
$\{\delta\in\Gamma\mid \otp(D_\delta)=\min\{\delta,\Lambda\}, \nacc(D_\delta)\s A\}$
covers the set
$$\{\delta\in \Gamma\mid \min(C_\delta)=\min(S), \otp(C_{\delta})=\cf(\min\{\delta,\Lambda\}), \nacc(C_{\delta})\s S \}.$$
\end{claim}
\begin{proof} Let $\langle M_\eta\mid \eta<\kappa \rangle$ be an $\in$-increasing and continuous chain of elementary submodels of $H_{\kappa^{+}}$,
such that $\{\Gamma,\Omega,\rho, A, \Lambda, \langle \Lambda_j \mid j<\cf(\Lambda) \rangle, \langle D_{\alpha,i} \mid \alpha,i<\kappa\rangle \}\s M_0$ and $M_\eta\cap\kappa\in\kappa$ for all $\eta<\kappa$.
Clearly, $E:=\{ \gamma<\kappa\mid M_\gamma\cap\kappa=\gamma\}$ is a club in $\kappa$,
and $G:=\{\gamma\in E\mid X_\gamma=A\cap\gamma\}$ is stationary in $\kappa$.
Let $S:=\{0\}\cup G$.

Now, suppose that $\delta\in\Gamma$ is an ordinal satisfying $\min(C_{\delta,0})=\min(S)$, $\otp(C_{\delta,0})=\cf(\min\{\delta,\Lambda\})$, and $\nacc(C_{\delta,0})\s S$.
For all $j<\otp(C_{\delta,0})$, write $\varrho_j:=\rho(C_{\delta,0}\cap (C_{\delta,0}(j)))$.
Note that if $\Lambda<\kappa$, then by $\Lambda \in M_0$, we have $\min(E)>\Lambda$, so that $\delta>\Lambda$ and $\otp(C_{\delta,0}) = \cf(\Lambda)$.
It follows that, in all cases, $\otp(C_{\delta,0}) = \cf(\delta) \leq \cf(\Lambda)$, so that
$\langle \varrho_j\mid j<\otp(C_{\delta,0})\rangle$ is a well-defined, continuous sequence, converging to $\min\{\delta,\Lambda\}$.

For every $j<\otp(C_{\delta,0})$, let $\Lambda'_j < a(\Lambda,\kappa)$ be some ordinal such that $\varrho_j + \Lambda'_j \geq \varrho_{j+1}$.\footnote{%
If $\Lambda<\kappa$, then we may simply let $\Lambda'_j := \Lambda_{ j+1}$.
If $\Lambda=\kappa$, then we may let $\Lambda'_j := \varrho_{j+1}$, recalling that $a(\Lambda,\kappa)=\kappa$ in that case.}

As $\delta \in \Gamma$ and $\otp(C_{\delta,0})\le\cf(\Lambda)$, we know that $D_{\delta,0}$ was constructed according to Case~2.
We shall now show that for all $j<\otp(C_{\delta,0})$:
$D_{\delta,0}^{j} = D_{\beta,k}$ for some pair $(\beta,k)$ with $\beta \in (\Gamma\setminus\Omega) \cup \{0\}$,
$\varrho_j\le\otp(D_{\delta,0}^j)<\Lambda$, and $\nacc(D_{\delta,0}^j)\s A$. By induction on $j$:
\begin{itemize}
\item[$\br$] For $j=0$, we have $D_{\delta,0}^0 = \emptyset = D_{0,0}$, so that $\varrho_0 = \rho(\emptyset) = 0 = \otp(D_{\delta,0}^0) < \Lambda$ and $D_{\delta,0}^0 \s A$.
\item[$\br$] Suppose that $j<\otp(C_{\delta,0})$ satisfies the induction hypothesis.
Fix a pair $(\zeta,i)$ with $\zeta \in (\Gamma\setminus\Omega) \cup \{0\}$ such that $D_{\delta,0}^j = D_{\zeta,i}$, and $i$ is the least to satisfy this.
By the induction hypothesis, $\otp(D_{\zeta,i})<\Lambda$, so that by appealing to Claim~\ref{claim5632} with $\Lambda' := \Lambda'_j$, there are stationarily many $\beta \in \Gamma\setminus\Omega$ such that, for some $l<\kappa$,
$D_{\zeta,i} \sq D_{\beta,l}$, $\nacc(D_{\beta,l}) \setminus \zeta \subseteq A$, and $\otp(D_{\zeta,i}) + \Lambda'_j \leq \otp(D_{\beta,l}) < \Lambda$.
Recall that $\zeta = \sup(D_{\zeta,i})$, and by the induction hypothesis $\nacc(D_{\zeta,i})\s A$, so that for any $\beta$ and $l$ as above we have $\nacc(D_{\beta,l}) = \nacc(D_{\zeta,i}) \cup (\nacc(D_{\beta,l}) \setminus \zeta) \subseteq A$.
Also, our choice of $\Lambda'_j$ together with the induction hypothesis gives $\otp(D_{\zeta,i}) + \Lambda'_j \geq \varrho_{j+1}$. Thus,
\[
H_{\kappa^+} \models
\sup \{\beta \in \Gamma\setminus\Omega \mid \exists l<\kappa
[D_{\zeta,i}\sq D_{\beta,l}, \nacc(D_{\beta,l})\s A, \varrho_{j+1} \le\otp(D_{\beta,l})<\Lambda] \} = \kappa.
\]

Write $\gamma := C_{\delta,0}(j+1)$.
Then $\gamma \in S\setminus\{0\} \subseteq E$, and hence $M_\gamma \cap \kappa = \gamma$.
We have $\zeta = \sup(D_{\zeta,i}) = \sup(D_{\delta,0}^j) \leq C_{\delta,0}(j) < \gamma$, so that $\zeta \in M_\gamma$ and $\mathcal D_\zeta \in M_\gamma$.
As $M_\gamma \prec H_{\kappa^+}$,  $M_\gamma\cap\kappa=\gamma$, and $|\mathcal D_\zeta| < \kappa$,  we have $\{ D_{\zeta,\tau}\mid \tau<\kappa\}=\{ D_{\zeta,\tau}\mid \tau<\gamma\}$.
In particular, $D_{\zeta,i} \in M_\gamma$.
Next, notice that $\varrho_{j+1}$ is equal to either $\sum_{\iota<j+1}\Lambda_\iota$ (in case $\Lambda<\kappa$)
or $C_{\delta,0}(j)$ (in case $\Lambda=\kappa$), which are both ordinals below $\gamma$, and hence in $M_\gamma$.
Now, since $\{\Gamma,\Omega, \varrho_{j+1},A, \Lambda, \kappa, \langle D_{\alpha,i} \mid \alpha,i<\kappa\rangle \}\s M_\gamma$, elementarity of $M_\gamma$ gives
\[
\sup\{\beta \in \Gamma\cap\gamma \setminus\Omega \mid \exists l<\gamma[D_{\zeta,i}\sq D_{\beta,l}, \nacc(D_{\beta,l})\s A, \varrho_{j+1} \le\otp(D_{\beta,l})<\Lambda] \}=\gamma.
\]

As $\gamma=C_{\delta,0}(j+1)\in S\setminus\{0\} = G$, we have $X_{C_{\delta,0}(j+1)}=A\cap\gamma$.
Altogether, there exists $\beta \in \Gamma\setminus\Omega$ and $l<\gamma$
with $C_{\delta,0}(j)<\beta<C_{\delta,0}(j+1)$ such that $D_{\delta,0}^j=D_{\zeta,i}\sq D_{\beta,l}$, $\nacc(D_{\beta,l})\s X_{C_{\delta,0}(j+1)}$ and $\varrho_{j+1} \le\otp(D_{\beta,l})<\Lambda$.
In particular, $(\beta,l)$ witnesses that $D_{\delta,0}^{j+1}$ was constructed according to the first option,
so that in fact $D_{\delta,0}^{j+1} = D_{\beta,k}$ for some pair $(\beta,k)$ with $\beta \in \Gamma\setminus\Omega$,
$\varrho_{j+1} \le \otp(D_{\delta,0}^{j+1})<\Lambda$, and $\nacc(D_{\delta,0}^{j+1}) \s A$.

\item[$\br$] Suppose that $j' \in \acc(\otp(C_{\delta,0}))$, and for every $j < j'$, $D_{\delta,0}^{j} = D_{\beta,k}$ for some pair $(\beta,k)$ with $\beta \in (\Gamma\setminus\Omega) \cup \{0\}$,
$\varrho_j\le\otp(D_{\delta,0}^j)<\Lambda$, and $\nacc(D_{\delta,0}^j)\s A$.
By Claim~\ref{claim561}(6), $D_{\delta,0}^{j'} = D_{\bar\alpha,k}$ for  $\bar\alpha := C_{\delta,0}(j')$ and some $k<\kappa$.
Furthermore, $\bar\alpha\in\Gamma\setminus\Omega$ and  $\otp(D_{\delta,0}^{j'}) < \Lambda$.
Since $D_{\delta,0}^{j'} =\bigcup_{j<j'}D_{\delta,0}^j$, it follows that $\otp(D_{\delta,0}^{j'}) = \sup_{j<j'} \otp(D_{\delta,0}^j) \geq \sup_{j<j'} \varrho_j = \varrho_{j'}$
and $\nacc(D_{\delta,0}^{j'}) = \bigcup_{j<j'} \nacc(D_{\delta,0}^j) \subseteq A$, completing the induction.
\end{itemize}

Since $\delta \in \Gamma$, $\min(C_{\delta,0}) = 0$, and $\otp(C_{\delta,0}) \leq \cf(\Lambda)$,
Case 2 of the construction gives us $D_{\delta,0} = \bigcup_{j< \otp(C_{\delta,0})} D_{\delta,0}^j$, so that $\nacc(D_{\delta,0}) \subseteq A$ and
$\min\{\delta,\Lambda\}=\sup\{\varrho_j\mid j<\otp(C_{\delta,0})\}\le\otp(D_{\delta,0})\le \min\{\Lambda,\delta\}$, completing the proof of the Claim.
\end{proof}

So $\langle D_\alpha\mid\alpha\in\Gamma\rangle$ is a transversal for $\cvec{D}$ that satisfies the desired properties.
\end{proof}

\begin{cor}\label{cor48} Suppose that $\langle (C_\alpha,Z_\alpha)\mid\alpha <\kappa\rangle$ is a sequence such that:
\begin{itemize}
\item For every $\alpha\in\acc(\kappa)$, $C_\alpha$ is a club in $\alpha$;
\item For every $\alpha\in\acc(\kappa)$ and $\bar\alpha\in\acc(C_\alpha)$, $C_\alpha\cap\bar\alpha=C_{\bar\alpha}$;
\item For every subset $Z\s\kappa$ and every club $D\s\kappa$, we have $$\sup\{\otp(C_\alpha)\mid \alpha\in\acc(\kappa), C_\alpha\s\{ \beta\in D\mid Z\cap\beta=Z_\beta\}\}=\kappa.$$
\end{itemize}

Then there exists a $\square(\kappa)$-sequence $\langle D_\alpha\mid\alpha <\kappa \rangle$ satisfying that
for every sequence $\langle A_i\mid i<\kappa\rangle$ of cofinal subsets of $\kappa$, and every $\theta\in\reg(\kappa)$, the following set is stationary:
$$\{\alpha\in E_\theta^\kappa \mid \otp(D_\alpha)=\alpha\ \&\ \forall i<\alpha[D_\alpha(i+1)\in A_i]\}.$$
\end{cor}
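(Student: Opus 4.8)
The plan is to manufacture a coherent transversal that can be fed into the blow-up machinery of Lemma~\ref{blowup-nacc} with $\Lambda=\kappa$, and then to compose the resulting sequence with the postprocessing function of Lemma~\ref{phi0} so that successor nodes are driven into the prescribed sets $A_i$. First I would extract a diamond sequence for free: given $Z\s\kappa$ and a club $D\s\kappa$, the third bullet of the hypothesis makes the displayed supremum equal to $\kappa$, so in particular $\{\beta\in D\mid Z\cap\beta=Z_\beta\}\neq\emptyset$; hence $\{\beta<\kappa\mid Z\cap\beta=Z_\beta\}$ meets every club and is stationary, i.e. $\langle Z_\beta\cap\beta\mid\beta<\kappa\rangle$ witnesses $\diamondsuit(\kappa)$. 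Let $\mathfrak Z$ be the $\kappa$-assignment derived from $\langle Z_\beta\rangle$ as in Example~\ref{phiZ-simpler}, and let $\Phi_{\mathfrak Z}$ be the corresponding $\acc$-preserving (hence $\otp$-preserving) conservative postprocessing function of Lemma~\ref{phiZ}. Since $\langle C_\alpha\mid\alpha\in\acc(\kappa)\rangle$ is a $\sq$-coherent transversal for the trivial $\square_\kappa(\kappa,{<}2,{\sq},V)$-sequence $\langle\{C_\alpha\}\rangle$, Lemma~\ref{pp-preserves-square} shows that $\langle C'_\alpha\mid\alpha\in\acc(\kappa)\rangle$, with $C'_\alpha:=\Phi_{\mathfrak Z}(C_\alpha)$, is again such a transversal, with support $\acc(\kappa)$.

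The heart of the matter is to check that this single transversal meets both requirements of Lemma~\ref{blowup-nacc} with $(\xi,\mu,\chi,\Omega,\Lambda):=(\kappa,2,\aleph_0,\emptyset,\kappa)$, so that $a(\Lambda,\kappa)=\kappa$. For hypothesis~(\ref{hyp-c-nacc}): given a cofinal $B\s\kappa$, an ordinal $\Lambda'<\kappa$, and a club $D_0$, I apply the guessing with $Z:=B$ and $D:=D_0\cap\acc^+(B)$ (a club, as $B$ is unbounded) to obtain $\alpha$ with $C_\alpha\s\{\beta\in D\mid B\cap\beta=Z_\beta\}$ and $\otp(C_\alpha)\ge\Lambda'$; every element of $C_\alpha$ then lies in $\acc^+(B)$ and guesses $B$, so Example~\ref{phiZ-simpler} yields $\nacc(C'_\alpha)\s B$, $\min(C'_\alpha)=\min(B)$, while $\otp(C'_\alpha)=\otp(C_\alpha)\ge\Lambda'$ and $\alpha\in D_0$, giving stationarity. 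The subtle point — which I expect to be the \emph{main obstacle} — is that Clause~(3) of Lemma~\ref{blowup-nacc} only reacts to $\alpha$ with $\otp(C'_\alpha)=\cf(\alpha)$, i.e. clubs of \emph{minimal} order type, whereas hypothesis~(\ref{hyp-c-nacc}) forced clubs of \emph{large} order type. The resolution is coherence: fixing a stationary $S\s\kappa$ and $\theta\in\reg(\kappa)$, I apply the guessing with $Z:=S$ and $D:=D_0\cap\acc^+(S)$ to get some $C_\alpha$ inside the guessing set with $\otp(C_\alpha)>\theta$, and then pass to $\bar\alpha:=C_\alpha(\theta)\in\acc(C_\alpha)$. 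By coherence $C_{\bar\alpha}=C_\alpha\cap\bar\alpha$ has order type exactly $\theta=\cf(\bar\alpha)$ and still lies entirely in $\{\beta\in\acc^+(S)\mid Z_\beta=S\cap\beta\}$; hence $C'_{\bar\alpha}=\Phi_{\mathfrak Z}(C_{\bar\alpha})$ satisfies $\otp(C'_{\bar\alpha})=\theta$, $\nacc(C'_{\bar\alpha})\s S$, and $\min(C'_{\bar\alpha})=\min(S)$, with $\bar\alpha\in E^\kappa_\theta\cap D_0$. Thus for \emph{every} stationary $S$ and every $\theta\in\reg(\kappa)$, the set $\{\alpha\in E^\kappa_\theta\mid\min(C'_\alpha)=\min(S),\ \otp(C'_\alpha)=\cf(\alpha),\ \nacc(C'_\alpha)\s S\}$ is stationary.

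With these two facts in hand, Lemma~\ref{blowup-nacc} produces a $\square_\kappa(\kappa,{<}2,{\sq},V)$-sequence $\cvec D$ with a coherent transversal $\langle D^{**}_\alpha\rangle$ satisfying $\otp(D^{**}_\alpha)=\alpha$, whose Clause~(3) says that for every cofinal $A$ there is a stationary $S$ with $\{\alpha\mid\otp(D^{**}_\alpha)=\alpha,\ \nacc(D^{**}_\alpha)\s A\}$ covering the set displayed above (now known to be stationary in each $E^\kappa_\theta$). I would then take the faithful, $\acc$-preserving $\Phi$ of Lemma~\ref{phi0} and set $D_\alpha:=\Phi(D^{**}_\alpha)$ for $\alpha\in\acc(\kappa)$, with $D_0:=\emptyset$ and $D_{\beta+1}:=\{\beta\}$; Proposition~\ref{narrow-transversal-coherent} keeps $\langle D_\alpha\rangle$ $\sq$-coherent and preserves $\otp(D_\alpha)=\alpha$. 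Now, given $\langle A_i\mid i<\kappa\rangle$ and $\theta\in\reg(\kappa)$, let $G$ be the stationary set that Lemma~\ref{phi0} attaches to $\langle A_i\rangle$; applying Clause~(3) with $A:=G$ and invoking the stationarity of the previous paragraph for the returned $S$, I obtain stationarily many $\alpha\in E^\kappa_\theta$ with $\otp(D^{**}_\alpha)=\alpha$ and $\nacc(D^{**}_\alpha)\s G$, whence Lemma~\ref{phi0} gives $D_\alpha(i+1)=\Phi(D^{**}_\alpha)(i+1)\in A_i$ for all $i<\alpha$, exactly the required conclusion.

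Finally, non-threadability — so that $\langle D_\alpha\rangle$ genuinely witnesses $\square(\kappa)$ and not merely its $V$-coherent weakening — will follow from this very hitting property rather than from Clause~(2) (which is unavailable at $\Lambda=\kappa$). Indeed, if some cofinal $A$ satisfied $A\cap\alpha=D_\alpha$ for all $\alpha\in\acc^+(A)$, I would feed $A_0:=\kappa\setminus\{A(1)\}$ and $A_i:=\kappa$ for $i>0$ into the hitting property with $\theta:=\omega$; the resulting stationary set of good $\alpha$ meets the club $\acc^+(A)$ above $A(1)$, and any such $\alpha$ has $D_\alpha(1)=A(1)$ while simultaneously $D_\alpha(1)\in A_0=\kappa\setminus\{A(1)\}$, a contradiction. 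Hence $\langle D_\alpha\mid\alpha<\kappa\rangle$ is the desired $\square(\kappa)$-sequence.
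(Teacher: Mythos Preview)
Your proof is correct and follows essentially the same route as the paper's: extract $\diamondsuit(\kappa)$ from the guessing hypothesis, push the transversal through $\Phi_{\mathfrak Z}$, feed the result into Lemma~\ref{blowup-nacc} with $\Lambda=\kappa$, and then postprocess with the $\Phi$ of Lemma~\ref{phi0}. Two minor remarks: $\Phi_{\mathfrak Z}$ is \emph{not} conservative (though you never use this, so it is harmless), and your non-threadability argument differs from the paper's---which feeds a constant sequence into the hitting property and invokes Lemma~\ref{hitting-implies-nontrivial}(3)---but is equally valid.
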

\begin{proof} Note that $\langle C_\alpha\mid\alpha\in\acc(\kappa)\rangle$ is a transversal for $\square(\kappa,{<}2,{\sq},V)$, and that $\langle Z_\alpha\mid\alpha<\kappa\rangle$ witnesses $\diamondsuit(\kappa)$.
Derive a $\kappa$-assignment $\mathfrak Z := \langle Z_{x,\beta}\mid x\in\mathcal K(\kappa),\beta\in\nacc(x)\rangle$ via the rule $Z_{x,\beta} := Z_\beta\cap \beta$,
and let $\Phi_{\mathfrak Z}$ be the corresponding $\acc$-preserving postprocessing function given by Lemma~\ref{phiZ}.
Then by Lemma~\ref{pp-preserves-square}, $\vec{C^\circ} := \langle \Phi_{\mathfrak Z}(C_\alpha)\mid\alpha\in\acc(\kappa)\rangle$ is a transversal for $\square(\kappa,{<}2,{\sq},V)$.

We shall show that $\vec{C^\circ}$ and $(\chi,\Lambda,\xi,\mu,\Omega):=(\aleph_0,\kappa,\kappa,2,\emptyset)$ satisfy the hypotheses of Lemma~\ref{blowup-nacc}.
Evidently, only Clause~$(\ref{hyp-c-nacc})$ of that Lemma requires an argument.
For this, consider arbitrary cofinal $B \subseteq \kappa$, $\Lambda'\in\acc(\kappa)$ and a club $C\s\kappa$.
Now, by applying the hypothesis above with $Z := B$ and $D := \acc^+(B)\cap C$, let us pick $\alpha \in \acc(\kappa)$ with $\otp(C_\alpha) > \Lambda'$ such that $C_\alpha\s\{ \beta\in \acc^+(B)\cap C\mid B\cap\beta=Z_\beta\}$.
Put $\bar\alpha:=C_\alpha(\Lambda')$.
Then ${\bar\alpha} \in C_\alpha \subseteq C$, $\otp(\Phi_{\mathfrak Z}(C_{\bar\alpha}))=\otp(C_{\bar\alpha})=\otp(C_\alpha\cap\bar\alpha)=\Lambda'$, and as explained in Example~\ref{phiZ-simpler},
it follows that $\min(\Phi_{\mathfrak Z}(C_{\bar\alpha})) = \min(B)$ and $\nacc(\Phi_{\mathfrak Z}(C_{\bar\alpha})) \subseteq B$.

Thus, let $\vec D=\langle D_\alpha \mid\alpha\in\acc(\kappa)\rangle$ be the corresponding transversal for $\square(\kappa,{<}2, {\sq}, V)$ produced by Lemma~\ref{blowup-nacc}.
By $\diamondsuit(\kappa)$, let $\Phi$ be given by Lemma~\ref{phi0}. Write $D_0^\bullet:=\emptyset$, $D^\bullet_{\alpha+1}:=\{\alpha\}$ for all $\alpha<\kappa$, and $D_\alpha^\bullet:=\Phi(D_\alpha)$ for all $\alpha\in\acc(\kappa)$.

\begin{claim}\label{4.10.1} For every sequence $\langle A_i\mid i<\kappa\rangle$ of cofinal subsets of $\kappa$ and every $\theta\in\reg(\kappa)$, the following set is stationary:
\[
\{\alpha\in E_\theta^\kappa \mid \otp(D_\alpha^\bullet)=\alpha\ \&\ \forall i<\alpha[D_\alpha^\bullet(i+1)\in A_i]\}.
\]
\end{claim}
\begin{proof} Let $\vec A=\langle A_i\mid i<\kappa\rangle$ be a sequence of cofinal subsets of $\kappa$, let $\theta\in\reg(\kappa)$ be arbitrary, and let $C$ be an arbitrary club in $\kappa$.
By our choice of $\Phi$, fix a stationary set $G \subseteq \kappa$ that encodes $\vec A$ as in Lemma~\ref{phi0}.
Then apply Clause~(3) of Lemma~\ref{blowup-nacc} with $A := G$ to obtain a corresponding stationary set $S \subseteq\kappa$.
Next, by applying the hypothesis above with $Z := S$ and $D := \acc^+(S)\cap C$, let us pick $\alpha \in \acc(\kappa)$ with $\otp(C_\alpha) > \theta$ such that $C_\alpha\s\{ \beta\in \acc^+(S)\cap C\mid S\cap\beta=Z_\beta\}$.
Put $\bar\alpha:=C_\alpha(\theta)$.
Then ${\bar\alpha} \in C\cap E^\kappa_\theta$, $\otp(\Phi_{\mathfrak Z}(C_{\bar\alpha}))=\otp(C_{\bar\alpha})=\otp(C_\alpha\cap\bar\alpha)=\theta=\cf({\bar\alpha})$, and as explained in Example~\ref{phiZ-simpler},
it follows that $\min(\Phi_{\mathfrak Z}(C_{\bar\alpha})) = \min(S)$ and $\nacc(\Phi_{\mathfrak Z}(C_{\bar\alpha})) \subseteq S$.
Thus, by our choice of $S$, it follows that $\otp(D_{\bar\alpha}) = \bar\alpha$ and $\nacc(D_{\bar\alpha}) \subseteq G$.
Since $\Phi$ is $\acc$-preserving, we have $\otp(D^\bullet_{\bar\alpha}) = \bar\alpha$, and by our choice of $G$, it follows that $D^\bullet_{\bar\alpha}(i+1) \in A_i$ for all $i<\bar\alpha$, as sought.
\end{proof}

In particular, by feeding a constant sequence into Claim~\ref{4.10.1}, we see that for every club $D \subseteq\kappa$ there is $\alpha \in E^\kappa_\omega$ such that $\nacc(D_\alpha^\bullet) \setminus\{\min(D_\alpha^\bullet)\} \subseteq D$.
Thus, by Clause~(3) of Lemma~\ref{hitting-implies-nontrivial}, $\langle D_\alpha^\bullet\mid\alpha <\kappa \rangle$ is a $\square(\kappa)$-sequence, as sought.
\end{proof}

We now prove Theorem~\ref{thm3}:

\begin{cor} The following are equivalent:
\begin{enumerate}
\item $\diamondsuit(\omega_1)$ holds;
\item There exists a $\square(\omega_1)$-sequence $\langle C_\alpha\mid\alpha<\omega_1\rangle$ such that for every sequence $\langle A_i\mid i<\omega_1\rangle$ of cofinal subsets of $\omega_1$, the following set is stationary:
$$\{\alpha<\omega_1\mid \otp(C_\alpha)=\alpha\ \&\ \forall i<\alpha[C_\alpha(i+1)\in A_i]\}.$$
\end{enumerate}
\end{cor}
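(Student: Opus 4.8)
The plan is to prove the two implications separately, using Corollary~\ref{cor48} as a black box for the hard direction $(1)\Rightarrow(2)$.

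\medskip

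\emph{Proof that $(2)\Rightarrow(1)$.} This direction is a soft coding argument. Fix the $\square(\omega_1)$-sequence $\langle C_\alpha\mid\alpha<\omega_1\rangle$ granted by~(2), and fix a partition $\omega_1 = T_0 \sqcup T_1$ into two cofinal sets (e.g.\ the successor ordinals and the limit ordinals). Define a candidate diamond sequence by
$Z_\alpha := \{ i<\alpha \mid \otp(C_\alpha)=\alpha\ \&\ C_\alpha(i+1)\in T_0\}$ for all $\alpha<\omega_1$.
I claim $\langle Z_\alpha\mid\alpha<\omega_1\rangle$ witnesses $\diamondsuit(\omega_1)$. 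Indeed, given an arbitrary $X\s\omega_1$, set $A_i := T_0$ if $i\in X$ and $A_i := T_1$ otherwise; each $A_i$ is cofinal in $\omega_1$. By~(2), the set of $\alpha$ satisfying $\otp(C_\alpha)=\alpha$ and $C_\alpha(i+1)\in A_i$ for all $i<\alpha$ is stationary. For any such $\alpha$ and any $i<\alpha$, we have $i\in X$ iff $C_\alpha(i+1)\in T_0$ (using disjointness of $T_0,T_1$) iff $i\in Z_\alpha$; hence $Z_\alpha = X\cap\alpha$. Thus $\{\alpha<\omega_1 \mid X\cap\alpha = Z_\alpha\}$ is stationary, as required.

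\medskip

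\emph{Proof that $(1)\Rightarrow(2)$.} Here the plan is to manufacture, from $\diamondsuit(\omega_1)$, a sequence satisfying the hypotheses of Corollary~\ref{cor48} at $\kappa=\omega_1$, and then quote that corollary (whose output $\langle D_\alpha\mid\alpha<\omega_1\rangle$ is precisely the sequence demanded in~(2)). So I must construct a narrowly coherent $C$-sequence $\langle C_\alpha\mid\alpha\in\acc(\omega_1)\rangle$ (a transversal for $\square(\omega_1,{<}2,{\sq},V)$) together with a $\diamondsuit(\omega_1)$-sequence $\langle Z_\alpha\mid\alpha<\omega_1\rangle$, such that for every $Z\s\omega_1$ and every club $D\s\omega_1$, writing $G_{Z,D}:=\{\beta\in D\mid Z\cap\beta=Z_\beta\}$, one has $\sup\{\otp(C_\alpha)\mid \alpha\in\acc(\omega_1),\ C_\alpha\s G_{Z,D}\}=\omega_1$. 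Using $\diamondsuit(\omega_1)$, I would first fix a sequence $\langle(Z_\alpha,D_\alpha)\mid\alpha<\omega_1\rangle$ guessing pairs, so that its first coordinate $\langle Z_\alpha\rangle$ is a diamond sequence. Then I would define $\langle C_\alpha\rangle$ by recursion on limit $\alpha$, maintaining full coherence, as follows. Let $G^\alpha:=\{\beta<\alpha\mid \beta\in D_\alpha\ \&\ Z_\alpha\cap\beta=Z_\beta\}$ be the \emph{local guess}. If $\sup(G^\alpha)=\alpha$, I let $C_\alpha$ be built by \emph{gluing}: choose an increasing sequence $\langle\alpha_n\mid n<\omega\rangle$ cofinal in $\alpha$ with each $\alpha_n\in G^\alpha$, each $C_{\alpha_n}\s G^\alpha$, and $\alpha_m\in\acc(C_{\alpha_n})$ for all $m<n$, and put $C_\alpha:=\bigcup_{n<\omega}(C_{\alpha_n}\cup\{\alpha_n\})$. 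Otherwise, I let $C_\alpha$ be a fresh ladder of order-type $\omega$ converging to $\alpha$, so that $\acc(C_\alpha)=\emptyset$.

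The gluing device is what preserves full $\sq$-coherence globally. The condition $\alpha_m\in\acc(C_{\alpha_n})$ for $m<n$ makes $\langle C_{\alpha_n}\cup\{\alpha_n\}\mid n<\omega\rangle$ a $\sq$-increasing chain, so that each $C_{\alpha_n}\cup\{\alpha_n\}=C_\alpha\cap(\alpha_n+1)$; consequently every $\bar\alpha\in\acc(C_\alpha)$ is either some glue-point $\alpha_n$, where $C_\alpha\cap\alpha_n=C_{\alpha_n}$, or else lies in $\acc(C_{\alpha_n})$ for the least $n$ with $\bar\alpha<\alpha_n$, where $C_\alpha\cap\bar\alpha=C_{\alpha_n}\cap\bar\alpha=C_{\bar\alpha}$ by the (inductively assumed) coherence of $C_{\alpha_n}$. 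Since fresh ladders create no accumulation points, the only accumulation points ever created are inherited through gluing, and coherence is maintained at every stage. For the guessing requirement, observe that for a fixed pair $(Z,D)$ the set of $\alpha$ at which the guess is correct (i.e.\ $Z\cap\alpha=Z_\alpha$ and $D\cap\alpha=D_\alpha$) is stationary, and at each such $\alpha$ one has $G^\alpha=G_{Z,D}\cap\alpha$; a bookkeeping induction on the order-type already achieved inside $G_{Z,D}$ then shows that gluing at successive correctly-guessed stages yields clubs $C_\alpha\s G_{Z,D}$ of unboundedly large order-type, so that $\sup\{\otp(C_\alpha)\mid C_\alpha\s G_{Z,D}\}=\omega_1$.

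The main obstacle is exactly this simultaneous demand: keeping the single $C$-sequence fully $\sq$-coherent while nonetheless forcing arbitrarily long clubs of the sequence into each reflected guessing set $G_{Z,D}$. The gluing construction is designed to overcome it, since it only ever accumulates at ordinals whose assigned clubs are genuine initial segments of the club currently being built, so that no coherence conflict can arise across different targets or between default and guessing stages.
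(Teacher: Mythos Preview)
Your $(2)\Rightarrow(1)$ argument is correct and clean; the paper omits this direction entirely.

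For $(1)\Rightarrow(2)$, your overall strategy matches the paper's exactly: both reduce to Corollary~\ref{cor48}, and both invoke a coherent $C$-sequence with the strong guessing property as input. The paper simply cites this input as a black box from \cite[Lemma~3.5]{paper22}, whereas you attempt to sketch a direct construction. That construction, however, has a genuine gap.

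The problem is your gluing step. At stage $\alpha$ with $\sup(G^\alpha)=\alpha$, you assert the existence of a cofinal sequence $\langle\alpha_n\rangle$ in $G^\alpha$ with $C_{\alpha_n}\subseteq G^\alpha$ and $\alpha_m\in\acc(C_{\alpha_n})$ for all $m<n$. But nothing in your recursion guarantees such a sequence exists. Concretely, at the \emph{first} stage $\alpha$ where $\sup(G^\alpha)=\alpha$, every earlier $C_\beta$ is a fresh $\omega$-ladder with $\acc(C_\beta)=\emptyset$, so the condition $\alpha_0\in\acc(C_{\alpha_1})$ is impossible; moreover there is no reason any earlier ladder lies inside $G^\alpha$ at all. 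So your recursion is undefined there. Even if you fall back to a fresh ladder inside $G^\alpha$ at such stages, the inductive step of your ``bookkeeping'' is not carried out: you need that at later correctly-guessed stages $\alpha'$, some $\alpha_{n+1}<\alpha'$ was already built with $\alpha_n\in\acc(C_{\alpha_{n+1}})$ --- but $C_{\alpha_{n+1}}$ was constructed using the guess at stage $\alpha_{n+1}$, which need not agree with the guess at $\alpha'$, so there is no reason your recursion at $\alpha_{n+1}$ chose $\alpha_n$ as a glue point. Making this work requires the choice of glue points to be canonical (depending only on $G^\alpha$ and the earlier sequence) in a way that makes correctly-guessed stages for the same pair $(Z,D)$ automatically nest; that is precisely the content of the cited lemma, and your sketch does not supply it.
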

\begin{proof} We focus on the forward implication. By \cite[Lemma~3.5]{paper22}, $\diamondsuit(\omega_1)$ entails the existence of a sequence  $\langle (C_\alpha,Z_\alpha)\mid \alpha<\omega_1\rangle$ such that:
\begin{itemize}
\item For every $\alpha\in\acc(\omega_1)$, $C_\alpha$ is a club in $\alpha$;
\item For every $\alpha\in\acc(\omega_1)$ and $\bar\alpha\in\acc(C_\alpha)$, $C_\alpha\cap\bar\alpha=C_{\bar\alpha}$;
\item For every subset  $Z\s\omega_1$, club $D\s\omega_1$ and $\epsilon\in\acc(\omega_1)$, there exists $\alpha\in\acc(\omega_1)$ with $\otp(C_\alpha)=\epsilon$ such that $C_\alpha\s\{ \beta\in D\mid Z\cap\beta=Z_\beta\}$.
\end{itemize}

Now, appeal to Corollary~\ref{cor48} with $\kappa := \aleph_1$.
\end{proof}

\begin{cor}\label{cor46} Assume $V=L$ and that $\kappa$ is an inaccessible cardinal that is not weakly compact.

Then there exists a $\square(\kappa)$-sequence $\langle C_\alpha\mid\alpha <\kappa \rangle$ satisfying that
for every sequence $\langle A_i\mid i<\kappa\rangle$ of cofinal subsets of $\kappa$, and every $\theta\in\reg(\kappa)$, the following set is stationary:
$$\{\alpha\in E_\theta^\kappa\mid \otp(C_\alpha)=\alpha\ \&\ \forall i<\alpha[C_\alpha(i+1)\in A_i]\}.$$
\end{cor}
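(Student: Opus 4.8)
The plan is to deduce this from Corollary~\ref{cor48}: the entire task is to manufacture, in $L$, a sequence $\langle (C_\alpha,Z_\alpha)\mid\alpha<\kappa\rangle$ meeting the three bullets hypothesized there, after which the desired $\square(\kappa)$-sequence is produced verbatim by that corollary. The first two bullets ask only that $\langle C_\alpha\mid\alpha\in\acc(\kappa)\rangle$ be a coherent $C$-sequence, i.e.\ a transversal for $\square(\kappa,{<}2,{\sq},V)$, while the third couples it with a $\diamondsuit(\kappa)$-sequence $\langle Z_\alpha\mid\alpha<\kappa\rangle$ through the strong guessing demand that, for every $Z\s\kappa$ and club $D\s\kappa$, the clubs $C_\alpha$ with $C_\alpha\s\{\beta\in D\mid Z\cap\beta=Z_\beta\}$ have order-types cofinal in $\kappa$. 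Let me note at the outset that the non-weak-compactness of $\kappa$ is unavoidable: the output of Corollary~\ref{cor48} is a $\square(\kappa)$-sequence, and $\square(\kappa)$ fails at every weakly compact cardinal, so any argument must feed on the failure of weak compactness somewhere.

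First I would extract the raw material from the fine structure of $L$. As $\kappa$ is regular and uncountable, $\diamondsuit(\kappa)$ holds; and as $\kappa$ is inaccessible but not weakly compact, Jensen's analysis (\cite{MR0309729}) furnishes a coherent $\square(\kappa)$-sequence assembled from the canonical clubs of the constructibility hierarchy. These two objects are to be amalgamated into a single $<_L$-definable sequence $\langle (C_\alpha,Z_\alpha)\rangle$, with $C_\alpha$ and $Z_\alpha$ both read off uniformly from the least level of $L$ that decides the relevant pattern below $\alpha$; this uniformity is what will make the mutual coherence and the guessing collapse correctly. In effect this is the $\kappa$-analogue of \cite[Lemma~3.5]{paper22}: there, at $\omega_1$, coherence was automatic because $\omega$-clubs have empty accumulation and $\diamondsuit(\omega_1)$ alone sufficed, whereas here coherence is a genuine constraint and is precisely what $\square(\kappa)$ provides.

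The crux is the third bullet. Fixing $Z\s\kappa$, a club $D\s\kappa$, and a target ordinal $\epsilon<\kappa$, I must exhibit $\alpha\in\acc(\kappa)$ with $\otp(C_\alpha)\ge\epsilon$ and $C_\alpha\s\{\beta\in D\mid Z\cap\beta=Z_\beta\}$. The standard route is a condensation argument: choose an elementary submodel $N\prec L_{\kappa^+}$ containing $Z$, $D$, and the parameters defining $\langle (C_\alpha,Z_\alpha)\rangle$, with $\bar\kappa:=N\cap\kappa\in\kappa$ chosen large enough that $\otp(C_{\bar\kappa})\ge\epsilon$; collapsing $N$ to some $L_{\bar\gamma}$ and using the canonicity of the definitions, the collapsed diamond sequence agrees with $\langle Z_\beta\mid\beta<\bar\kappa\rangle$ and with $Z\cap\bar\kappa$, forcing $C_{\bar\kappa}\s\{\beta\in D\mid Z\cap\beta=Z_\beta\}$ with $C_{\bar\kappa}$ of the prescribed length. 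I expect the delicate point --- and the only spot where non-weak-compactness is truly consumed --- to be arranging that such collapses occur with $C_{\bar\kappa}$ of arbitrarily large order-type below $\kappa$, rather than the coherent clubs being capped or threading into a single club of $\kappa$; this is exactly the obstruction that weak compactness would impose. Once the three bullets are in hand, the proof closes by a direct application of Corollary~\ref{cor48}.
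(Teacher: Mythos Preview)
Your overall plan matches the paper's exactly: feed a suitable $\langle (C_\alpha,Z_\alpha)\mid\alpha<\kappa\rangle$ into Corollary~\ref{cor48}. The difference is in how that input is obtained. You propose to run the condensation argument directly, whereas the paper cites \cite[\S2]{AShS:221} (via the hint in \cite[Theorem~3.2]{Sh:347}) for a sequence with a \emph{slightly different} coupling: $Z_{\bar\alpha}=Z_\alpha\cap\bar\alpha$ whenever $\bar\alpha\in\acc(C_\alpha)$, and for every $Z$, club $D$, and $\epsilon\in\acc(\kappa)$ there are stationarily many singular cardinals $\alpha$ with $\otp(C_\alpha)=\epsilon$, $Z_\alpha=Z\cap\alpha$, and $\acc(C_\alpha)\subseteq D$. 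This does not yet meet the third bullet of Corollary~\ref{cor48}, so the paper then applies the postprocessing function $\Phi^{\acc(\kappa)}$ of Fact~\ref{newPhiSigma}, which replaces each $C_\alpha$ by $C_\alpha^\circ\subseteq\acc(C_\alpha)$; the coherence of the $Z_\beta$'s along accumulation points now propagates the single correct guess $Z_\alpha=Z\cap\alpha$ down to every $\beta\in C_\alpha^\circ$, giving $C_\alpha^\circ\subseteq\{\beta\in D\mid Z\cap\beta=Z_\beta\}$ with $\otp(C_\alpha^\circ)=\epsilon$ whenever $\epsilon$ is regular uncountable.

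Your direct approach is perfectly legitimate --- it amounts to reproving the Abraham--Shelah--Solovay result in a form tailored to Corollary~\ref{cor48} --- but your sketch glosses over two points that the paper's cite-plus-postprocess strategy handles cleanly. First, you need \emph{every} $\beta\in C_{\bar\kappa}$, not just the accumulation points, to satisfy $Z_\beta=Z\cap\beta$; getting this in one condensation step requires either extra care in how $N$ is built or, as the paper does, a second pass with $\Phi^{\acc(\kappa)}$. Second, controlling $\otp(C_{\bar\kappa})$ requires building $N$ as a continuous $\epsilon$-chain of elementary submodels rather than a single Skolem hull --- this is standard, but it is precisely the content of \cite[\S2]{AShS:221} that the paper outsources.
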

\begin{proof} Work in $L$. As hinted in \cite[Theorem 3.2]{Sh:347}, the proof of \cite[\S2]{AShS:221} essentially shows that if $\kappa$ is an inaccessible cardinal that is not weakly compact,
then there exists a sequence $\langle (C_\alpha,Z_\alpha)\mid\alpha<\kappa\rangle$  such that:
\begin{itemize}
\item For every $\alpha\in\acc(\kappa)$, $C_\alpha$ is a club in $\alpha$;
\item For every $\alpha\in\acc(\kappa)$ and $\bar\alpha \in\acc(C_{\alpha})$, $C_{\bar\alpha}=C_\alpha\cap\bar\alpha$ and $Z_{\bar\alpha}=Z_\alpha\cap \bar\alpha$;
\item For every subset $Z\s\kappa$, club $D\s\kappa$, and $\epsilon\in\acc(\kappa)$, there exist stationarily many singular cardinals $\alpha<\kappa$ with $\otp(C_\alpha)=\epsilon$, $Z_\alpha=Z\cap\alpha$ and $\acc(C_\alpha)\s D$.
\end{itemize}

Let $\Phi^\Sigma$ be given by Fact~\ref{newPhiSigma} for $\Sigma := \acc(\kappa)$.
Put $C_\alpha^\circ:=\Phi^\Sigma(C_\alpha)$ for all $\alpha\in\acc(\kappa)$.
Clearly, $\langle C_\alpha^\circ\mid \alpha \in \acc(\kappa) \rangle$ is a transversal for $\square(\kappa,{<}2,{\sq},V)$.
Furthermore, for every subset  $Z\s\kappa$, club $D\s\kappa$ and a regular uncountable $\epsilon<\kappa$, we may pick $\alpha<\kappa$ with $\otp(C_\alpha)= \epsilon$, $Z_\alpha=Z\cap\alpha$ and $\acc(C_\alpha)\s D$,
so that $C_\alpha^\circ\s\acc(C_\alpha)\s\{ \beta \in D \mid Z \cap \beta = Z_\beta \}$ and $\otp(C_\alpha^\circ) =\cf(\alpha)=\epsilon$.

Now appeal to Corollary~\ref{cor48}.
\end{proof}

\begin{defn}[{\cite[Definition~1.5]{paper22}}]\label{proxy} The principle $\p^-(\kappa,2,\mathcal R_0,\theta,\mathcal S)$ asserts the existence of an $\mathcal R_0$-coherent $C$-sequence,
$\langle C_\alpha \mid \alpha < \kappa \rangle$, such that for every sequence $\langle A_i \mid i < \theta \rangle$ of cofinal subsets of $\kappa$ and every $S \in \mathcal S$,
there are stationarily many $\alpha \in S$ satisfying $\sup(\nacc(C_\alpha)\cap A_i)=\alpha$ for all $i<\min\{\theta,\alpha\}$.
\end{defn}

If we omit $\mathcal S$, then we mean that $\mathcal S=\{\kappa\}$.

\begin{cor}\label{P-from-V=L} If $V=L$, then $\p^-(\kappa,2,{\sq},\kappa,\{E^\kappa_{\ge\theta}\mid \theta\in\reg(\kappa)\ \&\ \forall\lambda<\kappa(\lambda^{<\theta}<\kappa)\})$ holds for every (regular uncountable cardinal) $\kappa$ that is not weakly compact.
\end{cor}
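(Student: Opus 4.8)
The plan is to deduce $\p^-(\kappa,2,{\sq},\kappa,\mathcal S)$ from the ``full guessing'' conclusions already at hand. The $C$-sequences produced by Corollary~\ref{cor46} (and, for successor $\kappa$, by Corollary~\ref{cor48}) are $\sq$-coherent $C$-sequences $\langle C_\alpha\mid\alpha<\kappa\rangle$ with the property that for every $\theta\in\reg(\kappa)$ and every sequence of cofinal subsets of $\kappa$, the set of $\alpha\in E^\kappa_\theta$ with $\otp(C_\alpha)=\alpha$ guessing that sequence coordinatewise is stationary. First I would isolate a purely combinatorial reindexing step that upgrades this coordinatewise guessing to the $\nacc$-hitting demanded by Definition~\ref{proxy}, losing nothing about cofinalities.

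For the reindexing, let $j\mapsto(i_j,k_j)$ be the inverse of the Gödel pairing of $\kappa$, and let $E_0$ be the club of $\alpha<\kappa$ onto which the Gödel pairing maps $\alpha\times\alpha$; thus for $\alpha\in E_0$ and $i<\alpha$ the set $\{j<\alpha\mid i_j=i\}$ is cofinal in $\alpha$. Given $S=E^\kappa_{\ge\theta}\in\mathcal S$ and cofinal sets $\langle A_i\mid i<\kappa\rangle$, put $B_j:=A_{i_j}$ and apply the guessing hypothesis to $\langle B_j\mid j<\kappa\rangle$ at this $\theta$. For $\alpha$ in the resulting stationary set intersected with $E_0$, and for each $i<\alpha$, the points $C_\alpha(j+1)$ with $i_j=i$ lie in $A_i\cap\nacc(C_\alpha)$ and are cofinal in $\alpha=\otp(C_\alpha)$, so $\sup(\nacc(C_\alpha)\cap A_i)=\alpha$ for all $i<\alpha=\min\{\kappa,\alpha\}$. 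Since $E^\kappa_\theta\s E^\kappa_{\ge\theta}=S$, these $\alpha$ witness the requirement of $\p^-$ for $S$ and $\langle A_i\rangle$. As $S$ and $\langle A_i\rangle$ were arbitrary, a single $C$-sequence with the guessing hypothesis for all $\theta\in\reg(\kappa)$ — or even only for the $\theta$ arising from $\mathcal S$ — yields $\p^-(\kappa,2,{\sq},\kappa,\mathcal S)$.

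It remains to supply such a $C$-sequence. Working in $L$, a regular uncountable non-weakly-compact $\kappa$ is either inaccessible or a successor cardinal. If $\kappa$ is inaccessible it is a strong limit, so $\mu^{<\theta}<\kappa$ for all $\mu,\theta<\kappa$, whence $\mathcal S=\{E^\kappa_{\ge\theta}\mid\theta\in\reg(\kappa)\}$, and Corollary~\ref{cor46} directly provides a sequence guessing at every $\theta\in\reg(\kappa)$ simultaneously. If $\kappa=\lambda^+$, then a $\gch$ computation shows that $\forall\mu<\kappa\,(\mu^{<\theta}<\kappa)$ holds exactly when $\theta\le\cf(\lambda)$, so every $S\in\mathcal S$ contains the stationary set $E^{\lambda^+}_{\cf(\lambda)}$, and it suffices to guess at the single cofinality $\theta=\cf(\lambda)$. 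For this I would run the fine-structure construction underlying Corollary~\ref{cor46} (cf.\ \cite{AShS:221},\cite{Sh:347},\cite{paper22}) to obtain a coherent pair-sequence $\langle(C_\alpha,Z_\alpha)\mid\alpha<\lambda^+\rangle$ meeting the hypotheses of Corollary~\ref{cor48}, the key point being that the required clubs of unbounded order type contained in a coded set can be realized at accumulation points of cofinality $\cf(\lambda)$, using $\lambda^{<\cf(\lambda)}=\lambda$. Corollary~\ref{cor48} then delivers a $\square(\lambda^+)$-sequence with coordinatewise guessing on $E^{\lambda^+}_\theta$ for every $\theta\in\reg(\lambda^+)$, and the reindexing step concludes.

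The main obstacle is precisely this successor-case fine-structural input: verifying in $L$ that, for a non-weakly-compact $\lambda^+$, the square-with-diamond pair-sequence satisfies the third bullet of Corollary~\ref{cor48}, namely that for every $Z\s\lambda^+$ and every club $D$ there are clubs $C_\alpha$ of unbounded order type with $C_\alpha\s\{\beta\in D\mid Z\cap\beta=Z_\beta\}$. The delicate bookkeeping is to concentrate these witnesses at points of cofinality $\cf(\lambda)$, where the anticipation argument is powered by $\lambda^{<\cf(\lambda)}=\lambda$; this is exactly the role played by the hypothesis $\forall\mu<\kappa\,(\mu^{<\theta}<\kappa)$ built into $\mathcal S$. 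Once this input is secured, Corollary~\ref{cor48} together with the reindexing reduction closes the argument, while the inaccessible case is immediate from Corollary~\ref{cor46}.
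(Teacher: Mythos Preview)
Your treatment of the inaccessible case is essentially identical to the paper's: both feed the sequence from Corollary~\ref{cor46} through a reindexing argument (you use G\"odel pairing and a closure club; the paper uses a surjection $\pi:\kappa\to\kappa$ with cofinal preimages and the diagonal-intersection club $\diagonal_{j<\kappa}\acc^+(\pi^{-1}\{j\})$). The reduction from coordinatewise guessing to the $\nacc$-hitting of Definition~\ref{proxy} is the same in both.

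For the successor case, however, there is a real gap. The paper does not argue internally at all: it simply cites \cite[Corollary~1.10(5)]{paper22}. Your proposed route via Corollary~\ref{cor48} requires, as input, a coherent pair-sequence $\langle (C_\alpha,Z_\alpha)\mid\alpha<\lambda^+\rangle$ with $\sup\{\otp(C_\alpha)\mid C_\alpha\subseteq\{\beta\in D\mid Z\cap\beta=Z_\beta\}\}=\lambda^+$ for every $Z$ and every club $D$. But the fine-structure constructions you cite (\cite{AShS:221}, \cite{Sh:347}) yield, at a successor $\lambda^+$, a $\square_\lambda$-type sequence whose clubs have order-type at most $\lambda$; the ``unbounded order-type'' feature is precisely what is special about the inaccessible case and is \emph{not} available off the shelf at $\lambda^+$. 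You acknowledge this as ``the main obstacle,'' but you do not resolve it, and it is not a bookkeeping matter: producing a $\sq$-coherent sequence over $\lambda^+$ with built-in diamond and clubs of order-type cofinal in $\lambda^+$ is exactly the kind of blow-up that Lemma~\ref{blowup-nacc} (and hence Corollary~\ref{cor48} itself) is designed to manufacture \emph{from} such input, so invoking it to obtain the input is circular. Your computation that every $S\in\mathcal S$ contains $E^{\lambda^+}_{\cf(\lambda)}$ is correct and does reduce the target, but the proposed derivation of the guessing at $\theta=\cf(\lambda)$ does not go through as stated; the paper's solution is to outsource this case entirely to \cite{paper22}.
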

\begin{proof} The case that $\kappa$ is a successor cardinal was established already in \cite[Corollary~1.10(5)]{paper22}.
For $\kappa$ inaccessible that is not weakly compact, let $\vec C=\langle C_\alpha\mid\alpha<\kappa\rangle$ be given by Corollary~\ref{cor46}.
To see that $\vec C$ witnesses $\p^-(\kappa,2,{\sq},\kappa,\{E^\kappa_{\theta}\mid \theta\in\reg(\kappa)\})$,
let $\langle A_i\mid i<\kappa\rangle$ be an arbitrary sequence of cofinal subsets of $\kappa$, and let $\theta\in\reg(\kappa)$ be arbitrary.
Fix a surjection $\pi:\kappa\rightarrow\kappa$ such that the preimage of any singleton is cofinal in $\kappa$.
Consider the club $D:=\diagonal_{j<\kappa}\acc^+(\pi^{-1}\{j\})$.
By the choice of $\vec C$, the set $G:=\{\alpha\in E^\kappa_\theta\cap D\mid \otp(C_\alpha)=\alpha\ \&\ (\forall i<\alpha) C_{\alpha}(i+1)\in A_{\pi(i)}\}$ is stationary.
Let $\alpha\in G$ be arbitrary. Then $\sup(\nacc(C_\alpha)\cap A_j)=\alpha$ for all $j<\alpha$.
\end{proof}

\begin{cor}\label{cor37} Suppose that $\lambda$ is an uncountable strong-limit cardinal, $\ch_\lambda$ holds, and $\chi \in \reg(\lambda^+)$.

Then $\square(\lambda^+,{\sq_\chi})$ is equivalent to $\p^-(\lambda^+,2,{\sq_\chi},1,\{E^{\lambda^+}_{\theta}\mid \theta\in\reg(\lambda)\})$.
\end{cor}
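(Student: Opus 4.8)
I would prove the two implications separately. The reverse implication is a quick application of the non‑triviality criterion of Lemma~\ref{hitting-implies-nontrivial}, while the forward implication is the substantial one, obtained by running Theorem~\ref{mixing_paper24} and then reflecting its guessing down in cofinality.

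\textbf{The implication $(\Leftarrow)$.} Suppose $\vec C=\langle C_\alpha\mid\alpha<\lambda^+\rangle$ witnesses $\p^-(\lambda^+,2,{\sq_\chi},1,\{E^{\lambda^+}_\theta\mid\theta\in\reg(\lambda)\})$. Setting $\mathcal C_\alpha:=\{C_\alpha\}$ and performing the routine adjustment of the levels outside the support (which does not touch the guessing, since that happens on the support), the sequence $\cvec C=\langle\mathcal C_\alpha\mid\alpha<\lambda^+\rangle$ is a $\square_{\lambda^+}(\lambda^+,{<}2,{\sq_\chi},V)$-sequence: $\xi=\lambda^+$-boundedness and the width condition are automatic, $\sq_\chi$-coherence is given, and $\mathcal R_1=V$ is vacuous. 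Now let $D\subseteq\lambda^+$ be an arbitrary club. Fixing any $\theta\in\reg(\lambda)$ and feeding the (cofinal) set $D$ into the club-guessing clause of $\p^-$ (with $\theta=1$, so $\min\{1,\alpha\}=1$), we obtain stationarily many $\alpha\in E^{\lambda^+}_\theta$ with $\sup(\nacc(C_\alpha)\cap D)=\alpha$; in particular at least one $\alpha\in\acc(\lambda^+)$ satisfies $\sup(\nacc(C)\cap D)=\alpha$ for the unique $C\in\mathcal C_\alpha$. This is exactly the hypothesis of Lemma~\ref{hitting-implies-nontrivial}(3), so $\cvec C$ witnesses $\square_{\lambda^+}(\lambda^+,{<}2,{\sq_\chi},{\nsin})$; as ${\nsin}$ refines ${\notin}$, $\square(\lambda^+,{\sq_\chi})$ holds.

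\textbf{The implication $(\Rightarrow)$.} Assume $\square(\lambda^+,{\sq_\chi})$. Since $\lambda$ is a strong-limit, $\mathcal D(\lambda,\theta)=\lambda$ for every $\theta\in\reg(\lambda)\setminus\{\cf(\lambda)\}$, so the density hypothesis of Theorem~\ref{mixing_paper24} is met; applying that theorem with $(\xi,\mu)=(\lambda^+,2)$ and the given $\chi$ yields a $\square_{\lambda^+}(\lambda^+,{<}2,{\sq_\chi})$-sequence $\cvec C=\langle\mathcal C_\alpha\rangle$, a transversal $\vec C=\langle C_\alpha\mid\alpha\in\Gamma\rangle$, and a cofinal $\Theta\subseteq\reg(\lambda)$ such that clause~(2) of that theorem holds at every $\theta'\in\Theta$. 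Viewing $\vec C$ (extended by the singleton levels of $\cvec C$) as a full $\sq_\chi$-coherent $C$-sequence, I claim it already witnesses the target $\p^-$ principle, so the only thing left is the guessing at every $S=E^{\lambda^+}_\theta$ with $\theta\in\reg(\lambda)$ — even those below $\sup\Theta$ that are not themselves in $\Theta$.

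\textbf{The core reflection step (the main obstacle).} The difficulty is propagating guessing from the cofinalities in $\Theta$ down to \emph{all} of $\reg(\lambda)$: because $\mu=2$, one cannot blend postprocessing functions tuned to distinct cofinalities into a single coherent $C$-sequence (the coherence $\Phi_{\cf(\alpha)}(C_\alpha)\cap\bar\alpha=\Phi_{\cf(\bar\alpha)}(C_{\bar\alpha})$ would fail when $\cf(\bar\alpha)\neq\cf(\alpha)$), so the extra cofinalities must come from reflection within a fixed sequence. Concretely, fix $\theta\in\reg(\lambda)$, a cofinal $A\subseteq\lambda^+$, and a club $D\subseteq\lambda^+$, and pick $\theta'\in\Theta$ with $\theta'>\theta$. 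Invoking clause~(2) at cofinality $\theta'$ with $A$ and $D$ both among the sets $A_i$, I get $\alpha\in E^{\lambda^+}_{\theta'}\cap\Gamma$ with $\sup(\nacc(C_\alpha)\cap A)=\alpha$ and $\alpha\in\acc(D)$. Then $B:=\nacc(C_\alpha)\cap A$ is cofinal in $\alpha$, so $\acc^+(B)\cap\acc(C_\alpha)\cap D$ is a club in $\alpha$; as $\theta<\theta'=\cf(\alpha)$, it meets the stationary set $E^{\lambda^+}_\theta\cap\alpha$. For any $\bar\alpha$ in the intersection, Proposition~\ref{narrow-transversal-coherent} (using $\alpha\in\Gamma$) gives $C_{\bar\alpha}=C_\alpha\cap\bar\alpha$, whence $\nacc(C_{\bar\alpha})\cap A=B\cap\bar\alpha$ and $\sup(\nacc(C_{\bar\alpha})\cap A)=\sup(B\cap\bar\alpha)=\bar\alpha$, while $\bar\alpha\in D\cap E^{\lambda^+}_\theta$. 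Since $D$ was arbitrary, $\{\alpha\in E^{\lambda^+}_\theta\mid\sup(\nacc(C_\alpha)\cap A)=\alpha\}$ is stationary, as required. The only points to check carefully are this cofinality-reflection and that it is compatible with the $\sq_\chi$-coherence bookkeeping (which it is, via Lemma~\ref{Gamma-closure} and Proposition~\ref{narrow-transversal-coherent}); everything else is assembling the cited results.
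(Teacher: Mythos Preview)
Your proposal is correct and follows the paper's approach. The paper's proof of the reverse implication is essentially identical to yours (replacing $C_\alpha$ by a standard $e_\alpha$ outside the support and then invoking Lemma~\ref{hitting-implies-nontrivial}(3)), and for the forward implication the paper simply writes ``follows from Theorem~\ref{mixing_paper24}, and the verification is left to the reader''; your reflection argument---picking $\theta'\in\Theta$ above $\theta$, landing at some $\alpha\in E^{\lambda^+}_{\theta'}\cap\Gamma$, and then descending to $\bar\alpha\in E^{\lambda^+}_\theta\cap\acc(C_\alpha)\cap\acc^+(B)\cap D$ via Proposition~\ref{narrow-transversal-coherent}---is exactly the verification that was left implicit.
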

\begin{proof} The forward implication follows from Theorem~\ref{mixing_paper24}, and the verification is left to the reader.

For the inverse implication, fix a sequence $\langle C_\alpha \mid \alpha < \lambda^+ \rangle$ witnessing $\p^-(\lambda^+,2,{\sq_\chi},1,\{E^{\lambda^+}_{\theta}\mid\theta\in\reg(\lambda)\})$.
Let $\Gamma := \{ \alpha \in \acc(\lambda^+) \mid \forall \bar\alpha \in \acc(C_\alpha)[C_\alpha \cap\bar\alpha = C_{\bar\alpha}]\}$.

Fix some standard $C$-sequence $\langle e_\alpha\mid\alpha<\kappa\rangle$, and define $\vec D := \langle D_\alpha \mid \alpha < \lambda^+ \rangle$ by stipulating:
\[D_\alpha := \begin{cases}
C_\alpha, &\text{if } \alpha \in \Gamma;\\
e_\alpha, &\text{otherwise}.
\end{cases}\]

Clearly, $\Gamma(\vec{D}) \supseteq \Gamma$.
Also, the fact that each $e_\alpha$ comes from a standard $C$-sequence ensures $\sq_\chi$-coherence of $\vec{D}$, as well as $E^{\lambda^+}_\omega \subseteq \Gamma$.
Recalling Clause~(3) of Lemma~\ref{hitting-implies-nontrivial}, we are done.
\end{proof}

Just like Theorem~\ref{transversal_of_mixing_paper24} is an analogue of Clause~(2) of Fact~\ref{clubguessing}, the next theorem is an analogue of Clause~(1).

\begin{lemma}\label{mixing_rinot11} Suppose that $\ch_\lambda$ holds for an uncountable cardinal $\lambda$, $\varepsilon<\lambda$ with $\reg(\lambda) \nsubseteq (\varepsilon+1)$,
and
$\langle C_\alpha \mid \alpha \in \Gamma \rangle$ is a $C$-sequence over some stationary subset $\Gamma\s \acc(\lambda^+)$,
satisfying $$\sup\{\theta\in\reg(\lambda)\mid \{\alpha\in E^{\lambda^+}_{\theta} \cap \Gamma \mid \otp(C_\alpha)<\alpha\}\text{ is stationary in } \lambda^+ \}=\sup(\reg(\lambda)).$$

Then there exists a postprocessing function $\Phi:\mathcal K(\lambda^+)\rightarrow\mathcal K(\lambda^+)$ such that:
\begin{enumerate}
\item For all $x\in\mathcal K(\lambda^+)$ with $\otp(x)\le\varepsilon$, we have $\Phi(x)=x$;
\item For all $x\in\mathcal K(\lambda^+)$ with $\otp(x)=\lambda$, we have $\otp(\Phi(x))=\cf(\lambda)$;
\item For cofinally many $\theta\in\reg(\lambda)$, for every cofinal $A\s\lambda^+$, there exist stationarily many $\alpha\in\Gamma$ for which $\otp(\Phi(C_\alpha))=\theta$ and $\nacc(\Phi(C_\alpha))\s A$.
\end{enumerate}
\end{lemma}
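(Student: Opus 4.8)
The plan is to reduce the statement to the single-cofinality guessing already packaged in Lemma~\ref{phi1}, then to assemble the resulting family of postprocessing functions into one, paying for the order-type bookkeeping demanded by Clauses~(1) and~(2) separately.

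First I would extract the index set. Let $\Theta$ be the set of all $\theta\in\reg(\lambda)\cap(\varepsilon,\lambda)\setminus\{\cf(\lambda)\}$ for which $S_\theta:=\{\alpha\in E^{\lambda^+}_\theta\cap\Gamma\mid \otp(C_\alpha)<\alpha\}$ is stationary. The displayed hypothesis guarantees $\sup(\Theta)=\sup(\reg(\lambda))$, since deleting the cardinals $\le\varepsilon$ (bounded below $\lambda$) and the single value $\cf(\lambda)$ does not affect cofinality in $\reg(\lambda)$. For each $\theta\in\Theta$ the sequence $\langle C_\alpha\mid\alpha\in S_\theta\rangle$ lives on a subset of $E^{\lambda^+}_\theta\s E^{\lambda^+}_{\neq\cf(\lambda)}$ and satisfies $\otp(C_\alpha)<\alpha$ throughout, so it is amenable by Example~\ref{example14}, and $\{\alpha\in S_\theta\mid\otp(C_\alpha)<\alpha\}=S_\theta$ is stationary. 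Thus Lemma~\ref{phi1} applies and yields a faithful postprocessing function $\Phi_\theta$ such that, for every cofinal $A\s\lambda^+$, there are stationarily many $\alpha\in S_\theta$ with $\otp(\Phi_\theta(C_\alpha))=\cf(\alpha)=\theta$, $\nacc(\Phi_\theta(C_\alpha))\s A$, and $\min(\Phi_\theta(C_\alpha))=\min(A)$.

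The combination step is where the work lies. The guessing properties of the various $\Phi_\theta$ concern clubs whose suprema have pairwise distinct cofinalities, so I would merge them by selecting, for each input club, the unique relevant $\theta$ by means of a coherent invariant. Following the pattern of the mixing arguments (Lemma~\ref{split_amenable2}), I would first tag the guessing clubs so that $\theta$ is recoverable from a quantity preserved under the commuting diagram --- the minimum is the natural candidate, since $\min(x\cap\bar\alpha)=\min(x)$ for $\bar\alpha\in\acc(x)$ --- and then let the composite function apply $\Phi_\theta$ exactly on clubs carrying the tag $\theta$. Verifying that the resulting case-defined map is genuinely a postprocessing function (that is, that $\Phi(x)\cap\bar\alpha=\Phi(x\cap\bar\alpha)$ holds across the boundaries between the cases) is the routine-but-delicate heart of the argument, and $\faithful$ of Example~\ref{faithful_correction} is appended at the end to restore faithfulness and yield Clause~(3).

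Clauses~(1) and~(2) must be enforced by the order-type bookkeeping of the reduction step, and I expect this to be the main obstacle. The natural device is a single $\Phi^\Sigma$ (Fact~\ref{newPhiSigma}) applied before the guessing combiner: choosing $\Sigma\supseteq\varepsilon+1$ forces $\Phi^\Sigma(x)=x$ whenever $\otp(x)\le\varepsilon$ (Clause~(1)), while arranging $\sup(\Sigma\cap\lambda)=\lambda$ with $\otp(\Sigma\cap\lambda)=\cf(\lambda)$ forces $\otp(\Phi^\Sigma(x))=\cf(\lambda)$ whenever $\otp(x)=\lambda$ (Clause~(2), via Fact~\ref{newPhiSigma}(\ref{PhiSigma3})). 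The tension is that these two demands on $\Sigma\cap\lambda$ can coexist only when $\cf(\lambda)>\varepsilon$; in general one must instead let the $\Sigma$-reduction fire only above the $\varepsilon$-th point of each club and reconcile, case by case, the ``fix the bottom $\varepsilon$'' regime with the ``thin to $\cf(\lambda)$'' regime and with the per-$\theta$ reductions coming from the $\Phi_\theta$'s. Because the $\Phi_\theta$'s produced by Lemma~\ref{phi1} do not themselves fix initial segments (the $\Phi_D$-layer in the $\chi>\aleph_0$ case reshuffles the bottom), I anticipate that the clean route is not to black-box Lemma~\ref{phi1} but to re-run its construction with an $\acc$-preserving $Z$-assignment layered over a bottom-preserving $\Phi^\Sigma$, so that all order-types are decided by the single $\Phi^\Sigma$ layer and the guessing layer, being $\acc$-preserving, leaves them intact. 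Getting all three clauses to hold simultaneously inside one coherent function is the step I would budget the most care for.
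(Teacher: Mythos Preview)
Your proposal identifies the same three ingredients as the paper --- $\Phi^\Sigma$ from Fact~\ref{newPhiSigma}, the mixing of Lemma~\ref{split_amenable2}, and the per-cofinality guessing of Lemma~\ref{phi1} --- and the same case-split on the minimum. But you over-engineer the resolution of Clauses~(1) and~(2), and you reverse the order of mixing and Lemma~\ref{phi1}.

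The tension you describe (needing $\Sigma\supseteq\varepsilon+1$ together with $\otp(\Sigma\cap\lambda)=\cf(\lambda)$) is a red herring. The paper does not try to make $\Phi^\Sigma$ itself fix inputs of order-type $\le\varepsilon$; it simply takes $\Sigma$ to be any club in $\lambda$ with $\otp(\Sigma)=\cf(\lambda)$ and $\min(\Sigma)=\varepsilon$, and then \emph{defines} $\Phi'(x):=x$ whenever $\otp(x)\le\varepsilon$, while setting $\Phi'(x):=\Phi_\theta(\Phi(\Phi^\Sigma(x)))$ or $\Phi(\Phi^\Sigma(x))$ (according to whether $\theta:=\min(\Phi(\Phi^\Sigma(x)))$ lies in $\Theta$) whenever $\otp(x)>\varepsilon$. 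Coherence across that case boundary is exactly Fact~\ref{newPhiSigma}(\ref{PhiSigma9}): since $\otp(x)>\varepsilon=\min(\Sigma)$, one has $\Phi^\Sigma(x)\subseteq x\setminus x(\varepsilon)$, hence any $\bar\alpha\in\acc(\Phi'(x))\subseteq\acc(\Phi^\Sigma(x))$ already satisfies $\otp(x\cap\bar\alpha)>\varepsilon$, and the case does not change. Clause~(2) then drops out because $\otp(x)=\lambda$ forces $\cf(\lambda)=\cf(\sup(x))\le\otp(\Phi'(x))\le\otp(\Phi^\Sigma(x))=\cf(\lambda)$. No unpacking of Lemma~\ref{phi1} is needed.

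The second point is order of operations. You apply Lemma~\ref{phi1} to $\langle C_\alpha\mid\alpha\in S_\theta\rangle$ and only afterwards tag by minimum, but then the composite must feed $\Psi(C_\alpha)$ (with $\Psi$ the mixing function) into $\Phi_\theta$, and the guessing conclusion of Lemma~\ref{phi1} is a statement about $\Phi_\theta$ evaluated at the specific inputs it was built from, not at perturbations of them. The paper instead first applies $\Phi^\Sigma$, then invokes Lemma~\ref{split_amenable2} on the resulting amenable sequence $\langle\Phi^\Sigma(C_\delta)\mid\delta\in\Gamma,\ \otp(C_\delta)<\delta\rangle$ to obtain $\Phi$ and the stationary sets $S_\theta=\{\delta:\min(\Phi(\Phi^\Sigma(C_\delta)))=\theta\}$, and only \emph{then} feeds $\langle\Phi(\Phi^\Sigma(C_\delta))\mid\delta\in S_\theta\rangle$ into Lemma~\ref{phi1} to get $\Phi_\theta$. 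With this order everything is black-boxed and the three-case definition of $\Phi'$ is a routine verification.
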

\begin{proof} Let $\Sigma$ be a club in $\lambda$ such that $\otp(\Sigma)=\cf(\lambda)$ and $\min(\Sigma)=\varepsilon$.
Let $\Phi^\Sigma$ be the conservative postprocessing function given by Fact~\ref{newPhiSigma}, and denote $C_\delta^\circ:=\Phi^\Sigma(C_\delta)$.
By the hypothesis of the Lemma together with Example~\ref{example14}, $\langle C_\delta^\circ\mid \delta\in\Gamma,\allowbreak \otp(C_\delta)<\delta\rangle$ is an amenable $C$-sequence.
By Lemma~\ref{split_amenable2}, there exist a postprocessing function $\Phi : \mathcal K(\lambda^+) \to \mathcal K(\lambda^+)$, and a cofinal subset $\Theta\s \reg(\lambda)$,
such that $S_\theta:=\{ \delta\in E^{\lambda^+}_\theta\cap\Gamma\mid \otp(C_\delta)<\delta\ \&\allowbreak\ \min(\Phi(C^\circ_\delta))=\theta\}$ is stationary for all $\theta\in\Theta$.
By thinning out, we may assume that $\min(\Theta)>\varepsilon$ and $\cf(\lambda)\notin\Theta$.

For each $\theta\in\Theta$, let $\Phi_{\theta}$ be the postprocessing function given by Lemma~\ref{phi1} when fed with $\langle \Phi(C^\circ_\delta)\mid \delta\in S_\theta\rangle$.
Define $\Phi'$ by stipulating:
$$\Phi'(x):=\begin{cases}
\Phi_\theta(\Phi(\Phi^\Sigma(x))),&\text{if }\otp(x)>\varepsilon\ \&\ \theta:=\min(\Phi(\Phi^\Sigma(x)))\text{ is in }\Theta;\\
\Phi(\Phi^\Sigma(x)), &\text{if } \otp(x) > \varepsilon \text{ but } \min(\Phi(\Phi^\Sigma(x))) \notin\Theta;\\
x, &\text{if } \otp(x) \leq \varepsilon.
\end{cases}$$

\begin{claim} $\Phi'$ is a postprocessing function.
\end{claim}
\begin{proof} Let $x \in \mathcal K(\lambda^+)$ be arbitrary. We consider several cases:
\begin{itemize}
\item[$\br$] Suppose $\otp(x)>\varepsilon$ and $\min(\Phi(\Phi^\Sigma(x)))=\theta$ for some $\theta \in \Theta$.
Then $\Phi'(x) = \Phi_\theta(\Phi(\Phi^\Sigma(x)))$ is a club in $\sup(x)$, and $\acc(\Phi'(x)) = \acc(\Phi_\theta(\Phi(\Phi^\Sigma(x)))) \subseteq \acc(\Phi(\Phi^\Sigma(x))) \subseteq \acc(\Phi^\Sigma(x)) \subseteq \acc(x)$.
Consider arbitrary $\bar\alpha \in \acc(\Phi'(x))$, and we will compare $\Phi'(x) \cap\bar\alpha$ with $\Phi'(x \cap\bar\alpha)$.
Since $\otp(x) > \varepsilon = \min(\Sigma)$, Fact~\ref{newPhiSigma}(\ref{PhiSigma9}) gives $\Phi^\Sigma(x) \subseteq x \setminus x(\varepsilon)$,
so that $\bar\alpha \in \acc(\Phi^\Sigma(x))$ entails $\otp(x \cap \bar\alpha) > \varepsilon$.
Also, $\Phi(\Phi^\Sigma(x \cap\bar\alpha)) = \Phi(\Phi^\Sigma(x)) \cap\bar\alpha$, so that $\min(\Phi(\Phi^\Sigma(x \cap\bar\alpha))) = \min(\Phi(\Phi^\Sigma(x))) = \theta$.
Thus, $\Phi'(x \cap\bar\alpha) = \Phi_\theta(\Phi(\Phi^\Sigma(x \cap\bar\alpha))) = \Phi_\theta(\Phi(\Phi^\Sigma(x))) \cap\bar\alpha = \Phi'(x) \cap\bar\alpha$, as required.
\item[$\br$] Suppose $\otp(x) > \varepsilon$ but $\min(\Phi(\Phi^\Sigma(x))) \notin \Theta$.
Then $\Phi'(x) = \Phi(\Phi^\Sigma(x))$ is a club in $\sup(x)$, and $\acc(\Phi'(x)) = \acc(\Phi(\Phi^\Sigma(x))) \subseteq \acc(\Phi^\Sigma(x)) \subseteq \acc(x)$.
Consider arbitrary $\bar\alpha \in \acc(\Phi'(x))$.
As before, since $\otp(x) > \varepsilon = \min(\Sigma)$, Fact~\ref{newPhiSigma}(\ref{PhiSigma9}) gives $\Phi^\Sigma(x) \subseteq x \setminus x(\varepsilon)$, so that $\bar\alpha \in \acc(\Phi^\Sigma(x))$ entails $\otp(x \cap \bar\alpha) > \varepsilon$.
Also, $\Phi(\Phi^\Sigma(x \cap\bar\alpha)) = \Phi(\Phi^\Sigma(x)) \cap\bar\alpha$, so that $\min(\Phi(\Phi^\Sigma(x \cap\bar\alpha))) = \min(\Phi(\Phi^\Sigma(x))) \notin \Theta$.
Thus, $\Phi'(x \cap\bar\alpha) = \Phi(\Phi^\Sigma(x \cap\bar\alpha)) = \Phi(\Phi^\Sigma(x)) \cap\bar\alpha = \Phi'(x) \cap\bar\alpha$, as required.
\item[$\br$] Suppose $\otp(x) \leq \varepsilon$. Then $\Phi'(x) = x$ is a club in $\sup(x)$, and $\acc(\Phi'(x)) = \acc(x)$.
Consider arbitrary $\bar\alpha \in \acc(\Phi'(x))$. Then $\otp(x \cap\bar\alpha) < \otp(x) \leq \varepsilon$, so that $\Phi'(x \cap\bar\alpha) = x \cap\bar\alpha = \Phi'(x) \cap\bar\alpha$, as required. \qedhere
\end{itemize}
\end{proof}

We now verify that $\Phi'$ satisfies the numbered properties in the statement of the theorem:

\begin{enumerate}
\item Straight from the definition of $\Phi'$ in this case.
\item Let $x\in\mathcal K(\lambda^+)$ with $\otp(x)=\lambda$ be arbitrary.
By $\otp(x)=\sup(\Sigma) > \varepsilon$ and Fact~\ref{newPhiSigma}(\ref{PhiSigma3}), we have
$$\cf(\lambda)=\cf(\sup(x))\le\otp(\Phi'(x))\le\otp(\Phi^\Sigma(x))=\otp(\Sigma)=\cf(\lambda).$$
\item Let $\theta \in \Theta$ be arbitrary.
Suppose that $A$ is a given cofinal subset of $\lambda^+$.
By the choice of $\Phi_\theta$, there are stationarily many $\alpha \in S_\theta$ such that $\otp(\Phi_\theta(\Phi(C_\alpha^\circ))) = \cf(\alpha)$ and
$\nacc(\Phi_\theta(\Phi(C_\alpha^\circ))) \subseteq A$.
Consider any such $\alpha$.
As $\alpha \in S_\theta$, we have $\otp(C_\alpha) \geq \otp(C_\alpha^\circ) \geq \cf(\alpha)=\theta \geq\min(\Theta) > \varepsilon$ and $\min(\Phi(C^\circ_\alpha))=\theta$.
Thus $\Phi'(C_\alpha) = \Phi_\theta(\Phi(C_\alpha^\circ))$, so that $\Phi'(C_\alpha)$ satisfies the required properties.
\end{enumerate}

This completes the proof.
\end{proof}

Recall the definition of $\acts{\Phi}{C}$ given by Notation~\ref{notationaction}.

\begin{lemma}\label{pp-preserves-square2} Suppose that $\Phi : \mathcal K(\kappa) \to \mathcal K(\kappa)$ is a postprocessing function and $\Omega\subseteq\kappa\setminus\{\omega\}$. Then:
\begin{enumerate}
\item If $\cvec{C}=\langle\mathcal C_\alpha \mid \alpha < \kappa\rangle$ is a witness to $\square_\xi(\kappa,{<}\mu, {\sq_\chi^\Omega},{\nsin})$,
then $\acts{\Phi}{C}$ is yet another witness, with some support $\Gamma^\bullet\supseteq\Gamma(\cvec{C})$;
if $\Phi$ is faithful or $\chi=\aleph_0$, then $\Gamma^\bullet=\Gamma(\cvec{C})$;
\item Suppose that $\langle C_\alpha\mid\alpha\in\Gamma\rangle$ is a transversal for $\square_\xi(\kappa,{<}\mu, {\sq_\chi^\Omega},{\nsin})$.
If $\Phi$ is faithful or $\chi=\aleph_0$, then $\langle \Phi(C_\alpha)\mid\alpha\in\Gamma\rangle$ is yet another transversal for $\square_\xi(\kappa,{<}\mu, {\sq_\chi^\Omega},{\nsin})$.
\end{enumerate}
\end{lemma}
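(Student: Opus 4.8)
The plan is to mimic the architecture of the proof of Lemma~\ref{pp-preserves-square}, but to replace its use of Lemma~\ref{square_is_amenable} (the source of the hypothesis $\min\{\xi,\mu\}<\kappa$ there) by the amenability characterization of the $\nsin$-condition furnished by Lemma~\ref{nsintransversal}. Since $\nsin$-coherence is equivalent to the statement that all transversals are amenable, and amenability is insensitive to the width of the sequence and is preserved by arbitrary postprocessing functions (Lemma~\ref{cons-pp-preserves-amenable}), the argument will survive the case $\xi=\mu=\kappa$. I would first dispatch Clause~(2): writing $\cvec{C^\bullet}$ for $\acts{\Phi}{C}$, one has $\langle\Phi(C_\alpha)\mid\alpha\in\Gamma\rangle\in\prod_{\alpha\in\Gamma}\mathcal C^\bullet_\alpha$, and when $\Phi$ is faithful or $\chi=\aleph_0$, Clause~(1) yields $\Gamma(\cvec{C^\bullet})=\Gamma$, so this is a transversal --- exactly as in Lemma~\ref{pp-preserves-square}(2). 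Thus everything reduces to Clause~(1).

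For Clause~(1), the first step is to re-run the $\mathcal R_1=V$ portion of the proof of Lemma~\ref{pp-preserves-square}(1), which never invokes $\min\{\xi,\mu\}<\kappa$, to conclude that $\cvec{C^\bullet}$ is a $\square_\xi(\kappa,{<}\mu,{\sq_\chi^\Omega},V)$-sequence with support $\Gamma^\bullet\supseteq\Gamma:=\Gamma(\cvec{C})$, with equality when $\Phi$ is faithful or $\chi=\aleph_0$. The only genuinely new bookkeeping here is the $\Omega$-avoidance required for $\sq_\chi^\Omega$-coherence: on $\Gamma$ it is inherited from $\acc(\Phi(C))\s\acc(C)$ together with $\sq_\chi^\Omega$-coherence of $\cvec{C}$, while on $\kappa\setminus\Gamma$ one chooses the standard $C$-sequence of Notation~\ref{notationaction} so as to avoid $\Omega$ in accumulation. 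This last choice is possible because for each limit $\alpha\in\acc(\kappa)\setminus\Gamma$ the original singleton $\mathcal C_\alpha=\{e_\alpha\}$ already satisfies $\acc(e_\alpha)\cap\Omega=\emptyset$, witnessing that $\Omega\cap\alpha$ is nonstationary in $\alpha$.

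The crux is then to promote $V$ to $\nsin$. As $\cvec{C}$ witnesses $\nsin$, Lemma~\ref{nsintransversal} guarantees that every transversal for $\cvec{C}$ is amenable; I will establish the same for $\cvec{C^\bullet}$ and then apply the reverse direction of Lemma~\ref{nsintransversal}. So let $\vec D=\langle D_\alpha\mid\alpha\in\Gamma^\bullet\rangle$ be an arbitrary transversal for $\cvec{C^\bullet}$. For each $\alpha\in\Gamma$ I pick $C_\alpha\in\mathcal C_\alpha$ with $\Phi(C_\alpha)=D_\alpha$; then $\langle C_\alpha\mid\alpha\in\Gamma\rangle$ is a transversal for $\cvec{C}$, hence amenable, so by Lemma~\ref{cons-pp-preserves-amenable}(1)$\Rightarrow$(2) the sequence $\vec D\restriction\Gamma=\langle\Phi(C_\alpha)\mid\alpha\in\Gamma\rangle$ is amenable. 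On $\Gamma^\bullet\setminus\Gamma$ each $D_\alpha=e_\alpha$ is a standard club with $\otp(e_\alpha)=\cf(\alpha)<\chi$, so $\{\alpha\in\Gamma^\bullet\setminus\Gamma\mid\otp(D_\alpha)=\alpha\}$ is bounded by $\chi$, and the reasoning of Example~\ref{example14} shows that for every club $E\s\kappa$ the set $\{\alpha\in\Gamma^\bullet\setminus\Gamma\mid\sup(E\cap\alpha\setminus D_\alpha)<\alpha\}$ is nonstationary. Since amenability of each restriction amounts to the corresponding bad set being nonstationary for every club $E$, and $\Gamma^\bullet=\Gamma\cup(\Gamma^\bullet\setminus\Gamma)$, the full bad set is a union of two nonstationary sets, hence nonstationary; that is, $\vec D$ is amenable. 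Lemma~\ref{nsintransversal} then gives that $\cvec{C^\bullet}$ witnesses $\square_\xi(\kappa,{<}\mu,{\sq_\chi^\Omega},{\nsin})$, completing Clause~(1).

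I expect the main obstacle to be conceptual location rather than computation: recognizing that the whole point is to route through Lemma~\ref{nsintransversal}, which trades the combinatorial $\nsin$-condition for the structural assertion ``all transversals are amenable,'' an assertion propagated through $\Phi$ by Lemma~\ref{cons-pp-preserves-amenable} with no restriction on $\xi$ or $\mu$. The two remaining technical nuisances are both routine: verifying that the standard $C$-sequence on $\kappa\setminus\Gamma$ can be taken $\Omega$-avoiding (handled via the original singletons, as above), and passing from amenability of the restrictions $\vec D\restriction\Gamma$ and $\vec D\restriction(\Gamma^\bullet\setminus\Gamma)$ to amenability of $\vec D$ over the possibly-larger support $\Gamma^\bullet$ --- relevant only in the non-faithful case with $\chi>\aleph_0$, and reducing to the fact that a finite union of nonstationary sets is nonstationary.
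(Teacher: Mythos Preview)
Your proposal is correct, but it takes a different route from the paper for the $\nsin$-clause. The paper first isolates a single auxiliary claim valid for \emph{all} $\alpha\in\acc(\kappa)$: for every $D\in\mathcal D_\alpha$ there is $C\in\mathcal C_\alpha$ with $\acc(D\setminus\omega)\subseteq\acc(C)$ (for $\alpha\in\Gamma$ this is just $\acc(\Phi(C))\subseteq\acc(C)$; for $\alpha\notin\Gamma$ it uses the inclusion $e_\alpha\subseteq\bigcup\mathcal C_\alpha\cup(\omega+1)$ built into Notation~\ref{notationaction}). This claim does double duty: it yields $\Omega$-avoidance uniformly (since $\omega\notin\Omega$), and it drives a direct contradiction for $\nsin$ --- given a cofinal $A$ with $A\cap\alpha\sin\mathcal D_\alpha$ for all $\alpha\in\acc^+(A)$, one passes to $D:=\acc^+(A\setminus\omega)$, finds $\alpha\in\acc^+(D)$ with $D\cap\alpha\nsin\mathcal C_\alpha$, and the claim forces $D\cap\alpha\subseteq\acc(C_\alpha)$ for some $C_\alpha\in\mathcal C_\alpha$. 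Your approach instead trades $\nsin$ for the transversal-amenability characterization of Lemma~\ref{nsintransversal}, pulls an arbitrary transversal for $\acts{\Phi}{C}$ back to a transversal for $\cvec{C}$ on $\Gamma$, and invokes Lemma~\ref{cons-pp-preserves-amenable}; this is more structural and makes transparent \emph{why} no width/height restriction is needed, at the cost of a case split on $\Gamma$ versus $\Gamma^\bullet\setminus\Gamma$ that the paper's unified claim avoids. One small sharpening: your justification for $\Omega$-avoidance off $\Gamma$ should point directly to the inclusion $e_\alpha\subseteq\bigcup\mathcal C_\alpha\cup(\omega+1)$ in Notation~\ref{notationaction}, from which $\acc(e_\alpha)\subseteq\acc(\bigcup\mathcal C_\alpha)\cup\{\omega\}$ and hence $\acc(e_\alpha)\cap\Omega=\emptyset$ is automatic rather than a matter of choice.
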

\begin{proof} Let $\cvec{C}=\langle\mathcal C_\alpha \mid \alpha < \kappa\rangle$ be a witness to $\square_\xi(\kappa,{<}\mu, {\sq_\chi^\Omega},{\nsin})$.
Denote $\acts{\Phi}{C}$ by $\cvec{D}=\langle \mathcal D_\alpha\mid\alpha<\kappa\rangle$.

\begin{claim}\label{pp-acc-subset} For every $\alpha\in\acc(\kappa)$ and every $D \in \mathcal D_\alpha$,
there is some $C \in \mathcal C_\alpha$ such that $\acc(D\setminus\omega)  \subseteq \acc(C)$.
\end{claim}
\begin{proof} Fix arbitrary $\alpha\in\acc(\kappa)$ and $D \in \mathcal D_\alpha$, and consider two cases:
\begin{itemize}
\item[$\br$] If $\alpha\in\Gamma(\cvec{C})$, then $D = \Phi(C)$ for some $C \in \mathcal C_\alpha$. Since $\Phi$ is a postprocessing function, it follows that $\acc(D) \subseteq \acc(C)$.
\item[$\br$] If $\alpha\notin\Gamma(\cvec{C})$, then $\mathcal C_\alpha$ is a singleton, say, $\mathcal C_\alpha = \{C\}$, where by $\sq_\chi$-coherence of $\cvec{C}$ we must have $\nacc(C) \subseteq \nacc(\alpha)$.
In particular, $\sup(C\cap\nacc(\alpha)) = \alpha$, so that by definition of $\acts{\Phi}{C}$ (Notation~\ref{notationaction}), we must have $D \setminus(\omega+1) \subseteq C$, so that $\acc(D\setminus\omega)\s\acc(C)$. \qedhere
\end{itemize}
\end{proof}

As $\cvec{C}$, in particular, witnesses $\square_\xi(\kappa,{<}\mu, {\sq_\chi},V)$, Lemma~\ref{pp-preserves-square} entails that so does $\cvec{D}$.
Moreover, $\cvec{D}$ witnesses $\square_\xi(\kappa,{<}\mu, {\sq^\Omega_\chi},V)$,  as follows from the $\sq_\chi^\Omega$-coherence of $\cvec{C}$ together with Claim~\ref{pp-acc-subset}.

Finally, towards a contradiction, suppose that there exists a cofinal $A\s\kappa$ such that $A\cap\alpha\sin\mathcal D_\alpha$ for all $\alpha\in\acc^+(A)$.
Since $D := \acc^+(A\setminus \omega)$ is cofinal in $\kappa$, let us pick $\alpha\in\acc^+(D)$ such that $D\cap\alpha\nsin\mathcal C_\alpha$.
In particular, $\alpha\in\acc^+(A)$, and we may pick $D_\alpha\in\mathcal D_\alpha$ such that $A\cap\alpha\s D_\alpha$.
By Claim~\ref{pp-acc-subset}, fix $C_\alpha \in \mathcal C_\alpha$ such that $\acc(D_\alpha\setminus\omega)\s\acc(C_\alpha)$.
Then $D\cap\alpha=\acc^+(A\cap\alpha\setminus\omega)\s\acc(D_\alpha\setminus\omega)\s\acc(C_\alpha)$, contradicting the choice of $\alpha$.
\end{proof}

A combined application of Lemmas \ref{blowup-nacc} and \ref{mixing_rinot11} yields the following.

\begin{cor}\label{cor413} Suppose that $\kappa=\lambda^+$ for a given singular cardinal $\lambda$, and
\begin{enumerate}[(a)]
\item $\ch_\lambda$ holds;
\item $\cvec{C}=\langle \mathcal C_\alpha\mid \alpha<\kappa\rangle$ is a $\square_\xi(\kappa,{<}\mu,{\sq^\Omega_\chi},{\nsin})$-sequence for some fixed subset $\Omega\s\kappa\setminus\{\omega\}$;
\item $\langle C_\alpha \mid \alpha \in \Gamma \rangle$ is a transversal for $\cvec{C}$, satisfying that for cofinally many $\theta\in\reg(\lambda)$,
the set $\{\alpha\in E^{\kappa}_{\theta} \cap \Gamma \mid \otp(C_\alpha)<\alpha\}$ is stationary in $\kappa$;
\item\label{hyp-d} $\Gamma'$ is a subset of $\Gamma$ satisfying that for every stationary $A\s\kappa$, there exists $\alpha\in\Gamma'$ with $\otp(C_\alpha)=\cf(\lambda)$ such that $\nacc(C_\alpha)\s A$.
\end{enumerate}

Then there exists a witness $\cvec{D}=\langle \mathcal D_\alpha\mid\alpha<\kappa\rangle$ to $\square_\xi(\kappa,{<}\mu,{\sq^\Omega_\chi},{\nsin})$,
with a transversal $\langle D_\alpha\mid\alpha\in\Gamma\rangle$, satisfying the following properties:
\begin{enumerate}
\item $|\mathcal D_\alpha| \leq |\mathcal C_\alpha|$ for all $\alpha<\kappa$;
\item For every cofinal $A\s\kappa$, there exist stationarily many $\alpha\in\Gamma'$ such that $\otp(D_\alpha)= \lambda$ and $\nacc(D_\alpha)\s A$.
\end{enumerate}
\end{cor}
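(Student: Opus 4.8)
The plan is to feed a carefully prepared witness into Lemma~\ref{blowup-nacc} with $\Lambda:=\lambda$. Since $\lambda$ is a singular cardinal below $\kappa$, it is indecomposable, and by Example~\ref{examples-a}(2) we have $a(\lambda,\kappa)=\lambda$; moreover $\ch_\lambda$ together with \cite{Sh:922} yields $\diamondsuit(\kappa)$, so hypothesis~(a) of Lemma~\ref{blowup-nacc} will be met. The role of Lemma~\ref{mixing_rinot11} is to stock the collections with ``building blocks'': for each prescribed minimum and each order-type below $\lambda$, clubs hitting any prescribed cofinal set. These are exactly what hypothesis~$(\ref{hyp-c-nacc})$ of Lemma~\ref{blowup-nacc} demands, and what that Lemma then splices together, through its Case~2, in order to blow the $\cf(\lambda)$-clubs sitting on $\Gamma'$ up to order-type $\lambda$.

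First I would prepend $0$ to every club: the map $\Phi_{\mathrm{prep}}(x):=\{0\}\cup x$ is readily checked to be a postprocessing function, and it is faithful since $(\omega\setminus\{0\})\notin\rng(\Phi_{\mathrm{prep}})$. Next, setting $\varepsilon:=\cf(\lambda)$ (legitimate since $\cf(\lambda)<\lambda$ and $\reg(\lambda)\not\subseteq(\cf(\lambda)+1)$), I would apply Lemma~\ref{mixing_rinot11} to the $C$-sequence $\langle \{0\}\cup C_\alpha\mid\alpha\in\Gamma\setminus\Omega\rangle$ — whose hypothesis is supplied by~(c), after noting that the stationary sets there may be taken inside $\Gamma\setminus\Omega$ — to obtain a postprocessing function $\Phi_{\mathrm{mix}}$, and compose to form the faithful $\Phi^\dagger:=\faithful\circ\Phi_{\mathrm{mix}}\circ\Phi_{\mathrm{prep}}$. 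Put $\cvec{C^*}:=\acts{\Phi^\dagger}{C}$ with transversal $C^*_\alpha:=\Phi^\dagger(C_\alpha)$; by Lemma~\ref{pp-preserves-square2}(1) this is again a $\square_\xi(\kappa,{<}\mu,{\sq^\Omega_\chi},{\nsin})$-sequence over $\Gamma$. The point of the threshold $\varepsilon=\cf(\lambda)$ is that Lemma~\ref{mixing_rinot11}(1) leaves the order-type-$\cf(\lambda)$ clubs untouched, so for $\alpha\in\Gamma'$ we retain $\min(C^*_\alpha)=0$, $\otp(C^*_\alpha)=\cf(\lambda)$, and $\nacc(C^*_\alpha)\subseteq\{0\}\cup\nacc(C_\alpha)$; whereas on the longer clubs Lemma~\ref{mixing_rinot11}(3), read off through the underlying Lemma~\ref{phi1} so as to retain the equality $\min(\Phi^\dagger(C_\alpha))=\min(A)$, provides for cofinally many $\theta\in\reg(\lambda)\setminus(\cf(\lambda)+1)$ and every cofinal $A$ stationarily many $\alpha\in\Gamma\setminus\Omega$ with $\min(C^*_\alpha)=\min(A)$, $\otp(C^*_\alpha)=\theta$ and $\nacc(C^*_\alpha)\subseteq A$.

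With this in hand, hypothesis~$(\ref{hyp-c-nacc})$ of Lemma~\ref{blowup-nacc} is verified directly: given cofinal $B$ and $\Lambda'<\lambda=a(\lambda,\kappa)$, I would choose a good $\theta>\Lambda'$ and feed $A:=B$ to the previous sentence, producing stationarily many $\alpha\in\Gamma\setminus\Omega$ carrying some $C\in\mathcal C^*_\alpha$ with $\min(C)=\min(B)$, $\Lambda'\le\otp(C)=\theta<\lambda$ and $\nacc(C)\subseteq B$. Applying Lemma~\ref{blowup-nacc} to $\cvec{C^*}$ with $\Lambda=\lambda$ then yields $\cvec{D}=\langle\mathcal D_\alpha\mid\alpha<\kappa\rangle$ and a transversal $\langle D_\alpha\mid\alpha\in\Gamma\rangle$. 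Conclusion~(1) of the corollary is immediate from Lemma~\ref{blowup-nacc}(1) together with $|\mathcal C^*_\alpha|\le|\mathcal C_\alpha|$, and since $\lambda<\kappa$ and $\cvec{C^*}$ witnesses $\nsin$, Lemma~\ref{blowup-nacc}(2) guarantees that $\cvec{D}$ witnesses $\square_\xi(\kappa,{<}\mu,{\sq^\Omega_\chi},{\nsin})$ as well.

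It remains to extract~(2) from Lemma~\ref{blowup-nacc}(3). First I would upgrade hypothesis~(d) from ``exists'' to ``stationarily many'': were $\{\alpha\in\Gamma'\mid\otp(C_\alpha)=\cf(\lambda),\ \nacc(C_\alpha)\subseteq A\}$ to miss a club $E$, then applying~(d) to the stationary set $A\cap E$ would produce a witness $\alpha$ with $\nacc(C_\alpha)\subseteq E$, whence $\alpha\in\acc^+(E)\subseteq E$, a contradiction. Now, fixing cofinal $A\subseteq\kappa$ and letting $S$ be the stationary set delivered by Lemma~\ref{blowup-nacc}(3) — whose construction gives $0=\min(S)$ — the upgraded guessing applied to $S$ furnishes stationarily many $\alpha\in\Gamma'$ above $\lambda$ with $\min(C^*_\alpha)=0=\min(S)$, $\otp(C^*_\alpha)=\cf(\lambda)=\cf(\min\{\alpha,\lambda\})$ and $\nacc(C^*_\alpha)\subseteq\{0\}\cup\nacc(C_\alpha)\subseteq S$; that is, these $\alpha$ lie in the set covered by Lemma~\ref{blowup-nacc}(3), so $\otp(D_\alpha)=\lambda$ and $\nacc(D_\alpha)\subseteq A$, as required. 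The delicate point throughout is the bookkeeping of minima: the splicing in Lemma~\ref{blowup-nacc} insists that a building block carry the exact minimum prescribed by its coding ordinal, while the top-level clubs on $\Gamma'$ must carry the minimum $0$ of $S$; since a postprocessing function cannot reset a club's minimum in a manner depending on its order-type (coherence would fail on initial segments), I expect the main work to be reconciling these two demands, which the plan does by prepending $0$ unconditionally and exploiting that the $\cf(\lambda)$-threshold of Lemma~\ref{mixing_rinot11} simultaneously fixes the short clubs and re-sets the minima of the long ones through Lemma~\ref{phi1}.
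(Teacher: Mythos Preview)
Your broad strategy matches the paper's: feed the transversal through Lemma~\ref{mixing_rinot11} with $\varepsilon=\cf(\lambda)$, then into Lemma~\ref{blowup-nacc} with $\Lambda=\lambda$. The difference lies entirely in how you handle the bookkeeping of minima. The paper does \emph{not} prepend $0$ or peek into the proofs of Lemmas~\ref{mixing_rinot11} and~\ref{blowup-nacc}. Instead, \emph{after} applying Lemma~\ref{mixing_rinot11} to obtain $\Phi_0$, it composes with the $\acc$-preserving postprocessing function $\Phi_1$ of Lemma~\ref{phi0} and works with $C_\alpha^\circ:=\Phi_1(\Phi_0(C_\alpha))$. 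The point of $\Phi_1$ is that for any target set one may first pass to a stationary encoding set $G$; then whenever $\nacc(x)\subseteq G$, one gets $\min(\Phi_1(x))$ equal to the prescribed minimum and $\nacc(\Phi_1(x))$ inside the target. This lets the paper (i)~verify hypothesis~$(\ref{hyp-c-nacc})$ of Lemma~\ref{blowup-nacc} by guessing into $G\cap D$ and then taking an initial segment $\bar\alpha=(\Phi_0(C_\alpha))(\Lambda')$, which lies in $\acc(C_\alpha)\subseteq\Gamma\setminus\Omega$ automatically; and (ii)~verify Conclusion~(2) by using the black-box $S$ from Lemma~\ref{blowup-nacc}(3) with no assumption on $\min(S)$, since $\Phi_1$ resets the minimum of the $\Gamma'$-clubs to $\min(S)$ regardless.

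Your route has a genuine gap at the phrase ``after noting that the stationary sets there may be taken inside $\Gamma\setminus\Omega$.'' Hypothesis~(c) gives, for cofinally many $\theta$, that $\{\alpha\in E^\kappa_\theta\cap\Gamma:\otp(C_\alpha)<\alpha\}$ is stationary; nothing in the hypotheses prevents $\Omega$ from swallowing these sets, and the transversal need not cohere with its own initial segments, so you cannot pass to accumulation points of $C_\alpha$ to recover the same statement over $\Gamma\setminus\Omega$. Without this, your verification of hypothesis~$(\ref{hyp-c-nacc})$ fails: you produce $\alpha\in\Gamma$ with $\min(C^*_\alpha)=\min(B)$ and $\nacc(C^*_\alpha)\subseteq B$, but you cannot place $\alpha$ outside $\Omega$, and you cannot fall back on an initial-segment trick because your $\nacc(C^*_\alpha)$ hits $B$ rather than a club $D$ (replacing $B$ by $B\cap D$ would corrupt the minimum you worked to secure). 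The paper's insertion of $\Phi_1$ is precisely what decouples ``hit $B$'' from ``land in $D$'' and makes the initial-segment move available, delivering $\bar\alpha\in\Gamma\setminus\Omega$ for free via $\sq^\Omega$-coherence. Adopting Lemma~\ref{phi0} in place of your $\Phi_{\mathrm{prep}}$ would close the gap and also remove your reliance on the internal facts $\min(\Phi_\theta(\cdot))=\min(A)$ and $\min(S)=0$.
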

\begin{proof} We begin by feeding $\langle C_\alpha \mid \alpha\in\Gamma \rangle$ and $\varepsilon := \cf(\lambda)$ into Lemma~\ref{mixing_rinot11}, to obtain a corresponding postprocessing function $\Phi_0$.
By $\ch_\lambda$ and~\cite{Sh:922}, $\diamondsuit(\kappa)$ holds, and we may let $\Phi_1$ be the $\acc$-preserving postprocessing function given by Lemma~\ref{phi0}.
Write $\cvec{C^\circ} = \langle \mathcal C_\alpha^\circ \mid \alpha<\kappa\rangle$ for $\acts{\Phi_1\circ\Phi_0}{C}$.
Denote $C_\alpha^\circ:= \Phi_1(\Phi_0(C_\alpha))$.
By Lemma~\ref{pp-preserves-square2}, since $\Phi_1$ is faithful, $\cvec{C^\circ}$ is a witness to $\square_\xi(\kappa,{<}\mu,{\sq^\Omega_\chi},{\nsin})$, with transversal $\langle C_\alpha^\circ \mid \alpha \in \Gamma \rangle$.

\begin{claim} $\Lambda := \lambda$, $\cvec{C^\circ}$, and $\langle C_\alpha^\circ \mid \alpha \in \Gamma \rangle$ satisfy the hypotheses of Lemma~\ref{blowup-nacc}.
\end{claim}
\begin{proof} Since $\kappa=\lambda^+$, we necessarily have $\xi\ge\lambda$.
Thus, to verify Clause~$(\ref{hyp-c-nacc})$ of Lemma~\ref{blowup-nacc}, let $B$ be an arbitrary cofinal subset of $\kappa$, let $\Lambda'<a(\lambda, \kappa)$ be an arbitrary ordinal, and let $D$ be an arbitrary club in $\kappa$.
Recalling Example~\ref{examples-a}, and since $\lambda$ is a singular cardinal, we have $a(\lambda, \kappa) = \lambda$, so that $\Lambda'<\lambda$.
By slightly increasing $\Lambda'$, we may assume that it is a limit ordinal.
By the choice of $\Phi_1$, fix a stationary set $G \subseteq \kappa$ such that for all $x \in \mathcal K(\kappa)$ with $\nacc(x) \subseteq G$, we have $\min(\Phi_1(x)) = \min(B)$ and $\nacc(\Phi_1(x)) \subseteq B$.
Consider the stationary set $A:=G\cap D$.
Since $\reg(\lambda)$ is cofinal in the limit cardinal $\lambda$, and since $\Phi_0$ was given to us by Lemma~\ref{mixing_rinot11},
Clause~(3) of that Lemma allows us to pick $\theta \in \reg(\lambda)$ above $\Lambda'$ along with $\alpha \in \Gamma$ for which $\otp(\Phi_0(C_\alpha)) = \theta$ and $\nacc(\Phi_0(C_\alpha)) \subseteq A$.
Let $\bar\alpha:=(\Phi_0(C_\alpha))(\Lambda')$.
Clearly, $\bar\alpha\in D\cap \Gamma\setminus\Omega$ and $C_\alpha\cap\bar\alpha\in\mathcal C_{\bar\alpha}$.
In particular, $C:=\Phi_1(\Phi_0(C_\alpha\cap\bar\alpha))$ is in $\mathcal C_{\bar\alpha}^\circ$.
As $\Phi_1$ is $\otp$-preserving, we have that $\otp(C) = \Lambda'$.
Finally, by the choice of $G$, and by $\nacc(\Phi_0(C_\alpha\cap\bar\alpha))\s\nacc(\Phi_0(C_\alpha))\s A\s G$, we have $\min(C) = \min(B)$ and $\nacc(C) \subseteq B$.
\end{proof}

Appeal to Lemma~\ref{blowup-nacc} with $\Lambda := \lambda$, $\cvec{C^\circ}$, and $\langle C_\alpha^\circ \mid \alpha \in \Gamma \rangle$,
and let $\langle\mathcal D_\alpha\mid\alpha<\kappa\rangle$ and $\langle D_\alpha\mid\alpha \in \Gamma \rangle$ be the corresponding output.
As $\Lambda=\lambda<\kappa$, Clause~(2) of Lemma~\ref{blowup-nacc} entails that $\cvec{D}$ is a $\square_\xi(\kappa,{<}\mu,{\sq^\Omega_\chi},{\nsin})$-sequence.
We are left with verifying the numbered conclusions in the statement:

(1) By Lemma~\ref{blowup-nacc}(1) and Notation~\ref{notationaction}, $|\mathcal D_\alpha| \leq |\mathcal C_\alpha^\circ|\leq |\mathcal C_\alpha|$ for all $\alpha<\kappa$.

(2) Recalling Clause~(3) of Lemma~\ref{blowup-nacc}, let $S$ be an arbitrary stationary subset of $\kappa$, and let $D$ be an arbitrary club in $\kappa$.
We need to find $\alpha\in\Gamma'\cap D$ such that $\min(C_\alpha^\circ)=\min(S)$, $\otp(C_\alpha^\circ)=\cf(\lambda)$ and $\nacc(C_\alpha^\circ)\s S$.
By the choice of $\Phi_1$, fix a stationary set $G \subseteq \kappa$ such that for all $x \in \mathcal K(\kappa)$ with $\nacc(x) \subseteq G$, we have $\min(\Phi_1(x)) = \min(S)$ and $\nacc(\Phi_1(x)) \subseteq S$.
Consider the stationary set $A:=G\cap D$.
By Clause~$(\ref{hyp-d})$ of the hypothesis, let us pick $\alpha\in\Gamma'$ such that $\otp(C_\alpha)=\cf(\lambda)$ and $\nacc(C_\alpha)\s A$.
In particular, $\alpha\in D$.
Finally, by $\otp(C_\alpha)\le\varepsilon$ and the choice of $\Phi_0$, we have $\nacc(\Phi_0(C_\alpha))=\nacc(C_\alpha)\s A\s G$, so that $C_\alpha^\circ=\Phi_1(C_\alpha)$,
$\otp(C_\alpha^\circ)=\otp(C_\alpha)=\cf(\lambda)$, $\min(C_\alpha^\circ)=\min(S)$ and $\nacc(C_\alpha^\circ)\s S$.
\end{proof}

\begin{fact}[\cite{paper28}]\label{Phi3} For any subset $B\s\kappa$, define $\Phi:\mathcal K(\kappa)\rightarrow\mathcal K(\kappa)$ by stipulating:
$$\Phi(x):=\begin{cases}
\cl(\nacc(x)\cap B),&\text{if }\sup(\nacc(x)\cap B)=\sup(x);\\
x\setminus\sup(\nacc(x)\cap B),&\text{otherwise}.
\end{cases}$$

Then $\Phi$ is a conservative postprocessing function.
\end{fact}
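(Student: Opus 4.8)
The plan is to fix an arbitrary $x \in \mathcal K(\kappa)$, write $\alpha := \sup(x)$ and $\beta^* := \sup(\nacc(x) \cap B)$, and verify the three defining clauses of a postprocessing function together with conservativity, handling the two cases of the definition of $\Phi(x)$ in parallel. Throughout I would repeatedly invoke the elementary observation that, since $x$ is a club in $\alpha$, every $\bar\alpha \in \acc(x)$ satisfies $\nacc(x \cap \bar\alpha) = \nacc(x) \cap \bar\alpha$, so that $\nacc(x \cap \bar\alpha) \cap B = (\nacc(x) \cap B) \cap \bar\alpha$.

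First I would dispatch the easy clauses. When $\beta^* < \alpha$, we have $\Phi(x) = x \setminus \beta^*$, a nonempty final segment of $x$; this is immediately a club in $\alpha$ with $\acc(\Phi(x)) \subseteq \acc(x)$ and $\Phi(x) \subseteq x$. When $\beta^* = \alpha$, we have $\Phi(x) = \cl(\nacc(x) \cap B)$, which is a club in $\alpha$ because $\sup(\nacc(x) \cap B) = \alpha$; here conservativity follows from $\nacc(x) \cap B \subseteq x$ together with the closedness of $x$, which gives $\acc^+(\nacc(x) \cap B) \subseteq \acc^+(x) \subseteq x$, and moreover $\acc(\Phi(x)) = \acc^+(\nacc(x) \cap B) \subseteq \acc^+(x) \subseteq \acc(x)$. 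This settles conservativity, club-ness, and the second clause in both cases.

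The substantive part is the coherence clause $\Phi(x) \cap \bar\alpha = \Phi(x \cap \bar\alpha)$ for $\bar\alpha \in \acc(\Phi(x))$, and here the key point to establish is that the case into which $x$ falls is matched by $x \cap \bar\alpha$. If $\beta^* < \alpha$, then any $\bar\alpha \in \acc(\Phi(x))$ satisfies $\bar\alpha > \beta^*$, so $(\nacc(x) \cap B) \cap \bar\alpha = \nacc(x) \cap B$ and $\sup(\nacc(x \cap \bar\alpha) \cap B) = \beta^* < \bar\alpha = \sup(x \cap \bar\alpha)$; thus $x \cap \bar\alpha$ also falls into the second case, and $\Phi(x \cap \bar\alpha) = (x \cap \bar\alpha) \setminus \beta^* = (x \setminus \beta^*) \cap \bar\alpha = \Phi(x) \cap \bar\alpha$. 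If $\beta^* = \alpha$, then $\bar\alpha \in \acc(\Phi(x)) = \acc^+(\nacc(x) \cap B)$, giving $\sup(\nacc(x \cap \bar\alpha) \cap B) = \bar\alpha = \sup(x \cap \bar\alpha)$; hence $x \cap \bar\alpha$ falls into the first case, and $\Phi(x \cap \bar\alpha) = \cl((\nacc(x) \cap B) \cap \bar\alpha)$.

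To close the first case I would apply the identity $\cl(y) \cap \bar\alpha = \cl(y \cap \bar\alpha)$, valid whenever $\bar\alpha \in \acc^+(y)$, to $y := \nacc(x) \cap B$: since $\bar\alpha$ is an accumulation point of $y$ we have $\acc^+(y) \cap \bar\alpha = \acc^+(y \cap \bar\alpha)$, so that $\cl(y) \cap \bar\alpha = (y \cap \bar\alpha) \cup \acc^+(y \cap \bar\alpha) = \cl(y \cap \bar\alpha)$, yielding $\Phi(x \cap \bar\alpha) = \cl(y) \cap \bar\alpha = \Phi(x) \cap \bar\alpha$. I expect the main obstacle to be bookkeeping rather than depth: one must check carefully that the two cases of the definition are genuinely preserved under restriction to $\bar\alpha$, and that the closure operation commutes with intersection precisely because $\bar\alpha$ was chosen in $\acc(\Phi(x))$ — and hence in $\acc^+(\nacc(x)\cap B)$ in the delicate case — and not merely in $\acc(x)$.
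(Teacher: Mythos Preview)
Your proof is correct and complete. Note that in the paper this statement is presented as a Fact cited from \cite{paper28} and is not proved here, so there is no in-paper proof to compare against; your direct verification of the three postprocessing clauses and conservativity, with the case split on whether $\sup(\nacc(x)\cap B)=\sup(x)$, is exactly the natural argument one would expect.
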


\begin{fact}[\cite{paper28}]\label{paper28} If $\diamondsuit(\kappa)$ holds, then for every $\epsilon\in\acc(\kappa)$,
there exists a postprocessing function $\Phi:\mathcal K(\kappa)\rightarrow\mathcal K(\kappa)$ satisfying the following.

For every sequence $\langle A_i\mid i<\kappa\rangle$ of cofinal subsets of $\kappa$, there exists some stationary set $G$ in $\kappa$ such that for all $x\in\mathcal K(\kappa)$, if any of the following hold:
\begin{enumerate}
\item $\sup(\nacc(x)\cap G)=\sup(x)$, $\otp(x)\le\epsilon$, and $(\cf(\sup(x)))^+=\kappa$;
\item $\otp(\nacc(x)\cap G)=\sup(x)>\epsilon$;
\item $\otp(x)$ is a cardinal $\le\epsilon$ whose successor is $\kappa$, and $\nacc(x)\s G$,
\end{enumerate}
then $\sup(\nacc(\Phi(x))\cap A_i)=\sup(x)$ for all $i<\sup(x)$.
\end{fact}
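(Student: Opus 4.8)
The plan is to build $\Phi$ as a composition whose core is a $\diamondsuit(\kappa)$-driven $\kappa$-assignment, exactly in the spirit of Lemma~\ref{phiZ}, Example~\ref{phiZ-simpler}, Fact~\ref{paper28b} and Lemma~\ref{phi0}. First I would fix a diamond matrix $\langle A^i_\gamma\mid i,\gamma<\kappa\rangle$ as in Fact~\ref{diamond_matrix}, so that for any candidate sequence $\vec A=\langle A_i\mid i<\kappa\rangle$ the guessing set $G(\vec A)$ is stationary and, for $\gamma\in G(\vec A)$, each tail $A^i\cap\gamma=A^i_\gamma$ is cofinal in $\gamma$ for all $i<\gamma$. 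The basic mechanism is then to relocate each non-accumulation point $\beta$ of an input club $x$ downward into a targeted $A_i$: whenever $\beta\in G(\vec A)$ and $i<\beta$, the set $A^i\cap\beta$ is cofinal below $\beta$, so the map $g_{x,\mathfrak Z}$ of Lemma~\ref{phiZ} sends $\beta$ to an element of $A^i$ lying in the gap $(\sup(x\cap\beta),\beta)$. Finishing with $\Phi:=\faithful\circ\Phi_{\mathfrak Z}$ (Example~\ref{faithful_correction}) repairs the bottom of each club and guarantees $\acc(\Phi(x))=\acc(x)$, so that no spurious accumulation points are created and $\otp(\Phi(x))\le\otp(x)$ is preserved.

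The delicate point is the \emph{scheduling}: to force $\sup(\nacc(\Phi(x))\cap A_i)=\sup(x)$ for \emph{every} $i<\sup(x)$ simultaneously, I must arrange that, as $\beta$ ranges over the relevant non-accumulation points, each index $i$ is aimed at by cofinally (in $\sup(x)$) many $\beta$. The only data available coherently at $\beta$ are the position $\otp(x\cap\beta)$ (which, as in Lemma~\ref{phi0}, does satisfy the $\kappa$-assignment coherence $Z_{x,\beta}=Z_{x\cap\bar\alpha,\beta}$) and the ordinal $\beta$ itself. I would emulate the bookkeeping machinery of the proof of Lemma~\ref{phi1}: fix once and for all a sequence of injections $\langle\psi_\gamma:\gamma+1\rightarrow\lambda\rangle$ and a fixed pairing function, and use these to \emph{name} the indices below $\beta$ by ordinals below $\lambda$ in a way that is transported coherently along initial segments. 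The target index assigned to $\beta$ is then decoded from the pair $(\otp(x\cap\beta),\beta)$ through this fixed coherent code, and the set $G$ of the statement is taken to be $G(\vec A)$ refined by the diamond-guessed values of these codes. This is precisely the device that lets a single, input-independent assignment $\mathfrak Z$ behave correctly on clubs of wildly different shapes.

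The main obstacle, and the reason all three hypotheses appear, is reconciling these shapes with one assignment. In Case~(2) one has $\otp(\nacc(x)\cap G)=\sup(x)>\epsilon$, whence $\otp(x)=\sup(x)$, and position and height coincide up to the pairing, so the scheduling is essentially direct. In Cases~(1) and~(3) the hypotheses $(\cf(\sup(x)))^+=\kappa$ and ``$\otp(x)$ is a cardinal whose successor is $\kappa$'' force $\kappa=\lambda^+$ and pin the cofinality, respectively the order type, to $\lambda$; here $\otp(x)$ may be a proper cardinal $\lambda$ strictly below $\sup(x)$, so a naive position-indexed schedule cannot reach the indices in $[\otp(x),\sup(x))$, while a naive height-indexed schedule can be dodged by an arbitrary cofinal $\nacc(x)$. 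The resolution I expect to carry the weight of the argument is to exploit that $\left|\otp(x)\right|=\left|\sup(x)\right|=\lambda$ in these two cases, and to use the coherent collapses $\langle\psi_\gamma\rangle$ to redistribute the $\lambda$-many available non-accumulation points of $x$ (cofinal in $G$ by hypothesis in Case~(1), and of full type contained in $G$ in Case~(3)) across the $\lambda$-many indices so that each is hit cofinally. Verifying that this single schedule is simultaneously valid in all three regimes, and that the diamond guessing on $G$ supplies the needed tails $A^i\cap\beta$ exactly where the schedule aims, is the technical heart; once it is in place, the conclusion $\sup(\nacc(\Phi(x))\cap A_i)=\sup(x)$ follows as above for each $i<\sup(x)$.
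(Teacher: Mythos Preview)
The paper does not prove this statement: it is stated as a Fact imported from \cite{paper28} and carries no proof in the present paper. There is therefore nothing here to compare your proposal against line by line.

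That said, your outline is in the right spirit---a $\diamondsuit(\kappa)$ matrix as in Fact~\ref{diamond_matrix}, a $\kappa$-assignment fed through Lemma~\ref{phiZ}, and a scheduling layer---and you have correctly isolated the real difficulty, namely that a purely position-indexed assignment $Z_{x,\beta}=A^{\otp(x\cap\beta)}_\beta$ (as in Lemma~\ref{phi0}) cannot reach indices in $[\otp(x),\sup(x))$. Two points remain genuinely unresolved in your sketch. First, you never say how the parameter $\epsilon$ enters the definition of $\Phi$; the statement quantifies ``for every $\epsilon$ there exists $\Phi$'', and the three clauses split on the relationship between $\otp(x)$, $\sup(x)$, and $\epsilon$, so the construction must branch on $\epsilon$ somewhere. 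Second, the proposed ``coherent collapse'' via injections $\psi_\gamma:\gamma+1\to\lambda$ is gestured at but not specified: you need a single $\kappa$-assignment $\mathfrak Z$ (so $Z_{x,\beta}$ depends only on $\beta$ and $x\cap\beta$) that simultaneously realises the Case~(2) schedule (where $\otp(x)=\sup(x)$ and position suffices) and the Cases~(1),(3) schedule (where $|\otp(x)|=|\sup(x)|=\lambda$ and one must spread $\lambda$-many available $\beta$'s across $\lambda$-many target indices, each hit cofinally). Writing down one formula for $Z_{x,\beta}$ that does both, and verifying it against an arbitrary cofinal $\nacc(x)\cap G$ in Case~(1), is the actual work; your proposal identifies the obstacle but does not yet clear it.
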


The following proof invokes almost all of the machinery developed in Sections \ref{section1}, \ref{section2}, \ref{section3}, and~\ref{section4},
cumulatively applying a dozen different postprocessing functions
in order to obtain the required $C$-sequence.

\begin{thm}\label{big-thm} Suppose that $\kappa=\lambda^+$ for a singular strong-limit cardinal $\lambda$, $\Omega\s\kappa\setminus\{\omega\}$,
and $\square_\xi(\kappa,{<}\mu,{\sq^\Omega_\chi},{\nsin})+\ch_\lambda$ holds.
Then there exists a transversal $\langle C_\alpha\mid\alpha\in\Gamma\rangle$ for $\square_\xi(\kappa,{<}\mu,{\sq^\Omega_\chi},{\nsin})$
such that for every sequence $\langle A_i\mid i<\kappa\rangle$ of cofinal subsets of $\kappa$, there are stationarily many $\alpha\in\Gamma$
with $\sup(\nacc(C_\alpha) \cap A_i)=\alpha$ for all $i<\alpha$.
\end{thm}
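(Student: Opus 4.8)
The plan is to funnel everything into the single‑cofinality club‑guessing engine of Fact~\ref{paper28}. Applied with $\epsilon:=\lambda$, that fact takes a transversal whose typical club has order type exactly $\lambda$ and whose non‑accumulation points can be confined to a prescribed stationary set, and upgrades the guessing so that \emph{all} $\kappa$ many cofinal sets $\langle A_i\mid i<\kappa\rangle$ are met simultaneously on non‑accumulation points. Since $\lambda$ is precisely the cardinal whose successor is $\kappa$, the case‑(3) hypothesis of that fact (``$\otp(x)$ a cardinal ${\le}\epsilon$ with successor $\kappa$, and $\nacc(x)\s G$'') matches order type exactly $\lambda$, and its conclusion $\sup(\nacc(\Phi(x))\cap A_i)=\sup(x)$ for all $i<\sup(x)$ is verbatim what the theorem demands. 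So the first job is to manufacture such a transversal, and the instrument for this is Corollary~\ref{cor413}, whose output (clause~(2)) produces, for every cofinal $A\s\kappa$, stationarily many $\alpha$ with $\otp(D_\alpha)=\lambda$ and $\nacc(D_\alpha)\s A$.

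Concretely, by $\ch_\lambda$ and~\cite{Sh:922} we have $\diamondsuit(\kappa)$, so given $\langle A_i\mid i<\kappa\rangle$ Fact~\ref{paper28} supplies a postprocessing function $\Phi^\ast$ and a stationary $G$. Feeding $A:=G$ into Corollary~\ref{cor413} yields a witness $\cvec D$ with transversal $\langle D_\alpha\rangle$ and stationarily many $\alpha$ with $\otp(D_\alpha)=\lambda$ and $\nacc(D_\alpha)\s G$; for each such $\alpha$, $D_\alpha$ meets case~(3) of Fact~\ref{paper28}, so $\sup(\nacc(\Phi^\ast(D_\alpha))\cap A_i)=\alpha$ for all $i<\alpha$. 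I would then compose with $\faithful$ of Example~\ref{faithful_correction} to render the final function faithful—this alters each club only on a bounded initial segment and hence preserves the guessing—and let it act on all of $\cvec D$. By Lemma~\ref{pp-preserves-square2} the result is again a $\square_\xi(\kappa,{<}\mu,{\sq^\Omega_\chi},{\nsin})$-sequence, and its transversal $\langle C_\alpha\rangle$ is as sought.

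The substance lies in verifying the hypotheses of Corollary~\ref{cor413} for an appropriate transversal. Clause~(a) is $\ch_\lambda$ and (b) is the given sequence; by Lemma~\ref{nsintransversal} every transversal of an $\nsin$-sequence is amenable, which drives all later club guessing. Clause~(d)—guessing members of order type exactly $\cf(\lambda)$ with steerable non‑accumulation points—I would extract from Lemma~\ref{lemma3.5} (taking $\Gamma':=\Gamma$), and clause~(c)—stationarily many clubs with $\otp(C_\alpha)<\alpha$ at cofinally many regular $\theta<\lambda$—from the same preprocessing together with Lemma~\ref{Gamma-closure}(3). To invoke Lemma~\ref{lemma3.5} I must first secure its input: stationarily many $\alpha\in E^\kappa_{>\cf(\lambda)}\cap\Gamma$ carrying some $C\in\mathcal C_\alpha$ with $\otp(C)<\alpha$. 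Here the idea is to pass to accumulation points—starting from the stationary set $E^\kappa_{\theta'}\cap\Gamma$ at a higher regular cofinality $\theta'$, intersecting a member of $\mathcal C_\gamma$ with a target club and reading off a low‑position accumulation point yields, by coherence, a short club at the desired lower cofinality.

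The main obstacle is precisely this order‑type management. When $\xi<\kappa$ every club is automatically short ($\otp(C_\alpha)<\alpha$ once $\alpha>\lambda$), so~(c) is immediate; but when $\xi=\kappa$ the principle permits unrestricted order types—possibly $\otp(C_\alpha)=\alpha$ cofinally often—and no single postprocessing function thins a long club down to order type $<\lambda$. Reconciling this with the short‑club demands underlying Corollary~\ref{cor413} (via Lemmas~\ref{lemma3.5} and~\ref{mixing_rinot11}) is the delicate point, and it is exactly where the accumulation‑point/reflection argument, powered by amenability and the basic guessing of Fact~\ref{clubguessing}, must be pushed with care. Interwoven throughout is the bookkeeping that every non‑terminal postprocessing function be faithful, so that $\nsin$-coherence and the support $\Gamma$ are preserved at each stage (Lemma~\ref{pp-preserves-square2}), and that the width bound $\mu$ and the $\sq^\Omega_\chi$-coherence survive the long composition of postprocessing functions that the argument assembles.
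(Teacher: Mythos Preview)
Your overall architecture---feed Corollary~\ref{cor413} into Fact~\ref{paper28}(3) with $\epsilon:=\lambda$---is exactly the paper's Case~2, and the endgame is correct. The gap is precisely where you flag it: you need, for cofinally many $\theta\in\reg(\lambda)$, stationarily many $\alpha\in E^\kappa_\theta\cap\Gamma$ with $\otp(C_\alpha)<\alpha$, and your ``accumulation-point'' idea does not deliver this. If you take $\gamma\in E^\kappa_{\theta'}$, $C\in\mathcal C_\gamma$, and $\bar\alpha:=(C\cap D)(\theta)$, you land in $D$ with $\cf(\bar\alpha)=\theta$, but $\otp(C\cap\bar\alpha)$ is uncontrolled---it could equal $\bar\alpha$. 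If instead you take $\bar\alpha:=C(\theta)$ to force $\otp(C\cap\bar\alpha)=\theta$, you lose membership in $D$. Worse, a coherent sequence with $\otp(C_\alpha)=\alpha$ everywhere forces every $\bar\alpha\in\acc(C_\gamma)$ to be a fixed point of the enumeration of $C_\gamma$, so passing to accumulation points never shortens anything. Amenability and Fact~\ref{clubguessing} do not help here: $\Phi_D$ preserves order type.

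The paper resolves this with a genuine dichotomy that you are missing. First it applies Theorem~\ref{transversal_of_mixing_paper24} to obtain a faithful $\Phi_0$ so that $C_\alpha:=\Phi_0(D_\alpha)$ satisfies $\sup(\nacc(C_\alpha)\cap A)=\alpha$ for cofinally many $\theta$ and all cofinal $A$. It then fixes a diamond matrix $\langle A^i_\gamma\rangle$ and asks: is there a single $\theta$ such that for every cofinal $A$, some $\alpha\in E^\kappa_\theta$ has $\otp(\{\gamma\in\nacc(C_\alpha):A\cap\gamma=A^\theta_\gamma\})=\alpha$? If yes (Case~1), one bypasses Corollary~\ref{cor413} entirely and uses Fact~\ref{paper28}(2) directly. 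If no (Case~2), then for every $\theta$ there is a witness $A^\theta$ to failure; intersecting the diamond-guessing sets for all $\theta$ yields a stationary $B$, and the postprocessing function $\Phi_1$ of Fact~\ref{Phi3} (with this $B$) satisfies $\otp(\Phi_1(C_\alpha))<\alpha$ whenever $\sup(\nacc(C_\alpha)\cap B)=\alpha$---which the preliminary $\Phi_0$ guarantees stationarily often. Only then do Lemma~\ref{lemma3.5} and Corollary~\ref{cor413} become available. The dichotomy is the missing idea; without it your verification of hypotheses (c) and~(d) of Corollary~\ref{cor413} does not go through when $\xi=\kappa$.
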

\begin{proof} Let $\vec D=\langle D_\alpha\mid\alpha\in\Gamma\rangle$ be an arbitrary transversal for $\square_\xi(\kappa,{<}\mu,{\sq^\Omega_\chi},{\nsin})$.
By Lemma~\ref{nsintransversal}, $\vec D$ is amenable.
Since $\lambda$ is a singular strong-limit, $\Theta:=\reg(\lambda)\setminus\max\{(\cf(\lambda))^+,\chi\}$ is a subset of $\{\theta\in\reg(\lambda)\mid \mathcal D(\lambda,\theta)=\lambda\}$ satisfying $\sup(\Theta)=\lambda$.
By Lemma~\ref{Gamma-closure}(3), $\{\alpha<\kappa\mid \cf(\alpha)\in\Theta\}$ is a stationary subset of $\Gamma$,
so that $\langle D_\alpha\mid \alpha<\kappa,\cf(\alpha)\in\Theta\rangle$ is also amenable.
Thus, by appealing to Theorem~\ref{transversal_of_mixing_paper24},
we may fix a faithful postprocessing function $\Phi_0:\mathcal K(\kappa)\rightarrow\mathcal K(\kappa)$ and a cofinal subset $\Theta_0\s\Theta$
such that for every $\theta\in\Theta_0$ and every cofinal $A\s\kappa$, there exists $\alpha\in E^{\kappa}_\theta$ with $\sup(\nacc(\Phi_0(D_\alpha))\cap A)=\alpha$.

For all $\alpha\in\Gamma$, let $C_\alpha:=\Phi_0(D_\alpha)$.
By Lemma~\ref{pp-preserves-square2}(2), $\langle C_\alpha\mid\alpha\in\Gamma\rangle$ is yet another transversal for $\square_\xi(\kappa,{<}\mu,{\sq^\Omega_\chi},{\nsin})$.

Next, by $\ch_\lambda$ and \cite{Sh:922}, $\diamondsuit(\kappa)$ holds,
so let us fix a matrix $\langle A^i_\gamma\mid i,\gamma<\kappa \rangle $ as in Fact~\ref{diamond_matrix}.
We consider two cases:

\underline{Case 1.} Suppose that there exists some $\theta\in\reg(\lambda)$ such that for every cofinal $A\s\kappa$,
the following set is nonempty: $$\{\alpha\in E^{\kappa}_\theta \cap\Gamma \mid \otp(\{\gamma\in\nacc(C_\alpha)\mid \sup(A\cap\gamma)=\gamma\ \&\ A\cap\gamma=A^\theta_\gamma\})=\alpha\}.$$
Fix such a $\theta$, derive a $\kappa$-assignment $\mathfrak Z=\langle Z_{x,\beta}\mid x\in\mathcal K(\kappa), \beta\in\nacc(x)\rangle$ via the rule $Z_{x,\beta}:=A_\beta^\theta$,
and let $\Phi_{\mathfrak Z}$ be the postprocessing function given by Lemma~\ref{phiZ}.
In addition, let $\faithful$  be given by Example~\ref{faithful_correction}, and let $\Phi$ be given by Fact~\ref{paper28} for $\epsilon := \lambda$.

For all $\alpha\in\Gamma$, denote $C_\alpha^\bullet:=\faithful(\Phi(\Phi_{\mathfrak Z}(C_\alpha)))$.
Since $\faithful\circ\Phi\circ\Phi_{\mathfrak Z}\circ\Phi_0$ is faithful, Lemma~\ref{pp-preserves-square2}(2) implies that $\vec{C^\bullet}=\langle C^\bullet_\alpha\mid\alpha\in\Gamma\rangle$ is a transversal
for $\square_\xi(\kappa,{<}\mu,{\sq^\Omega_\chi},{\nsin})$. Thus, we are left with proving:

\begin{claim}\label{claim4171} For every sequence $\langle A_i\mid i<\kappa\rangle$ of cofinal subsets of $\kappa$, there exist stationarily many $\alpha\in\Gamma$
with $\sup(\nacc(C^\bullet_\alpha)\cap A_i)=\alpha$ for all $i<\alpha$.
\end{claim}
\begin{proof} Let $D\s\kappa$ be an arbitrary club, and let $\vec A=\langle A_i\mid i<\kappa\rangle$ be an arbitrary sequence of cofinal subsets of $\kappa$.
Let $G\s\kappa$ be the corresponding stationary set that encodes $\vec A$, as given by Fact~\ref{paper28}.
Consider the stationary set $A:=(G\cap D)\setminus\lambda$.
By the choice of $\theta$, we can pick some $\alpha\in E^{\kappa}_\theta \cap\Gamma$ such that
$\otp(\{ \beta \in \nacc(C_\alpha) \cap \acc^+(A) \mid A \cap \beta = A^\theta_\beta \}) = \otp(C_\alpha) = \alpha$.
In particular, $\alpha\in D$, and as explained in Example~\ref{phiZ-simpler},
$\otp(\nacc(\Phi_{\mathfrak Z}(C_\alpha))\cap A) = \otp(C_\alpha) = \alpha>\lambda$.
Thus, by Fact~\ref{paper28}(2),  we also have $\sup(\nacc(\Phi(\Phi_{\mathfrak Z}(C_\alpha)))\cap A_i)=\alpha$ for all $i<\alpha$.
As, for all $x\in\mathcal K(\kappa)$, $\faithful(x)$ differs from $x$ by at a most a single element, $\sup(\nacc(C^\bullet_\alpha)\cap A_i)=\alpha$ for all $i<\alpha$.
\end{proof}

\underline{Case 2.} Suppose that for every $\theta\in\reg(\lambda)$, there exists a cofinal $A^\theta\s\kappa$,
for which the set $$G^\theta:=\{\alpha\in E^{\kappa}_\theta \cap\Gamma \mid \otp(\{\gamma\in\nacc(C_\alpha)\mid \sup(A^\theta\cap\gamma)=\gamma\ \&\ A^\theta\cap\gamma=A^\theta_\gamma\})=\alpha\}$$ is empty.
Fix such a set $A^\theta$ for each $\theta\in\reg(\lambda)$, and let $\Phi_1$ be the postprocessing function given by Fact~\ref{Phi3}  when fed with the following set,
which is stationary by our choice of the matrix $\langle A^i_\gamma \mid i,\gamma<\kappa \rangle$:
$$B:=\{\gamma<{\kappa}\mid \forall \theta\in\reg(\lambda)[\sup(A^\theta\cap\gamma)=\gamma\ \&\ A^\theta\cap\gamma=A^\theta_\gamma]\}.$$

\begin{claim}\label{claim4212} $S_\theta:=\{\alpha\in E^{\kappa}_\theta\cap\Gamma\mid \otp(\Phi_1(C_\alpha))<\alpha\}$ is stationary for cofinally many $\theta\in\reg(\lambda)$.
\end{claim}
\begin{proof} Recall that $\Theta_0$ is a cofinal subset of $\reg(\lambda) \setminus\chi$, and that $E^\kappa_{\geq\chi} \subseteq\Gamma$.
Fix an arbitrary $\theta\in\Theta_0$ and an arbitrary club $D\s\kappa$, and we shall find $\alpha\in E^{\kappa}_\theta\cap D$ such that $\otp(\Phi_1(C_\alpha))<\alpha$.

Consider the stationary set $A:=B\cap D$. By the property that $\Phi_0$ was chosen to satisfy, pick $\alpha\in E^{\kappa}_\theta$ such that $\sup(\nacc(\Phi_0(D_\alpha))\cap A)=\alpha$.
Then $\alpha\in D$ and $\sup(\nacc(C_\alpha)\cap A)=\alpha$.
Now, by definition of $\Phi_1$, we obtain $\Phi_1(C_{\alpha}) = \cl(\nacc(C_{\alpha}) \cap B)$.
For each $\gamma\in B$, we have $\sup(A^\theta\cap\gamma)=\gamma$ and $A^\theta\cap\gamma=A^\theta_\gamma$, so that
\[
\nacc(C_{\alpha}) \cap B \subseteq
\{\gamma\in\nacc(C_{\alpha})\mid
\sup(A^\theta\cap\gamma)=\gamma\ \&\ A^\theta\cap\gamma=A^\theta_\gamma\}.
\]
Since $G^\theta$ is empty, in particular $\alpha \notin G^\theta$,
and it follows from the above that $\otp(\nacc(C_{\alpha})\cap B)<{\alpha}$.
Altogether, we infer that $\otp(\Phi_1(C_{\alpha})) = \otp(\nacc(C_{\alpha} \cap B)) <\alpha$,
so that $\alpha\in S_\theta\cap D$.
\end{proof}

For all $\alpha\in\Gamma$, put $C_\alpha^1:=\faithful(\Phi_1(C_\alpha))$, so that, in particular, $\{ \alpha\in E^\kappa_{>\cf(\lambda)} \cap\Gamma \mid \otp(C^1_\alpha)<\alpha \}$ is stationary.

By Lemma~\ref{pp-preserves-square2}(2), let us fix a $\square_\xi(\kappa,{<}\mu,{\sq^\Omega_\chi},{\nsin})$-sequence $\cvec{C}$ for which $\langle C^1_\alpha \mid \alpha\in\Gamma \rangle$ is a transversal.
By Lemma~\ref{lemma3.5}, let us fix a transversal $\langle D^1_\alpha\mid\alpha\in\Gamma\rangle$ for the very same $\cvec{C}$,
along with a faithful postprocessing function $\Phi_2$ such that for every cofinal $A\s\kappa$, there exist stationarily many $\alpha\in\Gamma$ with $\otp(\Phi_2(D_\alpha^1))=\cf(\lambda)$
and $\nacc(\Phi_2(D_\alpha^1))\s A$. For all $\alpha\in\Gamma$, let
$$C^2_\alpha:=\begin{cases}
\Phi_2(D^1_\alpha),&\text{if }\cf(\alpha)=\cf(\lambda);\\
\Phi_2(C^1_\alpha),&\text{otherwise}.
\end{cases}$$

\begin{claim} $\acts{\Phi_2}{C}$ and $\langle C^2_\alpha \mid \alpha \in \Gamma \rangle$ satisfy the hypothesis of Corollary~\ref{cor413}, with $\Gamma'=\Gamma$.
\end{claim}
\begin{proof} By Lemma~\ref{pp-preserves-square2}, $\acts{\Phi_2}{C}$ is a $\square_\xi(\kappa,{<}\mu,{\sq^\Omega_\chi},{\nsin})$-sequence,
for which $\langle \Phi_2(C^1_\alpha)\mid\alpha\in\Gamma\rangle$ and $\langle \Phi_2(D^1_\alpha)\mid\alpha\in\Gamma\rangle$ are transversals.
In particular, $\langle C^2_\alpha \mid \alpha \in \Gamma \rangle$ is a transversal for $\acts{\Phi_2}{C}$.
By Claim~\ref{claim4212}, Clause~(c) of Corollary~\ref{cor413} holds.
By the very choice of $\Phi_2$, Clause~(\ref{hyp-d}) of Corollary~\ref{cor413} holds for $\Gamma'=\Gamma$.
\end{proof}

Let $\langle D^2_\alpha\mid\alpha \in \Gamma \rangle$ be the transversal for $\square_\xi(\kappa,{<}\mu,{\sq^\Omega_\chi},{\nsin})$ produced by Corollary~\ref{cor413}.
As in Case~1, let $\Phi$ be given by Fact~\ref{paper28} for $\epsilon := \lambda$.
For all $\alpha\in\Gamma$, denote $C_\alpha^\bullet:=\faithful(\Phi(D^2_\alpha))$, so that, by Lemma~\ref{pp-preserves-square2}(2),
$\vec{C^\bullet}=\langle C^\bullet_\alpha\mid\alpha\in\Gamma\rangle$ is a transversal for $\square_\xi(\kappa,{<}\mu,{\sq^\Omega_\chi},{\nsin})$.
Thus, we are left with proving:
\begin{claim} For every sequence $\langle A_i\mid i<\kappa\rangle$ of cofinal subsets of $\kappa$, there exist stationarily many $\alpha\in\Gamma$ with $\sup(\nacc(C^\bullet_\alpha)\cap A_i)=\alpha$ for all $i<\alpha$.
\end{claim}
\begin{proof}
Let $\vec A=\langle A_i\mid i<\kappa\rangle$ be an arbitrary sequence of cofinal subsets of $\kappa$.
Let $G\s\kappa$ be the corresponding stationary set that encodes $\vec A$, as given by Fact~\ref{paper28}.
Since $G$ is cofinal in $\kappa$, $S:=\{\alpha\in\Gamma\mid \otp(D_\alpha^2)=\lambda\text{ and }\nacc(D^2_\alpha)\s G\}$ is stationary by our choice of $\langle D_\alpha^2 \mid \alpha\in\Gamma \rangle$.
It now follows from Fact~\ref{paper28}(3) that for every $\alpha\in S$, we have $\sup(\nacc(C_\alpha^\bullet)\cap A_i)=\alpha$ for all $i<\alpha$.
\end{proof}
This completes the proof.
\end{proof}

Theorem~\ref{thm4} is the special case $\chi := \aleph_0$ of the following.

\begin{cor}\label{thm-using-everything} Suppose that $\lambda$ is a singular strong-limit cardinal, $\chi \in \reg(\lambda)$, and $\square(\lambda^+,{\sq_\chi})+\ch_\lambda$ holds.
Then $\p^-(\lambda^+,2,{\sq_\chi},\lambda^+)$ holds.
\end{cor}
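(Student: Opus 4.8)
The plan is to read the statement as an essentially immediate consequence of Theorem~\ref{big-thm}, after rephrasing the hypothesis in the $\nsin$-language and unwinding the parameter conventions. First I would fix the conventions: by the agreements on omitted parameters, $\square(\lambda^+,{\sq_\chi})$ is literally $\square_{\lambda^+}(\lambda^+,{<}2,{\sq_\chi},{\notin})$, while the target $\p^-(\lambda^+,2,{\sq_\chi},\lambda^+)$ unfolds, via Definition~\ref{proxy} with $\mathcal S=\{\lambda^+\}$ and $\theta=\lambda^+$, to the assertion that there is a $\sq_\chi$-coherent $C$-sequence $\langle C_\alpha\mid\alpha<\lambda^+\rangle$ such that for every sequence $\langle A_i\mid i<\lambda^+\rangle$ of cofinal subsets of $\lambda^+$ there are stationarily many $\alpha$ with $\sup(\nacc(C_\alpha)\cap A_i)=\alpha$ for all $i<\min\{\lambda^+,\alpha\}=\alpha$. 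This is exactly the shape of the conclusion of Theorem~\ref{big-thm} specialized to $\kappa=\lambda^+$, $\xi=\lambda^+$, $\mu=2$, and $\Omega=\emptyset$ (so that $\sq^\Omega_\chi=\sq_\chi$), the ambient $\chi$ being the given element of $\reg(\lambda)$. Since $\ch_\lambda$ is assumed, the arithmetic hypothesis of that theorem is met and $\diamondsuit(\lambda^+)$ need not be checked separately.

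The one genuine point is to verify the remaining hypothesis of Theorem~\ref{big-thm}, namely that $\square_{\lambda^+}(\lambda^+,{<}2,{\sq_\chi},{\nsin})$ holds. I would fix a witness $\cvec{C}=\langle\mathcal C_\alpha\mid\alpha<\lambda^+\rangle$ to $\square(\lambda^+,{\sq_\chi})$; since $\mu=2$, every $\mathcal C_\alpha$ is a singleton. Then I would invoke Lemma~\ref{hitting-implies-nontrivial}(4): with $\mathcal R_1={\notin}$ and $\mu':=2<\lambda^+$, the set $\{\alpha\in\Gamma(\cvec{C})\mid|\mathcal C_\alpha|<\mu'\}$ is the whole support $\Gamma(\cvec{C})$, which is stationary by Lemma~\ref{Gamma-closure}(3); hence $\cvec{C}$ already witnesses $\square_{\lambda^+}(\lambda^+,{<}2,{\sq_\chi},{\nsin})$. (Equivalently, as $\min\{\xi,\mu\}=2<\lambda^+$, Lemma~\ref{square_is_amenable} makes every transversal amenable, and Lemma~\ref{nsintransversal} gives the same upgrade.) This is really the only place where anything must be observed, and it is free precisely because $\mu=2$; I do not expect it to be an obstacle.

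Finally I would pass from the support-restricted transversal $\langle C_\alpha\mid\alpha\in\Gamma\rangle$ delivered by Theorem~\ref{big-thm} to a genuine $C$-sequence defined on all of $\lambda^+$. Let $\cvec{D}=\langle\mathcal D_\alpha\mid\alpha<\lambda^+\rangle$ be a $\square_{\lambda^+}(\lambda^+,{<}2,{\sq_\chi},{\nsin})$-sequence for which this transversal is a transversal, so that $\Gamma=\Gamma(\cvec{D})$ and, as $\mu=2$, every $\mathcal D_\alpha$ is a singleton. Defining $C_\alpha$ to be the unique element of $\mathcal D_\alpha$ for each limit $\alpha<\lambda^+$ (and $C_{\alpha+1}:=\{\alpha\}$, $C_0:=\emptyset$) yields a $C$-sequence over $\lambda^+$ that agrees on $\Gamma$ with the transversal from Theorem~\ref{big-thm}. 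Its $\sq_\chi$-coherence is immediate from the coherence clause of Definition~\ref{def115} together with the fact that each level is a singleton (so the witnessing $D\mathrel{\sq_\chi}C$ must be $C_{\bar\alpha}$), and the club-guessing conclusion of Theorem~\ref{big-thm}, which holds for stationarily many $\alpha\in\Gamma\s\lambda^+$, transfers verbatim to $\langle C_\alpha\mid\alpha<\lambda^+\rangle$. Thus this sequence witnesses $\p^-(\lambda^+,2,{\sq_\chi},\lambda^+)$. The only care required is in matching the parameter conventions and in the routine bookkeeping that turns the support-restricted transversal into an everywhere-defined coherent $C$-sequence.
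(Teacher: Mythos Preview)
Your proposal is correct and follows essentially the same approach as the paper: upgrade $\square(\lambda^+,{\sq_\chi})$ to the $\nsin$-version via Lemma~\ref{hitting-implies-nontrivial}(4), apply Theorem~\ref{big-thm} with $(\kappa,\Omega,\xi,\mu)=(\lambda^+,\emptyset,\lambda^+,2)$, and extend the resulting transversal on $\Gamma$ to a $\sq_\chi$-coherent $C$-sequence on all of $\lambda^+$. The paper's own proof is terser---it simply asserts the existence of such an extension---while you give a concrete construction via the singleton $\mathcal D_\alpha$'s of a witnessing $\square$-sequence, which is a perfectly valid instantiation.
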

\begin{proof} Recalling Lemma~\ref{hitting-implies-nontrivial}(4) and appealing to Theorem~\ref{big-thm} with $(\kappa,\Omega,\xi,\mu) := (\lambda^+,\emptyset,\lambda^+,2)$,
we obtain a transversal $\vec{C} = \langle C_\alpha \mid \alpha\in\Gamma \rangle$ for $\square(\lambda^+,{\sq_\chi})$
such that for every sequence $\langle A_i \mid i<\lambda^+ \rangle$ of cofinal subsets of $\lambda^+$,
there are stationarily many $\alpha\in\Gamma$ such that $\sup(\nacc(C_\alpha) \cap A_i) =\alpha$ for all $i<\alpha$.
Let $\vec{C^\bullet}$ be an arbitrary $\sq_\chi$-coherent $C$-sequence over $\lambda^+$, satisfying $\vec{C^\bullet}\restriction\Gamma=\vec{C}$.
Then $\vec{C^\bullet}$ witnesses $\p^-(\lambda^+,2,{\sq_\chi},\lambda^+)$.
\end{proof}

Theorem~C now follows:

\begin{cor}\label{thmC} Assume either of the following:
\begin{itemize}
\item $V=L$ and $\kappa$ is a regular uncountable cardinal that is not weakly compact;
\item $\kappa=\lambda^+$, where $\lambda$ is a strong-limit singular cardinal and $\square(\lambda^+)+\ch_\lambda$ holds.
\end{itemize}

Then there exists a uniformly coherent, prolific $\kappa$-Souslin tree.
\end{cor}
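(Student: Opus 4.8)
The plan is to feed one of the proxy principles established earlier in the paper into the standard microscopic construction of Souslin trees, which already delivers trees that are prolific and coherent. No genuinely new combinatorics is needed here: the only work is to assemble the correct instance of $\p^-$ together with $\diamondsuit(\kappa)$ in each of the two cases, and to check that the \emph{full} $\sq$-coherence on hand translates into \emph{uniform} coherence of the resulting tree.

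First I would secure $\diamondsuit(\kappa)$. In the first case, $V=L$ and $\kappa$ is a regular uncountable cardinal, so $\diamondsuit(\kappa)$ holds by Jensen. In the second case, $\ch_\lambda$ together with \cite{Sh:922} yields $\diamondsuit(\lambda^+)$. Next I would produce the proxy principle with full $\sq$-coherence, i.e.\ with $\chi=\aleph_0$, so that $\sq_\chi$ is literally $\sq$. In the first case, Corollary~\ref{P-from-V=L} gives $\p^-(\kappa,2,{\sq},\kappa,\mathcal S)$ for the indicated family $\mathcal S$; since $\theta:=\aleph_0$ meets the side condition (as $\lambda^{<\aleph_0}=\max\{|\lambda|,\aleph_0\}<\kappa$ for every $\lambda<\kappa$), the stationary set $E^\kappa_{\ge\aleph_0}$ of limit ordinals belongs to $\mathcal S$, and hence, feeding in a constant sequence and observing that stationarily many $\alpha\in E^\kappa_{\ge\aleph_0}$ gives stationarily many $\alpha\in\kappa$, we obtain in particular $\p^-(\kappa,2,{\sq},1,\{\kappa\})$. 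In the second case, recalling that $\square(\lambda^+)$ is by convention exactly $\square(\lambda^+,{\sq})=\square(\lambda^+,{\sq_{\aleph_0}})$ and that $\aleph_0\in\reg(\lambda)$ (since $\lambda$ is a singular strong-limit), Corollary~\ref{thm-using-everything} applied with $\chi:=\aleph_0$ gives $\p^-(\lambda^+,2,{\sq},\lambda^+)$, which likewise entails $\p^-(\kappa,2,{\sq},1,\{\kappa\})$. Either way, $\p^-(\kappa,2,{\sq},1,\{\kappa\})$ holds and $\diamondsuit(\kappa)$ holds.

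Finally I would invoke the microscopic construction of Souslin trees from the proxy principle (\cite{paper22}; compare also \cite{rinot20}): from $\p^-(\kappa,2,{\sq},1,\{\kappa\})$ together with $\diamondsuit(\kappa)$ one builds a prolific $\kappa$-Souslin tree whose coherence is inherited from the $\mathcal R_0$-coherence of the witnessing $C$-sequence. Since here $\mathcal R_0={\sq}$ — full coherence, because $\chi=\aleph_0$ — the resulting tree is uniformly coherent in the sense of \cite[\S3.1]{MR2013395}, which is precisely what the statement demands.

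The main obstacle, such as it is, is bookkeeping rather than mathematics: one must confirm that the particular instance of $\p^-$ obtained is strong enough to drive the Souslin construction (the coefficient $\theta=1$ with $\mathcal S=\{\kappa\}$ is all that bare Souslinity requires, and it is implied by the much stronger instances produced by Corollaries~\ref{P-from-V=L} and~\ref{thm-using-everything}), and, more delicately, that $\chi=\aleph_0$ is genuinely available in both cases so that one gets $\sq$-coherence — rather than merely $\sq_\chi$-coherence for some $\chi>\aleph_0$ — and hence the \emph{uniform} coherence of the tree. This last point is the reason the proof routes the second case through $\square(\lambda^+)$ (full $\sq$) rather than through a weaker $\sq_\chi$ hypothesis.
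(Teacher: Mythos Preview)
Your approach is essentially the same as the paper's: secure $\diamondsuit(\kappa)$ (Jensen in $L$, or \cite{Sh:922} from $\ch_\lambda$), obtain the proxy principle with full $\sq$-coherence via Corollary~\ref{P-from-V=L} or Corollary~\ref{thm-using-everything} with $\chi=\aleph_0$, and then feed both into the microscopic construction from \cite{paper22}. The one divergence is that the paper keeps the full strength $\p^-(\kappa,2,{\sq},\kappa)$ and cites \cite[Proposition~2.5, Remark~2.6]{paper22} directly for the uniformly coherent prolific tree, whereas you downgrade to $\p^-(\kappa,2,{\sq},1,\{\kappa\})$ before invoking the construction; your downgrading is harmless but unnecessary, and you should verify that the precise result you cite from \cite{paper22} is stated at the level $\theta=1$ rather than $\theta=\kappa$.
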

\begin{proof} By Corollary~\ref{P-from-V=L} or Corollary~\ref{thm-using-everything} (with $\chi:=\aleph_0$), $\p^-(\kappa,2,{\sq},\kappa)$ holds.
By $V=L$ or $\ch_\lambda$ and \cite{Sh:922}, $\diamondsuit(\kappa)$ holds.
Now, by \cite[Proposition~2.5, Remark~2.6]{paper22}, $\p^-(\kappa,2,{\sq},\kappa) + \diamondsuit(\kappa)$ entails the existence of a uniformly coherent, prolific $\kappa$-Souslin tree.
\end{proof}

\begin{remark} We close this section by offering a simpler variation of Lemma~\ref{blowup-nacc}.
Recalling the postprocessing function provided by Lemma~\ref{phi0}, it is easy to see that Clause~$(\ref{hyp-c-nacc})$ of Lemma~\ref{blowup-nacc} may as well be relaxed to the following:
\begin{enumerate}
\item[($\ref{hyp-c-nacc}'$)] for every stationary $B\s\kappa$  and every $\Lambda' < a(\Lambda,\kappa)$, the following set is nonempty:
\[
\{\alpha\in\Gamma\setminus\Omega\mid \exists C \in \mathcal C_\alpha [\Lambda' \le\otp(C)<\Lambda,\nacc(C)\s B]\}.
\]
\end{enumerate}
In the very same way, the requirement ``$\min(C_\alpha)=\min(S)$'' in Clause~(3) may be waived.
Consequently, in the special case $\Lambda<\kappa$, we may also assume that $\min\{\alpha,\Lambda\}=\Lambda$ for all $\alpha\in S$.
In addition, in the special case $\chi=\aleph_0$, we have $\Gamma(\cvec{C})=\acc(\kappa)$, and hence there is no need to start the proof by manipulating $C_{\alpha,i}\cap(\omega+1)$ for pairs $(\alpha,i)$
in order to ensure that $\Gamma(\cvec{D})$ will not exceed $\Gamma(\cvec{C})$. Consequently, in the special case $\chi=\aleph_0$, there is no harm in allowing $\omega$ to be an element of $\Omega$.
Finally, there are cases where we shall only care about the transversals $\vec C,\vec D$, but not about the sequences $\cvec{C},\cvec{D}$.
Putting all of these together with Lemma~\ref{hitting-implies-nontrivial}(1) yields the following simpler variation of Lemma~\ref{blowup-nacc}.
\end{remark}
\begin{lemma}
Suppose that $\Lambda\le\xi<\kappa$, with $\Lambda$ an infinite indecomposable ordinal. Suppose also:
\begin{enumerate}[(a)]
\item $\diamondsuit(\kappa)$ holds;
\item $\vec C=\langle C_\alpha \mid \alpha \in \acc(\kappa)\rangle$ is a transversal for $\square_\xi(\kappa,{<}\mu,{\sq^\Omega})$, for some fixed subset $\Omega\s\kappa$;
\item For every stationary $B\s\kappa$  and every $\Lambda' < a(\Lambda,\kappa)$, there is $\alpha\in\acc(\kappa)\setminus\Omega$ with $\Lambda' \le\otp(C_\alpha)<\Lambda$ such that $\nacc(C_\alpha)\s B$.
\end{enumerate}

Then there exists a transversal $\langle D_\alpha\mid\alpha\in\acc(\kappa)\rangle$ for $\square_\xi(\kappa,{<}\mu,{\sq^\Omega})$,
satisfying that for every cofinal $A\s\kappa$, there exists a stationary $S\s\kappa$, for which
\[
\left\{\alpha\in\acc(\kappa)\Biggm|
\begin{gathered}
\nacc(C_{\alpha})\s S,\\
\otp(C_{\alpha})=\cf(\Lambda)
\end{gathered}
\right\}\s \left\{\alpha\in\acc(\kappa) \Biggm|
\begin{gathered}
\nacc(D_\alpha)\s A,\\
\otp(D_\alpha)=\Lambda
\end{gathered}
\right\}.\qed
\]
\end{lemma}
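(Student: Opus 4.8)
The plan is to reduce the statement to a single application of Lemma~\ref{blowup-nacc}, using the postprocessing function of Lemma~\ref{phi0} to bridge the gap between the weak hypothesis~(c) available here and the stronger Clause~$(\ref{hyp-c-nacc})$ demanded there. First I would fix a $\square_\xi(\kappa,{<}\mu,{\sq^\Omega})$-sequence $\cvec{C}=\langle\mathcal C_\alpha\mid\alpha<\kappa\rangle$ for which $\vec C$ is a transversal; since $\chi=\aleph_0$ here, its support is all of $\acc(\kappa)$. As $\xi<\kappa$, Lemma~\ref{hitting-implies-nontrivial}(1) upgrades $\cvec{C}$ to a witness of $\square_\xi(\kappa,{<}\mu,{\sq^\Omega},{\nsin})$. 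Invoking $\diamondsuit(\kappa)$, let $\Phi$ be the faithful, $\acc$-preserving postprocessing function given by Lemma~\ref{phi0}, set $C_\alpha^\circ:=\Phi(C_\alpha)$ for all $\alpha\in\acc(\kappa)$, and write $\cvec{C^\circ}:=\acts{\Phi}{C}$. By Lemma~\ref{pp-preserves-square2}, since $\Phi$ is faithful, $\cvec{C^\circ}$ again witnesses $\square_\xi(\kappa,{<}\mu,{\sq^\Omega},{\nsin})$, with support $\acc(\kappa)$, and $\langle C_\alpha^\circ\mid\alpha\in\acc(\kappa)\rangle$ is a transversal for it. (The cited lemmas are stated for $\Omega\s\kappa\setminus\{\omega\}$, but by the observations in the remark preceding the present statement, their proofs remain valid with $\omega\in\Omega$ once $\chi=\aleph_0$, so I may apply them to our arbitrary $\Omega\s\kappa$.)

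The crux is to verify that $\cvec{C^\circ}$ and $\langle C_\alpha^\circ\rangle$ satisfy Clause~$(\ref{hyp-c-nacc})$ of Lemma~\ref{blowup-nacc}. Given a cofinal $B\s\kappa$, an ordinal $\Lambda'<a(\Lambda,\kappa)$, and a club $E\s\kappa$, I would feed the constant sequence $\langle B\mid i<\kappa\rangle$ into Lemma~\ref{phi0} to obtain a stationary $G_B$ with the property that $\nacc(x)\s G_B$ entails $\min(\Phi(x))=\min(B)$ and $\nacc(\Phi(x))\s B$. Applying hypothesis~(c) to the stationary set $G_B\cap E$ and to $\Lambda'$ then yields $\alpha\in\acc(\kappa)\setminus\Omega$ with $\Lambda'\le\otp(C_\alpha)<\Lambda$ and $\nacc(C_\alpha)\s G_B\cap E$. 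Since $\nacc(C_\alpha)$ is cofinal in the limit ordinal $\alpha$ and is contained in the club $E$, we get $\alpha\in E$; since $\nacc(C_\alpha)\s G_B$ and $\Phi$ is $\acc$-preserving, $C_\alpha^\circ=\Phi(C_\alpha)$ satisfies $\min(C_\alpha^\circ)=\min(B)$, $\nacc(C_\alpha^\circ)\s B$, and $\Lambda'\le\otp(C_\alpha^\circ)=\otp(C_\alpha)<\Lambda$. Thus $\alpha\in E$ witnesses membership in the relevant set, establishing its stationarity.

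With Clause~$(\ref{hyp-c-nacc})$ in hand, I would apply Lemma~\ref{blowup-nacc} with $\Lambda$, $\cvec{C^\circ}$, the transversal $\langle C_\alpha^\circ\rangle$, and $\chi=\aleph_0$, to obtain a $\square_\xi(\kappa,{<}\mu,{\sq^\Omega},V)$-sequence $\cvec{D}$ with transversal $\langle D_\alpha\mid\alpha\in\acc(\kappa)\rangle$. Because $\Lambda<\kappa$ and $\cvec{C^\circ}$ witnesses the $\nsin$-version, Clause~(2) of that Lemma (or simply Lemma~\ref{hitting-implies-nontrivial}(1), as $\xi<\kappa$) guarantees that $\cvec{D}$ witnesses $\square_\xi(\kappa,{<}\mu,{\sq^\Omega},{\nsin})$, and since $\nsin$ refines $\notin$, it witnesses $\square_\xi(\kappa,{<}\mu,{\sq^\Omega})$; so $\langle D_\alpha\rangle$ is a transversal of the required kind. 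It remains only to massage Clause~(3) of Lemma~\ref{blowup-nacc}.

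Finally, to derive the displayed inclusion, I would fix a cofinal $A\s\kappa$ and let $S$ be the stationary set supplied by Clause~(3) of Lemma~\ref{blowup-nacc}, so that $\{\alpha\mid\otp(D_\alpha)=\min\{\alpha,\Lambda\},\ \nacc(D_\alpha)\s A\}$ covers $\{\alpha\mid\min(C_\alpha^\circ)=\min(S),\ \otp(C_\alpha^\circ)=\cf(\min\{\alpha,\Lambda\}),\ \nacc(C_\alpha^\circ)\s S\}$. Feeding the constant sequence $\langle S\mid i<\kappa\rangle$ into Lemma~\ref{phi0}, I obtain a stationary $G_S$ with $\nacc(x)\s G_S$ implying $\min(\Phi(x))=\min(S)$ and $\nacc(\Phi(x))\s S$, and I set $S^\star:=G_S\setminus(\Lambda+1)$, which is stationary since $\Lambda<\kappa$. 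Now if $\alpha$ satisfies $\nacc(C_\alpha)\s S^\star$ and $\otp(C_\alpha)=\cf(\Lambda)$, then $\min(C_\alpha)>\Lambda$ forces $\alpha>\Lambda$, so $\min\{\alpha,\Lambda\}=\Lambda$; moreover $\nacc(C_\alpha)\s G_S$ yields $\min(C_\alpha^\circ)=\min(S)$, $\nacc(C_\alpha^\circ)\s S$, and $\otp(C_\alpha^\circ)=\otp(C_\alpha)=\cf(\Lambda)=\cf(\min\{\alpha,\Lambda\})$, placing $\alpha$ in the covered set and hence in $\{\alpha\mid\otp(D_\alpha)=\Lambda,\ \nacc(D_\alpha)\s A\}$. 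Taking $S^\star$ as the witnessing stationary set completes the inclusion. I expect the main obstacle to be precisely this bookkeeping: arranging for the single function $\Phi$ of Lemma~\ref{phi0}, applied with two different encoding sets $G_B$ and $G_S$, to simultaneously convert hypothesis~(c) into the form Lemma~\ref{blowup-nacc} requires and to translate that Lemma's conclusion back from $C_\alpha^\circ$ to the original transversal $C_\alpha$, with the truncation $S^\star=G_S\setminus(\Lambda+1)$ absorbing the $\min\{\alpha,\Lambda\}=\Lambda$ reduction.
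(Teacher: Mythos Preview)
Your proposal is correct and follows essentially the same route the paper indicates in the remark immediately preceding the statement: precompose with the postprocessing function of Lemma~\ref{phi0} to upgrade hypothesis~(c) to Clause~$(\ref{hyp-c-nacc})$ of Lemma~\ref{blowup-nacc}, invoke Lemma~\ref{hitting-implies-nontrivial}(1) (using $\xi<\kappa$) for the $\nsin$ relation, apply Lemma~\ref{blowup-nacc}, and then use Lemma~\ref{phi0} once more together with the truncation above $\Lambda$ to translate Clause~(3) back to the original transversal. The paper simply records this as a consequence of that remark and affixes a \qed; your write-up spells out the same argument in full.
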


\section{A canonical nonspecial Aronszajn tree}\label{section5}

Theorem~\ref{thm23} is the special case $(\mu,\chi,\Omega) := (\kappa,\aleph_0,\emptyset)$ of the following.

\begin{thm}\label{thm5.1} Suppose that $\kappa=\lambda^+$ for a singular cardinal $\lambda$, $\ch_\lambda$ holds,
and $\cvec{C}$ is a $\square_\lambda(\kappa,{<}\mu,{\sq^\Omega_\chi})$-sequence for some fixed $\Omega\s\kappa\setminus\{\omega\}$.

Then there exists a transversal $\langle C_\alpha\mid\alpha\in\Gamma\rangle$ for $\square_\lambda(\kappa,{<}\mu,{\sq^\Omega_\chi})$,
such that for every sequence $\langle A_i\mid i<\lambda\rangle$ of cofinal subsets of $\kappa$,
there exist stationarily many $\alpha\in\Gamma$ such that $\otp(C_\alpha)=\lambda$, $\min(C_\alpha)=\min(A_0)$ and $C_\alpha(i+1)\in A_i$ for all $i<\lambda$.
\end{thm}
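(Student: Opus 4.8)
The plan is to assemble the statement from three already-available transformation results, improving an arbitrary transversal for $\cvec C$ in three stages. Throughout, set $\Gamma:=\Gamma(\cvec C)$. Since $\xi=\lambda<\kappa$, Lemma~\ref{hitting-implies-nontrivial}(1) will give that $\cvec C$ in fact witnesses $\square_\lambda(\kappa,{<}\mu,{\sq^\Omega_\chi},{\nsin})$, so that every transversal for $\cvec C$ is simultaneously a transversal for the $\nsin$-principle; this is what will let me invoke the $\nsin$-preservation machinery below. The target feature — the sharp enumeration $\min(C_\alpha)=\min(A_0)$ and $C_\alpha(i+1)\in A_i$, at order-type exactly $\lambda$ — will be produced only at the last stage, via Lemma~\ref{phi0}; the two earlier stages will arrange a transversal whose clubs have order-type exactly $\lambda$ and whose non-accumulation points can be steered into any prescribed cofinal set.

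Stage~1 produces a club-guessing seed at order-type $\cf(\lambda)$. Viewing $\cvec C$ as a $\square_\lambda(\kappa,{<}\mu,{\sq_\chi},V)$-sequence (forgetting $\Omega$ and weakening $\notin$ to $V$), I would check the hypothesis of Lemma~\ref{lemma3.5}: as $\xi=\lambda$, every $C\in\bigcup_\alpha\mathcal C_\alpha$ satisfies $\otp(C)\le\lambda$, so for each $\alpha\in\Gamma$ above $\lambda$ one has $\otp(C)<\alpha$ for all $C\in\mathcal C_\alpha$; since $\sq^\Omega_\chi$-coherence entails $\sq_\chi$-coherence, Lemma~\ref{Gamma-closure}(3) makes $E^\kappa_\theta\cap\Gamma$ stationary for each $\theta\in\reg(\lambda)$, and intersecting with the club $(\lambda,\kappa)$ shows $\{\alpha\in E^\kappa_{>\cf(\lambda)}\cap\Gamma\mid\exists C\in\mathcal C_\alpha[\otp(C)<\alpha]\}$ is stationary. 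Lemma~\ref{lemma3.5} then yields a transversal $\langle \dot C_\alpha\mid\alpha\in\Gamma\rangle$ for $\cvec C$ and a faithful $\Phi_0$ with $\otp(\Phi_0(\dot C_\alpha))<\lambda$ for all $\alpha$ (Clause~(1)), such that for every cofinal $A\s\kappa$ there are stationarily many $\alpha\in\Gamma$ with $\otp(\Phi_0(\dot C_\alpha))=\cf(\lambda)$ and $\nacc(\Phi_0(\dot C_\alpha))\s A$ (Clause~(2)). Setting $D_\alpha:=\Phi_0(\dot C_\alpha)$, Lemma~\ref{pp-preserves-square2}(2) (faithfulness of $\Phi_0$) makes $\langle D_\alpha\mid\alpha\in\Gamma\rangle$ a transversal for $\square_\lambda(\kappa,{<}\mu,{\sq^\Omega_\chi},{\nsin})$, say for the witness $\cvec D:=\acts{\Phi_0}{C}$.

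Stage~2 blows the seed up to order-type $\lambda$ by applying Corollary~\ref{cor413} to $\cvec D$ and $\langle D_\alpha\mid\alpha\in\Gamma\rangle$ with $\Gamma':=\Gamma$. Its hypotheses will hold: (a) is $\ch_\lambda$; (b) is Stage~1; for (c), $\otp(D_\alpha)<\lambda$ together with Lemma~\ref{Gamma-closure}(3) makes $\{\alpha\in E^\kappa_\theta\cap\Gamma\mid\otp(D_\alpha)<\alpha\}$ stationary for every $\theta\in\reg(\lambda)$ (all $\alpha>\lambda$ qualify); and (d) is precisely Clause~(2) of Stage~1, since a stationary, hence cofinal, set $A$ admits such an $\alpha$. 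Corollary~\ref{cor413} (in which $a(\lambda,\kappa)=\lambda$ by Example~\ref{examples-a}(2), so the blow-up reaches order-type $\lambda$) then returns a witness $\cvec E$ to $\square_\lambda(\kappa,{<}\mu,{\sq^\Omega_\chi},{\nsin})$ with a transversal $\langle E_\alpha\mid\alpha\in\Gamma\rangle$ such that for every cofinal $A\s\kappa$ there are stationarily many $\alpha\in\Gamma$ with $\otp(E_\alpha)=\lambda$ and $\nacc(E_\alpha)\s A$. For Stage~3, using that $\diamondsuit(\kappa)$ holds by $\ch_\lambda$ and \cite{Sh:922}, I would fix the faithful, $\acc$-preserving $\Phi_1$ of Lemma~\ref{phi0} and set $C_\alpha:=\Phi_1(E_\alpha)$; by Lemma~\ref{pp-preserves-square2}(2) this is a transversal for $\square_\lambda(\kappa,{<}\mu,{\sq^\Omega_\chi},{\nsin})$, hence for $\square_\lambda(\kappa,{<}\mu,{\sq^\Omega_\chi})$. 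Given $\langle A_i\mid i<\lambda\rangle$, I would extend it to a $\kappa$-sequence of cofinal sets (e.g.\ $A_i:=A_0$ for $i\ge\lambda$), let $G$ be the associated stationary set of Lemma~\ref{phi0}, and apply Stage~2 with $A:=G$ to get a stationary set of $\alpha\in\Gamma$ with $\otp(E_\alpha)=\lambda$ and $\nacc(E_\alpha)\s G$; for each such $\alpha$, Lemma~\ref{phi0} forces $\min(C_\alpha)=\min(A_0)$ and $C_\alpha(i+1)\in A_i$ for all $i<\otp(E_\alpha)=\lambda$, while $\otp(C_\alpha)=\lambda$ because $\Phi_1$ is $\otp$-preserving. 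The required set thus contains a stationary set, so it is stationary.

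The main obstacle is concentrated in Stage~1: fabricating, over a singular $\lambda$ that need not be a strong limit, a transversal whose clubs have order-type exactly $\cf(\lambda)$ with freely steerable non-accumulation points. This is exactly the content of Lemma~\ref{lemma3.5} (which itself rests on Lemma~\ref{phi1}), so once it is cited the remaining work is bookkeeping — verifying the hypotheses of Corollary~\ref{cor413}, which are nearly automatic precisely because $\xi=\lambda$ caps all order-types below $\alpha$, and checking that faithfulness of each postprocessing function preserves the $\sq^\Omega_\chi$-coherent $\nsin$-structure across the three stages.
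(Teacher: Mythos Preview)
Your proposal is correct and follows essentially the same three-stage approach as the paper: apply Lemma~\ref{lemma3.5} (noting $\xi=\lambda<\kappa$ so Lemma~\ref{hitting-implies-nontrivial}(1) upgrades to ${\nsin}$), then feed the resulting transversal into Corollary~\ref{cor413}, and finally postprocess with the $\Phi$ of Lemma~\ref{phi0}. Your write-up is in fact more explicit than the paper's in verifying the hypotheses of Lemma~\ref{lemma3.5} and Corollary~\ref{cor413} and in spelling out the final hitting argument.
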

\begin{proof} By Lemma~\ref{lemma3.5} (with $\xi:=\lambda$), pick a transversal $\vec C=\langle C_\alpha\mid\alpha \in \Gamma\rangle$ for $\cvec{C}$
and a faithful postprocessing function $\Phi_0:\mathcal K(\kappa)\rightarrow\mathcal K(\kappa)$ such that:
\begin{enumerate}
\item $\otp(\Phi_0(C_\alpha))<\lambda$ for all $\alpha\in \Gamma$;
\item For every cofinal $A\s\kappa$, there exist stationarily many $\alpha\in \Gamma$ with $\otp(\Phi_0(C_\alpha))=\cf(\lambda)$ and $\nacc(\Phi_0(C_\alpha))\s A$.
\end{enumerate}

By Lemma~\ref{hitting-implies-nontrivial}(1), $\cvec{C}$ in fact witnesses $\square_\lambda(\kappa,{<}\mu,{\sq^\Omega_\chi},{\nsin})$.
For all $\alpha\in\Gamma$, put $C_\alpha^\circ:=\Phi_0(C_\alpha)$.
By Lemma~\ref{pp-preserves-square2}, and since $\Phi_0$ is faithful, $\acts{\Phi_0}{C}$ is a $\square_\lambda(\kappa,{<}\mu,{\sq^\Omega_\chi},{\nsin})$-sequence,
for which $\vec{C^\circ}:=\langle C_\alpha^\circ\mid\alpha\in\Gamma\rangle$ is a transversal.
Now, appealing to Corollary~\ref{cor413} with $\acts{\Phi_0}{C}$, $\vec{C^\circ}$ and $\Gamma':=\Gamma$,
we obtain a transversal $\langle D_\alpha\mid\alpha\in\Gamma\rangle$ for $\square_\lambda(\kappa,{<}\mu,{\sq^\Omega_\chi},{\nsin})$,
satisfying that for every cofinal $A\s\kappa$, there exist stationarily many $\alpha\in\Gamma$ such that $\otp(D_\alpha)=\lambda$ and $\nacc(D_\alpha)\s A$.
By $\ch_\lambda$ and \cite{Sh:922}, $\diamondsuit(\kappa)$ holds, so let $\Phi_1$ be given by Lemma~\ref{phi0}.
For all $\alpha\in\Gamma$, put $C_\alpha^\bullet:=\Phi_1(D_\alpha)$.
By Lemma~\ref{pp-preserves-square2}(2), $\vec{C^\bullet}:=\langle C_\alpha^\bullet\mid\alpha\in\Gamma\rangle$ is indeed a transversal for $\square_\lambda(\kappa,{<}\mu,{\sq^\Omega_\chi})$,
so that by the choice of $\Phi_1$, $\vec{C^\bullet}$ is as sought.
\end{proof}
\begin{remark}Compare the preceding with Corollary~\ref{cor310}.\end{remark}

The next lemma provides a postprocessing-function version of \cite[Theorem~4.11]{rinot21}.

\begin{lemma}\label{lemma5.3} Suppose that  $\diamondsuit(\kappa)$ holds, and $\langle C_\alpha\mid\alpha \in \acc(\kappa)\rangle$ is a given transversal for $\square(\kappa,{<}\kappa,{\sq},V)$.
Then there exists a $\min$-preserving, $\acc$-preserving postprocessing function $\Phi:\mathcal K(\kappa)\rightarrow\mathcal K(\kappa)$ satisfying the following.
For every function $f:\kappa\rightarrow\kappa$ and every $A\s\acc(\kappa)$, there exists a stationary $G\s\kappa$,
such that for every $\delta\in A$ with $\otp(\nacc(C_\delta)\cap G)>f(\delta)+1$, there exists  $\gamma\in \nacc(\Phi(C_\delta))\cap A$ such that
$f(\gamma)=f(\delta)$ and $\Phi(C_\delta)\cap\gamma\sqsubseteq \Phi(C_\gamma)$.
\end{lemma}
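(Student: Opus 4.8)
The plan is to realize $\Phi$ as $\Phi_{\mathfrak Z}$ for a carefully chosen $\kappa$-assignment $\mathfrak Z$, built by recursion on $\beta<\kappa$ using a $\diamondsuit(\kappa)$-sequence, so that $\Phi$ acts by selectively \emph{redirecting} non-accumulation points of its input down to lower ordinals carrying the right $f$-value and the right threading. First I would fix a $\diamondsuit(\kappa)$-sequence $\langle X_\beta\mid\beta<\kappa\rangle$, used (via a fixed pairing) to guess candidates for a function and a set together with the given transversal; then, for inputs $(f,A)$, I will later take $G$ to be the stationary set of those $\beta<\kappa$ at which $X_\beta$ correctly computes $(f\restriction\beta,A\cap\beta)$. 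Since $G\cap\beta$ is reconstructible at stage $\beta$ from $X_\beta$ and the fixed initial sequence $\langle X_{\beta'}\mid\beta'<\beta\rangle$, the assignment will have access, at each $\beta$, to a guessed copy of $f\restriction\beta$, $A\cap\beta$, and $G\cap\beta$.

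The recursion defines $Z_{x,\beta}$ as a function of $x\cap\beta$ alone, which is automatically a legitimate $\kappa$-assignment as in Example~\ref{phiZ-simpler}; setting $Z_{x,\beta}:=\emptyset$ whenever $x\cap\beta=\emptyset$ (a detectable condition meaning $\beta=\min(x)$) makes $\Phi_{\mathfrak Z}$ $\min$-preserving, while $\acc$-preservation comes for free from Lemma~\ref{phiZ}. At a stage $\beta$ with $x\cap\beta$ given, I decode $X_\beta$ to guessed $f^\ast,A^\ast,G^\ast$, compute the count $c:=\otp(\nacc(x\cap\beta)\cap G^\ast)$, and --- crucially --- read off the intended target value from $c$: when $c$ is a successor $t+1$, I search for the least $\gamma<\beta$ with $\gamma\in A^\ast$, $f^\ast(\gamma)=t$, $\gamma>\sup(x\cap\beta)$, and $\Phi_{\mathfrak Z}(C_\gamma)\sqsupseteq\Phi_{\mathfrak Z}(x\cap\beta)$ (all computable, since $\Phi_{\mathfrak Z}$ below $\beta$ is already determined), putting $Z_{x,\beta}:=\{\gamma\}$ if such $\gamma$ exists and $Z_{x,\beta}:=\emptyset$ otherwise. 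By Lemma~\ref{phiZ}(2) the resulting $\gamma=g_{x,\mathfrak Z}(\beta)$ lies in $\nacc(\Phi_{\mathfrak Z}(x))$, and by Lemma~\ref{phiZ}(3) one computes $\Phi_{\mathfrak Z}(x)\cap\gamma=\Phi_{\mathfrak Z}(x\cap\beta)$, so that the threading requirement $\Phi(x)\cap\gamma\sq\Phi(C_\gamma)$ reduces exactly to the selection condition $\Phi_{\mathfrak Z}(C_\gamma)\sqsupseteq\Phi_{\mathfrak Z}(x\cap\beta)$.

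With $\Phi:=\Phi_{\mathfrak Z}$ fixed, the verification then runs as follows. Given $f$ and $A$, let $G$ be the stationary set of correct-guess points. Fix $\delta\in A$ with $\otp(\nacc(C_\delta)\cap G)>f(\delta)+1$, enumerate $\nacc(C_\delta)\cap G$ increasingly, and let $\beta$ be its element of index $f(\delta)+1$. Since $\beta\in G$, the decoded data at $\beta$ are correct, so $x\cap\beta=C_\delta\cap\beta$ and $G^\ast=G\cap\beta$, whence the computed count is exactly $c=\otp(\nacc(C_\delta)\cap G\cap\beta)=f(\delta)+1$ and the target value is $t=f(\delta)$. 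Provided the search at $\beta$ succeeds, the redirect produces $\gamma=g_{C_\delta,\mathfrak Z}(\beta)\in\nacc(\Phi(C_\delta))\cap A$ with $f(\gamma)=f(\delta)$ and $\Phi(C_\delta)\cap\gamma=\Phi_{\mathfrak Z}(C_\delta\cap\beta)\sq\Phi(C_\gamma)$, exactly as demanded; this is where the hypothesis $\otp(\nacc(C_\delta)\cap G)>f(\delta)+1$ is consumed, its sole purpose being to make the index-$(f(\delta)+1)$ point available and to encode $f(\delta)$ through the count.

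The main obstacle is precisely guaranteeing that this search succeeds: that below $\beta$ there genuinely is a $\gamma\in A$ with $f(\gamma)=f(\delta)$ whose image $\Phi_{\mathfrak Z}(C_\gamma)$ end-extends the prescribed segment $s:=\Phi_{\mathfrak Z}(C_\delta\cap\beta)$. This is not visible from the hypothesis, and since $\gamma<\beta$ the value $\Phi(C_\gamma)$ is already frozen, so I cannot repair it after the fact. The resolution I intend is to interleave a second layer of guessing into the recursion, arranging that every initial segment $s$ that can arise, paired with every target value $t$, is \emph{planted} cofinally often below the correct-guess points --- the redirect itself being made to manufacture fresh points $\gamma\in A$ with $f(\gamma)=t$ and $\Phi_{\mathfrak Z}(C_\gamma)\sqsupseteq s$ --- while exploiting the $\sq$-coherence of $\Phi_{\mathfrak Z}$ from Lemma~\ref{phiZ} to keep these segments aligned. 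The delicacy is that the underlying transversal enjoys only width-$<\kappa$ coherence (so Proposition~\ref{narrow-transversal-coherent} does \emph{not} apply), and managing this alignment is where the bulk of the bookkeeping, mirroring the proof of the cited Theorem~4.11 of \cite{rinot21}, will reside. A final composition with the correction of Example~\ref{faithful_correction} can be added harmlessly should a low-level normalization be needed, though it is not required by the statement.
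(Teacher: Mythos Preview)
Your framework is essentially the paper's: the paper too builds $\Phi(x)=\im(\sigma_x)$ where $\sigma_x(i+1)$ is a redirect, chosen as the least $\gamma$ in a guessed copy of $\{\gamma\in A:f(\gamma)=j\}$ (with $j$ read off from a count, just as you propose) satisfying $\sigma_x\restriction(i+1)=\sigma_{C_\gamma}\restriction(i+1)$. So the shape of your $\Phi_{\mathfrak Z}$, the two-layer diamond guessing, and the role of the count are all correct.

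The genuine gap is your resolution of the ``main obstacle.'' You cannot \emph{manufacture} points $\gamma\in A$ with $f(\gamma)=t$: membership in $A$ and the value $f(\gamma)$ are fixed by the adversary after $\Phi$ is built, so no amount of planting during the recursion can force $\gamma\in A\cap f^{-1}\{t\}$. The paper's idea is the opposite of planting --- it is \emph{reflection}. Observe that $\delta$ itself is always a witness: $\delta\in A$, $f(\delta)=f(\delta)$, and trivially $\Phi(C_\delta)\cap\beta\sq\Phi(C_\delta)$. So the set $S_{j,\alpha}$ of witnesses (where $j=f(\delta)$ and $\alpha$ codes the prefix) is nonempty; the question is only whether it has an element below $\eta=C_\delta(i+1)$. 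To arrange this, the paper first uses the transversal hypothesis to fix a surjection $\varphi:\kappa\to\{\Phi(C_\delta)\cap\beta:\delta,\beta<\kappa\}$ (this is where $\square(\kappa,{<}\kappa,{\sq},V)$ enters, via Proposition~\ref{transversal-width}(2): below each level there are ${<}\kappa$ such prefixes, so $\varphi$ can be arranged to enumerate all prefixes below $\eta$ using only indices below $\eta$). It then defines a Skolem-type function $g(\alpha,\zeta,j)$ returning $\min(S_{j,\alpha}\setminus(\zeta+1))$ when $S_{j,\alpha}$ is unbounded, and takes $G$ to consist of correct-guess points lying in the closure club $E$ of $g$. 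Since $\eta\in E$ and $\delta\in S_{j,\alpha}$ with $\delta>\eta$, closure forces $g(\alpha,\zeta,j)<\eta$ to produce the needed small witness.

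In short: keep your construction of $\Phi$, but replace the ``planting'' plan by this closure step --- build $G$ not merely as the correct-guess points, but as correct-guess points inside a club $E$ closed under a function $g$ (definable from $f$, $A$, and $\varphi$) that reflects the existence of the trivial witness $\delta$ down below $\eta$.
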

\begin{proof} Let $\langle Z_\eta\mid \eta<\kappa\rangle$ be a $\diamondsuit(\kappa)$-sequence.
In particular, $Z_\eta\s\eta$ for all $\eta<\kappa$.
For each $x\in\mathcal K(\kappa)$, define $h_x:\otp(x)\rightarrow\otp(x)$ by stipulating:
$$h_x(i):=\sup(\otp(\{\beta\in \nacc(x)\cap\acc^+(Z_{x(i)})\mid Z_\beta = Z_{x(i)} \cap\beta \})).$$

Fix a bijection $\pi:\kappa\times\kappa\leftrightarrow\kappa$.
For all $j\le\eta<\kappa$, denote $Z_\eta^j:=\{ \gamma\in\acc(\eta)\mid \exists\varepsilon\in Z_\eta[\pi(j,\gamma)\in Z_\varepsilon]\}$.

We now define a collection of functions $\langle \sigma_x:\otp(x)\rightarrow\sup(x)\mid x\in\mathcal K(\kappa)\rangle$ by recursion over $\sup(x)$ for $x\in\mathcal K(\kappa)$.
For this, fix $\alpha \in \acc(\kappa)$, and suppose that $\sigma_x$ has already been defined for all $x\in\mathcal K(\kappa)$ with $\sup(x)<\alpha$.
Fix an arbitrary $x\in\mathcal K(\kappa)$ with $\sup(x)=\alpha$.
We now define $\sigma_x:\otp(x)\rightarrow\alpha$ by recursion over $i<\otp(x)$.
Put $\sigma_x(0):=x(0)$. Next, fix a nonzero $i<\otp(x)$, and suppose that $\sigma_x\restriction i$ has already been defined.
Noting that $h_x(i) \leq i \leq x(i)$, let
$$Y_x^i:=\{\gamma\in Z_{x(i)}^{h_x(i)}\mid  \sup(x\cap x(i))<\gamma, \sigma_x\restriction i=\sigma_{C_\gamma}\restriction i\},$$
and let
$$\sigma_x(i):=\begin{cases}
\min(Y^{i}_x),&\text{if } Y^{i}_x\neq\emptyset;\\
x(i),&\text{otherwise}.
\end{cases}$$

Finally, define $\Phi:\mathcal K(\kappa)\rightarrow\mathcal K(\kappa)$ by stipulating $\Phi(x):=\im(\sigma_x)$.

\begin{claim}\label{claim51kj} $\Phi$ is a $\min$-preserving, $\acc$-preserving postprocessing function.
\end{claim}
\begin{proof} Let $x\in\mathcal K(\kappa)$ be arbitrary. For all $i<\otp(x)$, we have:
\begin{itemize}
\item[(a)] $x(i)<\sigma_x(i+1)=\Phi(x)(i+1)\le x(i+1)$ for all $i<\otp(x)$; and
\item[(b)] $\sigma_x(i) = \Phi(x)(i) = x(i)$ for all limit $i<\otp(x)$, including $i=0$.
\end{itemize}

Consequently, $\Phi(x)$ is a club in $\sup(x)$ and $\Phi$ is $\min$-preserving and $\acc$-preserving. Next, fix $\bar\alpha\in\acc(x)$.
Then a straight-forward induction over $i<\otp(x\cap\bar\alpha)$ establishes that  $h_{x\cap\bar\alpha}(i)=h_x(i)$,  $Y^{i}_{x\cap\bar\alpha}=Y^{i}_x$ and $\sigma_{x\cap\bar\alpha}(i)=\sigma_x(i)$ for all $i<\otp(x\cap\bar\alpha)$,
so that $\Phi(x)\cap\bar\alpha = \Phi(x\cap\bar\alpha)$.
\end{proof}

To see that $\Phi$ is as sought, suppose that we are given a function $f:\kappa\rightarrow\kappa$ along with an arbitrary subset $A\s \acc(\kappa)$.
For all $\alpha\in\acc(\kappa)$, denote $D_\alpha:=\Phi(C_\alpha)$.
By Lemma~\ref{pp-preserves-square}, $\langle D_\alpha\mid\alpha \in \acc(\kappa)\rangle$ is a transversal for $\square(\kappa,{<}\kappa,{\sq},V)$.
Fix a surjection $\varphi:\kappa\rightarrow\{D_\delta\cap\beta\mid \delta\in\acc(\kappa),\beta<\kappa\}$.
By Proposition~\ref{transversal-width}(2), the following set is a club in $\kappa$:
$$D := \{ \eta\in\acc(\kappa)\mid \varphi[\eta]=\{ D_\delta\cap\beta \mid \delta\in\acc(\kappa), \beta<\eta \} \}.$$

For all $j,\alpha<\kappa$, put $S_j:=A\cap f^{-1}\{j\}$ and $S_{j,\alpha}:=\{\xi\in S_j\mid \varphi(\alpha)\sq D_\xi\}$.
Then, put $Z:=\{ \pi(j,\gamma)\mid j<\kappa, \gamma\in S_j\}$.
Define a function $g:\kappa\times\kappa\times\kappa\rightarrow\kappa$ by stipulating:
$$g(\alpha,\zeta,j):=\begin{cases}
\sup (S_{j,\alpha}) +1,&\text{if } \sup (S_{j,\alpha}) <\kappa;\\
\min (S_{j,\alpha} \setminus (\zeta+1)),&\text{otherwise}.
\end{cases}$$

Finally, consider the club $E:=\{\delta\in D\mid g[\delta\times\delta\times \delta]\s\delta=\pi[\delta\times\delta]\}$, and its stationary subsets:
\begin{itemize}
\item $F:=\{\varepsilon\in E\mid Z_\varepsilon=Z\cap\varepsilon\}$, and
\item $G:=\{\eta\in\acc^+(F)\mid Z_\eta=F\cap\eta\}$.
\end{itemize}

\begin{claim}\label{c532} For every $\eta\in G$ and $j<\eta$, we have $Z_\eta^j=S_j\cap\eta$.
\end{claim}
\begin{proof} Let $\eta\in G$ be arbitrary. Then $\eta\in\acc^+(F)$ and $Z_\eta=F\cap\eta$, so that $\bigcup_{\varepsilon\in Z_\eta}Z_\varepsilon=\bigcup_{\varepsilon\in F\cap\eta}Z_\varepsilon=Z\cap\eta$.
Now, let $j<\eta$ be arbitrary. Then, $Z_\eta^j=\{\gamma\in\acc(\eta)\mid\pi(j,\gamma)\in Z\cap\eta\}$.
As $j<\eta$ and $\pi[\eta\times\eta]=\eta$, the definition of $Z$ implies that $Z_\eta^j=S_j\cap\eta$.
\end{proof}

\begin{claim} Suppose that $\delta\in A$ and $\otp(\nacc(C_\delta)\cap G)>f(\delta)+1$.

Then there exists $\gamma\in\nacc(D_\delta)\cap A$ such that $f(\gamma)=f(\delta)$ and $D_\delta\cap\gamma\sqsubseteq D_\gamma$.
\end{claim}
\begin{proof} Denote $j:=f(\delta)$.
Fix $\eta\in\nacc(C_\delta)\cap G$ such that $\otp(\nacc(C_\delta)\cap G\cap \eta)=j+1$, and then fix $i<\otp(C_\delta)$ such that $\eta=C_\delta(i+1)$.
As $C_\delta(i+1)=\eta\in G$, we have $Z_{C_\delta(i+1)}=F\cap\eta$ and
$$\{\beta\in \nacc(C_\delta)\cap\acc^+(Z_{C_\delta(i+1)})\mid Z_\beta = Z_{C_\delta(i+1)} \cap\beta \}=\nacc(C_\delta)\cap G\cap\eta,$$
so that $h_{C_\delta}(i+1)=\sup(j+1)=j$.
By $\sigma_{C_\delta}(i) \leq C_\delta(i) < C_\delta(i+1)=\eta$ and the fact that $\eta$ is a limit ordinal, it follows that $\beta:= \sigma_{C_\delta}(i) +1$ is $<\eta$.
Then, since $\eta \in D$, we may fix some $\alpha<\eta$ such that $\varphi(\alpha)=D_\delta\cap \beta$.
Write $\zeta:=C_\delta(i)$.
Since $\alpha<\eta$, $\zeta<\eta$, and $j<\otp(\nacc(C_\delta)\cap G\cap\eta)\le\eta$, the fact that $\eta\in G\s E$ entails $g(\alpha,\zeta,j)< \eta$.
By definition of $g$, if $\sup(S_{j,\alpha})<\kappa$, then $\sup(S_{j,\alpha})+1=g(\alpha,\zeta,j)<\eta<\delta$.
However, $\delta\in S_{j,\alpha}$, and hence $\sup(S_{j,\alpha})=\kappa$.
Consequently, $g(\alpha,\zeta,j)=\min(S_{j,\alpha}\setminus (\zeta+1))$. Altogether,
$$g(\alpha,\zeta,j)\in S_{j,\alpha}\cap(\zeta,\eta).$$
By Claim~\ref{c532}, $Z^j_\eta=S_j\cap\eta$.
As $h_{C_\delta}(i+1)=j$ and $\eta = C_\delta(i+1)$, the definition of $Y_{C_\delta}^{i+1}$ yields:
\begin{align*}Y_{C_\delta}^{i+1} &=
\{\gamma\in S_j\cap\eta\mid \sup(C_\delta\cap C_\delta(i+1))<\gamma, \sigma_{C_\delta}\restriction (i+1)=\sigma_{C_\gamma}\restriction(i+1)\}\\
&=\{\gamma\in S_j\cap\eta\mid C_\delta(i)<\gamma, D_\delta\cap\beta=D_\gamma\cap\beta\}\\
&=\{\gamma\in S_j\cap\eta\mid \zeta<\gamma, \varphi(\alpha)\sq D_\gamma\} \\
&= S_{j,\alpha}\cap(\zeta,\eta).
\end{align*}
In particular, $Y^{i+1}_{C_\delta}$ is a nonempty subset of $S_{j,\alpha}$. Write $\gamma:=\sigma_{C_\delta}(i+1)$.
Then $\gamma\in Y^{i+1}_{C_\delta}\cap\nacc(D_\delta) \s S_{j,\alpha}$, so that $\gamma \in A$, $f(\gamma)=j = f(\delta)$, and $D_\delta\cap\gamma = D_\delta\cap\beta = \varphi(\alpha) \sq D_\gamma$, as sought.
\end{proof}
This completes the proof.
\end{proof}

\begin{thm}\label{thm5.4} Suppose that $\lambda$ is an uncountable cardinal and $\ch_\lambda$ holds. Then $(1)\implies(2)$:
\begin{enumerate}
\item There exists a transversal $\vec C=\langle C_\alpha\mid\alpha \in \acc(\lambda^+)\rangle$ for $\square^*_\lambda$ such that for every stationary $G\s\lambda^+$
there exists $\alpha \in \acc(\lambda^+)$ for which $\otp(\nacc(C_\alpha)\cap G)=\lambda$.
\item There exists a $C$-sequence $\vec{D}=\langle D_\alpha\mid\alpha\in\acc(\lambda^+)\rangle$ such that:
\begin{itemize}
\item $(\mathcal T(\rho_0^{\vec D}),{\stree})$ is a special $\lambda^+$-Aronszajn tree.
\item $(\mathcal T(\rho_1^{\vec D}),{\stree})$ is a normal nonspecial $\lambda^+$-tree, which is $\lambda^+$-Aronszajn in any $\lambda$-distributive forcing extension.
\end{itemize}
\end{enumerate}
\end{thm}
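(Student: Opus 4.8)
The plan is to obtain $\vec D$ by a single application of Lemma~\ref{lemma5.3} to the given $\vec C$, and then to read off all four assertions from the coherence-and-guessing properties of $\vec D$ together with the standard analysis of the characteristics of walks. First I would invoke $\ch_\lambda$ and \cite{Sh:922} to get $\diamondsuit(\lambda^+)$, feed $\vec C$ into Lemma~\ref{lemma5.3}, and let $\Phi$ be the resulting $\min$-preserving, $\acc$-preserving postprocessing function. Setting $D_0:=\emptyset$, $D_{\alpha+1}:=\{\alpha\}$, and $D_\alpha:=\Phi(C_\alpha)$ for $\alpha\in\acc(\lambda^+)$, Lemma~\ref{pp-preserves-square} (via Proposition~\ref{transversal-width}(1)) guarantees that $\vec D=\langle D_\alpha\mid\alpha\in\acc(\lambda^+)\rangle$ is again a transversal for $\square^*_\lambda$; in particular $\vec D$ is $\lambda$-bounded, $\sq$-coherent, and narrow. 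Because $\vec D$ is a transversal for $\square_\lambda(\lambda^+,{<}\lambda^+,{\sq},V)$, the theory of \cite[\S6]{MR2355670} yields at once that $(\mathcal T(\rho_0^{\vec D}),{\stree})$ is a special $\lambda^+$-Aronszajn tree and that its projection $(\mathcal T(\rho_1^{\vec D}),{\stree})$ is a normal $\lambda^+$-tree (its levels have size $\le\lambda$ by the $\lambda$-boundedness and narrowness of $\vec D$). This settles the first bullet of~(2) and the ``normal $\lambda^+$-tree'' half of the second, reducing matters to showing that $\mathcal T(\rho_1^{\vec D})$ is nonspecial and stays $\lambda^+$-Aronszajn after $\lambda$-distributive forcing.

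Both remaining points rest on a single walks computation, which I would isolate as a lemma: \emph{if $\delta\in\acc(\lambda^+)$, $\gamma\in\nacc(D_\delta)$, and $D_\delta\cap\gamma\sq D_\gamma$, then $\rho_1^{\vec D}(\xi,\gamma)=\rho_1^{\vec D}(\xi,\delta)$ for every $\xi<\gamma$, so that the level-$\gamma$ node $t_\gamma:=\rho_1^{\vec D}(\cdot,\gamma)$ is a proper initial segment of $t_\delta:=\rho_1^{\vec D}(\cdot,\delta)$ in $\mathcal T(\rho_1^{\vec D})$.} Writing $D_\delta\cap\gamma=D_\gamma\cap b$, one checks that for $\xi<\sup(D_\delta\cap\gamma)$ the walks to $\xi$ from $\delta$ and from $\gamma$ agree after their first step, so $\rho_0^{\vec D}(\xi,\delta)=\rho_0^{\vec D}(\xi,\gamma)$; and for $\xi\in(\sup(D_\delta\cap\gamma),\gamma)$ the walk from $\delta$ reaches $\gamma$ in one step, whence $\rho_0^{\vec D}(\xi,\delta)=\langle\otp(D_\delta\cap\xi)\rangle^\frown\rho_0^{\vec D}(\xi,\gamma)$ with $\otp(D_\delta\cap\xi)=\otp(D_\gamma\cap b)\le\otp(D_\gamma\cap\xi)\le\rho_1^{\vec D}(\xi,\gamma)$. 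Taking maxima gives $\rho_1^{\vec D}(\xi,\delta)=\rho_1^{\vec D}(\xi,\gamma)$ throughout, proving the lemma.

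For nonspecialness I would argue by contradiction. Suppose $\mathcal T(\rho_1^{\vec D})=\bigcup_{i<\lambda}A_i$ with each $A_i$ an antichain, and define $f:\lambda^+\to\lambda$ by letting $f(\delta)$ be the unique $i<\lambda$ with $t_\delta\in A_i$. Applying Lemma~\ref{lemma5.3} to this $f$ and to $A:=\acc(\lambda^+)$ produces a stationary $G\s\lambda^+$; by hypothesis~(1) I may then pick $\delta\in\acc(\lambda^+)$ with $\otp(\nacc(C_\delta)\cap G)=\lambda>f(\delta)+1$, and Lemma~\ref{lemma5.3} supplies $\gamma\in\nacc(D_\delta)\cap\acc(\lambda^+)$ with $f(\gamma)=f(\delta)$ and $D_\delta\cap\gamma\sq D_\gamma$. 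By the walks lemma, $t_\gamma$ is a proper initial segment of $t_\delta$, so $t_\gamma$ and $t_\delta$ are distinct comparable nodes lying in the same antichain $A_{f(\delta)}$ --- a contradiction. This argument is robust in flavour but is a genuine $V$-statement, since it is driven by the ground-model coloring~$f$.

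Finally, for the $\lambda$-distributive part I would pass to an arbitrary $\lambda$-distributive generic extension $V[H]$. As such forcing adds no surjection $\lambda\to\lambda^+$, it preserves $\lambda^+$, and $\mathcal T(\rho_1^{\vec D})$ remains a $\lambda^+$-tree with its ground-model levels; thus ``$\lambda^+$-Aronszajn in $V[H]$'' means ``no cofinal branch in $V[H]$''. A cofinal branch is a $g:\lambda^+\to\lambda$ all of whose initial segments are tree nodes, and the coherence of $\rho_1^{\vec D}$ makes $E:=\{\delta<\lambda^+\mid g\restriction\delta=\rho_1^{\vec D}(\cdot,\delta)\}$ club in $\lambda^+$. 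For $\delta<\delta'$ in $E$ one has the \emph{exact} identity $\rho_1^{\vec D}(\cdot,\delta)=\rho_1^{\vec D}(\cdot,\delta')\restriction\delta$, and the plan is to convert this, by running the isolated walks computation in reverse, into $E\cap\delta'\s D_{\delta'}$ for club-many $\delta'\in E$; granting this, any $\delta'\in E$ with $\otp(E\cap\delta')>\lambda$ yields $\lambda<\otp(E\cap\delta')\le\otp(D_{\delta'})\le\lambda$, an \emph{absolute} absurdity. The main obstacle is precisely this extraction: teasing out of the exact $\rho_1^{\vec D}$-agreement along the branch that a club of branch points sits inside the relevant $D_{\delta'}$ (the subtlety being that a transversal of $\square^*_\lambda$ need not be $\sq$-coherent with width $<2$, so $D_{\delta'}\cap\delta$ need not equal $D_\delta$), and confirming that $E$ is genuinely club in $V[H]$. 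Once that is secured, robustness under $\lambda$-distributive forcing is automatic, since the terminal contradiction invokes only $\otp(D_{\delta'})\le\lambda$, which is absolute between $V$ and $V[H]$.
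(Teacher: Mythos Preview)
Your nonspecialness argument via the isolated walks computation and Lemma~\ref{lemma5.3} is sound and matches the paper's route (which defers to \cite[Theorem~3]{MR2194042}). However, two genuine gaps remain.

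\textbf{Normality.} You assert that normality of $\mathcal T(\rho_1^{\vec D})$ follows from the general theory of \cite[\S6]{MR2355670}, but it does not: normality requires that every fiber $\rho_1^{\vec D}(\cdot,\gamma)$ have extensions at arbitrarily high levels, and the natural witness is a $\delta>\gamma$ with $\min(D_\delta)=\gamma$ (so that $\rho_1^{\vec D}(\cdot,\delta)\restriction\gamma=\rho_1^{\vec D}(\cdot,\gamma)$). Since the $\Phi$ of Lemma~\ref{lemma5.3} is $\min$-preserving, your $\vec D$ inherits whatever $\min$-distribution $\vec C$ has, and hypothesis~(1) says nothing about $\{\min(C_\alpha)\mid\alpha\in\acc(\lambda^+)\}$. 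The paper therefore first preprocesses $\vec C$ via Lemma~\ref{split_amenable} (and a manipulation as in Lemma~\ref{split_amenable2}) to obtain $\Phi'$ with $\Phi'(x)=^*x$ and $\{\delta\mid\min(\Phi'(C_\delta))=\iota\}$ stationary for every $\iota<\lambda^+$; only then is Lemma~\ref{lemma5.3} applied. The $=^*$ clause preserves the hitting hypothesis of~(1), and $\min$-preservation then passes the surjectivity of minima to $\vec D$, yielding normality.

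\textbf{Aronszajn in $\lambda$-distributive extensions.} Your plan --- show that $E=\{\delta\mid g\restriction\delta=\rho_1^{\vec D}(\cdot,\delta)\}$ is club and that $E\cap\delta'\subseteq D_{\delta'}$ for club-many $\delta'$ --- does not go through. There is no reason for $E$ to be club (a branch through $\mathcal T(\rho_1^{\vec D})$ consists of restrictions $\rho_1^{\vec D}(\cdot,\gamma)\restriction\beta$, not necessarily full fibers), and even granting exact $\rho_1$-agreement, one cannot in general recover membership in $D_{\delta'}$: the function $\rho_1$ loses too much information, and as you note, the width-$\lambda$ coherence of a $\square^*_\lambda$-transversal gives no control over $D_{\delta'}\cap\delta$ versus $D_\delta$. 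The paper's argument is entirely different and much shorter: a cofinal branch $b:\lambda^+\to\lambda$ in $V^{\mathbb P}$ yields (by \cite[Corollary~2.6]{rinot18}) a cofinal $X$ with $\rho_1^{\vec D}(\beta,\alpha)=b(\beta)$ for $\beta<\alpha$ in $X$; pigeonhole on $b\restriction X$ then gives a cofinal $Y\subseteq X$ with $b[Y]=\{\nu\}$ for some $\nu<\lambda$, and any $\alpha\in Y$ with $|Y\cap\alpha|=\lambda$ produces $\lambda$ many $\beta<\alpha$ with $\rho_1^{\vec D}(\beta,\alpha)=\nu$, contradicting the absolute bound $|\{\beta<\alpha\mid\rho_1^{\vec D}(\beta,\alpha)=\nu\}|\le\max\{|\nu|,\aleph_0\}$ of \cite[Lemma~6.2.1]{MR2355670}. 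No analysis of the clubs $D_\delta$ is needed.
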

\begin{proof} Denote $\kappa:=\lambda^+$. By $\ch_\lambda$ and \cite{Sh:922}, $\diamondsuit(\kappa)$ holds.
Let $\vec C$ be as in Clause~(1). By Lemma~\ref{split_amenable}, there exists a postprocessing function $\Phi:\mathcal K(\kappa)\rightarrow\mathcal K(\kappa)$
and an injection $h:\kappa\rightarrow\kappa$ such that, for every $\iota<\kappa$, $\{ \delta\in\acc(\kappa)\mid \min(\Phi(C_\delta))=h(\iota)\}$ is stationary in $\kappa$.
As made clear by the proof of that lemma, we also have $\Phi(x)=^* x$ for all $x\in\mathcal K(\kappa)$.
Then, by a manipulation along the lines of the proof of Lemma~\ref{split_amenable2}, there exists a postprocessing function $\Phi':\mathcal K(\kappa)\rightarrow\mathcal K(\kappa)$ satisfying:
\begin{itemize}
\item For every $\iota<\kappa,$ $\{ \delta\in\acc(\kappa)\mid \min(\Phi'(C_\delta))=\iota\}$ is stationary in $\kappa$;
\item $\Phi'(x)=^* x$ for all $x\in\mathcal K(\kappa)$.
\end{itemize}

Put $\vec C^\bullet:=\langle \Phi'(C_\alpha)\mid \alpha\in\acc(\kappa)\rangle$.
Now, by appealing to Lemma~\ref{lemma5.3} with $\vec{C^\bullet}$, we infer from Lemma~\ref{pp-preserves-square} the existence of a transversal $\vec D=\langle D_\alpha\mid\alpha \in \acc(\kappa)\rangle$ for $\square_\lambda^*$ such that:
\begin{enumerate}
\item[(a)] For every $\iota<\lambda^+$, $\{\delta\in\acc(\lambda^+)\mid \min(D_\delta)=\iota\}$ is cofinal (and even stationary) in $\lambda^+$;
\item[(b)] For every function $f:\lambda^+\rightarrow\lambda$, there exist a limit ordinal $\delta\in[\lambda,\lambda^+)$ and a limit ordinal $\gamma\in\nacc(D_\delta)$ such that $f(\gamma)=f(\delta)$ and $D_\delta\cap\gamma\sq D_\gamma$.
\end{enumerate}

Since $\vec D$ is a transversal for $\square^*_\lambda$, the first two lines of the proof of \cite[Lemma~6.1.14]{MR2355670} with $\theta := \lambda^+$ show that $(\mathcal T(\rho_0^{\vec D}),{\stree})$ is a special $\lambda^+$-Aronszajn tree.
As $\mathcal T(\rho_1^{\vec D})$ is a projection of $\mathcal T(\rho_0^{\vec D})$ (indeed, under the map $\sigma\mapsto\max(\im(\sigma))$),
we know that $(\mathcal T(\rho_1^{\vec D}),{\stree})$ is a $\lambda^+$-tree.
By Clause~(a), $(\mathcal T(\rho_1^{\vec D}),{\stree})$ is normal.
By Clause~(b) and the proof of \cite[Theorem~3]{MR2194042},  $(\mathcal T(\rho_1^{\vec D}),{\stree})$ is nonspecial.

Finally, let $\mathbb P$ be an arbitrary $\lambda$-distributive notion of forcing.
Work in $V^{\mathbb P}$, so that $\lambda^+=(\lambda^+)^V$ and $\mathcal T(\rho_1^{\vec D})=(\mathcal T(\rho_1^{\vec D}))^V$.
Towards a contradiction, suppose that $b:\lambda^+\rightarrow\lambda$ is such that $\{b\restriction\alpha\mid\alpha<\lambda^+\}$ is a cofinal branch through $(\mathcal T(\rho_1^{\vec D}),{\stree})$.
By a standard argument (see, e.g., \cite[Corollary~2.6]{rinot18}), there exists a cofinal subset $X\s\lambda^+$ such that for all $\beta<\alpha$ both from $X$, we have $\rho_1^{\vec D}(\beta,\alpha)=b(\beta)$.
Then there must exist a cofinal subset $Y\s X$ on which $b$ is constant, say, $b[Y] = \{\nu\}$ for some $\nu < \lambda$.
Choosing $\alpha\in Y$ such that $|Y \cap \alpha| = \lambda$ gives
\[
| \{ \beta<\alpha \mid \rho_1^{\vec D}(\beta,\alpha) = \nu \} | = \lambda > \max\{|\nu|,\aleph_0\},
\]
contradicting \cite[Lemma~6.2.1]{MR2355670}.
\end{proof}

Finally, we derive Theorem~B:

\begin{cor}\label{thm51} Suppose that $\lambda$ is a singular cardinal and $\square^*_\lambda+\ch_\lambda$ holds.

Then there exists a $C$-sequence $\vec{C}=\langle C_\alpha\mid\alpha \in \acc(\lambda^+) \rangle$ such that:
\begin{itemize}
\item $(\mathcal T(\rho_0^{\vec C}),{\stree})$ is a special $\lambda^+$-Aronszajn tree.
\item $(\mathcal T(\rho_1^{\vec C}),{\stree})$ is a nonspecial $\lambda^+$-Aronszajn tree which is normal but not $\lambda$-distributive.
\end{itemize}
\end{cor}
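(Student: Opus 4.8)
The plan is to deduce Corollary~\ref{thm51} almost entirely from Theorem~\ref{thm5.4}, whose hypothesis~(1) I will verify using the hitting feature supplied by Theorem~\ref{thm23}. Since $\lambda$ is singular and $\square^*_\lambda+\ch_\lambda$ holds, Theorem~\ref{thm23} (equivalently, the special case $(\mu,\chi,\Omega):=(\lambda^+,\aleph_0,\emptyset)$ of Theorem~\ref{thm5.1}) produces a $C$-sequence $\langle C_\alpha\mid\alpha<\lambda^+\rangle$ which, by its first two bullets together with Proposition~\ref{transversal-width}(1), is a transversal for $\square^*_\lambda$, and which enjoys the diagonal hitting property: for every sequence $\langle A_i\mid i<\lambda\rangle$ of cofinal subsets of $\lambda^+$, the set $\{\alpha<\lambda^+\mid \otp(C_\alpha)=\lambda\ \&\ \forall i<\lambda[C_\alpha(i+1)\in A_i]\}$ is stationary.

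Next I would check that this transversal satisfies hypothesis~(1) of Theorem~\ref{thm5.4}. Given a stationary $G\s\lambda^+$, I feed the constant sequence $A_i:=G$ $(i<\lambda)$ into the hitting property to obtain some $\alpha$ with $\otp(C_\alpha)=\lambda$ and $C_\alpha(i+1)\in G$ for all $i<\lambda$. Each $C_\alpha(i+1)$ has immediate predecessor $C_\alpha(i)$ in $C_\alpha$, hence lies in $\nacc(C_\alpha)\cap G$; and since $\{i+1\mid i<\lambda\}$ is exactly the set of successor ordinals below $\lambda$, whose order type is $\lambda$ (the map $\beta+1\mapsto\beta$ being an order isomorphism onto $\lambda$), the set $\{C_\alpha(i+1)\mid i<\lambda\}$ has order type $\lambda$. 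As it is contained in $\nacc(C_\alpha)\cap G\s C_\alpha$ and $\otp(C_\alpha)=\lambda$, we conclude $\otp(\nacc(C_\alpha)\cap G)=\lambda$, which is precisely hypothesis~(1). I expect this verification to be routine.

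Applying Theorem~\ref{thm5.4} then yields a $C$-sequence $\vec C$ (renaming its output $\vec D$) for which $\mathcal T(\rho_0^{\vec C})$ is a special $\lambda^+$-Aronszajn tree, while $\mathcal T(\rho_1^{\vec C})$ is a normal, nonspecial $\lambda^+$-tree that is $\lambda^+$-Aronszajn in every $\lambda$-distributive forcing extension. Taking the trivial forcing already shows $\mathcal T(\rho_1^{\vec C})$ is $\lambda^+$-Aronszajn in $V$, so it only remains to establish that it is \emph{not} $\lambda$-distributive. This is the conceptual crux, and I would argue it by a self-referential forcing argument: suppose toward a contradiction that $\mathbb P:=\mathcal T(\rho_1^{\vec C})$ were $\lambda$-distributive as a notion of forcing. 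Then $V^{\mathbb P}$ is a $\lambda$-distributive forcing extension, so by Theorem~\ref{thm5.4} the tree $\mathcal T(\rho_1^{\vec C})$ remains $\lambda^+$-Aronszajn there. On the other hand, the generic filter for $\mathbb P$ is a chain through the tree, and by normality the set of nodes of height $\ge\alpha$ is dense for each $\alpha<\lambda^+$; hence the generic is a cofinal branch of length $\lambda^+$ living in $V^{\mathbb P}$, contradicting that the tree is Aronszajn in $V^{\mathbb P}$.

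The main obstacle, then, is the non-distributivity step, which hinges on feeding the tree forcing itself back into the conclusion of Theorem~\ref{thm5.4}. Here one must also note that $\lambda$-distributivity of $\mathbb P$ adds no new $\lambda$-sequences and thus preserves $\lambda^+$, so that the generic branch genuinely has length $\lambda^+$ and witnesses a failure of the Aronszajn property in $V^{\mathbb P}$. Once this is in place, $\mathcal T(\rho_1^{\vec C})$ is a normal, nonspecial, $\lambda$-non-distributive $\lambda^+$-Aronszajn tree, and $\mathcal T(\rho_0^{\vec C})$ is its special refinement, as required.
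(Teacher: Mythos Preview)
Your proposal is correct and follows essentially the same route as the paper: invoke Theorem~\ref{thm5.1} (equivalently Theorem~\ref{thm23}) to obtain a transversal satisfying Clause~(1) of Theorem~\ref{thm5.4}, apply Theorem~\ref{thm5.4}, and then argue non-$\lambda$-distributivity by noting that forcing with the normal tree $\mathcal T(\rho_1^{\vec C})$ itself would add a cofinal branch while remaining a $\lambda$-distributive extension. Your write-up is simply more explicit than the paper's in verifying the hypothesis of Theorem~\ref{thm5.4} via the constant sequence $A_i:=G$ and in spelling out the self-referential forcing contradiction.
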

\begin{proof} Appeal to Theorem~\ref{thm5.1} with $(\mu,\chi,\Omega) := (\lambda^+,\aleph_0,\emptyset)$ to obtain a $C$-sequence satisfying clause~(1) of Theorem~\ref{thm5.4}.
Then, let $\vec D$ be the $C$-sequence provided by Clause~(2) of  Theorem~\ref{thm5.4}.
Since forcing with the normal $\lambda^+$-tree $\mathcal T(\rho_1^{\vec D})$ would introduce a cofinal branch through the tree, it follows in particular that $(\mathcal T(\rho_1^{\vec D}),{\stree})$ cannot be $\lambda$-distributive.
\end{proof}

\section*{Acknowledgments}
We thank Yair Hayut for illuminating discussions on $\square(\kappa,{<}\mu)$-sequences.

The main results of this paper were presented by the first author at the Toronto Set-Theory Seminar, April 2017,
and at the \emph{6th European Set-Theory Conference}, Budapest, July 2017,
and by the second author at the \emph{14th International Workshop on Set Theory}, Luminy, October 2017.
We thank the organizers of the respective meetings for the invitations.


\begin{thebibliography}{TPW17}

\bibitem[AS83]{AbSh:146}
Uri Abraham and Saharon Shelah.
\newblock {Forcing closed unbounded sets}.
\newblock {\em The Journal of Symbolic Logic}, 48:643--657, 1983.

\bibitem[ASS87]{AShS:221}
Uri Abraham, Saharon Shelah, and R.~M. Solovay.
\newblock {Squares with diamonds and Souslin trees with special squares}.
\newblock {\em Fundamenta Mathematicae}, 127:133--162, 1987.

\bibitem[BS86]{MR0861900}
Shai Ben-David and Saharon Shelah.
\newblock Nonspecial {A}ronszajn trees on {$\aleph_{\omega+1}$}.
\newblock {\em Israel J. Math.}, 53(1):93--96, 1986.

\bibitem[BR17a]{paper22}
Ari~Meir Brodsky and Assaf Rinot.
\newblock A microscopic approach to {S}ouslin-tree constructions. {P}art {I}.
\newblock {\em Ann. Pure Appl. Logic}, 168(11):1949--2007, 2017.

\bibitem[BR17b]{paper32}
Ari~Meir Brodsky and Assaf Rinot.
\newblock A remark on {S}chimmerling's question.
\newblock {\em Submitted}, November 2017.
\verb'http://www.assafrinot.com/paper/32'

\bibitem[BR17c]{rinot20}
Ari~Meir Brodsky and Assaf Rinot.
\newblock Reduced powers of {S}ouslin trees.
\newblock {\em Forum Math. Sigma}, 5(e2):1--82, 2017.

\bibitem[BR18a]{paper23}
Ari~Meir Brodsky and Assaf Rinot.
\newblock A microscopic approach to {S}ouslin-tree constructions. {P}art {II}.
\newblock {\em in preparation}, 2018.

\bibitem[BR18b]{paper26}
Ari~Meir Brodsky and Assaf Rinot.
\newblock More notions of forcing add a {S}ouslin tree.
\newblock {\em Notre Dame J. Form. Log.}, to appear, 2018.
\verb'http://www.assafrinot.com/paper/26'

\bibitem[FK78]{MR521125}
W. G. Fleissner and K. Kunen.
\newblock Barely {B}aire spaces.
\newblock {\em Fund. Math.}, 101(3):229--240, 1978.

\bibitem[Fri74]{MR0327521}
Harvey Friedman.
\newblock On closed sets of ordinals.
\newblock {\em Proc. Amer. Math. Soc.}, 43:190--192, 1974.

\bibitem[HL17]{MR3730566}
Yair Hayut and Chris Lambie-Hanson.
\newblock Simultaneous stationary reflection and square sequences.
\newblock  {\em J. Math. Log.} 17(2):1--27, 2017.

\bibitem[HM16]{arXiv:1603.05526}
Yair Hayut and Menachem Magidor.
\newblock Destructibility of the tree property at $\aleph_{\omega+1}$
\newblock {\em arXiv:1603.05526}, 2016.

\bibitem[HM05]{MR2194042}
Michael Hru{\v{s}}{\'a}k and Carlos Mart{\'{\i}}nez~Ranero.
\newblock Some remarks on non-special coherent {A}ronszajn trees.
\newblock {\em Acta Univ. Carolin. Math. Phys.}, 46(2):33--40, 2005.

\bibitem[Jen72]{MR0309729}
R.~Bj{\"o}rn Jensen.
\newblock The fine structure of the constructible hierarchy.
\newblock {\em Ann. Math. Logic}, 4:229--308; erratum, ibid. 4 (1972), 443, 1972.
\newblock With a section by Jack Silver.

\bibitem[Kan03]{MR1994835}
Akihiro Kanamori.
\newblock {\em The higher infinite}.
\newblock Springer Monographs in Mathematics. Springer-Verlag, Berlin, second edition, 2003.
\newblock Large cardinals in set theory from their beginnings.

\bibitem[Koj95]{kojman-abc-of-pcf}
Menachem Kojman.
The A, B, C of pcf: a companion to pcf theory, part I.
November 1995.
\verb"http://www.cs.bgu.ac.il/~kojman/ABCI.pdf"

\bibitem[K{\"o}n03]{MR2013395}
Bernhard K{\"o}nig.
\newblock Local coherence.
\newblock {\em Ann. Pure Appl. Logic}, 124(1-3):107--139, 2003.

\bibitem[Kun78]{MR495118}
Kenneth Kunen.
\newblock Saturated ideals.
\newblock {\em J. Symbolic Logic}, 43(1):65--76, 1978.

\bibitem[LR18]{paper28}
Chris Lambie-Hanson and Assaf Rinot.
\newblock Reflection on the coloring and chromatic numbers.
\newblock {\em Combinatorica}, to appear, 2018.
\verb'http://www.assafrinot.com/paper/28'

\bibitem[Mag82]{MR683153}
Menachem Magidor.
\newblock Reflecting stationary sets.
\newblock {\em J. Symbolic Logic}, 47(4):755--771 (1983), 1982.

\bibitem[Rin10]{rinot07}
Assaf Rinot.
\newblock A relative of the approachability ideal, diamond and non-saturation.
\newblock {\em J. Symbolic Logic}, 75(3):1035--1065, 2010.

\bibitem[Rin11]{rinot_s01}
Assaf Rinot.
\newblock Jensen's diamond principle and its relative.
\newblock In {\em Set theory and its applications}, volume 533 of {\em Contemp.
  Math.}, pages 125--156. Amer. Math. Soc., Providence, RI, 2011.

\bibitem[Rin14a]{rinot18}
Assaf Rinot.
\newblock Chain conditions of products, and weakly compact cardinals.
\newblock {\em Bull. Symb. Log.}, 20(3):293--314, 2014.

\bibitem[Rin14b]{rinot11}
Assaf Rinot.
\newblock The {O}staszewski square, and homogeneous {S}ouslin trees.
\newblock {\em Israel J. Math.}, 199(2):975--1012, 2014.

\bibitem[Rin15]{rinot19}
Assaf Rinot.
\newblock Putting a diamond inside the square.
\newblock {\em Bull. Lond. Math. Soc.}, 47(3):436--442, 2015.

\bibitem[Rin17]{paper24}
Assaf Rinot.
\newblock Higher {S}ouslin trees and the {GCH}, revisited.
\newblock {\em Adv. Math.}, 311(C):510--531, 2017.

\bibitem[Rin18]{paper35}
Assaf Rinot.
\newblock {S}ouslin trees at successors of regular cardinals.
\newblock {\em in preparation}, 2018.

\bibitem[RS17]{rinot21}
Assaf Rinot and Ralf Schindler.
\newblock Square with built-in diamond-plus.
\newblock {\em J. Symbolic Logic}, 82(3):809--833, 2017.

\bibitem[She90]{Sh:347}
Saharon Shelah.
\newblock {Incompactness for chromatic numbers of graphs}.
\newblock In {\em A tribute to Paul Erd\H{o}s}, pages 361--371. Cambridge Univ. Press, Cambridge, 1990.

\bibitem[She94]{Sh:365}
Saharon Shelah.
\newblock {There are Jonsson algebras in many inaccessible cardinals}.
\newblock In {\em Cardinal Arithmetic}, volume~29 of {\em Oxford Logic Guides}.
  Oxford University Press, 1994.

\bibitem[She10]{Sh:922}
Saharon Shelah.
\newblock {Diamonds}.
\newblock {\em Proceedings of the American Mathematical Society},
  138:2151--2161, 2010.

\bibitem[Sta13]{MR3135494}
M.~C. Stanley.
\newblock Forcing closed unbounded subsets of {$\aleph_{\omega_1+1}$}.
\newblock {\em J. Symbolic Logic}, 78(3):681--707, 2013.

\bibitem[Tod87]{MR908147}
Stevo Todorcevic.
\newblock Partitioning pairs of countable ordinals.
\newblock {\em Acta Math.}, 159(3-4):261--294, 1987.

\bibitem[Tod07]{MR2355670}
Stevo Todorcevic.
\newblock {\em Walks on ordinals and their characteristics}, volume 263 of {\em Progress in Mathematics}.
\newblock Birkh\"auser Verlag, Basel, 2007.

\bibitem[TW17]{MR3600760}
V\'\i ctor Torres-P\'erez and Liuzhen Wu.
\newblock Strong {C}hang's {C}onjecture, {S}emi-{S}tationary {R}eflection, the
  {S}trong {T}ree {P}roperty and two-cardinal square principles.
\newblock {\em Fund. Math.}, 236(3):247--262, 2017.

\end{thebibliography}
\end{document}